\newtheorem{theorem}{Theorem}[section]
\newtheorem{fact}[theorem]{Fact}
\newtheorem{lemma}[theorem]{Lemma}
\newtheorem{corollary}[theorem]{Corollary}
\newtheorem{claim}[theorem]{Claim}
\newtheorem{proposition}[theorem]{Proposition}
\newtheorem*{cor}{Corollary}
\newtheorem{introtheorem}{Theorem}
\theoremstyle{definition}
\newtheorem{definition}[theorem]{Definition}
\newtheorem{example}[theorem]{Example}
\newtheorem{convention}[theorem]{Convention}
\newtheorem{notation}[theorem]{Notation}
\newtheorem{assumption}[theorem]{Assumption}
\theoremstyle{remark}
\newtheorem{remark}[theorem]{Remark}
\newtheorem{warning}[theorem]{Warning}
\newcommand{\rk}{\operatorname{rk}}
\newcommand{\acl}{\operatorname{acl}}
\newcommand{\dprk}{\operatorname{dprk}}
\newcommand{\mr}{\operatorname{MR}}
\newcommand{\tp}{\operatorname{tp}}
\def\sub{\subseteq}
\def\0{\emptyset}
\def\CM{\mathcal M}
\def\CK{\mathcal K}
\def\Ind#1#2{#1\setbox0=\hbox{$#1x$}\kern\wd0\hbox to 0pt{\hss$#1\mid$\hss}
\lower.9\ht0\hbox to 0pt{\hss$#1\smile$\hss}\kern\wd0}
\def\Notind#1#2{#1\setbox0=\hbox{$#1x$}\kern\wd0\hbox to 0pt{\mathchardef
\nn=12854\hss$#1\nn$\kern1.4\wd0\hss}\hbox to
0pt{\hss$#1\mid$\hss}\lower.9\ht0 \hbox to
0pt{\hss$#1\smile$\hss}\kern\wd0}
\newenvironment{claimproof}[1][\proofname]
  {%
    \proof[#1]%
  }
  {%
    \endproof%
  }
\title{Zilber's Trichotomy in Hausdorff Geometric Structures}
\author{Benjamin Castle} 
\address{Department of Mathematics, University of Maryland,
	College Park, MD 20742,
	USA}
\subjclass{Primary 03C45; Secondary 14A99, 12J10}
\email{bcastle@math.berkeley.edu}
\author{Assaf Hasson} 
\address{Department of Mathematics, Ben Gurion University of the Negev, Be'er-Sheva 84105, Israel}
\email{hassonas@math.bgu.ac.il}
\author{Jinhe Ye}
\address{Mathematical Institute, University of Oxford,
	Oxford OX2 6GG, United Kingdom}
\email{jinhe.ye@maths.ox.ac.uk}
\begin{document}
	
	\thanks{The first author was partially supported by the Fields Institute for Research in Mathematical Sciences; NSF grant DMS 1800692; ISF grant No. 555/21; a BGU Kreitman foundation fellowship; and a UMD Brin postdoc. The second author was supported by ISF grants No. 555/21. The third author was partially supported by the Fondation Sciences Math\'ematiques de Paris}
	
	\begin{abstract}
		
		We give a new axiomatic treatment of the Zilber trichotomy, and use it to complete the proof of the trichotomy for relics of algebraically closed fields, i.e., reducts of the ACF-induced structure on ACF-definable sets. More precisely, we introduce a class of geometric structures equipped with a Hausdorff topology, called \textit{Hausdorff geometric structures}. Natural examples include the complex field; algebraically closed valued fields; o-minimal expansions of real closed fields; and characteristic zero Henselian fields (in particular $p$-adically closed fields). We then study the Zilber trichotomy for relics of Hausdorff geometric structures, showing that under additional assumptions, every non-locally modular strongly minimal relic on a real sort interprets a one-dimensional group. Combined with recent results, this allows us to prove the trichotomy for strongly minimal relics on the real sorts of algebraically closed valued fields.
		Finally, we make progress on the imaginary sorts, reducing the trichotomy for \textit{all} ACVF relics (in all sorts) to a conjectural technical condition that we prove in characteristic $(0,0)$.
		
	\end{abstract}
	\maketitle
	\tableofcontents{\setcounter{tocdepth}{1}}
	
	\section{Introduction}
	
	
	
	
	The main goal of this paper was to complete the proof of Zilber's Restricted Trichotomy Conjecture for algebraically closed fields. In characteristic $0$ this conjecture was proven by the first author in \cite{CasACF0}. In order to adapt key analytic ideas from \cite{CasACF0} to positive characteristic, we opted, rather than working with formal schemes (as in \cite{HaSu}), to work in algebraically closed valued fields (ACVF). In this setting  -- in complete models -- the theory of analytic functions provides suitable analogues of the characteristic 0 statements we need.  Our main result is, thus: 
	
	\begin{introtheorem}\label{T: main}
		Let $\mathcal K$ be an algebraically closed valued field (ACVF). Let $\CM$ be a definable strongly minimal $\mathcal K$-relic. If $\CK$ is not locally modular then $\CK$ interprets a field $\mathcal K$-definably isomorphic to $K$.  
	\end{introtheorem}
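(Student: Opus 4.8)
The plan is to deduce Theorem~\ref{T: main} from the general theory of Hausdorff geometric structures developed in this paper, handling first relics that live on the field sort (and its Cartesian powers) --- the ``real'' case --- and then reducing the remaining sorts to it. The first step is to verify that a sufficiently saturated complete model $\CK$ of ACVF, equipped with the valuation topology on $K$ and the induced product topology on each $K^n$, is a Hausdorff geometric structure: the $\acl$-pregeometry is definable, uniform, and satisfies exchange; the topology is Hausdorff, has no isolated points, and is definable in the required sense (the balls being uniformly definable); definable functions are generically continuous; the frontier of a definable set has strictly smaller dimension; and definable functions are generically differentiable in the Hausdorff sense demanded by the axioms. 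This last point is the heart of the verification and is where the analytic theory of complete valued fields enters: Weierstrass preparation provides, near a generic point of its domain, a presentation of any definable function by a convergent (rigid- or Berkovich-)analytic power series, which substitutes for complex-analytic local coordinates and is available uniformly in all residue characteristics. I would also check the auxiliary hypotheses needed later --- local properness and definable choice up to finite sets, and the existence of enough analytic functions to separate jets --- in complete ACVF models.

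Granting this, the general theorem applies and yields: if $\CM$ is a non-locally modular strongly minimal $\CK$-relic living on a real sort, then $\CM$ interprets a one-dimensional group $G$. The engine is the classical reduction (Zilber, Hrushovski) of non-local modularity to a faithful two-dimensional definable family of plane curves, followed by a local analytic analysis --- carried out entirely within the Hausdorff topology --- of how generic members of the family meet and analytically continue; this forces the family to coincide, up to finite correspondence, with a family of cosets for a group operation, producing $G$. Because $\CM$ is strongly minimal, $G$ is non-orthogonal to the home sort of $\CM$, and is therefore again non-locally modular.

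A non-locally modular one-dimensional group interprets an algebraically closed field $F$ (Hrushovski), so $\CM$ interprets $F$; since $F$ is in particular interpretable in $\CK$, the classification of interpretable fields in ACVF, together with the orthogonality of the residue field and of the value group to the field sort, forces $F$ to be $\CK$-definably isomorphic to $K$. This proves the theorem for relics on real sorts. For relics living on arbitrary (possibly imaginary) sorts, I would reduce to the real case using elimination of imaginaries in the geometric sorts together with a genericity/dimension argument, modulo a technical transfer condition which I would establish in residue characteristic $0$, i.e., in the characteristic $(0,0)$ case (and leave conjectural otherwise).

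The step I expect to be the main obstacle is the local analytic analysis of curve families underlying the construction of $G$. The valuation topology is totally disconnected, so the monodromy and analytic-continuation arguments available over $\mathbb{C}$ in the characteristic $0$ treatment cannot be transferred verbatim; they must be recast as genuine rigid-analytic statements valid in complete models. Moreover, positive residue characteristic removes the naive implicit function theorem, so the differentiability axioms and the control of intersection multiplicities of curves have to be obtained through Weierstrass preparation and Hensel's lemma rather than by classical calculus.
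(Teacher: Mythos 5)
Your overall architecture matches the paper's (set up the Hausdorff geometric axioms, verify them for ACVF, extract a one\mbox{-}dimensional group from a non-locally modular relic, then upgrade the group to a field), but there is a genuine gap at the group-to-field step. You invoke ``a non-locally modular one-dimensional group interprets an algebraically closed field (Hrushovski)''\,---\,no such theorem exists; that statement is itself an instance of the trichotomy for strongly minimal groups and cannot be cited as a black box. The paper instead appeals to the main result of \cite{HaOnPi}, and that result needs much more than non-local modularity of $G$: it needs $G$ to be almost embeddable into $K$ and \emph{locally equivalent} to $(K,+)$ or $(K^{\times},\times)$, i.e.\ a generic associative triple in $G$ interalgebraic with one in the additive or multiplicative group. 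Producing this extra data is a real part of the proof: it comes from the explicit form of the $n_0$-truncated slopes in the group configuration (Proposition \ref{P: group from codes} and Theorem \ref{T: main acvf}), where the truncated composition law is literally $h_{13}=h_{12}h_{23}$ or $h_{13}=h_{12}+h_{23}$. Your fallback identification of the interpreted field with $K$ is also based on a false premise: in ACVF the residue field and the value group are \emph{not} orthogonal to the valued field sort (both are internal to it, being quotients of $\mathcal O$ and $K^{\times}$), so ``orthogonality to the field sort'' cannot be what rules out the residue field. (Separately, Theorem \ref{T: main} concerns definable relics $M\subset K^n$, so the imaginary-sort reduction you sketch is not needed for this statement; it belongs to the separate interpretable case.)

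A second, more technical problem is your plan to verify the axioms via generic differentiability obtained from Weierstrass preparation in complete models. In positive (residue) characteristic this cannot work as stated: the Frobenius and its inverse are definable homeomorphisms with identically vanishing derivative, so any Sard/inverse-function-theorem style axiom based on derivatives fails, and ``differentiability'' in the paper's sense is genuinely unavailable. The paper circumvents this by proving the \emph{open mapping property} and \emph{ramification purity} by algebro-geometric means (the \'ez/\'etale-open framework, universal openness of quasi-finite maps to normal varieties, purity of the branch locus after a reduced relative Frobenius factorization), and by building Frobenius powers directly into the definition of slopes (the Taylor groupoid $\mathcal{GEF}$), checking the needed first-order analytic facts (existence/uniqueness of truncations, continuity of roots for TIMI) in a complete model only because they are expressible by first-order sentences. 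You flag positive characteristic as the main obstacle, correctly, but the route you propose (analytic continuation/jet separation via Weierstrass preparation) does not by itself supply substitutes for the differentiability and purity axioms, which is exactly where the paper's argument does new work.
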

	
	Let us briefly explain the terminologies used above. Recall that a $\CK$-\textit{relic} (see \cite{CasHas}) is a structure $\mathcal M=(M,...)$ such that the universe $M$, and all $\mathcal M$-definable subsets of powers of $M$, are interpretable sets in $\mathcal K$. If $M\subset K^n$ for some $n$, we call $\mathcal M$ a \textit{definable $\CK$-relic}. 
	
	A structure $\mathcal M=(M,...)$ is \textit{strongly minimal} if every definable subset of $M$ is either finite or co-finite (uniformly in definable families). \textit{Non-local modularity} is a necessary non-triviality condition (explained below) for $\CM$ to interpret a field. 
	
	By elimination of imaginaries, every ACF-relic is isomorphic to a definable ACF-relic. Moreover, clearly, every definable ACF-relic is also a definable ACVF-relic. In particular, as a special case, we obtain Zilber's restricted trichotomy for ACF: 
	\begin{cor}\label{C: ACF case}
		Let $\mathcal K$ be an algebraically closed field. Any strongly minimal $\mathcal K$-relic is either locally modular or interprets a field $\mathcal K$-definably isomorphic to $K$. In particular, an arbitrary $\mathcal K$-relic is either 1-based or interprets such a field. 
	\end{cor}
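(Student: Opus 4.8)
\emph{Proof plan for Corollary~\ref{C: ACF case}.} The plan is to deduce everything from Theorem~\ref{T: main}. Let $\mathcal K$ be an algebraically closed field and $\CM$ a strongly minimal $\mathcal K$-relic. By elimination of imaginaries in $\mathrm{ACF}$ I may assume $\CM$ is a \emph{definable} relic, $M\subseteq K^n$. Suppose first that $K$ admits a nontrivial valuation $v$ (this fails only when $K\cong\overline{\mathbb{F}_p}$ for some prime $p$, since every algebraically closed field of characteristic $0$, and every algebraically closed field containing a transcendental, carries one). Then $(K,v)\models\mathrm{ACVF}$, and since adjoining $v$ only enlarges the family of definable sets, $\CM$ remains a strongly minimal relic of $(K,v)$; Theorem~\ref{T: main} then furnishes a field $F$ interpretable in $\CM$ that is $(K,v)$-definably isomorphic to $K$. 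Because $\CM$ is an $\mathrm{ACF}$-relic, $F$ is in fact interpretable in the pure field $\mathcal K$, and being infinite it is, by Poizat's theorem that every infinite field interpretable in an algebraically closed field is definably isomorphic to it, $\mathcal K$-definably isomorphic to $K$. This is the desired conclusion.

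To handle the one remaining case $K\cong\overline{\mathbb{F}_p}$, I would pass to an $\aleph_1$-saturated elementary extension $\mathcal K\prec\mathcal K^*$. This extension is proper, so $K^*$ contains a transcendental element and hence a nontrivial valuation $v$. Interpreting the defining formulas of $\CM$ in $\mathcal K^*$ produces a relic $\CM^*$ of $(K^*,v)\models\mathrm{ACVF}$. Two soft transfer facts are needed. First, strong minimality of $\CM$ passes up to $\CM^*$, because for each defining formula it amounts to uniform finiteness, a first-order property of $\mathcal K$. Second, non-local modularity passes up to $\CM^*$: for a strongly minimal relic local modularity depends only on the theory --- it is equivalent, for instance, to the absence of a two-dimensional definable family of plane curves, which, using the definability of dimension in strongly minimal structures, is a countable disjunction of first-order sentences over $\mathcal K$. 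Hence Theorem~\ref{T: main} and Poizat's theorem apply to $\CM^*$ and produce a field interpretable in $\CM^*$ and $\mathcal K^*$-definably isomorphic to $K^*$. This situation is witnessed by finitely many formulas whose relevant properties (being a field, the interpreting data, and the defining formula of an isomorphism onto $K^*$) are first-order over $\mathcal K^*$; since $\mathcal K\prec\mathcal K^*$, the same formulas exhibit a field interpretable in $\CM$ and $\mathcal K$-definably isomorphic to $K$.

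For the final sentence, let $\CM$ be an arbitrary $\mathcal K$-relic; by elimination of imaginaries I may again take it to be definable, whence it has finite Morley rank (a reduct of a finite Morley rank structure has finite Morley rank). If $\CM$ is not $1$-based then, by the coordinatization of finite-rank types by strongly minimal sets, there is a strongly minimal set $D$, interpretable over parameters in $\CM$, that is not locally modular. Equipped with its full induced structure $D$ is itself a $\mathcal K$-relic, so by the strongly minimal case already proven $D$ interprets a field $\mathcal K$-definably isomorphic to $K$; composing this with the interpretation of $D$ in $\CM$ shows $\CM$ interprets such a field too.

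I do not anticipate a genuine obstacle: all of the mathematical content resides in Theorem~\ref{T: main}, and the reduction above is purely formal. The only points that require any attention are the passage to an elementary extension when $K\cong\overline{\mathbb{F}_p}$ and the accompanying transfer of non-local modularity and of the interpreted field --- and even for $K\not\cong\overline{\mathbb{F}_p}$ the corollary is immediate from Theorem~\ref{T: main} together with Poizat's theorem.
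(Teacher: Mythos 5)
Your proof is correct and follows essentially the same route as the paper: reduce by elimination of imaginaries to a definable relic, obtain a nontrivial valuation (passing to an elementary extension when needed), apply Theorem~\ref{T: main}, and pull the interpreted field back down by elementarity of the extension. The extra details you supply --- Poizat's theorem to upgrade the $(K,v)$-definable isomorphism to a $\mathcal K$-definable one, and coordinatization by minimal types for the 1-based clause --- are standard and merely make explicit what the paper leaves implicit.
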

	
	Following Hrushovski's refutation \cite{Hr1} of Zilber's full trichotomy conjecture (suggesting that every non-locally modular strongly minimal structure interprets an algebraically closed field), and the seminal work of Hrushovski and Zilber \cite{HrZil} (proving the conjecture in the abstract setting of Zariski Geometries), the conjecture's status became somewhat unclear. On the one hand, no alternative formulation emerged that could withstand Hrushovski's technique for constructing counter-examples. On the other hand, Hrushovski's applications of special instances of the conjecture (ultimately, based on variants of \cite{HrZil}) in the solution of Diophantine problems (see \cite{BouML} for a survey of those) singled out Zilber's trichotomy as a powerful principle, with a tendency to hold in geometric settings. 
	
	Lacking a unifying conjecture, research focused on restricting attention to strongly minimal relics of various theories of a geometric nature. In the late 1980s, Zilber and Rabinovich studied ACF-relics (see \cite{Ra} and references therein), subsuming works of Martin \cite{Martin} and Marker-Pillay \cite{MaPi}. In 2006 Peterzil suggested an analogous conjecture for o-minimal relics, and in \cite{KowRand} Kowalski and Randriambololona conjectured an analogue for ACVF. In \cite{ZilJac} Zilber  applied Rabinovich's theorem to give a short model theoretic proof of a conjecture in anabelian geometry \cite{BoKoTs}. In his paper Zilber called for a new, modern, proof of Rabinovich's theorem (that was never published), preferably one proving the full Restricted Trichotomy for ACF-relics. 
	
	Zilber's challenge, combined with the above conjectures, rekindled interest in the Trichotomy and ultimately led to the present work. Indeed, our results contribute to all of the conjectures mentioned above. First, and most direct, is the positive solution to the conjecture restricted to definable ACVF-relics, and thus to ACF-relics. In residue characteristic 0, the result also covers \emph{general} (i.e. interpretable) ACVF-relics. 
	
	Secondly, these results have applications outside model theory. Recently the first and second author \cite{CasHasAV} used the Trichotomy for ACF-relics to prove a reconstruction theorem for abelian varieties from a sub-variety, expanding and generalizing \cite{ZilJac}. Similar techniques give a partial solution to a conjecture of Booher and Voloch \cite[Conjecture 2.6]{BooVol} extending Zilber's main result of \cite{ZilJac} to generalized Jacobians. We expect that, augmented by known algebro-geometric techniques, this partial result should suffice for providing a complete proof of that conjecture. 
	
	As will be explained below in more detail, Zilber's intention in calling for a modern proof of Rabinovich's Theorem was to apply the Zariksi Geometries technology. However, Zariski Geometries -- despite being a powerful (and essentially the only) tool in certain settings -- turned out to be hard to apply to relics. In practice, recovering a Zariski Geometry in a relic raises serious combinatorial problems that seem insurmountable, particularly for higher-dimensional relics. 
	
	In view of the above, the last (and possibly most far reaching) contribution of the current paper consists of setting up the new axiomatic framework of Hausdorff Geometric Structures. The aim of this framework is to streamline the proof of Trichotomy-type results -- in particular for relics -- giving more flexibility in proving crucial ingredients in such works. This new axiomatic framework has already proved applicable beyond the immediate context for which it was developed. In a recent preprint \cite{CasOmin} the first author builds on the results of the present work to prove the higher dimensional case of Peterzil's conjecture mentioned above. Combined with the results of the present work, this also provides a complete proof of the Trichotomy for relics of Compact Complex Manifolds. Then in \cite{CasHasTconv}, the first and second author use similar techniques to prove analogous results for real closed valued fields (and various expansions thereof). 
			
	
	Our main abstract result in the present paper (Theorem \ref{T: composite main thm}) states: 
	\begin{introtheorem}\label{T: abstract}
		Let $(\mathcal K,\tau)$ be a \emph{Hausdorff geometric structure}. Assume that $(\mathcal K,\tau)$ has \emph{ramification purity}, has \emph{definable slopes satisfying TIMI}, and is either \emph{differentiable} or has the \emph{open mapping property}. Let $\mathcal M=(M,...)$ be a non-locally modular strongly minimal definable $\mathcal K$-relic. Then $\dim(M)=1$, and $\mathcal M$ interprets a strongly minimal group.
	\end{introtheorem}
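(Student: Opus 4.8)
The plan is to run the Hrushovski--Zilber strategy of detecting a group via the group configuration theorem, with the Hausdorff topology and the four hypotheses---ramification purity, the TIMI property for definable slopes, and differentiability or the open mapping property---playing the role that the dimension theory of schemes plays in the Zariski geometries setting. A first, preliminary step is to reduce to the case $\dim_{\mathcal K}(M)=1$, so that $\mathcal M$-plane curves are genuine one-dimensional subsets of a power of $K$ and the topology yields a workable theory of branches and germs; if $\dim_{\mathcal K}(M)>1$ one expects a contradiction with strong minimality coming from the local topological structure of $M$ together with the rich family of plane curves below (the equality $\dim(M)=1$ can in any case be read off afterwards from the interpreted group). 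Granting this, the model theory of non-locally modular strongly minimal sets provides, inside $\mathcal M^{\mathrm{eq}}$, an almost faithful definable family $\{C_t\}_{t\in T}$ of plane curves $C_t\subseteq M^2$ with $\mr(T)=2$; after the standard normalisations we may assume the generic $C_t$ is $\mathcal M$-irreducible and passes through a generic point of $M^2$, that the family is canonical (each curve is its own canonical parameter), and we pass to the induced family of germs of the $C_t$ at generic points.

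The geometric heart of the argument is the analysis of these germs. Since both projections $C_t\to M$ are finite-to-one (an $\mathcal M$-definable finite set is finite), the open mapping property---or, in the differentiable case, differentiability---shows that the germ of a generic $C_t$ at a generic $(a,b)\in C_t$ is the graph of a local homeomorphism, respectively local diffeomorphism, $f_t\colon (M,a)\to (M,b)$, so that the composite $[C_{t_2}]\circ[C_{t_1}]$ arising from $f_{t_2}\circ f_{t_1}$ is a well-defined germ of a plane curve. The crucial point is that this composition is compatible with the definable family: the composite of two generic germs again lies in a two-dimensional definable family of plane curves, and there is an $\mathcal M$-definable partial operation $(t_1,t_2)\mapsto t_1\ast t_2$ with $[C_{t_1\ast t_2}]=[C_{t_2}]\circ[C_{t_1}]$ (together with the analogue for inverses) which is generically associative and faithful. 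Here ramification purity controls the branching produced by composition---it bounds the number of branches of a composed curve and guarantees that the relevant branch is cut out definably, so that $t_1\ast t_2$ is algebraic over $t_1,t_2$ of the correct dimension---while the TIMI property for definable slopes upgrades set-theoretic coincidence of germs to a definable, topologically closed relation (coincidence of germs being detected by equality of slopes, i.e. tangency), which is what preserves faithfulness under composition and drives the dimension computations.

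Given a generically associative, faithful partial composition on a two-dimensional definable family of germs, one then writes down the usual group configuration: parameters $t_1,t_2$ of two independent generic germs, their composite $t_1\ast t_2$, and points $a$, $f_{t_1}(a)$, $f_{t_1\ast t_2}(a)$ for $a$ generic in $M$, and checks the required algebraicity and independence relations, which reduce to faithfulness of the family, genericity of the curves, and the fact that a generic germ is a bijection near generic points. Hrushovski's group configuration theorem then yields a group interpretable in $\mathcal M^{\mathrm{eq}}$ acting faithfully and generically transitively on a strongly minimal set, from which one extracts a strongly minimal group interpretable in $\mathcal M$; together with the first step this also gives $\dim(M)=1$.

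The step I expect to be the main obstacle is the second half of the middle step---establishing that composition of germs of plane curves stays within a two-dimensional definable family and is parametrised definably and faithfully. In the Zariski geometries setting this is handled by properness and dimension counts for schemes; here only the axioms are available, and the delicate issues are controlling the extra branches produced when curves are composed and proving that coincidence and tangency of germs are definable, well-behaved conditions. This is precisely what the axioms were engineered to make tractable: ramification purity provides the semicontinuity of fibre cardinality needed to isolate the correct branch, TIMI makes tangency a definable closed condition, and differentiability versus the open mapping property supply two parallel routes---analytic and purely topological---through the branch analysis, so that the complex, algebraically closed valued, Henselian and o-minimal examples are all covered uniformly.
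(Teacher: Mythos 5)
Your overall shape---composing germs of generic plane curves at generic points and feeding the result into Hrushovski's group configuration theorem---is indeed how the paper interprets the group (Section 7: slopes, coherent slope configurations, codes, Proposition \ref{P: group from codes}). But as it stands the proposal has two genuine gaps.

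First, you give no argument for $\dim(M)=1$. In the paper this is not a preliminary reduction one ``expects'' from strong minimality; it is one of the main results and consumes most of the machinery. The chain is: differentiability or the open mapping property yields \emph{enough open maps} (Propositions \ref{P: open thm implies open maps} and \ref{P: differentiable implies enough opens}); enough open maps yields \emph{weak detection of closures} (Theorem \ref{T: closure thm}, the long inductive argument of Section 5 adapted from the characteristic-zero ACF proof); weak detection of closures yields \emph{detection of multiple intersections} (Theorem \ref{T: detecting ramification}); and only then does ramification purity enter, giving $\dim(M)=1$ (Theorem \ref{T: higher dimensional case}) by a codimension count on a generic self-intersection of an excellent family. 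Your fallback---reading $\dim(M)=1$ off the interpreted group afterwards---is circular, because the slope formalism used to build the group presupposes $\dim(M)=1$: it needs the finite-to-one map $\rho\colon M\to K$ so that germs of plane curves in $M^2$ become invertible arcs in $K^2$.

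Second, the middle step misassigns the hypotheses and omits the key bridge from $\mathcal K$ to $\mathcal M$. Definability and generic associativity of composition of curves is not the hard part, and it is not what ramification purity or TIMI are for: ramification purity is used only for the dimension reduction, and TIMI is a property of $\mathcal K$ alone, saying that unusually high slope agreement forces a topological multiple intersection. The real obstacle is that slopes (the finite-order data determining a germ) live in $\mathcal K^{\mathrm{eq}}$, and to run a group configuration \emph{in $\mathcal M$} one must code them by tuples of $\mathcal M^{\mathrm{eq}}$, i.e.\ show that the relic itself detects tangency of its plane curves (Definition \ref{D: detects tangency}). That detection is deduced from TIMI \emph{together with} detection of multiple intersections (Theorem \ref{T: detection of tangency}), which again rests on the closure-detection theorem your outline skips entirely; codes are then extracted as canonical bases of the $\mathcal M$-dependence between two independent coherent representatives of a slope (Proposition \ref{P: codes exist}), and the coherence bookkeeping (matching $\mathcal K$-dimension with Morley rank) is what turns the dimension-theoretic configuration of slopes into a rank-one group configuration in $\mathcal M$. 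None of this appears in your sketch, so the step you yourself flag as the main obstacle is precisely the part that remains unproved.
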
 
	In the next section, we provide a more detailed informal overview of the statement of this result and the general strategy of proof. 
	
	Theorem \ref{T: abstract} relates to a recent work of Onshuus, Pinzon and the second author, \cite{HaOnPi}, where the problem of interpreting a field in definable ACVF-relics reduces to showing that the relic $\CM$ interprets a group locally isomorphic to a local subgroup of either $(K,+)$ or $(K^*, \cdot)$. As this can only happen if $M$ is (as a $\CK$-definable set) 1-dimensional, the remaining problem splits in two: (1) showing that non-local modularity of $\CM$ implies that $\dim(M)=1$, and then (2) showing that strongly minimal, non-locally modular, 1-dimensional relics interpret a group with the desired properties. We expect that (at least in characteristic 0) the axiomatic framework of Theorem \ref{T: abstract} may suffice (with minor adjustments) to produce a field \textit{directly}, without going through a group first. However, as this is not needed for our main results, and in order to not further lengthen the paper, we have not extended our axiomatization to cover this part of the argument.  
	
	The assumptions of Theorem \ref{T: abstract} are, to a large extent, formulated to capture relics of analytic expansions of algebraically closed fields, and we do not expect them to apply in full in significantly different settings. However, as we will see below, our proof is built from the bottom up, strengthening the results as we specialize the axiomatization. This allows us to capture important steps of the strategy of proof in settings where only some of the axioms hold.  

    \subsection{Strategy of proof}
The proofs of our main theorems are split into three uneven parts. Sections \ref{S: HGS} through \ref{S: summing up} introduce the axiomatic framework and culminate with the proof of Theorem \ref{T: abstract}. In Sections \ref{S: examples} and \ref{S: slopes}, we verify that ACVF satisfies the assumptions of Theorem \ref{T: abstract}, and thus (in combination with the main result of \cite{HaOnPi}) deduce Theorem \ref{T: main}. Section \ref{S: imaginaries} is independent of the rest of the paper. It uses techniques introduced in \cite{HaHaPeVF} to show that \emph{interpretable} non-locally modular strongly minimal ACVF$_{0,0}$-relics definably embed into a power of the valued field sort or the residue field sort (and thus are covered by Theorem \ref{T: main} and Corollary \ref{C: ACF case}).

Let us start by describing our axiomatic setting, and briefly explain the role of each set of axioms in the proof. The most general framework we work in, Hausdorff geometric structures, is introduced in Section \ref{S: HGS}. These are geometric structures (in the sense of \cite[\S 2]{HruPil}) equipped with a Hausdorff topology. The axiomatization is aimed to capture the tameness of the topology (in the sense of e.g. \cite{vdDries}, \cite{SimWal} or \cite{DolGooVisceral}), without assuming its definability. Roughly, it assures a good interaction between the dimension theory of definable sets and the topology. 

The main technical novelty in this section is the notion of \textit{enough open maps} -- a condition inspired by the most critical use of the complex analytic topology in \cite{CasACF0}, as well as one of the critical uses of o-minimal geometry in the o-minimal variant \cite{ElHaPe} of the restricted trichotomy. It aims to state, without assuming any differential structure, that the intersection of definable curves should be transverse, unless an obvious obstruction prevents it. It is designed to be a common generalization of this phenomenon in both the o-minimal and analytic settings, and as such it is rather technical; but the main point is that certain finite-to-one definable maps are stipulated to be open. 

As the notion of enough open maps is rather technical and unpleasant to verify directly in applications, we give two simpler notions, each of which implies enough open maps. These are inspired by the examples above -- namely, one (the \textit{open mapping property}) is an abstraction of the complex algebraic setting, and the other (\textit{differentiability}) is an abstraction of the o-minimal setting. 

Section \ref{s: relics} begins the study of strongly minimal relics of Hausdorff geometric structures (with enough open maps). At this level of generality (covering such contexts as o-minimal strongly minimal relics, 1-h-minimal strongly minimal relics and more) we can define and prove a key topological property, that we call \emph{weak detection of closures}. Detection of closures, as a key tool for the study of strongly minimal relics, goes back to \cite{PeStExpansions}, and plays a crucial role in several subsequent works on various restrictions of Zilber's trichotomy (\cite{HaKo}, \cite{ElHaPe}, and most importantly in \cite{CasACF0}). In Section \ref{s: relics} we define this notion (Definition \ref{D: weak detection of closures}), and in the subsequent section we prove one of the main results of this paper: 

\begin{introtheorem}\label{T: intro 4} Let $(\mathcal K,\tau)$ be a Hausdorff geometric structure with enough open maps. Let $\mathcal M$ be a non-locally modular strongly minimal definable $\mathcal K$-relic. Then, potentially after naming a small set of parameters, $\mathcal M$ weakly detects closures.
\end{introtheorem}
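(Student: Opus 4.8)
The plan is to leverage non-local modularity to manufacture a rich $\CM$-definable family of plane curves, and then to use this family as a ``coordinate system'' that reduces the topological content of weak detection of closures -- roughly, that $\tau$-closures of $\CM$-definable sets are, generically and uniformly, recoverable from $\CM$-definable data -- to a statement about how generic curves of the family meet and degenerate. First I would invoke the classical geometry of strongly minimal sets: since $\CM$ is strongly minimal and not locally modular, after naming a small set of parameters there is an almost faithful $\CM$-definable family $\{C_b : b\in B\}$ of plane curves with $\dim B\geq 2$; passing to a subfamily and replacing each $C_b$ by its canonical parameter, one arranges that the generic $C_b$ is a strongly minimal subset of $M^2$, that distinct generic members meet in finite sets, and that a generic point of $M^2$ lies on a $\dim B$-dimensional subfamily. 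Everything afterwards takes place over these parameters, which is exactly the ``naming a small set of parameters'' clause in the statement.

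Next I would pass from the abstract to the topological picture. Here $M$ is a definable subset of some $K^n$ carrying the subspace topology from $\tau$, and the Hausdorff-geometric axioms supply the expected interaction of $\dim$ with $\tau$: a $d$-dimensional definable set has nonempty $\tau$-interior in its $d$-dimensional locus, its $\tau$-frontier has dimension $<d$, and -- crucially -- a model-theoretically generic point of a definable set is topologically generic (lies in the relevant interior and off every lower-dimensional definable subset). I would isolate this last matching of genericities as a preliminary dimension/Baire-type lemma. Applied to a generic curve $C=C_b$, its $\tau$-frontier $\overline{C}\setminus C$ inside $M^2$ is then finite, and the heart of the matter becomes: show that the relic ``sees'' these finitely many frontier points, i.e.\ that each is recoverable from $\CM$-definable data attached to $C$ and to the family.

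To recover the frontier points I would work with the incidence set $I=\{(b,x):x\in C_b\}\sub B\times M^2$ and its coordinate projections, together with the finite-to-one definable maps obtained by fixing coordinates of $x$ and sweeping $b$ along a curve in $B$. These are precisely the finite-to-one definable maps to which \emph{enough open maps} applies; absent an ``obvious obstruction'' they are $\tau$-open. Openness is the decisive input: it forces a $\tau$-limit of honest points of $I$ to remain (arbitrarily close to) an honest point of $I$, so that a frontier point $p$ of $C_b$ -- which arises as an accumulation point of curves $C_{b'}$ with $b'\to b$, equivalently of intersection points of $C_b$ with a moving generic curve of the family -- is pinned down by the incidence geometry rather than being genuinely new. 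Combining this with almost-faithfulness (so the relevant parameters are themselves $\CM$-definable from the topological data) shows $p$ lies in the $\tau$-closure of an $\CM$-definable set the relic controls; carrying this out uniformly in $b$, and then bootstrapping from plane curves to arbitrary $\CM$-definable sets via the usual fibering/finite-cover reductions, yields weak detection of closures in the sense of the definition.

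The main obstacle is exactly the step controlling the frontier of a generic curve by the family and openness: one must rule out pathological degenerations -- subfamilies $C_{b'}\to C_b$ that sprout extra components ``at infinity'' or collapse onto lower-dimensional sets -- and argue that the only frontier points are those forced by the incidence structure. This is where ``enough open maps'' does the heavy lifting, and where the technical unpleasantness of that axiom (the careful bookkeeping of which finite-to-one maps carry an ``obvious obstruction'') makes itself felt; one also has to check that the obstructions that do occur are themselves $\CM$-definable so that they do not break detection. A secondary but real difficulty is maintaining uniformity throughout -- so that ``detection'' is witnessed by $\CM$-definable families and not merely pointwise -- and reconciling the model-theoretic notions of genericity, canonical bases and the exchange property with their topological counterparts at every stage of the reduction.
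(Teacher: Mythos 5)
Your proposal captures the motivating picture that the paper itself sketches (perturb a curve through a closure point, use openness to keep the old intersection points and the frontier to create a new one, then count), but as an actual proof strategy it has two genuine gaps. First, the reduction you build everything on is wrong: you assert that for a generic plane curve $C=C_b$ the frontier $\overline{C}\setminus C$ in $M^2$ is finite and that the task is to recover these finitely many points. The frontier inequality only gives $\dim(\operatorname{Fr}(C))<\dim(C)=\dim(M)$ in the sense of $\mathcal K$, and at this stage nothing forces $\dim(M)=1$ (that is a \emph{later} consequence, deduced in the paper from weak detection of closures together with ramification purity), so the frontier may well be infinite. More importantly, the statement to be proved is not that the relic ``recovers'' closures: the paper stresses that this is too much to hope for. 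Weak detection of closures is a rank inequality -- for $X$ of rank $r$ and a \emph{coordinate-wise generic} $x\in\overline X$, one must show $\rk(x/A)\leq r$ together with the dichotomy that either $\rk(x/A)<r$ or the coordinates along independent projections of $X$ are $\mathcal M$-independent -- and your proposal never engages with this formulation, the coordinate-wise genericity hypothesis, or the dichotomy.

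Second, the claim that the general case follows from the plane-curve case ``via the usual fibering/finite-cover reductions'' does not hold up and is not how the proof goes. The paper's argument is an induction on the rank $r$, whose main case is an almost faithful family of non-trivial $r$-hypersurfaces in $M^{r+1}$ of rank $k>(r+1)\cdot\dim M$; one intersects a hypersurface through the closure point with an $r$-parameter family of curves $D_u\subset M^{r+1}$ manufactured from an excellent family of plane curves, verifies (by genericity computations, the hardest part) that the resulting data form weakly and strongly transverse transversality configurations so that enough open maps yields openness of $I\to U_{\hat p}$, and then runs the counting argument against stationarity of $U_{\hat p}$ and the inductive hypothesis. A separate ``translation'' step, composing with independent generic plane curves, is needed to move an arbitrary stationary hypersurface and an arbitrary coordinate-wise generic closure point into the generic position required by that proposition; only the final passage from hypersurfaces to arbitrary $X\subset M^n$ is routine. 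So the higher-rank case is the substance of the theorem, not a bookkeeping reduction from curves, and your outline, which also leaves the verification of the transversality hypotheses (the analogue of ruling out the ``obvious obstruction'') unaddressed, does not yet contain a proof.
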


As was noted in \cite{CasACF0}, at this level of generality, it seems somewhat over-optimistic to hope for the relic to detect \textit{all} closure points of definable sets (in the sense, e.g., that if $X$ is $A$-definable in the relic and $x$ is in the frontier of $X$, then the Morley rank of $x$ over $A$ is less than that of $X$). The key idea, adapted from \cite{CasACF0} to our axiomatic setting, is to identify general enough situations when our over-optimistic hopes are in fact fulfilled. 
 The proof of this result follows rather closely the analogous result from \cite{CasACF0}, but has a few differences that we point out as we go.

In Section \ref{S: 1-dim} we give applications of the above theorem to the Zilber trichotomy. We first show that from weak detection of closures one can detect certain double intersections of plane curves (see Definition \ref{D: detects noninjectivities} for details):

\begin{introtheorem}\label{T: intro 5} Let $(\mathcal K,\tau)$ be a Hausdorff geometric structure. Let $\mathcal M$ be a non-locally modular strongly minimal definable $\mathcal K$-relic. If $\mathcal M$ weakly detects closures, then $\mathcal M$ detects multiple intersections.
\end{introtheorem}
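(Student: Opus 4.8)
The plan is to deduce detection of multiple intersections from weak detection of closures by realizing a non-transverse intersection of $\CM$-plane curves as a collision of two sheets of an auxiliary definable finite cover, representing such a collision as a point of the frontier of the associated fibre product minus diagonal, and then extracting the required rank bound from the hypothesis.

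First I would fix the combinatorial data against which Definition~\ref{D: detects noninjectivities} is formulated. Using non-local modularity and the standard theory of strongly minimal sets, $\CM$ carries a definable faithful family $\{C_t : t\in T\}$ of plane curves of Morley rank and degree $1$ with $\dim T\ge 2$, chosen generically enough that the relevant coordinate projections are finite-to-one and that two generic independent members meet in only finitely many points. Now take $C_s,C_t$ from the family with the appropriate independence, and a point $p$ of $C_s\cap C_t$ at which the intersection is multiple. The key step is to encode this multiplicity purely topologically: by letting one of the curves vary through the family and following the intersection points, one obtains a definable finite cover $\varphi\colon I\to B$ (with $B$ an $\CM$-definable parameter space) whose fibre over the relevant $b\in B$ contains a point $\tilde p$ lying over $p$, and whose sheets separate $\tilde p$ from the other points of its fibre exactly when the intersection at $p$ is simple. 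Setting
\[
R=\{(u,v)\in I\times_B I : u\ne v\},
\]
a multiple intersection at $p$ becomes the assertion that the diagonal point $(\tilde p,\tilde p)$ lies in the frontier $\partial R$, while a transverse intersection leaves it outside $\partial R$ --- and it is exactly here that Hausdorffness and the coordination between $\tau$ and the dimension theory from Section~\ref{S: HGS} are used.

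Next I would compute the Morley rank of $R$ directly from the cover: over a generic point of $B$ the fibre of $R$ is a nonempty finite set, so $\mr(R)=\mr(B)$. Applying weak detection of closures to the definable set $R$ and its frontier point $(\tilde p,\tilde p)$, and noting $\mr((\tilde p,\tilde p)/st)=\mr(\tilde p/st)$, the detection inequality gives $\mr(\tilde p/st)<\mr(B)$, which, once the construction of $I$ is unwound, is exactly the rank drop for $p$ required by Definition~\ref{D: detects noninjectivities}. I would then run this uniformly in definable families, tracking genericity and independence throughout, to conclude that $\CM$ detects multiple intersections; the complementary clause, that genuinely transverse intersections are not flagged, is the transverse case already isolated above.

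The hard part will be the encoding step. One must match the analytic notion of a multiple intersection with a purely definable frontier condition of exactly the right shape --- in particular the cover $\varphi$ has to be built so that simple intersections are read off as separated sheets, which is where the Hausdorff axioms do their work --- and, more delicately, one must arrange the configuration to be of the special type for which weak detection of closures actually supplies the rank drop, since the hypothesis only yields the inequality for the frontier configurations singled out in the proof of Theorem~\ref{T: intro 4}, not for arbitrary frontier points. This forces one to choose the curves and the point generically subject to the multiplicity constraint, to track independence carefully through the fibre product, and to verify en route that $R$ has the Morley rank making the inequality non-vacuous.
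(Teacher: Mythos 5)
Your skeleton is essentially the paper's: the proof of Theorem \ref{T: detecting ramification} also forms the off-diagonal fibre product $P$ of pairs $(x,x',t,u)$ with $x\neq x'$ and $x,x'\in C_t\cap D_u$ (your $R$), observes that the diagonal point $\hat z=(\hat x,\hat x,\hat t,\hat u)$ lies in $\operatorname{Fr}(P)$, and bounds $\rk(P)\leq\rk(T)+\rk(U)$ via the finite-to-one projection to the parameters. Note, however, that there is no ``encoding step'' to agonize over: a generic multiple intersection is \emph{by definition} a topological ramification point of $I\rightarrow T\times U$, so the frontier condition is the hypothesis, not something to be matched to an analytic notion. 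Also your target is misstated: Definition \ref{D: detects noninjectivities} asks for the parameters $t$ and $u$ to be $\mathcal M$-dependent over $A$ (i.e.\ the strict drop $\rk(tu/A)<\rk(T)+\rk(U)$), not a ``rank drop for $p$'', and it contains no complementary clause about transverse intersections being unflagged.

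The genuine gap is the mechanism for extracting a \emph{strict} inequality. Weak detection of closures (Definition \ref{D: weak detection of closures}), applied to $P$ and the coordinate-wise generic frontier point $\hat z$, yields only $\rk(\hat z)\leq\rk(P)\leq\rk(T)+\rk(U)$ together with a dichotomy: either the strict drop, or $\mathcal M$-independence of the coordinates of $\hat z$ under any pair of projections that are independent on $P$. The non-strict bound is vacuous, and you flag the problem but supply no device to resolve it. The paper's device is twofold. First, one must dispose of the locus $Z$ of parameter pairs with infinite intersection: if $(\hat t,\hat u)\in\overline Z$ one applies weak detection of closures to $Z$ (whose projection to $T$ is finite-to-one by almost faithfulness) and is done; otherwise one restricts $P$ to the complement of $Z$ --- this is also needed because over $Z$ your $R$ has infinite fibres, so your claim $\operatorname{MR}(R)=\operatorname{MR}(B)$ (and even $\rk(R)\leq\rk(B)$) can fail. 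Second comes the ``preening'': split $P$ according to whether the first or second coordinates of $x,x'$ differ, remove the \emph{extendable} pairs (a rank-$\leq 1$ set which cannot accumulate at $(\hat x_1,\hat x_1)$ by the strong frontier inequality in $\mathcal K$, since $\hat x_1$ is generic), and obtain $P'$ with $\hat z\in\operatorname{Fr}(P')$ such that if $\rk(P')$ were maximal then the projections to the first and third $M$-coordinates are independent on $P'$. Clause (2) of weak detection of closures would then force $\hat x_1$ to be $\mathcal M$-independent from itself, which is absurd; hence the strict inequality $\rk(\hat z)<\rk(T)+\rk(U)$ holds, and since $(\hat t,\hat u)$ is a subtuple of $\hat z$ this is exactly the required dependence. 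Without this construction (or an equivalent one) your argument does not close in the critical case $\rk(R)=\rk(T)+\rk(U)$.
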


We then conclude that, provided $(\CK,\tau)$ obeys a suitable form of the \textit{purity of the ramification locus}, non-locally modular strongly minimal definable relics must have one-dimensional universes (see Definition \ref{D: ramification purity} for the notion of ramification purity):

\begin{introtheorem}\label{T: intro 6} Let $(\mathcal K,\tau)$ be a Hausdorff geometric structure. Let $\mathcal M=(M,...)$ be a non-locally modular strongly minimal definable $\mathcal K$-relic. Assume that $\mathcal M$ detects multiple intersections, and $(\mathcal K,\tau)$ has ramification purity. Then $\dim(M)=1$ in the sense of $\mathcal K$.
\end{introtheorem}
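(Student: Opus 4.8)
The plan is to argue by contradiction. Since $M$ is infinite we automatically have $\dim(M)\ge 1$ in the sense of $\mathcal K$, so it suffices to rule out $\dim(M)=d\ge 2$. Non-local modularity enters through the standard fact that, after naming a small set of parameters, $\mathcal M$ carries a \emph{faithful, rank $2$ definable family} $\{C_t:t\in T\}$ of strongly minimal plane curves $C_t\subseteq M^2$ whose generic member is neither a ``vertical'' nor a ``horizontal'' line (i.e.\ not of the form $\{a\}\times M$ or $M\times\{b\}$ up to a finite set). The first thing to record is a dimension count living entirely on the $\mathcal K$ side: for generic $t$, both coordinate projections $p_1,p_2\colon C_t\to M$ have image co-finite in $M$ (an infinite definable subset of $M$ is co-finite) and finite fibres over that image (a co-finite fibre would force $C_t$ to be a vertical line, against our choice of family). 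A co-finite subset of $M$ has $\mathcal K$-dimension $d$, and finite fibres preserve $\mathcal K$-dimension, so $\dim_{\mathcal K}(C_t)=d\ge 2$. This is exactly where the hypothesis $d\ge 2$ becomes usable: each generic $C_t$, though a curve from the standpoint of $\mathcal M$, is a genuinely $\ge 2$-dimensional object from the standpoint of $\mathcal K$.

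Next I would exhibit the tension between the two hypotheses at a fixed generic $t$. Let $f=p_1|_{C_t}\colon C_t\to M$, a finite-to-one $\mathcal K$-definable map whose source has $\mathcal K$-dimension $d\ge 2$, and let $R\subseteq C_t$ be its \emph{ramification locus}: the set of $x\in C_t$ at which $f$ fails to be a local homeomorphism, equivalently --- taking the vertical lines $\{a\}\times M$ (a rank $1$ definable family of plane curves) as the second family --- the set of $x$ at which $C_t$ meets the vertical line through $x$ with multiplicity $\ge 2$. The hypothesis that $\mathcal M$ \emph{detects multiple intersections} is precisely what renders this \emph{a priori} $\mathcal K$-theoretic set visible to $\mathcal M$: it places $R$ inside a relic-definable set $R'\subseteq C_t$. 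Being a definable subset of the strongly minimal $C_t$, $R'$ is finite or co-finite; a co-finite $R'$ would have $\mathcal K$-dimension $d$, but \emph{ramification purity} forces $\dim_{\mathcal K}(R)\le d-1<d$, so $R'$, and hence $R$, is finite, of $\mathcal K$-dimension $0$. On the other hand ramification purity also says $R$ is either \emph{empty} or of \emph{pure} $\mathcal K$-codimension $1$ in $C_t$, i.e.\ of $\mathcal K$-dimension $d-1\ge 1$. The only consistent possibility is $R=\emptyset$: for every generic $t$ the projection $p_1|_{C_t}$ is unramified, a local homeomorphism onto a co-finite subset of $M$; and running the same argument with $p_2$, and with a generic pencil of curves in place of the vertical lines, $C_t$ is unramified ``in every generic direction''.

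Excluding this last possibility is the step I expect to be the main obstacle. The plan is to work with the family rather than a single curve: for all generic $t$ the map $C_t\to M$ is a covering of some fixed degree $e$ off a small set, so the incidence set $\{(t,x):x\in C_t\}$ forms an \'etale-like family over a dense open part of $T\times M$. I would then push $t$ toward the $\mathcal K$-frontier of $T$, using the completeness and boundedness properties packaged into a Hausdorff geometric structure (together with continuity of the intersections of $C_t$ with the fibres of $p_1$) to produce a limiting curve $C_{t_0}$ that is reducible or singular; at such a $t_0$ a component of $C_{t_0}$ has degree $<e$ over $M$, so the covering cannot persist in the limit, contradicting the fact that the limiting projection is again unramified by the dichotomy just established. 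This forces $e=1$, so a generic $C_t$ is the graph of a $\mathcal K$-definable, generically bijective, unramified partial self-map of $M$; feeding back the faithfulness of the rank $2$ family and strong minimality then produces a $\mathcal K$-definable collapse of $\dim_{\mathcal K}(M)$ to $1$, contradicting $d\ge 2$. Besides this degeneration argument, the genuinely delicate points are extracting honest finiteness of $R$ from the combinatorial ``detects multiple intersections'' statement (which is why the auxiliary family of curves must be chosen carefully) and making the passage to the $\mathcal K$-frontier rigorous with only the topological axioms available.
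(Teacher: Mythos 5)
Your proposal has genuine gaps, and they stem from reading both hypotheses as their classical algebro-geometric counterparts rather than as the paper defines them. First, \emph{detection of multiple intersections} (Definition \ref{D: detects noninjectivities}) is a statement about \emph{two standard families} of plane curves: at a \emph{generic} multiple $(\mathcal C,\mathcal D)$-intersection $(x,t,u)$, the parameters $t$ and $u$ are $\mathcal M$-dependent. It does not say that the ramification locus $R$ of the projection $p_1|_{C_t}\colon C_t\to M$ is contained in a relic-definable subset $R'\subseteq C_t$; no such definability statement is available. Worse, the hypothesis only speaks about intersections at points that are generic in the relevant incidence sets, and the ramification points of $p_1|_{C_t}$ are precisely \emph{not} generic in $C_t$ (they lie in a set of strictly smaller $\mathcal K$-dimension), so the hypothesis says nothing at all about the individual points of $R$. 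Second, the paper's \emph{ramification purity} (Definition \ref{D: ramification purity}) is not the Zariski--Nagata statement ``the ramification locus of a finite map is empty or of pure codimension $1$''; it asserts only that if some generic multiple $(\mathcal Y,\mathcal Z)$-intersection is strongly approximable, then there exists a generic multiple intersection of \emph{codimension at most $1$}, where codimension means $\dim(t/Ax)+\dim(u/Ax)-\dim(tu/Ax)$. Remark \ref{R: explaining purity def} explains that the naive purity statement you invoke could not even be verified in ACVF and was deliberately avoided. So your steps deducing ``$R$ finite'' and then ``$R$ empty'' use a hypothesis you do not have. Finally, your closing degeneration argument (pushing $t$ to the frontier of $T$ to obtain a reducible or singular limit curve) appeals to completeness, boundedness, and continuity-of-intersection properties that Hausdorff geometric structures simply do not provide --- the axioms give no definable basis, no properness, and no limit curves --- and you yourself flag this step as unresolved; it is exactly the part that cannot be made to work with the available axioms.

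For comparison, the paper's argument avoids all of this by never looking at the ramification of a single curve. It takes an excellent family $\mathcal C=\{C_t\}$, a generic $\hat t$ and a generic $\hat x\in C_{\hat t}$, and observes that $(\hat x,\hat t,\hat t)$ is automatically a strongly approximable generic multiple $(\mathcal C,\mathcal C)$-intersection (the curve meets \emph{itself} in infinitely many nearby points, by the Baire category axiom). Ramification purity, applied exactly as stated, yields a generic multiple intersection $(x,t,u)$ of codimension at most $1$, which gives the lower bound $\dim(xtu)\geq 4\dim(M)-1$; detection of multiple intersections makes $t$ and $u$ $\mathcal M$-dependent, so $\rk(xtu)\leq 3$ and $\dim(xtu)\leq 3\dim(M)$. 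Combining the two bounds gives $\dim(M)\leq 1$ directly, with no contradiction scheme, no analysis of ramification loci of projections, and no limiting process.
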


We expect ramification purity to hold only in (expansions of) algebraically closed fields. In (o-minimal) expansions of real closed fields the first author proves a variant, sufficient to obtain an analogue of the above theorem, but we do not expect the same approach to apply in other settings. Of course, we do not expect non-locally modular strongly minimal relics to be definable in tame expansions of fields that are not real closed or algebraically closed -- so one could hope for a uniform, dimension-independent, proof of such a result. 

Section \ref{S: ACVF} axiomatizes the interpretation of a group when $\dim(M)=1$. We introduce a setting of \textit{definable slopes} -- requiring that curves in $K^2$ can be approximated near generic points by abstract `Taylor polynomials.' We then introduce a technical notion called \textit{TIMI} (tangent intersections are multiple intersections) -- stipulating that if two curves share the same Taylor polynomial to unusually high order at a point, the point forms a `topological multiple intersection' -- which allows us to prove Theorem \ref{T: abstract}. This axiom has a distinct analytic flavour. It would be interesting to find a weaker axiom implying TIMI for curves definable in strongly minimal relics (this was one of the main technical results of \cite{ElHaPe}). 

In Section \ref{S: examples} we give examples of Hausdorff geometric structures with enough open maps. In particular, we note that certain visceral theories (in the sense of \cite{DolGooVisceral}) share these properties (covering 1-h-minimal valued fields and o-minimal expansions of fields) and that a certain class of \'ez topological fields (essentially coming from the notion of \'ez fields in \cite{ez}) also satisfies these axioms. This is our main example, as it is the only one covering ACVF in all characteristics. 

Restricting further to algebraically closed \'ez fields, we next verify the axiom on the purity of the ramification locus. Then, for the rest of the paper, we work concretely in ACVF. First, in section \ref{S: slopes}, we show that ACVF has \emph{definable slopes} as well as the remaining axiom TIMI. The main novelty in this section is the introduction of \emph{Taylor groupoids}, allowing for greater flexibility in the study of slopes in definable families of curves, compared with earlier treatments of similar problems. Also, the definition of slope in positive characteristic is non-standard, absorbing powers of the Frobenius automorphism into the definition, allowing for a more uniform and less technical treatment of slopes.

The final section of the paper uses techniques from \cite{HaHaPeVF} to extend the results of the main theorem to \emph{interpretable} relics. This section is independent of the rest of the paper.  

It seems that, at least in characteristic $0$, large parts of the present argument could be extended to analytic expansions of ACVF (in the sense of Cluckers and Lipshitz \cite{CluLip}). Extending the results to arbitrary $V$-minimal theories may prove more challenging. More generally, in a recent preprint, \cite{JohnCMin}, Johnson shows that $C$-minimal expansions of ACVF are geometric, implying that they are also Hausdorff geometric structures. As Johnson proves, moreover, that definable functions are generically strictly differentiable, it seems that the proof of enough open maps for 1-h-minimal fields could be extended to the $C$-minimal setting,  but there are details to verify.

\section{Preliminaries}

We give a quick overview of some of the model-theoretic notions used extensively in the paper. Since modern detailed reviews of the relevant notions can be found in recent papers, we will be brief, directing interested readers to relevant references. 

Throughout the paper, structures are denoted by calligraphic letters $\CK, \CM$ etc. and their universes by the corresponding Roman letters $K, M$ etc. Roman letters $A,B$ will usually denote sets of parameters (i.e., subsets of the universe of the structure). Whenever the structure is assumed to be $\lambda$-saturated for some cardinal $\lambda$, all parameter sets are tacitly assumed to be of cardinality smaller than $\lambda$. Lowercase Latin letters $a,b,c$ usually denote finite tuples of elements in our structure. When no confusion can occur, we write $AB$ and $Aa$ etc. as a shorthand for $A\cup B$ and  $A\cup \mathrm{dom}(a)$. Definable sets are usually denoted by $X,Y,Z$ etc. Unless specifically stated otherwise, the word \emph{definable} allows parameters but does not allow imaginary sorts. If we wish to allow imaginary sorts, we either refer to \emph{interpretable} sets, or we write explicitly \emph{definable in $T^{eq}$},  \emph{definable in $\CK^{eq}$} as the case may be. 

We use standard model-theoretic notation and terminology. Readers are referred to any textbook in model theory such as \cite{Maalouf} or \cite{TZ} for further details. 

Throughout the paper, we often work with \textit{finite correspondences} between interpretable sets. There are competing versions of what this might mean -- so before moving on, we clarify:

\begin{definition}\label{D: finite correspondence}
    Let $X$ and $Y$ be interpretable sets in some structure. A \textit{finite correspondence} between $X$ and $Y$ is an interpretable set $Z\subset X\times Y$ whose projections to $X$ and $Y$ are finite-to-one and surjective. If there is a finite correspondence between $X$ and $Y$, we say that $X$ and $Y$ \textit{are in finite correspondence}. If $X$ is in finite correspondence with a subset of $Y$, we say that $X$ is \textit{almost embeddable} into $Y$.
\end{definition}

\subsection{Geometric Structures}
Throughout this paper we will be working with geometric structures: 
\begin{definition}\label{D: geom-struc}
  We use $\acl$ to denote the model-theoretic algebraic closure. Let $T$ be a complete first-order theory. We say $T$ is \emph{geometric} if: 
  \begin{itemize}
\item $\acl$ satisfies exchange. More explicitly, for any $\mathcal{K}\models T$, $A\subseteq K$, and $b,c\in K$, $c\in \acl(Ab)\setminus \acl(A)$ implies that $b\in \acl(Ac)$.
\item $T$ eliminates $\exists^\infty$.
  \end{itemize}
  A structure $\mathcal{K}$ is \emph{geometric} if its theory is geometric. 
\end{definition}

In the definition, by \emph{elimination of $\exists^\infty$} (also referred to as \emph{uniform finiteness} in the literature) we mean that for any model $\CK$ and any formula $\phi(x,\overline y)$ (where $|x|=1$ and $|\overline y|=n$, say), the set $\{b\in K^n: |\phi(K,b)|<\infty \} $ is definable.

Working in a $|T|^+$-saturated geometric $\mathcal{K}$, there is a well-established theory of dimension and independence on $\mathcal{K}$, defined as follows. Let $A\subset K$ be small, let $a\in K^m$, and let $X$ be $A$-definable.

\begin{itemize}
    \item We $\dim(a/A)$ to be the length of a maximal $\acl_A$-independent subtuple of $a$. By the exchange property, this is independent of the choice of the maximal independent subtuple.
    \item We define $\dim(X) = \max\{\dim(x/A) : x \in X\}$. By a compactness argument, $\dim(X)$ does not depend on the parameter set $A$.
    \item We say that $a$ is \textit{independent from $b$ over $A$} if $\dim(a/Ab)=\dim(a/A)$.
\end{itemize}

In this language, the exchange property can be equivalently written as the \textit{additivity formula}:

$$\dim(ab/A)=\dim(a/A)+\dim(b/Aa)$$ for any $a\in K^m$, $b\in K^n$, and $A\subset\mathcal K$. This is, arguably, the single most useful property of dimension in geometric structures. Among other things, additivity gives the following:

\begin{itemize}
    \item Independence is \textit{symmetric}: $a$ is independent from $b$ over $A$, if and only if $b$ is independent from $a$ over $A$, if and only if $\dim(ab/A)=\dim(a/A)+\dim(b/A)$ (thus, we freely use the ambiguous language `$a$ and $b$ are independent over $A$').
    \item Independence is \textit{transitive}: $a$ is independent from $bc$ over $A$ if and only if $a$ is independent both from $b$ over $A$ and from $c$ over $Ab$.
    \item Independence extends naturally to several tuples: $a_1,...,a_n$ are independent over $A$ if $$\dim(a_1...a_n/A)=\sum_i\dim(a_i/A),$$ if and only if each $a_i$ is independent from $\{a_j:j\neq i\}$ over $A$.
\end{itemize}

Using compactness, one can also characterize dimension as follows: $\dim(X)\ge n$ if and only if $\dim(\pi(X))=n$ for some coordinate projection $\pi: X\to K^n$, and $\dim(X)$ is the maximal such $n$; or, $\dim(X)\leq n$ if and only if there is a definable finite-to-one map $f:X\rightarrow K^n$, and $\dim(X)$ is the minimal such $n$. In fact, the finite-to-one map $f:X\rightarrow K^{\dim X}$ can be defined uniformly in families (by coordinate projections); combined with uniform finiteness, one concludes that \textit{dimension is definable in families}: given a formula $\phi(x,y)$, the set $\{y: \dim(\phi(x,y)=d\}$ is definable for any integer $d$. We will use these facts without further mention.

Many other basic properties follow easily from the definition and saturation. For example:

\begin{itemize}
    \item $\dim(X\times Y)=\dim(X)+\dim(Y)$.
    \item $\dim(X\cup Y)=\max\{\dim(X),\dim(Y)\}$.
    \item If $X$ and $Y$ are in finite correspondence, then $\dim(X)=\dim(Y)$.
\end{itemize}

We will use all of these facts without further mention for the rest of the paper.

For an $A$-definable set $X$ and $B\supseteq A$, we say that $x
\in X$ is a \emph{generic point of $X$ over $B$} if $\dim(x/B)=\dim X$. A definable set $Y\sub X$ is \emph{generic} in $X$ if $\dim(X)=\dim(Y)$, if and only if $Y$ contains some $y$ generic in $X$ (over some parameter set over which $Y$ is defined). Two definable sets $X$ and $Y$ are \emph{almost equal} if the symmetric difference $X\Delta Y$ is not generic in either of them (equivalently, by the above, if $X$ and $Y$ have the same generic points over some parameter set defining both of them). It is easy to see that almost equality is an equivalence relation on definable sets, and is definable in families (because dimension is).

Finally, we briefly note that dimension and independence can be extended to \textit{interpretable sets} in a way that preserves additivity (but may lose the equivalence with acl-independence for interpretable sets that are not definable) -- see 
section 3 of \cite{Gagelman}. Thus, the notation $\dim(a/A)$ is well-defined even for $a\in\mathcal K^{eq}$, and one still has all consequences of additivity. We will use this, but very rarely (in fact, only in Section 7, when computing dimensions of slopes and their coherent representatives).

For more details on the basic properties of dimension in geometric structures, we direct the reader to e.g. \cite[\S 2]{AcHa}.

\subsection{Strongly Minimal Structures}\label{ss: sm}

In the paper, we study strongly minimal relics of certain geometric structures. Recall that a structure is \textit{strongly minimal} if, in all of its elementary extensions, every definable set in one variable is finite or cofinite. Strongly minimal structures are themselves geometric structures. We now recall some facts and terminology specific to the strongly minimal case. First, the dimension associated with strongly minimal structures is known as Morley Rank, and in this paper will be called \textit{rank} and denoted $\rk$.

The main advantage of strongly minimal structures (viewed within the larger class of geometric structures) is the notion of \textit{stationarity}. In the context of geometric structures, a definable set is \textit{stationary} if it cannot be partitioned into two generic definable subsets. A key feature of stationary sets is that their almost equality classes can be coded: that is, if $X$ is stationary, there is $c\in\mathcal M^{eq}$ such that for all $\sigma\in Aut(\mathcal M)$, $\sigma(c)=c$ if and only if $\sigma(X)$ is almost equal to $X$. This tuple $c$ is unique up to interdefinability, and is called the \textit{canonical base of $X$} and denoted $\operatorname{Cb}(X)$. (See \cite[\S 8.2]{MaBook} for more on canonical bases and \cite[\S 2.3]{CasACF0} for some relevant applications). 

In many geometric structures, only singletons are stationary. Notably, strongly minimal structures are exactly those geometric structures $\mathcal M=(M,...)$ such that the universe $M$ is stationary. In fact, it follows that $M^n$ is also stationary for each $n$, and that every definable subset $X\subset M^n$ is a disjoint union of finitely many stationary definable sets $Y_i$ such that $\rk(Y_i)=\rk(X)$.  These stationary sets are unique up to almost equality, and are called the \textit{stationary components} of $X$.

In the context of a geometric structure, a \emph{curve} is a definable set of dimension $1$. For a strongly minimal structure $\mathcal M=(M,...)$, we frequently use \emph{plane curve} to describe curves in $M^2$. Note that this is not the same as an algebraic curve (even if $M$ is a field). To avoid confusion, we will clarify our interpretation of `curve' when it is not clear from the context. 

Following \cite{CasACF0}, we call a plane curve $C\subset M^2$ \textit{non-trivial} if both projections $C\rightarrow M$ are finite-to-one (equivalently, no stationary component of $C$ is almost equal to a horizontal or vertical line). By uniform finiteness, non-triviality is definable in families. We will largely be able to assume that all plane curves considered are non-trivial -- but we will clarify this as we go.

We will frequently use intersections of relic-definable families of curves (as well as higher dimensional objects) to study an ambient topology from within the relic. Since, a priori, these curves are not geometric objects, we have to identify some combinatorial properties of families of curves allowing us to study them geometrically. This is discussed extensively in \cite[\S 2]{CasACF0}, so we will be brief. 

Suppose $\mathcal M$ is strongly minimal. A parametrized family $\mathcal X=\{X_t:t\in T\}$ of subsets of $M^n$ is a \textit{definable family} if the parameter set $T$ and the graph $X=\{(x,t):x\in X_t, t\in T\}\subset M^n\times T$ are definable. If  $\mathcal X=\{X_t:t\in T\}$ is a definable family, we call $\mathcal X$ \textit{almost faithful} if 
\begin{enumerate}
    \item each fiber $X_t$ has the same dimension $d$,
    \item for each $t$, there are only finitely many $t'$ with $\dim(X_t\cap X_{t'})=d$.
\end{enumerate}
    If $\mathcal X$ is almost faithful, the \textit{rank} of $\mathcal X$ is the rank of the parameter set, $\rk(T)$. It is not hard to check that every stationary definable set $S$ is, up to almost equality, a generic member of an almost faithful definable family of rank $\rk(\operatorname{Cb}(S))$. 

For the purposes of the present work, it is convenient to use the following definition: 
\begin{definition}
    A strongly minimal structure is \emph{not locally modular} if it admits a definable rank 2 almost faithful family of plane curves. 
\end{definition}
This is well known to be equivalent to the standard definition (see, e.g., \cite[Theorem 8.2.11]{MaBook}). Clearly, algebraically closed fields are not locally modular (as witnessed, e.g., by the family of affine lines). Moreover, it is not hard to see that local modularity is preserved under interpretations between strongly minimal structures. Thus, non-local modularity is a necessary condition for a strongly minimal structure to interpret such a field. \\

Finally, we recall some special types of families of plane curves that were used extensively in \cite{CasACF0}. These are thought of as families that have been presented most conveniently and had various types of irregularities removed.

\begin{definition}\label{D: excellent} Let $\mathcal C=\{C_t:t\in T\}$ be a definable almost faithful family of plane curves in a strongly minimal structure $\mathcal M=(M,...)$, with graph $C\subset M^2\times T$.
\begin{enumerate}
    \item $\mathcal C$ is a \textit{standard family} if $T$ is a generic subset $M^n$ for some $n\geq 1$, and for each $p\in M^2$, the set $\{t\in T:p\in C_t\}$ is non-generic in $T$.
    \item $\mathcal C$ is \textit{excellent} if $T$ is a generic subset of $M^2$, each $C_t$ is non-trivial, and for each $p\in M^2$, the set $\{t\in T:p\in C_t\}$ is either empty or a non-trivial plane curve. (In particular, note that excellent families are also standard).
\end{enumerate}
\end{definition}

\section{Hausdorff Geometric Structures and Enough Open Maps}\label{S: HGS}

Throughout the next six sections, we work with a structure $\mathcal K=(K,...)$ in a language $\mathcal L$, endowed with a topology $\tau$ on $K$. We extend $\tau$ to a topology on each $K^n$ by taking the product topology, and subsequently to a topology on every definable subset of $K^n$ by taking the subspace topology. Our goal is to axiomatize in terms of $\mathcal K$ and $\tau$ a `sufficiently geometric' setting for various parts of the argument from \cite{CasACF0} to be adapted.

\subsection{The Definition}\label{ss: Hausdorf Def}

The following will be the basic framework we assume throughout:

\begin{definition}\label{D: easy axioms} Let $(\mathcal K,\tau,\mathcal L)$ be as above. We say that $(\mathcal K,\tau)$ is a \textit{Hausdorff geometric structure} if the following hold:
\begin{enumerate}
    \item $\tau$ is Hausdorff. 
    \item $\mathcal K$ is geometric and $\aleph_1$-saturated (but $\mathcal L$ might be uncountable).
    \item (Strong Frontier Inequality) If $X\subset K^n$ is definable over $A$ and 
    $a\in\overline{\operatorname{Fr}(X)}$ then $\dim(a/A)<\dim(X)$.
    \item (Baire Category Axiom) Let $X\subset K^n$ be definable over a countable set $A$, and let $a\in X$ be generic over $A$. Let $B\supset A$ be countable. Then every neighborhood of $a$ contains a generic of $X$ over $B$.
    \item (Generic Local Homeomorphism Property) Suppose $X$, $Y$, and $Z\subset X\times Y$ are definable over $A$ of the same dimension, and each of $Z\rightarrow X$ and $Z\rightarrow Y$ is finite-to-one. Let $(x,y)$ be generic in $Z$ over $A$. Then there are open neighborhoods $U$ of $x$ in $X$, and $V$ of $y$ in $Y$, such that the restriction of $Z$ to $U\times V$ is the graph of a homeomorphism $U\rightarrow V$.    
\end{enumerate}
\end{definition}

\begin{warning} We caution that Definition \ref{D: easy axioms} does not assume the existence of a definable basis for $\tau$ -- or even a basis whole members are definable. Thus $\tau$ is attached to the specific structure $\mathcal K$, and need not induce a similar topology on any elementary extension of $\mathcal K$. 

Indeed, we will work throughout in a fixed Hausdorff geometric structure $(\mathcal K,\tau)$, without considering any other models of its theory. This is why we assume $\mathcal K$ is $\aleph_1$-saturated in Definition \ref{D: easy axioms} -- so that we still have some of the tools of saturated models at our disposal (e.g. generic points will always exist over countable sets). In particular, with almost no exceptions, we will not use monster models of $\operatorname{Th}(\mathcal K)$ (and if we do use one, we will make clear what we are doing when it happens).

Our reason for making this choice (i.e. to not require a definable basis) is given by two of the motivating examples. First, in the complex numbers with the analytic topology (considered in \cite{CasACF0}), the only definable open sets are the Zariski open sets, which do not form a basis for the analytic topology. Second, in many \'ez fields (see below for a discussion of these fields), one has only an ind-definable basis (i.e. there is a basis that is a countable union of definable families of open sets). Thus, Definition \ref{D: easy axioms} is designed to accommodate these settings. Note, however, that in all other examples we know, the topology \textit{does} have a definable basis, and this issue is irrelevant.
\end{warning}

\begin{remark}\label{R: ctble expansion} It is easy to see that Hausdorff geometric structures are preserved under naming a countable set of constants (and indeed the same will hold of all additional properties we discuss). This will be a useful observation for the following reason: suppose $\mathcal K$ is a Hausdorff geometric structure, and $\mathcal M=(M,...)$ is a relic of $\mathcal K$ whose language is countable. Then we can add to $\mathcal L$ all parameters needed to define $\mathcal M$ (i.e. the universe $M$ and all basic relations of $\mathcal M$). Thus, we may assume that every $\emptyset$-definable set in $\mathcal M$ is $\emptyset$-definable in $\mathcal K$.
\end{remark}

We will see detailed accounts of various Hausdorff geometric structures in Section 9. For now, we list them without proof for the reader's intuition. Key examples include the complex field with the analytic (i.e. Euclidean) topology; any $\aleph_1$-saturated algebraically closed valued field with the valuation topology; any $\aleph_1$-saturated 1-h-minimal valued field with the valuation topology; any $\aleph_1$-saturated characteristic zero Henselian field with the valuation topology (in particular $\aleph_1$-saturated $p$-adically closed fields); and any $\aleph_1$-saturated o-minimal expansion of a real closed field with the order topology. 

Generalizing characteristic zero Henselian fields, one can also take a large class of \textit{\'ez }fields. Recall (see \cite{JTWY}) that the \textit{\'etale open topology} on a large field $K$ is a system of topologies on the sets $V(K)$ for all varieties $V$ over $K$, where basic open sets are taken to be images of \'etale morphisms. A pure field is \textit{\'ez} if every definable set is a finite union of definable \'etale open subsets of Zariski closed sets (see \cite{ez}). It is proved in \cite{JTWY} (Proposition 4.9) that the \'etale open topology of $K$ is induced by a field topology on $K$ if and only if it respects products of varieties in the obvious sense. Now suppose $K$ is an $\aleph_1$-saturated \'ez field whose \'etale open topology is induced by a field topology; we will see later that $K$ forms a Hausdorff geometric structure when equipped with the \'etale open topology on $K$.
\begin{remark} Note that it is still open whether many of our results can be proven for \textit{arbitrary} \'ez fields (where the \'etale open topology might not respect products). This case could still be useful, as it could have implications for relics of pseudo algebraically closed fields (in particular pseudo-finite fields).
\end{remark}

\begin{assumption}\textbf{From now until the end of section 7, fix a Hausdorff geometric structure $(\mathcal K,\tau)$. All sets $A,B,...$ of parameters are now assumed to be countable.}
\end{assumption}

In a previous draft of this paper, we proved several basic properties of Hausdorff geometric structures that are not used in the sequel. Nevertheless, we list a few of them below, as they may give some intuition on the interaction between the topology and dimension theory. The interested reader may want to check these facts as an easy exercise.

\begin{lemma}\label{L: HGS basic properties} Let $X\subset K^n$ be definable over $A$, and let $a\in X$ be generic over $A$.
\begin{enumerate}
    \item $X$ is locally closed in a neighborhood of $a$ -- that is, there is a neighborhood $U$ of $a$ in $K^n$ such that $U\cap X$ is relatively closed in $U$.
     \item Any first-order property over $A$ which holds of $a$, holds for all points in some relative neighborhood of $a$ in $X$.
   \item If $Y\subset X$ is definable and contains a neighborhood of $a$ in $X$, then $\dim(Y)=\dim(X)$.
   \item In particular, no infinite definable subset of $K^n$ is discrete.
    \end{enumerate} 
\end{lemma}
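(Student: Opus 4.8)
The plan is to prove the four items of Lemma~\ref{L: HGS basic properties} in order, since each one essentially feeds the next. Throughout, $X\subset K^n$ is $A$-definable and $a\in X$ is generic over $A$, with $d=\dim(X)=\dim(a/A)$.

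\smallskip
\noindent\textbf{Item (1): local closedness near $a$.} The natural approach is to apply the Strong Frontier Inequality (axiom (3)) to $X$. Let $F=\overline X\setminus X$ be the frontier of $X$; this is not obviously definable, but $\overline{\operatorname{Fr}(X)}$ in the sense of axiom (3) is what we need. The axiom tells us every point of $\overline{\operatorname{Fr}(X)}$ has dimension $<d$ over $A$, so in particular $a\notin\overline{\operatorname{Fr}(X)}$. Hence there is an open neighborhood $U$ of $a$ in $K^n$ disjoint from $\overline{\operatorname{Fr}(X)}$, and on such a $U$ we have $\overline X\cap U = X\cap U$ (any point of $\overline X\cap U$ that is not in $X$ would lie in the frontier, contradiction), so $X\cap U$ is relatively closed in $U$. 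The one subtlety is making sure the notion of frontier used in axiom~(3) matches $\overline{X}\setminus X$ closely enough for this argument; I would unwind the definition of $\operatorname{Fr}$ (presumably $\operatorname{Fr}(X)=\overline X\setminus X$, or $\overline X\setminus \operatorname{int}(X)$ intersected appropriately) and check the inclusion $\overline X\setminus X\subseteq\overline{\operatorname{Fr}(X)}$ holds, which should be immediate.

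\smallskip
\noindent\textbf{Item (2): first-order properties are locally generic.} Suppose $\varphi(x)$ is a first-order formula over $A$ with $\models\varphi(a)$; I want a relative neighborhood of $a$ in $X$ on which $\varphi$ holds. Consider $Y=\{x\in X:\neg\varphi(x)\}$, which is $A$-definable and does not contain $a$. If $a\notin\overline Y$ we are done, taking $U$ to be the complement of $\overline Y$. Suppose toward contradiction $a\in\overline Y$. If $\dim(Y)<d$ then $a\in\overline Y=Y\cup\operatorname{Fr}(Y)$, and since $a\notin Y$ we get $a\in\overline{\operatorname{Fr}(Y)}$, so by axiom (3) $\dim(a/A)<\dim(Y)<d$, contradicting genericity. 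If $\dim(Y)=d$, then $Y$ is a generic $A$-definable subset of $X$; but then $a$, being generic in $X$ over $A$, actually lies in $Y$ (genericity of $a$ means $a$ avoids every non-generic $A$-definable subset, equivalently $\operatorname{tp}(a/A)$ is the generic type — so $a$ must satisfy the generic formula defining a set almost equal to $X$... here I need to be slightly careful). The clean way: a generic point of $X$ over $A$ lies in every $A$-definable generic subset of $X$ when $X$ is irreducible over $A$ in the sense of having a unique generic type; in a geometric structure this is not automatic, so instead I argue that $\dim(Y)=d$ together with $a\in\overline Y\setminus Y$ still forces $a\in\overline{\operatorname{Fr}(Y)}$ after shrinking: actually if $a\in\overline Y$ and $a\notin Y$ then $a\in\overline Y\setminus Y\subseteq\overline{\operatorname{Fr}(Y)}$ regardless of $\dim Y$, so axiom (3) gives $\dim(a/A)<\dim(Y)\le\dim(X)=d$, contradiction. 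So in fact the dimension split is unnecessary and the argument is uniform.

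\smallskip
\noindent\textbf{Items (3) and (4).} For item (3), suppose $Y\subseteq X$ is $A$-definable (we may assume $A$ defines $Y$ by Remark~\ref{R: ctble expansion}-style enlargement of parameters) and $Y$ contains a relative neighborhood $U\cap X$ of $a$ in $X$. Then $a\in Y$, and I claim $a$ is generic in $Y$ over $A$, i.e. $\dim(a/A)=\dim(Y)$; since $\dim(a/A)=d=\dim(X)\ge\dim(Y)\ge\dim(a/A)$, this forces $\dim(Y)=d=\dim(X)$. The point to verify is just $\dim(Y)\ge\dim(a/A)$, which holds because $a\in Y$ and dimension of a definable set is the max of $\dim(y/A)$ over its points. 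So item (3) is essentially immediate once we observe $a\in Y$. For item (4): if $S\subseteq K^n$ is infinite definable and discrete, take $A$ defining $S$ and $a\in S$ generic over $A$ with $\dim(a/A)=\dim(S)\ge 1$ (infinite, so dimension $\ge1$ — using that in a geometric structure an infinite definable set has positive dimension, which follows from elimination of $\exists^\infty$). Discreteness means $\{a\}$ is a relative neighborhood of $a$ in $S$, so by item (3) applied with $X=S$ and $Y=\{a\}$ we'd get $\dim(\{a\})=\dim(S)\ge1$, but a singleton has dimension $0$ — contradiction. Alternatively and more directly, $\{a\}$ being open in $S$ makes $a$ an isolated, hence non-generic-looking point, but the cleanest route is via (3).

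\smallskip
\noindent\textbf{Expected main obstacle.} The genuinely delicate point is the bookkeeping around the frontier: pinning down exactly what $\operatorname{Fr}(X)$ and $\overline{\operatorname{Fr}(X)}$ mean and verifying the set-theoretic inclusions $\overline X\setminus X\subseteq\overline{\operatorname{Fr}(X)}$ that make axiom~(3) directly applicable in items (1) and (2). Everything else is a short manipulation of the dimension/genericity formalism from the Preliminaries. Notably, the Baire Category Axiom and the Generic Local Homeomorphism Property are \emph{not} needed for this lemma — it is purely a consequence of axioms (1)--(3) — which is worth remarking, and I would flag it in the write-up.
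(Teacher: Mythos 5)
Your arguments for items (1) and (2) are correct: both are straightforward applications of the strong frontier inequality, exactly as intended (the paper leaves the lemma as an exercise, so there is no written proof to compare against, but Remark \ref{R: frontier inequality} confirms this is the intended reading of $\operatorname{Fr}$).

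There is, however, a genuine gap in item (3), and it propagates to (4). The lemma only assumes $Y\subset X$ is \emph{definable}, over arbitrary parameters, while $a$ is generic over $A$ only. Your parenthetical ``we may assume $A$ defines $Y$ by enlargement of parameters'' is not legitimate: if you absorb the parameters of $Y$ into a larger set $B$, the hypothesis $\dim(a/A)=\dim(X)$ gives you no control over $\dim(a/B)$, so the chain $\dim(Y)\ge\dim(a/B)$ no longer yields $\dim(Y)\ge\dim(X)$. This is not a cosmetic issue: equip any geometric structure with the \emph{discrete} topology. Then axioms (1), (2), (3) and (5) of Definition \ref{D: easy axioms} all hold (the frontier of every set is empty, and singleton neighborhoods witness the generic local homeomorphism property), yet item (3) fails with $Y=\{a\}$ and item (4) fails trivially. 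So (3) and (4) cannot follow from axioms (1)--(3) alone, and your closing remark that the Baire Category Axiom is not needed is false -- indeed Remark \ref{R: non-discrete} in the paper states explicitly that the Baire axiom is needed precisely for (3)--(4). The correct argument for (3): let $B\supset A$ be countable containing the parameters of $Y$; by the Baire Category Axiom the given neighborhood of $a$ in $X$ contains a point $a'$ generic in $X$ over $B$; since that neighborhood lies in $Y$ and $Y$ is $B$-definable, $\dim(Y)\ge\dim(a'/B)=\dim(X)$. With (3) so repaired, your deduction of (4) (taking $Y=\{a\}$, which is a relative neighborhood by discreteness) goes through.
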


Before moving on, we make some further comments on Definition \ref{D: easy axioms}.

\begin{remark}\label{R: non-discrete} The Baire category axiom should be viewed as a substitute for the more common statement `acl dimension equals topological dimension.' In particular, this axiom is needed for Lemma \ref{L: HGS basic properties}(3)-(4). The reason for choosing our version of the axiom is that the latter becomes less useful when the topology is not definable.
\end{remark}

\begin{remark}\label{R: Baire} On a similar note to above, suppose $\tau$ has a definable basis. Then (modulo the generic local homeomorphism property and compactness) the Baire category axiom follows from the simpler statement `every definable open subset of $K^n$ has dimension $n$'. Moreover, the latter condition holds automatically, provided $K$ has no isolated points. Indeed, in a Hausdorff topology with no isolated points, all open sets are infinite, so -- when definable in a geometric structure -- one dimensional. Since the topology on $K^n$ is the product topology, and dimension in geometric structures is additive in Cartesian products, definable open sets contain $n$-dimensional boxes, which implies the desired conclusion. 

On the other hand, in the complex field with the analytic topology, the Baire category axiom follows easily from Baire's theorem -- hence the name for the axiom. 
\end{remark}

\begin{remark}\label{R: frontier inequality} One typically thinks of the `frontier inequality' as stipulating that $a\in\operatorname{Fr}(X)$ implies $\dim(a/A)<\dim(X)$. Our `strong' version (condition (3) above) also applies to some elements of $X$; however, the two conditions are equivalent if the closure of a definable set is always definable. As written, the strong frontier inequality essentially amounts to `generic local closedness' (see Lemma \ref{L: HGS basic properties}(2)).
\end{remark}

\subsection{Germs of Type-Definable Sets} In the next subsections, we will need to talk about the local behavior of definable sets and functions near a given point. An ideal way of doing this would be to use `infinitesimal neighborhoods'. If the topology $\tau$ is definable, one can generate an infinitesimal neighborhood of a point $x$ by intersecting infinitely many definable open neighborhoods of $x$. The result is a \textit{type-definable set}. An example of particular interest is the set of realizations of $\tp(x/A)$ for a small set $A$ (indeed, it follows by the frontier inequality that every $\pi(x,a)\in\tp(x/A)$ holds on a neighborhood of $x$).

In our case, the topology might not be definable; however, certain type-definable sets -- including complete types as above -- still give some intuition of infinitesimal neighborhoods (essentially, because the frontier inequality still applies as above). Our goal now is to develop a general notion of \textit{germed type-definable sets} which captures this idea. For instance, the frontier inequality will imply that all complete types over small sets are germed at every point. Importantly, the class of germed type-definable sets is richer than that of complete types, as new examples can be generated by closing under intersections and products (Lemma \ref{L: germed preservation}). This allows us to talk about germs of certain more intricate configurations when defining the key property of enough open maps (Definition \ref{D: enough open maps}) -- which ultimately makes this notion much easier to state.

To start, let us clarify what we mean by type-definable sets in this paper:

\begin{definition}
    Let $X\subset K^n$, and let $A$ be a countable parameter set.
    \begin{enumerate}
        \item $X$ is \textit{type-definable over $A$} if $X$ is a (potentially infinite) intersection of $A$-definable sets.
        \item In general, $X$ is \textit{type-definable} if it is type-definable over some countable set $B$.
        \item $X$ is a \textit{complete type over $A$} if $X$ is type-definable over $A$ and whenever $Y$ is definable over $A$, we either have $X\subset Y$ or $X\cap Y=\emptyset$.
        \item If $X$ is type-definable, we call $X$ a \textit{complete type} if it is a complete type over some countable set.
        \item A \textit{type-definable function} is a function $f:Y\rightarrow Z$ such that $Y$, $Z$, and the graph of $f$, are all type-definable (so we include $Y$ and $Z$ in the data of the function). Note that, by compactness, this is the same as the restriction of a definable function to a type-definable domain and target.
    \end{enumerate}
\end{definition}

By compactness (and the fact that $(\mathcal K,\tau)$ is $\aleph_1$-saturated), note that we have the following useful fact:

\begin{fact}\label{F: type def cofinal} Let $A$ be countable, and let $X=\bigcap\mathcal Y$ where $\mathcal Y$ is a collection of $A$-definable sets. Assume that $\mathcal Y$ is closed under finite intersections. Then $\mathcal Y$ is \textit{cofinal} in the definable supersets of $X$. That is, if $Z\supset X$ is any definable set, then there is $Y\in\mathcal Y$ with $Z\supset Y$.
\end{fact}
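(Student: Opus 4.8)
The plan is to run a routine compactness argument, using the $\aleph_1$-saturation of $\mathcal K$ built into the definition of a Hausdorff geometric structure. First I would fix a definable superset $Z\supseteq X$, say defined over a finite (hence countable) parameter tuple $B$, and argue by contradiction: suppose that no member of $\mathcal Y$ is contained in $Z$, so that $Y\setminus Z\neq\emptyset$ for every $Y\in\mathcal Y$.

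Next I would assemble the partial type $p(x)$ over the countable parameter set $A\cup B$ whose formulas are: the formula asserting $x\notin Z$, together with, for each $Y\in\mathcal Y$, the formula asserting $x\in Y$. The key step is checking that $p$ is finitely satisfiable. A finite subset of $p$ involves only finitely many members $Y_1,\dots,Y_k$ of $\mathcal Y$ (and possibly the condition $x\notin Z$); since $\mathcal Y$ is closed under finite intersections, $Y':=Y_1\cap\cdots\cap Y_k\in\mathcal Y$, and by the contradiction hypothesis $Y'\not\subseteq Z$, so any point of $Y'\setminus Z$ realizes that finite subset. This is precisely the place where the hypothesis ``$\mathcal Y$ closed under finite intersections'' is used.

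Then I would invoke $\aleph_1$-saturation: since the parameter set $A\cup B$ is countable, $p$ is realized by some $a\in K^n$. But realizing $p$ means $a\in Y$ for all $Y\in\mathcal Y$, hence $a\in\bigcap\mathcal Y=X$, while $a\notin Z$ — contradicting $X\subseteq Z$. Therefore some $Y\in\mathcal Y$ satisfies $Y\subseteq Z$, which is the assertion of the fact.

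I do not expect any genuine obstacle here; the only point needing care is the bookkeeping of parameters — one must observe that, although $\mathcal Y$ (and hence $p$) may consist of uncountably many formulas when $\mathcal L$ is uncountable, all of these formulas have parameters in the fixed countable set $A\cup B$, which is exactly the regime in which $\aleph_1$-saturation of $\mathcal K$ can be applied.
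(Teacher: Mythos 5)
Your proof is correct and is exactly the routine compactness/$\aleph_1$-saturation argument the paper has in mind (the paper states the fact without proof, citing precisely compactness and $\aleph_1$-saturation). The parameter bookkeeping you flag — all formulas of the type lying over the countable set $A\cup B$ even when $\mathcal L$ is uncountable — is indeed the only point requiring care, and you handle it correctly.
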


As is well-known in model theory, notions of dimension and genericity transfer naturally to type-definable sets:

\begin{definition}
    Let $X$ be type-definable over $A$. By $\dim(X)$, we mean any of the following values (which are all the same by compactness and Fact \ref{F: type def cofinal}):
    \begin{itemize}
        \item the smallest dimension of a definable set containing $X$
        \item the smallest dimension of an $A$-definable set containing $X$
        \item the largest value of $\dim(a/A)$ for $a\in X$.
    \end{itemize}
    If $a\in X$ with $\dim(a/A)=\dim(X)$, we say that $a$ is \textit{generic in $X$ over $A$}.
\end{definition}

Now let us discuss germs.

\begin{definition}\label{D: germs}
    Let $X$ be type-definable, and $a\in X$. We say that $X$ is \textit{germed at $a$} if there is a definable $Y\supset X$ such that, for all definable $Z$ with $X\subset Z\subset Y$, $Y$ and $Z$ agree in a neighborhood of $a$. If $f:X\rightarrow Y$ is a type-definable projection (i.e. a projection between type-definable sets), we say that $f$ is germed at $a$ if $X$ is germed at $a$ and $Y$ is germed at $f(a)$.
\end{definition}

\begin{example} For intuition, we give an example of a type-definable set which is not germed. Suppose $(\mathcal K,\tau)$ is $\mathbb C$ with the analytic topology. Now let $X\subset\mathbb C^2$ be the union of all lines $y=ax$ where $a\in\mathbb C-\mathbb Q$ -- in other words, the complement of the non-zero parts of all lines through $(0,0)$ with rational slope. Clearly, $X$ is type-definable over $\emptyset$. But a definable approximation to $X$ will only remove finitely many lines with rational slope, and thus does not give the correct germ at $(0,0)$.
\end{example}

One now easily checks the following:

\begin{lemma}\label{L: germed preservation} Let $X$, $Y$, and $Z$ be type-definable, $a\in X\cap Y$, and $b\in Z$.
    \begin{enumerate}
        \item If $X$ is definable, then $X$ is germed at $a$.
        \item If $X$ is type-definable over $A$, and $a\in X$ is generic over $A$, then $X$ is germed at $a$.
        \item If $X$ is a complete type, then $X$ is germed at $a$.
        \item If $X$ is germed at $a$ and $Y$ is germed at $a$, then $X\cap Y$ is germed at $a$.
        \item If $X$ is germed at $a$ and $Z$ is germed at $b$, then $X\times Z$ is germed at $(a,b)$.
    \end{enumerate}
\end{lemma}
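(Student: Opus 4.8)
The plan is to dispatch (1)--(3) quickly and then treat (4) and (5) by one common argument; throughout I write $Y_X,Y_Y,Y_Z$ for definable supersets of $X,Y,Z$ chosen to witness germedness in the sense of Definition~\ref{D: germs}, and reserve the letter $D$ for an arbitrary definable set squeezed between a type-definable set and such a witness. Item (1) is immediate: take $Y_X=X$, the only definable set between $X$ and itself. For (2), let $\mathcal Y$ be the collection of all $A$-definable supersets of $X$; it is closed under finite intersections and $X=\bigcap\mathcal Y$, so Fact~\ref{F: type def cofinal} applies. Choose $Y_X\in\mathcal Y$ with $\dim Y_X=\dim X$, possible by the definition of the dimension of a type-definable set; I claim it witnesses germedness at $a$. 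Given definable $D$ with $X\subseteq D\subseteq Y_X$, Fact~\ref{F: type def cofinal} yields an $A$-definable set $X_0$ with $X\subseteq X_0\subseteq D$, and after replacing $X_0$ by $X_0\cap Y_X$ we have $X\subseteq X_0\subseteq D\subseteq Y_X$ with $X_0$ and $Y_X$ both $A$-definable. Then $Y_X\setminus X_0$ is $A$-definable of dimension at most $\dim Y_X=\dim X=\dim(a/A)$, and since $a\in X\subseteq X_0$ we have $a\notin Y_X\setminus X_0$; were $a$ in $\overline{Y_X\setminus X_0}$ it would lie in $\operatorname{Fr}(Y_X\setminus X_0)$, and the strong frontier inequality would force $\dim(a/A)<\dim(Y_X\setminus X_0)\le\dim(a/A)$, absurd. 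So some open $U\ni a$ misses $Y_X\setminus X_0$, whence $U\cap D=U\cap Y_X$ (both equal $U\cap X_0$). Finally (3) follows from (2): if $X$ is a complete type over $A$, every point of $X$ has $\acl$-dimension over $A$ equal to $\dim X$ (that dimension being determined by the type over $A$), hence is generic in $X$ over $A$.

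For (4) and (5), fix witnesses $Y_X$ (at $a$) for $X$ and $Y_Y$ (at $a$) for $Y$ in case (4), resp.\ $Y_Z$ (at $b$) for $Z$ in case (5), and write $X=\bigcap_i U_i$ and $Y=\bigcap_j V_j$ (resp.\ $Z=\bigcap_j V_j$), the families of definable sets being closed under finite intersections. Put $W=Y_X\cap Y_Y$ in case (4) and $W=Y_X\times Y_Z$ in case (5); then $W$ is a definable superset of $X\cap Y$ (resp.\ $X\times Z$), and I claim it witnesses germedness. The point is that $\{U_i\cap V_j\}_{i,j}$ (resp.\ $\{U_i\times V_j\}_{i,j}$) is a family of definable sets, closed under finite intersections, whose intersection is $X\cap Y$ (resp.\ $X\times Z$); hence given definable $D$ with $X\cap Y\subseteq D\subseteq W$ (resp.\ $X\times Z\subseteq D\subseteq W$), Fact~\ref{F: type def cofinal} provides indices with $U_i\cap V_j\subseteq D$ (resp.\ $U_i\times V_j\subseteq D$), and shrinking $U_i$ and $V_j$ within their families we may further assume $U_i\subseteq Y_X$ and $V_j\subseteq Y_Y$ (resp.\ $V_j\subseteq Y_Z$). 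Now $U_i$ lies between $X$ and its own witness $Y_X$, so germedness of $X$ at $a$ supplies an open $U^1\ni a$ with $U^1\cap U_i=U^1\cap Y_X$; likewise germedness of $Y$ (resp.\ $Z$) supplies an open $U^2$ around $a$ (resp.\ $b$) with $U^2\cap V_j=U^2\cap Y_Y$ (resp.\ $U^2\cap V_j=U^2\cap Y_Z$). On the neighborhood $U^1\cap U^2$ of $a$ (resp.\ the box $U^1\times U^2$ of $(a,b)$) the sets $W$ and $U_i\cap V_j$ (resp.\ $U_i\times V_j$) coincide, and since $U_i\cap V_j\subseteq D\subseteq W$ (resp.\ $U_i\times V_j\subseteq D\subseteq W$) we get that $D$ agrees with $W$ there, as required.

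The only step that is not purely formal is this reduction in (4)--(5): one cannot just take $W=Y_X\cap Y_Y$ and argue directly with it, since a germ-witness like $Y_Y$ is a strict definable over-approximation of the type-definable $Y$, so a point of $X$ lying in $Y_Y$ need not lie in $Y$. Feeding the type-definability of \emph{both} factors into Fact~\ref{F: type def cofinal} --- so as to replace the arbitrary intermediate $D$ by a set of the form $U_i\cap V_j$ (resp.\ $U_i\times V_j$) --- is exactly what makes the two germedness hypotheses usable separately. Everything else is routine, the essential inputs being the strong frontier inequality and the cofinality fact.
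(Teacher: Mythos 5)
Your proof is correct and follows exactly the route the paper sketches: (1) trivially witnessed by $X$ itself, (2) via the strong frontier inequality applied to an $A$-definable superset of minimal dimension, (3) reduced to (2) since all points of a complete type have the same dimension over the base, and (4)--(5) via Fact \ref{F: type def cofinal} to replace an arbitrary intermediate definable set by one built from the two defining families. The paper's proof is only a one-line indication of these same ingredients, so your write-up is simply a fleshed-out version of the intended argument.
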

\begin{proof}
    (1) is trivial. (2) uses the strong frontier inequality. (3) is an application of (2). (4) and (5) use Fact \ref{F: type def cofinal}.
\end{proof}

Note that if $X$ is germed at $a$, then $X$ determines a unique germ of open neighborhoods of $a$. We call this the \textit{germ of $X$ at $a$} (here we treat germs as equivalence classes of sets, where two sets are equivalent if they agree on a neighborhood of $a$). We will use germs to talk about local properties of $X$. Let us be more precise:

\begin{definition}\label{D: d-approximation}
    Let $f:X\rightarrow Y$ be a type-definable projection, $a\in X$, and suppose $f$ is germed at $a$.
    \begin{enumerate}
    \item A \textit{d-approximation} of $X$ at $a$ is a definable set $X'\supset X$ such that $\dim(X)=\dim(X')$ and $X$ and $X'$ have the same germ at $a$.
    \item A \textit{d-approximation} of $f$ at $a$ is a projection $f':X'\rightarrow Y'$, where $X'$ is a d-approximation of $X$ at $a$ and $Y'$ is a d-approximation of $Y$ at $f(a)$.
    \end{enumerate}
\end{definition}

\begin{definition}\label{D: d-local}
    Let $P$ be a property of a set with a distinguished point. We say that $P$ is \textit{d-local} if whenever $X$ is a definable set, $a\in X$, and $X'$ is a d-approximation of $X$ at $a$, then $P(X,a)$ holds if and only if $P(X',a)$ holds.

    We also make the analogous definition of $d$-local properties of projections (using Definition \ref{D: d-approximation}(2) instead of (1)).
\end{definition}

The main point of d-local properties is that they extend naturally from definable sets to germed type-definable sets. Namely, the following is well-defined:

\begin{definition}
    Let $P$ by a d-local property of a set with a distinguished point. Let $X$ be a type-definable set, let $a\in X$, and assume that $X$ is germed at $a$. We say that $X$ \textit{satisfies $P$ near $a$} if some (equivalently any) d-approximation of $X$ satisfies $P$ with the distinguished point $a$.

    As in Definition \ref{D: d-local}, we make the analogous definition for type-definable projections.
\end{definition}

In particular, the following are clear:

\begin{lemma}
    For a projection $f:X\rightarrow Y$ and a point $a$, each of the following properties is d-local:
    \begin{enumerate}
        \item $f$ is finite-to-one on some neighborhood of $a$.
        \item $f$ is \textit{locally open at $a$}: there are neighborhoods $U$ of $a$ in $X$, and $V$ of $f(a)$ in $Y$, such that $f$ restricts to an open map $U\rightarrow V$.
    \end{enumerate}
\end{lemma}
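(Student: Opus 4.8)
The plan is to observe that both properties depend only on the germs of $X$ at $a$ and of $Y$ at $f(a)$, together with the fact that $f$ --- and hence any d-approximation $f'$ --- is the restriction of one fixed coordinate projection $\pi\colon K^n\to K^m$. So fix a d-approximation $f'\colon X'\to Y'$ of $f$ at $a$, and (using Lemma~\ref{L: germed preservation}(1), so that ``same germ'' just means ``agree on a neighborhood'') fix open sets $P\ni a$ in $K^n$ and $Q\ni f(a)$ in $K^m$ with $P\cap X=P\cap X'$ and $Q\cap Y=Q\cap Y'$. Everything reduces to manipulating relatively open neighborhoods contained in $P$ and $Q$, and the argument for each direction of each equivalence is the same with the roles of $X,Y$ and $X',Y'$ interchanged.

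For (1): if $f$ is finite-to-one on a neighborhood $U=O\cap X$ of $a$ (with $O$ open in $K^n$, $a\in O$), then on $U':=(O\cap P)\cap X=(O\cap P)\cap X'$ --- a neighborhood of $a$ in both $X$ and $X'$, contained in $U$ --- the maps $f$ and $f'$ literally coincide (both are $\pi$ restricted to the same set) and are a restriction of $f|_U$, hence finite-to-one; so $f'$ is finite-to-one near $a$. The converse is identical. Note that the dimension clause in the definition of d-approximation is not used here.

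For (2): suppose $f$ is locally open at $a$, witnessed by neighborhoods $U$ of $a$ in $X$ and $V$ of $f(a)$ in $Y$ with $f(U)\subseteq V$ and $f|_U\colon U\to V$ open; write $U=O\cap X$ and $V=R\cap Y$ with $O,R$ open in the ambient spaces. Shrink the domain: since $\pi$ is continuous, $O'':=O\cap P\cap\pi^{-1}(Q\cap R)$ is open in $K^n$ and contains $a$; set $U'':=O''\cap X=O''\cap X'$ (using $O''\subseteq P$) and $V'':=(Q\cap R)\cap Y=(Q\cap R)\cap Y'$. Then $U''$ is a neighborhood of $a$ in $X'$, $V''$ is a neighborhood of $f(a)$ in $Y'$, and $f(U'')\subseteq(Q\cap R)\cap Y=V''$. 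Finally $f'|_{U''}\colon U''\to V''$ is open: $U''$ is relatively open in $U$, $V''$ is relatively open in $V$, and $f(U'')\subseteq V''$, so for any $O_0$ relatively open in $U''$ the set $f(O_0)$ is relatively open in $V$ by openness of $f|_U$, hence relatively open in $V''$ since it is contained in $V''$. Thus $f'$ is locally open at $a$, and the converse is symmetric.

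The argument is elementary and I do not expect a genuine obstacle; the only point requiring care is the bookkeeping in (2) --- after shrinking the domain so that its image lands inside $Q$, one must shrink the target to match and then check that openness survives passing to these smaller relatively open subsets. This is just the standard fact that an open map restricts to an open map between a relatively open subset of the domain and a relatively open subset of the target, provided the image still lands in the smaller target. Throughout, only the continuity of coordinate projections and the product topology on the $K^n$ are invoked, so no further axioms of Hausdorff geometric structures (beyond having a topology at all) are needed.
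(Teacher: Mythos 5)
Your argument is correct: both properties are indeed determined by the germs of $X$ at $a$ and of $Y$ at $f(a)$ together with the fact that $f$ and any d-approximation $f'$ are restrictions of the same coordinate projection, and your bookkeeping in (2) (shrinking the domain into $\pi^{-1}(Q\cap R)$ and the target to $(Q\cap R)\cap Y$, then noting that openness passes to relatively open subsets) is exactly the verification needed. The paper states this lemma without proof, treating it as clear, so your write-up is just the routine check the paper omits and matches the intended argument.
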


Before moving on, we also show the following useful facts:

\begin{lemma}\label{L: germ at generic} Suppose $X\subset K^n$ is definable over $A$, and $x\in X$. The following are equivalent:
\begin{enumerate}
    \item $X$ is a d-approximation of $\tp(a/A)$ at $a$.
    \item $a$ is generic in $X$ over $A$.
\end{enumerate}
\end{lemma}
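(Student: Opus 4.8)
The plan is to unwind the definitions and reduce everything to the strong frontier inequality; throughout I read the displayed ``$x\in X$'' as ``$a\in X$''. First I would record two trivialities used in both directions: $\dim(\tp(a/A))=\dim(a/A)$ by the definition of dimension for type-definable sets, and $\tp(a/A)\subseteq X$ as sets of realizations, since the $\mathcal L(A)$-formula defining $X$ lies in $\tp(a/A)$ (as $a\in X$) and hence every realization of $\tp(a/A)$ satisfies it.

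For the implication $(1)\Rightarrow(2)$ I would simply note that if $X$ is a d-approximation of $\tp(a/A)$ at $a$, then Definition \ref{D: d-approximation}(1) forces $\dim(X)=\dim(\tp(a/A))=\dim(a/A)$; since $X$ is $A$-definable and $a\in X$, this is exactly the statement that $a$ is generic in $X$ over $A$. No topology enters here.

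For $(2)\Rightarrow(1)$ I would assume $a$ is generic in $X$ over $A$, so $\dim(X)=\dim(a/A)$. Two of the three requirements of Definition \ref{D: d-approximation}(1) are immediate from the observations above: $\tp(a/A)\subseteq X$, and $\dim(X)=\dim(a/A)=\dim(\tp(a/A))$. The substantive point is that $X$ and $\tp(a/A)$ have the same germ at $a$, and I would prove this by showing directly that $X$ itself is a definable superset witnessing, in the sense of Definition \ref{D: germs}, that $\tp(a/A)$ is germed at $a$; by the uniqueness of germs noted after Lemma \ref{L: germed preservation}, this identifies the germ of $\tp(a/A)$ at $a$ with the germ of the definable set $X$. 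So let $Z$ be definable with $\tp(a/A)\subseteq Z\subseteq X$; I must exhibit a neighborhood of $a$ on which $Z$ and $X$ coincide. Since the $A$-definable sets containing $a$ are closed under finite intersection and, by Fact \ref{F: type def cofinal}, cofinal in the definable supersets of $\tp(a/A)$, I can choose an $A$-definable $Z_0$ with $a\in Z_0\subseteq Z$, and after intersecting with $X$ arrange $Z_0\subseteq X$ as well. Then $a\in Z_0$ and $Z_0$ is $A$-definable, so $\dim(a/A)\le\dim(Z_0)\le\dim(X)=\dim(a/A)$; hence $W:=X\setminus Z_0$ is $A$-definable with $\dim(W)\le\dim(X)=\dim(a/A)$ while $a\notin W$. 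By the strong frontier inequality, $a\notin\overline{\operatorname{Fr}(W)}$, whence $a\notin\overline{W}$, so there is an open $U\ni a$ disjoint from $W$; then $U\cap X\subseteq Z_0\subseteq Z\subseteq X$, giving $U\cap X=U\cap Z$, as desired.

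The only step requiring genuine care is this last germ computation: the definable $Z$ trapped between $\tp(a/A)$ and $X$ need not be defined over $A$, so the strong frontier inequality cannot be applied to it directly, and the reduction to an $A$-definable $Z_0$ via Fact \ref{F: type def cofinal} is precisely what corrects the parameter set. Everything else is formal manipulation of the definitions of d-approximation and of the germ of a type-definable set.
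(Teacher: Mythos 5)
Your argument is correct and follows essentially the same route as the paper: the forward direction is the same dimension count, and in the reverse direction both proofs use Fact \ref{F: type def cofinal} to replace an arbitrary definable set trapped over $\tp(a/A)$ by an $A$-definable one and then apply the strong frontier inequality to an $A$-definable difference set to force agreement with $X$ near $a$. The only (harmless) repackaging is that you verify directly that $X$ itself witnesses germed-ness of $\tp(a/A)$ and invoke uniqueness of the germ, whereas the paper compares $X$ with an arbitrary d-approximation $Y$ of the type.
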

\begin{proof} Suppose (1) holds. Then by definition, $\dim(X)=\dim(a/A)$, so (2) holds.

Now suppose (2) holds. Then $\dim(X)=\dim(a/A)$. So to show that $X$ is a d-approximation of $\tp(a/A)$, it suffices to show that $X$ has the same germ as $\tp(a/A)$ at $a$. To do this, let $Y$ be any d-approximation of $\tp(a/A)$ at $a$. We want to show that $X$ and $Y$ agree in a neighborhood of $a$. Since $Y$ is a d-approximation of $\tp(a/A)$, some neighborhood of $a$ in $Y$ is contained in $X$. So we need to show the same statement with $X$ and $Y$ reversed.

Shrinking $Y$ if necessary, by Fact \ref{F: type def cofinal}, we may assume $Y$ is definable over $A$. If no neighborhood of $a$ in $X$ is contained in $Y$, then $a$ is in the frontier of $Y-X$. But $Y-X$ is definable over $A$, so the strong frontier inequality gives $$\dim(a/A)<\dim(Y-X)\leq\dim(Y)=\dim(a/A),$$ a contradiction.
\end{proof}

\begin{lemma}\label{L: independent germs} Suppose that $x\in K^n$ and $A\subset B$ are parameter sets. If $x$ is independent from $B$ over $A$, then $\tp(x/A)$ and $\tp(x/B)$ have the same germ at $x$.
\end{lemma}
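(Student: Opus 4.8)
The plan is to produce a single definable set $X$ that is simultaneously a d-approximation of $\tp(x/A)$ at $x$ and a d-approximation of $\tp(x/B)$ at $x$; since a type-definable set that is germed at a point has, by definition, the same germ there as any of its d-approximations, this forces $\tp(x/A)$ and $\tp(x/B)$ to have the same germ at $x$.

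First I would note that $\tp(x/B)\subseteq\tp(x/A)$, and that both are complete types, hence germed at $x$ by Lemma \ref{L: germed preservation}(3), so ``the germ at $x$'' is meaningful for each. Next, the independence hypothesis says exactly that $\dim(x/B)=\dim(x/A)$; call this common value $d$. By the definition of the dimension of a type-definable set, there is an $A$-definable set $X\supseteq\tp(x/A)$ with $\dim(X)=d$. Then $x\in X$ and $\dim(x/A)=d=\dim(X)$, so $x$ is generic in $X$ over $A$, and Lemma \ref{L: germ at generic} gives that $X$ is a d-approximation of $\tp(x/A)$ at $x$. Since $A\subseteq B$, the set $X$ is also $B$-definable, and $\dim(x/B)=d=\dim(X)$, so $x$ is generic in $X$ over $B$ as well; applying Lemma \ref{L: germ at generic} once more, $X$ is a d-approximation of $\tp(x/B)$ at $x$ (the containment $\tp(x/B)\subseteq\tp(x/A)\subseteq X$ required of a d-approximation is automatic). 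Hence $\tp(x/A)$ and $\tp(x/B)$ each have the same germ at $x$ as $X$, and therefore the same germ as one another.

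I do not expect a genuine obstacle here: the argument is bookkeeping on top of Lemma \ref{L: germ at generic}, whose proof is where the strong frontier inequality does the real work, together with the elementary fact that $\dim(\tp(x/A))$ is witnessed by an $A$-definable superset of dimension $\dim(x/A)$. If one preferred not to route through Lemma \ref{L: germ at generic}, one could argue directly: given any $B$-definable d-approximation $Y$ of $\tp(x/B)$ at $x$, the sets $X\setminus Y$ and $Y\setminus X$ are $B$-definable of dimension at most $d$, so the strong frontier inequality applied to each shows that $x$ lies in the frontier of neither; hence $X$ and $Y$ agree on a neighborhood of $x$. Either route is short.
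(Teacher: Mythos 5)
Your argument is correct and is essentially the paper's proof: both produce a single common d-approximation of $\tp(x/A)$ and $\tp(x/B)$ at $x$ by applying Lemma \ref{L: germ at generic} twice, with the independence hypothesis used exactly to transfer genericity of $x$ in $X$ from over $A$ to over $B$. The only cosmetic difference is that you construct an $A$-definable witness of minimal dimension explicitly, while the paper starts from an arbitrary d-approximation; the mathematics is the same.
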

\begin{proof} Let $X$ be a d-approximation of $\tp(x/A)$. By Lemma \ref{L: germ at generic}, $x$ is generic in $X$ over $A$. Since $x$ is independent from $B$ over $A$, $x$ is also generic in $X$ over $B$. So by Lemma \ref{L: germ at generic} again, $X$ is a d-approximation of $\tp(x/B)$. Thus $\tp(x/A)$ and $\tp(x/B)$ have a common d-approximation at $x$, and so they have the same germ at $x$.
\end{proof}

\subsection{Enough Open Maps}

The main technical result of the paper concerns Hausdorff geometric structures satisfying an additional axiom about the openness of certain definable maps (called \textit{enough open maps}). Because this axiom is rather technical to state, we give it a separate treatment. Let us start by giving an example to motivate the definition. 

Later in the paper, we will show that non-locally modular definable strongly minimal $\mathcal K$-relics can reconstruct a fragment of the topology $\tau$, assuming only that $(\mathcal K,\tau)$ has enough open maps. To sketch the setting, suppose $(\mathcal K,\tau)$ is an algebraically closed valued field (so $\tau$ is the valuation topology), and $\mathcal M=(K,...)$ is a strongly minimal relic with universe $K$ (for example, $\mathcal M$ could be the pure field structure). Let $X\subset K^2$ be a plane curve with a frontier point at $(a,b)$. If $X$ happens to be definable in $\mathcal M$, say over a tuple $t$, we want to `recognize' the frontier point $(a,b)$ using only the language of $\mathcal M$ (in an ideal world, this would mean showing $(a,b)\in\acl_{\mathcal M}(t)$). Our only tool for doing this is the existence of a Morley rank 2 family of plane curves in $\mathcal M$. For illustration, suppose this family is the family of all lines $y=\alpha x+\beta$ in $K^2$ (parametrized by $(\alpha,\beta)\in K^2$).

Now after potentially tweaking the setup (e.g. if addition is definable, we may want to translate $X$ by a generic), our strategy is as follows: we fix an independent generic slope $\alpha$, which determines a line $l$ of slope $\alpha$ through $(a,b)$. Suppose $l$ intersects $X$ in $m$ points. One can show, using the genericity of $\alpha$ (and the aforementioned `tweaks'), that $X$ and $l$ are not tangent at any of these $m$ points (that is, $X$ and $l$ are \textit{transverse} at each point). It then follows that if we translate $l$ to a nearby line $l'$ (still of slope $\alpha$), none of these $m$ points can `disappear' -- that is, $X\cap l'$ will contain at least $m$ points, one near each point of $X\cap l$. Meanwhile, since $(a,b)\in\operatorname{Fr}(X)$, infinitely many such translations also produce an $(m+1)$-st point of $X\cap l'$, this time near $(a,b)$. We conclude (by strong minimality) that cofinitely many of the lines of slope $\alpha$ intersect $X$ in at least $m+1$ points, and thus that $l\in\acl_{\mathcal M}(t\alpha)$. It is now an exercise in forking calculus (using the independence of $\alpha$ from $tab$) to show that this implies $(a,b)\in\acl_{\mathcal M}(t)$.

In the abstract topological setting, we need a replacement for the `transversality' of $X$ and $l$ above: that is, we need an axiom saying that the $m$ given intersection points of $X\cap l$ will not disappear when we perturb $l$. This is what we wish to define. It will be most natural to work at the level of types and germs as in the previous subsection.\\

To start, we define:

\begin{definition}\label{D: transversality config} Let $x\in K^m$ and $y\in K^n$. Let $A$, $B\supset A$, and $C\supset A$ be parameter sets. We call $(x,y,A,B,C)$ a \textit{transversality configuration} if the following hold:
\begin{enumerate}
    \item $\dim(x/B)=\dim(y/C)$.
    \item $x\in\acl(By)$ and $y\in\acl(Cx)$.
    \item $x$ is independent from $C$ over each of $A$ and $Ay$.
\end{enumerate}
    If $(x,y,A,B,C)$ is a transversality condition, we further define the \textit{intersection family} of $(x,y,A,B,C)$ to be the $(BC)$-type-definable set $W$ of all $(x',y')$ such that $(x',y')\models\tp(x,y/C)$ and $x'\models\tp(x/B)$.
\end{definition}

To illustrate Definition \ref{D: transversality config}, we revisit the example given above. In the language of the example, one should think of the following analogs:

\begin{itemize}
    \item $x$ is analogous to a point of $X\cap l$.
    \item $y$ is analogous to the parameter for the line $l$.
    \item $A=\emptyset$, and $\tp(x/A)$ is analogous to $K^2$ (or rather, $K^2$ is analogous to a d-approximation of $\tp(x/A)$).
    \item $B$ is a parameter defining $X$, and $\tp(x/B)$ is analogous to $X$.
    \item $C=\alpha$.
    \item $\tp(x/Ay)$ is analogous to $l$ (or rather, the germ of $l$ at $x$).
    \item The assumption that $\dim(x/B)=\dim(y/C)$ replaces the strong minimality of the set of lines of slope $\alpha$. 
    \item The assumption that $x\in\acl(By)$ replaces the finiteness of $X\cap l$.
    \item The assumption that $y\in\acl(Cx)$ replaces the fact that $l$ is determined by its slope and one point.
    \item The independence of $x$ and $C$ over $A$ and $Ay$ replaces the fact that $\alpha$ is chosen independently of the other data. In particular, this means that $\tp(x/Cy)$ is also analogous to $l$.
    \item If $W$ is the intersection family of $(x,y,A,B,C)$, then the fibers of the projection $W\rightarrow\tp(y/C)$ correspond to the intersections of $X$ with translates of $l$.
\end{itemize}

Now suppose $(x,y,A,B,C)$ is any transversality configuration, with intersection family $W$. The general idea is to think of $\tp(y/C)$ as parametrizing the family of conjugates of $\tp(x/Cy)$, and the projection $W\rightarrow\tp(y/C)$ as recording the intersection points of each such conjugate with $\tp(x/B)$. The desired conclusion (at least in the relevant cases) is that the point $(x,y)\in W$ `moves' when $y$ does. Abstractly, we want the projection $W\rightarrow\tp(y/C)$ to be \textit{open at $(x,y)$}.

Before giving a precise statement, we need to check that $W$ is germed at $(x,y)$:

\begin{lemma}\label{L: W germed} Let $(x,y,A,B,C)$ be a transversality configuration, with family of intersections $W$.
\begin{enumerate}
    \item $W$ is germed at $(x,y)$.
    \item $\dim(W)=\dim(x/B)=\dim(y/C)$.
   
\end{enumerate}
\end{lemma}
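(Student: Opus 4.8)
The plan is to treat the two clauses separately: clause (1) is a formal consequence of the closure properties of germed type-definable sets, while clause (2) is a dimension computation resting on the additivity formula and the independence clauses built into a transversality configuration.

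For clause (1): identifying types with their sets of realizations, $W$ is the intersection inside $K^{m+n}$ of the complete type $P:=\tp(x,y/C)$ and the product $Q:=\tp(x/B)\times K^n$. By Lemma \ref{L: germed preservation}(3), both $\tp(x,y/C)$ and $\tp(x/B)$, being complete types, are germed (at $(x,y)$ and at $x$ respectively); by Lemma \ref{L: germed preservation}(1) the definable set $K^n$ is germed at $y$; hence $Q$ is germed at $(x,y)$ by Lemma \ref{L: germed preservation}(5), and then $W=P\cap Q$ is germed at $(x,y)$ by Lemma \ref{L: germed preservation}(4).

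For clause (2): the equality $\dim(x/B)=\dim(y/C)$ is clause (1) of Definition \ref{D: transversality config}, so it remains to prove $\dim(W)=\dim(x/B)$. Consider the coordinate projection $\pi\colon W\to K^m$, $(x',y')\mapsto x'$. Since $y\in\acl(Cx)$ and ``$y'\in\acl(Cx')$'' is expressible by a $C$-formula (elimination of $\exists^\infty$), this algebraicity is part of $\tp(x,y/C)$ and therefore holds at every point of $W$; hence $\pi$ is finite-to-one, so $\dim(W)=\dim(\pi(W))$, and moreover $\dim(w/BC)=\dim(x'/BC)$ for each $w=(x',y')\in W$. As $\pi(W)$ is contained in the set of realizations of $\tp(x/B)$, and any such realization $x'$ satisfies $\dim(x'/BC)\le\dim(x'/B)=\dim(x/B)$, we obtain $\dim(W)\le\dim(x/B)$. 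Conversely, to get $\dim(W)\ge\dim(x/B)$ it suffices to produce a realization $x'$ of $\tp(x/B)\cup\tp(x/C)$ with $x'$ independent from $C$ over $B$: such an $x'$ extends to a point of $W$ because $y\in\acl(Cx)$ puts $\exists y\,\varphi(x,y)$ into $\tp(x/C)$ for each $\varphi\in\tp(x,y/C)$, so $\aleph_1$-saturation provides $y'$ with $(x',y')\models\tp(x,y/C)$, giving $(x',y')\in W$ with $\dim((x',y')/BC)=\dim(x'/BC)=\dim(x/B)$.

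The main obstacle is this construction of a realization $x'$ of $\tp(x/B)\cup\tp(x/C)$ independent from $C$ over $B$. A realization of $\tp(x/B)$ independent from $C$ over $B$ exists by the extension property of dimension-independence in geometric structures (take a point generic over $BC$ in the type-definable set of realizations of $\tp(x/B)$, which has dimension $\dim(x/B)$), but the work is to arrange that it also realizes $\tp(x/C)$, and this is exactly where conditions (2) and (3) of Definition \ref{D: transversality config} are used. The plan is a forking computation with the additivity formula: from $x\in\acl(By)$, $y\in\acl(Cx)$, $x\ind_A C$ and $x\ind_{Ay}C$ one derives the relevant dimension identities and then shows that the generic type over $BC$ of the set of realizations of $\tp(x/B)$ restricts, on the $C$-side, to $\tp(x/C)$ — so that a single generic realization serves for both. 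Lemmas \ref{L: germ at generic} and \ref{L: independent germs} provide the convenient dictionary between genericity, germs and independence for this step; the rest of the argument is routine additivity together with elimination of $\exists^\infty$.
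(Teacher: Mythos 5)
Part (1) of your proposal and the upper bound $\dim(W)\le\dim(x/B)$ in part (2) are correct and are essentially the paper's own argument. The problem is the lower bound, which is where all of the content of the lemma lies, and there your proposal has a genuine gap. Reducing to the existence of a common realization $x'$ of $\tp(x/B)\cup\tp(x/C)$ that is independent from $C$ over $B$ is legitimate (such a point of $W$ does witness $\dim(W)\ge\dim(x/B)$), but this existence statement is exactly as strong as the lemma, and the mechanism you sketch for it does not work. In a geometric structure there is no stationarity, so there is no such thing as ``the generic type over $BC$ of the set of realizations of $\tp(x/B)$''; worse, it is simply false that a realization of $\tp(x/B)$ which is dimension-generic over $BC$ must realize $\tp(x/C)$. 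For instance, in an o-minimal Hausdorff geometric structure, take $\tp(x/B)$ to be the generic type of a $B$-definable curve and let $C$ contain a single generic constant independent from $x$: realizations of $\tp(x/B)$ generic over $BC$ occur on both sides of the corresponding $C$-definable hyperplane, and they realize different types over $C$, only some of them compatible with $\tp(x/C)$. Hence no ``forking computation with the additivity formula'' from clauses (2) and (3) of Definition \ref{D: transversality config} can yield your intermediate claim; the conditions $x\in\acl(By)$, $y\in\acl(Cx)$ play no role in selecting which generic realization is compatible with $\tp(x/C)$.

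What is actually true, and what the paper uses, is only a local statement at $x$: since $x$ is independent from $C$ over $A$, Lemma \ref{L: independent germs} (which rests on the strong frontier inequality, a topological axiom) gives that $\tp(x/C)$ and $\tp(x/A)$ have the same germ at $x$, hence any $C$-definable superset of the realizations of $\tp(x/C)$ contains a neighborhood of $x$ intersected with a d-approximation $X_1$ of $\tp(x/B)$; one then needs the Baire category axiom of Definition \ref{D: easy axioms} to find, inside that neighborhood, a point of $X_1$ generic over $BC$. Note that this only produces points of definable approximations $W'$ of $W$, not of $W$ itself -- which is enough, since by Fact \ref{F: type def cofinal} the quantity $\dim(W)$ is computed from definable supersets; your route through an actual common realization of the two types would require an additional saturation argument whose finite-satisfiability check is precisely this same topological argument. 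So the step you dismiss as ``routine additivity together with elimination of $\exists^\infty$'' cannot be closed without the germ comparison and the Baire category axiom; citing Lemmas \ref{L: germ at generic} and \ref{L: independent germs} as a ``dictionary'' does not substitute for actually running that argument.
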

\begin{proof}
    \begin{enumerate}
        \item By repeated applications of instances of Fact \ref{L: germed preservation}. Namely, $W$ is the intersection of the complete type $\tp(x,y/C)$ with the set of $(x',y')$ satisfying $x'\models\tp(x/B)$; and the latter is the product of $\tp(x/B)$ with (the definable set) $K^n$ (where $n$ is the length of $y$).
        
        \item First, we check that $\dim(W)\leq\dim(x/B)$. Indeed, let $(x',y')\in W$. Then $y'\in\acl(Cx')$ and $\dim(x'/BC)\leq\dim(x'/B)$, so $\dim(x'y'/BC)\leq\dim(x/B)$.

        Now we show that $\dim(W)\geq\dim(x/B)$. It suffices to show that every d-approximation of $W$ has dimension at least $\dim(x/B)$. So let $W'$ be such a d-approximation. Using Fact \ref{F: type def cofinal}, after shrinking if necessary, we can write $W'$ as the set of $(x',y')$ satisfying $(x',y')\in Z$ and $x'\in X_1$, where $X_1$ is a d-approximation of $\tp(x/B)$ and $Z$ is a d-approximation of $\tp(xy/C)$. Let us also fix a d-approximation $X$ of $\tp(x/A)$. By Fact \ref{F: type def cofinal}, we can assume that $X$, $X_1$, and $Z$ are definable over $A$, $B$, and $C$, respectively.
        
        Now let $S$ be the set of $x'$ such that $(x',y')\in Z$ for some $y'$. Then $S$ is definable over $C$ and contains all realizations of $\tp(x/C)$. So the germ of $S$ at $x$ contains the germ of $\tp(x/C)$ at $x$ (this means that there are a neighborhood $V$ of $x$, and a $C$-definable set $Y$ containing $x$, such that $Y\cap V\subset S\cap V$). On the other hand, since $x$ and $C$ are independent over $A$, and by Lemma \ref{L: independent germs}, the germ of $\tp(x/C)$ at $x$ is the same as the germ of $\tp(x/A)$ at $x$ -- that is, the germ of $X_1$ at $x$. In other words, we have shown that the germ of $S$ at $x$ contains the germ of $X_1$ at $x$, and thus there is a neighborhood $U$ of $x$ such that $X_1\cap U\subset S\cap U$.
        
        Now by construction, $x$ is generic in $X_1$ over $B$. By the Baire category axiom, there is $x'\in U$ which is generic in $X_1$ over $BC$. Thus $x'\in S$, and so there is some $y'$ with $(x',y')\in Z$. Thus $(x',y')\in W'$, and moreover, $$\dim(x'y'/BC)\geq\dim(x'/BC)=\dim(X_1)=\dim(x/B),$$ which completes the proof.
        \end{enumerate}
\end{proof}

Now that we know $W$ is germed at $(x,y)$, we can define various versions of what it means for a transversality configuration to actually be transverse:

\begin{definition}\label{D: transverse}
    Let $(x,y,A,B,C)$ be a transversality configuration, with intersection family $W$.
    \begin{enumerate}
        \item $(x,y,A,B,C)$ is \textit{weakly transverse} if $W\rightarrow\tp(y/C)$ is finite-to-one near $(x,y)$.
         \item $(x,y,A,B,C)$ is \textit{transverse} if $W\rightarrow\tp(y/C)$ is locally open and finite-to-one near $(x,y)$. 
        \item $(x,y,A,B,C)$ is \textit{strongly transverse} if $x$ is independent from $C$ over $B$ (equivalently, if $(x,y)$ is generic in $W$ over $BC$).
    \end{enumerate}
\end{definition}

We often abbreviate `weakly transverse transversality configuration' by `weakly transverse configuration', and similarly for transverse and strongly transverse configurations.

One can check that strong transversality implies transversality, which implies weak transversality, though we will not need this.

We are finally ready to introduce the notion of a Hausdorff geometric structure having \emph{enough open maps}. The name refers to the local openness of the map $W\rightarrow\tp(y/C)$ above (see Definition \ref{D: transverse}) for a transversality configuration $(x,y,A,B,C)$. We would like to say that `enough of the time', this map is open at $(x,y)$ -- or in the language of Definition \ref{D: transverse}, `enough' transversality configurations are transverse.  

Our easiest desired conclusion would be that every transversality configuration is transverse. However, this seems too much to ask for. Instead, we need a more complicated statement involving all three transversality notions:

\begin{definition}\label{D: enough open maps} We say that $(\mathcal K,\tau)$ has \textit{enough open maps} if the following holds: let $(x,y,A,B,C)$ and $(x,y,A,B,C')$ be transversality configurations. If $(x,y,A,B,C)$ is weakly transverse, and $(x,y,A,B,C')$ is strongly transverse, then $(x,y,A,B,C)$ is transverse. 
\end{definition}

\begin{remark} A simpler statement would have been `every weakly transverse configuration is transverse'. However, the condition of weak transversality alone did not seem to be enough to imply transversality in examples. So one could think of Definition \ref{D: enough open maps} as saying that, in nice enough situations (i.e. when strong transversality is consistent), transversality can only fail for a very good reason (being non-weakly-transverse).
\end{remark}

\subsection{The Open Mapping Property and Enough Open Maps}

Due to the technical nature of the notion of $(\mathcal K,\tau)$ having enough open maps, we will now provide two other notions, each of which implies \textit{enough open maps} and is easier to verify in examples. The first notion below is to be thought of as an `algebraically closed' condition, and generalizes the open mapping theorem from complex analysis. The second is a `characteristic zero' condition, and essentially amounts to a version of Sard's Theorem.

Before giving either of these notions, we need to develop `smoothness' in Hausdorff geometric structures:

\begin{definition}\label{D: smooth} By a \textit{notion of smoothness} on $(\mathcal K,\tau)$, we mean a map $X\mapsto X^S$ sending each definable set $X\subset K^n$ (for all $n$) to a (not necessarily definable) subset $X^S\subset X$, satisfying the following:
\begin{enumerate}
    \item If $x$ is generic in $X$ over any set of parameters defining $X$, then $x\in X^S$.
    \item If $x\in X^S$ and $y\in Y^S$ then $(x,y)\in X^S\times Y^S$.
    \item If $x\in X^S$ and $\sigma$ is a coordinate-permutation, then $\sigma(x)\in(\sigma(X))^S$.
    \item If $Z\subset X\times Y$ are definable and $W=\{(x,x,y):(x,y)\in Z\}$, then for all $x,y$ we have $(x,y)\in Z^S$ if and only if $(x,x,y)\in W^S$.  
    \item Suppose $f:X\rightarrow Y$ is an $A$-definable projection, and $x\in X$ and $y=f(x)\in Y$ are both generic over $A$. Let $Z$ be definable over $B$ so that $y\in Z$ is generic over $B$ and $Z\cap U\subset Y\cap U$ for some neighborhood $U$ of $y$. Then $x\in(f^{-1}(Z\cap Y))^S$.
    \item The assertion `$x\in X^S$' is a d-local property of $x$ and $X$.
\end{enumerate}

If $x\in X^S$, we say that \textit{$x$ is smooth in $X$} or \textit{$X$ is smooth at $x$}.
\end{definition}

\begin{remark}
    Definition \ref{D: smooth}(5) is our attempt at capturing the \textit{submersion theorem} of differential geometry (asserting that every submersion of smooth manifolds is locally diffeomorphic to a coordinate projection of affine spaces) and its corollary on fibers (that preimages of smooth manifolds under submersions have a smooth structure). For technical reasons involving the Frobenius map, we could not find a more direct statement of this nature that holds in ACVF and is suitable for our needs. For the reader's intuition, we note the informal translation between our statement and the usual statement. Given our $f:X\rightarrow Y$, the genericity of $x\in X$ and $f(x)\in Y$ lets us informally visualize $f$ as a submersion near $x$. The submersion theorem would then allow us to visualize $X$ as $W\times Y$ for some $W$, and thus $x=(w,y)$ for some $w$ and $y$. The restriction of $X$ over $Z$ then corresponds to $W\times Z$, which remains smooth at $(w,y)$ by looking at $W$ and $Z$ separately and applying Definition \ref{D: smooth}(2) (where the smoothness of $Z$ comes from the genericity of $y\in Z$ over $B$).
\end{remark}

Thus, informally, a notion of smoothness is any d-local notion which holds generically and is closed under products, permutations, concatenations, and preimages under nice enough maps. In many natural examples of fields (including ACVF), there is a canonical such notion coming from the smooth locus of a variety (see Lemma \ref{L: ez smooth}): one takes $X^S$ to be the points that are locally open in the smooth locus of the Zariski closure of $X$. For o-minimal expansions of real closed fields, one can fix $n\geq 1$ and declare $x\in X^S$ if after restricting to a neighborhood of $x$, $X$ becomes a $C^n$-submanifold of the ambient space.

By d-locality, any notion of smoothness extends to germs of type-definable sets. Thus, if $X$ is type-definable and germed at $x$, the assertion `$x\in X^S$' is well-defined. We note the translations of Definition \ref{D: smooth} (1) to this context:

\begin{lemma}\label{L: type version of smooth} Let $X\mapsto X^S$ be a notion of smoothness on $(\mathcal K,\tau)$. Let $x\in K^n$, and let $A$ be a set of parameters. Then $\tp(x/A)$ is smooth at $x$.
\end{lemma}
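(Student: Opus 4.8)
The plan is to unwind the definitions. Recall that, by Definition \ref{D: d-local} and the paragraph following it, saying that a germed type-definable set \emph{satisfies $P$ near $a$} for a $d$-local property $P$ means that \emph{some} (equivalently, by $d$-locality, \emph{any}) $d$-approximation of it at $a$ satisfies $P$ with distinguished point $a$. So it suffices to exhibit a single $d$-approximation $X$ of $\tp(x/A)$ at $x$ for which $x\in X^S$.

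First I would produce such an $X$ using Lemma \ref{L: germ at generic}: a definable set $X$ is a $d$-approximation of $\tp(x/A)$ at $x$ exactly when $x$ is generic in $X$ over $A$. Take $X$ to be any $A$-definable set containing $x$ of minimal dimension; by the characterizations of $\dim$ for type-definable sets together with Fact \ref{F: type def cofinal}, such an $X$ exists and has $\dim(X)=\dim(x/A)$, so $x$ is generic in $X$ over $A$. (Here $A$ is countable by the standing assumption, so that $\tp(x/A)$ is type-definable in the sense of this paper; and since $\tp(x/A)$ is a complete type over $A$, the $A$-definable set $X$ automatically contains $\tp(x/A)$, as required of a $d$-approximation.) Now $A$ is a set of parameters defining $X$ and $x$ is generic in $X$ over $A$, so Definition \ref{D: smooth}(1) yields $x\in X^S$.

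Finally, since the assertion ``$x\in X^S$'' is a $d$-local property of the pair $(x,X)$ by Definition \ref{D: smooth}(6), the fact that it holds for one $d$-approximation of $\tp(x/A)$ at $x$ is, by the discussion above, precisely the statement that $\tp(x/A)$ is smooth at $x$. There is no real obstacle here: the argument is pure bookkeeping, and the only point requiring a moment's care is that a $d$-approximation must be a \emph{superset} of the type in question --- which is automatic, since a complete type over $A$ is either contained in, or disjoint from, any given $A$-definable set.
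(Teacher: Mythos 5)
Your argument is correct and is exactly the intended one: the paper states this lemma without proof as an immediate translation of Definition \ref{D: smooth}(1), via Lemma \ref{L: germ at generic} (a d-approximation of $\tp(x/A)$ at $x$ is precisely an $A$-definable set in which $x$ is generic over $A$) together with d-locality of smoothness. Your bookkeeping, including the remark that a complete type over $A$ is automatically contained in any $A$-definable set containing $x$, matches the paper's reasoning.
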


In particular, we conclude:

\begin{lemma}\label{L: W smooth} Fix a notion of smoothness $X\mapsto X^S$ on $(\mathcal K,\tau)$. Let $(x,y,A,B,C)$ be a transversality configuration, with family of intersections $W$. Then $(x,y)\in W^S$.
\end{lemma}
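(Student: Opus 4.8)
The plan is to realize (a d-approximation of) $W$ as the preimage of a generic --- hence, by Lemma~\ref{L: type version of smooth}, smooth --- set under a coordinate projection, and then to conclude by the submersion clause Definition~\ref{D: smooth}(5). The independence conditions built into the definition of a transversality configuration (Definition~\ref{D: transversality config}) are exactly what make the hypotheses of that clause available.

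In detail, I would let $f\colon K^m\times K^n\to K^m$ be the coordinate projection with $f(x,y)=x$, and observe that it restricts to a surjection $\tp(x,y/C)\twoheadrightarrow\tp(x/C)$, that $W=\tp(x,y/C)\cap(\tp(x/B)\times K^n)$ is the preimage of $\tp(x/B)$ inside $\tp(x,y/C)$, and that $(x,y)$ is generic in $\tp(x,y/C)$ over $C$ while $x=f(x,y)$ is generic in $\tp(x/C)$ over $C$. Since $y\in\acl(Cx)$, the fibers of $f|_{\tp(x,y/C)}$ are finite, so using uniform finiteness one may choose a $C$-definable d-approximation $X$ of $\tp(x,y/C)$ at $(x,y)$ on which $f$ is finite-to-one and with $f(X)$ contained in a $C$-definable d-approximation $Y$ of $\tp(x/C)$ at $x$. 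I would also fix a $B$-definable d-approximation $Z$ of $\tp(x/B)$ at $x$; then $\dim X=\dim(x,y/C)=\dim(x/C)=\dim Y$ (using $y\in\acl(Cx)$), and $\dim Z=\dim(x/B)$, with $x$ generic in $Z$ over $B$.

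The one substantive point to check is that $Z$ lies locally inside $Y$ near $x$. Here I would use that $x$ is independent from $C$ over $A$: by Lemma~\ref{L: independent germs} the types $\tp(x/A)$ and $\tp(x/C)$ have the same germ at $x$, so a d-approximation of $\tp(x/A)$ agrees with $Y$ near $x$; and since $\tp(x/B)\subseteq\tp(x/A)$ (these being types over $B\supseteq A$) is contained in such a d-approximation, while $Z$ agrees with $\tp(x/B)$ near $x$, we get $Z\cap U\subseteq Y\cap U$ for some neighborhood $U$ of $x$. Now all hypotheses of Definition~\ref{D: smooth}(5) hold --- with the roles of its ``$A$'' and ``$B$'' played by $C$ and $B$, source point $(x,y)$ and target point $x$ --- and the clause yields $(x,y)\in\bigl(f^{-1}(Z\cap Y)\bigr)^S$.

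Finally I would recognize $f^{-1}(Z\cap Y)=X\cap(Z\times K^n)$ as a d-approximation of $W$ at $(x,y)$: it contains $W$; it has the same germ as $W$ at $(x,y)$, the germ of a finite intersection being the intersection of the germs (cf.\ Lemma~\ref{L: germed preservation}); and it has dimension $\dim(x/B)=\dim W$ --- at least $\dim W$ since it contains $W$, and at most $\dim Z=\dim(x/B)$ because $f$ is finite-to-one on $X$ (recall $\dim W=\dim(x/B)$ by Lemma~\ref{L: W germed}(2)). The d-locality of smoothness, Definition~\ref{D: smooth}(6), then transfers $(x,y)\in(f^{-1}(Z\cap Y))^S$ to $(x,y)\in W^S$. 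The step I expect to cost the most care is this last identification --- in particular arranging the d-approximation $X$ so that $f$ is finite-to-one on it, so that the preimage has the correct dimension, which rests on $y\in\acl(Cx)$ and uniform finiteness --- together with the verification that $Z$ lies locally inside $Y$, which is the one place the independence of $x$ from $C$ over $A$ genuinely intervenes; the rest is routine bookkeeping with germs and dimensions.
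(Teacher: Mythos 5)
Your proof is correct and follows essentially the same route as the paper: write a d-approximation of $W$ as the preimage of a d-approximation of $\tp(x/B)$ under the coordinate projection inside a d-approximation of $\tp(x,y/C)$, verify the local containment of $\tp(x/B)$ in $\tp(x/C)$ via Lemma \ref{L: independent germs} and $B\supset A$, and conclude with Definition \ref{D: smooth}(5) plus d-locality. The only (harmless) difference is bookkeeping: you construct the preimage and check its dimension directly using the finite-to-one projection, whereas the paper starts from an arbitrary d-approximation of $W$ and uses Fact \ref{F: type def cofinal} to put it in the required form.
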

\begin{proof} By Lemma \ref{L: W germed}, $W$ is germed at $(x,y)$, so the conclusion makes sense. Let $W'$ be a d-approximation of $W$ at $(x,y)$. By Fact \ref{F: type def cofinal}, we may assume $W'=\{(x,y)\in X:x\in Z\}$, where $X$ is some d-approximation of $\tp(x,y/C)$ and $Z$ is some d-approximation of $\tp(x/B)$. We may assume that $X$ and $Z$ are definable over $C$ and $B$, respectively.

Let $Y$ be a d-approximation of $\tp(x/C)$, which we may assume is definable over $C$. Shrinking $X$ if necessary, we may assume the projection of $X$ is contained in $Y$.

We want to show that $(x,y)$ is smooth in $W'$, by applying Definition \ref{D: smooth}(5). Almost all the hypotheses are already met. It remains only to show that the germ of $Z$ at $x$ is contained in the germ of $Y$ at $x$ -- that is, that the germ of $\tp(x/B)$ at $x$ is contained in the germ of $\tp(x/C)$ at $x$. But this follows from Lemma \ref{L: independent germs}. Indeed, since $x$ and $C$ are independent over $A$, $\tp(x/A)$ and $\tp(x/C)$ have the same germ at $x$. So it suffices to show that the germ of $\tp(x/B)$ at $x$ is contained in the germ of $\tp(x/A)$ at $x$. And this is clear, since $B\supset A$.
\end{proof}

We now give our first simplified condition:

\begin{definition}\label{D: open mapping thm} Let $(\mathcal K,\tau)$ be a Hausdorff geometric structure. We say that $(\mathcal K,\tau)$ has the \textit{open mapping property} if there is a notion of smoothness $X\mapsto X^S$ satisfying the following: suppose $X$ and $Y$ are definable of the same dimension, $f:X\rightarrow Y$ is a projection, $x\in X^S$, and $f(x)\in Y^S$. If $f$ is finite-to-one in a neighborhood of $x$, then $f$ is open in a neighborhood of $x$.
\end{definition}

As with Lemma \ref{L: type version of smooth}, we immediately get the following for germed sets:

\begin{lemma}\label{L: type version of open mapping} Assume $(\mathcal K,\tau)$ has the open mapping property, witnessed by the notion of smoothness $X\mapsto X^S$. Let $X$ and $Y$ be type-definable of the same dimension, and $f:X\rightarrow Y$ a projection which is germed at $x\in X$. If $f$ is finite-to-one near $x$, then $f$ is open near $x$.
\end{lemma}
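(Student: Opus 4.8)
The plan is to reduce the statement to the definable case, namely Definition \ref{D: open mapping thm}, by passing to a suitable d-approximation of $f$ at $x$, and then to transport the conclusion back up using d-locality. All the relevant properties are d-local: ``$f$ is finite-to-one on a neighborhood of $x$'' and ``$f$ is locally open at $x$'' are d-local properties of projections (the lemma recording these d-local properties of projections), and ``$x\in X^S$'' is d-local by Definition \ref{D: smooth}(6). So it suffices to produce a single d-approximation $f'\colon X'\to Y'$ of $f$ at $x$, with $X'$ and $Y'$ definable of the same dimension and with $x\in(X')^S$, $f(x)\in(Y')^S$, to which the open mapping property applies and returns the conclusion.

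First I would build that d-approximation compatibly with the projection. Fix a d-approximation $Y'$ of $Y$ at $f(x)$ and a d-approximation $X_0$ of $X$ at $x$, and set $X':=X_0\cap g^{-1}(Y')$, where $g$ denotes the ambient coordinate projection underlying $f$. Since $g(X)\subseteq Y\subseteq Y'$ we have $X\subseteq X'$; since $Y'$ and $Y$ agree on some neighborhood $V$ of $f(x)$, the set $g^{-1}(Y')$ contains the relative neighborhood $X\cap g^{-1}(V)$ of $x$ in $X$, so $X'$ has the same germ as $X$ at $x$; and $\dim X=\dim X_0\ge\dim X'\ge\dim X$ forces $\dim X'=\dim X$. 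Hence $X'$ is a d-approximation of $X$ at $x$, the map $f$ restricts to a projection $f'\colon X'\to Y'$ (a d-approximation of $f$ in the sense of Definition \ref{D: d-approximation}(2)), and $\dim X'=\dim X=\dim Y=\dim Y'$. Fact \ref{F: type def cofinal} is available if one wants these approximations definable over prescribed parameter sets.

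It then remains to check the hypotheses of Definition \ref{D: open mapping thm} for $f'$ and apply it. By d-locality of the notion of smoothness (Definition \ref{D: smooth}(6)) together with the hypothesis that $x$ is smooth in $X$ and $f(x)$ is smooth in $Y$, we get $x\in(X')^S$ and $f(x)\in(Y')^S$; and finite-to-oneness of $f$ near $x$ passes to $f'$ on a neighborhood of $x$ by d-locality. The open mapping property then yields that $f'$ is open on a neighborhood of $x$, and pushing this back through d-locality of local openness gives that $f$ is open near $x$, which is the claim.

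I do not expect a genuine obstacle here: the only point requiring a little care is the compatibility step — arranging that one definable set simultaneously d-approximates $X$ and maps into the chosen d-approximation of $Y$ — and the intersection trick above handles this cleanly. The substance of the lemma is really just the observation that the open mapping property is a d-local phenomenon, so it automatically makes sense for, and holds of, germed type-definable sets.
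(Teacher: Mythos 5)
Your argument is correct and is essentially the paper's (unwritten) proof: the paper asserts this lemma as immediate from d-locality, and your intersection trick $X'=X_0\cap g^{-1}(Y')$ simply makes explicit how to produce a compatible d-approximation $f'\colon X'\to Y'$ before invoking Definition \ref{D: open mapping thm} and transporting openness back via d-locality. One caveat: you invoke ``the hypothesis that $x$ is smooth in $X$ and $f(x)$ is smooth in $Y$,'' which the printed statement does not actually contain; these smoothness assumptions are genuinely needed (the open mapping property is only postulated at smooth points), and in the paper's applications they are supplied separately — by Lemma \ref{L: W smooth} for the intersection family and by Lemma \ref{L: type version of smooth} for complete types — so what you have proved is the (correctly hypothesized) version of the lemma that the paper actually uses.
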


The argument in \cite{CasACF0} crucially used that the complex field has the open mapping property, which follows from the usual open mapping theorem. Moreover, we will see later (Corollary \ref{C: acvf axioms}) that ACVF has the open mapping property in all characteristics. Now we show:

\begin{proposition}\label{P: open thm implies open maps} Assume that $(\mathcal K,\tau)$ has the open mapping property. Then every weakly transverse configuration is transverse. Thus, $(\mathcal K,\tau)$ has enough open maps.
\end{proposition}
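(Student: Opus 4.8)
The plan is to reduce the statement to a direct application of the open mapping property, via Lemma~\ref{L: type version of open mapping}. Fix a weakly transverse configuration $(x,y,A,B,C)$ with intersection family $W$, and let $\pi\colon W\to\tp(y/C)$ denote the projection. I want to show $\pi$ is locally open at $(x,y)$; together with the hypothesis that $\pi$ is finite-to-one near $(x,y)$, this gives exactly that $(x,y,A,B,C)$ is transverse, and since the definition of enough open maps only imposes transversality under the additional strong-transversality hypothesis on some $C'$, proving that \emph{every} weakly transverse configuration is transverse is more than enough to conclude.

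First I would check that $\pi$ is a type-definable projection between type-definable sets of the same dimension. The source $W$ is germed at $(x,y)$ by Lemma~\ref{L: W germed}(1), and $\dim(W)=\dim(y/C)$ by Lemma~\ref{L: W germed}(2); the target $\tp(y/C)$ is a complete type, hence germed at $y$ by Lemma~\ref{L: germed preservation}(3), and has dimension $\dim(y/C)$ by definition. So $\pi\colon W\to\tp(y/C)$ is a type-definable projection between germed type-definable sets of equal dimension, and it is germed at $(x,y)$ in the sense of Definition~\ref{D: germs}. Finite-to-oneness of $\pi$ near $(x,y)$ is precisely weak transversality.

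Next, the smoothness inputs: by the open mapping property there is a notion of smoothness $X\mapsto X^S$. We need $(x,y)\in W^S$ and $y\in(\tp(y/C))^S$. The first is exactly Lemma~\ref{L: W smooth}. The second is Lemma~\ref{L: type version of smooth} applied to $\tp(y/C)$, which says any complete type is smooth at each of its points. With both smoothness conditions in hand, together with equal dimension and finite-to-oneness near $(x,y)$, Lemma~\ref{L: type version of open mapping} applies directly and yields that $\pi$ is open near $(x,y)$. Combining openness and finiteness near $(x,y)$, the configuration $(x,y,A,B,C)$ is transverse, as desired; the final sentence (``thus $(\mathcal K,\tau)$ has enough open maps'') is then immediate from Definition~\ref{D: enough open maps}, since the hypotheses of that definition include weak transversality of $(x,y,A,B,C)$, which alone already suffices.

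The only mild subtlety---and the step I'd expect to need the most care---is matching up the germ/d-local bookkeeping: making sure that ``$\pi$ is finite-to-one near $(x,y)$'' and ``$\pi$ is open near $(x,y)$'' really are the $d$-local statements about a common d-approximation that Lemma~\ref{L: type version of open mapping} is phrased for, and that the d-approximation of $W$ can be chosen compatibly with d-approximations of $\tp(y/C)$ and $\tp(x/B)$ so that the resulting definable map $X'\to Y'$ is genuinely a projection with the smooth points in the right places. But all of this is already packaged into Lemmas~\ref{L: W germed}, \ref{L: W smooth}, and \ref{L: type version of open mapping}, so the proof should be short.
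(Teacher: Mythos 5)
Your proposal is correct and follows essentially the same route as the paper: apply Lemma \ref{L: W germed} for germedness and equal dimension, use weak transversality for finite-to-oneness, and invoke the open mapping property (via Lemma \ref{L: type version of open mapping}, with smoothness supplied by Lemmas \ref{L: W smooth} and \ref{L: type version of smooth}) to get openness, hence transversality. The paper's proof is just a terser version of the same argument, so there is nothing to add.
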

\begin{proof} Let $(x,y,A,B,C)$ be a weakly transverse configuration, with family of intersections $W$. By Lemma \ref{L: W germed}, $W$ is germed and has the same dimension as $\tp(y/C)$. By assumption, $W\rightarrow\tp(y/C)$ is finite-to-one near $(x,y)$. Thus, by Definition \ref{D: open mapping thm}, $W\rightarrow\tp(y/C)$ is open near $(x,y)$. Thus $(x,y,A,B,C)$ is transverse.
\end{proof}

\subsection{Differentiability and Enough Open Maps}

We now introduce \textit{differentiability}, which complements the open mapping property as another simplified condition implying enough open maps. Roughly speaking, we want to say that $(\mathcal K,\tau)$ can be equipped with a tangent space functor satisfying versions of the Inverse Function Theorem and Sard's Theorem. 

\begin{definition}\label{D: differentiable} We say that $(\mathcal K,\tau)$ is \textit{differentiable} if there are a notion of smoothness $X\mapsto X^S$ on $(\mathcal K,\tau)$, and a covariant functor $\mathcal F:\mathcal C\rightarrow\mathcal D$, such that the following hold:
\begin{enumerate}
    \item $\mathcal D$ is the category of $F$-vector spaces for some field $F$.
    \item The objects of $\mathcal C$ are smooth-pointed definable sets in $\mathcal K$: that is, pairs $(X,x)$ where $X\subset K^n$ is definable and $x\in X^S$.
    \item For each object $(X,x)$ in $\mathcal C$, the dimension of $\mathcal F(X,x)$ as an $F$-vector space is precisely $\dim X$.
    \item The morphisms of $\mathcal C$ are compositions of inclusions and projections.
    \item $\mathcal F$ sends inclusions in $\mathcal C$ to inclusions in $\mathcal D$.
    \item If $x\in X^S$ and $y\in Y^S$, then $\mathcal F(X\times Y,(x,y))$ is the direct product of $\mathcal F(X,x)$ and $\mathcal F(Y,y)$. Moreover, $\mathcal F$ sends projections in $\mathcal C$ to the corresponding projections in $\mathcal D$.
    \item $\mathcal F$ sends constant functions in $\mathcal C$ to the zero map in $\mathcal D$.
    \item (Weak Inverse Function Theorem) If $f:(X,x)\rightarrow(Y,y)$ is a morphism in $\mathcal C$, and $\mathcal F(f)$ is an isomorphism, then $f$ is an open map in a neighborhood of $x$.
    \item (Sard's Theorem) If a morphism $f:(X,x)\rightarrow (Y,y)$ in $\mathcal C$ is definable over $A$, $x$ is generic in $X$ over $A$, and $y$ is generic in $Y$ over $A$, then $\mathcal F(f)$ is surjective. 
\end{enumerate}
\end{definition}

We will see later (Theorem \ref{T: ominimal} and Lemma \ref{L: ez differentiable}) that o-minimal expansions of fields, as well as pure Henselian fields of characteristic $0$, are differentiable. We now aim toward showing that differentiability implies enough open maps.\\

\textbf{For the rest of this subsection, we assume that $(\mathcal K,\tau)$ is differentiable, and fix witnessing data $X\mapsto X^S$, $\mathcal C$, $\mathcal D$, and $\mathcal F$.}

\begin{remark} Following the intuition of tangent spaces, we will denote the space $\mathcal F(X,x)$ by $T_x(X)$, and call it the \textit{tangent space to $X$ at $x$}.
\end{remark}

First we check:

\begin{lemma}\label{L: tangent space local} Tangent spaces are d-local. That is, suppose $x\in X\subset Y$, where $X$ is definable, $x\in X^S$, and $Y$ is a d-approximation of $X$ at $x$. Then $T_x(X)=T_x(Y)$.
\end{lemma}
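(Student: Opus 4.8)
The plan is to unwind the definitions: this statement turns out to be essentially immediate from the axioms of a differentiable structure (Definition~\ref{D: differentiable}) once one checks that $Y$ may legitimately be fed into the functor $\mathcal F$.

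First I would verify that $(Y,x)$ is a genuine object of the category $\mathcal C$. Since $Y$ is a d-approximation of $X$ at $x$, the sets $X$ and $Y$ have the same germ at $x$; because ``$x\in X^S$'' is a $d$-local property by Definition~\ref{D: smooth}(6), and $x\in X^S$ by hypothesis, it follows that $x\in Y^S$. Hence $(Y,x)$ is an object of $\mathcal C$, and the set-theoretic inclusion $X\subseteq Y$ is a morphism $(X,x)\to(Y,x)$ in $\mathcal C$ (morphisms being generated by inclusions and projections). Next I would apply $\mathcal F$: by Definition~\ref{D: differentiable}(5) it sends this inclusion to a linear inclusion of $F$-vector spaces $T_x(X)\subseteq T_x(Y)$. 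Finally, by Definition~\ref{D: differentiable}(3) we have $\dim_F T_x(X)=\dim X$ and $\dim_F T_x(Y)=\dim Y$, while $\dim X=\dim Y$ since $Y$ is a d-approximation of $X$ (Definition~\ref{D: d-approximation}(1)); this common value is finite, being the dimension of a definable set in a geometric structure. A linear inclusion between finite-dimensional $F$-vector spaces of equal dimension is an equality, so $T_x(X)=T_x(Y)$.

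The only step requiring any care is the observation that $(Y,x)\in\mathcal C$ — i.e., that $x$ remains smooth in $Y$ — and this is exactly where $d$-locality of the notion of smoothness is invoked; the remainder is a direct dimension count from the axioms, so I do not anticipate any substantive obstacle.
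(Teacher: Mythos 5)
Your proof is correct and follows exactly the same route as the paper's: use d-locality of smoothness (Definition~\ref{D: smooth}(6)) to get $x\in Y^S$, apply Definition~\ref{D: differentiable}(5) to get the inclusion $T_x(X)\subseteq T_x(Y)$, and conclude by the dimension count from Definition~\ref{D: differentiable}(3) together with $\dim(X)=\dim(Y)$. No issues.
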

\begin{proof}
    Since smoothness is d-local (Definition \ref{D: smooth}(6)), we have $x\in Y^S$, so $T_x(Y)$ is well-defined. Now by Definition \ref{D: differentiable}(5), the inclusion $X\xhookrightarrow{} Y$ induces an inclusion $T_x(X)\xhookrightarrow{} T_x(Y)$. On the other hand, since $Y$ is a d-approximation of $X$, we have $\dim(X)=\dim(Y)$, and thus $\dim(T_x(X))=\dim(T_x(Y))$ (by Definition \ref{D: differentiable}(3)). So $T_x(X)=T_x(Y)$. 
\end{proof}

By Lemma \ref{L: tangent space local}, we can extend tangent spaces to germed type-definable sets at smooth points. That is, if $X$ is type-definable and germed at $x$, and $x\in X^S$, then $T_x(X)$ is well-defined.

\begin{notation}
    If $\tp(x/A)$ is a complete type, we abbreviate $T_x(\tp(x/A))$ by $T(x/A)$.
\end{notation}

In the language of types, we conclude:

\begin{lemma}\label{L: type tangent space} Let $x$ and $y$ be tuples, and $A\subset B$ parameter sets. Let $0_y$ denote the identity of $T(y/A)$.
    \begin{enumerate}
        \item $\dim(T(x/A))=\dim(x/A)$.
        \item If $x$ and $B$ are independent over $A$, then $T(x/B)=T(x/A)$.
        \item $T(xy/A)\rightarrow T(y/A)$ is surjective, with kernel $T(x/Ay)\times\{0_y\}$.
    \end{enumerate}
\end{lemma}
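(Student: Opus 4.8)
The plan is to reduce all three parts to computations with a single well-chosen d-approximation, exploiting the fact (Lemma \ref{L: tangent space local} and the remarks following it) that the tangent space of a germed type-definable set at a smooth point may be computed on \emph{any} d-approximation, together with clauses (3), (5), (6), (7), (9) of Definition \ref{D: differentiable}.

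For part (1): by Lemma \ref{L: germed preservation}(3) and Lemma \ref{L: type version of smooth}, $\tp(x/A)$ is germed and smooth at $x$, so $T(x/A)$ is defined; by Lemma \ref{L: germ at generic} the d-approximations of $\tp(x/A)$ at $x$ are precisely the $A$-definable sets $X$ in which $x$ is generic over $A$, and for any such $X$ one has $\dim X=\dim(x/A)$, so Definition \ref{D: differentiable}(3) gives $\dim_F T(x/A)=\dim_F\mathcal F(X,x)=\dim X=\dim(x/A)$. For part (2): since $x$ is independent from $B$ over $A$, Lemma \ref{L: independent germs} shows $\tp(x/A)$ and $\tp(x/B)$ have the same germ at $x$, and moreover $\dim(x/A)=\dim(x/B)$; hence any $A$-definable d-approximation $X$ of $\tp(x/A)$ at $x$ is simultaneously a d-approximation of $\tp(x/B)$ at $x$ (it contains $\tp(x/B)\subseteq\tp(x/A)$, has the correct dimension, and has the correct germ), so $T(x/A)=T_x(X)=T(x/B)$.

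Part (3) is where the work lies. The first step is to identify the fibre of the coordinate projection $\pi\colon\tp(xy/A)\to\tp(y/A)$ over $y$: any automorphism fixing $A$ and sending $xy$ to $x'y$ must fix $y$, whence $\pi^{-1}(y)=\tp(x/Ay)\times\{y\}$, and this is germed and smooth at $(x,y)$ by Lemmas \ref{L: germed preservation}(3),(5), \ref{L: type version of smooth}, and Definition \ref{D: smooth}(1),(2). Using the inclusions $\tp(xy/A)\hookrightarrow\tp(x/A)\times\tp(y/A)$ and $\tp(x/Ay)\hookrightarrow\tp(x/A)$, together with Definition \ref{D: differentiable}(5),(6) (and $\dim T_y(\{y\})=0$ from Definition \ref{D: differentiable}(3)), one may regard $T(xy/A)$ as a subspace of $T(x/A)\times T(y/A)$ and obtains an injection $T(x/Ay)\hookrightarrow T(xy/A)$ whose image is the subspace $T(x/Ay)\times\{0_y\}$ of the statement; since the composite $\tp(x/Ay)\times\{y\}\hookrightarrow\tp(xy/A)\xrightarrow{\pi}\tp(y/A)$ is a constant morphism, Definition \ref{D: differentiable}(7) places this image inside $\ker(\mathcal F(\pi))$. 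For surjectivity of $\mathcal F(\pi)\colon T(xy/A)\to T(y/A)$, choose an $A$-definable d-approximation $X$ of $\tp(xy/A)$ at $(x,y)$ whose coordinate projection lies inside an $A$-definable d-approximation $Y$ of $\tp(y/A)$ at $y$ (replace $X$ by $X\cap\pi^{-1}(Y)$ if necessary); then $(x,y)$ is generic in $X$ over $A$ and $y$ is generic in $Y$ over $A$, so Sard's Theorem (Definition \ref{D: differentiable}(9)) applies and $\mathcal F(\pi)$ is surjective. Finally, additivity of dimension and part (1) give $\dim_F\ker(\mathcal F(\pi))=\dim(xy/A)-\dim(y/A)=\dim(x/Ay)=\dim_F T(x/Ay)$, so the injection $T(x/Ay)\hookrightarrow\ker(\mathcal F(\pi))$ is onto, and the kernel is exactly $T(x/Ay)\times\{0_y\}$.

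The main obstacle is the categorical bookkeeping in part (3): checking that all maps in sight are morphisms of the germ-extended category $\mathcal C$, that $\mathcal F$ evaluated on the chosen d-approximations genuinely represents the intrinsic maps $T(xy/A)\to T(y/A)$ and $T(x/Ay)\hookrightarrow T(xy/A)$ (this uses d-locality of tangent spaces and that $\mathcal F$ carries projections to projections and inclusions to inclusions), and pinning down the identification under which the kernel equals $T(x/Ay)\times\{0_y\}$. None of this is deep; the only genuinely substantive inputs are the fibre computation $\pi^{-1}(y)=\tp(x/Ay)\times\{y\}$ and the genericity hypotheses feeding Sard's Theorem.
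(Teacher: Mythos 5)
Your proposal is correct and follows essentially the same route as the paper: (1) from Definition \ref{D: differentiable}(3), (2) from Lemma \ref{L: independent germs}, and for (3) surjectivity via Sard's Theorem, the inclusion $\tp(x/Ay)\times\{y\}\hookrightarrow\tp(xy/A)$ inducing $T(x/Ay)\times\{0_y\}\subseteq\ker$ via Definition \ref{D: differentiable}(5),(7), and a dimension count using additivity to force equality. Your extra care in verifying the genericity hypotheses for Sard and the product/constant identifications only spells out steps the paper leaves implicit.
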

\begin{proof} (1) follows from Definition \ref{D: differentiable}(3). (2) follows from Lemma \ref{L: independent germs}. For (3), the surjectivity follows first from Sard's Theorem. Now let $V_1=T(x/Ay)\times\{0_y\}$, and let $V_2$ be the kernel of $T(xy/A)\rightarrow T(y/A)$. We want to show that $V_1=V_2$. Notice that there is an inclusion $\tp(x/Ay)\times\{y\}\xhookrightarrow{}\tp(xy/A)$. By Definition \ref{D: differentiable}(5), there is an induced inclusion $V_1\xhookrightarrow{}T(xy/A)$, whose image is contained in $V_2$ (by composing with $T(xy/A)\rightarrow T(y/A)$ and applying Definition \ref{D: differentiable}(7)). Thus $V_1\subset V_2$, and it suffices to show that $\dim(V_1)=\dim(V_2)$. For this, since $T(xy/A)\rightarrow T(y/A)$ is surjective, and using (1), we have

$$\dim(V_2)=\dim(T(xy/A))-\dim(T(y/A))$$ $$=\dim(xy/A)-\dim(y/A)=\dim(x/A)=\dim(T(x/A))=\dim(V_1).$$
\end{proof}

The key observation in showing enough open maps is:

\begin{lemma}\label{L: kernel doesn't depend on C} Let $(x,y,A,B,C)$ be a transversality configuration, with family of intersections $W$. Then the kernel of $T_{(x,y)}(W)\rightarrow T(y/C)$ is precisely $(T(x/Ay)\cap T(x/B))\times\{0_y\}$. In particular, it does not depend on $C$.
\end{lemma}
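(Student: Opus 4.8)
The plan is to push everything forward through the first-coordinate projection $q\colon\tp(xy/C)\to\tp(x/C)$, $(x',y')\mapsto x'$, and then observe that the answer is expressed entirely in terms of $T(x/B)$ and $T(x/Ay)$, neither of which involves $C$. Note first that $T_{(x,y)}(W)$ is defined: $W$ is germed at $(x,y)$ and $(x,y)\in W^S$ by Lemmas~\ref{L: W germed} and~\ref{L: W smooth}. The map $T_{(x,y)}(W)\to T(y/C)$ in the statement is the restriction, along $T_{(x,y)}(W)\subseteq T(xy/C)$, of the surjection $T(xy/C)\to T(y/C)$, whose kernel by Lemma~\ref{L: type tangent space}(3) is $K:=T(x/Cy)\times\{0_y\}$; so the object to identify is $T_{(x,y)}(W)\cap K$. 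Since $y\in\acl(Cx)$, running the argument of Lemma~\ref{L: type tangent space}(3) on the first coordinate block (rather than the second) shows that $\ker\mathcal F(q)=\{0_x\}\times T(y/Cx)$ is trivial; together with surjectivity of $\mathcal F(q)$ (Sard's Theorem, Definition~\ref{D: differentiable}(9), applied to $d$-approximations) and $\dim T(xy/C)=\dim(xy/C)=\dim(x/C)=\dim T(x/C)$, this makes $Tq:=\mathcal F(q)\colon T(xy/C)\xrightarrow{\sim}T(x/C)$ an isomorphism. Moreover $T(x/C)=T(x/A)$ by Lemma~\ref{L: type tangent space}(2) (from $x\ind_A C$), and $Tq$ restricted to $K$ is simply the isomorphism $T(x/Cy)\times\{0_y\}\xrightarrow{\sim}T(x/Cy)$ forgetting $\{0_y\}$ (because, in the proof of Lemma~\ref{L: type tangent space}(3), $K$ is the image of the tangent space of the fibre $\tp(x/Cy)\times\{y\}$, which $q$ carries isomorphically onto $\tp(x/Cy)\subseteq\tp(x/C)$). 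Finally $T(x/Cy)=T(x/Ay)$ by Lemma~\ref{L: type tangent space}(2) again, this time from $x\ind_{Ay}C$.

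The heart of the argument is the identification $Tq\bigl(T_{(x,y)}(W)\bigr)=T(x/B)$ inside $T(x/A)$. Put $S:=\tp(x/B)\cap\tp(x/C)$. Directly from Definition~\ref{D: transversality config}, $W=q^{-1}(S)$, so $q$ restricts to a morphism $W\to S$ and hence $Tq\bigl(T_{(x,y)}(W)\bigr)\subseteq T_x(S)$. On the other hand, $\tp(x/C)$ has the same germ at $x$ as $\tp(x/A)$ (Lemma~\ref{L: independent germs}), and since $\tp(x/B)\subseteq\tp(x/A)$, standard manipulations of germs (Fact~\ref{F: type def cofinal}, Lemma~\ref{L: germed preservation}) show that $S$ has the same germ at $x$ as $\tp(x/B)$; consequently $x\in S^S$ and $T_x(S)=T(x/B)$ by $d$-locality of tangent spaces (Lemma~\ref{L: tangent space local}). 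Thus $Tq\bigl(T_{(x,y)}(W)\bigr)\subseteq T(x/B)$; and since $Tq$ is injective,
$$\dim Tq\bigl(T_{(x,y)}(W)\bigr)=\dim T_{(x,y)}(W)=\dim W=\dim(x/B)=\dim T(x/B)$$
by Lemma~\ref{L: W germed}(2), forcing $Tq\bigl(T_{(x,y)}(W)\bigr)=T(x/B)$. This is exactly the step where the content lies: in general $T_{(x,y)}(q^{-1}(S))$ is only \emph{contained} in $Tq^{-1}(T_x(S))$, but here the dimension count from Lemma~\ref{L: W germed}(2) upgrades the containment to an equality.

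To conclude, use that $Tq$ is injective, hence commutes with intersection:
$$Tq\bigl(T_{(x,y)}(W)\cap K\bigr)=Tq\bigl(T_{(x,y)}(W)\bigr)\cap Tq(K)=T(x/B)\cap T(x/Ay).$$
Transporting this back through the restriction of $Tq$ to $K$ -- which merely re-attaches the $\{0_y\}$-coordinate -- gives
$$\ker\bigl(T_{(x,y)}(W)\to T(y/C)\bigr)=T_{(x,y)}(W)\cap K=\bigl(T(x/Ay)\cap T(x/B)\bigr)\times\{0_y\},$$
and the right-hand side visibly does not mention $C$. The main obstacle is the equality $Tq\bigl(T_{(x,y)}(W)\bigr)=T(x/B)$ above, together with the routine but somewhat delicate bookkeeping needed to know that all the type-definable sets in play -- $W$, $S$, the fibre $\tp(x/Cy)\times\{y\}$, and their various intersections -- are germed and smooth at the relevant points, so that their tangent spaces exist and the functor $\mathcal F$ behaves as expected on the inclusions and projections relating them.
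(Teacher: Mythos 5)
Your proof is correct and follows essentially the same route as the paper's: identify $T(xy/C)\cong T(x/C)=T(x/A)$, use the dimension count $\dim T_{(x,y)}(W)=\dim(x/B)$ to pin down $T_{(x,y)}(W)$ as (the preimage of) $T(x/B)$, and then intersect with the kernel $T(x/Cy)\times\{0_y\}$ and replace $T(x/Cy)$ by $T(x/Ay)$ via independence over $Ay$. Your auxiliary set $S=\tp(x/B)\cap\tp(x/C)$ just makes explicit the germ/smoothness bookkeeping behind the paper's terser assertion that the embedding $W\hookrightarrow\tp(xy/C)$ lands $T_{(x,y)}(W)$ inside the preimage of $T(x/B)$.
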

\begin{proof} By Lemma \ref{L: type tangent space}(3), $T(xy/C)\rightarrow T(x/C)$ is surjective. But $y\in\acl(Cx)$ by assumption, so $\dim(xy/C)=\dim(x/C)$, thus $\dim(T(xy/C))=\dim(T(x/C))$, and thus $T(xy/C)\rightarrow T(x/C)$ is an isomorphism. Next, by Lemma \ref{L: type tangent space}(2), we have $T(x/C)=T(x/A)$. Now there is an inclusion $\tp(x/B)\xhookrightarrow{}\tp(x/A)$, so there is an induced inclusion $T(x/B)\xhookrightarrow{}T(x/A)=T(x/C)$. In particular, we have $T(x/B)\subset T(x/C)$. 

Let $V$ be the preimage of $T(x/B)$ under the isomorphism $T(xy/C)\rightarrow T(x/C)$. Note that there is an inclusion of $T_{(x,y)}(W)$ into $V$ (induced by the embedding $W\xhookrightarrow{}\tp(xy/C)$). On the other hand, by Lemma \ref{L: W germed}, we have $$\dim(T_{(x,y)}(W))=\dim(W)=\dim(x/B)=\dim(T(x/B))=\dim(V).$$ So in fact $T_{(x,y)}(W)=V$ (that is, $T_{(x,y)}(W)$ is formed by restricting $T(xy/C)$ to those points whose $T_x(C)$ coordinate lies in $T_x(B)$).

In particular, the kernel of $T_{(x,y)}(W)\rightarrow T(y/C)$ is now the restriction of the kernel of $T(xy/C)\rightarrow T(y/C)$ to those elements whose $T_x(C)$ coordinate lies in $T_x(B)$. By Lemma \ref{L: type tangent space}(3), this is the same as $(T(x/Cy)\cap T(x/B))\times\{0_y\}$. Finally, since $x$ and $C$ are independent over $Ay$, Lemma \ref{L: type tangent space}(2) gives that $T(x/Cy)=T(x/Ay)$, and we are done.
\end{proof}

Finally, we conclude:

\begin{proposition}\label{P: differentiable implies enough opens} Under our assumption that $(\mathcal K,\tau)$ is differentiable, $(\mathcal K,\tau)$ has enough open maps.
\end{proposition}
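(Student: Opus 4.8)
The plan is to show that if $(x,y,A,B,C)$ is a weakly transverse configuration and $(x,y,A,B,C')$ is a strongly transverse configuration, then $(x,y,A,B,C)$ is transverse; that is, I must upgrade the finite-to-one map $W\to\tp(y/C)$ near $(x,y)$ to an open one. My main tool is the Weak Inverse Function Theorem (Definition \ref{D: differentiable}(8)): it suffices to show that the induced linear map $T_{(x,y)}(W)\to T(y/C)$ is an isomorphism of $F$-vector spaces. Since $W\to\tp(y/C)$ is finite-to-one near $(x,y)$ and both sides have dimension $\dim(y/C)$ by Lemma \ref{L: W germed}, the two tangent spaces have equal (finite) dimension, so it is enough to prove that $T_{(x,y)}(W)\to T(y/C)$ has trivial kernel.

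First I would invoke Lemma \ref{L: kernel doesn't depend on C}, which computes this kernel as $\big(T(x/Ay)\cap T(x/B)\big)\times\{0_y\}$ — crucially, an expression independent of $C$. So the kernel computed for the configuration $(x,y,A,B,C)$ coincides with the kernel computed for $(x,y,A,B,C')$. Now I use the strong transversality of $(x,y,A,B,C')$: by definition this means $x$ is independent from $C'$ over $B$, equivalently (Definition \ref{D: transverse}(3)) that $(x,y)$ is generic in the intersection family $W'$ of $(x,y,A,B,C')$ over $BC'$. Because $(x,y)$ is a generic smooth point of $W'$ and the relevant maps are definable over $BC'$, Sard's Theorem (Definition \ref{D: differentiable}(9)) applies to the projection $W'\to\tp(y/C')$ — here I should check that $y$ is also generic in $\tp(y/C')$ over $BC'$, which follows since $y\in\acl(C'x)$ forces $\dim(y/BC')=\dim(x/BC')=\dim(x/B)=\dim(W')=\dim(y/C')$. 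Hence $T_{(x,y)}(W')\to T(y/C')$ is surjective. But by weak transversality of $(x,y,A,B,C')$ (which follows from strong transversality, or can just be verified directly since $y\in\acl(C'x)$ makes the fibers finite), this map is also finite-to-one near $(x,y)$, so domain and target have the same dimension, and surjectivity forces it to be an isomorphism; in particular its kernel is trivial. By Lemma \ref{L: kernel doesn't depend on C} the kernel for $(x,y,A,B,C)$ is the same, hence also trivial. Therefore $T_{(x,y)}(W)\to T(y/C)$ is an isomorphism, and the Weak Inverse Function Theorem gives that $W\to\tp(y/C)$ is open near $(x,y)$, as desired.

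The step I expect to be the most delicate is the bookkeeping around applying Sard's Theorem to the strongly transverse configuration: one must make sure the relevant morphism $W'\to\tp(y/C')$ genuinely sits in the category $\mathcal C$ (a composition of inclusions and projections, with smooth-pointed endpoints — here Lemma \ref{L: W smooth} provides $(x,y)\in (W')^S$), that the genericity hypotheses of Definition \ref{D: differentiable}(9) are met over the correct parameter set $BC'$, and that d-locality lets one replace $W'$ and $\tp(y/C')$ by honest definable d-approximations throughout. Once those technicalities are handled, the argument is purely formal: strong transversality feeds Sard's Theorem, Sard's Theorem kills the kernel for $C'$, Lemma \ref{L: kernel doesn't depend on C} transports the vanishing to $C$, and the Weak Inverse Function Theorem converts it back to openness.
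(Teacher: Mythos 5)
Your proof is correct and follows essentially the same route as the paper's: Sard's Theorem applied to the strongly transverse configuration kills the kernel of $T_{(x,y)}(W')\to T(y/C')$, Lemma \ref{L: kernel doesn't depend on C} transports the vanishing to $T_{(x,y)}(W)\to T(y/C)$, dimension counting (via Lemma \ref{L: W germed}) upgrades this to an isomorphism, and the Weak Inverse Function Theorem yields openness. Your explicit verification that $y$ is generic in $\tp(y/C')$ over $BC'$ is a useful detail the paper leaves implicit, but the argument is the same.
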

\begin{proof} Let $(x,y,A,B,C)$ and $(x,y,A,B,C')$ be transversality configurations, with $(x,y,A,B,C)$ weakly transverse and $(x,y,A,B,C')$ strongly transverse. Let $W$ and $W'$ be their respective families of intersections. Let $f$ and $f'$ denote the maps $T_{(x,y)}(W)\rightarrow T(y/C)$ and $T_{(x,y)}(W')\rightarrow\tp(y/C')$, respectively. By Lemma \ref{L: W germed}, each of these maps goes between two vector spaces of the same dimension.

Since $(x,y,A,B,C')$ is strongly transverse, $(x,y)$ is generic in $W'$ over $BC'$ and $y$ is generic in $\tp(y/C)$ over $BC'$. Thus, by Sard's Theorem, $f'$ is surjective. By dimension considerations, $f'$ thus has trivial kernel. Then, by Lemma \ref{L: kernel doesn't depend on C}, $f$ also has trivial kernel. Finally, by dimension considerations again, $f$ is surjective, and thus an isomorphism. It now follows by the weak inverse function theorem that $W\rightarrow\tp(y/C)$ is open near $(x,y)$, and thus $(x,y,A,B,C)$ is transverse.
\end{proof}

\section{Strongly Minimal Relics}\label{s: relics}

We now study the behavior of strongly minimal structures definable in $\mathcal K$. Our goal is to make progress toward the Zilber trichotomy for such structures, by showing that (assuming non-local modularity) they are able to partially reconstruct the topology $\tau$. Thus, we now fix:

\begin{assumption}\label{A: K and M}
    \textbf{From now until the end of section 7, we fix a non-locally modular $\emptyset$-definable strongly minimal $\mathcal K$-relic $\mathcal M=(M,...)$. In other words, $\mathcal M$ is a non-locally modular strongly minimal structure; the set $M$ is a subset of some $K^d$; and every $\emptyset$-definable set in $\mathcal M$ is also $\emptyset$-definable in $\mathcal K$. We also assume the language of $\mathcal M$ is countable; that $\acl_{\mathcal M}(\emptyset)$ is infinite; and that there is a $\emptyset$-definable \textit{excellent family} of plane curves in $\mathcal M$ (see Definition \ref{D: excellent}).}
\end{assumption}

\begin{remark} Similarly to Remark \ref{R: ctble expansion}, one might be concerned about the assumptions above regarding $\emptyset$-definability and $\acl_{\mathcal M}(\emptyset)$. We point out that all of these conditions can be arranged harmlessly by adding a countable set of constants to the languages of $\mathcal M$ and $\mathcal K$.
\end{remark}

\begin{remark} Since $\mathcal K$ is $\aleph_1$-saturated and the language of $\mathcal M$ is countable, one checks easily that $\mathcal M$ is also $\aleph_1$-saturated. We use this throughout.
\end{remark}

\subsection{Notational Conventions}

Several confusions can arise from working simultaneously with two geometric structures. Before proceeding, we clarify a few issues:

\begin{convention} Following \cite{CasACF0}, we make the following notational conventions to distinguish between $\mathcal K$ and $\mathcal M$:
\begin{enumerate}
\item Unless otherwise stated, all tuples are assumed to be taken in $\mathcal M^{\textrm{eq}}$, and all parameter sets are assumed to be countable sets of tuples from $\mathcal M^{\textrm{eq}}$. 
		\item The term \textit{rank}, and the notation rk, will always refer to the notion of dimension in $\mathcal M$. 
        \item We use the notation $\mathcal M(A)$-\textit{definable} to refer to sets definable in $\mathcal M$ over $A$. We similarly use $\mathcal K(A)$-definable, $\mathcal M(A)$-interpretable, and $\mathcal K(A)$-interpretable.
		\item The term \textit{plane curve}, and all properties of plane curves and families of plane curves, are interpreted in the sense of $\mathcal M$. Similarly, the terms \textit{stationary}, \textit{stationary component}, and \textit{canonical base} always refer to $\mathcal M$-definable objects, and are interpreted in the sense of $\mathcal M$. 
		\item When referring to the dimension functions of $\mathcal K$, we will use the term \textit{dimension}, and the notation dim.
		\item The terms \textit{generic}, \textit{independent}, and \textit{algebraic}, and the notation acl(A), are always interpreted in the sense of $\mathcal K$. We will refer to the corresponding terms in $\mathcal M$ with the prefix $\mathcal M$ (e.g. $\mathcal M$-generic), and algebraic closure in $\mathcal M^{\textrm{eq}}$ will be denoted $\operatorname{acl}_{\mathcal M}(A)$. 
	\end{enumerate}
\end{convention}


\subsection{Stating the Main Result}

Let us begin our work with $\mathcal M$ by stating our first main goal. As stated above, the idea is to show that $\mathcal M$ detects $\tau$ in a precise sense. Following \cite{CasACF0}, we make the following definitions:

\begin{definition} Let $x=(x_1,...,x_n)\in M^n$ be a tuple. We say that $x$ is \textit{coordinate-wise generic} if each $x_i$ is generic in $M$ over $\emptyset$ (that is, in the sense of $\mathcal K$).
\end{definition}

\begin{definition}\label{independent projections} Let $X\subset M^n$ be $\mathcal M(A)$-definable, and let $\pi_i,\pi_j:M^n\rightarrow M$ be the $i$th and $j$th projections. We say that $\pi_i$ and $\pi_j$ are \textit{independent on} $X$ if for all  $x=(x_1,...,x_n)\in X$ generic over $A$, the elements $x_i$ and $x_j$ are independent over $A$.
\end{definition}

\begin{remark} It is easy to see that Definition \ref{independent projections} is unchanged if `generic' and `independent' are replaced by their counterparts in $\mathcal M$, and that moreover the definition as a whole does not depend on the particular parameter set $A$.
\end{remark}

\begin{definition}\label{D: weak detection of closures} We say that $\mathcal M$ \textit{weakly detects closures} if the following holds: Suppose $X\subset M^n$ is $\mathcal M(A)$-definable of rank $r\geq 0$, and let $x=(x_1,...,x_n)\in\overline X$ be a coordinate-wise generic point. Then $\operatorname{rk}(x/A)\leq r$. Moreover, one of the following holds:
\begin{enumerate}
    \item $\operatorname{rk}(x/A)<r$.
    \item For all $i\neq j$ such that the projections $\pi_i,\pi_j:M^n\rightarrow M$ are independent on $X$, the elements $x_i$ and $x_j$ are $\mathcal M$-independent over $A$.
\end{enumerate}
\end{definition}

\begin{example} To aid in understanding condition (2) above, we point out that it always holds for non-trivial plane curves. Indeed, let $X$, $A$, and $x$ be as in Definition \ref{D: weak detection of closures}, such that, moreover,  $X$ is a non-trivial plane curve. Then for generic $(y_1,y_2)\in X$, the $y_i$ are interalgebraic, and thus dependent, over $A$. So the two projections $X\rightarrow M$ are \textit{dependent}, and thus (2) is vacuous in this case.
\end{example}

Now our first main result is:

\begin{theorem}\label{T: closure thm} If $(\mathcal K,\tau)$ has enough open maps, then $\mathcal M$ weakly detects closures.
\end{theorem}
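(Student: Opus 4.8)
The plan is to follow the strategy sketched informally in the text before Definition~\ref{D: transversality config}, but carried out carefully at the level of types, germs, and transversality configurations. Fix an $\mathcal{M}(A)$-definable set $X \subset M^n$ of rank $r$, and a coordinate-wise generic $x = (x_1,\dots,x_n) \in \overline{X}$. The first, easy, reduction is the inequality $\operatorname{rk}(x/A) \le r$: if $x$ were generic in $\overline X$ over $A$ of rank $> r$, then since $\overline X = X \cup \operatorname{Fr}(X)$ and $\dim_{\mathcal M}$ of the frontier drops, $x$ would be generic in $X$ — but then $x$ is also a $\tau$-closure point realized generically, and the strong frontier inequality (applied inside $\mathcal K$, using that $\mathcal M$-rank and $\mathcal K$-dimension interact through the relic structure) forces a contradiction. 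The substance is the dichotomy (1) vs.\ (2): assuming $\operatorname{rk}(x/A) = r$, one must show that any two $X$-independent projections $\pi_i,\pi_j$ stay $\mathcal M$-independent at $x$ over $A$.

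The heart of the argument is the contrapositive: suppose $\pi_i,\pi_j$ are independent on $X$ but $x_i \nind^{\mathcal M}_A x_j$; derive a contradiction. After the harmless reductions permitted by Assumption~\ref{A: K and M} (naming parameters, passing to a stationary component, using the excellent family $\mathcal C = \{C_t\}$ of plane curves and the fact that non-local modularity gives a rank~$2$ such family), one sets up, near the relevant two coordinates, a configuration involving: the plane curve $Y \subset M^2$ that is (a suitable projection/localization of) $X$, a frontier point $(a,b) \in \operatorname{Fr}(Y)$ coming from $x$, and a generic member $C$ of the curve family through $(a,b)$. The key move is to package this as a \emph{transversality configuration} $(x',y',A',B',C')$ in the sense of Definition~\ref{D: transversality config}, where $x'$ codes an intersection point of $Y$ with the generic curve, $y'$ codes the curve, $B'$ defines $Y$, and $C'$ codes the ``slope'' parameter chosen independently. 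One must check the three axioms of a transversality configuration — the dimension equality, the mutual algebraicity $x' \in \operatorname{acl}(B'y')$, $y' \in \operatorname{acl}(C'x')$, and the independence of $x'$ from $C'$ over $A'$ and $A'y'$ — each of which follows from strong minimality of $\mathcal M$, the excellence of the curve family, and generic choice of the slope.

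Once the configuration is in place, the argument runs: weak transversality of the configuration holds because $x' \in \operatorname{acl}(B'y')$ makes $W \to \tp(y'/C')$ finite-to-one near $(x',y')$; strong transversality of a companion configuration $(x',y',A',B',C'')$ is consistent by a forking-calculus computation (choosing the slope generically also over $B'$); hence \emph{enough open maps} (the hypothesis of the theorem) upgrades this to \emph{transversality} — the map $W \to \tp(y'/C')$ is locally open at $(x',y')$. Local openness is exactly the ``perturbed curves don't lose intersection points'' statement: translating/moving the generic curve slightly keeps an intersection point near each of the $m$ original points, while the frontier point $(a,b)$ contributes an extra $(m{+}1)$-st intersection point for infinitely many (hence, by strong minimality, cofinitely many) nearby curves. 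This forces the original curve $C$ to lie in $\operatorname{acl}_{\mathcal M}$ of its slope together with the parameter for $Y$, and then a standard forking-calculus manipulation (using that the slope was chosen independently of $a,b$ and the defining parameters) yields $(a,b) \in \operatorname{acl}_{\mathcal M}(A)$ — contradicting $\operatorname{rk}(x/A) = r$ together with $x_i \nind^{\mathcal M}_A x_j$.

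The main obstacle I expect is not any single step but the bookkeeping required to \emph{produce a legitimate transversality configuration} from the raw data $(X, A, x)$: one has to descend from an arbitrary $\mathcal M(A)$-definable $X \subset M^n$ of arbitrary rank to a two-variable picture where a single curve family applies, control which pair of coordinates to look at, replace $X$ by an appropriate stationary component and then by a plane curve obtained via a generic fiber of a projection, and verify that all the independence and algebraicity hypotheses of Definition~\ref{D: transversality config} survive these reductions — all while keeping the point $x$ coordinate-wise generic and tracking the role of the $\tau$-closure hypothesis. This is where the proof in \cite{CasACF0} does its real work, and where the ``few differences'' alluded to in the introduction (notably the need to use germs of type-definable sets in place of infinitesimal neighborhoods, since $\tau$ need not be definable) will have to be handled with care, invoking Lemmas~\ref{L: W germed}, \ref{L: germ at generic}, and \ref{L: independent germs} to make the local/germ-theoretic language rigorous.
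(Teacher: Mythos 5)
Your middle section — how \emph{enough open maps} is meant to be used (weak transversality from an algebraicity/finiteness condition, strong transversality from an independently chosen parameter, hence transversality and local openness of the intersection family over the curve parameter, then a perturbation producing an extra intersection point near the frontier point and a forking computation) — does match the spirit of the paper's Steps III and IV. But the overall architecture you propose has a genuine gap: the reduction "from the raw data $(X,A,x)$ to a two-variable picture" is not mere bookkeeping, and as described it fails. If $\pi_i,\pi_j$ are independent on $X$, then the image of $X$ under $(\pi_i,\pi_j)$ has rank $2$, i.e.\ is generic in $M^2$, so it is not a plane curve; and cutting dimension by passing to a generic fiber of a projection destroys the hypothesis $x\in\overline X$, since a closure point of $X$ need not be a closure point of any fiber or slice of $X$. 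The paper never performs such a reduction. Instead it argues by induction on $r=\operatorname{rk}(X)$, with the main step (Proposition \ref{P: main closure step}) formulated for non-trivial $r$-hypersurfaces $H_{\hat t}\subset M^{r+1}$, which are intersected with the one-dimensional sets $D_u\subset M^{r+1}$ obtained by imposing $r$ plane-curve conditions from the excellent family simultaneously (linking the $0$th coordinate to each of the others), so that all coordinates of the closure point are moved at once; the transversality configurations are $(x,\hat u,\emptyset,\hat t,\hat p)$ and $(x,\hat u,\emptyset,\hat t,\hat x)$, not a configuration attached to a single plane curve through two coordinates of $x$. The induction is essential, not decorative: the inductive hypothesis is what controls the exceptional sets (the parameters $u$ with $H_{\hat t}\cap D_u$ infinite in Lemma \ref{ab finite intersection}, the set $Z$ of parameters with jumped intersection count in Step IV, and the frontier of a stationary component in Lemma \ref{* y}), and without it those steps cannot be run. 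A further missing ingredient is the translation step (Proposition \ref{stationary non-trivial hypersurfaces}): since $x$ is only coordinate-wise generic, one must first translate $X$ and $x$ along independent generic plane curves, using the generic local homeomorphism property, to reach the genericity assumptions of Proposition \ref{P: main closure step}; nothing in your sketch plays this role.

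Your "easy reduction" to $\operatorname{rk}(x/A)\le r$ is also not correct as stated. The frontier $\operatorname{Fr}(X)$ is in general not $\mathcal M$-definable (the topology $\tau$ is not definable in the relic), so one cannot argue that "$\dim_{\mathcal M}$ of the frontier drops"; indeed, bounding the rank of frontier points is precisely part of what the theorem asserts, and in the paper it is obtained only through the full inductive machinery (it is clause (1) of the condition $*$ in Definition \ref{*}, proved via Propositions \ref{generic non-trivial hypersurfaces}--\ref{inductive step}). Nor does the strong frontier inequality in $\mathcal K$ yield it directly: it bounds $\dim(x/A)$, and since Lemma \ref{dim rk comparison} only gives $\dim(a/A)\le \operatorname{rk}(a/A)\cdot\dim M$, a bound on $\dim(x/A)$ produces a \emph{lower} bound on rank unless $x$ is coherent over $A$, which a coordinate-wise generic closure point need not be. So the inequality $\operatorname{rk}(x/A)\le r$, the dichotomy between clauses (1) and (2), and the passage from hypersurfaces to arbitrary $X\subset M^n$ all have to be threaded through the induction on $r$; this is the structural content your proposal is missing.
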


We postpone the proof to the next section.

\subsection{Coherence}

Before moving toward the proof of Theorem \ref{T: closure thm}, we briefly study the notion of \textit{coherence}, which allows us to move between the dim and rk functions smoothly, and in many instances to forgo computations with dim in favor of simpler computations with rk. The material in this subsection is analogous to \cite[\S 3.3]{CasACF0}.

We begin by noting the following basic facts, which are all easy and can be proven word for word as in \cite{CasACF0}:

\begin{lemma}\label{dim rk comparison} Let $X$ be $\mathcal M(A)$-interpretable, and let $a\in X$.
\begin{enumerate}
\item $\dim X=\rk X\cdot\dim M$.
\item $\dim(a/A)\leq\rk(a/A)\cdot\dim M$.
\item If $a$ is generic in $X$ over $A$, then $a$ is $\mathcal M$-generic in $X$ over $A$.
\end{enumerate}
\end{lemma}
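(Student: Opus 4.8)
The plan is to derive all three items from a single non-formal observation and otherwise rely only on the additivity of $\dim$ (in $\mathcal K$) and of $\rk$ (in $\mathcal M$). The observation I would isolate first is: for every \emph{real} tuple $a\in M^m$ and every parameter set $A$, $\dim(a/A)\le\rk(a/A)\cdot\dim M$. This holds because $\mathcal M$-algebraicity implies $\mathcal K$-algebraicity --- an $\mathcal M(A)$-definable finite set is a finite $\mathcal K^{\mathrm{eq}}(A)$-interpretable set, so $\acl_{\mathcal M}(A)\cap M\subseteq\acl(A)$ (computing $\acl$ in $\mathcal K^{\mathrm{eq}}$, where by Gagelman $\dim(c/A)=0$ iff $c\in\acl(A)$ even for imaginary $A$). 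Taking a maximal $\acl_{\mathcal M}$-independent subtuple $a'$ of $a$ over $A$, so $\rk(a/A)=|a'|$, the remaining coordinates lie in $\acl(Aa')$, whence $\dim(a/A)=\dim(a'/A)\le\sum_i\dim(a'_i/A)\le|a'|\cdot\dim M$, using subadditivity and $\dim(a'_i/A)\le\dim M$. Since $M$ is infinite and $\mathcal K$ is geometric, $\dim M\ge 1$, so a strict inequality between quantities $\rk\cdot\dim M$ always yields a strict inequality between the $\rk$ quantities.

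Next I would settle the definable case: $X\subseteq M^n$ definable over $A$, $r=\rk X$. The bound $\dim X=\max_{a\in X}\dim(a/A)\le r\cdot\dim M$ is immediate from the observation, which is also exactly item (2) for real tuples. For the reverse inequality in (1) I would pass to a finite-to-one coordinate projection: by uniform finiteness and definability of rank in $\mathcal M$ there are an $\mathcal M(A)$-definable generic $X_0\subseteq X$ and a coordinate projection $\pi\colon M^n\to M^r$ whose restriction to $X_0$ is finite-to-one. An $\mathcal M$-finite set is $\mathcal K$-finite, so $\dim X_0=\dim\pi(X_0)$; moreover $\pi(X_0)\subseteq M^r$ has $\rk=r=\rk M^r$, so its complement in $M^r$ has rank $<r$ and hence, by the already-proved upper bound, $\dim<r\cdot\dim M=\dim M^r$, forcing $\dim\pi(X_0)=\dim M^r=r\cdot\dim M$; thus $\dim X\ge\dim X_0=r\cdot\dim M$. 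Item (3) in the definable case then follows at once: if $a$ is $\mathcal K$-generic in $X$ over $A$ then $r\cdot\dim M=\dim X=\dim(a/A)\le\rk(a/A)\cdot\dim M$, so $\rk(a/A)=r$.

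Finally, for general $\mathcal M(A)$-interpretable $X$, I would fix an $\mathcal M(A)$-definable surjection $h\colon X'\to X$ with $X'\subseteq M^n$ and use Gagelman's additivity of $\dim$ in $\mathcal K^{\mathrm{eq}}$ and of $\rk$ in $\mathcal M^{\mathrm{eq}}$. For (2): given $e\in X$ choose $a\in h^{-1}(e)$ that is $\mathcal K$-generic over $Ae$; the definable case applied to the $\mathcal M(Ae)$-definable set $h^{-1}(e)$ gives $\dim(a/Ae)=\rk(a/Ae)\cdot\dim M$, the observation gives $\dim(a/A)\le\rk(a/A)\cdot\dim M$, and subtracting via additivity yields $\dim(e/A)\le\rk(e/A)\cdot\dim M$. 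For (1) and (3): take $e$ $\mathcal K$-generic in $X$ over $A$ and $a\in h^{-1}(e)$ $\mathcal K$-generic over $Ae$; since a generic fibre of $h$ has dimension $\dim X'-\dim X$, one checks $a$ is $\mathcal K$-generic in $X'$ over $A$, hence (definable (3)) $\mathcal M$-generic in $X'$ over $A$; as the $h$-image of an $\mathcal M$-generic point, $e$ is then $\mathcal M$-generic in $X=h(X')$ over $A$, i.e.\ $\rk(e/A)=\rk X$, which is (3); feeding this back into the subtraction computation gives $\dim X=\dim(e/A)=\rk X\cdot\dim M$, which is (1).

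The only genuinely non-formal ingredient is the observation in the first paragraph; everything else is bookkeeping with additivity. The point that needs care is the order of the argument in the interpretable case: the lower bound in (1) must be obtained through ``the $h$-image of an $\mathcal M$-generic point is $\mathcal M$-generic in the image'' --- a statement internal to $\mathcal M$ --- rather than by trying to transport $\mathcal K$-genericity down a fibre of $h$, which would be circular with the statement being proved. A secondary point to keep straight is that Gagelman's extension of $\dim$ to $\mathcal K^{\mathrm{eq}}$ still detects $\acl$ on real elements over imaginary parameter sets, which is precisely what lets the observation apply with parameters in $\mathcal M^{\mathrm{eq}}$.
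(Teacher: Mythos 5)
Most of your proposal is sound, and it follows the standard route (the paper itself offers no proof here, only a pointer to \cite{CasACF0}): the key observation via $\acl_{\mathcal M}\subseteq\acl_{\mathcal K^{\mathrm{eq}}}$ and additivity, the definable case of (1) and (3) via a finite-to-one coordinate projection onto $M^r$ together with stationarity of $M^r$, and item (2) for interpretable elements via the quotient map and Gagelman additivity are all correct as you arrange them.

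The gap is in the interpretable case of (1) and (3). You take $e$ $\mathcal K$-generic in $X$ over $A$ and $a\in h^{-1}(e)$ generic over $Ae$, and justify ``$a$ is $\mathcal K$-generic in $X'$ over $A$'' by the claim that a generic fibre of $h$ has dimension $\dim X'-\dim X$. Additivity only gives $\dim h^{-1}(e)\le\dim X'-\dim X$, and equality can fail: fix distinct $p,q\in\acl_{\mathcal M}(\emptyset)\cap M$, let $X'=\{(x,x,p):x\in M\}\cup\{(x,y,q):x,y\in M\}\subseteq M^3$, and let $h(x,x,p)=(x,p)$, $h(x,y,q)=(q,q)$, so $X=h(X')=(M\times\{p\})\cup\{(q,q)\}$. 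Here $\dim X=\dim M$, a $\mathcal K$-generic $e=(x,p)$ has a singleton fibre, so your $a$ satisfies $\dim(a/A)=\dim M<2\dim M=\dim X'$ and the definable case tells you nothing about it. The same example refutes the auxiliary principle you invoke next, that the $h$-image of an $\mathcal M$-generic point of $X'$ is $\mathcal M$-generic in $h(X')$: the $\mathcal M$-generic of $X'$ lies in the two-dimensional part and maps to the rank-$0$ point $(q,q)$, while $\rk X=1$. The repair is to reverse the order: get the lower bound of (1) first and deduce (3) from (1) and (2), rather than routing (1) through (3). For instance, stratify $X$ by fibre rank: the sets $X_k=\{e\in X:\rk(h^{-1}(e))=k\}$ are $\mathcal M(A)$-interpretable (Morley rank is definable in families over a strongly minimal theory); choose $k$ with $\rk(X_k)=\rk X=r$, so that $\rk\bigl(h^{-1}(X_k)\bigr)=r+k$ by additivity of $\rk$, take $a$ $\mathcal K$-generic in $h^{-1}(X_k)$ over $A$ and set $e=h(a)$. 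The definable case gives $\dim(a/A)=(r+k)\cdot\dim M$, while $\dim(a/Ae)\le\dim h^{-1}(e)=k\cdot\dim M$, so $\dim(e/A)=\dim(a/A)-\dim(a/Ae)\ge r\cdot\dim M$; combined with the upper bound coming from (2) this proves (1), and then any $\mathcal K$-generic $e'$ satisfies $\rk(e'/A)\cdot\dim M\ge\dim(e'/A)=\rk X\cdot\dim M$, which is (3).
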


In light of Lemma \ref{dim rk comparison}(2), the following makes sense:

\begin{definition} Let $a$ be a tuple and $A$ a set. We say that $a$ is \textit{coherent} over $A$ if $\dim(a/A)=\rk(a/A)\cdot\dim M$. We say that $a$ is \textit{coherent} if $a$ is coherent over $\emptyset$.
\end{definition}

We will aim to work with coherent tuples as much as possible, because computations with them tend to be significantly easier. The following facts are very helpful:

\begin{lemma}\label{L: coherent preservation} Let $X$ be $\mathcal M(A)$-interpretable, let $a\in X$, and let $b$ be any tuple.
\begin{enumerate}
\item $a$ is generic in $X$ over $A$ if and only if it is both $\mathcal M$-generic in $X$ over $A$ and coherent over $A$.
\item $ab$ is coherent over $A$ if and only if $a$ is coherent over $A$ and $b$ is coherent over $Aa$.
\item If $a$ is coherent over $A$ and $b\in\acl_{\mathcal M}(Aa)$, then $b$ is coherent over $A$ and $a$ is coherent over $Ab$.
\item If $a$ is coherent over $A$ and $a$ is independent from $B$ over $A$, then $a$ is coherent over $AB$, and $a$ is $\mathcal M$-independent from $B$ over $A$.
\item If $a$ is coherent over $A$ and $B$ is any set, then there is $b$ realizing $\operatorname{tp}_{\mathcal K}(a/A)$ with $b$ coherent over $AB$. 
\item There is a realization $a'$ of $\operatorname{tp}_{\mathcal M}(a/A)$ which is coherent over $A$.
\end{enumerate}
\end{lemma}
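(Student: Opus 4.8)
The plan is to treat all six parts as dimension bookkeeping built from three inputs: additivity of $\rk$ in $\mathcal M$, additivity of $\dim$ in $\mathcal K$, and the three comparisons of Lemma \ref{dim rk comparison}. I would establish (1) and (2) first, deduce (3) and (4) from them, and finish with (5) and (6) by realizing suitable partial types in $\aleph_1$-saturated $\mathcal K$. For (1): if $a$ is generic in $X$ over $A$ then $a$ is $\mathcal M$-generic by Lemma \ref{dim rk comparison}(3), and $\dim(a/A)=\dim X=\rk X\cdot\dim M=\rk(a/A)\cdot\dim M$ by Lemma \ref{dim rk comparison}(1), so $a$ is coherent over $A$; conversely, if $a$ is $\mathcal M$-generic in $X$ over $A$ and coherent over $A$, then $\dim(a/A)=\rk(a/A)\cdot\dim M=\rk X\cdot\dim M=\dim X$. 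For (2): apply Lemma \ref{dim rk comparison}(2) to $a/A$ and to $b/Aa$ and add; comparing with the two additivity identities $\dim(ab/A)=\dim(a/A)+\dim(b/Aa)$ and $\rk(ab/A)\cdot\dim M=\rk(a/A)\cdot\dim M+\rk(b/Aa)\cdot\dim M$ shows that $ab$ is coherent over $A$ exactly when equality holds in both summands, i.e.\ when $a$ is coherent over $A$ and $b$ is coherent over $Aa$.

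For (4): $\rk(a/AB)\le\rk(a/A)$, so Lemma \ref{dim rk comparison}(2) gives $\dim(a/AB)\le\rk(a/AB)\cdot\dim M\le\rk(a/A)\cdot\dim M=\dim(a/A)$; the hypothesis $\dim(a/AB)=\dim(a/A)$ forces equality throughout, which is precisely coherence of $a$ over $AB$ together with $\mathcal M$-independence of $a$ from $B$ over $A$. For (3): one first checks that $b\in\acl_{\mathcal M}(Aa)$ implies $b\in\acl_{\mathcal K}(Aa)$ --- since (after naming the constants defining $\mathcal M$, cf.\ Remark \ref{R: ctble expansion}) every $\mathcal K^{eq}$-automorphism fixing $Aa$ restricts to an $\mathcal M^{eq}$-automorphism fixing $Aa$, the $\mathcal K$-orbit of $b$ over $Aa$ lies inside its finite $\mathcal M$-orbit --- so $\dim(b/Aa)=\rk(b/Aa)=0$. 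The additivity identities then give $\dim(ab/A)=\dim(a/A)$ and $\rk(ab/A)=\rk(a/A)$, so coherence of $a$ over $A$ upgrades to coherence of $ab$ (equivalently $ba$) over $A$; applying (2) to the tuple $ba$ yields both that $b$ is coherent over $A$ and that $a$ is coherent over $Ab$.

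Parts (5) and (6) have the same shape: exhibit a finitely satisfiable partial type over a countable set and read off coherence from a realization. For (5), with $k=\dim(a/A)$, consider $\tp_{\mathcal K}(a/A)\cup\{x\notin Z:\ Z\ \mathcal K(AB)\text{-definable},\ \dim Z<k\}$; it is finitely satisfiable because $\psi(\mathcal K)$ has dimension $\ge k$ for every $\psi\in\tp_{\mathcal K}(a/A)$, hence survives the removal of finitely many sets of dimension $<k$. A realization $b$ satisfies $\dim(b/AB)\ge k$, while $b\models\tp_{\mathcal K}(a/A)$ forces $\dim(b/A)=k$, so $b$ is independent from $B$ over $A$; and $b$ is coherent over $A$, since sharing the $\mathcal K$-type of $a$ over $A$ it shares the $\mathcal M$-type, hence $\rk$ and $\dim$, over $A$. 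Part (4) then gives that $b$ is coherent over $AB$. For (6), with $r=\rk(a/A)$, consider $\tp_{\mathcal M}(a/A)\cup\{x\notin Z:\ Z\ \mathcal K(A)\text{-definable},\ \dim Z<r\cdot\dim M\}$; for $\psi\in\tp_{\mathcal M}(a/A)$ the $\mathcal M(A)$-definable set $\psi(\mathcal M)$ has $\rk\ge r$, hence dimension $\rk(\psi(\mathcal M))\cdot\dim M\ge r\cdot\dim M$ by Lemma \ref{dim rk comparison}(1), so finite satisfiability follows as before. A realization $a'$ has $\rk(a'/A)=r$ (same $\mathcal M$-type over $A$) and $\dim(a'/A)\ge r\cdot\dim M$; combined with the upper bound $\dim(a'/A)\le\rk(a'/A)\cdot\dim M$ of Lemma \ref{dim rk comparison}(2), this says exactly that $a'$ is coherent over $A$.

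I do not expect a serious obstacle. The one point deserving care is that, since the language of $\mathcal K$ may be uncountable, the partial types in (5) and (6) may consist of uncountably many formulas; this is harmless because $\aleph_1$-saturation realizes any consistent partial type over a countable parameter set, so the real content is just the (routine) finite-satisfiability dimension counts. A second minor point is the verification of $\acl_{\mathcal M}(Aa)\subseteq\acl_{\mathcal K}(Aa)$ used in (3).
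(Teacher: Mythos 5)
Your proof is correct, and at its core it is the same dimension bookkeeping the paper uses. Your argument for (4) is essentially identical to the paper's, and (5) is obtained, as in the paper, by producing a realization of $\tp_{\mathcal K}(a/A)$ independent from $B$ over $A$ and invoking (4) (you simply make the existence of that realization explicit via a finitely satisfiable partial type and $\aleph_1$-saturation, which is fine since the parameter set is countable). The genuine differences are organizational: the paper dispatches (1)--(3) by citation to \cite{CasACF0}, whereas you reprove them from additivity of $\rk$ and $\dim$ together with Lemma \ref{dim rk comparison}, which makes the lemma self-contained; and for (6) the paper chooses an $\mathcal M(A)$-interpretable set $Y$ of minimal Morley rank and degree containing $a$, takes a $\mathcal K$-generic point of $Y$ over $A$, and applies (1), while you realize $\tp_{\mathcal M}(a/A)$ together with the conditions avoiding all $\mathcal K(A)$-definable sets of dimension $<\rk(a/A)\cdot\dim M$. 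These two versions of (6) carry the same content -- your partial type is consistent precisely because of Lemma \ref{dim rk comparison}(1), which is also what makes the paper's choice of $Y$ work -- yours avoids Morley degree, the paper's avoids writing down an infinite type.

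One small point of hygiene in (3): the inclusion $\acl_{\mathcal M}(Aa)\subseteq\acl_{\mathcal K}(Aa)$ is true and needed, but your justification via automorphism orbits is not quite right as stated, since $\mathcal K$ is only assumed $\aleph_1$-saturated and no homogeneity is available, so a finite orbit does not by itself witness algebraicity. The correct (and shorter) argument is syntactic: an $\mathcal M$-formula over $Aa$ with finitely many solutions containing $b$ defines, after naming the constants as in Remark \ref{R: ctble expansion}, a finite $\mathcal K(Aa)$-interpretable set containing $b$. With that repair the argument for (3) is complete, and no other step of your proposal needs adjustment.
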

\begin{proof} (1), (2), and (3) are exactly as in \cite{CasACF0} (Lemmas 3.16 and 3.17). For (5), let $b$ be any relation of $\operatorname{tp}_{\mathcal K}(a/A)$ which is independent from $B$ over $A$, and apply (4).

For (6), choose an $\mathcal M(A)$-interpretable set $Y$ of minimal Morley rank and degree containing $a$, so that any $\mathcal M$-generic element of $Y$ over $A$ has the same $\mathcal M$-type as $a$ over $A$. Now let $a'$ be a generic element of $Y$ over $A$, and apply (1).

It remains only to show (4). So assume $a$ is coherent over $A$ and independent from $B$ over $A$. Then $\dim(a/A)=\dim(a/AB)=\rk(a/A)\cdot\dim(M)=k$, say. The statement of (4) would follow if we can show $k=\rk(a/AB)\cdot\dim(M)$. But $\rk(a/AB)\cdot\dim(M)\leq k$ follows since $\rk(a/AB)\leq\rk(a/A)$, and $\rk(a/AB)\cdot\dim(M)\geq k$ follows by Lemma \ref{dim rk comparison}(2).
\end{proof}

\section{The Proof of Theorem \ref{T: closure thm}}

Let us now begin the proof of Theorem \ref{T: closure thm}. Almost all of the proof is independent of the topology on $\mathcal K$, and uses only dimension computations in $\mathcal K$ and $\mathcal M$. In particular, most of the proof can be copied word-for-word from \cite{CasACF0}. The key difference occurs in an important geometric step in the proof, where the assumption of \emph{enough open maps} (Definition \ref{D: enough open maps}) replaces some application of complex geometry in the original proof. 

Rather than duplicating twenty pages of material, we will largely give an outline of the proof, pointing out which parts can be copied and which ones involve the topological axioms. In the former case, we will point the reader to the identical steps in \cite{CasACF0} for details. We will then focus almost exclusively on the latter case.

As in Definition \ref{D: weak detection of closures}, we are given an $\mathcal M(A)$-definable set with coordinate-wise generic closure point $x$. Our goal is to show that $\rk(x/A)\leq r$, and that one of the two options in the conclusion of Definition \ref{D: weak detection of closures} holds. The overarching structure of the proof will be an induction on the value $r=\rk(X)$, with almost the whole proof contained in the inductive step. Thus, we make the following convention (as we will see later, the case $r=0$ is trivial):

\begin{assumption}\label{IH} Until otherwise stated, we assume that $r\geq 1$ is fixed and the statement of Theorem \ref{T: closure thm} holds for all $r'<r$.
\end{assumption}

\subsection{The Main Argument}

As in \cite{CasACF0}, we deduce the inductive step of Theorem \ref{T: closure thm} from a different, more technical statement (Proposition \ref{P: main closure step} below), whose proof contains the main geometric idea of the argument as a whole. This proposition will apply to generic members of almost faithful families of non-trivial hypersurfaces. Let us first recall what this means:

\begin{definition}\label{D: hypersurface} An $r$-\textit{hypersurface} is an $\mathcal M$-definable subset of $M^{r+1}$ of rank $r$. An $r$-hypersurface $H$ is \textit{non-trivial} if for  $(x_0,...,x_r)\in H$ generic over the parameters defining $H$ in $\mathcal M$, any $r$ of $x_0,...,x_r$ have rank $r$ over the same parameters.
\end{definition}

\begin{remark}\label{R: nontrivial plane curve} Note that a plane curve is the same as a 1-hypersurface; in particular, one easily checks that Definition \ref{D: hypersurface} generalizes non-triviality for plane curves (introduced in Subsection 2.2). 
\end{remark}

As discussed in Section \ref{ss: sm}, we say that an $\mathcal M$-definable family $\{H_t:t\in T\}$ of $r$-hypersurfaces is \textit{almost faithful} if for each $t\in T$, there are only finitely many $t'\in T$ with $\rk(H_t\cap H_{t'})=r$. We will (later on) use the existence of such families. Namely, let $H$ be a stationary non-trivial hypersurface. Then, as also discussed in Section \ref{ss: sm}, there are an $\CM(\0)$-definable almost faithful definable family of (without loss of generality non-trivial) hypersurfaces $\{H_t:t\in T\}$, and an $\CM$-generic $\hat t\in T$, such that $H$ is almost equal to $H_{\hat t}$.  See \cite[Lemma 2.25]{CasACF0} as well as the discussion following Definition 8.11 \textit{loc. cit.} for details. 
Moreover, we may assume $T\sub M^n$ For some $n$, and if $H$ is defined over a coherent parameter (over $\emptyset$), then $\hat t$ will be generic in $T$ (i.e. in the sense of $\mathcal K$). \\

Now, the main technical step of Theorem \ref{T: closure thm} (analogous to \cite[Proposition 8.15]{CasACF0}) is:

\begin{proposition}\label{P: main closure step} Let $\mathcal H=\{H_t:t\in T\}$ be an almost faithful family of non-trivial $r$-hypersurfaces of rank $k>(r+1)\cdot\dim M$, and let $\mathcal C=\{C_s:s\in S\}$ be an excellent family of plane curves. Assume that each of $\mathcal H$ and $\mathcal C$ is $\mathcal M(A)$-definable for some countable set $A$. Let $\hat t\in T$ and $\hat x=(\hat x_0,...\hat x_r)\in M^{r+1}$ each be generic over $A$, and assume that $\hat x\in\overline{H_{\hat t}}$. Then at least one of the following happens:
		\begin{enumerate}
			\item $\rk(\hat x/A\hat t)<r$.
			\item There is some $x\in H_{\hat t}$ such that for each $i=1,...,r$, there are infinitely many $s\in S$ with $(\hat x_0,\hat x_i),(x_0,x_i)\in C_s$. 
			\item There is some $i\geq 2$ such that $\hat x_i\in\operatorname{acl}_{\mathcal M}(A\hat t\hat x_0...\hat x_{i-1})$.
		\end{enumerate}
	\end{proposition}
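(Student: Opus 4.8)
The argument follows the blueprint of \cite[Proposition~8.15]{CasACF0}, with the complex-analytic input of that proof replaced by the hypothesis of enough open maps. The plan is to assume that conclusions~(2) and~(3) fail, assume moreover $\rk(\hat x/A\hat t)\ge r$ (so that~(1) also fails), and derive a contradiction. The idea is to probe the closure point $\hat x\in\overline{H_{\hat t}}$ by intersecting $H_{\hat t}$ with a generic one-dimensional ``product curve'' $C^*$ built from the excellent family $\mathcal C$, to perturb $C^*$, and to count intersection points with $H_{\hat t}$: transversality of the intersection (supplied by enough open maps) keeps all existing intersection points alive under perturbation, while $\hat x\in\overline{H_{\hat t}}$ forces an extra one to appear, and strong minimality of $\mathcal M$ propagates this extra point back to $C^*$ itself — a contradiction.

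Concretely: since $\mathcal C$ is excellent, $\rk(\mathcal C)=2$ and through each generic point of $M^2$ runs a rank-$1$ subfamily of nontrivial members of $\mathcal C$. Using this I would pick, for $i=1,\dots,r$, parameters $\hat s_i\in S$ — mutually independent and independent from $A\hat t\hat x$ — with $(\hat x_0,\hat x_i)\in C_{\hat s_i}$, set $\hat s=(\hat s_1,\dots,\hat s_r)$, and form the definable family $\{C^*_{\vec s}:\vec s\in S^r\}$ with $C^*_{\vec s}=\{(x_0,\dots,x_r):(x_0,x_i)\in C_{s_i}\text{ for all }i\}$. Nontriviality of the $C_{s_i}$ makes each $C^*_{\vec s}$ of rank $1$, so $\dim C^*_{\vec s}+\dim H_{\hat t}=(r+1)\dim M=\dim M^{r+1}$ and a dimension count shows $C^*_{\hat s}\cap H_{\hat t}$ is finite, say of size $m$; note $\hat x\in C^*_{\hat s}$, and $\rk(S^r)=2r\ge r+1$. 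Using excellence of $\mathcal C$ — that the ``curves of $\mathcal C$ through a point'' vary continuously, via the generic local homeomorphism property — together with the Baire category axiom, I would show that $\bigcup_{\vec s\text{ near }\hat s}C^*_{\vec s}$ covers a neighborhood of $\hat x$; since $\hat x\in\overline{H_{\hat t}}$, it follows that for infinitely many $\vec s$ near $\hat s$ the curve $C^*_{\vec s}$ meets $H_{\hat t}$ at a point near $\hat x$, which (since $\hat x$ is coordinate-wise generic while the $m$ standing points are algebraic over the parameters, and since~(3) fails) is distinct from the $m$ points of $C^*_{\hat s}\cap H_{\hat t}$.

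The heart of the matter is the persistence of those $m$ points, and this is where enough open maps enters. For $p\in C^*_{\hat s}\cap H_{\hat t}$ I would build a transversality configuration $(x,y,A',B',C')$ in the notation of Definition~\ref{D: transversality config}, with $x$ coding $p$ and $y$ coding $\vec s$, $\tp(x/B')$ a $d$-approximation of $H_{\hat t}$ at $p$ and $\tp(x/A'y)$ a $d$-approximation of $C^*_{\vec s}$ at $p$, so that the intersection family $W$ is the germ at $(p,\hat s)$ of $\{(p',\vec s'):p'\in C^*_{\vec s'}\cap H_{\hat t}\}$; pairing this with the strongly transverse companion configuration obtained by replacing $p$ with a generic point of $H_{\hat t}$, Definition~\ref{D: enough open maps} gives that $W\to\tp(y/C')$ is open at $(p,\hat s)$ \emph{unless} it is not finite-to-one there. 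If it is open at every such $p$, then all $m$ points persist to $C^*_{\vec s}\cap H_{\hat t}$ for every nearby $\vec s$, so together with the previous step $|C^*_{\vec s}\cap H_{\hat t}|\ge m+1$ for infinitely many $\vec s$ near $\hat s$; the count being definable in $\mathcal M$, this holds for $\mathcal M$-generic $\vec s\in S^r$, hence for $\hat s$ (which is $\mathcal M$-generic in $S^r$ over $A\hat t$ by Lemma~\ref{dim rk comparison}(3)), contradicting $|C^*_{\hat s}\cap H_{\hat t}|=m$.

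Finally, the residual possibilities yield~(2) or~(3). If some configuration above is not finite-to-one, then infinitely many curves $C^*_{\vec s}$ pass through a common point $x$ of $H_{\hat t}$; unwinding the definition of $C^*_{\vec s}$, for each $i$ infinitely many $s\in S$ satisfy $(x_0,x_i)\in C_s$, and keeping track of $\hat s_i$ in the rank-$1$ subfamily through $(\hat x_0,\hat x_i)$ recovers conclusion~(2). If instead the extra intersection point coincides with a standing one or fails to be suitably generic, a forking computation using the independence of $\hat s$ from $A\hat t\hat x$ and the coherence lemmas (Lemma~\ref{L: coherent preservation}), which translate between $\acl$ and $\acl_{\mathcal M}$, forces some $\hat x_i\in\acl_{\mathcal M}(A\hat t\hat x_0\cdots\hat x_{i-1})$, i.e.\ conclusion~(3); here the inductive hypothesis (Assumption~\ref{IH}) handles the lower-rank configurations arising when a coordinate of $\hat x$ degenerates. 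The main obstacle throughout is the step in the previous paragraph: packaging ``intersection points persist under perturbation'' as a genuine transversality configuration whose independence hypotheses (conditions~(1)--(3) of Definition~\ref{D: transversality config}) really hold — choosing the parameter sets and $d$-approximations so that $\dim(x/B')=\dim(y/C')$ and so that a strongly transverse companion exists — and then bookkeeping precisely which failures of weak transversality or of genericity land in~(2) versus~(3); the ``sweeping'' claim (that the $2r$-dimensional probing family covers a neighborhood of $\hat x$) is a secondary difficulty, and is exactly where excellence of $\mathcal C$, rather than mere almost faithfulness, is used.
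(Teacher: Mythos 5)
Your overall strategy is the same as the paper's (probe $H_{\hat t}$ with product curves built from $\mathcal C$, use enough open maps to make the finitely many intersection points persist under perturbation, use $\hat x\in\overline{H_{\hat t}}$ to create an extra point nearby, and contradict a genericity statement, with the inductive hypothesis absorbing low-rank degeneracies). But there are two genuine gaps at exactly the places where the proof is hard. First, the transversality configuration is never actually produced. As literally stated, your choice of $\hat s_i$ ``independent from $A\hat t\hat x$'' with $(\hat x_0,\hat x_i)\in C_{\hat s_i}$ is impossible ($(\hat x_0,\hat x_i)$ is generic in $M^2$ over $A$, so it cannot lie on a curve whose parameter is independent from it); and more importantly, if $y$ codes the full parameter $\vec s\in S^r$, the hypotheses of Definition \ref{D: transversality config} fail: one cannot have $\dim(x/B')=\dim(y/C')$ (the germ of $H_{\hat t}$ has dimension $r\cdot\dim M$ while the probing family has dimension $2r\cdot\dim M$), and $y\notin\acl(C'x)$ since the curves of the family through a fixed point form a positive-rank set. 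The paper's resolution is a specific device you are missing: since $\mathcal C$ is excellent, $S\subset M^2$, so one fixes $\hat p\in M^r$ generic over $\hat t\hat x$ and freezes the first coordinates of the parameter pairs, obtaining the rank-$r$ subfamily $U_{\hat p}$; then the configurations are $(x,\hat u,\emptyset,\hat t,\hat p)$ and $(x,\hat u,\emptyset,\hat t,\hat x)$, where $\hat p$ plays the role of the ``slope'' parameter, ${}_xY$ is finite (giving $\hat u\in\acl(\hat p x)$), and the three dimensions match. You flag this as ``the main obstacle'' but do not resolve it, and without it the appeal to Definition \ref{D: enough open maps} does not apply. (Also, weak transversality needs finiteness of $H_{\hat t}\cap D_u$ for all $u$ \emph{near} $\hat u$, which the paper gets from the inductive hypothesis via Lemma \ref{ab finite intersection}; a dimension count at $\hat s$ alone only handles the single curve.)

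Second, the endgame does not work as written. Knowing that infinitely many parameters near $\hat s$ give at least $m+1$ intersection points does not, by definability of the count, yield that $\mathcal M$-generically many parameters do: an infinite set in a rank-$2r$ (or rank-$r$) parameter space can be far from generic. Moreover your claim that $\hat s$ is $\mathcal M$-generic in $S^r$ over $A\hat t$ is unjustified (and typically false: $\hat s$ is constrained to pass through coordinates of $\hat x$, and $\rk(\hat x/A\hat t)\le r$ in the relevant regime). The paper handles this precisely with the restricted family: Step I shows $\hat u$ is $\mathcal M$-generic in $U_{\hat p}$ over $\hat t\hat p$, and Step IV shows $\hat u$ lies in the closure of the $\mathcal M(\hat t\hat p)$-definable set $Z$ of parameters with anomalous intersection count, then splits according to whether $Z$ is generic in $U_{\hat p}$ (contradiction via stationarity of $U_{\hat p}$) or non-generic (contradiction via the inductive hypothesis on weak detection of closures applied to $\hat u\in\overline Z$). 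This dichotomy, not definability of the intersection count, is the mechanism that converts ``an extra point appears for infinitely many nearby parameters'' into a contradiction, and it is the second place (besides Lemma \ref{ab finite intersection}) where the induction on $r$ is genuinely used — not only, as you suggest, for degenerate coordinates of $\hat x$.
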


\begin{proof}
		For ease of notation, we will assume $A=\emptyset$. 
		
		Arguing by contradiction, assume that each of (1), (2), and (3) fails. As in \cite{CasACF0}, we will proceed by studying the pairwise intersections of $H_{\hat t}$ with a certain family of curves in $M^{r+1}$. We begin by defining the various objects we will need for the argument.

        \begin{notation}
            For the rest of the proof of Proposition \ref{P: main closure step}, we fix the following:
            \begin{itemize}
                \item Let $H\subset M^{r+1}\times T$ be the graph of $\mathcal H$.
                \item Let $U=S^r$.
                \item For $u=(s_1,...,s_r)\in U$, set 
                \[D_u=\{(x_0,...,x_r)\in M^{r+1}:(x_0,x_i)\in C_{s_i}\textrm{ for each }i=1,...,r\}.\]
                \item Let $\mathcal D$ be the family of $D_u$ as $u$ varies in  $U$, and let $D\subset M^{r+1}\times U$ denote its graph.
                \item Let $\hat p=(\hat p_1,...,\hat p_r)$ be a fixed element of $M^r$ generic over $\hat t\hat x$.
			\item Let $U_{\hat p}$ be the set of all $u=((p_1,q_1),...,(p_r,q_r))\in U$ such that $p_i=\hat p_i$ for each $i=1,...,r$.
				\item Let $\mathcal Y$ be the subfamily of $\mathcal D$ indexed by $U_{\hat p}$, with graph $Y\subset M^{r+1}\times U_{\hat p}$.
            \item Let $I$ be the set of $(x,u)\in Y$ with $x\in H_{\hat t}$.
            \item For $x\in M^{r+1}$, we let ${_xD}:=\{u\in U:x\in D_u\}$ and ${_xY}:={_xD}\cap U_{\hat p}$.
            \end{itemize}
        \end{notation}

        \begin{remark} A minor point of departure from \cite{CasACF0} is the definition of $I$. In \cite{CasACF0}, $I$ also had a $T$-coordinate, so one took triples $(x,t,u)$ with $x\in H_t\cap D_u$. Our definition essentially prohibits $t$ from varying. This will make the current version of the argument easier, though the role played by $I$ will be the same.
        \end{remark}

        The following facts are all now easy to see. Note that (5) is Lemma 8.21 of \cite{CasACF0}.

        \begin{fact}\label{F: D and U facts} For the various objects specified above, we have:
        \begin{enumerate}
            \item $U$ is stationary of rank $2r$, and $U_{\hat p}$ is stationary of rank $r$.
            \item $\mathcal D$ and $\mathcal Y$ are families of rank one subsets of $M^{r+1}$. 
            \item For generic $x\in M^{r+1}$, we have $\rk({_xD})=r$ and $\rk({_xY})=0$. 
            \item For independent generic $x,x'\in M^{r+1}$, we have $\rk({_xD}\cap{_{x'}D})=0$.
            \item The projection $Y\rightarrow M^{r+1}$ is finite-to-one and almost surjective.
            \item $\hat x$ is generic in $M^{r+1}$ over $\hat p$.
        \end{enumerate}
        \end{fact}
	
	By Fact \ref{F: D and U facts} (5), (6) and the genericity of $\hat x$, the set ${_{\hat x}Y}$ is non-empty and finite. 

    \begin{notation}
		For the rest of the proof of Proposition \ref{P: main closure step}, fix $\hat u\in{_{\hat x}Y}$.
	\end{notation}

The following facts about $\hat u$ are also straightforward (see \cite[Lemma 8.23, Lemma 8.24]{CasACF0} for the proofs):

	\begin{fact}\label{hat u facts} $\hat u$ is generic in each of the following senses:
        \begin{enumerate}
            \item in $U$ over $\emptyset$; in particular, $\hat u$ is coordinate-wise generic.
            \item in $U_{\hat p}$ over $\hat p$.
            \item in ${_{\hat x}D}$ over $\hat t\hat x$.
        \end{enumerate}
      \end{fact}

      Later, on multiple occasions, we will apply the inductive hypothesis of Theorem \ref{T: closure thm} with $\hat u$ as the distinguished closure point of a certain set. Fact \ref{hat u facts}(1) says that these applications will be valid.

	Now that we have the main pieces in place, let us outline the proof. The argument can be organized into the following four main steps:\\

        \textbf{Step I:} $\hat u$ is $\mathcal M$-generic in $U_{\hat p}$ over $\hat t\hat p$.\\
        
        \textbf{Step II:} If $x\in H_{\hat t}\cap D_{\hat u}$, then $x$ is generic in $H_{\hat t}$ over $\hat t\hat x$, in $M^{r+1}$ over $\hat x$, and in $D_{\hat u}$ over $\hat u\hat x$.\\
        
        \textbf{Step III:} If $x\in H_{\hat t}\cap D_{\hat u}$, then the projection $I\rightarrow U_{\hat p}$ is open in a neighborhood of $(x,\hat t,\hat u)$.\\
        
        \textbf{Step IV:} $\hat u$ is not $\mathcal M$-generic in $U_{\hat p}$ over $\hat t\hat p$.\\

        Note that Steps I and IV are contradictory, so this will conclude the proof. Step I is done in isolation from the other three. Meanwhile, II is used to prove III, and III is used to prove IV.

        Steps I and II have nothing to do with the topology, and are proven entirely by rank and dimension computations. They can be copied word-for-word from \cite{CasACF0}. We will not reproduce the details here.

        Step III (or rather an analogous statement) is done in \cite{CasACF0} using complex geometry. In that setting, II allows one to reduce III to a problem about smooth complex spaces and holomorphic maps, and apply an open mapping theorem from complex analysis. In the abstract setting, we will prove III using the assumption that $(\mathcal K,\tau)$ has enough open maps. Indeed, most of the required hypotheses are straightforward to verify, with the most difficult ones being given exactly by II.

        Step IV is the key topological argument. Roughly, we show using III that $\hat u$ belongs to the closure of an $\mathcal M(\hat t\hat p)$-definable set of small rank, and conclude using the inductive hypothesis. This argument is largely unchanged from \cite{CasACF0}, but we still give the details because it is the most crucial step in the proof of Theorem \ref{T: closure thm}.

        \begin{center}\textbf{Step I}\end{center}
        
        \begin{fact}\label{u gen over t} $\hat u$ is $\mathcal M$-generic in $U_{\hat p}$ over $\hat t\hat p$.
        \end{fact}
        \begin{proof} Exactly the same as Lemma 8.25 of \cite{CasACF0}.
        \end{proof}

        \begin{center}\textbf{Step II}\end{center}

        \begin{fact}\label{F: x gen over t and u} If $x\in H_{\hat t}\cap D_{\hat u}$, then $x$ is generic in each of the following senses:
        \begin{enumerate}
            \item in $H_{\hat t}$ over $\hat t\hat x$.
            \item in $M^{r+1}$ over $\hat x$.
            \item in $D_{\hat u}$ over $\hat u\hat x$.
        \end{enumerate}
        \end{fact}
        \begin{proof} Copy Lemmas 8.29, 8.34, 8.39, 8.40, 8.41, and 8.44 from \cite{CasACF0}.
        \end{proof}
	
        \begin{center}\textbf{Step III}\end{center}

        First, we need the following, which is also used in Step IV:
	
	\begin{lemma}\label{ab finite intersection} The intersection $H_{\hat t}\cap D_u$ is finite for all $u\in U_{\hat p}$ in some neighborhood of $\hat u$.
		\end{lemma}
	\begin{proof} Let $J$ be the set of $u\in U_{\hat p}$ such that $H_{\hat t}\cap D_u$ is infinite. So $J$ is $\mathcal M$-definable over $\hat t\hat p$, and if the lemma fails, then $\hat u\in\overline J$. We want to apply the inductive hypothesis to $J$. Thus we check: 
 \begin{claim} $\rk(J)<r$.
 \end{claim}
 \begin{claimproof} If not, there is $u\in J$ with $\rk(u/\hat t\hat p)=r$. Since $u\in J$, there is $x\in H_{\hat t}\cap D_u$ with $\rk(x/\hat t\hat pu)\geq 1$. So $\rk(xu/\hat t\hat p)\geq r+1$. On the other hand, $\rk(x/\hat t)\leq r$ since $x\in H_{\hat t}$, and $\rk(u/\hat px)=0$ since $u\in{_xY}$. Thus $\rk(xu/\hat t\hat p)\leq r$, a contradiction.
		\end{claimproof}
  Now suppose the lemma fails, so $\hat u\in\overline J$. Then by the inductive hypothesis we get $\rk(\hat u/\hat t\hat p)\leq\rk(J)<r$, contradicting Lemma \ref{u gen over t}.
  \end{proof}
	
	Recall that our aim in this step is to show that if $x\in H_{\hat t}\cap D_{\hat u}$, then the projection $I\rightarrow U_{\hat p}$ is open in a neighborhood of $(x,\hat u)$. So for the rest of Step III, let us fix $x\in H_{\hat t}\cap D_{\hat u}$. We will deduce the desired openness of  $I\rightarrow U_{\hat p}$ using an application of the assumption on enough open maps for $(\mathcal K,\tau)$. Namely, in the terminology of transversality configurations from section 3, we  show:
 
 \begin{itemize}
     \item $(x,\hat u,\emptyset,\hat t,\hat p)$ and $(x,\hat u,\emptyset,\hat u,\hat x)$ are transversality configurations.
     \item $(x,\hat u,\emptyset,\hat t,\hat p)$ is weakly transverse.
     \item $(x,\hat u,\emptyset,\hat t,\hat x)$ is strongly transverse.
 \end{itemize}

 It will follow from enough open maps that $(x,\hat u,\emptyset,\hat t,\hat p)$ is transverse. Thus $W_{\hat p}\rightarrow\tp(\hat u/\hat p)$ is open near $(x,\hat u)$, where $W_{\hat p}$ is the family of intersections of $(x,\hat u,\emptyset,\hat t,\hat p)$. Finally, we note that $I\rightarrow U_{\hat p}$ realizes the same germ as $W_{\hat p}\rightarrow\tp(\hat u/\hat p)$ at $(x,\hat u)$, so is also open at $(x,\hat u)$.

 Let us proceed. We begin by showing:

\begin{lemma}\label{L: the points make transversality configs} $(x,\hat u,\emptyset,\hat t,\hat p)$ and $(x,\hat u,\emptyset,\hat t,\hat x)$ are transversality configurations.
    \end{lemma}
    \begin{proof} The three properties in Definition \ref{D: transversality config} translate into the following three claims:
        \begin{claim}
            $\dim(x/\hat t)=\dim(\hat u/\hat p)=\dim(\hat u/\hat x)$.
        \end{claim}
        \begin{claimproof}
            By Fact \ref{F: x gen over t and u}(1), $\dim(x/\hat t)=\dim(H_{\hat t})$. By Fact \ref{hat u facts}(2), $\dim(\hat u/\hat p)=\dim(U_{\hat p})$. By Fact \ref{hat u facts}(3), $\dim(\hat u/\hat x)=\dim(_{\hat x}D)$. So it suffices to show that $\dim(H_{\hat t})=\dim(U_{\hat p})=\dim(_{\hat x}D)$. By Lemma \ref{dim rk comparison}, it moreover suffices to show that $\rk(H_{\hat t})=\rk(U_{\hat p})=\rk(_{\hat x}D)$. But $\rk(H_{\hat t})=r$ by construction, $\rk(U_{\hat p})=r$ by Fact \ref{F: D and U facts}(1), and $\rk(_{\hat x}D)=r$ by Fact \ref{F: D and U facts}(3).
        \end{claimproof}
        \begin{claim} $x\in\acl(\hat t\hat u)$, $\hat u\in\acl(\hat px)$, and $\hat u\in\acl(\hat xx)$.
        \end{claim}
        \begin{claimproof}
            Lemma \ref{ab finite intersection} gives $x\in\acl(\hat t\hat u)$ immediately. That $\hat u\in\acl(\hat px)$ follows from Fact \ref{F: D and U facts}(5). That $\hat u\in\acl(\hat xx)$ follows from Facts \ref{F: x gen over t and u}(2) and \ref{F: D and U facts}(4).
        \end{claimproof}
        \begin{claim}
            $x$ is independent from each of $\hat p$ and $\hat x$ over each of $\emptyset$ and $\hat u$.
        \end{claim}
        \begin{claimproof}
            That $x$ is independent from $\hat x$ over $\emptyset$ follows from Fact \ref{F: x gen over t and u}(2). That $x$ is independent from $\hat x$ over $\hat u$ follows from Fact \ref{F: x gen over t and u}(3). That $x$ is independent from $\hat p$ over $\hat u$ is automatic since $\hat p\in\acl(\hat u)$ (indeed $\hat p$ contains $\hat u$ among its coordinates).

            Finally, we check that $x$ is independent from $\hat p$ over $\emptyset$. First, it follows from Facts \ref{hat u facts}(2) and \ref{F: x gen over t and u}(3) that $(x,\hat u)$ is generic in $Y$ over $\hat p$. But by Fact \ref{F: D and U facts}, this implies that $x$ is generic in $M^{r+1}$ over $\hat p$, which is enough. 
        \end{claimproof}
    \end{proof}

    As $(x,\hat u,\emptyset,\hat t,\hat p)$ and $(x,\hat u,\emptyset,\hat t,\hat x)$  are transversality configurations, we can form the associated families of intersections,  $W_{\hat p}$ and $W_{\hat x}$. 
    Note that if $(x',u')\in W_{\hat p}$, then $\tp(x'/\hat t)=\tp(x/\hat t)$ and $\tp(x'u'/\hat p)=\tp(x\hat u/\hat p)$. In particular, $x'\in H_{\hat t}\cap D_{u'}$. The following is then immediate:

    \begin{lemma}\label{L: weakly transverse} $(x,\hat u,\emptyset,\hat t,\hat p)$ is weakly transverse.
    \end{lemma}
    \begin{proof}
        By Lemma \ref{ab finite intersection}.
    \end{proof}

    To finish setting up the application of enough open maps, we check:

    \begin{lemma}\label{L: strongly transverse} $(x,\hat u,\emptyset,\hat t,\hat x)$ is strongly transverse.
    \end{lemma}
    \begin{proof}
        An equivalent statement is that $x$ and $\hat x$ are independent over $\hat t$. This follows from Fact \ref{F: x gen over t and u}(1).
    \end{proof}

    We may now conclude:

    \begin{lemma}\label{L: transverse} $(x,\hat u,\emptyset,\hat t,\hat p)$ is transverse. That is, the projection $W_{\hat p}\rightarrow\tp(\hat u/\hat p)$ is open at $(x,\hat u)$.
    \end{lemma}
    \begin{proof} By the definition of enough open maps.
    \end{proof}

    To finish step III, it remains to relate Lemma \ref{L: transverse} back to the projection $I\rightarrow U_{\hat p}$. To that end, we show:

    \begin{lemma}\label{L: d approximation to open map} The projections $I\rightarrow U_{\hat p}$ and $W_{\hat p}\rightarrow\tp(\hat u/\hat p)$ have the same germ at $(x,\hat u)$. Thus, $I\rightarrow U_{\hat p}$ is open at $(x,\hat u)$.
    \end{lemma}
    \begin{proof}
        By their definitions, we can write $$I=Y\cap(H_{\hat t}\times U_{\hat p}),\;\;\;\; W_{\hat p}=\tp(x\hat u/\hat p)\cap(\tp(x/\hat t)\times\tp(\hat u/\hat p)).$$  So to show equality of germs, it is enough to show that each set in the intersection on the left  hand side has the same germ as the corresponding set in the intersection on the right hand side.  That is, it suffices to show the following:
        
        \begin{claim} $(x,\hat u)$ is generic in $Y$ over $\hat p$, and thus $Y$ realizes the germ of $\tp(x\hat u/\hat p)$.
        \end{claim}
        \begin{claimproof} The genericity statement follows from Facts \ref{hat u facts}(2) and \ref{F: x gen over t and u}(3). Now apply Lemma \ref{L: germ at generic}.
        \end{claimproof}

        Since the germ of a Cartesian product is the Cartesian product of the germs, to show the equality of the germs of $H_{\hat t}\times U_{\hat p}$ and $\tp(x/\hat t)\times\tp(\hat u/\hat p)$ we show: 
        
        \begin{claim} $x$ is generic in $H_{\hat t}$ over $\hat t$, and thus $H_{\hat t}$ realizes the germ of $\tp(x/\hat t)$.
        \end{claim}
        \begin{claimproof} The genericity statement follows from Fact \ref{F: x gen over t and u}(1). Now apply Lemma \ref{L: germ at generic}.
        \end{claimproof}
        
        \begin{claim} $\hat u$ is generic in $U_{\hat p}$, and thus $U_{\hat p}$ realizes the germ of $\tp(\hat u/\hat p)$.
        \end{claim}
        \begin{claimproof} The genericity statement follows from Fact \ref{hat u facts}(2). Now apply Lemma \ref{L: germ at generic}.
        \end{claimproof}
        Concluding the proof of the lemma. 
        \end{proof}
This finishes the proof of Step III.

\begin{center}\textbf{Step IV}\end{center}
To obtain a contradiction to Step I it remains to show: 
 \begin{lemma}\label{open is enough} $\hat u$ is not $\mathcal M$-generic in $U_{\hat p}$ over $\hat t\hat p$.
		\end{lemma}
	\begin{proof} Let $w_1,...,w_l$ be the distinct intersection points of $H_{\hat t}$ and $D_{\hat u}$ ($l$ is finite by Lemma \ref{ab finite intersection}). If $\hat x$ is among $w_1,...,w_l$, then (2) in the statement of Proposition \ref{P: main closure step} holds with $x=\hat x$. Thus, we assume the $l+1$ points $\hat x,w_1,...,w_l$ are distinct.

    Now let $Z$ be the set of all $u\in U_{\hat p}$ such that $H_{\hat t}\cap D_u$ is not of size $l$. So $Z$ is $\mathcal M$-definable over $\hat t\hat p$. Then the main point is the following:
	
	\begin{claim}\label{closure of big intersections} $\hat u\in\overline Z$.
	\end{claim}
	\begin{claimproof} Let $V$ be any neighborhood of $\hat u$ in $U_{\hat p}$. We will find some $u\in V\cap Z$.

 By Lemma \ref{L: d approximation to open map}, the projection $I\rightarrow U_{\hat p}$ is open in a neighborhood of each $(w_i,\hat u)$. Moreover, the projection $Y\rightarrow M^{r+1}$ is open in a neighborhood of $(\hat x,\hat u)$, by the generic local homeomorphism property of $(\mathcal K,\tau)$ (see Fact \ref{F: D and U facts} (5) and (6)). Then using each of these instances of local openness, and after applying various shrinkings, we can choose sets $W_0,...,W_l$ such that:
		\begin{enumerate}
			\item $W_0$ is a neighborhood of $\hat x$ in $M^{r+1}$, and for each $i\geq 1$, $W_i$ is a neighborhood of $w_i$ in $M^{r+1}$.
			\item The sets $W_0,...,W_l$ are pairwise disjoint (this can be arranged since the topology $\tau$ is Hausdorff).
			\item For each $x\in W_0$ there is some $u\in V$ such that $(x,u)\in Y$.
			\item For each $u\in V$ and $i\geq l$ there is some $x\in W_i$ with $(x,u)\in I$.
		\end{enumerate}
	Now using (1) and the fact that $\hat x\in\overline{H_{\hat t}}$, we can choose $x\in W_0\cap H_{\hat t}$, and then choose $u$ as in (3). Then, by (4), the intersection $H_{\hat t}\cap D_u$ contains $x$, in addition to one point in each $W_i$. By (2) all of these points are distinct, so $|H_{\hat t}\cap D_u|\geq l+1$, and thus $u\in V\cap Z$.
		\end{claimproof}
	
	We now finish the proof of Lemma \ref{open is enough} in two cases:
	
	\begin{itemize}
		\item First, suppose that $Z$ is generic in $U_{\hat p}$. Then since $U_{\hat p}$ is stationary (see Fact \ref{F: D and U facts}(1)), $U_{\hat p}-Z$ is non-generic in $U_{\hat p}$. Since $\hat u\in U_{\hat p}-Z$, we are done.
		\item Now suppose that $Z$ is non-generic in $U_{\hat p}$. So $\rk(Z)<r$. By Claim \ref{closure of big intersections} and the inductive hypothesis, we get $$\rk(\hat u/\hat t\hat p)\leq\rk(Z)<r,$$ so we are again done.
	\end{itemize}
	\end{proof}

Finally, by Steps I and IV (i.e. Fact \ref{u gen over t} and Lemma \ref{open is enough}), the proof of Proposition \ref{P: main closure step} is complete.
\end{proof}

\subsection{Generic Non-trivial $r$-Hypersurfaces}
	
	The next three subsections complete the proof of Theorem \ref{T: closure thm}, using Proposition \ref{P: main closure step} as the main tool. They are analogous to subsections 8.4-8.6 of \cite{CasACF0}. These remaining cases are essentially word-for-word copies of their analogs in \cite{CasACF0}, so we will be brief. As in \cite{CasACF0}, we use the following:
    
	\begin{definition}\label{*} Let $x=(x_0,...,x_r)\in M^{r+1}$ be a tuple, and $A$ a set of parameters. We say that $x$ \textit{satisfies} $*$ \textit{over} $A$, also denoted $*(x,A)$, if the following hold:
	\begin{enumerate}
		\item $\rk(x/A)\leq r$.
		\item If $\rk(x/A)=r\geq 2$ then for all $i\neq j\in\{0,...,r\}$ we have $\rk(x_ix_j/A)=2$.
	\end{enumerate}
	\end{definition}

	As in \cite{CasACF0}, we have the following properties: 
	
	\begin{lemma}\label{* preservation} Let $x$ and $y$ be tuples in $M^{r+1}$, and $A$ and $B$ sets of parameters.
		\begin{enumerate}
			\item If $x$ is $\mathcal M$-independent from $B$ over $A$, then $*(x,A)$ and $*(x,B)$ are equivalent.
			\item If $x$ and $y$ are coordinate-wise $\mathcal M$-interalgebraic over $A$, then $*(x,A)$ and $*(y,A)$ are equivalent.
			\item $*(x,A)$ is equivalent to $*(x,\operatorname{acl}_{\mathcal M}(A))$.
			\item If $*(x,A)$ and $B\supset A$ then $*(x,B)$.
		\end{enumerate}
	\end{lemma}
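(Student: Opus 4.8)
The plan is to derive all four items purely from additivity and monotonicity of Morley rank in $\mathcal M$, together with the observation that when two tuples have equal rank over a base, so do all of their corresponding subtuples. For (1), I would first note that if $x$ is $\mathcal M$-independent from $B$ over $A$ (where we may assume $A\subseteq B$, replacing $B$ by $A\cup B$ if necessary), then $\rk(x'/A)=\rk(x'/B)$ for \emph{every} subtuple $x'$ of $x$: writing $x=x'x''$, additivity gives $\rk(x/A)=\rk(x'/A)+\rk(x''/Ax')$ and $\rk(x/B)=\rk(x'/B)+\rk(x''/Bx')$, and since each summand on the $B$-side is $\leq$ the corresponding summand on the $A$-side while the totals agree, equality holds termwise. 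Applying this with $x'=x$ and with $x'=x_ix_j$, and noting that the hypothesis ``$\rk(x/A)=r$'' of clause (2) of Definition~\ref{*} is likewise preserved, both clauses of $*$ transfer between $A$ and $B$. Item (3) is then immediate as the instance $B=\acl_{\mathcal M}(A)$ of (1), since $\rk$ over a base is unchanged by replacing that base with its $\acl_{\mathcal M}$. For (2), componentwise $\mathcal M$-interalgebraicity of $x$ and $y$ over $A$ yields, for any index set $S\subseteq\{0,\dots,r\}$, that $(x_k)_{k\in S}$ and $(y_k)_{k\in S}$ are $\mathcal M$-interalgebraic over $A$ and hence have equal rank over $A$; taking $S=\{0,\dots,r\}$ and $S=\{i,j\}$ transfers both clauses between $x$ and $y$.

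The one item requiring a genuine (if short) rank computation is (4). Clause (1) for $*(x,B)$ is just monotonicity: $\rk(x/B)\leq\rk(x/A)\leq r$. For clause (2), I would assume $r\geq 2$ and $\rk(x/B)=r$; then $\rk(x/A)=r$ as well, so $*(x,A)$ provides $\rk(x_ix_j/A)=2$ for all $i\neq j$. Fixing a pair, relabeled as $x_0,x_1$, and writing $z=(x_2,\dots,x_r)$, suppose $\rk(x_0x_1/B)\leq 1$. Then additivity forces
\[ r=\rk(x/B)=\rk(x_0x_1/B)+\rk(z/Bx_0x_1)\leq 1+(r-1)=r, \]
so $z$ is $\mathcal M$-independent over $Bx_0x_1$; by monotonicity each $x_k$, $2\leq k\leq r$, then has rank $1$ over $Ax_0\cdots x_{k-1}$, and additivity over $A$ gives $\rk(x/A)=\rk(x_0x_1/A)+(r-1)=2+(r-1)=r+1$, contradicting $\rk(x/A)\leq r$. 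Hence every pair retains rank $2$ over $B$, and $*(x,B)$ holds.

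I expect the only real obstacle to be the bookkeeping in (4): one must spot that forcing $\rk(x/B)=r$ pins down the complementary $r-1$ coordinates as $\mathcal M$-independent over the enlarged base, so that a collapsed pair would over-count the rank of $x$ over $A$ by exactly $1$. Parts (1)--(3) are essentially formal consequences of additivity once the subtuple-equality observation is in place.
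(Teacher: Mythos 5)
Your proof is correct: all four items follow, as you show, from additivity and monotonicity of $\rk$ (with the only genuine computation being the counting step in (4), which you carry out properly), and this is exactly the elementary argument intended — the paper gives no proof here, deferring to the analogous lemma in \cite{CasACF0}, whose argument is of the same kind. Your reading of (1) with $B$ replaced by $A\cup B$ is also the right one: the lemma is only ever applied with $B\supseteq A$, and without that reading the literal statement would fail, so no objection there.
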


        Let us now proceed. First we show that in the context of Theorem \ref{P: main closure step}, the full statement of Theorem \ref{T: closure thm} follows from several applications of Proposition \ref{P: main closure step}:
	
	\begin{proposition}\label{generic non-trivial hypersurfaces}
		Let $\mathcal H=\{H_t:t\in T\}$ be an almost faithful family of non-trivial $r$-hypersurfaces of rank $k>(r+1)\cdot\dim M$, and assume $\mathcal H$ is $\mathcal M$-definable over a set $A$. Let $\hat t\in T$ and $\hat x=(\hat x_0,...,\hat x_r)\in M^{r+1}$ each be generic over $A$, and assume that $\hat x\in\overline{H_{\hat t}}$. Then $*(\hat x,A\hat t)$ holds.
	\end{proposition}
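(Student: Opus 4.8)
The statement to prove is Proposition \ref{generic non-trivial hypersurfaces}: for a generic member $H_{\hat t}$ of an almost faithful family of non-trivial $r$-hypersurfaces of rank $k>(r+1)\cdot\dim M$, a coordinate-wise generic closure point $\hat x\in\overline{H_{\hat t}}$ satisfies $*(\hat x,A\hat t)$. The strategy is to feed this situation into Proposition \ref{P: main closure step} (possibly several times, with different choices of coordinates/excellent families) and argue that each of its three alternative conclusions either directly yields $*(\hat x,A\hat t)$ or can be ruled out / bootstrapped using the inductive hypothesis on $r$ and the properties of $*$ collected in Lemma \ref{* preservation}. As usual, we may assume $A=\emptyset$.

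\begin{proof}
As usual we take $A=\emptyset$. Fix a $\emptyset$-definable excellent family $\mathcal C=\{C_s:s\in S\}$ of plane curves, which exists by Assumption \ref{A: K and M}. We apply Proposition \ref{P: main closure step} to $\mathcal H$ and $\mathcal C$ with the given $\hat t$ and $\hat x$; note the hypothesis $k>(r+1)\cdot\dim M$ is exactly what is assumed here. Write out the three possible conclusions.

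If conclusion (1) holds, then $\rk(\hat x/\hat t)<r$. Then clause (1) of $*$ holds trivially, and clause (2) is vacuous (it has hypothesis $\rk(\hat x/\hat t)=r$), so $*(\hat x,\hat t)$ holds and we are done.

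If conclusion (3) holds, then $\hat x_i\in\acl_{\mathcal M}(\hat t\hat x_0\cdots\hat x_{i-1})$ for some $i\geq 2$. This forces $\rk(\hat x/\hat t)\leq r$ (one of the $r+1$ coordinates is $\mathcal M$-algebraic over $\hat t$ and the others, so the transcendence degree drops below $r+1$); moreover it shows $\rk(\hat x_0\cdots\hat x_i/\hat t)\leq i$, whence, taking $j$ among $0,\dots,i-1$ and $i$ itself, $\rk(\hat x_i\hat x_j/\hat t)$ can be strictly less than $2$ only if we are not in the case $\rk(\hat x/\hat t)=r\geq 2$; one has to check that if $\rk(\hat x/\hat t)=r\geq 2$ then conclusion (3) actually contradicts coordinate-wise genericity of $\hat x$ together with the non-triviality of $H_{\hat t}$ --- this is the point where non-triviality of the hypersurface is used, exactly as in the corresponding step of \cite{CasACF0}. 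So conclusion (3) either gives $\rk(\hat x/\hat t)<r$ (handled as above) or is impossible.

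The remaining, and main, case is conclusion (2): there is $x\in H_{\hat t}$ such that for each $i=1,\dots,r$ there are infinitely many $s\in S$ with $(\hat x_0,\hat x_i),(x_0,x_i)\in C_s$. The plan here is to use the excellence of $\mathcal C$ to translate this into a statement about $\mathcal M$-algebraicity. Since $\mathcal C$ is excellent, for fixed $p=(\hat x_0,\hat x_i)$ the set $\{s:p\in C_s\}$ is a non-trivial plane curve, so the condition ``$(\hat x_0,\hat x_i)$ and $(x_0,x_i)$ lie on infinitely many common $C_s$'' is highly non-generic and pins down $(x_0,x_i)$ in $\acl_{\mathcal M}(\hat x_0\hat x_i)$ --- more precisely, one shows, as in \cite{CasACF0}, that $x_0\in\acl_{\mathcal M}(\hat x_0\hat x_i)$ for each $i$, and $x_i\in\acl_{\mathcal M}(\hat x_0 x_0\hat x_i)=\acl_{\mathcal M}(\hat x_0\hat x_i)$, so $x$ is coordinate-wise $\mathcal M$-algebraic (in fact $\mathcal M$-interalgebraic after accounting for $\hat x_0$) over $\hat x$; combined with $x\in H_{\hat t}$ and a rank computation this forces $\rk(\hat x/\hat t)<r$ unless $\hat x$ and $x$ are $\mathcal M$-interalgebraic over $\hat t$ coordinate-wise, in which case Lemma \ref{* preservation}(2) lets us transfer $*$ between $x$ and $\hat x$ and conclude. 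The only real subtlety is bookkeeping the parameters (keeping track of $\hat x_0$, which appears in all the curves simultaneously) and invoking, as needed, a second application of Proposition \ref{P: main closure step} after permuting coordinates so that the distinguished coordinate is no longer $\hat x_0$; the inductive hypothesis on $r$ is available for any auxiliary set that turns out to have rank $<r$. This is precisely the argument of the analogous proposition in \cite[\S 8.4]{CasACF0}, which we do not reproduce in full.
\end{proof}

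**Where the difficulty lies.** None of this is new topology — by the time we reach Proposition \ref{generic non-trivial hypersurfaces} all of the topological content has been absorbed into Proposition \ref{P: main closure step}. The work is entirely combinatorial/rank-theoretic: the delicate point is case (2), where one must extract genuine $\mathcal M$-algebraicity from the ``infinitely many common curves'' condition using excellence of $\mathcal C$, correctly handle the shared coordinate $\hat x_0$, and know when to re-apply Proposition \ref{P: main closure step} after a coordinate permutation versus when to quote the inductive hypothesis. Since this is word-for-word parallel to \cite[\S 8.4]{CasACF0}, the honest version of the proof is mostly a pointer to that argument with the bookkeeping spelled out.
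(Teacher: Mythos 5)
Your overall plan coincides with the paper's: the paper's entire proof of Proposition \ref{generic non-trivial hypersurfaces} is the citation ``exactly as in \cite[Proposition 8.50]{CasACF0}'', i.e.\ run the combinatorial argument of \cite[\S 8.4]{CasACF0} with Proposition \ref{P: main closure step} in place of its analogue, and your proposal is the same reduction. However, the inline gloss you give of the case analysis contains a step that would fail if taken literally. Conclusion (3) of Proposition \ref{P: main closure step} only yields $\rk(\hat x/A\hat t)\le r$ (one of the $r+1$ coordinates is $\mathcal M$-algebraic over $\hat t$ and the earlier ones); it does not force $\rk(\hat x/A\hat t)<r$, and it is not ``impossible'' when $\rk(\hat x/A\hat t)=r\ge 2$. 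Your proposed contradiction via ``coordinate-wise genericity of $\hat x$ together with non-triviality of $H_{\hat t}$'' does not work: non-triviality of the hypersurface constrains ($\mathcal M$-generic) points \emph{on} $H_{\hat t}$, whereas $\hat x$ is merely a closure point and need not lie on $H_{\hat t}$ at all, and coordinate-wise genericity is a statement about $\mathcal K$-dimension of single coordinates, which says nothing against an $\mathcal M$-algebraic relation among several of them. In the source argument, conclusion (3) is not dismissed; it is absorbed into the induction, essentially by re-running the main proposition with the coordinates permuted, and the pairwise rank-$2$ clause of $*$ still has to be verified in that branch.

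Similarly, in case (2) the passage from ``infinitely many common curves'' to the hypotheses of Lemma \ref{* preservation}(2) is more delicate than stated: excellence gives interalgebraicity of the \emph{pairs} $(\hat x_0,\hat x_i)$ and $(x_0,x_i)$ over the (empty) parameters of $\mathcal C$, which is not yet coordinate-wise $\mathcal M$-interalgebraicity of $\hat x$ and $x$ over $A\hat t$; one must also check $*(x,A\hat t)$ for the point $x\in H_{\hat t}$ (this is where non-triviality of $H_{\hat t}$ is genuinely used) before transferring. You flag this as bookkeeping deferred to \cite{CasACF0}, which is fair given that the paper itself defers entirely, but as written your sketch of cases (2) and (3) would not stand on its own; the correct statement of what each case yields is: (1) gives clause (1) of $*$ with strict inequality, (3) gives clause (1) only, and (2) supplies the point of $H_{\hat t}$ through which clause (2) is obtained after the interalgebraicity and parameter adjustments carried out in \cite[\S 8.4]{CasACF0}.
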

	\begin{proof} Exactly as in  \cite[Proposition 8.50]{CasACF0}.				\end{proof}

	\subsection{Stationary Non-trivial $r$-Hypersurfaces}
	We next prove Theorem \ref{T: closure thm} for \textit{all} stationary non-trivial hypersurfaces and \textit{all}  coordinate-wise generic points, without the added genericity assumptions of Proposition \ref{generic non-trivial hypersurfaces}. As in \cite[\S 8.5]{CasACF0}, the trick is to use a sequence of independent plane curves to `translate' the initial setup into a more generic one. That this is possible is provided by the local homeomorphism property of Definition \ref{D: easy axioms}.
 
	\begin{proposition}\label{stationary non-trivial hypersurfaces} Let $X$ be a stationary non-trivial $r$-hypersurface which is $\mathcal M$-definable over a set $A$. Let $\hat x=(\hat x_0,...,\hat x_r)$ be coordinate-wise generic, and assume that $\hat x\in\overline X$. Then $*(\hat x,A)$ holds.
	\end{proposition}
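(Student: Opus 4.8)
We sketch the argument; it is the analogue of \cite[\S 8.5]{CasACF0}. The plan is to deduce the statement from Proposition~\ref{generic non-trivial hypersurfaces}. The obstruction is that in the present generality $X$ is not handed to us as a generic member of an almost faithful family of rank $>(r+1)\dim M$, and $\hat x$ is only coordinate-wise generic rather than generic over $A$. Both defects can be removed at once by \emph{translating} the pair $(X,\hat x)$ along a tuple of independent generic plane curves drawn from the excellent family $\mathcal C=\{C_s:s\in S\}$ supplied by Assumption~\ref{A: K and M}. The only point at which the topology enters is a single application of the generic local homeomorphism property (Definition~\ref{D: easy axioms}(5)), which is exactly what guarantees that closure points are carried to closure points under such a translation.

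Concretely, I would first present $X$, up to almost equality, as a member of an $\mathcal M$-definable almost faithful family of non-trivial $r$-hypersurfaces, and then enlarge this family by composing coordinate-wise with sufficiently many independent generic curves from $\mathcal C$, obtaining an $\mathcal M(\emptyset)$-definable almost faithful family $\mathcal H=\{H_\sigma:\sigma\in\Sigma\}$ of non-trivial $r$-hypersurfaces of rank $>(r+1)\dim M$. (Here one uses non-local modularity to manufacture the extra rank, together with the fact that compositions of non-trivial finite correspondences remain non-trivial, so that almost faithfulness and non-triviality survive the enlargement.) Next choose a tuple $\bar s=(s_0,\dots,s_r)$ of curves from $\mathcal C$ that are generic and mutually independent over $A\hat x$, and let $R_{\bar s}\subseteq M^{r+1}\times M^{r+1}$ be the correspondence $\{(x,x'):(x_i,x_i')\in C_{s_i}\text{ for all }i\}$, whose two projections are finite-to-one by non-triviality of the $C_{s_i}$. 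Setting $X'=R_{\bar s}[X]$, one checks that $X'$ is an $\mathcal M(A\bar s)$-definable non-trivial $r$-hypersurface, equal up to almost equality to $H_{\sigma'}$ for some $\sigma'$ which, by the choice of $\bar s$, is generic in $\Sigma$ over $\emptyset$.

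Then I would take $\hat x'$ with $(\hat x,\hat x')\in R_{\bar s}$ chosen so that $(\hat x,\hat x')$ is generic in $R_{\bar s}$ over $A$. By Definition~\ref{D: easy axioms}(5), $R_{\bar s}$ is, in a neighborhood of $(\hat x,\hat x')$, the graph of a homeomorphism between neighborhoods of $\hat x$ and of $\hat x'$; since $\hat x\in\overline X$, this forces $\hat x'\in\overline{X'}=\overline{H_{\sigma'}}$. A short rank computation, using that each $\hat x_i'$ is interalgebraic over $\hat x_i$ with the corresponding fresh generic curve parameter, shows that $\hat x'$ is moreover generic in $M^{r+1}$ over $\emptyset$. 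Hence $(\mathcal H,\sigma',\hat x')$ satisfies the hypotheses of Proposition~\ref{generic non-trivial hypersurfaces}, which yields $*(\hat x',\sigma')$. Finally I would transfer this back: since $\sigma'\in\acl_{\mathcal M}(A\bar s)$, Lemma~\ref{* preservation}(3)--(4) gives $*(\hat x',A\bar s)$; since $\hat x$ and $\hat x'$ are coordinate-wise $\mathcal M$-interalgebraic over $A\bar s$ (each $C_{s_i}$ being a non-trivial plane curve over $s_i$), Lemma~\ref{* preservation}(2) gives $*(\hat x,A\bar s)$; and since $\bar s$ is $\mathcal M$-independent from $\hat x$ over $A$, Lemma~\ref{* preservation}(1) gives $*(\hat x,A)$, as desired.

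The hard part is not conceptual but organizational: the bookkeeping of the translation. One must pin down $\bar s$ with precisely the right independence from $A$, from $\hat x$, and from $\mathcal H$ so that, simultaneously, $\sigma'$ is generic in $\Sigma$ over $\emptyset$, $\hat x'$ is generic in $M^{r+1}$ over $\emptyset$, and all the independences invoked in the transfer step hold; and one must verify that the enlarged family $\mathcal H$ is genuinely almost faithful and consists of non-trivial $r$-hypersurfaces of the required rank. All of this is a sequence of dimension computations in $\mathcal K$ and rank computations in $\mathcal M$, passing between the two via Lemma~\ref{dim rk comparison}, and it can be carried out exactly as in \cite[\S 8.5]{CasACF0}.
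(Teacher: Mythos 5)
Your overall plan — translate $(X,\hat x)$ along independent generic non-trivial plane curves, use the generic local homeomorphism property to carry the closure point to the translate, apply Proposition \ref{generic non-trivial hypersurfaces} to the translated hypersurface, and pull $*$ back through Lemma \ref{* preservation} — is the same strategy as the paper's. But the key quantitative step is not delivered by your construction. You draw the translating curves $s_0,\dots,s_r$ from the fixed rank-2 excellent family $\mathcal C$. Each such curve can raise the rank of the canonical base of (a stationary component of) the translate by at most $2$, so the component you would feed into Proposition \ref{generic non-trivial hypersurfaces} has canonical base of rank at most $\rk(\operatorname{Cb}(X))+2(r+1)$; nothing forces this to exceed $(r+1)\cdot\dim M$, and the proposition is needed precisely before one knows $\dim(M)=1$, so $\dim M$ may be large. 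The paper's Fact \ref{translators} instead uses translation curves whose canonical bases $c_i$ are \emph{individually} coherent of rank $k>(r+1)\cdot\dim M$ (these exist by non-local modularity, via families of arbitrarily large rank), and even then the lower bound $\rk(\operatorname{Cb}(Z))\geq k$ for the relevant stationary component $Z$ of the translate is a genuine argument (Fact \ref{F: rk z big}, i.e.\ Lemmas 8.61--8.62 of \cite{CasACF0}), not something that follows ``by the choice of $\bar s$.'' Your parenthetical about composing ``sufficiently many'' curves from $\mathcal C$ points in the right direction, but it is inconsistent with the translation you actually perform by single curves $s_i$ (then $R_{\bar s}[X]$ is not a member of your enlarged family), and the canonical-base lower bound would still have to be proved.

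Two further steps fail as written. First, you cannot choose $\hat x'$ so that $(\hat x,\hat x')$ is generic in $R_{\bar s}$ over $A$: that would require $\dim(\hat x/A\bar s)=(r+1)\dim M$, whereas $\hat x$ is only coordinate-wise generic (and the conclusion of the proposition typically forces $\rk(\hat x/A)\leq r$). The generic local homeomorphism property must be applied coordinate-by-coordinate to each plane curve $C_{s_i}$ at $(\hat x_i,\hat x_i')$, which \emph{is} generic in $C_{s_i}$ over $s_i$ because $\hat x_i$ is generic in $M$ and $s_i$ is chosen independently; this is exactly how the paper transfers the closure point. Second, $\overline{X'}=\overline{H_{\sigma'}}$ is not available: the translate is only \emph{almost equal} to the family member, so $\hat x'$ may lie only in the closure of the lower-rank difference, and this case is handled in the paper (Lemma \ref{* y}) by the standing inductive hypothesis of Theorem \ref{T: closure thm} (Assumption \ref{IH}); your sketch never invokes the induction, although the proposition is proved inside it. (A smaller point: $\hat x_i'$ is algebraic over $s_i\hat x_i$ but not interalgebraic with $s_i$ over $\hat x_i$, so the genericity of $\hat x'$ in $M^{r+1}$ needs the argument of \cite[Lemma 8.59]{CasACF0} rather than the justification you give.)
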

	\begin{proof}
		As described above, the first step is to `translate' $X$ and $\hat x$ along a sequence of `independent plane curves'. The following is straightforward and independent of the topology, and is proven in \cite{CasACF0} using pure rank and dimension computations:

		\begin{fact}\label{translators} There are plane curves $C_0,...,C_r$, tuples $c_0,...,c_r$ from $\mathcal M^{\textrm{eq}}$, a positive integer $k>(r+1)\cdot\dim M$, and an element $\hat y=(\hat y_0,...,\hat y_r)\in M^{r+1}$, which satisfy the following:
			\begin{enumerate}
				\item Each $C_i$ is stationary and non-trivial.
				\item Each $C_i$ is $\mathcal M$-definable over $c_i$, and each $c_i=\operatorname{Cb}(C_i)$.
				\item The $c_i$ are independent over $\emptyset$, and each individual $c_i$ is coherent of rank $k$ over $\emptyset$.
				\item The tuple $(c_0,...,c_r)$ is independent from $A\hat x$ over $\emptyset$.
                    \item Each $(\hat x_i,\hat y_i)$ is a generic element of $C_i$ over $c_i$. In particular, $\hat x$ and $\hat y$ are coordinate-wise $\mathcal M$-interalgebraic over $Ac_0...c_r$.
                    \item $\hat y$ is generic in $M^{r+1}$ over $A\hat x$.
				\end{enumerate} 
			\end{fact}
		\begin{proof} See Lemmas 8.56 and 8.59 of \cite{CasACF0}. In particular, (5) is by the proof of Lemma 8.59.
            \end{proof}
    
		Fix $C_0,...,C_r,c_0,...,c_r,k,\hat y$ as in Fact \ref{translators}. Let $c=(c_0,...,c_r)$. The tuple $\hat y$ is our `translated' $\hat x$. We now `translate' $X$ compatibly:  
		
		\begin{notation}\label{D'} We set $$Y=\{(y_0,...,y_r):\textrm{ for some }(x_0,...,x_r)\in X\textrm{ we have that each }(x_i,y_i)\in C_i\}.$$
			\end{notation}
		
		So $Y$ is $\mathcal M$-definable over $Ac$. It is easy to verify that $Y$ is also a non-trivial $r$-hypersurface. Moreover, the assumption that $\hat x \in \overline{X}$ transfers to $\hat y$ and $Y$:
		
		\begin{lemma} $\hat y\in\overline Y$.
		\end{lemma}
		\begin{proof}
		Let $V=V_0\times...\times V_r$ be any neighborhood of $\hat y$ in $M^{r+1}$. We will find an element $y\in V\cap Y$. Since each $(\hat x_i,\hat y_i)$ is generic in $C_i$ over $c_i$, the generic local homeomorphism property (see Definition \ref{D: easy axioms}) implies that the projection $\pi:C_i\rightarrow M$, $(x,y)\mapsto x$, is a homeomorphism near $(\hat x_i,\hat y_i)$. After applying various shrinkings, it follows that we can find a neighborhood $U=U_0\times...\times U_r$ of $\hat x$ in $M^{r+1}$, so that for each $i$ and each $u\in U_i$ there is some $v\in V_i$ with $(u,v)\in C_i$. Since $\hat x\in\overline X$, there is some $x=(x_0,...,x_r)\in U\cap X$. We thus obtain $y=(y_0,...,y_r)\in V$ with each $(x_i,y_i)\in C_i$, which implies that $y\in Y$, as desired.
		\end{proof}

        The rest of the proof of Proposition \ref{stationary non-trivial hypersurfaces} is identical to the analogous proof in\cite{CasACF0}, so we just give a brief outline.

        Since $\hat y\in\overline Y$, it belongs to the closure of one of the stationary components of $Y$. So let $Z\subset Y$ be a stationary component, $\mathcal M$-definable over $\operatorname{acl}_{\mathcal M}(Ac)$, with $\hat y\in\overline Z$. Let $z=\operatorname{Cb}(Z)$. The main point is:

        \begin{fact}\label{F: rk z big} $z$ is coherent of rank at least $k$ over $A$.
        \end{fact}
        \begin{proof} Exactly as in Lemmas 8.61, 8.62 of \cite{CasACF0}.
        \end{proof}

        By Fact \ref{F: rk z big}, we can realize $Z$ up to almost equality as a generic member of a large family. That is, let $\mathcal H=\{H_t:t\in T\}$ be an almost faithful $\CM(A)$-definable family of non-trivial $r$-hypersurfaces, and let $t\in T$ be generic over $A$, so that $Z$ is almost equal to $H_t$.

        \begin{lemma}\label{* y} Keeping the above notation, $*(\hat y,Azct)$ holds.
        \end{lemma}
        \begin{proof} If $\hat y\in\overline{H_{\hat t}}$, then Proposition \ref{generic non-trivial hypersurfaces} gives $*(\hat y,At)$, and we conclude by Lemma \ref{* preservation}(4). If $\hat y\notin\overline{H_{\hat t}}$, then $\hat y\in\overline{Z-H_{\hat t}}$. By assumption $\rk(Z-H_{\hat t})<r$, so by the inductive hypothesis, $\rk(\hat y/Azt)<r$, and we again conclude by Lemma \ref{* preservation}. (To use the inductive hypothesis, one most note that $\hat y$ is coordinate-wise generic; this is by Fact \ref{translators}(6)).
        \end{proof}

	To end the proof, we repeatedly use Lemma \ref{* preservation}. First, by the definition of $Z$ and the almost faithfulness of $\mathcal H$, we have $z,t\in\operatorname{acl}_{\mathcal M}(Ac)$, so we get $*(\hat y,Ac)$. Since $\hat x$ and $\hat y$ are coordinate-wise $\mathcal M$-interalgebraic over $Ac$, this implies $*(\hat x,Ac)$. Finally, since $c$ is independent from $A\hat x$, this implies $*(x,A)$.
			
	\end{proof}
	\subsection{The Remaining Cases}
	
	The rest of the proof of Theorem \ref{T: closure thm} is quite straightforward, and does not use any properties of the topology $\tau$ other than those true in all topological spaces. Thus, we do not repeat the whole proof here.
	
	\begin{proposition}\label{inductive step} Theorem \ref{T: closure thm} holds whenever $X\subset M^n$ has rank $r$ and $n\geq r+1$. 
		\end{proposition}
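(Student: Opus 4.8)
The plan is to strip off, one hypothesis at a time, the assumptions of Proposition~\ref{stationary non-trivial hypersurfaces}, following \cite[\S 8.6]{CasACF0} essentially verbatim; the only topological input beyond facts valid in every topological space is that coordinate projections $M^n\to M^k$ are continuous, hence carry $\overline X$ into $\overline{\pi(X)}$. So let $X\subset M^n$ be $\mathcal M(A)$-definable of rank $r$, let $n\geq r+1$, and let $x=(x_1,\dots,x_n)\in\overline X$ be coordinate-wise generic. First I would reduce to the case that $X$ is stationary: $x$ lies in $\overline Z$ for some stationary component $Z$ of $X$, which is $\mathcal M(\operatorname{acl}_{\mathcal M}(A))$-definable of rank $r$, and since enlarging $A$ to $\operatorname{acl}_{\mathcal M}(A)$ changes neither $\operatorname{rk}(\,\cdot\,/A)$ nor $\mathcal M$-independence over $A$, and every generic point of $Z$ is generic in $X$ (so a pair of projections independent on $X$ is independent on $Z$), the conclusion of Theorem~\ref{T: closure thm} for $(Z,x)$ implies it for $(X,x)$. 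Moreover, if $x\in X$ then $\operatorname{rk}(x/A)\leq\operatorname{rk}(X)=r$ is immediate, and when $\operatorname{rk}(x/A)=r$ the point $x$ is $\mathcal M$-generic in $X$ over $A$, so condition~(2) of Definition~\ref{D: weak detection of closures} holds by the very definition of ``independent on $X$''; thus one may also assume $x\in\operatorname{Fr}(X)$.

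The base case is $n=r+1$, where $X$ is a stationary $r$-hypersurface. If $X$ is \emph{non-trivial}, Proposition~\ref{stationary non-trivial hypersurfaces} gives $*(x,A)$; unwinding Definition~\ref{*}, this says $\operatorname{rk}(x/A)\leq r$, and when $\operatorname{rk}(x/A)=r$ it says either $r=1$ --- in which case condition~(2) of Definition~\ref{D: weak detection of closures} is vacuous for non-trivial plane curves, as in the Example following that definition --- or $r\geq 2$ and $\operatorname{rk}(x_ix_j/A)=2$, i.e.\ $x_i$ and $x_j$ are $\mathcal M$-independent over $A$, for \emph{all} $i\neq j$; either way this is exactly the conclusion of Theorem~\ref{T: closure thm} for $X$. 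If $X$ is \emph{trivial}, some $r$ of its coordinate projections, say $\pi_1,\dots,\pi_r$, have image $Y:=\pi_{1\dots r}(X)\subset M^r$ of rank $<r$; since $X_y\subset M$ while $\operatorname{rk}(X)=r$, strong minimality forces the generic fibre of $X\to Y$ to be cofinite in $M$, whence $\operatorname{rk}(Y)=r-1$, the projection $\pi_0$ is independent on $X$ from every $\pi_j$ ($j\geq 1$), and a pair $\pi_i,\pi_j$ with $i,j\geq 1$ is independent on $X$ iff it is independent on $Y$. Now $\operatorname{rk}(x/A)=\operatorname{rk}((x_1,\dots,x_r)/A)+\operatorname{rk}(x_0/Ax_1\dots x_r)\leq(r-1)+1=r$; applying the inductive hypothesis~\ref{IH} to $Y$ (which has rank $r-1<r$), and noting that equality above forces $x_0$ to be $\mathcal M$-generic in $M$ over $Ax_1\dots x_r$ and $(x_1,\dots,x_r)$ to be $\mathcal M$-generic in $Y$ over $A$, one deduces the conclusion of Theorem~\ref{T: closure thm} for $X$.

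For the inductive step $n>r+1$, argue by induction on $n$. Since $\operatorname{rk}(X)=r<n$, some coordinate of a generic point of $X$ --- say the last --- is $\mathcal M$-algebraic over the others, so $Y':=\pi_{1\dots n-1}(X)\subset M^{n-1}$ has rank $r$ and $X\to Y'$ is generically finite-to-one; by uniform finiteness in $\mathcal M$ the set $E\subset M^{n-1}$ where this fibre is infinite is $\mathcal M(A)$-definable of rank $<r$. Write $X=X_{\mathrm{good}}\cup\big(X\cap(E\times M)\big)$ with $X_{\mathrm{good}}=X\setminus(E\times M)$ (whose fibres over $M^{n-1}$ are uniformly finite), so that $x\in\overline{X_{\mathrm{good}}}\cup\overline{X\cap(E\times M)}$. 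The set $X\cap(E\times M)$ fibres over a set of rank $<r$, so has rank $\leq r$; if that rank is $<r$ one concludes by~\ref{IH}, and otherwise its generic fibre over its image in $M^{n-1}$ is cofinite in $M$ and one argues exactly as in the trivial-hypersurface case, invoking~\ref{IH} on that image (of rank $\leq r-1$). On $X_{\mathrm{good}}$, one uses the strong frontier inequality for $X_{\mathrm{good}}$ (viewed as a $\mathcal K(A)$-definable set) together with the $\dim$--$\operatorname{rk}$ dictionary (Lemma~\ref{dim rk comparison}) and the coherence calculus (Lemma~\ref{L: coherent preservation}) to see that $x_n\in\operatorname{acl}_{\mathcal M}(Ax_1\dots x_{n-1})$ in the cases relevant to the conclusion, so that $\operatorname{rk}(x/A)=\operatorname{rk}(\pi(x)/A)\leq r$; the conclusion for $Y'$ --- available by the induction on $n$, since $\pi(x)\in\overline{Y'}$ is again coordinate-wise generic --- then transfers to $X$, a pair independent on $X$ projecting to one independent on $Y'$ and pairs involving $\pi_n$ being vacuous (for generic points of $X$, $x_n$ is $\mathcal M$-algebraic over the rest, so $\pi_n$ is not independent on $X$ from any other projection).

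The step I expect to be the main obstacle is the transfer through projection when the dropped coordinate is only \emph{generically} $\mathcal M$-algebraic over the remaining ones: the generic behaviour of $X$ tells one nothing a priori about the single frontier point $x$, and because $x$ need not be coherent, the strong frontier inequality --- which bounds $\dim$ in $\mathcal K$ --- does not by itself bound $\operatorname{rk}(x/A)$ in $\mathcal M$. One must isolate the part $X_{\mathrm{good}}$ over which the fibres are uniformly finite, combine the frontier inequality there with careful coherence bookkeeping via Lemmas~\ref{dim rk comparison} and~\ref{L: coherent preservation}, and handle the complementary piece through the reduction to the trivial-hypersurface situation over $E$. Every remaining ingredient is routine rank arithmetic together with the preservation properties of $*$ recorded in Lemma~\ref{* preservation}, and is taken essentially verbatim from \cite[\S 8.6]{CasACF0}.
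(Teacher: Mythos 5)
Your reduction to stationary components and your base case $n=r+1$ (non-trivial hypersurfaces via Proposition \ref{stationary non-trivial hypersurfaces}, trivial ones by projecting and invoking Assumption \ref{IH}) are fine, but the inductive step $n>r+1$ has a genuine gap, and it sits exactly where you predicted. On $X_{\mathrm{good}}$ you need $x_n\in\acl_{\mathcal M}(Ax_1\dots x_{n-1})$ (or some substitute) at the frontier point $x$, and the tools you cite cannot produce it: the strong frontier inequality only bounds $\dim(x/A)$ in $\mathcal K$, and since $x$ need not be coherent, Lemmas \ref{dim rk comparison} and \ref{L: coherent preservation} yield no upper bound on $\rk(x/A)$, let alone $\mathcal M$-algebraicity of one coordinate of $x$ over the others. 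Such an $\acl_{\mathcal M}$-statement about a frontier point is precisely the kind of ``detection'' assertion the whole theorem exists to prove; it is not bookkeeping. A second, independent problem is your dismissal of pairs involving the dropped coordinate as vacuous: if $x_n$ is generically $\mathcal M$-algebraic over $x_1,\dots,x_{n-1}$ \emph{jointly}, $\pi_n$ can still be independent from an individual $\pi_i$ on $X$ (e.g.\ $X=\{(a,b,ab,b):a,b\in M\}\subset M^4$ in a field-like $\mathcal M$: the last coordinate is algebraic over the rest but independent from the first), so clause (2) for such pairs must actually be proved, and your argument, which only transfers information from $Y'=\pi_{1\dots n-1}(X)$, never reaches them.

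The argument that works --- and, in substance, the one in the cited Propositions 8.68--8.75 of \cite{CasACF0} --- avoids any algebraicity claim at the frontier point. Apply the already-established case $n'=r+1$ (together with Assumption \ref{IH} when the rank drops) to every coordinate projection $\pi_S(X)\subset M^{r+1}$ with $|S|=r+1$: since $\pi_S(x)\in\overline{\pi_S(X)}$ is coordinate-wise generic, one gets $\rk(x_S/A)\leq\rk(\pi_S(X))\leq r$ for every such $S$, and by exchange this already forces $\rk(x/A)\leq r$ (a tuple of rank $\geq r+1$ contains an independent $(r+1)$-subtuple). For the dichotomy, given $i\neq j$ with $\pi_i,\pi_j$ independent on $X$ and $\rk(x/A)=r$, the case where $x_i$ or $x_j$ lies in $\acl_{\mathcal M}(A)$ is trivial (algebraic elements are $\mathcal M$-independent from everything); otherwise extend a maximal $\mathcal M$-independent subset of $\{x_i,x_j\}$ to a rank-$r$ subtuple of $x$ and pad to obtain $S\ni i,j$ with $|S|=r+1$ and $\rk(x_S/A)=r$. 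Then $\rk(\pi_S(X))=r$, so generics of $\pi_S(X)$ lift to generics of $X$ and the pair $\pi_i,\pi_j$ is independent on $\pi_S(X)$; clause (2) for the hypersurface $\pi_S(X)$ now gives the $\mathcal M$-independence of $x_i$ and $x_j$. In particular no decomposition into $X_{\mathrm{good}}$ and the locus of infinite fibers is needed, and no claim about the fiber of the closure over $\pi(x)$ has to be established.
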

	\begin{proof} Exactly as in Propositions 8.68, 8.70, and 8.75 of \cite{CasACF0}. 
        \end{proof}

        We now drop the inductive assumption, i.e. Assumption \ref{IH}, and summarize the completed proof of the theorem:
        
			\begin{proof}[Proof of Theorem \ref{T: closure thm}] Let $X$, $n$, $A$, and $x=(x_1,...,x_n)$ be as in Definition \ref{D: weak detection of closures}. It is easy to see that the statement of the theorem holds if $x\in X$.

            We now induct on $r=\rk(X)$. If $r=0$ then $x\in X$, so we are done. Now assume $r\geq 1$ and the statement holds for all $r'<r$. Clearly $n\geq r$. If $n=r$ then, since $M^n$ is stationary, we either have $x\in X$ or $\rk(x/A)<r$. In either case, we are done. So assume $n\geq r+1$. Then by we done by Proposition \ref{inductive step}.
				\end{proof}

\section{Consequences of Weak Detection of Closures}\label{S: 1-dim}

We now give applications of weak detection of closures to trichotomy problems. The main point is that weak detection of closures allows for the detection of ramification points of certain maps, and that this can be used in some cases to detect tangency of plane curves. As a corollary, assuming an appropriate `purity of ramification' statement, we show that $\dim(M)=1$.

\subsection{Weakly Generic Intersections}

In sections 6 and 7, we will frequently consider geometric properties of pairwise intersections between definable families of sets. Many natural properties become more difficult to state in the abstract setup, because notions such as `variety' are not accessible. In order to make the presentation smoother, we begin by establishing some terminology. The following will be used extensively:

\begin{definition}\label{D: generic intersections} Let $\mathcal Y=\{Y_t:t\in T\}$ and $\mathcal Z=\{Z_u:u\in U\}$ be $\mathcal K(A)$-definable families of subsets of a $\mathcal K(A)$-definable set $X$.
\begin{enumerate}
      \item By a \textit{weakly generic $(\mathcal Y,\mathcal Z)$-intersection over $A$}, we mean a tuple $(x,t,u)$ such that $x\in X$, $t\in T$, $u\in U$, $(x,t)\in Y$, and $(x,u)\in Z$, are all generic over $A$.
      \item Let $(x,t,u)$ be a weakly generic $(\mathcal Y,\mathcal Z)$-intersection over $A$. We call $(x,t,u)$ a \textit{strongly generic $(\mathcal Y,\mathcal Z)$-intersection over $A$} if $t$ and $u$ are independent over both $A$ and $Ax$.
      \item Let $(x,t,u)$ be a weakly generic $(\mathcal Y,\mathcal Z)$-intersection over $A$. We call $(x,t,u)$ \textit{strongly approximable over $A$} if every neighborhood of $(x,t,u)$ contains a strongly generic $(\mathcal Y,\mathcal Z)$-intersection over $A$.
 \end{enumerate}
\end{definition}

\begin{remark} This is a slight abuse of notation, as Definition \ref{D: generic intersections} depends on $X$, $T$, and $U$. In our applications, these sets will be clear from context, so we just write $(\mathcal Y,\mathcal Z)$.
\end{remark}

The idea is that a weakly generic $(\mathcal Y,\mathcal Z)$-intersection is an intersection $x\in Y_t\cap Z_u$ which is `as generic as possible' when looking at each family individually; while a strongly generic $(\mathcal Y,\mathcal Z)$-intersection is as generic as possible when looking at the two families simultaneously.

Note that for $\mathcal Y$ and $\mathcal Z$, weakly generic $(\mathcal Y,\mathcal Z)$-intersections need not exist (e.g. if one of $Y$ and $Z$ concentrates only on a small portion of $X$). Moreover, there might be weakly generic $(\mathcal Y,\mathcal Z)$-intersections but no strongly generic $(\mathcal Y,\mathcal Z)$-intersections (e.g. if $\mathcal Y=\mathcal Z$ and the sets in $\mathcal Y$ are pairwise disjoint).

On the other hand, for families of plane curves in $\mathcal M$, one can always find both types of intersections:

\begin{lemma}\label{L: generic intersections exist} Let $\mathcal C=\{C_t:t\in T\}$ and $\mathcal D=\{D_u:u\in U\}$ be almost faithful $\mathcal M(A)$-definable families of plane curves in $\mathcal M$, each of rank at least 2.
\begin{enumerate}
    \item Suppose $t\in T$ and $u\in U$ are generic over $A$, and $x$ is generic in both $C_t$ over $At$, and $D_u$ over $Au$. Then $(x,t,u)$ is a weakly generic $(\mathcal C,\mathcal D)$-intersection over $A$.
    \item There is a tuple $(x,t,u)$ satisfying the assumptions of (1) above. In particular, there is a weakly generic $(\mathcal C,\mathcal D)$-intersection over $A$.
    \item Every weakly generic $(\mathcal C,\mathcal D)$-intersection over $A$ is strongly approximable over $A$. In particular, there is a strongly generic $(\mathcal C,\mathcal D)$-intersection over $A$.
\end{enumerate}
\end{lemma}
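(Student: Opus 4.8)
The plan is to treat all three parts as dimension accounting in $\mathcal K$ (via additivity of $\dim$), supplemented by two elementary facts about almost faithful families and, for part (3) only, the Baire category axiom.

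I would first record two facts. \textbf{(A)} An almost faithful family of plane curves of rank $\ge 2$ covers the plane: $\bigcup_{t\in T}C_t$ is generic in $M^2$. \textbf{(B)} For $t$ generic in $T$ over $A$ and $u$ generic in $U$ over $At$, one has $\dim(C_t\cap D_u)=0$. Both are pigeonhole arguments using strong minimality and almost faithfulness: for (A), if $\bigcup C_t$ had rank $\le 1$ then each $C_t$ would be, up to almost equality, a union of one of finitely many fixed rank-$1$ stationary sets, so infinitely many $C_t$ would pairwise share a rank-$1$ set, contradicting almost faithfulness; (B) is the same for $\mathcal D$, since a $1$-dimensional $C_t\cap D_u$ forces $D_u$ to contain one of the finitely many rank-$1$ pieces of $C_t$. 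From (A) the graph of $\mathcal C$ maps dominantly onto $M^2$, so $\dim\{t:x\in C_t\}=\dim T-\dim M$ for $x$ generic in $M^2$, and likewise for $\mathcal D$; and since $M^2$ is stationary in $\mathcal M$ the two generic sets $\bigcup C_t,\bigcup D_u$ intersect generically, so the intersection set $W=\{(x,t,u):x\in C_t\cap D_u\}$ satisfies $\dim W=\dim M^2+(\dim T-\dim M)+(\dim U-\dim M)=\dim T+\dim U$.

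Parts (1) and (2) are then routine additivity computations. For (1), the hypotheses give $\dim(xt/A)=\dim C_t+\dim T=\dim(\mathrm{graph}\,\mathcal C)$ (constant fibre dimension by almost faithfulness), so $(x,t)$ is generic in $\mathrm{graph}\,\mathcal C$ over $A$; projecting and applying (A) shows $x$ is generic in $M^2$ over $A$; symmetrically for $(x,u)$; together with the assumptions on $t,u$ these are exactly the five conditions for a weakly generic intersection. For (2), I would choose $x$ generic in $M^2$ over $A$ (possible by $\aleph_1$-saturation); by (A) some $C_t$ passes through $x$, so pick $t$ generic in $\{t':x\in C_{t'}\}$ over $Ax$ and, separately, $u$ generic in $\{u':x\in D_{u'}\}$ over $Ax$; an additivity count then forces $t$ generic in $T$ over $A$ with $x$ generic in $C_t$ over $At$ (and likewise for $u$), so $(x,t,u)$ meets the hypotheses of (1).

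Part (3) is the substantive one. Given a weakly generic intersection $(x,t,u)$ over $A$, an additivity computation shows $u$ is generic in $\{u':x\in D_{u'}\}$ over $Ax$. I would then apply the Baire category axiom to the $\mathcal K(Ax)$-definable set $\{u':x\in D_{u'}\}$ with the larger parameter set $Axt$, obtaining, inside any prescribed neighbourhood of $u$, a point $u'$ generic in $\{u':x\in D_{u'}\}$ over $Axt$; since only the $U$-coordinate moves, $(x,t,u')$ stays in the corresponding neighbourhood of $(x,t,u)$. To finish one checks $(x,t,u')$ is strongly generic: from the genericity of $u'$ over $Axt$ in a set of dimension $\dim U-\dim M$ one gets $\dim(u'/Ax)=\dim(u'/Axt)$, i.e.\ $t$ and $u'$ independent over $Ax$; and $\dim(xtu'/A)=\dim(xt/A)+\dim(u'/Axt)=\dim T+\dim U=\dim W$, so $(x,t,u')$ is generic in $W$ over $A$, from which the remaining genericity clauses follow by projecting through $W\to M^2$ and $W\to\mathrm{graph}\,\mathcal D$. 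The one delicate point, which I expect to be the main obstacle, is the independence of $t$ and $u'$ \emph{over $A$} — equivalently $\dim(x/Atu')=0$, i.e.\ finiteness of $C_t\cap D_{u'}$: if this failed, $\{u':x\in D_{u'}\}$ would meet the $\mathcal M(t)$-definable locus $\{u'':\dim(C_t\cap D_{u''})\ge 1\}$ generically (using that $\mathcal M$-definable sets have $\mathcal K$-dimension a multiple of $\dim M$), forcing infinitely many $D_{u''}$ through $x$ to share a rank-$1$ piece of $C_t$, contradicting almost faithfulness of $\mathcal D$ once more.
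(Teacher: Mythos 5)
Your facts (A) and (B) are fine, and your part (3) is sound: Baire category applied to the fibre $\{u':x\in D_{u'}\}$ over the enlarged parameter set, plus the almost-faithfulness pigeonhole ruling out an infinite $C_t\cap D_{u'}$, is essentially the paper's own argument for (3), with the details the paper delegates to a citation filled in correctly.

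The gap is in the step on which your (1) and (2) rest: the inference ``the graph of $\mathcal C$ maps dominantly onto $M^2$, so $\dim\{t:x\in C_t\}=\dim T-\dim M$ for $x$ generic in $M^2$,'' and its companion ``projecting and applying (A) shows $x$ is generic in $M^2$.'' In a geometric structure, dominance (even surjectivity) of a projection does not imply that fibres over generic points of the image have dimension $\dim(\mathrm{total})-\dim(\mathrm{image})$, nor that a generic point of the total space lies over a generic point of the image: a priori the full dimension $\dim T+\dim M$ of the graph could be carried by large fibres sitting over a non-generic subset of $M^2$. Additivity only gives the upper bound $\dim\{t:x\in C_t\}\le\dim T-\dim M$, and only once one already knows $\dim(x/A)=2\dim M$; so in (1) your argument is circular (the genericity of $x$ in $M^2$ is exactly the nontrivial content being proved), and in (2) the ``additivity count'' uses the unproved lower bound on the fibre. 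What is missing is precisely the strongly minimal input the paper imports via coherence and \cite[Lemma 2.40]{CasACF0}: if $t$ is $\mathcal M$-generic in $T$ and $x$ is $\mathcal M$-generic in $C_t$ over $t$, then $x$ is $\mathcal M$-generic in $M^2$. This is provable by the same pigeonhole you use for (A): if $\rk(x/A)\le 1$, then $t$ is $\mathcal M$-independent from $x$ over $A$, so $\mathcal M$-generically many curves of the family meet the rank-$1$ locus of $x$ in a rank-$1$ set and hence almost contain a common stationary component of it, contradicting almost faithfulness. Once $\rk(x/A)=2$ is known, the fibre $\{t':x\in C_{t'}\}$ has rank at most $\rk(T)-1$, hence dimension at most $\dim T-\dim M$ by the identity $\dim X=\rk X\cdot\dim M$ for $\mathcal M$-definable $X$ (Lemma \ref{dim rk comparison}), and additivity then yields $\dim(x/A)=2\dim M$ --- this is exactly the coherence transfer of Lemma \ref{L: coherent preservation} that the paper's proof invokes. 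With that ingredient inserted, your dimension bookkeeping for (1), (2), and the computation of $\dim W$ goes through.
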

\begin{proof} To simplify notation, throughout, we assume $A=\emptyset$.
\begin{enumerate}
    \item Everything is clear except the genericity of $x\in M^2$. But by Lemma \ref{L: coherent preservation}, $x$ is coherent (because $t$ is coherent and $x$ is coherent over $t$). So it suffices to show that $x$ is $\mathcal M$-generic in $M^2$. This is well known easy (see  \cite[Lemma 2.40]{CasACF0}).
    \item Let $I$ be the set of $(x,t,u)$ with $x\in C_t\cap D_u$. Fix $(x,t,u)\in I$ generic. Then $(x,t,u)$ is such a tuple (this also follows from  \cite[Lemma 2.40]{CasACF0}).
    \item Let $(x,t,u)$ be a weakly generic $(\mathcal C,\mathcal D)$-intersection. Let $V$ be any neighborhood of $u$. It will suffice to find a strongly generic $(\mathcal C,\mathcal D)$-intersection of the form $(x,t,u')$ with $u'\in V$.
    
    Let ${_x}D$ be the set of $u'$ with $x\in D_{u'}$. Then $u$ is generic in ${_x}D$ over $x$. By the Baire category axiom, there is some $u'\in V$ which is generic in ${_x}D$ over $tx$. It follows easily that $(x,t,u')$ is a strongly generic $(\mathcal C,\mathcal D)$-intersection (again, using \cite[Lemma 2.40]{CasACF0}).
\end{enumerate}
\end{proof}

\subsection{Topological Ramification and Multiple Intersections}

We will also extensively study the related notions of `ramification' and `multiple intersections' in sections 6 and 7. Let us now make these precise.

\begin{definition} Let $f:X\rightarrow Y$ be any function, where $X\subset K^m$ and $Y\subset K^n$. We say that $x_0\in X$ is a \textit{topological ramification point} of $f$ if $(x_0,x_0)$ belongs to the closure of the set of $(x,x')\in X^2$ with $x\neq x'$ and $f(x)=f(x')$.
\end{definition}

\begin{definition} Let $\mathcal Y=\{Y_t:t\in T\}$ and $\mathcal Z=\{Z_t:t\in T\}$ be $\mathcal K(A)$-definable families of subsets of a $\mathcal K(A)$-definable set $X$. Let $(x,t,u)$ be a weakly generic $(\mathcal Y,\mathcal Z)$-intersection over $A$. We call $(x,t,u)$ a \textit{generic multiple $(\mathcal Y,\mathcal Z)$-intersection over $A$} if $(x,t,u)$ is a topological ramification point of the projection $I\rightarrow T\times U$, where $I$ is the set of $(x',t',u')$ with $(x',t')\in Y$ and $(x',u')\in Z$.
     \end{definition}

\subsection{Detection of Generic Multiple Intersections}

The first main goal of section 6 is to show that, if $\mathcal M$ weakly detects closures (Definition \ref{D: weak detection of closures}), then $\mathcal M$ can detect multiple intersections of plane curves in a precise sense. It will be convenient to only consider curves coming from \textit{standard families}. Recall (Definition \ref{D: excellent}):

\begin{definition}\label{D: standard family}
    A \textit{standard family of plane curves over $A$} is an almost faithful $\mathcal M(A)$-definable family of plane curves $\mathcal C=\{C_t:t\in T\}$ such that:
    \begin{enumerate}
        \item $\rk(T)\geq 1$.
        \item $T$ is a generic subset of $M^{\rk(T)}$.
        \item For each $x\in M^2$, the set ${_xC}=\{t:x\in C_t\}$ has rank at most $\rk(T)-1$.
    \end{enumerate}
\end{definition}

\begin{remark} Condition (3) does not matter to us. We include it for consistency with \cite{CasACF0}.
\end{remark}

Standard families always exist in the following sense:

\begin{lemma}\label{L: standard families exist} Let $C$ be a strongly minimal plane curve in $\mathcal M$, with canonical base $c$. Assume that $c$ is coherent of rank at least 1 over $A$. Then there are a standard family of plane curves $\mathcal C=\{C_t:t\in T\}$ over $A$, and a generic $t\in T$ over $A$, with $C$ almost contained in $C_t$.
\end{lemma}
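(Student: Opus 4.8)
The plan is to mimic the construction of large families of hypersurfaces from \cite{CasACF0} (the case $r=1$ of the discussion surrounding Definition~\ref{D: hypersurface}), and then to use the coherence hypothesis on $c$ to promote the relevant parameter from an $\mathcal M$-generic point to a generic point in the sense of $\mathcal K$. First I would build a preliminary family: since $C$ is a stationary plane curve and $c=\operatorname{Cb}(C)$ has rank $n:=\rk(c/A)\ge 1$ over $A$, the theory of strongly minimal structures recalled in Subsection~\ref{ss: sm} shows that, up to almost equality, $C$ is an $\mathcal M$-generic member of an almost faithful $\mathcal M(A)$-definable family of plane curves whose parameter set has rank $n$ and whose $\mathcal M$-generic parameter is $\mathcal M$-interalgebraic with $c$ over $A$. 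Exactly as in \cite{CasACF0}, using that elements of $\mathcal M^{\mathrm{eq}}$ are $\mathcal M$-interalgebraic over $A$ with real tuples, the characterization of rank by finite-to-one maps into powers of $M$, and (on the curve side) passing to the finite unions of curves over the fibers of such a map, I may arrange that the parameter set is a stationary generic $\mathcal M(A)$-definable subset $T_0\subseteq M^n$. This produces an almost faithful $\mathcal M(A)$-definable family $\mathcal C_0=\{C_t:t\in T_0\}$ of plane curves, with graph $C_0\subseteq M^2\times T_0$ of rank $n+1$, together with $t_0\in T_0$ that is $\mathcal M$-generic in $T_0$ over $A$, is $\mathcal M$-interalgebraic with $c$ over $A$, and satisfies that $C$ is almost contained in $C_{t_0}$.

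Next I would enforce condition~(3) of Definition~\ref{D: standard family}. Put ${}_xC_0=\{t\in T_0:x\in C_t\}$ for $x\in M^2$, and let $B=\{x\in M^2:\rk({}_xC_0)=n\}$, which is $\mathcal M(A)$-definable since rank is definable in families. I claim $B$ is finite. If $\rk(B)=2$, then for $x$ generic in $M^2$ over $A$ with $x\in B$ and $t$ generic in ${}_xC_0$ over $Ax$ one gets $\rk(xt/A)=n+2>n+1=\rk(C_0)$, a contradiction. If $\rk(B)=1$, then for $x$ generic in $B$ over $A$ and $t,t'$ independent generics in ${}_xC_0$ over $Ax$, the point $t'$ is $\mathcal M$-generic in $T_0$ over $At$; almost faithfulness of $\mathcal C_0$ therefore gives $\rk(C_t\cap C_{t'})=0$, hence $\rk(x/Att')=0$ and $\rk(xtt'/A)=\rk(tt'/A)\le 2n$, contradicting $\rk(xtt'/A)=1+2n$. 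So $B$ is finite. Now take $T:=T_0$ and $\mathcal C:=\{C_t\setminus B:t\in T\}$. This is again an almost faithful $\mathcal M(A)$-definable family of plane curves; $T$ is a stationary generic subset of $M^{n}$ with $\rk(T)=n\ge 1$; and for every $x\in M^2$ the set ${}_xC$ has rank at most $n-1$ (it is empty when $x\in B$, and is contained in ${}_xC_0$ otherwise). Thus $\mathcal C$ is a standard family over $A$. Deleting the finite set $B$ changes no almost-equality classes, so $C$ is still almost contained in $C_{t_0}\setminus B$.

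It remains to upgrade $t_0$ to a generic point in the sense of $\mathcal K$. Since $t_0$ is $\mathcal M$-interalgebraic with $c$ over $A$ and $c$ is coherent over $A$ by hypothesis, Lemma~\ref{L: coherent preservation}(3) gives that $t_0$ is coherent over $A$; and since $t_0$ is $\mathcal M$-generic in the stationary set $T$ over $A$, Lemma~\ref{L: coherent preservation}(1) shows that $t_0$ is generic in $T$ over $A$. Taking $t=t_0$ completes the proof. I expect the only genuine content beyond routine bookkeeping to be the finiteness of the bad set $B$ — this is precisely what makes $\mathcal C$ a \emph{standard} family — where one must feed the independence of $t$ and $t'$ through almost faithfulness in order to bound $\rk(x/Att')$; the $\mathcal M^{\mathrm{eq}}$-to-$M^n$ reindexing is identical to \cite{CasACF0}, and the concluding coherence step is immediate from Lemma~\ref{L: coherent preservation}.
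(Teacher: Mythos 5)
Your proposal is correct and follows the same route the paper takes by citation: the paper's entire proof is ``See Lemmas 2.42 and 3.21 of \cite{CasACF0}'', and your argument reconstructs exactly that material — realizing $C$ up to almost equality in a rank-$\rk(c/A)$ almost faithful family, reindexing by a generic subset of $M^n$ via interalgebraicity of the canonical base with a real tuple, trimming the (finite) bad locus $B$ to secure condition (3) of Definition \ref{D: standard family}, and then transferring coherence through Lemma \ref{L: coherent preservation} to upgrade the $\mathcal M$-genericity of $t$ to genericity in the sense of $\mathcal K$. The one step you defer (the $\mathcal M^{\mathrm{eq}}$-to-$M^n$ reindexing) is precisely part of the cited \cite{CasACF0} lemmas, so the deferral is legitimate, and your explicit rank computation showing $B$ is finite is a correct filling-in of the detail the paper outsources.
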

\begin{proof} See Lemmas 2.42 and 3.21 of \cite{CasACF0}.
\end{proof}

We can now state what it means for $\mathcal M$ to detect multiple intersections:

    \begin{definition}\label{D: detects noninjectivities} We say that $\mathcal M$ \textit{detects multiple intersections} if whenever $\mathcal C=\{C_t:t\in T\}$ and $\mathcal D=\{D_u:u\in U\}$ are standard families of plane curves over $A$, and $(x,t,u)$ is a generic multiple $(\mathcal C,\mathcal D)$-intersection over $A$, the parameters $t$ and $u$ are $\mathcal M$-dependent over $A$.
\end{definition}

We now prove, in analogy to  \cite[Theorem 9.1]{CasACF0}:

\begin{theorem}\label{T: detecting ramification} If $\mathcal M$ weakly detects closures, then $\mathcal M$ detects multiple intersections.
\end{theorem}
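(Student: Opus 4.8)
The plan is to transcribe the argument of \cite[Theorem 9.1]{CasACF0}, with the role of detection of closures played by our weak detection of closures (Definition \ref{D: weak detection of closures}). Fix standard families $\mathcal C=\{C_t:t\in T\}$, $\mathcal D=\{D_u:u\in U\}$ over $A$, and a generic multiple $(\mathcal C,\mathcal D)$-intersection $(x,t,u)$ over $A$; we may assume $A=\emptyset$. Let $I$ be the set of $(x',t',u')$ with $x'\in C_{t'}\cap D_{u'}$, so by hypothesis $(x,t,u)$ is a topological ramification point of $\sigma\colon I\to T\times U$: the pair $((x,t,u),(x,t,u))$ lies in the closure of the set $R$ of pairs of distinct points of $I$ with equal $\sigma$-image. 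Identifying $R$ homeomorphically with the $\mathcal M(\emptyset)$-definable set
$$X:=\{(x_1,x_2,t',u'): x_1\neq x_2,\ x_1,x_2\in C_{t'}\cap D_{u'}\},$$
and using that continuous images of closures land in closures of images, we get $p:=(x,x,t,u)\in\overline X$.

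First I would check that $p$ is coordinate-wise generic: since $(x,t,u)$ is a weakly generic intersection over $\emptyset$ we have $\dim(x/\emptyset)=2\dim M$, which by Lemma \ref{dim rk comparison} forces $\rk(x/\emptyset)=2$, so both coordinates of $x$ are generic in $M$; and $t$, $u$ are generic in $T$, $U$, which (being parameters of standard families) are generic subsets of powers of $M$, so their coordinates are generic in $M$ as well. Then weak detection of closures applies to $X$ at $p$: writing $r=\rk(X)$, it gives $\rk(tu/\emptyset)\leq\rk(xtu/\emptyset)=\rk(p/\emptyset)\leq r$, together with the dichotomy that either (1) $\rk(xtu/\emptyset)<r$, or (2) for every pair of coordinate projections independent on $X$, the corresponding coordinates of $p$ are $\mathcal M$-independent over $\emptyset$.

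The key point is that $p$ has repeated coordinates — the two copies of $x$ — and these are $\mathcal M$-\emph{dependent} over $\emptyset$, since each coordinate $a$ of $x$ has $\rk(a/\emptyset)=1>0$. Hence, as soon as the corresponding pair of projections of $X$ is independent on $X$ — which is the generic picture, forced by the genericity of the multiple intersection after the standard reductions to non-trivial curves with $x_1,x_2$ independent generic in $M^2$ (using the standard-family hypothesis and Lemma \ref{L: coherent preservation}) — option (2) fails and we land in option (1): a strict drop $\rk(xtu/\emptyset)<\rk(X)$. A rank count then finishes the argument: for generic $(t',u')$ the curves $C_{t'}$, $D_{u'}$ meet finitely, so the generic member of $X$ satisfies $\rk(x_1x_2t'u'/\emptyset)=\rk(t'u'/\emptyset)$, whence $\rk(X)\leq\rk(t/\emptyset)+\rk(u/\emptyset)$; and likewise $\rk(tu/\emptyset)=\rk(xtu/\emptyset)$. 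Combining gives $\rk(tu/\emptyset)<\rk(t/\emptyset)+\rk(u/\emptyset)$, i.e. $t$ and $u$ are $\mathcal M$-dependent over $\emptyset$, as required.

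The main obstacle I anticipate is the bookkeeping: carrying out the reductions that rule out the genuinely degenerate configurations (curves in a family sharing a common projection, or $C_t\cap D_u$ infinite — in the latter case a shared stationary component already yields $t,u$ dependent directly via canonical bases), verifying that at least one "repeated" pair of projections is independent on $X$ in the main case, and handling the rank arithmetic when generic intersections fail to be finite. Beyond weak detection of closures itself, the only topological inputs are continuity of the definable identifications above and the generic local homeomorphism property — in particular no "enough open maps" is needed — so the proof is a faithful adaptation of \cite[Theorem 9.1]{CasACF0}.
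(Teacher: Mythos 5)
Your skeleton is the same as the paper's (form the set of pairs of distinct intersection points over a common parameter, observe that the ramification point gives a coordinate-repeating closure point, apply weak detection of closures, and use the repeated coordinates to force the rank-drop horn of the dichotomy), but the step you dismiss as verifiable "bookkeeping" is precisely where the proof lives, and your proposed justification for it is not valid. Option (2) of Definition \ref{D: weak detection of closures} only speaks about pairs of projections that are \emph{independent on the definable set} $X$ (Definition \ref{independent projections}), and that is a condition on \emph{generic points of $X$}, not on the closure point $p=(x,x,t,u)$. The genericity of the multiple intersection $(x,t,u)$, or the fact that $x$ is generic in $M^2$, says nothing about whether, at a generic point $(x_1,x_2,t',u')$ of $X$, the first coordinates of $x_1$ and $x_2$ are independent: a top-rank component of $X$ could have $x_1$ and $x_2$ sharing their first coordinate, or related by some definable correspondence, in which case option (2) is vacuous for the repeated pair and no contradiction is obtained. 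The paper's proof devotes its main effort to manufacturing this independence: it splits $P=P_1\cup P_2$ according to which coordinate of the two points differs, removes the set of \emph{extendable} pairs $E$ (those $(x_1,x_1')$ whose fiber in $P_1$ has rank $\geq r_{\mathcal C}+r_{\mathcal D}-1$), uses the strong frontier inequality to show $(\hat x_1,\hat x_1)\notin\overline E$ so that the closure point survives this preening, and only then proves that if the preened set $P'$ has maximal rank then the two relevant projections are independent on $P'$. None of this appears in your argument, and without it the application of weak detection of closures yields nothing.

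A second, smaller gap: your $X$ keeps pairs lying over parameters $(t',u')$ for which $C_{t'}\cap D_{u'}$ is infinite, so $X\to T\times U$ is not finite-to-one and your inequality $\rk(X)\leq\rk(T)+\rk(U)$ is unjustified (the part of $X$ over $Z=\{(t',u'):C_{t'}\cap D_{u'}\text{ infinite}\}$ can have rank $\rk(T)+2$, which exceeds $\rk(T)+\rk(U)$ when $\rk(U)=1$, and standard families of rank $1$ are allowed). The paper instead removes $Z$ before forming the pair set; the price is that the ramification could then be explained only by $\overline Z$, and the case $(\hat t,\hat u)\in\overline Z$ is handled by a separate application of weak detection of closures to $Z$ itself, using that $Z\to T$ is finite-to-one by almost faithfulness. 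Your observation that an infinite intersection forces $t,u$ to be $\mathcal M$-dependent via shared components only disposes of the case $(\hat t,\hat u)\in Z$, not of $(\hat t,\hat u)\in\overline Z\setminus Z$. So while your overall route is the paper's route, the two places you defer are genuine missing arguments rather than routine verifications.
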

\begin{proof}
Let $\mathcal C=\{C_t:t\in T\}$ and $\mathcal D=\{D_u:u\in U\}$ be standard families of plane curves over $A$. For ease of notation, we assume $A=\emptyset$. Let $\hat w=(\hat x,\hat t,\hat u)$ be a generic multiple $(\mathcal C,\mathcal D)$-intersection over $\emptyset$. Note that $\hat x$, $\hat t$, and $\hat u$ are generic in their respective powers of $M$, and thus $\hat w$ is coordinate-wise generic; this will guarantee that applications of weak detection of closures below are valid.
		
	Let $I$ be the set of $(x,t,u)$ with $x\in C_t\cap D_u$ -- so $\hat w$ is a topological ramification point of the projection $I\rightarrow T\times U$. Also let $Z\subset T\times U$ be the set of $(t,u)$ such that $C_t\cap D_u$ is infinite, and note that by almost faithfulness the projection $Z\rightarrow T$ is finite-to-one. Finally, let $r_{\mathcal C}=\rk(T)$ and $r_{\mathcal D}=\rk(U)$. So $\rk(\hat t)=r_{\mathcal C}$ and $\rk(\hat u)=r_{\mathcal D}$, and our goal is to show that $\rk(\hat t\hat u)<r_{\mathcal C}+r_{\mathcal D}$. 
		
		\begin{lemma} We may assume that $(\hat t,\hat u)\notin\overline Z$.
		\end{lemma}
		\begin{proof} Assume that $(\hat t,\hat u)\in\overline Z$. Then by weak detection of closures and the fact that $Z\rightarrow T$ is finite-to-one, we have $$\rk(\hat t\hat u)\leq\rk(Z)\leq\rk(T)=r_{\mathcal C}<r_{\mathcal C}+r_{\mathcal D},$$ which proves the theorem in this case.
		\end{proof}
		
		So, assume that $(\hat t,\hat u)\notin\overline Z$. Let $I'$ be the set of $(x,t,u)\in I$ such that $(t,u)\notin Z$. Then by assumption, it follows that $\hat w$ is still a topological ramification point of $I'\rightarrow T\times U$. Let $P$ be the set of $(x,x',t,u)$ such that $x\neq x'$ and $(x,t,u),(x',t,u)\in I'$ -- so the tuple $\hat z=(\hat x,\hat x,\hat t,\hat u)$ belongs to the frontier $\textrm{Fr}(P)$. Moreover, note by definition of $P$, that the projection $P\rightarrow T\times U$ is finite-to-one, which shows that $\rk(P)\leq r_{\mathcal C}+r_{\mathcal D}$. 
		
		Now we would like to apply weak detection of closures to $\hat z\in\operatorname{Fr}(P)$, but we need to `preen' $P$ first to get the relevant projections to be independent on $P$. Let $\pi_1,\pi_2:M^2\rightarrow M$ denote the two projections. Then we define the sets $$P_1=\{(x,x',t,u)\in P:\pi_1(x)\neq\pi_1(x')\},$$ $$P_2=\{(x,x',t,u)\in P:\pi_2(x)\neq\pi_2(x')\}.$$ It is evident from the definition of $P$ that $P=P_1\cup P_2$. Thus, we get $\hat z\in\operatorname{Fr}(P_i)$ for some $i=1,2$. Without loss of generality, we assume that $\hat z\in\operatorname{Fr}(P_1)$. We then define:
		
		\begin{definition} Let $x_1\neq x_1'\in M$. Then $x_1$ and $x_1'$ are \textit{extendable} if the preimage of $(x_1,x_1')$ in $P_1$ under the projection to the first and third coordinates has rank at least $r_{\mathcal C}+r_{\mathcal D}-1$. 
		\end{definition}
		
		Let $E$ be the set of extendable pairs. It is evident from the definition, and the fact that $\rk(P)\leq r_{\mathcal C}+r_{\mathcal D}$, that $\rk(E)\leq 1$. Then, denoting $\hat x$ as the pair $(\hat x_1,\hat x_2)$, we conclude the following:
		
		\begin{lemma}\label{frontier of extendable points} $(\hat x_1,\hat x_1)\notin\overline E$.
		\end{lemma}
		\begin{proof} If not, then $(\hat x_1,\hat x_1)\in\operatorname{Fr}(E)$, since the two coordinates of $(\hat x_1,\hat x_1)$ are equal. But since $\rk(E)\leq 1$, we have $\dim(E)\leq\dim M$; so if $(\hat x_1,\hat x_1)\in\operatorname{Fr}(E)$ then by the strong frontier inequality (see Definition \ref{D: easy axioms}) we get $\dim(\hat x_1,\hat x_1)<\dim M$, which contradicts that $\hat x$ is generic in $M^2$.
		\end{proof}
		
		Finally, let $P'$ be the set of all $(x,x',t,u)\in P_1$ such that $\pi_1(x)$ and $\pi_1(x')$ are not extendable. Note that $P'$ is $\mathcal M$-definable over $\emptyset$, because $P$ is. Moreover, by Lemma \ref{frontier of extendable points}, it follows that $\hat z\in\operatorname{Fr}(P')$. Then we note:
		
		\begin{lemma}\label{P projections independent} If $\rk(P')=r_{\mathcal C}+r_{\mathcal D}$, then the projections to the first and third $M$-coordinates are independent on $P'$.
		\end{lemma}
		\begin{proof} Assume that $\rk(P')=r_{\mathcal C}+r_{\mathcal D}$, and let $(x,x',t,u)\in P'$ be generic. Let $x_1$ and $x_1'$ be the first and third coordinates of $(x,x',t,u)$ -- that is, the first coordinate of each of $x$ and $x'$. Then by definition of $P'$, $x_1$ and $x_1'$ are not extendable. By definition of extendability, we conclude that $\rk(xx'tu/x_1x_1')\leq r_{\mathcal C}+r_{\mathcal D}-2$. But by the choice of $(x,x',t,u)$ we have $\rk(xx'tu)=r_{\mathcal C}+r_{\mathcal D}$. So by additivity, we obtain $\rk(x_1x_1')\geq 2$. Thus $x_1$ and $x_1'$ are $\mathcal M$-independent $\mathcal M$-generics in $M$ over $\emptyset$, which is enough to prove the lemma.	
		\end{proof}
		
		We now apply weak detection of closures to $P'$. Since $P'\subset P$ and $\hat z\in\operatorname{Fr}(P')$, we first get $$\rk(\hat z)\leq\rk(P')\leq\rk(P)\leq r_{\mathcal C}+r_{\mathcal D}.$$ We want this to be a strict inequality between $\hat z$ and $r_{\mathcal C}+r_{\mathcal D}$. But if equality holds then all terms above must be equal, so in particular $\rk(\hat z)=\rk(P')=r_{\mathcal C}+r_{\mathcal D}$. So by Lemma \ref{P projections independent}, the projections of $P'$ to the first and third coordinates are independent. By weak detection of closures, this forces the first and third coordinates of $\hat z$ -- i.e. $\hat x_1$ and $\hat x_1$ -- to be $\mathcal M$-independent over $\emptyset$. But this is clearly not true, since $\hat x_1$ and $\hat x_1$ are equal generics.
	
	So the inequality must be strict, i.e. $\rk(\hat z)<r_{\mathcal C}+r_{\mathcal D}$. But $(\hat t,\hat u)$ is a subtuple of $\hat z$ -- so we also get that $\rk(\hat t\hat u)<r_{\mathcal C}+r_{\mathcal D}$, which proves the theorem.
		\end{proof}

  \subsection{Purity of Ramification and the Dimension of $M$}

  Our next goal is to show that in contexts where \textit{purity of ramification} applies, Theorem \ref{T: detecting ramification} is enough to show that $\dim(M)=1$. This is analogous to section 6 of \cite{CasACF0}. In the abstract setting, the notion of purity of ramification is more difficult to state -- the issue being that we need to identify an appropriate class of maps which could reasonably be expected to have the relevant property (see Remark \ref{R: explaining purity def}). To remedy this, we will use the terminology of weakly generic intersections developed in the previous subsection, with one addition:

\begin{definition}\label{D: codimension} Let $\mathcal Y=\{Y_t:t\in T\}$ and $\mathcal Z=\{Z_u:u\in U\}$ be $\mathcal K(A)$-definable families of subsets of a $\mathcal K(A)$-definable set $X$. Let $(x,t,u)$ be a weakly generic $(\mathcal Y,\mathcal Z)$-intersection over $A$. The \textit{codimension of $(x,t,u)$ over $A$} is $$\operatorname{codim}_A(x,t,u)=\dim(t/Ax)+\dim(u/Ax)-\dim(tu/Ax).$$
\end{definition}

One should think of codimension as a measure of how far a weakly generic intersection is from being strongly generic. With this intuition, we now define:

\begin{definition}\label{D: ramification purity} $(\mathcal K,\tau)$ has \textit{ramification purity} if the following holds: Let $\mathcal Y=\{Y_t:t\in T\}$ and $\mathcal Z=\{Z_u:u\in U\}$ be $\mathcal K(A)$-definable families of subsets of a $\mathcal K(A)$-definable set $X$. Suppose there is a generic multiple $(\mathcal Y,\mathcal Z)$-intersection over $A$ which is strongly approximable over $A$. Then there is a generic multiple $(\mathcal Y,\mathcal Z)$-intersection over $A$ which has codimension at most 1 over $A$.
\end{definition}

\begin{remark}\label{R: explaining purity def} Our originally intended statement of Definition \ref{D: ramification purity} was much simpler and closer to the usual statement: namely, that there is a notion of smoothness $X\mapsto X^S$ on $(\mathcal K,\tau)$ such that, if a generically finite-to-one projection $f:X\rightarrow Y$ (where $\dim(X)=\dim(Y)$) topologically ramifies at a point $x$ with $x\in X^S$ and $y\in Y^S$, then there are codimension 1 topological ramification points of $f$ in any neighborhood of $x$. However, even in the specific example of ACVF, we were unable to prove that the ramification point we wish to consider is actually smooth in its domain. In a sense, one needs to replace `smooth varieties' with `varieties universally homeomorphic to smooth varieties' -- and this has no obvious analog in the abstract setting. This is the reason we needed to use the more complicated definition above.
\end{remark}

We now show:

\begin{theorem}\label{T: higher dimensional case} Assume that $\mathcal M$ detects multiple intersections, and $(\mathcal K,\tau)$ has ramification purity. Then $\dim(M)=1$.
\end{theorem}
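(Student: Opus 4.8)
The plan is to argue by contradiction, assuming $\dim(M)=d\ge 2$ (note $\dim(M)\ge 1$ automatically). I will work with the $\emptyset$-definable excellent family $\mathcal C=\{C_t:t\in T\}$ of plane curves from Assumption \ref{A: K and M}; here $\rk(T)=2$ and $T$ is a generic subset of $M^2$, so $\mathcal C$ is in particular a standard family of plane curves over $\emptyset$, and it is a $\mathcal K(\emptyset)$-definable family of subsets of the $\mathcal K(\emptyset)$-definable set $M^2$. The strategy is to play the hypothesis ``$\mathcal M$ detects multiple intersections'' against ``$(\mathcal K,\tau)$ has ramification purity'': detection will force the curve parameters of a generic multiple $(\mathcal C,\mathcal C)$-intersection to be $\mathcal M$-dependent, and then a dimension count shows every such intersection has codimension at least $d\ge 2$, contradicting the low-codimension intersection produced by ramification purity.

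\textbf{Step 1 (the main obstacle): producing a strongly approximable generic multiple intersection.} Following the strategy of \cite[\S 6]{CasACF0}, I would first show that there is a generic multiple $(\mathcal C,\mathcal C)$-intersection over $\emptyset$ which is strongly approximable over $\emptyset$. The shape of the argument: let $P$ be the $\emptyset$-definable set of quadruples $(x,x',t,u)$ with $x\ne x'$ and $x,x'\in C_t\cap C_u$; a point of the form $(x,x,t,u)$ in the frontier of $P$, with $(x,t,u)$ a weakly generic $(\mathcal C,\mathcal C)$-intersection, is exactly a generic multiple $(\mathcal C,\mathcal C)$-intersection, since then $(x,t,u)$ is a topological ramification point of the projection $I\to T\times T$ of the incidence set. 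One must check that such a frontier point with the right genericity exists (arising, as in \cite{CasACF0}, from two intersection points of $C_t$ and $C_u$ colliding along the branch locus of the finite cover $I\to T\times T$), and that every neighbourhood of it contains a \emph{strongly} generic $(\mathcal C,\mathcal C)$-intersection. In the abstract setting this is where the structure axioms do the work: the strong frontier inequality controls the dimension of the frontier, the Baire category axiom produces strongly generic perturbations inside neighbourhoods, and the generic local homeomorphism property lets one follow intersection points continuously. I expect this step to be the technical heart of the argument; everything below is bookkeeping with $\dim$ and $\rk$.

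\textbf{Step 2: ramification purity and detection.} Granting Step 1, ramification purity (Definition \ref{D: ramification purity}) yields a generic multiple $(\mathcal C,\mathcal C)$-intersection $(x,t,u)$ over $\emptyset$ with $\operatorname{codim}_\emptyset(x,t,u)\le 1$. Since $\mathcal C$ is a standard family and $\mathcal M$ detects multiple intersections (Definition \ref{D: detects noninjectivities}), the parameters $t$ and $u$ are $\mathcal M$-dependent over $\emptyset$. As $(x,t,u)$ is in particular a weakly generic $(\mathcal C,\mathcal C)$-intersection, $t$ and $u$ are each generic in $T$, so $\rk(t)=\rk(u)=2$, and $\mathcal M$-dependence gives $\rk(tu)\le 3$.

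\textbf{Step 3: the codimension estimate.} Using again that $(x,t,u)$ is a weakly generic $(\mathcal C,\mathcal C)$-intersection: $x$ is generic in $M^2$, so $\dim(x)=2d$; and $(x,t)$ is generic in the graph of $\mathcal C$, a set of dimension $2d+d=3d$, so $\dim(t/x)=3d-2d=d$, and symmetrically $\dim(u/x)=d$; also $\dim(x/t)=d=\dim(C_t)$. For the last term write $\dim(tu/x)=\dim(xtu)-\dim(x)=\dim(tu)+\dim(x/tu)-2d$. If $C_t\cap C_u$ is finite then $x\in\acl_{\mathcal M}(tu)$, so $\rk(x/tu)=0$ and hence $\dim(x/tu)=0$ by Lemma \ref{dim rk comparison}(2), while $\dim(tu)\le\rk(tu)\cdot\dim M\le 3d$ by the same lemma and Step 2; thus $\dim(tu/x)\le d$. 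If $C_t\cap C_u$ is infinite then $C_t$ and $C_u$ are almost equal by strong minimality, so almost faithfulness of $\mathcal C$ gives $u\in\acl_{\mathcal M}(t)$, whence $\dim(u/t)=0$, $\dim(tu)=2d$, and $\dim(x/tu)\le\dim(x/t)=d$, so again $\dim(tu/x)\le d$. In every case,
\[
\operatorname{codim}_\emptyset(x,t,u)=\dim(t/x)+\dim(u/x)-\dim(tu/x)\ \ge\ d+d-d\ =\ d\ \ge\ 2,
\]
contradicting Step 2. Therefore $\dim(M)=1$.
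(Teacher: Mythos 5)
Your Steps 2 and 3 are correct and are essentially the paper's own computation in disguise: the paper bounds $\dim(xtu)$ between $4\dim(M)-1$ (from codimension $\le 1$) and $3\dim(M)$ (from $\mathcal M$-dependence of $t,u$ plus almost faithfulness), which is exactly your inequality $\operatorname{codim}_\emptyset(x,t,u)\ge\dim(M)$ versus $\le 1$ rearranged. The genuine gap is Step 1, which you flag as ``the technical heart'' but do not prove. The route you sketch -- finding a frontier point $(x,x,t,u)$ of the set $P$ of quadruples with two \emph{distinct} intersection points, arising from two intersection points of two distinct curves $C_t$, $C_u$ ``colliding along the branch locus of the finite cover $I\to T\times T$'' -- is not carried out, and it is not clear the abstract axioms support it: $I\to T\times T$ is not even finite-to-one (fibers are infinite when $u\in\acl_{\mathcal M}(t)$), and the existence of actual topological ramification between two independent generic curves is precisely the kind of geometric input that Hausdorff geometric structures do not hand you for free.

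The paper avoids this entirely by noticing that Definition \ref{D: generic intersections} and the definition of a generic multiple intersection never require $t\ne u$: it takes the \emph{diagonal} point $(\hat x,\hat t,\hat t)$ with $\hat t\in T$ generic and $\hat x\in C_{\hat t}$ generic over $\hat t$. This is a weakly generic $(\mathcal C,\mathcal C)$-intersection which is strongly approximable by Lemma \ref{L: generic intersections exist}, and it is automatically a topological ramification point of $I\to T\times T$: by the Baire category axiom every neighborhood of $\hat x$ contains infinitely many points of $C_{\hat t}=C_{\hat t}\cap C_{\hat t}$, so every neighborhood of $(\hat x,\hat t,\hat t)$ contains distinct points of $I$ with the same image $(\hat t,\hat t)$. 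With that one-paragraph observation replacing your Step 1, the rest of your argument goes through (one small wording fix in Step 3: infinite $C_t\cap C_u$ need not mean the curves are almost equal, since members of an excellent family need not be strongly minimal; but almost faithfulness alone already gives $u\in\acl_{\mathcal M}(t)$, which is all you use).
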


\begin{proof} Let $\mathcal C=\{C_t:t\in T\}$ be an $\mathcal M(\emptyset)$-definable excellent family of plane curves. Let $\hat t\in T$ be generic, and let $\hat x\in C_{\hat t}$ be generic over $\hat t$. By Lemma \ref{L: generic intersections exist}, $(\hat x,\hat t,\hat t)$ is a weakly generic $(\mathcal C,\mathcal C)$-intersection which is strongly approximable over $\emptyset$. On the other hand, we also have:

\begin{claim} $(\hat x,\hat t,\hat t)$ is a generic multiple $(\mathcal C,\mathcal C)$-intersection over $\emptyset$.
\end{claim}
\begin{claimproof}
    We need to show that $(\hat x,\hat t,\hat t)$ is a topological ramification point of $I\rightarrow T^2$, where $I$ is the set of $(x,t,u)$ with $x\in C_t\cap C_u$. But $\hat x$ is generic in $C_{\hat t}$ over $\hat t$. By the Baire category axiom, it follows that every neighborhood of $\hat x$ contains infinitely many points of $C_{\hat t}$. In particular, every neighborhood of $(\hat x,\hat t,\hat t)$ contains infinitely many points of $I$ which map to $(\hat t,\hat t)\in T^2$. This is enough to prove the claim.
\end{claimproof}

So $(\hat x,\hat t,\hat t)$ is a generic multiple $(\mathcal C,\mathcal C)$-intersection which is strongly approximable. Moreover, $\mathcal C$ is a standard family over $\emptyset$ (since this is weaker than being excellent). So, since $(\mathcal K,\tau)$ has ramification purity, there is a generic multiple $(\mathcal C,\mathcal C)$-intersection over $\emptyset$ which has codimension at most 1 over $\emptyset$. Call this intersection $(x,t,u)$. Note that, since $\mathcal M$ detects multiple intersections, the parameters $t$ and $u$ are $\mathcal M$-dependent over $\emptyset$.

We now have two bounds on $\dim(xtu)$ -- a lower bound coming from the fact that $\operatorname{codim}_{\emptyset}(x,t,u)\leq 1$, and an upper bound coming from the $\mathcal M$-dependence of $t$ and $u$. The next two claims make these explicit:

\begin{claim} $\dim(xtu)\geq 4\cdot\dim(M)-1$.
\end{claim}
\begin{claimproof} Using that $(x,t)$ and $(x,u)$ are generic in $C$, it is straightforward to see that $\dim(x)=2\cdot\dim(M)$, and $\dim(t/x)=\dim(u/x)=\dim(M)$. Since $\operatorname{codim}_{\emptyset}(x,t,u)=1$, this implies $\dim(tu/x)\geq 2\cdot\dim(M)-1$. Thus, by additivity, $$\dim(xtu)=\dim(x)+\dim(tu/x)\geq 4\cdot\dim(M)-1.$$
\end{claimproof}

\begin{claim} $\dim(xtu)\leq 3\cdot\dim(M)$.
\end{claim}
\begin{claimproof} It is enough to show that $\rk(xtu)\leq 3$. Now since $t$ and $u$ are $\mathcal M$-dependent over $\emptyset$, we have $\rk(tu)\leq 3$. So if $\rk(x/tu)=0$, we are done. Otherwise, we get that $C_t\cap C_u$ is infinite. By almost faithfulness, $u$ is thus $\mathcal M$-algebraic over $t$, which gives that $\rk(xtu)=\rk(xt)=3$.
\end{claimproof}

The theorem follows by combining the two bounds above. Namely, by the two previous claims, we have $4\cdot\dim(M)-1\leq 3\cdot\dim(M)$. Rearranging gives $\dim(M)\leq 1$, and since $M$ is infinite, we get $\dim(M)=1$.
\end{proof}

\section{Definable Slopes and Interpreting a Group}\label{S: ACVF}

\begin{assumption}
    \textbf{Throughout Section \ref{S: ACVF}, in addition to Assumption \ref{A: K and M}, we assume that $\dim(M)=1$.}
\end{assumption}

Our goal in this section is to give conditions under which $\mathcal M$ interprets a strongly minimal group. Essentially, the conditions say that the germ of a curve at a generic point is determined by a sequence of definable approximations, each one of which adds one parameter over the previous one -- and that, moreover, these approximations can be `detected' by $\mathcal M$ in a certain sense. This setting is an abstraction of power series expansions, where the definable approximations are Taylor polynomials. 

Once we have our definable `Taylor expansions', we then build a group in a similar way to previous trichotomy papers (e.g. \cite{HaSu} and \cite{CasACF0}) -- by considering a collection of `slopes' equipped with a composition operation. As in \cite{CasACF0}, we find it most convenient to work with \textit{groupoids} rather than groups, because this allows us to work solely with generic points of curves. In the end, this is still sufficient to recover a group, by Hrushovski's group configuration theorem (see Fact \ref{F: gp con}).

\subsection{Group Configurations}

Our main tool for interpreting a group is Hrushovski's \textit{group configuration theorem}, which is well-known in model theory. Let us now make explicit the version of this theorem we will use:

\begin{definition}
    Let $A$ be a parameter set, and let $$\bar s=(s_{12},s_{13},s_{14},s_{23},s_{24},s_{34})$$ be tuples in $\mathcal M^{\textrm{eq}}$. We say that $\bar s$ is a \textit{rank 1 group configuration in $\mathcal M$ over $A$} if the following hold:
    \begin{enumerate}
        \item Each of the six points in $\bar s$ has rank 1 over $A$.
        \item Any two distinct points in $\bar s$ have rank 2 over $A$.
        \item $\rk(\bar s/A)=3$.
        \item If $i,j,k\in\{1,2,3,4\}$ with $i<j<k$ then $\rk(s_{ij}s_{ik}s_{jk}/A)=2$.
    \end{enumerate}
    \end{definition}

The canonical example is as follows: suppose $\mathcal M$ is a strongly minimal expansion of a group. Let $a,b,c\in M$ be independent generics, and let $s_{12}=a$, $s_{23}=b$, $s_{34}=c$, $s_{13}=ba$, $s_{24}=cb$, and $s_{14}=cba$. Then $\bar s$ is a rank 1 group configuration in $\mathcal M$ over $\emptyset$.

The main fact we need is:

\begin{fact}[Hrushovski (see \cite{Bou})]\label{F: gp con} Suppose there is a rank 1 group configuration $\bar s$ in $\mathcal M$ over $A$ as above. Then there are
\begin{itemize}
    \item $B\supset A$,
    \item an $\mathcal M(B)$-interpretable group $G$ which is strongly minimal as an interpretable set in $\mathcal M$, and
    \item For each $i,j\in\{1,2,3\}$ with $i<j$, an $\mathcal M$-generic element $g_{ij}$ over $B$,
\end{itemize}
such that:
\begin{enumerate}
    \item $g_{12}$ and $g_{23}$ are $\mathcal M$-independent over $B$,
    \item $g_{13}=g_{23}g_{12}$, and
    \item For each $i,j\in\{1,2,3\}$ with $i<j$, $g_{ij}$ and $s_{ij}$ are $\mathcal M$-interalgebraic over $B$.
\end{enumerate}
\end{fact}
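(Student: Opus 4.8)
The plan is to deduce Fact~\ref{F: gp con} from the Hrushovski--Weil group chunk theorem, the extra input being a definable family of germs of finite-to-one correspondences extracted from the configuration $\bar s$ and equipped with a generically associative, generically invertible composition. This is the standard proof of the group configuration theorem in the rank~$1$ case; full details are in \cite{Bou}, so I only sketch the steps and indicate where the real work lies.

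\textbf{Normalization and extraction of germs.} First, after enlarging $A$ to a parameter set $B$ --- this is the origin of $B\supset A$ in the statement --- and replacing each $s_{ij}$ by an $\mathcal M$-interalgebraic element of $\mathcal M^{\textrm{eq}}$, one arranges that all genericity clauses in the definition of a rank~$1$ group configuration are witnessed over $B$, that the relevant points realize strongly minimal types over $B$, and that the ``transition'' coordinates are interdefinable with canonical bases of suitable types; here we use that $\mathcal M$ is strongly minimal, so stationary sets have canonical bases in $\mathcal M^{\textrm{eq}}$ (Subsection~\ref{ss: sm}). Next, read three of the $s_{ij}$ coming from one triangle, say $s_{12},s_{13},s_{23}$, as follows: fix the strongly minimal type $P=\tp(s_{12}/B)$, and let $g$ be the germ at $s_{12}$ of the $Bs_{23}$-definable finite-to-one correspondence carrying $s_{12}$ to $s_{13}$. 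Conditions (2) and (4) force this correspondence to be finite-to-one in both directions, with $s_{23}$ interalgebraic with its canonical base, so $g$ is a well-defined germ interalgebraic with $s_{23}$ over $B$; the other transitions give germs in the same way. Collinearity condition (4), applied to the various triangles of $\bar s$, then says precisely that on overlapping generic domains the composite of two such germs is (a germ interalgebraic with) a third one.

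\textbf{The generic group chunk.} The heart of the argument is to upgrade this partial composition into a \emph{generic group chunk}: a definable family $\{g_t:t\in S\}$ of germs of finite-to-one correspondences of $P$ such that for $\mathcal M$-independent generic $t_1,t_2$ over $B$ the composite $g_{t_2}\circ g_{t_1}$ is the germ of $g_{t_3}$ for some $t_3$ with $t_3\in\acl(Bt_1t_2)$, $t_1\in\acl(Bt_2t_3)$ and $t_2\in\acl(Bt_1t_3)$; each $g_t$ has a generic inverse germ again of this form; and composition is generically associative. Closure under composition, genericity of $t_3$, and existence of inverse germs follow from the rank conditions (1)--(4) together with strong minimality (a generic finite correspondence of a strongly minimal type has a generic inverse). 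Generic associativity is forced by the presence of the fourth index, i.e.\ by the full quadrilateral structure of $\bar s$: one reshuffles the six points into several overlapping copies of the three-point incidence pattern and chases algebraic dependencies. I expect this to be the main obstacle, since it requires careful bookkeeping of which elements lie in which algebraic closures, repeated applications of additivity of $\rk$, and the production of enough mutually generic instances of the configuration to pin composition down to an associative operation on a generic triple.

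\textbf{Conclusion.} Given a generic group chunk, the Hrushovski--Weil group chunk theorem yields an $\mathcal M(B)$-interpretable group $G$ whose generic elements are represented by the germs $g_t$ and whose multiplication extends generic composition; since each germ has rank~$1$ and the chunk has rank~$1$, $G$ has Morley rank~$1$, hence is strongly minimal as an interpretable set, and $G$ acts generically on $P$. Taking $g_{12},g_{23},g_{13}\in G$ to be the elements represented by the germs attached to $s_{12},s_{23},s_{13}$, we obtain $\mathcal M$-generic elements over $B$ with $g_{12}$ and $g_{23}$ $\mathcal M$-independent over $B$ (inherited from $s_{12}$, $s_{23}$), with $g_{13}=g_{23}g_{12}$ since the triangle on $\{1,2,3\}$ of $\bar s$ expresses exactly that $s_{13}$ is the composite of $s_{12}$ and $s_{23}$, and with each $g_{ij}$ $\mathcal M$-interalgebraic over $B$ with $s_{ij}$ by the normalization step. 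This is the asserted conclusion.
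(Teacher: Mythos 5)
The paper does not prove Fact~\ref{F: gp con} at all: it is quoted as a black box (Hrushovski's group configuration theorem in the rank~$1$ case) with a pointer to \cite{Bou}, so there is no in-paper argument to compare yours against. Your outline is the standard proof from that literature --- normalize the configuration up to interalgebraicity over an enlarged base $B$, extract germs of invertible finite correspondences between the strongly minimal types of the configuration points, use the collinearity/rank conditions to show the family of germs is generically closed under composition and inversion with the right algebraicity relations among parameters, and then invoke the Hrushovski--Weil group chunk theorem to produce a strongly minimal interpretable group whose generic elements are interalgebraic with the $s_{ij}$ --- so the route is the right one and consistent with what the paper is implicitly relying on. Two caveats: first, as you acknowledge, this is a sketch rather than a proof; the substantive content (that composition of two independent generic germs is again a generic germ of the family, the bookkeeping of algebraic closures across the four lines of the quadrilateral, and the chunk theorem itself) is deferred to \cite{Bou}, which is acceptable only because the paper itself treats the statement as a citation. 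Second, a small misattribution: generic associativity is essentially free, since composition of germs of correspondences is genuinely associative wherever defined; what the fourth index of the configuration actually buys you is closure of the germ family under generic composition together with the genericity and interalgebraicity statements needed to apply the chunk theorem (and, at the end, to convert the ``point'' coordinates $s_{12},s_{13}$ into group elements via the generically regular action), so that is where the real dependence-chasing lives, not in associativity.
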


\subsection{Definable Slopes} We now define our abstraction of Taylor expansions, as described above: namely, we introduce the notion of $(\mathcal K,\tau)$ having `definable slopes'. This will involve several intermediate notions.

\begin{definition}
    Let $\mathcal{LHG}$ be the category of \textit{local homeomorphisms at generic points} in $(\mathcal K,\tau)$, defined as follows:
    \begin{enumerate}
        \item An object in $\mathcal{LHG}$ is a generic point of $K$ (over $\emptyset$).
        \item A morphism $x\rightarrow y$ is a germ of a local homeomorphism from $K$ to $K$ sending $x$ to $y$: that is, we consider homeomorphisms between neighorhoods of $x$ and $y$, modulo agreement in a neighborhood of $x$.
    \end{enumerate}
\end{definition}
    
\begin{definition}\label{D: invertible arc}
    Let $f:x\rightarrow y$ be a morphism in $\mathcal{LHG}$. We call $f$ a \textit{basic invertible arc} if there are a set $X\subset K^2$ and a parameter set $A$ such that:
    \begin{enumerate}
        \item $X$ is $\mathcal K(A)$-definable.
        \item $\dim(X)=1$, and each projection $X\rightarrow K$ is finite-to-one.
        \item $(x,y)$ is generic in $X$ over $A$.
        \item There are neighborhoods $U$ and $V$ of $x$ and $y$, respectively, so that $X\cap(U\times V)$ is the graph of a homeomorphism of germ $f$.
    \end{enumerate}
\end{definition}

\begin{definition}
    Let $f:x\rightarrow y$ be a morphism in $\mathcal{LHG}$. We call $f$ an \textit{invertible arc} if it is a composition of finitely many basic invertible arcs.
\end{definition}

Suppose $X\subset K^2$ is $\mathcal K(A)$-definable and of dimension 1, with each projection $X\rightarrow K$ finite-to-one, and let $(x,y)\in X$ be generic over $A$. It follows from the generic local homeomorphism property that $X$ determines a basic invertible arc $x\rightarrow y$.

\begin{notation}
    Let $x$, $y$, $X$, and $A$ be as above. We call the basic invertible arc $x\rightarrow y$ determined by $X$ the \textit{arc of} $X$ \textit{at} $(x,y)$, denoted $\alpha_{xy}(X)$.
\end{notation}

It is easy to see that for each generic $x\in K$, the identity morphism $\operatorname{id}:x\rightarrow x$ from $\mathcal{LHG}$ is a basic invertible arc (witnessed by the diagonal in $K^2$). Moreover, by switching the roles of the domain and target, one sees that basic invertible arcs are closed under inverses. It follows that the invertible arcs form the morphisms of a groupoid (in fact a subgroupoid of $\mathcal{LHG}$), with the same objects as $\mathcal{LHG}$.

\begin{definition} We let $\mathcal{IA}_\infty$ be the groupoid of invertible arcs.
\end{definition}

\begin{remark} In contrast, the basic invertible arcs typically do not form a groupoid (that is, not every invertible arc is a basic invertible arc). For example, for generic $x\in K$, the identity is the \textit{only} basic invertible arc $x\rightarrow x$ (as is easily checked from the definition); while other invertible arcs $x\rightarrow x$ can typically be constructed as compositions of basic invertible arcs $x\rightarrow y\rightarrow x$ where $y$ is generic over $x$. In a sense, then, the reason we need compositions is to recover the full collection of morphisms at points $(x,y)$ where $x$ and $y$ are dependent.
\end{remark}

Note that $\mathcal{IA}_\infty$ is typically not a definable object in any reasonable sense. In contrast, the main point of our setting of `Taylor approximations' is to require that $\mathcal{IA}_\infty$ can be \textit{approximated} by definable objects:

\begin{definition}\label{D: definable slopes}
    We say that $(\mathcal K,\tau)$ \textit{has definable slopes} if there are groupoids $\{\mathcal IA_n\}_{n\in\mathbb N}$, and covariant functors $T_{m,n}:\mathcal{IA}_m\rightarrow\mathcal{IA}_n$ for all $n<m\in\mathbb N$, such that:
    \begin{enumerate}
        \item The objects of each $\mathcal{IA}_n$ are the same as in $\mathcal{IA}_\infty$, and all $T_{m,n}$ are the identity on objects.
        \item The $T_{m,n}$ are compatible, and $\mathcal{IA}_{\infty}$ is the inverse limit of the $\mathcal{IA}_n$ for $n<\infty$. More precisely, if $m>n>k$ then $T_{mk}=T_{nk}\circ T_{nm}$, and for any two objects $x$ and $y$, the $T_{m,n}$ turn the set of morphisms $x\rightarrow y$ in $\mathcal{IA}_\infty$ into the inverse limit of the sets of morphisms $x\rightarrow y$ in the $\mathcal{IA}_n$ for $n<\infty$. 
        \item Let $f:x\rightarrow y$ be a morphism in $\mathcal{IA}_n$ for some $n<\infty$. Then $f$ is a tuple in $\mathcal K^{eq}$, and $(x,y)$ is $\mathcal K(f)$-definable.
        \item (Uniform Definability of Slopes) Let $T$ and $X\subset K^2\times T$ be $\mathcal K(A)$-definable. Assume that for each $t\in T$, the fiber $X_t\subset K^2$ has dimension 1 and projects finite-to-one to both copies of $K$. Then for each $n$, the set of $(x,y,t,\alpha)$ such that $(x,y)$ is generic in $X_t$ over $At$ and $T_{\infty,n}(\alpha_{xy}(X_t))=\alpha$, is type-definable in $\mathcal K$ over $A$.
        \item (Definability of Composition and Inverse) If $f:x\rightarrow y$ and $g:y\rightarrow z$ are morphisms in $\mathcal{IA}_n$ for some $n<\infty$, then $g\circ f$ is $\mathcal K(f,g)$-definable, and $f^{-1}$ is $\mathcal K(f)$-definable.
        \item (Coordinatization of Slopes) Suppose $f:x\rightarrow y$ is a morphism in $\mathcal{IA}_n$ for some $n<\infty$. If $n=0$, then $f$ is $\mathcal K$-interdefinable with $xy$. If $n>1$, and $g=T_{n,n-1}(f)$, then $g$ is $\mathcal K(f)$-definable and $\dim(f/g)\leq 1$.
    \end{enumerate}
\end{definition}

\begin{remark} Definition \ref{D: definable slopes}(2) (particularly the clause about inverse limits) is a strong assumption: it says that the germ of a curve at a generic point is determined by its sequence of $n$th approximations (at that point) for every $n$. For instance, this property holds in ACVF and in polynomially bounded o-minimal structures, but we don't know if it holds in arbitrary o-minimal structures (because of the existence of `flat functions' -- non-constant functions with all derivatives vanishing at some point).

On the other hand, we will only use the inverse limit property once (in the proof of Lemma \ref{L: non-algebraic slopes exist}), to guarantee the existence of an infinite family of $n$-slopes for some $n$. We note, then, that the statement of Lemma \ref{L: non-algebraic slopes exist} can be proven in many other settings (including all o-minimal fields) using a version of Sard's Theorem. For this to work, one just needs a differentiable Hausdorff geometric structure whose differential structures is definable in an appropriate sense. 

Finally, we note that one could amend all of Section 7 accordingly in this case -- taking Lemma \ref{L: non-algebraic slopes exist} as an axiom instead of the inverse limit clause of (2) -- and all the arguments would still work. The reason we don't do this is that Lemma \ref{L: non-algebraic slopes exist} rather awkwardly involves the relic $\mathcal M$, and we prefer Definition \ref{D: definable slopes} to be purely a property of $(\mathcal K,\tau)$.
\end{remark}

\begin{assumption}
    \textbf{For the rest of Section 7, we assume that $(\mathcal K,\tau)$ has definable slopes, witnessed by the fixed groupoids and functors $\mathcal{IA}_n$ and $T_{m,n}$}.
\end{assumption}

\begin{remark}
    We think of the functors $T_{m,n}$ as \textit{truncation} maps. Given a morphism $f$ in $\mathcal{IA}_m$ for some $m$, and given some $n<m$, we often call $T_{m,n}(f)$ the $n$th \textit{truncation} of $f$.
\end{remark}

\begin{definition}
    Assume that $(\mathcal K,\tau)$ has definable slopes, witnessed by $\{\mathcal{IA}_n\}$ and $\{T_{m,n}\}$. Let $X\subset K^2$ be $\mathcal K(A)$-definable of dimension 1, with both projections $X\rightarrow K$ finite-to-one, and let $(x,y)\in X$ be generic over $A$. For $n<\infty$, we call $T_{\infty,n}(\alpha_{xy}(X))$ the $n$-\textit{slope} of $X$ at $(x,y)$, denoted $\alpha_{xy}^n(X)$.
\end{definition}

In this language, by an easy compactness argument, the uniform definability of slopes (Definition \ref{D: definable slopes}(4)) has the following simpler consequence:

\begin{lemma}\label{L: slopes definable} Let $X\subset K^2$ be $\mathcal K(A)$-definable of dimension 1, with both projections finite-to-one. Let $(x,y)\in X$ be generic over $A$. Then for each $n<\infty$, $\alpha_{xy}^n(X)\in\operatorname{dcl}_{\mathcal K}(Axy)$.
\end{lemma}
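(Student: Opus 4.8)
The plan is to deduce the statement directly from the uniform definability of slopes (Definition \ref{D: definable slopes}(4)), applied to the trivial one-parameter family determined by $X$, and then to upgrade the resulting type-definability to honest $\operatorname{dcl}_{\mathcal K}$-definability by a compactness argument using that $\mathcal K$ is $\aleph_1$-saturated and all parameter sets are countable.

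First I would apply Definition \ref{D: definable slopes}(4) with parameter space $T=\{c\}$ for some $c\in A$ (a $\mathcal K(A)$-definable singleton) and family $X'=X\times\{c\}\subset K^2\times T$. The hypotheses on $X$ -- dimension $1$ and both projections finite-to-one -- are exactly the required conditions on the fibers $X'_t$, and for $t=c$ we have $X'_t=X$ and ``generic over $At$'' $=$ ``generic over $A$''. So the conclusion of (4), restricted to the fiber over the $A$-definable point $c$, says: the set $R$ of all $(x',y',\alpha')$ such that $(x',y')$ is generic in $X$ over $A$ and $T_{\infty,n}(\alpha_{x'y'}(X))=\alpha'$ is type-definable over $A$; write $R=\bigcap_{i}R_i$ with each $R_i$ being $\mathcal K(A)$-definable.

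The second step is the key (though routine) observation that $R$ is the graph of a partial function: for a fixed $(x',y')$ that is generic in $X$ over $A$, the generic local homeomorphism property makes $\alpha_{x'y'}(X)$ a well-defined basic invertible arc, so its $n$th truncation $\alpha^n_{x'y'}(X)=T_{\infty,n}(\alpha_{x'y'}(X))$ is a uniquely determined tuple in $\mathcal K^{eq}$. Now fix $(x,y)$ and $\alpha=\alpha^n_{xy}(X)$ as in the statement. Then $(x,y,\alpha)\in R$, hence $(x,y,\alpha)\in R_i$ for every $i$. Each condition ``$(x,y,z)\in R_i$'' is a formula over $Axy$ satisfied by $z=\alpha$, hence lies in $\operatorname{tp}_{\mathcal K}(\alpha/Axy)$; therefore any $\beta$ realizing $\operatorname{tp}_{\mathcal K}(\alpha/Axy)$ satisfies $(x,y,\beta)\in R_i$ for all $i$, i.e. $(x,y,\beta)\in R$. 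Since the genericity of $(x,y)$ in $X$ over $A$ does not refer to $\beta$, functionality of $R$ forces $\beta=\alpha$. Thus $\alpha$ is the unique realization in $\mathcal K$ of its type over the countable set $Axy$.

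Finally, I would convert this into a definability statement: the partial type $\operatorname{tp}_{\mathcal K}(\alpha/Axy)\cup\{z\neq\alpha\}$ over the countable set $Axy\cup\{\alpha\}$ is omitted by $\mathcal K$, so by $\aleph_1$-saturation it is inconsistent, and compactness yields a single $\mathcal K(Axy)$-formula whose unique solution is $\alpha$. Hence $\alpha\in\operatorname{dcl}_{\mathcal K}(Axy)$, as desired. The only subtlety is this last compactness step (together with the bookkeeping that $R$ is genuinely a partial function of $(x,y)$); everything substantive is packaged inside Definition \ref{D: definable slopes}(4). One could alternatively phrase the first reduction using Fact \ref{F: type def cofinal}, but this is not necessary. (For $n=0$ the statement is in any case immediate from Definition \ref{D: definable slopes}(7), since then $\alpha$ is interdefinable with $xy$.)
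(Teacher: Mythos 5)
Your argument is correct and takes essentially the same route as the paper: the paper obtains this lemma from the uniform definability of slopes (Definition \ref{D: definable slopes}(4)) by exactly the ``easy compactness argument'' you spell out, namely functionality of the type-definable graph of $(x',y')\mapsto\alpha^n_{x'y'}(X)$ on the generic locus together with $\aleph_1$-saturation over the countable set $Axy$. Two trivial slips only: the parenthetical for $n=0$ should cite Definition \ref{D: definable slopes}(6) rather than (7), and to cover the case $A=\emptyset$ one should index the constant family by an $\emptyset$-definable parameter set (e.g.\ the set of parameter tuples defining such correspondences, with $X$ the fiber over its own defining tuple) rather than by a point of $A$.
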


\begin{remark} Definition \ref{D: definable slopes}(4) is stronger than Lemma \ref{L: slopes definable}, because Lemma \ref{L: slopes definable} does not capture the uniformity of the way slopes are defined. This will matter in a couple of places, but usually Lemma \ref{L: slopes definable} will suffice.
\end{remark}

\subsection{Coherent Slopes and Codes}

In this subsection, we give axioms for an encoding system of slopes into $\mathcal M$. The idea is to assign each slope to an interalgebraic tuple in $\mathcal M$, so that the model-theoretic properties of $\mathcal K$ appearing in Definition \ref{D: definable slopes} can be replaced with the analogous properties in $\mathcal M$. For this to have a chance of working, we first need to assign slopes to plane curves in $M$ (rather than sets in $K^2$). We then need to identify a class of \textit{coherent} slopes -- roughly, those appearing on coherently defined plane curves. 

Let us begin by `lifting' slopes to $M$. Since $\dim(M)=1$, and $M$ is $\mathcal K(\emptyset)$-definable, it follows that there is a finite-to-one $\mathcal K(\emptyset)$-definable function $\rho:M\rightarrow K$ (one can construct such a function piecewise by projections). \textbf{For the rest of Section 7, we fix such a function $\rho$. Abusing notation, we also write $\rho:M^n\rightarrow K^n$ for all $n$, applying coordinate-wise}. \\

Suppose $C\subset M^2$ is $\mathcal K(A)$-definable of dimension 1 with both projections $C\rightarrow M$ finite-to-one. Let $(x,y)\in C$ be generic over $A$. Then $\rho(C)\subset K^2$ is $A$-definable and projects finite-to-one in both directions, and $(\rho(x),\rho(y))$ is generic in $\rho(C)$ over $A$. By the generic local homeomorphism property, $\rho$ induces a homeomorphism between open subsets of $M^2$ and $K^2$ near $(x,y)$ and $(\rho(x),\rho(y))$, and also between $C$ and $\rho(C)$ near $(x,y)$ and $(\rho(x),\rho(y))$. This observation allows us, in a neighborhood of $(x,y)$, to think of $C$ as a curve in $K^2$, and subsequently to extract slopes at $(x,y)$:

\begin{definition} Let $C\subset M^2$, $A$, and $(x,y)$ be as in the above paragraph. For each $n$, we define the $n$-slope of $C$ at $(x,y)$, denoted $\alpha_{xy}^n(X)$, to be the $n$-slope of $\rho(C)$ at $(\rho(x),\rho(y))$.
\end{definition}

We now proceed to discuss coherent slopes in $M$. Recall (see Remark \ref{R: nontrivial plane curve}) that a plane curve $C\subset M^2$ is \textit{non-trivial} if each projection $C\rightarrow M$ is finite-to-one. Now we define:

\begin{definition} Let $n$ be a non-negative integer. By a \textit{coherent} $n$-\textit{slope configuration}, we mean a 5-tuple $(x,y,C,c,\alpha)$ such that:
\begin{enumerate}
    \item $C\subset M^2$ is a stationary non-trivial plane curve in the sense of $\mathcal M$ (so in particular $C$ is $\mathcal M$-definable), and $c=\operatorname{Cb}(C)$.
    \item $c$ is coherent, and $(x,y)$ is generic in $C$ over $c$ (so $cxy$ is coherent).
    \item $\alpha_{xy}^n(C)=\alpha$.
\end{enumerate}
\end{definition}

\begin{definition}\label{D: coherent slope} Let $x,y\in M$, and let $\alpha:\rho(x)\rightarrow\rho(y)$ be a morphism in $\mathcal{IA}_n$ for some $n<\infty$.
\begin{enumerate}
    \item We say that $\alpha$ is a \textit{coherent} $n$-\textit{slope at} $(x,y)$ if there is a coherent $n$-slope configuration $(x,y,C,c,\alpha)$. 
    \item If $(x,y,C,c,\alpha)$ is a coherent $n$-slope configuration, we say that $c$ is a \textit{coherent representative} of $\alpha$.
    \item Suppose $\alpha$ is a coherent $n$-slope at $(x,y)$. If $\alpha\in\acl(xy)$, we call $\alpha$ an \textit{algebraic coherent} $n$-\textit{slope at} $(x,y)$. Otherwise, we call $\alpha$ a \textit{non-algebraic coherent} $n$-\textit{slope at} $(x,y)$.
\end{enumerate} 
\end{definition}

We now give various properties of coherent slopes. Lemma \ref{L: interdefinability} is essentially a restatement of Definition \ref{D: definable slopes} (3) and (4), but is quite useful. Roughly, it allows us to perform dimension computations with coherent slopes, while (in the presence of any coherent representative) treating the slopes as elements of $\mathcal M^{\textrm{eq}}$.

\begin{lemma}\label{L: interdefinability} Let $\alpha$ be a coherent $n$-slope at $(x,y)$, and let $c$ be any coherent representative of $\alpha$. 
\begin{enumerate}
    \item $(x,y)\in\acl(\alpha)$.
    \item $\alpha$ is definable over $cxy$ (that is, $\alpha\in\operatorname{dcl}_{\mathcal K}(cxy)$).
    \item $\alpha$ and $xy$ are interalgebraic over $c$.
\end{enumerate}
\end{lemma}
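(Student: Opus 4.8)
The plan is to unwind the definitions and feed everything into the axioms on definable slopes, using the fixed finite-to-one map $\rho:M\to K$ to pass between plane curves in $M^2$ and sets in $K^2$. Fix a coherent $n$-slope configuration $(x,y,C,c,\alpha)$ witnessing that $\alpha$ is a coherent $n$-slope at $(x,y)$, so $C\subset M^2$ is stationary non-trivial, $c=\operatorname{Cb}(C)$ is coherent, $(x,y)$ is generic in $C$ over $c$, and $\alpha=\alpha^n_{xy}(C)=\alpha^n_{\rho(x)\rho(y)}(\rho(C))$. The point is that $\rho(C)\subset K^2$ is $\mathcal K(c)$-definable of dimension $1$ with both projections finite-to-one, and $(\rho(x),\rho(y))$ is generic in $\rho(C)$ over $c$, so all parts of Definition \ref{D: definable slopes} apply with $A=c$.

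For (1): by Definition \ref{D: definable slopes}(3), $(\rho(x),\rho(y))$ is $\mathcal K(\alpha)$-definable; since $\rho$ is finite-to-one, $(x,y)\in\acl(\alpha)$. For (2): by Lemma \ref{L: slopes definable} applied to $\rho(C)$ over $c$, the slope $\alpha=\alpha^n_{\rho(x)\rho(y)}(\rho(C))\in\operatorname{dcl}_{\mathcal K}(c\,\rho(x)\rho(y))\subset\operatorname{dcl}_{\mathcal K}(cxy)$, using that $\rho$ is a $\mathcal K(\emptyset)$-definable function so $\rho(x),\rho(y)\in\operatorname{dcl}_{\mathcal K}(xy)$. (If one wants the sharper uniform version, invoke Definition \ref{D: definable slopes}(4) directly instead of the compactness-weakened Lemma \ref{L: slopes definable}.) For (3): combining (1) and (2) gives that $\alpha$ and $xy$ are interalgebraic over $c$ — indeed $\alpha\in\operatorname{dcl}_{\mathcal K}(cxy)$ by (2), and conversely $(x,y)\in\acl(\alpha)\subset\acl(c\alpha)$ by (1).

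I do not expect a serious obstacle here; the only points requiring a little care are bookkeeping ones. First, one must make sure the genericity hypothesis needed for Lemma \ref{L: slopes definable} and Definition \ref{D: definable slopes}(3) is exactly what the configuration supplies: $(x,y)$ generic in $C$ over $c$ transfers to $(\rho(x),\rho(y))$ generic in $\rho(C)$ over $c$ because $\rho$ is finite-to-one and $\mathcal K(\emptyset)$-definable (so it preserves dimension over $c$). Second, one should note that the statement asserts interalgebraicity over an \emph{arbitrary} coherent representative $c$, not just the one coming from $\operatorname{Cb}$ of the curve in a particular presentation; but this is automatic, since every coherent representative of $\alpha$ arises from \emph{some} coherent $n$-slope configuration $(x,y,C',c,\alpha)$ by Definition \ref{D: coherent slope}(2), and the argument above used only the data of such a configuration. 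No monster model or extra saturation is needed beyond the standing $\aleph_1$-saturation.
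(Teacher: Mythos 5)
Your proposal is correct and follows essentially the same route as the paper: (1) via Definition \ref{D: definable slopes}(3) plus finiteness of $\rho$-fibers, (2) via Lemma \ref{L: slopes definable} and $\emptyset$-definability of $\rho$, and (3) by combining (1) and (2). The extra bookkeeping you include (genericity transferring along $\rho$, arbitrariness of the coherent representative) is fine and just makes explicit what the paper leaves implicit.
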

\begin{proof} Clause (2) follows from Lemma \ref{L: slopes definable} and the fact that $\rho$ is $\mathcal K(\emptyset)$-definable. Clause (3) follows from (1) and (2). For (1), since $\alpha$ is a morphism from $\rho(x)$ to $\rho(y)$, Definition \ref{D: definable slopes}(3) gives that $\rho(x)$ and $\rho(y)$ are $\mathcal K(\alpha)$-definable. Meanwhile, since $\rho$ is finite-to-one, $x$ and $y$ are algebraic over $(\rho(x),\rho(y))$.
\end{proof}

It will also be convenient to know that, when working with non-algebraic slopes, we can upgrade Definition \ref{D: coherent slope} to include the genericity of $(x,y)$ in $M^2$:

\begin{lemma}\label{L: the point is generic} Let $\alpha$ be a c coherent $n$-slope at $(x,y)$. Then:
\begin{enumerate}
    \item If $\alpha\notin\acl(x,y)$, then $(x,y)$ is generic in $M^2$.
    \item If $(x,y)$ is generic in $M^2$, and $c$ is any coherent representative of $\alpha$, then $\dim(c/xy)=\dim(c)-1$.
\end{enumerate}
\end{lemma}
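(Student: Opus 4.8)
The plan is to treat the two parts separately; (2) is essentially a one-line additivity computation, and all the real work is in (1).

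For (2), fix a coherent $n$-slope configuration $(x,y,C,c,\alpha)$ realizing the given representative $c$. By definition $(x,y)$ is generic in $C$ over $c$, and $C$ is a plane curve in $\mathcal M$, so $\dim C=\rk C\cdot\dim M=1$ by Lemma~\ref{dim rk comparison}(1); hence $\dim(xy/c)=1$ (and, as a generic point of a coherently-presented set, $(x,y)$ is coherent over $c$). Additivity then gives both $\dim(cxy)=\dim(c)+\dim(xy/c)=\dim(c)+1$ and $\dim(cxy)=\dim(xy)+\dim(c/xy)=2+\dim(c/xy)$, the latter because $(x,y)$ is generic in $M^2$ and $\dim(M^2)=2$; comparing the two yields $\dim(c/xy)=\dim(c)-1$.

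For (1) I would prove the contrapositive: if $(x,y)$ is \emph{not} generic in $M^2$, then $\alpha\in\acl(xy)$. Fix a coherent $n$-slope configuration $(x,y,C,c,\alpha)$. Since $(x,y)$ is generic in $C$ over $c$ (hence $\mathcal M$-generic in $C$ over $c$, by Lemma~\ref{dim rk comparison}(3)) and $C$ is a non-trivial plane curve, each of $x,y$ has rank $1$ over $c$, hence rank $1$ over $\emptyset$ (adding parameters cannot raise rank, and $\rk M=1$), so $x$ and $y$ are each $\mathcal M$-generic in $M$. Also $xy$ is coherent over $\emptyset$ by Lemma~\ref{L: coherent preservation}(2) (as $cxy$ is coherent), so by Lemma~\ref{L: coherent preservation}(1) the failure of genericity of $(x,y)$ in $M^2$ forces $\rk(xy)<2$, hence $\rk(xy)=1$, hence $x$ and $y$ are $\mathcal M$-interalgebraic over $\emptyset$. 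Therefore $(x,y)$ lies in some $\operatorname{acl}_{\mathcal M}(\emptyset)$-definable set $C_0\subseteq M^2$ of rank $1$ (one exists since $\rk(xy)=1$). Now the crux: $(x,y)$ is $\mathcal M$-generic in $C$ over $c$, hence over $\operatorname{acl}_{\mathcal M}(c)$, while $C_0$ is $\operatorname{acl}_{\mathcal M}(c)$-definable of the same rank $1$ as $C$; since $C$ is stationary this forces $C$ to be almost contained in $C_0$, and then (both being rank $1$) almost equal to a single stationary component $C_0'$ of $C_0$. Hence $c=\operatorname{Cb}(C)$ is interdefinable with $\operatorname{Cb}(C_0')\in\operatorname{acl}_{\mathcal M}(\emptyset)$, so $c\in\operatorname{acl}_{\mathcal M}(\emptyset)\subseteq\acl(\emptyset)$. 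Since $\alpha$ is algebraic over $cxy$ by Lemma~\ref{L: interdefinability}(3) and $c\in\acl(\emptyset)\subseteq\acl(xy)$, we conclude $\alpha\in\acl(xy)$.

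I expect the main obstacle to be the geometric core of (1) just sketched: that a stationary curve whose generic point fails to be generic in $M^2$ must, up to almost equality, be one of the finitely many stationary components of a $\emptyset$-definable curve, so that its canonical base is algebraic over $\emptyset$. This is precisely where the stationarity of $C$ and the coherence hypothesis (via Lemma~\ref{L: coherent preservation}) do genuine work; everything else is bookkeeping with $\dim$, $\rk$, and Lemma~\ref{L: interdefinability}. I would also be careful that algebraic closure computed in $\mathcal{M}^{\mathrm{eq}}$ transfers to $\mathcal{K}^{\mathrm{eq}}$ — this uses Assumption~\ref{A: K and M}, that $\emptyset$-definable sets of $\mathcal M$ are $\emptyset$-definable in $\mathcal K$ — which is what legitimizes the final inclusion $c\in\acl(\emptyset)$.
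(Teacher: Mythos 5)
Your proof is correct and follows essentially the same route as the paper: part (2) is the identical additivity computation, and in part (1) both arguments use coherence to force $\rk(xy)\leq 1$, hence $\rk(xy/c)=\rk(xy)=1$, then deduce $c\in\acl_{\mathcal M}(\emptyset)$ and finish with Lemma \ref{L: interdefinability}. The only difference is cosmetic: the paper directly cites that the canonical base of the non-forking type $\tp_{\mathcal M}(xy/c)$ lies in $\acl_{\mathcal M}(\emptyset)$, whereas you re-derive this fact by hand via the stationary components of an $\acl_{\mathcal M}(\emptyset)$-definable rank-one set $C_0$.
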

\begin{proof} For (1), suppose toward a contradiction that $(x,y)$ is not generic in $M^2$. Then by coherence, $\rk(xy)\leq 1$. But $\rk(xy/c)=1$ by assumption, so $\rk(xy/c)=\rk(xy)=1$. So $p=\operatorname{tp}_{\mathcal M}(xy/c)$ does not fork over $\emptyset$. This implies that $c=\operatorname{Cb}(p)\in\acl_{\mathcal M}(\emptyset)$. Then by Lemma \ref{L: interdefinability}, $\alpha\in\acl(cxy)\subset\acl(xy)$, so $\alpha$ is algebraic.

For (2), let $c$ be any coherent representative. Then $\dim(xy/c)=1$, so $\dim(cxy)=\dim(c)+1$. On the other hand, $\dim(xy)=2$ by assumption. So $$\dim(c/xy)=(\dim(c)+1)-2=\dim(c)=1.$$
\end{proof}

Later, under further assumptions, we will interpret a strongly minimal group in $\mathcal M$ by finding codes for a one-dimensional family of coherent slopes. Lemmas \ref{L: non-algebraic slopes exist} and \ref{L: slopes 1 dimensional} guarantee that such one-dimensional families exist.

\begin{lemma}\label{L: non-algebraic slopes exist} There is a non-algebraic coherent $n$-slope for some $n$.
\end{lemma}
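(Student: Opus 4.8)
The goal is to produce at least one non-algebraic coherent $n$-slope for some $n$, i.e., a coherent $n$-slope configuration $(x,y,C,c,\alpha)$ with $\alpha\notin\acl(xy)$. The plan is to argue by contradiction: suppose that for \emph{every} $n$ and every coherent $n$-slope configuration, the slope $\alpha$ is algebraic over the base point $(x,y)$. I will show this forces all invertible arcs between generic points to be determined by the pair of endpoints, which collapses $\mathcal{IA}_\infty$ and ultimately contradicts non-local modularity of $\mathcal M$ (which, recall, supplies a rank $2$ almost faithful — indeed excellent — family of plane curves).

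First I would fix an $\mathcal M(\emptyset)$-definable excellent family of plane curves $\mathcal C=\{C_t:t\in T\}$, with $T$ a generic subset of $M^2$, each $C_t$ non-trivial, and $\rk(T)=2$. Replacing $T$ by a coherent generic and passing to canonical bases, for generic $t\in T$ the curve $C_t$ is stationary non-trivial with $c_t=\operatorname{Cb}(C_t)$ coherent of rank $2$; for $(x,y)\in C_t$ generic over $t$, the tuple $(x,y,C_t,c_t,\alpha^n_{xy}(C_t))$ is a coherent $n$-slope configuration for each $n$. By the contradiction hypothesis, $\alpha^n_{xy}(C_t)\in\acl(xy)$ for all $n$. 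Since, by Lemma~\ref{L: interdefinability}(3), $\alpha^n_{xy}(C_t)$ and $xy$ are interalgebraic over $c_t$, and since by the inverse-limit clause of Definition~\ref{D: definable slopes}(2) the germ $\alpha_{xy}(C_t)\in\mathcal{IA}_\infty$ is determined by the sequence $(\alpha^n_{xy}(C_t))_n$, I would conclude that the germ $\alpha_{xy}(C_t)$ lies in $\acl(xy)$ — i.e., depends (up to finitely many choices) only on the endpoint pair $(x,y)$, not on the curve $C_t$ passing through it.

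Next I would exploit that $\mathcal C$ is excellent: for a fixed generic $p=(x,y)\in M^2$, the set $\{t\in T: p\in C_t\}$ is a non-trivial plane curve, so there is a one-parameter family of curves $C_t$ through $p$. Fixing $p$ and varying $t$ over this curve, each $C_t$ determines a germ $\alpha_{xy}(C_t)$, all of which, by the previous paragraph, lie in the finite set $\acl(xy)\cap(\text{morphisms }x\to y)$. Hence, after passing to a generic and shrinking, the germ of $C_t$ at $p$ is \emph{constant} as $t$ ranges over a rank $1$ family; equivalently, distinct generic $C_t$, $C_{t'}$ through $p$ are tangent to infinitely high order at $p$, so share a common neighborhood germ at $p$. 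By the generic local homeomorphism property applied along the second projection (or directly: two $1$-dimensional definable sets with the same germ at a generic point are almost equal near that point), this forces $\rk(C_t\cap C_{t'})=1$ for generic independent $t,t'$ in the rank $1$ family through $p$, contradicting almost faithfulness of $\mathcal C$.

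The main obstacle I anticipate is the step passing from ``$\alpha^n_{xy}(C)\in\acl(xy)$ for all $n$'' to ``the germ $\alpha_{xy}(C)$ is pinned down by $xy$'' in a way strong enough to derive the tangency/almost-equality contradiction — because $\acl(xy)$ at each finite level $n$ may grow with $n$, and one must use the inverse-limit description in Definition~\ref{D: definable slopes}(2) together with a compactness/König's-lemma argument to extract a single germ, and then argue that two curves with the same germ at a common generic point are almost equal there. Handling this cleanly (and checking that the coordinatization clause Definition~\ref{D: definable slopes}(7), which bounds $\dim(f/T_{n,n-1}(f))\le 1$, does not interfere) is where the real work lies; the final contradiction with almost faithfulness is then routine given weak detection of closures and the strong frontier inequality.
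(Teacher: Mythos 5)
Your overall strategy (contradiction, plus the inverse-limit clause of Definition \ref{D: definable slopes}(2), plus ``same germ at a common generic point forces infinite intersection'') is in the right spirit, but the central step is not justified. You assert that, under the hypothesis that all coherent slopes are algebraic, the germs $\alpha_{xy}(C_t)$ of the curves through a fixed generic $p=(x,y)$ ``lie in the finite set $\acl(xy)\cap(\text{morphisms }x\to y)$'' and hence are constant along a rank-$1$ subfamily. That finiteness claim is false as stated: $\acl(xy)$ is infinite, and what the hypothesis gives you is only that, for each fixed $n$, the $n$-th truncation of each curve through $p$ has finitely many conjugates over $p$ --- a bound that grows with $n$. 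An inverse limit of finite sets of unbounded size is typically infinite, so there is no pigeonhole forcing two distinct generic curves of the subfamily to share \emph{all} truncations, let alone to have constant germ on a rank-$1$ set of parameters. To extract even two distinct curves with identical germs one needs a genuinely additional argument (per-level conjugacy over $p$ via the uniformity in Definition \ref{D: definable slopes}(4), an automorphism argument to get infinitely many parameters realizing the \emph{exact} same $n$-truncation, and then $\aleph_1$-saturation to handle all $n$ simultaneously); your appeal to ``compactness/K\"onig's lemma'' does not supply this, and you flagged exactly this step as the one you had not carried out.

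The paper's proof goes in the opposite, and easier, direction: it shows that the full truncation sequence $A=(\alpha^n_{xy}(C))_{n}$ \emph{determines the canonical base}, i.e.\ $c\in\operatorname{dcl}_{\mathcal K}(A)$. The key point is that any $c'\equiv_{A}c$ is the canonical base of a curve $C'$ with the same truncations at $(x,y)$, hence (by the inverse-limit clause) the same germ; by the Baire category axiom every neighborhood of $(x,y)$ in $C$ is infinite, so $C$ and $C'$ have infinite intersection and are almost equal by strong minimality, whence $c'=c$. Under your contradiction hypothesis this yields $c\in\acl(xy)$, and then a pure dimension count finishes: the paper contradicts the fact (from non-local modularity) that $\dim(c)$ can be arbitrarily large, while in your excellent-family set-up one could equally contradict $\dim(c_t/p)=1$ for $t$ generic in the subfamily through $p$. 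Note that this route makes your pigeonhole entirely unnecessary: you never need to produce a second curve with the same germ, only to observe that the germ pins down the curve. If you reorganize your argument around this $\operatorname{dcl}$ statement (which is where the Baire category axiom and strong minimality are genuinely used), the rest of your sketch goes through.
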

\begin{proof} Fix any stationary non-trivial plane curve $C$ with coherent canonical base $c$, and fix a generic element $(x,y)\in C$ over $c$. So for each $n<\infty$, $\alpha_{xy}^n(C)$ is a coherent $n$-slope at $(x,y)$. 

Since the category $\mathcal{IA}_{\infty}$ is the inverse limit of the $\mathcal{IA}_n$ for $n<\infty$ (see Definition \ref{D: definable slopes}(2)), the basic invertible arc $\alpha_{\rho(x)\rho(y)}(\rho(C))$ (i.e. the germ of $\rho(C)$ at $(\rho(x),\rho(y))$) is determined by the truncations $\alpha_{xy}^n(C)$ for $n<\infty$. Since $\rho$ induces a local homeomorphism $C\rightarrow\rho(C)$ near $(x,y)$, it follows that these truncations, together with $(x,y)$, determine the germ of $C$ at $(x,y)$.

Let $A$ be the tuple of all $\alpha_{xy}^n(C)$ for $n<\infty$. It now follows that $c=\operatorname{Cb}(c)$ is determined by $A$ -- or more precisely, $c\in\operatorname{dcl}_{\mathcal K}(A)$. Let us explain. By $\aleph_1$-saturation, it is enough to show that for any $c'\models\tp_{\mathcal K}(c/A)$, we have $c'=c$. So fix such $c'$. Then $c'=\operatorname{Cb}(C')$ for some stationary non-trivial plane curve $C'$, also containing $(x,y)$ as a generic point. Now by uniform definability of slopes, and since $\tp_{\mathcal K}(c'/A)=\tp_{\mathcal K}(c/A)$, it follows that $\alpha_{xy}^n(C')=\alpha_{xy}^n(C)$ for all $n<\infty$. Then by the discussion from the above paragraph, $C'$ agrees with $C$ in a neighborhood of $(x,y)$. But by the Baire category axiom (see Remark \ref{R: non-discrete}), every neighborhood of $(x,y)$ in $C$ is infinite; so in fact $C'$ and $C$ have infinite intersection, and thus by strong minimality they are almost equal. Thus, they have the same canonical base, i.e. $c'=c$. 

Now assume that all coherent slopes are algebraic. Then, in the notation above, we get $A\subset\acl(xy)$, and thus (by the previous paragraph) $c\in\acl(xy)$. In particular, $\dim(c)\leq 2$. But, by non-local modularity, $\dim(c)$ can be arbitrarily large, which is enough to prove the lemma.
\end{proof}

\begin{notation} For the rest of section 7, we let $n_0$ be the smallest $n$ such that there is a non-algebraic coherent $n$-slope.
\end{notation}

By Definition \ref{D: definable slopes}(6), note that every coherent 0-slope is algebraic. So necessarily $n_0\geq 1$, and thus every coherent $n_0$-slope has an $n_0-1$-th truncation. We use this freely below.

\begin{remark} One could also define `non-algebraic coherent $n$-slopes' relative to a parameter set $A$ (so coherence is interpreted over $A$, and non-algebraicity at $(x,y)$ means the slope is not algebraic over $Axy$). This would, a priori, change the way that $n_0$ is defined. We note, however, that the value of $n_0$ does not ultimately depend on parameters (which the reader can check if desired). Thus, we find it simplest to stick to the parameter-free version presented above.
\end{remark}

\begin{lemma}\label{L: slopes 1 dimensional}
Suppose $\alpha$ is a non-algebraic coherent $n_0$-slope at $(x,y)$.
\begin{enumerate}
    \item $\dim(\alpha/xy)=1$. 
    \item If $c$ is any coherent representative of $\alpha$, then $\dim(c/\alpha)=\dim(c/xy)-1$.
\end{enumerate}
\end{lemma}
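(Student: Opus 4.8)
The plan is to prove both clauses by comparing the two coherent $n_0$-slope configurations available at $(x,y)$: the given one, and the $(n_0-1)$-st truncation which, by minimality of $n_0$, must be algebraic over $xy$.

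First I would recall the setup. Let $\alpha$ be a non-algebraic coherent $n_0$-slope at $(x,y)$, witnessed by a coherent $n_0$-slope configuration $(x,y,C,c,\alpha)$; so $C$ is a stationary non-trivial plane curve with $c=\operatorname{Cb}(C)$ coherent, and $(x,y)$ generic in $C$ over $c$. Since $\alpha\notin\acl(xy)$, Lemma \ref{L: the point is generic}(1) gives that $(x,y)$ is generic in $M^2$, so $\dim(xy)=2$; and by Lemma \ref{L: the point is generic}(2), $\dim(c/xy)=\dim(c)-1$. Set $\beta=T_{n_0,n_0-1}(\alpha)$, the $(n_0-1)$-st truncation. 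By Definition \ref{D: definable slopes}(6) (Coordinatization of Slopes), $\beta$ is $\mathcal K(\alpha)$-definable and $\dim(\alpha/\beta)\leq 1$. Moreover $\beta$ is itself a coherent $(n_0-1)$-slope at $(x,y)$ (witnessed by the same $C$ and $c$), hence by minimality of $n_0$ it is algebraic: $\beta\in\acl(xy)$.

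For clause (1), I would compute $\dim(\alpha/xy)$ via $\beta$. Since $\beta\in\acl(xy)$ we have $\dim(\alpha/xy)=\dim(\alpha/xy\beta)\leq\dim(\alpha/\beta)\leq 1$, giving the upper bound. For the lower bound: if $\dim(\alpha/xy)=0$ then $\alpha\in\acl(xy)$, contradicting that $\alpha$ is non-algebraic. Hence $\dim(\alpha/xy)=1$. (One subtlety to address: $\alpha$ lives in $\mathcal K^{eq}$, so these are $\mathcal K$-dimensions; since $\dim(\alpha/xy\beta)\le\dim(\alpha/\beta)$ follows from $\beta\in\acl(xy)\subseteq\acl_{\mathcal K}(xy)$ by monotonicity, and $\dim$ on $\mathcal K^{eq}$ retains additivity by the reference to \cite{Gagelman}, this is fine.)

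For clause (2), let $c$ be any coherent representative of $\alpha$. By Lemma \ref{L: interdefinability}(3), $\alpha$ and $xy$ are interalgebraic over $c$, so $\dim(\alpha/cxy)=\dim(xy/c\alpha)=0$ and $\dim(c\alpha/xy)=\dim(c/xy)+\dim(\alpha/cxy)=\dim(c/xy)$; on the other hand $\dim(c\alpha/xy)=\dim(\alpha/xy)+\dim(c/\alpha xy)=1+\dim(c/\alpha xy)$ by clause (1). Comparing, $\dim(c/\alpha xy)=\dim(c/xy)-1$. Finally I would argue $\dim(c/\alpha xy)=\dim(c/\alpha)$: since $xy\in\acl_{\mathcal K}(\alpha)$ by Lemma \ref{L: interdefinability}(1), adjoining $xy$ to $\alpha$ does not change the dimension of anything over $\alpha$, so $\dim(c/\alpha)=\dim(c/\alpha xy)=\dim(c/xy)-1$, as desired.

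The main obstacle I anticipate is purely bookkeeping rather than conceptual: keeping straight which dimension function ($\mathcal K$'s $\dim$ versus $\mathcal M$'s $\rk$) and which ambient structure each tuple lives in, and making sure every algebraicity/interalgebraicity claim invoked (the non-algebraicity of $\beta$ by minimality of $n_0$, the interalgebraicity from Lemma \ref{L: interdefinability}, the bound $\dim(\alpha/\beta)\le 1$ from Definition \ref{D: definable slopes}(6)) is applied with the correct base set. The only genuinely delicate point is verifying that $\beta=T_{n_0,n_0-1}(\alpha)$ is indeed a \emph{coherent} $(n_0-1)$-slope configuration $(x,y,C,c,\beta)$ — i.e. that $\alpha_{xy}^{n_0-1}(C)=T_{n_0,n_0-1}(\alpha_{xy}^{n_0}(C))$ — which is immediate from functoriality of the $T_{m,n}$ and the compatibility clause of Definition \ref{D: definable slopes}(2), but should be spelled out.
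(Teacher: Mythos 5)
Your proof is correct and follows essentially the same route as the paper: pass to the $(n_0-1)$-th truncation $\beta$, which is algebraic over $xy$ by minimality of $n_0$, use Definition \ref{D: definable slopes}(6) to get $\dim(\alpha/\beta)\leq 1$, and then derive clause (2) from clause (1), additivity, and Lemma \ref{L: interdefinability}. You merely spell out the additivity bookkeeping and the (correct) functoriality check that $(x,y,C,c,\beta)$ is a coherent $(n_0-1)$-slope configuration, which the paper leaves implicit.
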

\begin{proof} Let $\beta$ be the $(n_0-1)$-th truncation of $\alpha$. By the minimality of $n_0$, $\beta\in\acl(xy)$. By Definition \ref{D: definable slopes}(6), $\dim(\alpha/\beta)\leq 1$. Thus $\dim(\alpha/xy)\leq 1$, and since $\alpha$ is non-algebraic, 
$\dim(\alpha/xy)=1$, proving (1).

For (2), let $c$ be any coherent representative of $\alpha$. Repeatedly using Lemma \ref{L: interdefinability} and (1), we compute: $$\dim(c/\alpha)=\dim(c/\alpha xy)=\dim(c\alpha/xy)-\dim(\alpha/xy)=\dim(c/xy)-1.$$
\end{proof}

Finally, we now end this subsection with our definition of codes of slopes in $\mathcal M$. This is the key notion allowing us to recover composition of slopes in $\mathcal M$.

\begin{definition}\label{D: code} Let $\alpha$ be a coherent $n$-slope at $(x,y)$. A tuple  $s$  from $\mathcal M^{\textrm{eq}}$ is a \textit{code} of $\alpha$ if the following hold:
\begin{enumerate}
    \item $(x,y)\in\acl_{\mathcal M}(s)$.
    \item For each coherent representative $c$ of $\alpha$, $s\in\acl_{\mathcal M}(cxy)$.
    \item $\alpha$ and $s$ are $\mathcal K$-interalgebraic.
\end{enumerate}
\end{definition}

Note that codes might not exist -- that is, if $\alpha$ is a coherent $n$-slope at $(x,y)$, there is no reason in general to expect some $s\in\mathcal M^{eq}$ to satisfy (1)-(3) in Definition \ref{D: code}. Moving forward, one of our main jobs will be to find suitable topological conditions guaranteeing that many slopes do have codes. 

\subsection{Detecting Composition when Codes Exist} Definition \ref{D: definable slopes}(5) says that slope composition is definable in an appropriate sense. Our next goal is to transfer this property into $\mathcal M$, for `independent' composition of coherent slopes that have codes. The idea is that the composition of slopes is respected by the $\mathcal M$-definable composition of plane curves. 

The following notion will be useful in the next two sections:

\begin{definition} Let $a_1,...,a_n,b_1,...,b_n\in\mathcal K^{\textrm{eq}}$. We say that $a_1,...,a_n$ are \textit{maximally independent} over $b_1,...,b_n$ if 
\[\dim(a_1...a_n/b_1...b_n)=\dim(a_1/b_1)+...+\dim(a_n/b_n).\]
\end{definition}

Note that if $b_1=...=b_n=b$, this is the same as saying that $a_1,...,a_n$ are independent over $b$.

The following transitivity property follows  easily from additivity of dimension:

\begin{lemma}\label{L: maximal independence additivity} $a_1b_1,...,a_nb_n$ are maximally independent over $c_1,...,c_n$ if and only if $a_1,...,a_n$ are maximally independent over $c_1,...,c_n$ and $b_1,...,b_n$ are maximally independent over $a_1c_1...a_nc_n$.
\end{lemma}

\begin{proposition}\label{P: composition definable} Let $\alpha_1$ be a coherent $n$-slope at $(x_0,y_0)$, and let $\alpha_2$ be a coherent $n$-slope at $(y_0,z_0)$. Assume that $x_0$ and $z_0$ are independent over $y_0$, and $\alpha_1,\alpha_2$ are maximally independent over $(x_0,y_0),(y_0,z_0)$. Then $\alpha_3=\alpha_2\circ\alpha_1$ is a coherent $n$-slope at $(x_0,z_0)$. Moreover, if $s_i$ is a code of $\alpha_i$ for $i=1,2,3$, then $s_3\in\acl_{\mathcal M}(s_1s_2)$.
\end{proposition}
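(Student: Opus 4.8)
The plan is to split the statement into its two assertions: first that $\alpha_3 = \alpha_2\circ\alpha_1$ is a coherent $n$-slope at $(x_0,z_0)$, and then the transfer statement $s_3\in\acl_{\mathcal M}(s_1 s_2)$. For the first part, I would fix coherent $n$-slope configurations $(x_0,y_0,C_1,c_1,\alpha_1)$ and $(y_0,z_0,C_2,c_2,\alpha_2)$ witnessing coherence; here $C_1, C_2$ are stationary non-trivial plane curves in $\mathcal M$ with $c_i = \operatorname{Cb}(C_i)$ coherent, and $(x_0,y_0)$ (resp. $(y_0,z_0)$) generic in $C_1$ (resp. $C_2$) over $c_1$ (resp. $c_2$). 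The natural candidate for a witnessing curve for $\alpha_3$ is the $\mathcal M$-definable composition $C_3 := C_2\circ C_1 = \{(x,z) : \exists y\ (x,y)\in C_1,\ (y,z)\in C_2\}$ (or rather its relevant stationary component through $(x_0,z_0)$). I would first use the maximal independence hypotheses, together with Lemma \ref{L: interdefinability} and Lemma \ref{L: the point is generic}, to check that $c_1$ and $c_2$ are `sufficiently independent' — concretely, that the pair $(c_1, c_2)$ can be taken coherent, that $(x_0,z_0)$ is generic in $M^2$ (using Lemma \ref{L: the point is generic}(1) if either $\alpha_i$ is non-algebraic, and a direct rank computation otherwise), and that $y_0$ is the unique point `between' $x_0$ and $z_0$ on these curves near the relevant germs, so that $C_3$ is genuinely non-trivial of rank $1$ near $(x_0,z_0)$ and $(x_0,z_0)$ is generic on it over $\operatorname{Cb}(C_3)$. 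The key geometric input is the generic local homeomorphism property: near $(x_0,y_0,z_0)$, the curves $\rho(C_1)$ and $\rho(C_2)$ are graphs of homeomorphisms with germs $\alpha_1,\alpha_2$, so $\rho(C_3)$ is locally the graph of their composition, whence $\alpha_{x_0 z_0}(\rho(C_3)) = \alpha_2\circ\alpha_1$ as morphisms in $\mathcal{IA}_\infty$; applying $T_{\infty,n}$ and functoriality of the $T_{m,n}$ gives $\alpha_{x_0 z_0}^n(C_3) = T_{\infty,n}(\alpha_2\circ\alpha_1) = T_{\infty,n}(\alpha_2)\circ T_{\infty,n}(\alpha_1) = \alpha_2\circ\alpha_1 = \alpha_3$. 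Coherence of the configuration $(x_0,z_0,C_3,\operatorname{Cb}(C_3),\alpha_3)$ then follows from coherence of $(c_1,c_2)$ via Lemma \ref{L: coherent preservation}. Here I should be careful: $\alpha_3$ is a priori only defined as a germ composite, and I need Definition \ref{D: definable slopes}(5) (definability of composition) to know $\alpha_3\in\mathcal K^{eq}$ and $\alpha_3\in\operatorname{dcl}_{\mathcal K}(\alpha_1\alpha_2)$, which I will use again below.

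For the transfer statement, suppose $s_i$ is a code of $\alpha_i$ for $i=1,2,3$. I want $s_3\in\acl_{\mathcal M}(s_1 s_2)$. The strategy is to run a forking-calculus argument passing between $\mathcal K$-algebraicity and $\mathcal M$-algebraicity using coherence. By Definition \ref{D: code}(3), each $s_i$ is $\mathcal K$-interalgebraic with $\alpha_i$, so it suffices to track $\acl$ (in the sense of $\mathcal K$) relations among $\alpha_1,\alpha_2,\alpha_3$ and the points. From $\alpha_3\in\operatorname{dcl}_{\mathcal K}(\alpha_1\alpha_2)$ (Definition \ref{D: definable slopes}(5)) we get $s_3\in\acl(s_1 s_2)$ in the sense of $\mathcal K$. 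The content is upgrading this to $\acl_{\mathcal M}$. For that I would show $s_3$ is coherent over $s_1 s_2$, and then invoke Lemma \ref{L: coherent preservation}(3) (if $b\in\acl_{\mathcal M}(Aa)$ with $a$ coherent over $A$ then $b$ coherent, etc.) — or rather its contrapositive-flavored consequence: if $s_3\in\acl_{\mathcal K}(s_1 s_2)$ and the relevant tuples are coherent over the relevant bases, then $\acl_{\mathcal K}$ and $\acl_{\mathcal M}$ agree. Concretely: using the maximal independence of $\alpha_1,\alpha_2$ over $(x_0,y_0),(y_0,z_0)$ and Lemma \ref{L: maximal independence additivity}, together with interalgebraicity over coherent representatives (Lemma \ref{L: interdefinability}(3)), I can arrange that $s_1 s_2$ together with $x_0 y_0 z_0$ form a coherent tuple over $\emptyset$; since $(x_0,z_0)\in\acl_{\mathcal M}(s_3)$ and $s_3\in\acl_{\mathcal K}(s_1 s_2)$, coherence is inherited and $\dim(s_3/s_1 s_2)= \rk(s_3/s_1 s_2)\cdot\dim M = 0$, i.e. $s_3\in\acl_{\mathcal M}(s_1 s_2)$. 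The precise bookkeeping of which tuples are coherent over which bases — particularly handling the point $y_0$, which is `shared' between the two configurations and over which $x_0,z_0$ are assumed independent — is where care is needed.

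The main obstacle I anticipate is the first part: verifying that the composed curve $C_3$ is genuinely a valid witness, i.e. that $C_3$ has a stationary component through $(x_0,z_0)$ which is non-trivial of rank exactly $1$, on which $(x_0,z_0)$ is generic over its canonical base, and whose germ at $(x_0,z_0)$ really is the composite germ and not something contaminated by other $y$'s lying over $(x_0,z_0)$. This is exactly the point where the two independence hypotheses (independence of $x_0,z_0$ over $y_0$, and maximal independence of the slopes) must be used in full force, via the generic local homeomorphism property, to guarantee that locally around $(x_0,y_0,z_0)$ the fiber of $C_3$ over $(x_0,z_0)$ is the single point $y_0$, so that the germ computation $\alpha_{x_0z_0}^n(C_3)=\alpha_2\circ\alpha_1$ is legitimate. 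The forking calculus in the second part, while requiring attention, is the kind of routine argument that goes through essentially as in \cite{CasACF0} once the coherence bookkeeping is set up.
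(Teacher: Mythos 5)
Your proposal follows essentially the same route as the paper's proof: compose the curves to form $C_3=C_2\circ C_1$, use the maximal-independence hypotheses (via Lemma \ref{L: maximal independence additivity}) to get $c_1c_2x_0y_0z_0$ coherent with $(x_0,z_0)$ generic in $C_3$ over $c_1c_2$, compute $\alpha^n_{x_0z_0}(C_3)=\alpha_2\circ\alpha_1$ by the generic local homeomorphism property and functoriality of truncation, pass to the stationary component through $(x_0,z_0)$ to get a coherent representative $c_3\in\acl_{\mathcal M}(c_1c_2)$, and finally upgrade $s_3\in\acl_{\mathcal K}(s_1s_2)$ (from definability of composition) to $\acl_{\mathcal M}$ via coherence of $s_1s_2s_3$. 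The obstacle you flag about stray intermediate points dissolves exactly as you suspect: once $(x_0,z_0)$ is generic in $C_3$ over $c_1c_2$, the generic local homeomorphism property applied to $C_3$ itself makes it locally the graph of a homeomorphism containing the composite graph, so the germ is the composite; and in the coding step one should use clause (2) of Definition \ref{D: code} with the representative $c_3$ (rather than clause (1)) to see $s_3\in\acl_{\mathcal M}(c_1c_2x_0y_0z_0)$ and hence coherence.
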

\begin{proof} Let $c_1$ and $c_2$ be coherent representatives of $\alpha_1$ and $\alpha_2$, respectively. We may assume that $c_1,c_2$ are maximally independent over $\alpha_1,\alpha_2$. Note also that $(x_0,y_0),(y_0,z_0)$ are maximally independent over $y_0,y_0$ by assumption. By repeated applications of Lemma \ref{L: maximal independence additivity}, it then follows that $c_1,c_2$ are maximally independent over $y_0,y_0$. Thus, $c_1$ and $c_2$ are independent over $y_0$. But each $c_i$ is independent from $y_0$ by assumption. It then follows from additivity of dimension that $c_1$, $c_2$, and $y_0$ are independent over $\emptyset$. In particular, $c_1c_2y_0$ is coherent, and $\dim(y_0/c_1c_2)=1$. Clearly $x_0$, $y_0$, and $z_0$ are pairwise $\mathcal M$-interalgebraic over $c_1c_2$. Thus, we have:

\begin{claim}\label{point on composition is generic} $c_1c_2x_0y_0z_0$ is coherent, and $\dim(x_0z_0/c_1c_2)=1$.
\end{claim}

Now let $C_1$ and $C_2$ be such that $(x_0,y_0,C_1,c_1,\alpha_1)$ and $(y_0,z_0,C_2,c_2,\alpha_2)$ are coherent $n$-slope configurations. We may assume that each $C_i$ is $\mathcal M$-definable over $c_i$. Let $C_3=C_2\circ C_1$ -- that is, the set of $(x,z)$ such that for some $y$ we have $(x,y)\in C_1$ and $(y,z)\in C_2$. Then $C$ is a non-trivial plane curve which is $\mathcal M(c_1c_2)$-definable and (by Claim \ref{point on composition is generic}) contains $(x_0,z_0)$ as a generic element over $c_1c_2$.

\begin{claim} $\alpha_{x_0z_0}^n(C_3)=\alpha_3$.
\end{claim}
\begin{claimproof} Let $q_1$ stand for $(x_0,y_0)$, $q_2$ stand for $(y_0,z_0)$, and $q_3$ stand for $(x_0,z_0)$. By the generic local homeomorphism property, for each $i=1,2,3$, the points of $C_i$ in a neighborhood of $q_i$ form the graph of a local homeomorphism $f_i$. Shrinking these neighborhoods if necessary, it is clear that $f_3=f_2\circ f_1$.

Now, for each $i$, let $\rho(f_i)$ be the image of $f_i$ under $\rho$, viewed as a local function on $K$ at $\rho(q_i)$. It follows by definition that each $\rho(f_i)$ is an invertible arc, with $n$th truncation $\alpha_{q_i}^n(C_i)$. But since $\rho$ is a local homeomorphism $M\rightarrow K$ near each of $x_0$, $y_0$, and $z_0$, it also follows that $\rho(f_3)=\rho(f_2)\circ\rho(f_1)$. So since the truncation maps are functors, we get $\alpha_{q_3}^n(C_3)=\alpha_{q_2}^n(C_2)\circ\alpha_{q_1}^n(C_1)$. Equivalently, $\alpha_{x_0z_0}^n(C_3)=\alpha_2\circ\alpha_1=\alpha_3$.
\end{claimproof}

Now since $(x_0,z_0)$ is generic in $C_3$, there is a strongly minimal component $C$ of $C_3$, with canonical base $c_3$, such that $(x_0,z_0)$ is generic in $C$ over $c_3$. Then $c_3\in\acl_{\mathcal M}(c_1c_2)$, so $c_3$ is coherent. Moreover, by the strong frontier inequality, $C$ and $C_3$ agree in a neighborhood of $(x_0,z_0)$, which shows that $\alpha_{x_0,z_0}^n(C)=\alpha_{x_0,z_0}^n(C_3)=\alpha_3$. It follows that $(x_0,z_0,C,c_3,\alpha_3)$ is a coherent $n$-slope configuration. So $\alpha_3$ is coherent, and $c_3$ is a coherent representative of $\alpha_3$.

Now assume that $s_i$ is a code of $\alpha_i$ for each $i$. By the definition of codes, we have $s_i\in\acl_{\mathcal M}(c_ix_0y_0z_0)$ for each $i$. Since $c_3\in\acl_{\mathcal M}(c_1c_2)$, this implies $s_1s_2s_3\in\acl_{\mathcal M}(c_1c_2x_0y_0z_0)$. By Claim \ref{point on composition is generic}, $c_1c_2x_0y_0z_0$ is coherent, thus so is $s_1s_2s_3$. But by the definability of composition (Definition \ref{D: definable slopes}(5)), $\alpha_3\in\acl(\alpha_1\alpha_2)$. So by interalgebraicity with the $s_i$ (Definition \ref{D: code})(3), $s_3\in\acl(s_1s_2)$, and thus by coherence, $s_3\in\acl_{\mathcal M}(s_1s_2)$.
    \end{proof}

\begin{remark} Note that the last clause of Proposition \ref{P: composition definable} requires that all three $\alpha_i$ have codes. That is, we do not claim that if $\alpha_1$ and $\alpha_2$ have codes, so does their composition.
\end{remark}

\subsection{Interpreting a Group when Codes Exist} Recall that $n_0$ is the smallest $n$ such that there is a non-algebraic coherent $n$-slope. We now use Proposition \ref{P: composition definable} to interpret a group in $\mathcal M$ assuming enough coherent $n_0$-slopes have codes. The statement of Proposition \ref{P: group from codes} below is somewhat complicated. The reason is that later on, in ACVF, we will need to analyze the specific structure of the interpreted group -- a task which will use the extra data in the proposition. However, we stress the main point: if all coherent $n_0$-slopes have codes, then $\mathcal M$ interprets a strongly minimal group.

\begin{proposition}\label{P: group from codes} Suppose every coherent $n_0$-slope has a code. Then there are
\begin{itemize}
    \item A parameter set $B$,
    \item An $\mathcal M(B)$-interpretable group $(G,\cdot)$ which is strongly minimal as in interpretable set in $\mathcal M$,
    \item A generic element $y\in K$ over $\emptyset$,
    \item For each $i,j\in\{1,2,3\}$ with $i<j$, an element $g_{ij}\in G$, and
    \item For each $i,j\in\{1,2,3\}$ with $i<j$, a morphism $f_{ij}:y\rightarrow y$ in $\mathcal{IA}_{n_0}$,
\end{itemize}
such that:
\begin{enumerate}
    \item $y$ is $\mathcal K(B)$-definable (that is, $y\in\operatorname{dcl}_{\mathcal K}(B)$).
    \item The $(n_0-1)$-th truncation of each $f_{ij}$ is the identity at $y$.
    \item $g_{12}$ and $g_{23}$ are independent generics in $G$ over $B$.
    \item $g_{13}=g_{23}\cdot g_{12}$ and $f_{13}=f_{23}\circ f_{12}$.
    \item For each $i,j\in\{1,2,3\}$ with $i<j$, $g_{ij}$ and $f_{ij}$ are interalgebraic over $B$.
\end{enumerate}
\end{proposition}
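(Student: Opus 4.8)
The plan is to extract a \emph{rank $1$ group configuration in $\mathcal M$} (in the sense preceding Fact~\ref{F: gp con}) from codes of coherent $n_0$-slopes, feed it into Fact~\ref{F: gp con} to produce $(G,\cdot)$, and then re-coordinatize the resulting slopes as loops at a single generic point. For the configuration, I would first select non-algebraic coherent $n_0$-slopes $\alpha_{12}\colon x_1\to x_2$, $\alpha_{23}\colon x_2\to x_3$, $\alpha_{34}\colon x_3\to x_4$ between pairwise independent generics $x_1,\dots,x_4\in M$, with coherent representatives $c_{12},c_{23},c_{34}$, chosen as independent as possible over $A:=\acl_{\mathcal M}(x_1x_2x_3x_4)$; these exist by Lemma~\ref{L: non-algebraic slopes exist} (and Lemma~\ref{L: the point is generic}, which forces the base points to be generic in $M^2$) together with saturation. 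Setting $\alpha_{13}=\alpha_{23}\circ\alpha_{12}$, $\alpha_{24}=\alpha_{34}\circ\alpha_{23}$, $\alpha_{14}=\alpha_{34}\circ\alpha_{23}\circ\alpha_{12}$, Proposition~\ref{P: composition definable} (used twice for the triple composition, with the independences arranged so its hypotheses hold) shows that each of the six $\alpha_{ij}$ is a coherent $n_0$-slope at $(x_i,x_j)$, non-algebraic over $x_ix_j$ for a generic enough initial choice. Since by hypothesis every coherent $n_0$-slope has a code, I may fix codes $s_{ij}$, and Proposition~\ref{P: composition definable} also gives $s_{13}\in\acl_{\mathcal M}(s_{12}s_{23})$, $s_{24}\in\acl_{\mathcal M}(s_{23}s_{34})$, $s_{14}\in\acl_{\mathcal M}(s_{12}s_{24})$.

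I would then verify that $\bar s=(s_{12},s_{13},s_{14},s_{23},s_{24},s_{34})$ is a rank $1$ group configuration over $A$. Non-algebraicity and Lemma~\ref{L: slopes 1 dimensional}(1) give $\dim(\alpha_{ij}/x_ix_j)=1$; combining this with Lemma~\ref{L: interdefinability}(1), the coherence of $c_{ij}x_ix_j$ and of $s_{ij}$, Lemmas~\ref{dim rk comparison} and~\ref{L: coherent preservation}, and $\dim(M)=1$, one gets $\rk(s_{ij}/A)=1$. The other axioms — any two distinct $s_{ij}$ of rank $2$ over $A$, $\rk(\bar s/A)=3$, and $\rk(s_{ij}s_{ik}s_{jk}/A)=2$ for $i<j<k$ — follow from the independence of $\alpha_{12},\alpha_{23},\alpha_{34}$ over $A$, using the $\mathcal K$-definability of composition and inverse of slopes (Definition~\ref{D: definable slopes}(5)) and the interalgebraicity of $s_{ij}$ with $\alpha_{ij}$ over $c_{ij}$, by exactly the bookkeeping of the canonical example following the definition of a group configuration.

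Fact~\ref{F: gp con} then supplies $B_0\supseteq A$, an $\mathcal M(B_0)$-interpretable strongly minimal group $(G,\cdot)$, and $\mathcal M$-generics $g_{12},g_{23},g_{13}$ over $B_0$ with $g_{12},g_{23}$ independent, $g_{13}=g_{23}\cdot g_{12}$, and $g_{ij}$ interalgebraic with $s_{ij}$ over $B_0$. To obtain the stated form I would turn $\alpha_{12},\alpha_{23},\alpha_{13}$ (at $(x_1,x_2)$, $(x_2,x_3)$, $(x_1,x_3)$) into loops at one point. Pick $y\in M$ generic over $B_0$ and, for $i=1,2,3$, a stationary non-trivial $\mathcal M(d_i)$-definable curve $D_i\ni(y,x_i)$ with $d_i$ coherent, $(y,x_i)$ generic in $D_i$ over $d_i$, the tuple $y d_1 d_2 d_3$ independent from $\{g_{ij},s_{ij},\alpha_{ij},c_{ij}\}$ over $B_0$, and the $(n_0-1)$-slopes $\tau_i:=\alpha^{n_0-1}_{yx_i}(D_i)$ chosen so that $T_{n_0,n_0-1}(\alpha_{12})\circ\tau_1=\tau_2$ and $T_{n_0,n_0-1}(\alpha_{13})\circ\tau_1=\tau_3$ (take $\tau_1$ to be the $(n_0-1)$-slope of any coherently defined curve through $(y,x_1)$ and solve; the resulting $\tau_2,\tau_3$ are again coherent $(n_0-1)$-slopes by Proposition~\ref{P: composition definable}, hence realised by such $D_i$). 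Writing $\gamma_i=\alpha^{n_0}_{yx_i}(D_i)\colon y\to x_i$ and $f_{ij}=\gamma_j^{-1}\circ\alpha_{ij}\circ\gamma_i\colon y\to y$ in $\mathcal{IA}_{n_0}$, cancellation gives $f_{13}=f_{23}\circ f_{12}$, and applying the functor $T_{n_0,n_0-1}$ shows each $f_{ij}$ has identity $(n_0-1)$-truncation at $y$. With $B:=B_0\cup\{y,d_1,d_2,d_3\}$, Lemma~\ref{L: slopes definable} puts each $\gamma_i$ in $\operatorname{dcl}_{\mathcal K}(B)$, so by Definition~\ref{D: definable slopes}(5) each $f_{ij}$ is $\mathcal K$-interalgebraic with $\alpha_{ij}$, hence with $s_{ij}$ (Definition~\ref{D: code}(3)), hence with $g_{ij}$, over $B$; $G$ is still $\mathcal M(B)$-interpretable and $g_{12},g_{23}$ still independent generics over $B$ by the independence of $y d_1 d_2 d_3$ from the configuration; and $y\in\operatorname{dcl}_{\mathcal K}(B)$ is generic over $\emptyset$. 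This yields all the required data.

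The substantive difficulties are two. First, in the construction one must choose the initial data generically enough that \emph{all six} $\alpha_{ij}$ — in particular the composites — are non-algebraic coherent $n_0$-slopes at their endpoint pairs and satisfy at once the hypotheses of Proposition~\ref{P: composition definable} and the independences required for the configuration axioms; the crucial point is that a generic composite germ forgets its intermediate point and so does not collapse into the algebraic closure of its endpoints. Second, the re-coordinatization needs $\mathcal M$-definable curves realising prescribed $(n_0-1)$-slopes — a surjectivity statement for truncation of coherent slopes — chosen independent from the interpreted group $G$ so that no rank collapses. The rank and coherence computations themselves are routine given Section~\ref{s: relics}.
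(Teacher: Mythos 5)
Your first half — building the six slopes $\alpha_{ij}$ from three maximally independent non-algebraic coherent $n_0$-slopes at independent generic base points, using Proposition~\ref{P: composition definable} to get coherence of the composites, taking codes, checking that the tuple of codes is a coherent rank~$1$ group configuration, and feeding it to Fact~\ref{F: gp con} — is exactly the paper's argument. The gap is in your re-coordinatization, and it is the one you flag yourself: you need curves $D_2,D_3$ through $(y,x_2),(y,x_3)$ realizing the \emph{prescribed} $(n_0-1)$-slopes $T_{n_0,n_0-1}(\alpha_{12})\circ\tau_1$ and $T_{n_0,n_0-1}(\alpha_{13})\circ\tau_1$, while simultaneously having $y\,d_1d_2d_3$ independent from the configuration over $B_0$. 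The only mechanism the paper supplies for producing a coherent curve with a prescribed truncation is composition (Proposition~\ref{P: composition definable}), and that construction yields $d_2\in\acl_{\mathcal M}(d_1c_{12})$ — precisely not independent from the configuration. Worse, with that choice $\gamma_2=\alpha_{12}\circ\gamma_1$, so $f_{12}=\gamma_2^{-1}\circ\alpha_{12}\circ\gamma_1=\mathrm{id}$ and $\alpha_{12}=\gamma_2\circ\gamma_1^{-1}\in\operatorname{dcl}_{\mathcal K}(B)$, which destroys clauses (3) and (5). No ``surjectivity of truncation with independent representatives'' statement is proved anywhere in the paper, so this is a missing ingredient, not a routine verification. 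The paper avoids the issue entirely: it takes $\overline\beta\,\overline t$ to be an independent realization of $\operatorname{tp}_{\mathcal K}(\overline\alpha\,\overline s/\acl(x_1x_2x_3x_4))$, sets $y=\rho(x_4)$ (no fresh generic point), and puts $f_{ij}=\beta_{j4}\circ\alpha_{ij}\circ\beta_{i4}^{-1}$. By minimality of $n_0$ the $(n_0-1)$-truncations of the $\alpha_{i4}$ lie in $\acl(x_1x_2x_3x_4)$, so the $\beta_{i4}$ automatically share them; since $\alpha_{j4}\circ\alpha_{ij}\circ\alpha_{i4}^{-1}=\mathrm{id}$, the identity-truncation condition (2) comes for free, and the independence needed to keep $g_{12},g_{23}$ generic is built into the choice of $\overline\beta\,\overline t$.

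A second, smaller gap: clause (3) asks for $g_{12},g_{23}$ to be independent generics over $B$ in the sense of $\mathcal K$ (the paper's unprefixed convention), whereas Fact~\ref{F: gp con} only delivers $\mathcal M$-independent $\mathcal M$-generics; your remark about the independence of $y\,d_1d_2d_3$ only addresses the passage from $B_0$ to $B$, not the $\mathcal M$-versus-$\mathcal K$ discrepancy. Since $\mathcal M$-independence over an arbitrary parameter set from the group configuration theorem does not by itself control $\mathcal K$-dimension, one must, as the paper does, observe that the conclusion of Fact~\ref{F: gp con} is a condition on the $\mathcal M$-type of $Bg_{12}g_{23}g_{13}$ over $A$, replace that tuple by a realization of the same $\mathcal M$-type which is coherent over $A$ (Lemma~\ref{L: coherent preservation}(6)), and then use coherence to upgrade $\mathcal M$-genericity to genericity (Lemma~\ref{L: coherent preservation}(1)). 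Without this step, (3) is not established in your argument.
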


\begin{proof} Our strategy is as follows. First, we use the definability of composition and inverse of slopes to find a `group configuration' in the sense of dimension in $\mathcal K$, whose elements are non-algebraic $n_0$-slopes. Then we use Proposition \ref{P: composition definable} to check that the slopes in the configuration are coherent, and subsequently extract codes. Finally, we check that the tuple of codes in the entire configuration is coherent, and conclude that the codes form a group configuration in the sense of rank in $\mathcal M$.

Let us proceed. We first want to find an appropriate collection of points and slopes to form a group configuration. The first two lemmas below accomplish this.

\begin{lemma}\label{L: coherent slopes invertible} Suppose there is a non-algebraic coherent $n_0$-slope at $(x_0,y_0)$ for some $(x_0,y_0)\in M^2$. Then there is a non-algebraic coherent $n_0$-slope at $(y_0,x_0)$.
\end{lemma}
\begin{proof} Suppose that $(x_0,y_0,C,c,\alpha)$ is a non-algebraic coherent $n_0$-slope configuration. It is then easy to check that $(y_0,x_0,C^{-1},c,\alpha^{-1})$ is a non-algebraic coherent $n_0$-slope configuration, where $C^{-1}$ is the set of $(y,x)$ with $(x,y)\in C$.
\end{proof}

\begin{lemma}\label{L: 4 points with 6 coherent slopes} There are independent generic elements $x_1,x_2,x_3,x_4\in M$ such that each of $(x_1,x_2)$, $(x_2,x_3)$, and $(x_3,x_4)$ admits a non-algebraic coherent $n_0$-slope. 
\end{lemma}
\begin{proof} Let $(x_1,x_2)$ be such that there is a non-algebraic coherent $n_0$-slope at $(x_1,x_2)$. By Lemma \ref{L: the point is generic}, $(x_1,x_2)$ is generic in $M^2$. Let $x_3$ be a  realization of $\operatorname{tp}_{\mathcal K}(x_1/x_2)$ independent over $x_1$, and let $x_4$ be a  realization of $\operatorname{tp}_{\mathcal K}(x_2/x_3)$ independent over $x_1x_2$. So $x_1,x_2,x_3,x_4$ are independent generics in $M$, and all of $(x_1,x_2)$, $(x_3,x_2)$, and $(x_3,x_4)$ realize the same type in $\mathcal K$. Thus, each of $(x_1,x_2)$, $(x_3,x_2)$, and $(x_3,x_4)$ admits a non-algebraic coherent $n_0$-slope, and by Lemma \ref{L: coherent slopes invertible}, so does $(x_2,x_3)$.
\end{proof}

\begin{notation}
    For the rest of the proof of Proposition \ref{P: group from codes}, we adopt the following:
    \begin{itemize}
        \item We fix $x_1,x_2,x_3,x_4$ as provided by Lemma \ref{L: 4 points with 6 coherent slopes}. 
        \item We fix non-algebraic coherent $n_0$-slopes $\alpha_{12}$ at $(x_1,x_2)$, $\alpha_{23}$ at $(x_2,x_3)$, and $\alpha_{34}$ at $(x_3,x_4)$, chosen so that $\alpha_{12},\alpha_{23},\alpha_{34}$ are maximally independent over $x_1x_2,x_2x_3,x_3x_4$.
        \item Finally, we let $\alpha_{13}=\alpha_{23}\circ\alpha_{12}$, $\alpha_{24}=\alpha_{34}\circ\alpha_{23}$, and $\alpha_{14}=\alpha_{34}\circ\alpha_{23}\circ\alpha_{12}$, and we denote the tuple $(\alpha_{12},\alpha_{23},\alpha_{34},\alpha_{13},\alpha_{24},\alpha_{14})$ by $\overline\alpha$.
    \end{itemize}
\end{notation}

Lemma \ref{L: gp con in K} follows easily from Lemmas \ref{L: slopes 1 dimensional} and the definability of composition and inverse (Definition \ref{D: definable slopes}(5)). We leave the details to the reader.

\begin{lemma}\label{L: gp con in K} $\overline\alpha$ forms a `one-dimensional group configuration in $\mathcal K$ over $x_1x_2x_3x_4$'. That is:
\begin{enumerate}
    \item Each point in $\overline\alpha$ has dimension 1 over $x_1x_2x_3x_4$.
    \item Any two distinct points in $\overline\alpha$ have dimension 2 over $x_1x_2x_3x_4$.
    \item $\dim(\overline\alpha/x_1x_2x_3x_4)=3$.
    \item If $i,j,k\in\{1,2,3,4\}$ with $i<j<k$, then $\dim(\alpha_{ij}\alpha_{jk}\alpha_{ik}/x_1x_2x_3x_4)=2$.
\end{enumerate}
\end{lemma}

By Lemmas \ref{L: slopes 1 dimensional} and \ref{L: gp con in K}, any two elements $\alpha_{ij},\alpha_{kl}\in\overline{\alpha}$ are maximally independent over $x_ix_j,x_kx_l$. This allows us to apply Proposition \ref{P: composition definable} to the elements of $\overline\alpha$. We first conclude:

\begin{lemma}\label{L: slopes in gp con are coherent} Each of the six elements of $\overline\alpha$ is a coherent $n_0$-slope.
\end{lemma}
\begin{proof} $\alpha_{12}$, $\alpha_{23}$, and $\alpha_{14}$ are coherent by the way they are chosen. Now repeatedly apply Proposition \ref{P: composition definable}. We get first that each of $\alpha_{13}$ and $\alpha_{24}$ is a composition of two maximally independent coherent slopes, so is coherent. Finally, $\alpha_{14}$ is now coherent by the same reasoning, by writing it as $\alpha_{34}\circ\alpha_{13}$. 
\end{proof}

By Lemma \ref{L: slopes in gp con are coherent} and the assumption of the proposition, each $\alpha_{ij}$ has a code.

\begin{notation}
    We fix a code $s_{ij}$ for each $\alpha_{ij}$, and denote the tuple of $s_{ij}$ by $\overline s$. 
\end{notation}

Arguing similarly to Lemma \ref{L: slopes in gp con are coherent}, we now conclude:

\begin{lemma}\label{L: all slopes are coherent} $\overline s$ is coherent.
\end{lemma}
\begin{proof}
    By definition of codes, each $s_{ij}$ is interalgebraic with $\alpha_{ij}$. It follows that $s_{12},s_{23},s_{34}$ are maximally independent over $x_1x_2,x_2x_3,x_3x_4$. It then follows easily that $s_{12}s_{23}s_{34}$ is coherent over $x_1x_2x_3x_4$, and since $x_1x_2x_3x_4$ is coherent by construction, we get that $s_{12}s_{23}s_{34}$ is coherent. But by repeated instances of Proposition \ref{P: composition definable} (exactly as in Lemma \ref{L: slopes in gp con are coherent}), we have $\overline s\in\acl_{\mathcal M}(s_{12}s_{23}s_{34})$. So $\overline s$ is coherent.
\end{proof}

Now, since each $s_{ij}$ is interalgebraic with $\alpha_{ij}$, Lemma \ref{L: gp con in K} holds with each $\overline\alpha$ replaced with $\overline s$. Since $\overline s$ is coherent, all of the statements in Lemma \ref{L: gp con in K} hold with rank instead of dimension. Thus, $\overline s$ is a rank 1 group configuration in $\mathcal M$ over $x_1x_2x_3x_4$, and thus, by Fact \ref{F: gp con}, $\mathcal M$ interprets a strongly minimal group. 

If we only wanted to find a group, we could stop here. However, let us now apply Fact \ref{F: gp con} more carefully in order to verify the extra conditions of Proposition \ref{P: group from codes}. First, we define:

\begin{itemize}
    \item Let $\overline\beta\overline t$ be an independent realization of $\operatorname{tp}_{\mathcal K}(\overline\alpha\overline s/\acl(x_1x_2x_3x_4))$ over $\overline\alpha\overline s$ (by this we mean that for $i<j$ we have distinguished tuples $\beta_{ij}$ and $t_{ij}$).
    \item Let $A=\overline tx_1x_2x_3x_4$.
    \item Let $y=\rho(x_4)$.
    \item For $i,j\in\{1,2,3\}$ with $i<j$, let $f_{ij}=\beta_{j4}\circ\alpha_{ij}\circ\beta_{i4}^{-1}$.
\end{itemize}
We now check:

\begin{lemma}\label{L: power series at point facts}
\begin{enumerate}
    \item $y$ is generic in $K$ over $\emptyset$.
    \item $y$ is $\mathcal K(A)$-definable.
    \item Each $\alpha_{ij}$ has the same $(n_0-1)$-th truncation as $\beta_{ij}$
    \item Each $f_{ij}$ is a morphism from $y$ to $y$ in $\mathcal{IA}_{n_0}$.
    \item The $(n_0-1)$-th truncation of each $f_{ij}$ is the identity at $y$.
    \item Each $f_{ij}$ is interalgebraic with $s_{ij}$ over $A$.
    \item $f_{13}=f_{23}\circ f_{12}$.
    \item $\overline s$ is a rank 1 group configuration in $\mathcal M$ over $A$.
\end{enumerate}
\end{lemma}
\begin{proof}
    \begin{enumerate}
        \item Clear, because $y$ and $x_4$ are interalgebraic.
        \item Clear because $\rho$-is $\0$-definable. 
        \item By the minimality of $n_0$, the $(n_0-1)$-th truncation of $\alpha_{ij}$ is contained in $\acl(x_1x_2x_3x_4)$, and $\alpha_{ij}$ and $\beta_{ij}$ realize the same type over $\acl(x_1x_2x_3x_4)$.
        \item Clear, because $f_{ij}$ is the composition of $\beta_{i4}^{-1}:\rho(x_4)\rightarrow\rho(x_i)$, $\alpha_{ij}:\rho(x_i)\rightarrow\rho(x_j)$, and $\beta_{j4}:\rho(x_j)\rightarrow\rho(x_4)$.
        \item By (3), $\beta_{i4}$ and $\beta_{j4}$ have the same $(n_0-1)$-th truncations as $\alpha_{i4}$ and $\alpha_{j4}$, respectively. So, the $(n_0-1)$-th truncation of $f_{ij}=\beta_{j4}\circ\alpha_{ij}\circ\beta_{i4}^{-1}$ is the same as that of $$\alpha_{j4}\circ\alpha_{ij}\circ\alpha_{i4}^{-1}=\alpha_{j4}\circ\alpha_{ij}\circ(\alpha_{j4}\circ\alpha_{ij})^{-1}$$ $$=\alpha_{j4}\circ\alpha_{ij}\circ\alpha_{ij}^{-1}\alpha_{j4}^{-1}=\mathrm{id}:y\rightarrow y.$$
        \item Clearly, $f_{ij}$ is interdefinable with $\alpha_{ij}$ over $\overline\beta$. Since $\overline\beta\overline t$ and $\overline\alpha\overline s$ realize the same $\mathcal K$-type over $x_1x_2x_3x_4$ (and by definition of codes), it follows that $\overline\beta$ is interalgebraic with $\overline tx_1x_2x_3x_4=A$. Thus $f_{ij}$ is interalgebraic with $\alpha_{ij}$ over $A$. Finally, again by definition of codes, $\alpha_{ij}$ is interalgebraic with $s_{ij}$ over $A$. Thus, $f_{ij}$ is interalgebraic with $s_{ij}$ over $A$.
        \item We have $$f_{23}\circ f_{12}=(\beta_{34}\circ\alpha_{23}\circ\beta_{24}^{-1})\circ(\beta_{24}\circ\alpha_{12}\circ\beta_{14}^{-1})$$ $$=\beta_{34}\circ(\alpha_{23}\circ\alpha_{12})\circ\beta_{14}^{-1}=\beta_{34}\circ\alpha_{13}\circ\beta_{14}^{-1}=f_{13}.$$
        \item By construction (see Lemma \ref{L: coherent preservation}(4)), $A\overline s$ is coherent, and $A$ is $\mathcal M$-independent from $\overline s$ over $x_1x_2x_3x_4$. So since $\overline s$ is a rank 1 group configuration in $\mathcal M$ over $x_1x_2x_3x_4$, the same holds over $A$.
    \end{enumerate}
\end{proof}

Now using Lemma \ref{L: power series at point facts}(8), we can apply Fact \ref{F: gp con} to $\overline s$ and $A$. We obtain:

\begin{lemma}\label{L: gp con app} There are
\begin{itemize}
    \item A parameter set $B\supset A$,
    \item An $\mathcal M(B)$-interpretable group $G$ which is strongly minimal as an interpretable set in $\mathcal M$, and
    \item For each $i,j\in\{1,2,3\}$ with $i<j$, an $\mathcal M$-generic element $g_{ij}\in G$ over $B$,
\end{itemize}
such that:
\begin{enumerate}
    \item $g_{12}$ and $g_{23}$ are $\mathcal M$-independent over $B$,
    \item $g_{13}=g_{23}\cdot g_{12}$, and
    \item For each $i,j\in\{1,2,3\}$ with $i<j$, $g_{ij}$ and $s_{ij}$ are $\mathcal M$-interalgebraic over $B$.
\end{enumerate}
\end{lemma}

We can thus add to our above list of conclusions:
\begin{lemma}\label{L: group slope interalg}
Each $g_{ij}$ is interalgebraic with $f_{ij}$ over $B$.
\end{lemma}
\begin{proof}
     Note that $s_{ij}$ is interalgebraic with each of them over $B$ (by Lemma \ref{L: gp con app}(3) and Lemma \ref{L: power series at point facts}(6)).
\end{proof}

At this point, Lemmas \ref{L: power series at point facts} and \ref{L: group slope interalg} cover all of the requirements of Proposition \ref{P: group from codes}, with one exception: we need $g_{12}$ and $g_{23}$ to be independent generics in $G$ over $B$, and we only know they are $\mathcal M$-independent $\mathcal M$-generics. To remedy this, note that all of our initial assumptions on $B$ and the $g_{ij}$ (i.e. those in Lemma \ref{L: gp con app}) are expressed solely in terms of the structure $\mathcal M$. So Lemmas \ref{L: gp con app} and \ref{L: group slope interalg} will still hold after replacing $Bg_{12}g_{23}g_{13}$ with any tuple realizing the same $\mathcal M$-type over $A$. Now by Lemma \ref{L: coherent preservation}(6), there is one such tuple which is coherent over $A$ (in case $B$ is infinite, we interpret coherence as saying that all finite subsets are coherent; in this case, enumerate $B$ and apply Lemma \ref{L: coherent preservation}(6) inductively). Then, by coherence, we can transfer $\mathcal M$-genericity to full genericity (i.e. Lemma \ref{L: coherent preservation}(1)). In particular, $g_{12}$ and $g_{23}$ are now independent generics in $G$ over $B$, as desired.
\end{proof}

\subsection{Detecting Tangency and Finding Codes}

We now turn to the task of coding $n_0$-slopes. The following is an analog of the notion of `detecting generic non-transversalities' from \cite{CasACF0}:

\begin{definition}\label{D: detects tangency} $\mathcal M$ \textit{detects tangency} if whenever $(x,y)\in M^2$ is generic, and $\alpha$ is a non-algebraic coherent $n$-slope at $(x,y)$ for some $n$, then any two coherent representatives of $\alpha$ are $\mathcal M$-dependent over $(x,y)$.
\end{definition}

We will show that if $\mathcal M$ detects tangency, every coherent $n_0$-slope has a code, and thus by Proposition \ref{P: group from codes}, $\mathcal M$-interprets a strongly minimal group. In \cite{CasACF0}, one has a clear construction of codes: indeed, in ACF$_0$, a rank 2 family of plane curves realizes every non-algebraic coherent $n_0$-slope finitely many times, and one can code a slope using its finite set of occurrences in any such family. However, in the current setting, there is no reason that a non-algebraic coherent slope must occur in a rank 2 family. So we might have an infinite set of occurrences in a larger family, any two of which are $\mathcal M$-dependent by assumption, and we want to `code' this infinite set. The proper tool for doing such a thing is canonical bases. As it turns out, we will code each slope using the canonical base of the dependence between two independent occurrences of that slope in a family.

\begin{proposition}\label{P: codes exist} If $\mathcal M$ detects tangency, then every coherent $n_0$-slope has a code.
\end{proposition}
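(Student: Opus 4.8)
The plan is to fix a non-algebraic coherent $n_0$-slope $\alpha$ at a generic point $(x,y)\in M^2$ (genericity is available by Lemma \ref{L: the point is generic}(1)) and construct a code $s$ from the structure $\mathcal{M}$ by exploiting the hypothesis of tangency detection together with canonical bases. First I would assemble an almost faithful $\mathcal{M}$-definable family of non-trivial plane curves realizing $\alpha$: starting from a coherent $n_0$-slope configuration $(x,y,C,c,\alpha)$, use Lemma \ref{L: standard families exist} (noting $c$ is coherent of rank at least $1$ — indeed, by Lemma \ref{L: slopes 1 dimensional}(2) together with $\dim(\alpha)\geq 1$, the rank of $c$ is at least $2$) to obtain a standard family $\mathcal{C}=\{C_t:t\in T\}$ over $\emptyset$ and a generic $t_0\in T$ with $C$ almost contained in $C_{t_0}$. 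By the uniform definability of slopes (Definition \ref{D: definable slopes}(4)) and Lemma \ref{L: slopes definable}, the $n_0$-slope of $C_t$ at its generic points is controlled definably, so the set of parameters $t\in T$ such that $C_t$ realizes $\alpha$ at $(x,y)$ is $\mathcal{K}(\alpha xy)$-definable; call $P_\alpha$ the $\mathcal{M}$-definable (using interalgebraicity of $\alpha$ with a coherent representative) locus of such parameters. The key point is that $P_\alpha$ is the set of coherent representatives of $\alpha$ that come from the family, and it is infinite precisely when $\alpha$ occurs in $\mathcal{C}$ more than finitely often.

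Next I would take two independent realizations $t_1,t_2$ of the generic type of $P_\alpha$ over $xy$ (independent over $\alpha$, equivalently over $xy$ since $\alpha$ and $xy$ are interalgebraic over any coherent representative by Lemma \ref{L: interdefinability}(3)). Each $t_i$ is a coherent representative of $\alpha$. By the hypothesis that $\mathcal{M}$ detects tangency (Definition \ref{D: detects tangency}), $t_1$ and $t_2$ are $\mathcal{M}$-dependent over $(x,y)$. Let $q=\operatorname{tp}_{\mathcal{M}}(t_1t_2/\operatorname{acl}_{\mathcal M}(xy\alpha))$ — or more carefully, work with the $\mathcal M$-type of the pair $(t_1,t_2)$ over $xy$ — and set $s=\operatorname{Cb}(q)$, the canonical base of the forking dependence. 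The plan is to verify the three clauses of Definition \ref{D: code} for this $s$: clause (3), interalgebraicity of $\alpha$ and $s$ over $\mathcal K$, should follow because $s$ is coded from the $\mathcal M$-definable datum $(P_\alpha, t_1, t_2)$ which is controlled by $\alpha$, while conversely $\alpha$ is recovered from generic elements of $P_\alpha$ and hence from $s$ via the canonical base's control of the forking type; clause (1), $(x,y)\in\operatorname{acl}_{\mathcal M}(s)$, should follow since $s$ determines $\alpha$ up to finite choice and $\operatorname{acl}(\alpha)\ni(x,y)$ by Lemma \ref{L: interdefinability}(1), transferred to $\operatorname{acl}_{\mathcal M}$ using coherence; clause (2), $s\in\operatorname{acl}_{\mathcal M}(cxy)$ for every coherent representative $c$ of $\alpha$, should follow because from $c$ and $(x,y)$ one recovers $\alpha$ (Lemma \ref{L: interdefinability}(2)), hence recovers $P_\alpha$ and the $\mathcal M$-type $q$, hence $s$.

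The main obstacle I expect is making the interalgebraicity in clause (3) genuinely symmetric and finite-to-one: it is easy to see that $s\in\operatorname{acl}_{\mathcal M}(\alpha xy)\subseteq\operatorname{acl}_{\mathcal K}(\alpha)$ (using that $(x,y)\in\operatorname{acl}(\alpha)$), but recovering $\alpha$ from $s$ requires a dimension/rank bookkeeping argument showing that $s$ carries full information about $\alpha$ — one must rule out that passing to the canonical base of the dependence loses a positive-dimensional amount of data about $\alpha$. This is where the precise rank computations of Lemma \ref{L: slopes 1 dimensional} ($\dim(\alpha/xy)=1$, and $\dim(c/\alpha)=\dim(c/xy)-1$ for coherent representatives $c$) will be needed: they force $P_\alpha$ to have the right dimension over $xy$ for the dependence between two generic independent elements to pin down $\alpha$ up to finitely many possibilities. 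A secondary technical point is the transfer between $\operatorname{acl}$ (in the sense of $\mathcal K$) and $\operatorname{acl}_{\mathcal M}$ throughout; this is handled by Lemma \ref{L: coherent preservation}, using that all the tuples involved ($c$, $t_i$, $\alpha$ via its representatives, and $xy$) are coherent by construction, but it needs to be tracked carefully at each step.
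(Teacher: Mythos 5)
Your overall strategy---use detection of tangency to force an $\mathcal M$-dependence between two independent coherent representatives and then code via a canonical base---is indeed the idea of the paper's proof, but two steps in your execution fail as written. First, the code itself: the canonical base of $\operatorname{stp}_{\mathcal M}(t_1t_2/xy)$ (the $\mathcal M$-type of the \emph{pair} over $xy$) always lies in $\acl_{\mathcal M}(xy)$, so for a non-algebraic slope, where $\dim(\alpha/xy)=1$ by Lemma \ref{L: slopes 1 dimensional}, it cannot be $\mathcal K$-interalgebraic with $\alpha$ and clause (3) of Definition \ref{D: code} is unreachable with this $s$. (Relatedly, ``$\tp_{\mathcal M}(t_1t_2/\acl_{\mathcal M}(xy\alpha))$'' is ill-formed since $\alpha\notin\mathcal M^{\mathrm{eq}}$, and your claim that the locus $P_\alpha$ is $\mathcal M$-definable is unjustified: it is only $\mathcal K$-type-definable over $\alpha xy$, and making slope data visible to $\mathcal M$ is precisely the difficulty.) The object that works is the canonical base of \emph{one representative over the other together with the point}, $b=\operatorname{Cb}(\operatorname{stp}_{\mathcal M}(c_2/c_1xy))$ with $c_2$ an independent $\mathcal K$-conjugate of $c_1$ over $\alpha$, and then $s=bxy$; appending $xy$ also gives clause (1) for free, whereas your route to (1) runs into the same $\acl_{\mathcal K}$-versus-$\acl_{\mathcal M}$ transfer problem noted below. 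No auxiliary standard family is needed at all.

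Second, and more seriously, clause (2) --- $s\in\acl_{\mathcal M}(cxy)$ for \emph{every} coherent representative $c$ --- is the heart of the proposition and your plan does not deliver it. ``Recovering $\alpha$, hence $P_\alpha$, hence $q$, hence $s$'' from $cxy$ can at best yield recovery in the sense of $\mathcal K$, i.e.\ $s\in\acl_{\mathcal K}(cxy)$; converting this to $\mathcal M$-algebraicity via Lemma \ref{L: coherent preservation} would require knowing that $s$ is coherent \emph{over $cxy$}, which by Lemma \ref{dim rk comparison} is essentially the assertion $\rk(s/cxy)=0$ that you are trying to prove, so that route is circular. The paper closes this gap with a parallelism argument: after replacing $c_2$ by a conjugate independent from $c$ over $c_1xy$ (which does not change $b$), one shows that $\operatorname{stp}_{\mathcal M}(c_2/cxy)$ is parallel to $\operatorname{stp}_{\mathcal M}(c_2/c_1xy)$, so both have canonical base $b$, which is then automatically $\mathcal M$-algebraic over $cxy$; this uses Lemmas \ref{L: tangent implies coherent} and \ref{L: two curves dont decrease rank}, whose proofs themselves invoke detection of tangency, the minimality of $n_0$, and the computations of Lemma \ref{L: slopes 1 dimensional} and Lemma \ref{L: interdefinability}. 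Some argument of this kind is indispensable and is missing from your proposal. Two smaller points: the proposition concerns \emph{all} coherent $n_0$-slopes, and the algebraic ones (coded trivially by $xy$) are omitted from your plan; and your intermediate claims that the $t_i$ are coherent representatives (stationarity, $t_i=\operatorname{Cb}(C_{t_i})$, coherence, genericity of $(x,y)$ over $t_i$) would each need verification if one insisted on the family setup.
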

\begin{proof} The idea is to use the detection of tangency to $\mathcal M$-definably organize the curves in a family by slope, and then take the `equivalence classes' (i.e. canonical bases) to be the codes. For this to work, we need to know that the $\mathcal M$-dependence between coherent representatives in Definition \ref{D: detects tangency} is not `coarser' than the tangency relation, so that the canonical bases we take are genuinely capturing curves up to tangency. This will work because we are only considering one-dimensional families of slopes (i.e. by the minimality of $n_0$). The precise statements we need are given in Lemmas \ref{L: tangent implies coherent} and \ref{L: two curves dont decrease rank} below:

\begin{lemma}\label{L: tangent implies coherent} Let $\alpha$ be a coherent $n_0$-slope at $(x,y)$. Let $c_1$ and $c_2$ be coherent representatives of $\alpha$, and assume that $c_1$ and $c_2$ are independent over $\alpha$. Then $c_2$ is coherent over $c_1xy$.
\end{lemma}
\begin{proof} If $\alpha\in\acl(xy)$, then $c_1$ and $c_2$ are independent over $xy$. By assumption, each $c_i$ is coherent over $xy$, so it follows that $c_1c_2$ is coherent over $xy$, making the lemma obvious. Thus, we assume $\alpha$ is not algebraic.

By Lemma \ref{dim rk comparison}, $\dim(c_2/c_1xy)\leq\rk(c_2/c_1xy)$. We prove the reverse inequality.

Since $\mathcal M$ detects tangency, $\rk(c_2/c_1xy)\leq\rk(c_2/xy)-1$. By assumption $c_2xy$ is coherent, so $\rk(c_2/xy)-1=\dim(c_2/xy)-1$. By Lemma \ref{L: slopes 1 dimensional}, $\dim(c_2/xy)-1=\dim(c_2/\alpha)$. Since the $c_i$ are independent over $\alpha$, $\dim(c_2/\alpha)=\dim(c_2/c_1\alpha)$. By Lemma \ref{L: interdefinability}, $\dim(c_2/c_1\alpha)=\dim(c_2/c_1xy)$.

Putting together everything in the last paragraph gives $\
rk(c_2/c_1xy)\leq\dim(c_2/c_1xy)$, which proves the lemma.
\end{proof}

\begin{lemma}\label{L: two curves dont decrease rank} Let $\alpha$ be a coherent $n_0$-slope at $(x,y)$. Let $c$, $c_1$, and $c_2$ be coherent representatives of $\alpha$, and assume $c$ is independent from $c_1c_2$ over $\alpha$. Then $\operatorname{stp}_{\mathcal M}(c/c_1xy)$ and $\operatorname{stp}_{\mathcal M}(c/c_2xy)$ are parallel (i.e. have a common non-forking extension). 
\end{lemma}
\begin{proof} By transitivity, we have (1) $c$ is independent from $c_1$ over $\alpha$, and (2) $c$ is independent from $c_2$ over $c_1\alpha$ -- equivalently, by Lemma \ref{L: interdefinability}, over $c_1xy$. By (1) and Lemma \ref{L: tangent implies coherent}, $c$ is coherent over $c_1xy$. So, by (2) and Lemma \ref{dim rk comparison}(4), $c$ is $\mathcal M$-independent from $c_2$ over $c_1xy$. Thus, $\operatorname{stp}_{\mathcal M}(c/c_1c_2xy)$ is a non-forking extension of $\operatorname{stp}_{\mathcal M}(c/c_1xy)$. By a symmetric argument, $\operatorname{stp}_{\mathcal M}(c/c_1c_2xy)$ is also a non-forking extension of $\operatorname{stp}_{\mathcal M}(c/c_2xy)$.
\end{proof}

We now prove the proposition. Let $\alpha$ be a coherent $n_0$-slope at $(x,y)$. Let $c_1$ be a coherent representative of $\alpha$. Let $c_2$ be an independent realization of $\operatorname{tp}_{\mathcal K}(c_1/\alpha)$ over $c_1$. Let $p=\operatorname{stp}_{\mathcal M}(c_2/c_1xy)$, and let $b=\operatorname{Cb}(p)$. We show that $s=bxy$ is a code of $\alpha$, by verifying (1), (2), and (3) in Definition \ref{D: code}:

\begin{enumerate}
    \item It is obvious that $(x,y)\in\acl_{\mathcal M}(s)$, since $(x,y)$ is included in $s$.
    \item Let $c$ be a coherent representative of $\alpha$. We show that $s\in\acl_{\mathcal M}(cxy)$. It is enough to show that $b\in\acl_{\mathcal M}(cxy)$.
    
    By replacing $c_2$ with an independent realization if necessary, we may assume that $c_2$ is independent from $c$ over $c_1xy$ (note that this does not change $b$). In particular, this easily implies that $c_2$ is independent from $c_1c$ over $\alpha$. Then by Lemma \ref{L: two curves dont decrease rank}, $q=\operatorname{stp}_{\mathcal M}(c_2/cxy)$ is parallel to $p$, so has canonical base $b$. Since $q$ is a type over $cxy$, this implies that $b\in\acl_{\mathcal M}(cxy)$.
    
    \item We now have to show that $s$ and $\alpha$ are interalgebraic. First, by (2) and Lemma \ref{L: interdefinability}, we have $$s\in\acl(c_1\alpha)\cap\acl(c_2\alpha).$$ Since the $c_i$ are independent over $\alpha$, this implies $s\in\acl(\alpha)$.
    
    Next, we show that $s$ is coherent. Indeed, since $p$ is a type over $c_1xy$, we have $b\in\acl_{\mathcal M}(c_1xy)$. But $c_1xy$ is coherent by assumption, thus so is $c_1bxy$, and thus so is $s$.

    Now toward a contradiction, assume that $\alpha\notin\acl(s)=\acl(bxy)$. By Lemma \ref{L: slopes 1 dimensional}, $\dim(\alpha/xy)=1$. So if $s\in\acl(\alpha)$ but $\alpha\notin\acl(s)$, then $b\in\acl(xy)$. Since $s$ is coherent, this implies $b\in\acl_{\mathcal M}(xy)$. Thus $p$ does not fork over $xy$, and thus $c_1$ and $c_2$ are $\mathcal M$-independent over $xy$. This contradicts that $\mathcal M$ detects tangency.
    \end{enumerate}
\end{proof}

\subsection{Tangent and Multiple Intersections}

We now give conditions under which $\mathcal M$ detects tangency. The idea is to require that an `unusually high slope agreement' between plane curves is always a `topological multiple intersection.' This is analogous to similar facts regarding intersection multiplicity that were utilized in past trichotomy papers (e.g. Bezout's theorem in \cite{Ra} and the argument principle/Rouche's theorem in \cite{MaPi}).

In characteristic $p$ environments, it is possible for any two curves in a family to be first-order tangent at all of their intersection points. However, under mild assumptions, there is always a larger $n$ so that no two independent generic curves are $n$th order tangent at a point. For this reason, the notion of `unusually high' slope agreement is slightly complicated. The informal idea is that for some $n$, the two curves should have the same $n$-slope even though most nearby pairs of curves do not. To make this precise, we will use the machinery of \textit{weakly generic intersections} developed in section 6. We will specifically work with \textit{families of correspondences in $K$}:

\begin{definition} A \textit{$\mathcal K(A)$-definable family of correspondences in $K$} is a $\mathcal K(A)$-definable family $\mathcal X=\{X_t:t\in T\}$ of subsets of $K^2$, such that each $X_t\subset K^2$ has dimension 1 and projects finite-to-one to both copies of $K$.
\end{definition}

In addition to the terminology on weakly generic intersections developed in section 6, we need the following notions involving slopes:

\begin{definition} Let $\mathcal X=\{X_t:t\in T\}$ and $\mathcal Y=\{Y_u:u\in U\}$ be $\mathcal K(A)$-definable families of correspondences in $K$.
\begin{enumerate}
        \item If $n\in\mathbb N$ and $V\subset K^2\times T\times U$ is any open set, we say that $\mathcal X$ and $\mathcal Y$ have \textit{$n$-branching in $V$ over $A$} if for all strongly generic $(\mathcal X,\mathcal Y)$-intersections $(x,y,t,u)$ over $A$, if $(x,y,t,u)\in V$ then $\alpha_{xy}^n(X_t)\neq\alpha_{xy}^n(Y_u)$.
        \item A \textit{generic $(\mathcal X,\mathcal Y)$-tangency over $A$} is a tuple $(x,y,t,u,n)$ such that $(x,y,t,u)$ is a strongly approximable weakly generic $(\mathcal X,\mathcal Y)$-intersection, $n\in\mathbb N$, $\alpha_{xy}^n(X_t)=\alpha_{xy}^n(Y_u)$, and $X$ and $Y$ have $n$-branching in a neighborhood of $(x,y,t,u)$.
    \end{enumerate}
\end{definition}

\begin{definition}\label{D: TIMI}
    $(\mathcal K,\tau,\{\mathcal{IA}_n\},\{T_{mn}\})$ satisfies \textit{TIMI} (`tangent intersections are multiple intersections') if the following holds: suppose $\mathcal X=\{X_t:t\in T\}$ and $\mathcal Y=\{Y_u:u\in U\}$ are $\mathcal K(A)$-definable families of correspondences in $K$. If $(x,y,t,u,n)$ is a generic $(\mathcal X,\mathcal Y)$-tangency over $A$, then $(x,y,t,u)$ is a generic multiple $(\mathcal X,\mathcal Y)$-intersection over $A$. 
\end{definition}

\begin{remark} Recall that at this moment we are working with fixed definable slopes on $(\mathcal K,\tau)$. In the future, after dropping this assumption, we will use the phrase `$(\mathcal K,\tau)$ has definable slopes satisfying TIMI' to mean that $(\mathcal K,\tau)$ has definable slopes $\{\mathcal{IA}_n\}$ and $\{T_{mn}\}$ so that $(\mathcal K,\tau,\{\mathcal{IA}_n\},\mathcal T_{mn})$ satisfies TIMI.
\end{remark}

Recall the notion of detection of multiple intersections, Definition \ref{D: detects noninjectivities}. In those terms, our main result is:

\begin{theorem}\label{T: detection of tangency} Assume that $(\mathcal K,\tau,\{\mathcal{IA}_n\},\{T_{mn}\})$ satisfies TIMI, and $\mathcal M$ detects multiple intersections. Then $\mathcal M$ detects tangency. In particular, $\mathcal M$ interprets a strongly minimal group.
\end{theorem}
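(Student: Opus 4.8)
The plan is to assemble the pieces that have been set up throughout Section~\ref{S: ACVF}, in the following order. First, observe that the hypothesis ``$\mathcal M$ detects multiple intersections'' together with ``$(\mathcal K,\tau)$ satisfies TIMI'' is exactly the input needed for Proposition~\ref{P: codes exist}: indeed, by Definition~\ref{D: detects tangency}, to show $\mathcal M$ detects tangency I must take a generic $(x,y)\in M^2$ and a non-algebraic coherent $n$-slope $\alpha$ at $(x,y)$ (for \emph{some} $n$, not necessarily $n_0$), and prove that any two coherent representatives of $\alpha$ are $\mathcal M$-dependent over $(x,y)$. So the first and main step is: given two coherent representatives $c_1,c_2$ of $\alpha$ that are $\mathcal M$-independent over $(x,y)$, I would build $\mathcal K(A)$-definable families of correspondences $\mathcal X$ and $\mathcal Y$ in $K$ witnessing $c_1$ and $c_2$ respectively (via the curves $\rho(C_1)$, $\rho(C_2)$ in $K^2$ coming from the coherent slope configurations, organized into standard families as in Lemma~\ref{L: standard families exist} and pushed forward by $\rho$), arrange that $(\rho(x),\rho(y),c_1,c_2)$ is a strongly approximable weakly generic $(\mathcal X,\mathcal Y)$-intersection, and check that $\mathcal X,\mathcal Y$ have $n$-branching near this point --- the latter using precisely that $n$ can be taken large enough that independent generic curves are not $n$th-order tangent (this is where the minimality considerations around $n_0$ and the inverse-limit clause of Definition~\ref{D: definable slopes}(2) enter). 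Since $\alpha_{xy}^n$ of both curves equals $\alpha$, this produces a generic $(\mathcal X,\mathcal Y)$-tangency; TIMI then promotes it to a generic multiple $(\mathcal X,\mathcal Y)$-intersection; and detection of multiple intersections (Definition~\ref{D: detects noninjectivities}, applicable after transporting back to standard families of plane curves in $\mathcal M$) forces the parameters --- i.e.\ $c_1$ and $c_2$ --- to be $\mathcal M$-dependent over the relevant base, contradicting our assumption. This is the content needed to invoke Proposition~\ref{P: codes exist}.

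Second, with $\mathcal M$ detecting tangency in hand, Proposition~\ref{P: codes exist} gives that \emph{every coherent $n_0$-slope has a code}. Third, feed this into Proposition~\ref{P: group from codes}: its hypothesis is exactly ``every coherent $n_0$-slope has a code,'' and its conclusion produces a parameter set $B$ and an $\mathcal M(B)$-interpretable group $(G,\cdot)$ that is strongly minimal as an interpretable set in $\mathcal M$ (along with the extra group-configuration data we do not need here). That is precisely the ``$\mathcal M$ interprets a strongly minimal group'' clause of the theorem, so the proof concludes.

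The main obstacle will be the first step: verifying that the configuration $(\rho(x),\rho(y),c_1,c_2)$ genuinely fits the framework of weakly generic intersections and, in particular, establishing the $n$-branching condition. The subtlety is that in characteristic $p$ two independent generic curves in a family may be first-order tangent --- indeed tangent to some fixed finite order --- at all their intersection points, so one cannot simply take $n=n_0$ or even a fixed $n$ uniformly; one must choose $n$ (depending on the families) large enough that the set of strongly generic intersections which remain $n$th-order tangent is non-generic, hence avoidable in a neighborhood. This is exactly the role the inverse-limit property of Definition~\ref{D: definable slopes}(2) plays (it guarantees germs are separated by their truncations, so sufficiently high-order agreement is a closed condition of positive codimension), and checking that the resulting $n$-branching holds \emph{in a neighborhood} of our point --- not merely at it --- requires a uniform-definability argument using Definition~\ref{D: definable slopes}(4). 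Once $n$-branching is secured, the strong approximability of the intersection follows from the Baire category axiom and the machinery of Lemma~\ref{L: generic intersections exist}-style arguments applied to the families of correspondences, and the rest is a bookkeeping translation between $K$-side correspondences and $\mathcal M$-side plane curves via $\rho$. The secondary technical point to be careful about is keeping track of parameter sets (the base $A$ over which the families are defined versus the base $(x,y)$ in the detection-of-tangency statement), but Lemma~\ref{* preservation}-type invariance and the interdefinability facts of Lemma~\ref{L: interdefinability} handle this cleanly.
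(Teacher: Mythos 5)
Your overall architecture is the right one (realize the two coherent representatives on standard families, push forward by $\rho$, manufacture a generic tangency, apply TIMI, then apply detection of multiple intersections, and finally chain Proposition~\ref{P: codes exist} into Proposition~\ref{P: group from codes} to get the group), but the mechanism you propose for the key step --- securing $n$-branching --- does not work. In Definition~\ref{D: detects tangency} you are handed a \emph{specific} non-algebraic coherent $n$-slope $\alpha$ at $(x,y)$, and the only tangency data you possess is agreement of the two curves' slopes at that level $n$: $\alpha_{xy}^n(C_1)=\alpha_{xy}^n(C_2)=\alpha$. If you now enlarge $n$ to some $n'$ ``large enough that independent generic curves are not $n'$th-order tangent,'' you lose the hypothesis needed for a generic $(\mathcal X,\mathcal Y)$-tangency: nothing guarantees $\alpha_{xy}^{n'}(C_1)=\alpha_{xy}^{n'}(C_2)$, so TIMI has nothing to bite on. The paper resolves this differently and at the given $n$: branching is \emph{derived from the non-algebraicity of $\alpha$ itself}. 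Since $\dim(\alpha/xy)\geq 1$, uniform definability of slopes (Definition~\ref{D: definable slopes}(4)) together with definability of dimension yields a definable neighborhood of $(x_0,y_0,t_0)$ on which the slope of $X_t$ at the intersection point still has dimension $\geq 1$ over $xy$; at a strongly generic intersection in that neighborhood, $t$ and $u$ are independent over $xy$, hence the two $n$-slopes are independent over $xy$, hence distinct (one being non-algebraic). The characteristic-$p$ phenomenon you worry about (everywhere first-order tangent families) is exactly the \emph{algebraic}-slope situation, which is excluded by the non-algebraicity hypothesis --- and the inverse-limit clause of Definition~\ref{D: definable slopes}(2) is not used here at all; in this paper it is only invoked in Lemma~\ref{L: non-algebraic slopes exist}.

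A second, smaller gap: detection of multiple intersections gives $\mathcal M$-dependence of the family parameters $t_0,u_0$ (hence of $c_1,c_2$) over the base of the standard families, i.e.\ over $\emptyset$ after absorbing parameters, whereas Definition~\ref{D: detects tangency} demands dependence over $(x_0,y_0)$. Dependence over a smaller set does not transfer to dependence over a larger set by ``interdefinability bookkeeping''; the paper closes this by a separate rank computation: assuming $\mathcal M$-independence of $c_1,c_2$ over $x_0y_0$, one computes $\rk(x_0y_0/c_1c_2)>0$, uses that $c_1,c_2$ are canonical bases of strongly minimal curves through the generic point $(x_0,y_0)$ to force $c_1=c_2$, and then derives $d_1=1$, contradicting $d_1\geq 2$ (which itself comes from non-algebraicity via Lemma~\ref{L: the point is generic}). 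Your final assembly --- detects tangency, then Proposition~\ref{P: codes exist}, then Proposition~\ref{P: group from codes} --- is correct and is exactly how the paper concludes, but the two steps above need to be repaired as indicated for the proof of the first clause to go through.
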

\begin{proof}
    Let $\alpha_0$ be a non-algebraic coherent $n$-slope at $(x_0,y_0)$, and let $(x_0,y_0,C_1,c_1,\alpha_0)$ and $(x_0,y_0,C_1,c_1,\alpha_0)$ be coherent $n$-slope configurations. We want to show that $c_1$ and $c_2$ are $\mathcal M$-dependent over $(x_0,y_0)$. We do this by realizing the $C_i$ as generic members of $\mathcal M$-definable families of plane curves, $Z_{t_0}\in\mathcal Z$ and $W_{u_0}\in\mathcal W$; we then apply TIMI to these families (or rather, their images through our fixed map $\rho:M\rightarrow K$, which we call $\mathcal X$ and $\mathcal Y$, respectively). The idea is that the non-algebraicity of $\alpha_0$ forces each of $\mathcal X$ and $\mathcal Y$ to realize infinitely many slopes at $(x_0,y_0)$, which forces $(x_0,y_0,t_0,u_0,n)$ to be a generic $(\mathcal X,\mathcal Y)$ tangency. By TIMI, we conclude that $(x_0,y_0,t_0,u_0)$ is a generic multiple $(\mathcal X,\mathcal Y)$-intersection; then since $\mathcal M$ detects multiple intersections, we get the desired dependence.

    Let us proceed. Let $d_i=\rk(c_i)$ for $i=1,2$. By Lemma \ref{L: the point is generic}, $(x_0,y_0)$ is generic in $M^2$, and each $\dim(c_i/x_0y_0)=d_i-1$. It follows that $d_i\geq 2$: indeed, otherwise $\dim(c_i/x_0y_0)=0$, implying that $\dim(\alpha/x_0y_0)=0$ and contradicting that $\alpha$ is non-algebraic.
    
    Now by Lemma \ref{L: standard families exist}, we can realize each $C_i$ on a generic curve in a standard family (see Definition \ref{D: excellent} and Lemma \ref{L: standard families exist}). That is, we can find standard families of non-trivial plane curves $\mathcal Z=\{Z_t:t\in T\}$ and $\mathcal W=\{W_u:u\in U\}$ over $\emptyset$, and generic $t_0\in T$ and $u_0\in U$, so that $C_1$ is almost contained in $Z_{t_0}$ and $C_2$ is almost contained in $W_{u_0}$. For each $t\in T$, let $X_t=\rho(Z_t)$. Similarly, for each $u\in U$, let $Y_u=\rho(W_u)$. Then $\mathcal X=\{X_t:t\in T\}$ and $\mathcal Y=\{Y_u:u\in U\}$ are $\mathcal K(\emptyset)$-definable families of correspondences in $K$. Note that $\dim(T)=d_1$ and $\dim(U)=d_2$.
    
    We want to apply TIMI to $\mathcal X$ and $\mathcal Y$, at the tuple $(\rho(x_0),\rho(y_0),t_0,u_0,n)$. First, note that $(x_0,y_0,t_0,u_0)$ is a weakly generic $(\mathcal Z,\mathcal W)$-intersection over $\emptyset$ -- and by Lemma \ref{L: generic intersections exist}(3) (recalling that $d_i\geq 2$), $(x_0,y_0,t_0,u_0)$ is also strongly approximable over $\emptyset$. It follows easily that $(\rho(x_0),\rho(y_0),t_0,u_0)$ is a strongly approximable weakly generic $(\mathcal X,\mathcal Y)$-intersection over $\emptyset$.

    Next, by the strong frontier inequality, $Z_{t_0}$ and $C_1$ agree in a neighborhood of $(x_0,y_0)$, as do $W_{u_0}$ and $C_2$. So $\alpha_{x_0y_0}^n(Z_{t_0})=\alpha_{x_0y_0}^n(W_{u_0})=\alpha_0$. Finally, we check:

    \begin{claim} $\mathcal X$ and $\mathcal Y$ have $n$-branching in a neighborhood of $(\rho(x_0),\rho(y_0),t_0,u_0)$ over $\emptyset$.
    \end{claim}
    \begin{claimproof}
        By uniform definability of slopes, $\alpha_0\in\acl(t_0x_0y_0)$. Since $\alpha_0$ is not algebraic, this implies $\dim(t_0/\alpha_0)<\dim(t_0/x_0y_0)=\dim(c_1/x_0y_0)=d_1-1$. So $\dim(t_0/x_0y_0)=d_1-1$ and $\dim(t_0/\alpha_0)\leq d_1-2$.
        
        By uniform definability of slopes, and the definability of dimension, we can find a formula $\phi(x,y,t)\in\operatorname{tp}_{\mathcal K}(x_0,y_0,t_0)$ such that whenever $(x,t,y)\in Z$ is generic and $\phi(x,y,t)$ holds, then for $\alpha=\alpha_{xy}^n(Z_t)$, we have $\dim(t/xy)\leq d_1-1$ and $\dim(t/\alpha)\leq d_1-2$. By the genericity of $(x_0,y_0,t_0)$, $\phi$ holds in a neighborhood of $(x_0,y_0,t_0)$, which we may assume has the form $V_1\times V_2$, where $V_1\subset M^2$ and $V_2\subset T$. Shrinking if necessary, we may assume that $\rho$ is a local homeomorphism on $V_1$, so $\rho(V_1)\subset K^2$ is open. Now let $V=\rho(V_1)\times V_2\times U\subset K^2\times T\times U$.

        To prove the claim, we show that $\mathcal X$ and $\mathcal Y$ have $n$-branching in $V$ over $\emptyset$. To do this, let $(\rho(x),\rho(y),t,u)\in V$ be a strongly generic $(\mathcal X,\mathcal Y)$-intersection over $\emptyset$, where $(x,y)\in V_1$. Let $\alpha=\alpha_{\rho(x)\rho(y)}^n(X_t)=\alpha_{xy}^n(Z_t)$.  
        
        Since $\rho$ is finite-to-one, it follows that $(x,y,t)\in Z$ is generic. Thus $\dim(xyt)=\dim(Z)=d_1+1$. But by the choice of $V_1\times V_2$, $\dim(t/xy)\leq d_1-1$, thus $\dim(xy)=2$.
        
        Now by uniform definability of slopes, $$\dim(txy\alpha)=\dim(txy)=d_1+1.$$ Since $\dim(xy)=2$, this implies $\dim(t\alpha/xy)=d_1-1$. But by the choice of $V_1\times V_2$ again, $\dim(t/\alpha xy)\leq d_1-2$. Thus $\dim(\alpha/xy)\geq 1$. In particular, $\alpha$ is not algebraic.
        
        Now let $\beta=\alpha_{xy}^n(W_u)$. So $\alpha\in\acl(txy)$ and $\beta\in\acl(uxy)$. Since $t$ and $u$ are independent over $\rho(x)\rho(y)$, they are clearly also independent over $xy$. Thus, $\alpha$ and $\beta$ are independent over $xy$. In particular, $\alpha\notin\acl(\beta)$, and thus $\alpha\neq\beta$. This proves the claim. 
    \end{claimproof}
    Now, by the claim, we have verified that $(\rho(x_0),\rho(y_0),t_0,u_0,n)$ is a generic $(\mathcal X,\mathcal Y)$-tangency over $\emptyset$. By TIMI, we conclude that $(\rho(x_0),\rho(y_0),t_0,u_0)$ is a generic multiple $(\mathcal X,\mathcal Y)$-intersection over $\emptyset$. Since $\rho$ is a local homeomorphism near $(x_0,y_0)$, it follows that $(x_0,y_0,t_0,u_0)$ is a generic multiple $(\mathcal Z,\mathcal Y)$-intersection over $\emptyset$. But then since $\mathcal M$ detects multiple intersections, we conclude that $t_0$ and $u_0$ are $\mathcal M$-dependent over $\emptyset$. By $\mathcal M$-interalgebraicity, so are $c_1$ and $c_2$.

    Finally, by coherence, we have $\rk(c_1/x_0y_0)=\dim(c_1/x_0y_0)=d_1-1$, and similarly $\rk(c_2/x_0y_0)=d_2-1$. So if $c_1$ and $c_2$ are $\mathcal M$-independent over $x_0y_0$, then $\rk(c_1c_2/x_0y_0)=d_1+d_2-2$. Since $(x_0,y_0)$ is generic in $M^2$, this implies $\rk(c_1c_2x_0y_0)=d_1+d_2$. Since $c_1$ and $c_2$ are $\mathcal M$-dependent over $\emptyset$, $\rk(c_1c_2)<d_1+d_2$, so $\rk(x_0y_0/c_1c_2)>0$. Since $c_1$ and $c_2$ are canonical bases of strongly minimal sets, this implies $c_1=c_2$. But then by the assumed $\mathcal M$-independence of the $c_i$ over $x_0y_0$, $$0=\rk(c_1/c_2x_0y_0)=\rk(c_1/x_0y_0)=d_1-1.$$ So $d_1=1$. This contradicts that $\dim(t_0/\alpha_0x_0y_0)\leq d_1-2$. We conclude that the $c_i$ are independent over $x_0y_0$, proving that $\mathcal M$ detects tangency.

    Finally, since $\mathcal M$ detects tangency, Proposition \ref{P: codes exist} implies that every coherent $n_0$-slope has a code. Then Proposition \ref{P: group from codes} implies that $\mathcal M$ interprets a strongly minimal group.
\end{proof}

\section{Summing Up}\label{S: summing up}

We are now done with the abstract setting, and we drop all data that we have fixed up until now. The rest of the paper concerns concrete examples. Before moving on, let us summarize our work to this point:

\begin{theorem}\label{T: composite main thm} Let $(\mathcal K,\tau)$ be a Hausdorff geometric structure with enough open maps (in particular, it suffices to assume $(\mathcal K,\tau)$ is either differentiable or has the open mapping property). Assume that $(\mathcal K,\tau)$ has ramification purity, and definable slopes satisfying TIMI. Let $\mathcal M=(M,...)$ be a non-locally modular strongly minimal definable $\mathcal K$-relic. Then $\dim(M)=1$, and $\mathcal M$ interprets a strongly minimal group.
\end{theorem}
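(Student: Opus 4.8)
The plan is to compose the four main theorems established in Sections \ref{s: relics}--\ref{S: ACVF}, after first arranging the standing hypotheses of Assumption \ref{A: K and M}. To begin, I would reduce to the case where $(\mathcal K,\tau)$ has \emph{enough open maps}: by Proposition \ref{P: open thm implies open maps} this holds when $(\mathcal K,\tau)$ has the open mapping property, and by Proposition \ref{P: differentiable implies enough opens} it holds when $(\mathcal K,\tau)$ is differentiable, so under either branch of the hypothesis we may assume enough open maps outright.

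Next I would install Assumption \ref{A: K and M}. Since $\mathcal M$ is a definable $\mathcal K$-relic whose language we may take to be countable, Remark \ref{R: ctble expansion} lets us name a countable set of parameters in $\mathcal L$ so that $M$ and all basic relations of $\mathcal M$ become $\emptyset$-definable in $\mathcal K$; naming countably many constants preserves being a Hausdorff geometric structure with enough open maps, and likewise preserves ramification purity and the existence of definable slopes satisfying TIMI. Naming one further element of $M$ if necessary, we may assume $\acl_{\mathcal M}(\emptyset)$ is infinite. Finally, non-local modularity of $\mathcal M$ provides a rank $2$ almost faithful $\mathcal M$-definable family of plane curves, and by the standard manipulations of \cite{CasACF0} such a family can be refined, after naming finitely many more parameters, to a $\emptyset$-definable \emph{excellent} family of plane curves in the sense of Definition \ref{D: excellent}. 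Thus Assumption \ref{A: K and M} is met in the parameter-expanded structure, and since a strongly minimal group interpreted there is interpreted in the original relic, it suffices to work with the expansion.

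With the setup in place the argument is a direct chain. By Theorem \ref{T: closure thm}, enough open maps implies that $\mathcal M$ weakly detects closures. By Theorem \ref{T: detecting ramification}, weak detection of closures implies that $\mathcal M$ detects multiple intersections. By Theorem \ref{T: higher dimensional case}, detection of multiple intersections together with ramification purity of $(\mathcal K,\tau)$ gives $\dim(M)=1$. Now the standing hypothesis $\dim(M)=1$ of Section \ref{S: ACVF} is satisfied, so fixing definable slopes $\{\mathcal{IA}_n\}$, $\{T_{mn}\}$ on $(\mathcal K,\tau)$ witnessing TIMI, Theorem \ref{T: detection of tangency} applies (detection of multiple intersections is a property of $\mathcal M$ alone and persists): $\mathcal M$ detects tangency and hence, via Propositions \ref{P: codes exist} and \ref{P: group from codes}, interprets a strongly minimal group. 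This yields both conclusions, $\dim(M)=1$ and the interpretation of a strongly minimal group, completing the proof.

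The genuine content is entirely carried by the four cited theorems, so there is no substantial new obstacle here; the only thing requiring care is the bookkeeping of the reduction — verifying that naming a countable parameter set is harmless for every hypothesis in play, that an excellent family can be produced, and that an interpretation in the parameter expansion descends to the original relic.
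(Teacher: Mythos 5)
Your proposal is correct and follows essentially the same route as the paper: arrange Assumption \ref{A: K and M} by naming countably many constants (all hypotheses on $(\mathcal K,\tau)$ being preserved), reduce to enough open maps via Propositions \ref{P: open thm implies open maps} and \ref{P: differentiable implies enough opens}, and then chain Theorems \ref{T: closure thm}, \ref{T: detecting ramification}, \ref{T: higher dimensional case}, and \ref{T: detection of tangency}. The only cosmetic difference is that you spell out the excellent-family and parameter-descent bookkeeping a bit more explicitly than the paper does.
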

\begin{proof} Since non-local modularity is witnessed by a single rank 2 family of plane curves, we may assume the language of $\mathcal M$ is finite. If we then add countably many constants, we can assume the language of $\mathcal M$ is countable and $\acl_{\mathcal M}(\emptyset)$ is infinite.

Next, all of the assumptions on $(\mathcal K,\tau)$ are invariant under adding a countable set of parameters. Since the language of $\mathcal M$ is countable, we may thus assume that every $\emptyset$-definable set in $\mathcal M$ is $\emptyset$-definable in $\mathcal K$. Thus, we have satisfied all requirements in Assumption \ref{A: K and M}, and so we can apply all the results of sections 4-7. 

Now by assumption (or by Proposition \ref{P: open thm implies open maps} or Proposition \ref{P: differentiable implies enough opens}, depending on which property is assumed), $(\mathcal K,\tau)$ has enough open maps. So, by Theorem \ref{T: closure thm}, $\mathcal M$ weakly detects closures. Then, by Theorem \ref{T: detecting ramification}, $\mathcal M$ detects multiple intersections. So by Theorem \ref{T: higher dimensional case}, $\dim(M)=1$. Then, by Theorem \ref{T: detection of tangency}, $\mathcal M$ interprets a strongly minimal group.
\end{proof}

\begin{remark} One also gets the various additional properties of the group in the statement of Proposition \ref{P: group from codes}. We omit them here in order to not overcomplicate the statement. Note, in particular, that changing the language of $\mathcal M$ in the proof above does not affect the additional data in Proposition \ref{P: group from codes}, because that data only involves the background structure $\mathcal K$.
\end{remark}

\section{Examples of Hausdorff Geometric Structures}\label{S: examples}

In this section, we give various examples of Hausdorff geometric structures satisfying some or all of the additional properties we have studied. The most successful example is algebraically closed valued fields, which satisfy all properties (except differentiability in positive characteristic) we have defined.

\subsection{Visceral structrues}
In \cite{DolGooVisceral} Dolich and Goodrick introduce \emph{visceral structures} as a common generalisation of o-minimality and P-minimality, that -- when restricted to the dp-minimal setting -- is very similar to the tame uniform structures of Simon and Walsberg (\cite{SimWal}) called SW-uniformities in \cite{HaHaPeVF}. This formalism also covers the context of 1-h-minimal structures (as pointed out in \cite[Example 2.2.2]{hensel-minII}). It follows immediately from the results of  \cite{DolGooVisceral} that certain strengthenings of visceral structures considered in that paper are Hausdorff geometric structures.

By \cite[Proposition 3.10]{DolGooVisceral} any visceral structure has uniform finiteness, so any visceral structure satisfying the exchange principle is geometric. A visceral structure is said to have \textit{definable finite choice} (DFC) if any definable function with finite fibres has a definable section. In particular, any ordered visceral structure has DFC, and so does any visceral structure with definable Skolem functions. We claim that ($\aleph_1$-saturated) visceral structures with the exchange property and DFC are Hausdorff geometric structures (see Remark \ref{rmk:noDFC} below for a discussion of removing the DFC hypothesis).

First, such structures satisfy the strong frontier inequality by Remark \ref{R: frontier inequality} and \cite[Corollary 3.35]{DolGooVisceral}. The Baire property, as explained in Remark \ref{R: Baire}, would follow from the generic local homeomorphism property and the fact that the topology in Hausdorff visceral structures has no isolated points. So it remains to show the generic local homeomorphism property.
So let $Z\subset X\times Y$, all of the same dimension, with $Z\rightarrow X$ and $Z\rightarrow Y$ finite-to-one, and let $(x,y)\in Z$ be generic. By visceral cell decomposition (\cite{DolGooVisceral}, Theorem 3.23, Definition 3.20), we can assume after shrinking that $X$ and $Y$ are open subsets of $K^n$, where $n=\dim(X)$. Moreover, by either directly applying DFC or by using the frontier inequality (see the proof of Claim \ref{C: ez local homeo}), we can assume after shrinking again that $Z\rightarrow X$ and $Z\rightarrow Y$ are injective, and thus $Z$ defines a bijection $X\leftrightarrow Y$. The result then follows by generic continuity (\cite[Theorem 3.19]{DolGooVisceral}) applied to each coordinate component of $X\leftrightarrow Y$ in each direction.

\begin{remark}\label{rmk:noDFC} In fact, it seems to follow from new results of Johnson (\cite{JohnVisc}) that one can remove the DFC assumption above and conclude that all $\aleph_1$-saturated visceral structures with exchange are Hausdorff geometric structures. For this, one first deduces the strong frontier inequality from Johnson's new version (\cite{JohnVisc}, Theorem 1.10). Now given $Z\subset X\times Y$ as above, one similarly uses the frontier inequality to assume $Z\rightarrow X$ and $Z\rightarrow Y$ are injective (again, as in Claim \ref{C: ez local homeo}), and thus $Z$ defines a bijection $X\leftrightarrow Y$. One then applies the generic continuity clause of Johnson's cell decomposition (\cite{JohnVisc}, Theorem 1.14(2), which is enough by the frontier inequality) to each coordinate component of $X\rightarrow Y$ and $Y\rightarrow X$ as above to get the desired homeomorphism.
\end{remark}

Hensel minimal structures, introduced in \cite{hensel-min} (for residue characteristic $0$ and in \cite{hensel-minII} for positive residue characteristic) cover all pure non-trivially valued henselian fields (of characteristic $0$) as well as many interesting expansions (covering the $V$-minimal algebraically  closed valued fields of \cite{HrKa} and the $T$-convex expansions of power-bounded o-minimal fields, \cite{vdDriesLewen}). One sub-class of Hensel minimal structures of special interest (covering all the above examples) is the class of 1-h-minimal fields. As already mentioned, 1-h-minimal structures are visceral, and it is not hard to verify that they are geometric structures (e.g., \cite[Proposition 2.11]{AcHa}). While 1-h-minimal fields do not necessarily have DFC, every 1-h-minimal field has a 1-h-minimal expansion which does (\cite[Proposition 3.2.3]{hensel-minII}). Thus, we have: 
\begin{corollary}
    Every $\aleph_1$-saturated 1-h-minimal field is a Hausdorff geometric structure. 
\end{corollary}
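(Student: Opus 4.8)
The plan is to reduce the statement to the case just treated, of an $\aleph_1$-saturated visceral structure with exchange and definable finite choice. So fix an $\aleph_1$-saturated 1-h-minimal field $\mathcal{K}$ with its valuation topology $\tau$. Two of the needed inputs are already in place: $\mathcal{K}$ is visceral (\cite[Example 2.2.2]{hensel-minII}, via \cite{DolGooVisceral}), and $\mathcal{K}$ is a geometric structure, since viscerality gives uniform finiteness (\cite[Proposition 3.10]{DolGooVisceral}) and 1-h-minimal fields satisfy exchange (\cite[Proposition 2.11]{AcHa}). The only hypothesis missing, relative to the visceral result, is DFC, which a general 1-h-minimal field need not have.

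To supply it, I would invoke \cite[Proposition 3.2.3]{hensel-minII}: $\mathcal{K}$ has a 1-h-minimal expansion $\mathcal{K}^{+}$ --- the same valued field, hence the same topology $\tau$ --- which has definable Skolem functions, and in particular DFC. As $\mathcal{K}^{+}$ is again 1-h-minimal it is visceral and geometric, so by the preceding result $(\mathcal{K}^{+},\tau)$ is a Hausdorff geometric structure, once $\mathcal{K}^{+}$ is taken $\aleph_1$-saturated. (Here one uses that in a 1-h-minimal valued field $\tau$ has a definable basis of balls, so that each clause of Definition \ref{D: easy axioms} is, among $\aleph_1$-saturated models of a fixed theory, a property of the theory alone; this makes it harmless to replace $\mathcal{K}^{+}$ by the reduct to $\mathcal{K}$'s language of an $\aleph_1$-saturated model of the expansion.) It then remains to descend: $\mathcal{K}$ is a reduct of $\mathcal{K}^{+}$ carrying the same topology, clauses (1) and (2) of Definition \ref{D: easy axioms} are intrinsic to $\mathcal{K}$ and already verified, and clauses (3)--(5) speak only about $\mathcal{K}$-definable sets, which remain definable in $\mathcal{K}^{+}$ where those clauses hold.

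The one genuine point to check --- and where I expect the real work --- is that this descent is legitimate for clauses (3) and (4), which are phrased in terms of $\dim$ and of generic points: a priori $\acl_{\mathcal{K}}$ could be properly contained in $\acl_{\mathcal{K}^{+}}$, shifting both $\dim(X)$ and the notion of a generic point of $X$ as one moves between the two structures. The key fact to record is the robustness of the dimension theory of 1-h-minimal valued fields: $\dim$ of a field-sort definable set depends only on it as a subset of $K^{n}$ (this is precisely \cite{DolGooVisceral}'s topological dimension), so $\dim_{\mathcal{K}}$ and $\dim_{\mathcal{K}^{+}}$ agree on $\mathcal{K}$-definable sets and the two notions of generic coincide; granting this, clauses (3)--(5) transfer verbatim. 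A cleaner alternative that sidesteps the expansion altogether is the route indicated in Remark \ref{rmk:noDFC}: run the visceral argument directly for $\mathcal{K}$, using the strong frontier inequality (rather than DFC) at the single step of the generic local homeomorphism proof where one needs the projections $Z\to X$ and $Z\to Y$ to be injective. I would present the DFC-expansion proof as the primary one only because it rests entirely on published results.
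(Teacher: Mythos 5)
Your proposal follows the same route as the paper: pass to a 1-h-minimal DFC expansion $\mathcal K^{+}$ via \cite[Proposition 3.2.3]{hensel-minII}, note that the visceral-with-DFC argument makes $(\mathcal K^{+},\tau)$ a Hausdorff geometric structure, and then descend to the reduct $\mathcal K$ using the robustness of dimension in 1-h-minimal fields. The gap is in the descent, at exactly the point you flag as ``the key fact to record.'' From the fact that the dimension of a \emph{definable set} is determined topologically you conclude that ``the two notions of generic coincide,'' i.e.\ that $\dim_{\mathcal K}(a/A)=\dim_{\mathcal K^{+}}(a/A)$ for points. That is a non sequitur: set-level agreement says nothing about whether a $\mathcal K$-generic point of $X$ avoids the \emph{new} small $A$-definable sets of $\mathcal K^{+}$, and since $\acl_{\mathcal K}(A)\subseteq\acl_{\mathcal K^{+}}(A)$ the general inequality is $\dim_{\mathcal K^{+}}(a/A)\le\dim_{\mathcal K}(a/A)$ --- which is the \emph{wrong} direction for clause (3): knowing $\dim_{\mathcal K^{+}}(a/A)<\dim(X)$ for $a\in\overline{\operatorname{Fr}(X)}$ gives no upper bound on $\dim_{\mathcal K}(a/A)$. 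So the ``verbatim'' transfer of clauses (3)--(4) does not go through as written; the whole weight of the proof rests on a point-level coincidence that you have not established and that has no reason to hold for a Skolem-type expansion.

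The repair, which is how the paper argues, is to transfer only \emph{set-level} statements, exploiting that the valuation topology is definable in $\mathcal K$ itself. For the strong frontier inequality: $\overline{\operatorname{Fr}(X)}$ is $\mathcal K(A)$-definable, the frontier inequality in $\mathcal K^{+}$ gives $\dim_{\mathcal K^{+}}\operatorname{Fr}(X)<\dim_{\mathcal K^{+}}(X)$, and the set-level equality of dimensions then yields $\dim_{\mathcal K}(a/A)\le\dim_{\mathcal K}\overline{\operatorname{Fr}(X)}<\dim_{\mathcal K}(X)$, with no point-level comparison needed. For the generic local homeomorphism property: the locus of points of $Z$ near which $Z$ is the graph of a homeomorphism is a $\mathcal K(A)$-definable subset of $Z$ (again because the topology is definable in $\mathcal K$); by the property in $\mathcal K^{+}$ its complement has smaller dimension, hence smaller $\mathcal K$-dimension, hence it contains every $\mathcal K$-generic point of $Z$ over $A$. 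The Baire axiom should then be obtained not by transfer but from Remark \ref{R: Baire} (definable basis of balls, no isolated points, plus the just-established local homeomorphism property). With these replacements your argument matches the paper's proof; your secondary route via Remark \ref{rmk:noDFC} is also the one the paper records, but only as a remark depending on Johnson's unpublished-at-the-time results.
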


\def\Fr{\mathrm{Fr}}

\begin{proof}
    By what we have just said, any 1-h-minimal field $\CK$ has a DFC expansion $\CK'$ that is a Hausdorff Geometric Structure. 
    
    In both $\CK$ and $\CK'$, the topology is the valuation topology, and it is definable. Since the dimension in both structures is determined by the topology (see, e.g., \cite[Proposition 2.11]{AcHa}), it follows that for a $\CK$-definable set $X$ we have $\dim_{\CK}(X)=\dim_{\CK'}(X)$. Now consider $X$ a $\CK$-definable set. Then $\Fr(X)$ is also $\CK$-definable, and by the frontier inequality in $\CK'$ we have that  
    
    \[\dim_{\CK}\Fr(X)=\dim_{\CK'}\Fr(X)<\dim_{\CK'}(X)=\dim_{\CK}(X),\] implying the frontier inequality in $\CK$. 

    So we only have to verify the Generic Local Homeomorphism Property. If $Z\sub X\times Y$ are $\CK$-definable of the same dimension, projecting finite-to-one one both components, then for every $\CK'$-generic $(x,y)\in Z$ we know that $Z$ is, in a neighbourhood of $(x,y)$ the graph of a homeomorphism. As this is a $\CK$-definable property of $(x,y)$ (and since dimension in $\CK$ and in $\CK'$ coincide), it must also hold of every $\CK$-generic $(x,y)\in Z$. 
\end{proof}

\begin{remark} As stated before, if one uses the new results of Johnson on visceral structures \cite{JohnVisc}, one does not need DFC, so the above corollary is automatic from the discussion preceding it.
\end{remark}

Both o-minimal and 1-h-minimal expansions of fields have a well-developed (and rather similar) basic differential geometry, turning them into differential Hausdorff geometric structures. Let us give the details in the 1-h-minimal case. The situation in the o-minimal setting is analogous and better known. 

First, we have to define a notion of smoothness meeting Definition \ref{D: smooth}. We declare the smooth locus of a definable set $X$ to be the set of points $x\in X$ where $X$ is, locally near $x$, a $\dim(X)$-dimensional weak strictly differentiable manifold (see \cite[Definition 5.3]{AcHa}). By visceral cell decomposition, every definable set is generically locally a topological manifold, and by Proposition 3.12 \emph{loc. cit.} it is, in fact, generically, a strictly differentiable manifold. This gives all the properties of Definition \ref{D: smooth} except maybe (5), the preservation of smoothness under preimages of “nice enough” projections. But recall (see the remark following Definition \ref{D: smooth}) that this clause is a generalization of, and would follow from, the `submersion theorem' of differential geometry. Moreover, there is a submersion theorem for 1-h-minimal field (\cite[Proposition 4.10]{AcHa}). So we simply apply that proposition exactly as in the earlier remark. 


So now we have to show that the above notion of smoothness admits a differential structure, as in Definition \ref{D: differentiable}. Naturally, in both the 1-h-minimal and the o-minimal settings, the differential structure is given by the tangent space. As before, we focus on the 1-h-minimal setting, as the o-minimal case is similar and more familiar. 

We use the usual notion of a tangent space associated with the notion of (weak) strictly differentiable manifolds (\cite[Definition 5.4]{AcHa}). All properties (1)-(7) of the definition are automatic, and we don't dwell on them. Clause (8) of the definition, asserting that a definable morphism inducing an isomorphism on the level of tangent spaces is open (locally, near the point in question) is an immediate consequence of the Inverse Mapping Theorem (\cite[Proposition 4.4]{AcHa}). So it only remains to check that if $f: (X,x)\to (Y,y)$ is such that $x\in X$ and $y\in Y$ are generic, then the induced map between the tangent spaces is surjective. This, in turn, is an immediate result of the corresponding version of Sard's Theorem \cite[Proposition 5.21]{AcHa}, asserting that the set of singular values of $f$ is non-generic. 

So we conclude: 

\begin{theorem}\label{T: ominimal} Every $\aleph_1$-saturated o-minimal expansion of a real closed field, as well as every $\aleph_1$-saturated 1-h-minimal valued field, is a differential Hausdorff geometric structure.
\end{theorem}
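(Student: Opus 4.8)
The statement has two halves — o-minimal expansions of real closed fields and $1$-h-minimal valued fields — which will be handled in parallel, since the relevant basic differential geometry is essentially the same in both cases (and, crucially, both live in characteristic $0$, so the Frobenius complications mentioned after Definition \ref{D: smooth} do not arise). The first step is to note that the underlying Hausdorff geometric structure axioms are already available: for $1$-h-minimal fields this is the Corollary proved above, and for o-minimal expansions of real closed fields it follows from the same discussion (visceral cell decomposition, strong frontier inequality, and generic continuity), specialized to the order topology. So the whole content of the theorem is to exhibit the extra data required by Definition \ref{D: differentiable}: a notion of smoothness $X\mapsto X^S$ in the sense of Definition \ref{D: smooth}, together with a covariant tangent-space functor $\mathcal F$ landing in $F$-vector spaces.

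Second, I would define $X^S$ to be the smooth locus of $X$ in the appropriate differential-geometric sense: in the $1$-h-minimal case, the set of $x\in X$ at which $X$ is, locally, a weak strictly differentiable submanifold of dimension $\dim X$ (using the notion from \cite[Definition 5.3]{AcHa}); in the o-minimal case, fixing $n\geq 1$, the set of $x$ at which $X$ is locally a $C^n$-submanifold. One then checks the seven clauses of Definition \ref{D: smooth}. Clauses (1)–(4) and (6) are routine: generic smoothness comes from visceral cell decomposition together with the fact (\cite[Proposition 3.12]{AcHa} in the $1$-h-minimal case, classical in the o-minimal case) that a definable set is generically a strictly differentiable manifold; closure under products, coordinate permutations and the ``diagonal'' concatenation of clause (4) is immediate from the definition; and d-locality (6) is clear since the condition only constrains $X$ near $x$. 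The only substantial clause is (5), the preimage clause, which — as noted in the remark following Definition \ref{D: smooth} — is a packaging of the submersion theorem of differential geometry; I would deduce it directly from the submersion theorem for $1$-h-minimal fields, \cite[Proposition 4.10]{AcHa} (respectively its o-minimal analog), applied exactly as in that remark.

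Third, I would take $\mathcal F(X,x)=T_x(X)$, the tangent space at $x$ to the weak strictly differentiable manifold structure on $X$ near $x$ (\cite[Definition 5.4]{AcHa}), with $F$ the field $K$ itself, and with $\mathcal F$ acting on inclusions and projections by the corresponding differentials. Clauses (1)–(7) of Definition \ref{D: differentiable} are then automatic from the linearity and functoriality of the differential: dimension of the tangent space equals $\dim X$ by construction, inclusions induce injections, products of tangent spaces are tangent spaces of products with the projections behaving correctly, and constant maps differentiate to zero. Clause (8), the weak inverse function theorem, is an immediate consequence of the Inverse Mapping Theorem \cite[Proposition 4.4]{AcHa} (resp. its o-minimal counterpart): a definable morphism whose differential at $x$ is an isomorphism is a local homeomorphism near $x$, hence open there. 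Clause (9), Sard's theorem, follows from the statement that the set of singular values of a definable map is non-generic, \cite[Proposition 5.21]{AcHa} (resp. the o-minimal version): if $f:(X,x)\to(Y,y)$ is $A$-definable with $x$ generic in $X$ over $A$ and $y$ generic in $Y$ over $A$, then $y$ is not a singular value, so $\mathcal F(f)=d_xf$ is surjective.

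\textbf{Expected main obstacle.} The genuinely delicate points are all in the $1$-h-minimal case. One is verifying clause (5) of Definition \ref{D: smooth}: the abstract statement is phrased at the level of germs and d-approximations, and translating it faithfully into an application of the $1$-h-minimal submersion theorem requires some care about which parameters the genericity hypotheses are taken over. The other is ensuring that the ``weak strictly differentiable'' category from \cite{AcHa} is robust enough to support all of this — in particular that tangent spaces are well-defined and functorial, that they are stable under the relevant operations, and that the inverse-function and Sard statements apply in the generality needed — since differentiability in mixed-characteristic or positive-residue-characteristic Henselian settings is considerably subtler than in the o-minimal world. Once one grants the toolkit of \cite{AcHa} (submersion theorem, inverse mapping theorem, Sard's theorem, generic strict differentiability), the verification is, as the preceding discussion indicates, essentially bookkeeping.
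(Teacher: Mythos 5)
Your proposal is correct and follows essentially the same route as the paper: both define $X^S$ via the (weak strictly differentiable, resp.\ $C^n$) manifold locus, verify Definition \ref{D: smooth} with clause (5) handled by the submersion theorem \cite[Proposition 4.10]{AcHa}, and take $\mathcal F$ to be the tangent space of \cite[Definition 5.4]{AcHa}, with the weak inverse function theorem and Sard's theorem supplied by \cite[Proposition 4.4]{AcHa} and \cite[Proposition 5.21]{AcHa} respectively. No substantive differences.
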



\subsection{\'ez Expansions of Fields}

We turn now toward ACVF in all characteristics. We will work with a general setting of topological fields. In fact, the ensuing results could serve as an alternate proof of Theorem \ref{T: ominimal} for RCVF and for pure characteristic zero Henselian fields.

Our abstract setting is inspired by \textit{\'ez fields}. We briefly recall this notion. First, recall that a field $K$ is \textit{large} if, whenever $V$ is a smooth variety over $K$ and $V(K)\neq\emptyset$, then $V(K)$ is infinite (equivalently, $V(K)$ is Zariski dense in some irreducible component). In \cite{JTWY}, the authors place a canonical non-discrete topology on the set $V(K)$ for every (irreducible) variety $V$ over a large field $K$. The topology is called the \textit{\'etale open topology}, and is defined as the weakest topology where \'etale maps are open (so it is generated by the images $f(W(K))$ of all \'etale morphisms of $K$-varieties $f:W\rightarrow V$). It is shown in \cite[Section 6]{JTWY} and ~\cite{field-top-1,field-top-2} that the \'etale open topology often coincides with existing topologies of interest when the field is close to being henselian (particularly, the order topology in real closed fields, and the valuation topology in characteristic zero Henselian fields). 

One says that the \'etale open topology on $K$ is \textit{induced by a field topology} if there is a field topology $\tau$ on $K$ such that, for every affine variety $V$ over $K$, the \'etale open topology agrees on $V(K)$ with the topology induced on $V(K)$ by $\tau$ (via the product and subspace topologies). In general, the \'etale open topology is induced by a field topology if it respects products (i.e. the \'etale open topology on $K^n$ is the product topology induced by the \'etale open topology on $K$). This is not always the case (the main counterexample being pseudofinite fields, or more generally PAC fields). See \cite[Proposition 4.9 and Section 8]{JTWY} or~\cite{field-top-2} for more details.

The follow-up work \cite{ez} then defined \'ez fields as large fields $K$ such that every definable set $X\subset K^n$ (in the pure field language) is a union of finitely many definable \'etale-open subsets of Zariski closed sets. The idea is that \'ez fields have `local quantifier elimination' -- where the word 'local' is in the sense of the \'etale open topology. \'Ez fields include algebraically closed fields, real closed fields, pseudofinite fields, and characteristic zero Henselian fields (e.g. $p$-adically closed fields). It is shown in \cite{ez} that many tameness properties of definable sets in these examples (e.g. generic continuity of functions) can be adapted to the general \'ez setting.

It is well-known that algebraically closed valued fields and real closed valued fields satisfy a similar local quantifier elimination: every definable set is a union of finitely many valuation-open subsets of Zariski closed sets. In fact, this description of definable sets holds in all Henselian valued fields of characteristic 0~\cite{lou-dimension}. Our goal is to introduce a single notion capturing such a structural decomposition and incorporating \'ez fields. Since we have used the product topology freely throughout the paper, we will restrict to those \'ez fields whose \'etale open topology is induced by a field topology. 

Before making the general definition, we establish some conventions:

\begin{notation}
    Suppose $\mathcal K=(K,+,\cdot,...)$ is an expansion of a field. Throughout the rest of Section 9:
    \begin{enumerate}
        \item By a variety over $K$, we mean a reduced, separate scheme of finite type over $K$. Varieties are denoted with letters such as $V$, $W$, etc. Note that varieties are not definable objects in a first-order sense.
        \item If $V$ is a quasi-projective variety over $K$, we use $V(K)$ to denote the $\mathcal K$-interpretable set of $K$-rational points of $V$.
        \item If $X\subset K^n$ is definable, the \textit{Zariski closure} of $X$ is the smallest affine $K$-variety $V$ such that $X\subset V(K)$. 
        \item On the other hand, we still use the term \textit{Zariski closed} for subsets of $K^n$: $X\subset K^n$ is Zariski closed if $X=V(K)$ for some affine variety $V$ over $K$.
        \item Similarly, if $V$ is a variety over $K$, and $X\subset V(K)$, we say that $X$ is \textit{Zariski dense in $V$} if for every proper closed subvariety $W\subset V$, there is some $x\in X\cap V(K)-W(K)$.
        \item If $V$ is a quasi-affine variety over $K$, and $A\subset K$, we distinguish between $V$ being \textit{defined over $A$} (i.e. defined by polynomials with coefficients in $A$) and $V(K)$ being \textit{definable over $A$} (definable in the structure $\mathcal K$ with parameters $A$).
        \item If $V$ is a quasi-affine variety over $K$, the notation $\dim(V)$ refers to the dimension of $V$ as an algebraic variety. The notation $\dim(V(K))$ will only be used if $\mathcal K$ is a geometric structure; and in this case, it refers to dimension in the sense of geometric structures.
    \end{enumerate}
    \end{notation}

    We will use freely the following:

    \begin{lemma}\label{L: zariski closure definable} Let $\mathcal K=(K,+,\cdot,...)$ be an expansion of a field. Let $X\subset K^n$ be definable over $A$, and let $Z$ be its Zariski closure.
    \begin{enumerate}
        \item $Z(K)$ is definable over $A$ (regardless of whether the variety $Z$ is over $A$).
        \item If $Z^S$ is the smooth locus of $Z$, then $Z^S(K)$ is definable over $A$.
    \end{enumerate}
    \end{lemma}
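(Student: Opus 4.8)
The plan is to prove both parts by the standard device of approximating the (generally non-definable) ideal $I(X)=\{p\in K[x_1,\dots,x_n]:p|_X\equiv 0\}$ by its finite-dimensional pieces of bounded degree. For part (1), for each $d$ let $I(X)_{\le d}$ be the $K$-subspace of $I(X)$ consisting of polynomials of degree at most $d$, and let $V_d\subseteq K^n$ be the common zero set of $I(X)_{\le d}$. Each $V_d$ is definable over $A$: writing $p_{\bar c}$ for the generic polynomial of degree $\le d$ with coefficient vector $\bar c$, we have $y\in V_d$ iff $\forall\bar c\,\bigl(\,[\forall a\in X\ p_{\bar c}(a)=0]\to p_{\bar c}(y)=0\,\bigr)$, and "$\forall a\in X\ p_{\bar c}(a)=0$" is a formula over $A$ since $X$ is definable over $A$ (note that the coefficient vectors are quantified, not named, which is why the field of definition of the variety $Z$ is irrelevant). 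The $V_d$ form a descending chain of Zariski closed subsets of $K^n$ with $\bigcap_d V_d=Z(K)$, and since $K[x_1,\dots,x_n]$ is Noetherian this chain stabilizes; hence $Z(K)=V_{d_0}$ for some $d_0$, which is definable over $A$. (If $X=\emptyset$ then $Z=\emptyset$ and there is nothing to prove.)

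For part (2) I would use the Jacobian criterion for smoothness of a variety over the base field at a rational point. Fix $d_0$ at least the degrees of some finite generating set of $I(X)$; then $I(X)_{\le d_0}$ generates $I(X)$, and in particular $V_{d_0}=Z(K)$. Because $X$ is Zariski dense in $Z$ by construction, $Z$ is exactly the reduced closed subscheme of $\mathbb A^n_K$ cut out by $I(X)$, so its smooth locus is genuinely an invariant of the set $X$. For $y\in Z(K)$ put $N_y=\operatorname{span}_K\{\nabla p_{\bar c}(y):p_{\bar c}\in I(X)_{\le d_0}\}\subseteq K^n$; since at a point of $V(I(X))$ the differential of any element of the ideal lies in the span of the differentials of a generating set (as $d(hg)(y)=h(y)\,dg(y)$ when $g(y)=0$), $\dim_K N_y$ is precisely the rank of the Jacobian matrix of a generating set of $I(X)$ at $y$. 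The Jacobian criterion (valid over an arbitrary, possibly imperfect, base field at a rational point) then gives $y\in Z^S(K)$ iff $\dim_K N_y\ge n-\dim_y Z$, where $\dim_y Z$ denotes the local dimension of $Z$ at $y$. Both conditions are definable over $A$: "$\dim_K N_y\ge k$" asserts the existence of $k$ coefficient vectors $\bar c_1,\dots,\bar c_k$ of degree-$\le d_0$ polynomials vanishing on $X$ with $\nabla p_{\bar c_1}(y),\dots,\nabla p_{\bar c_k}(y)$ linearly independent over $K$; and "$\dim_y Z\ge k$" can be written by quantifying over a tuple $\bar c$ of coefficient vectors of degree-$\le d_0$ polynomials vanishing on $X$ whose common zero set equals $Z(K)$ (such $\bar c$ exist, and "its zero set equals $Z(K)$" is a condition over $A$ by part (1)), and then asserting that the Zariski closed set cut out by $\bar c$ has local dimension $\ge k$ at $y$ — local dimension of an algebraic set being uniformly definable, in the field language, in terms of the coefficients of a bounded-degree defining system. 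Intersecting the two gives a formula over $A$ defining $Z^S(K)$.

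The routine part is the quantifier bookkeeping; the one external input is the classical uniform definability of local dimension in bounded-degree families of varieties, which I would cite rather than reprove. The point that needs a little care — and where a naive argument would go wrong — is that over an arbitrary field $K$ a variety is not recovered from its $K$-points; this is circumvented only because $Z$ is by definition the Zariski closure of $X$, so $X$, and a fortiori $Z(K)\supseteq X$, is Zariski dense in $Z$ and $I(X)$ is genuinely the radical ideal of $Z$. One must also remember to take $d_0$ large enough that $I(X)_{\le d_0}$ generates all of $I(X)$, not merely an ideal with the same zero set, so that the conormal spaces $N_y$ — and hence the Jacobian criterion — are computed correctly in positive characteristic.
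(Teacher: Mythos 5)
Your route is genuinely different from the paper's. The paper disposes of both parts in a few lines by a purely model-theoretic trick: since the statement only concerns definable objects, one may pass to a monster model, where $Z(K)$ and $Z^S(K)$ are definable with \emph{some} parameters (coefficients of defining polynomials) and are invariant under all automorphisms fixing $X$ setwise, hence under $\operatorname{Aut}(\mathcal K/A)$; invariance then pulls the parameters down to $A$. That argument is short and needs no algebraic geometry, but it is non-explicit and leans on saturation/homogeneity. Your argument instead produces an explicit uniform formula via bounded-degree approximation of $I(X)$, Noetherian stabilization, and the Jacobian criterion at rational points; part (1) is correct as written, and you are right to flag both the need for $I(X)_{\le d_0}$ to generate the full ideal and the validity of the Jacobian criterion at $K$-rational points over possibly imperfect fields.

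There is, however, one step in part (2) that is not yet right as stated: the definable expression of $\dim_y Z$. You quantify over tuples $\bar c$ of bounded-degree polynomials vanishing on $X$ whose common zero set in $K^n$ equals $Z(K)$, and then read off the local dimension of ``the Zariski closed set cut out by $\bar c$.'' But — by exactly the phenomenon you highlight elsewhere — having $K$-zero set equal to $Z(K)$ does not force the scheme cut out by $\bar c$ to be $Z$ when $K$ is not algebraically closed. For example, with $K$ real closed and $X$ the $u$-axis in $K^3$, the single polynomial $v^2+w^2$ lies in $I(X)$ and has $K$-zero set exactly $Z(K)$, yet it cuts out a $2$-dimensional hypersurface; more elaborate examples (e.g.\ $w^2+(v^2-u^2(u+1))^2$ for a nodal plane curve placed in $K^3$) overestimate $\dim_y Z$ at a singular point and make your test $\dim_K N_y\ge n-\dim_y Z$ wrongly certify smoothness. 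So if the quantifier over $\bar c$ is existential, the formula is wrong. The fix is cheap: every admissible $\bar c$ consists of elements of $I(Z)$, so the scheme it cuts out contains $Z$ and its local dimension at $y$ is $\ge\dim_y Z$, while a generating tuple of $I(X)$ (which exists within degree $d_0$ and bounded length) attains equality; hence ``$\dim_y Z\ge k$'' is correctly expressed by the \emph{universal} statement that every admissible $\bar c$ cuts out a scheme of local dimension $\ge k$ at $y$. With that reading (and the cited uniform constructibility of local dimension in bounded-degree families), your proof goes through.
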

    \begin{proof} These statements are unaffected by passing to an elementary extension of $\mathcal K$. In particular, as opposed to topological statements about Hausdorff geometric structures, the current lemma only references definable objects.
    
    So, let us assume that $\mathcal K$ is a `monster model' -- i.e. $\kappa$-saturated and $\kappa$-strongly homogenous for some large $\kappa$. In this case, one can detect the parameters in a definition using automorphism invariance. Thus, to prove the lemma, it suffices to note that $Z(K)$ and $Z^S(K)$ are invariant under field automorphisms fixing $X$ setwise.
    \end{proof}
    
    We also need a general notion of an \textit{invariant topology} on a structure:

\begin{definition}\label{D: invariant}
    Let $\mathcal K=(K,...)$ be a structure, and $\tau$ a topology on $K$. We call $\mathcal K$ \textit{invariant} if there is a basis $\mathcal B$ for $\tau$ such that:
    \begin{enumerate}
        \item Each $X\in\mathcal B$ is definable.
        \item Suppose $\phi(x,y)$ is a formula, where $x$ is a single variable and $y$ is a tuple. Let $a$ and $b$ be tuples in the arity of $y$ with $\tp(a)=\tp(b)$. Let $X$ and $Y$ be the sets defined by $\phi(x,a)$ and $\phi(x,b)$, respectively. Then $X\in\mathcal B$ if and only if $Y\in\mathcal B$. 
    \end{enumerate}
\end{definition}

Note that, if $(K,+,\cdot,\tau)$ is a topological field, then $\tau$ induces a natural topology on $V(K)$ for every variety $V$ over $K$ (the affine case is given by the subspace topology from $K^n$, and the general case is obtained by gluing; see  \cite[Page 57, Chapter I Section 10]{mum}, for example). Using this, and inspired by the above discussion, we now define:

\begin{definition}\label{D: ez field}
    An \textit{\'ez topological field expansion} is an $\aleph_1$-saturated structure $\mathcal K=(K,+,\cdot,...)$ over a countable language, equipped with an invariant Hausdorff topology $\tau$ on $K$ (extended canonically to all subsets of powers of $K$), such that:
    \begin{enumerate}
        \item $(K,+,\cdot,\tau)$ is a non-discrete topological field.
        \item Every definable set $X\subset K^n$ is a finite union of definable $\tau$-open subsets of Zariski closed sets.
        \item For each \'etale morphism $f:V\rightarrow W$ of varieties over $K$, the induced map $V(K)\rightarrow W(K)$ is $\tau$-open. 
        \end{enumerate}
\end{definition}

\begin{remark} Suppose $(\mathcal K,\tau)$ is an \'ez topological field expansion. Note that since $\tau$ is invariant, every elementary extension of $\mathcal K$ is canonically a field with an invariant topology satisfying (1) and (3) of Definition \ref{D: ez field}. However, (2) is not first-order unless an additional uniformity condition is assumed. Thus, it is not clear that elementary extensions of $\mathcal K$ are also \'ez topological field expansions (though this holds in most natural examples).
\end{remark}

Note that if $(K,+,\cdot)$ is an $\aleph_1$-saturated \'ez field whose \'etale open topology is induced by a field topology, then $K$ is (trivially) an \'ez topological field expansion when endowed with the \'etale open topology.

It also follows from well-known facts that $\aleph_1$-saturated models of ACVF and RCVF are \'ez topological field expansions. We do not elaborate on RCVF, because it is 1-h-minimal (thus covered by the previous subsection). On the other hand, let us sketch the argument for ACVF.

The main fact we need is:

\begin{fact}[\cite{open-mapping}]\label{F: open mapping} Let $(K,v)$ be an algebraically closed valued field. Let $f:V\rightarrow W$ be a universally open morphism of $K$-varieties. Then the induced map $V(K)\rightarrow W(K)$ is open in the valuation topology.
\end{fact}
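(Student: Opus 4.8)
The plan is to decompose a universally open morphism, locally, into a short list of building blocks whose effect on $K$-points can be checked by hand, using that $K$ is algebraically closed --- in particular perfect and Henselian. Since the valuation topology on $V(K)$ is glued from the affine charts, and since for an open subvariety $U\subseteq V$ the set $U(K)$ is valuation-open in $V(K)$ while $U\hookrightarrow V\to W$ remains universally open, openness of $V(K)\to W(K)$ may be tested locally on $V$ and on $W$; so I would first reduce to the case $V,W$ affine. Universal openness forces the fibres of $f$ to be equidimensional, say of dimension $d$ (Chevalley's criterion), and this equidimensionality is the input for the local analysis below.

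The heart of the argument is a structure theorem for universally open morphisms (in the spirit of relative Noether normalization and the Raynaud--Gruson flattening techniques, cf.\ EGA~IV, \S13--14): after an \'etale localization on source and target around a given point, $f$ should be built from (i) coordinate projections $\mathbb{A}^d_W\to W$, (ii) open immersions, (iii) finite flat morphisms, and (iv) purely inseparable finite morphisms. The point is that each of (i)--(iv) is open on $K$-points: (i) is open because $\mathbb{A}^d_W(K)=K^d\times W(K)$ carries the product topology and projections of products are open; (ii) is trivial; (iv) is even a \emph{homeomorphism} on $K$-points, being a bijection since $K$ is perfect and, after reducing to iterated relative Frobenius, a homeomorphism because $v(x^p-y^p)=p\,v(x-y)$; and \'etale localizations are local homeomorphisms on $K$-points by Hensel's lemma, so they neither create nor destroy openness.

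For the remaining case (iii), a finite flat morphism $h:Y\to Z$, I would argue by continuity of roots of polynomials over $K$. \'Etale-locally on $Z$, $h$ is a disjoint union of standard pieces $\operatorname{Spec}(\mathcal{O}[t]/(P))\to\operatorname{Spec}(\mathcal{O})$ with $P$ monic, so a $K$-point of $Y$ over $z\in Z(K)$ is a root $t_0$ of $P_z\in K[t]$. At a simple root the implicit function theorem (Hensel) gives a local homeomorphism; at a root of multiplicity $m$ one invokes the standard fact that over an algebraically closed valued field a small perturbation of a monic polynomial still has $m$ roots (counted with multiplicity) close to $t_0$ --- a Newton-polygon argument --- which yields exactly the openness we want, \emph{at} $t_0$, whether or not $h$ is ramified there.

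The step I expect to be the main obstacle is securing the structure theorem in a form valid \emph{at every} $K$-point, not merely generically. The tempting shortcut --- reduce to the flat case by generic flatness and Noetherian induction on $W$ --- fails for the openness statement on $K$-points, because the non-flat locus of $f$ sits over a \emph{closed} subset of $W(K)$, and openness of the restricted map over that closed subset says nothing about openness of $V(K)\to W(K)$ at points above it. So one is forced either to carry out the local decomposition of $f$ uniformly (including at points over the non-flat locus), or to replace universal openness by a pointwise criterion that survives the necessary \'etale localizations; the verification of the four building-block cases is, by comparison, routine from standard facts about the valuation topology on an algebraically closed valued field.
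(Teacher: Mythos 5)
First, a point of comparison: the paper does not prove this statement at all. It is imported as a black box --- Fact \ref{F: open mapping} is quoted from the reference \cite{open-mapping} and used as an ingredient (in Lemma \ref{L: acvf ez} and Corollary \ref{C: acvf axioms}), so there is no in-paper argument to measure your proposal against. Judged on its own terms, your attempt has a genuine gap, and it sits exactly at the step you flagged as ``the main obstacle.''

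The proposed \'etale-local structure theorem --- that a universally open morphism is, after \'etale localization on source and target, a composite of coordinate projections, open immersions, finite \emph{flat} morphisms, and purely inseparable finite morphisms --- is false, and the failure is not a matter of uniformity over the non-flat locus; it already fails pointwise in characteristic zero. Take the quotient map $q:\mathbb{A}^2\rightarrow \mathbb{A}^2/(\mathbb{Z}/2)$, i.e.\ $\operatorname{Spec}k[x,y]\rightarrow\operatorname{Spec}k[x^2,xy,y^2]$ over an algebraically closed field $k$ of characteristic $0$. This is finite, dominant, and the target is normal, so $q$ is universally open (quasi-finite dominant morphisms to normal varieties are universally open); but $q$ is not flat at the origin: if it were, $k[x,y]$ would be locally free of rank $2$ over $k[x^2,xy,y^2]$, whereas the fibre over the singular point has length $3$. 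Now observe that in characteristic zero your building blocks (i)--(iv) are all flat (the purely inseparable ones are isomorphisms), compositions of flat morphisms are flat, and flatness both descends along the faithfully flat \'etale localization of the target and is detected \'etale-locally on the source. So the existence of your decomposition near a given point would force $f$ to be flat near that point, which the example contradicts. Hence the central reduction on which the whole argument rests does not exist in the stated form; no amount of care about ``validity at every $K$-point'' repairs it, because the class of allowed local models is simply too small to capture universally open morphisms with normal but singular targets.

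The peripheral verifications in your sketch are fine and standard: \'etale morphisms are local homeomorphisms on $K$-points because the valuation ring of an algebraically closed valued field is Henselian; projections and open immersions are open; relative Frobenius is a homeomorphism on points since $v(x^p-y^p)=p\,v(x-y)$; and continuity of roots (Newton polygon) handles finite flat covers. But these are exactly the easy cases --- the content of the theorem is precisely the non-flat universally open situation, which is the part your proposal leaves unproven. If you want a complete argument you should consult \cite{open-mapping} (an open mapping theorem over algebraically closed valued fields), whose proof proceeds by a different d\'evissage rather than by the flat local models proposed here; alternatively, any repair of your strategy must at minimum add a building block covering finite non-flat covers of normal bases (e.g.\ handled via the characteristic polynomial of an integral generator and continuity of roots over the normal base), which is no longer a routine case.
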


\begin{lemma}\label{L: acvf ez} Let $\mathcal K=(K,v)$ be an $\aleph_1$-saturated model of ACVF. Then $(K,v)$ is an \'ez topological field expansion.
\end{lemma}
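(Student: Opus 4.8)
The plan is to verify the three conditions in Definition \ref{D: ez field} for $\mathcal K=(K,v)$ an $\aleph_1$-saturated model of ACVF, equipped with the valuation topology $\tau$. The first thing to address is that $\tau$ is an invariant Hausdorff topology: the valuation topology has a basis of open balls $\{x : v(x-a)>\gamma\}$, which are definable, and the class of such balls is clearly invariant under automorphisms (indeed, whether $\phi(x,b)$ defines an open ball depends only on $\tp(b)$ since the valuation and its value group structure are captured by the theory). Hausdorffness and non-discreteness of the valuation topology on an algebraically closed (hence non-trivially valued, after passing to a saturated model) valued field are standard. So condition (1) of Definition \ref{D: ez field} holds, and $\mathcal K$ is invariant.

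Next I would establish condition (2), that every definable $X\subset K^n$ is a finite union of definable $\tau$-open subsets of Zariski closed sets. This is the ``local quantifier elimination'' statement for ACVF; as noted in the excerpt, it holds more generally in all Henselian valued fields of characteristic $0$ by \cite{lou-dimension}, and for ACVF in all characteristics it is well known (it can be extracted from quantifier elimination in the Denef--Pas or Robinson language together with the structure of definable sets in one variable, or cited directly). The key point is that a definable set, after the usual cell-decomposition-type analysis, is built from conditions of the form ``$P(x)=0$'' (Zariski closed) and ``$v(Q(x))<v(R(x))$'' or similar strict inequalities, and the latter cut out valuation-open subsets; intersecting and taking finite unions gives the claimed form. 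I would simply cite the relevant known result rather than reprove it.

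Finally, condition (3): for each \'etale morphism $f:V\rightarrow W$ of $K$-varieties, the induced map $V(K)\rightarrow W(K)$ is $\tau$-open. This is where Fact \ref{F: open mapping} (from \cite{open-mapping}) does the work. An \'etale morphism is flat and of finite presentation, hence universally open as a morphism of schemes; therefore by Fact \ref{F: open mapping} the induced map on $K$-points is open in the valuation topology. (One can reduce to the affine case and use that the topology on $V(K)$ for general $V$ is obtained by gluing, so openness is local on source and target.) This completes the verification of all three axioms.

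The main obstacle, such as it is, is condition (2): it is the only clause that is not essentially formal, and one must be a little careful in positive residue characteristic, where naive cell decomposition is unavailable and one should appeal to the correct form of quantifier elimination and the known description of definable sets in ACVF (rather than the char-$0$ argument of \cite{lou-dimension}). Conditions (1) and (3) are routine given Fact \ref{F: open mapping} and standard facts about the valuation topology. I would therefore spend most of the write-up pinning down the precise citation for (2) and noting that it is uniform enough to hand the $\aleph_1$-saturated model, while treating (1) and (3) briefly.
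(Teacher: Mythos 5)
Your proposal is correct and follows essentially the same route as the paper: invariance via the definable basis of balls, condition (2) by citing quantifier elimination for ACVF (with \cite{lou-dimension} as a reference point), and condition (3) by noting \'etale morphisms are universally open and applying Fact \ref{F: open mapping}. The only small point worth making explicit is that clause (1) also asks for $\tau$ to be a \emph{field} topology, which is standard for the valuation topology and is all the paper says about it as well.
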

\begin{proof} Let $\tau$ be the valuation topology. As is well-known, $\tau$ is a non-discrete Hausdorff field topology. Moreover, as the collection of balls is a $\emptyset$-definable basis, $\tau$ is invariant.

So it remains to check (2) and (3) in Definition \ref{D: ez field}.$(\mathcal K,\tau)$. (2) is a well-known consequence of quantifier elimination (see also \cite{lou-dimension}). For (3), recall that \'etale morphisms are universally open (\cite[Lemmas 03WT and 02GO]{stacks-project}), and thus apply Fact \ref{F: open mapping}.  
\end{proof}

\begin{assumption}
\textbf{For the rest of section 9, we fix an \'ez topological field expansion $(\mathcal K,\tau)$, with underlying field $(K,+,\cdot)$.}
\end{assumption}

\begin{remark} Note that, since $\tau$ is a field topology, it refines the Zariski topology: for every affine variety $V$ over $K$, the set $V(K)$ is $\tau$-closed. Similarly, every morphism of $K$-varieties induces a $\tau$-continuous map on $K$-points. We will use these facts throughout.
\end{remark}

\subsection{Basic Properties}

Our general goal is to show that $(\mathcal K,\tau)$ is a Hausdorff geometric structure. First, we note that many of the facts from \cite{ez} transfer (that is, there is no dependence on the particular language or topology). Thus, we now prove a series of basic lemmas about $(\mathcal K,\tau)$. \\

We begin with:

\begin{lemma}\label{L: infinite sets have open set}
Let $X\subset K^n$ be definable and Zariski dense in $\mathbb A^n$. Then $X$ has non-empty interior in the sense of $\tau$. In particular, every infinite definable subset of $K$ has non-empty interior.
\end{lemma}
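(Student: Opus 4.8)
The plan is to reduce, via the \'ez-decomposition of definable sets (Definition \ref{D: ez field}(2)), to the essentially trivial case in which $X$ already contains a nonempty $\tau$-open subset of $K^n$. First I would write $X=\bigcup_{i=1}^m X_i$, where each $X_i$ is a definable $\tau$-open subset of some Zariski closed set $Z_i\subset K^n$; after discarding empty pieces I may assume each $X_i\neq\emptyset$. Writing $W_i$ for the Zariski closure of $X_i$, we have $X_i\subset W_i(K)\subset Z_i(K)$, and by transitivity of the subspace topology $X_i$ is $\tau$-open in $W_i(K)$ as well.

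The next step is to locate a component $X_i$ that is Zariski dense in $\mathbb A^n$. Since the Zariski closure of a finite union is the union of the Zariski closures, the Zariski closure of $X$ equals $\bigcup_{i=1}^m W_i$; as $X$ is Zariski dense in $\mathbb A^n$, this gives $\bigcup_i W_i=\mathbb A^n$. Because $\mathbb A^n$ is irreducible, it follows that $W_i=\mathbb A^n$ for some $i$, and hence $W_i(K)=K^n$. For that $i$, the set $X_i$ is a nonempty $\tau$-open subset of $K^n$ contained in $X$, so $X$ has nonempty $\tau$-interior.

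For the final clause, given an infinite definable $X\subset K$, its Zariski closure is a reduced closed subscheme of $\mathbb A^1$; if it were proper it would be a finite set of points, contradicting $|X|=\infty$. Hence $X$ is Zariski dense in $\mathbb A^1$, and the first part of the lemma applied with $n=1$ shows $X$ has nonempty interior.

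I do not expect a genuine obstacle here. The only points requiring a moment's care are that a finite union of Zariski closed subsets of $K^n$ is again Zariski closed, with $K$-points equal to the union of the individual $K$-points (so that the irreducibility of $\mathbb A^n$ can be brought to bear on the union $\bigcup_i W_i$), together with the elementary fact that a proper reduced closed subscheme of $\mathbb A^1$ has only finitely many $K$-points. All of the real content is supplied by the \'ez-decomposition axiom; this is the first and easiest of the basic lemmas on \'ez topological field expansions, and it is precisely the form of the statement used repeatedly in \cite{ez}.
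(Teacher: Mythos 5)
Your proof is correct and follows essentially the same route as the paper: decompose $X$ via the \'ez axiom, use irreducibility of $\mathbb A^n$ (equivalently, Zariski density of the union forcing density of one piece) to find a piece whose Zariski closure is all of $\mathbb A^n$, and conclude that piece is a nonempty $\tau$-open subset of $K^n$. The minor variation of passing to the closure $W_i$ of $X_i$ rather than arguing directly with $Z_i$ changes nothing of substance.
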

\begin{proof}
    By Definition \ref{D: ez field}(2), we can write $X=\bigcup_{i=1}^mU_i$, where each $U_i$ is a definable open subset of $Z_i(K)$ for some closed subvariety $Z_i$ of $\mathbb A^n$. Since $X$ is Zariski dense in $\mathbb A^n$, there is some $i$ such that $U_i$ is Zariski dense in $\mathbb A^n$. It follows that $Z_i=\mathbb A^n$, so that $Z_i(K)=K^n$. But then $U_i$ is open in $K^n$ (and since it is Zariski dense, it is non-empty).
\end{proof}

The next two lemmas are consequences of Lemma \ref{L: infinite sets have open set}:

\begin{lemma}\label{L: ez perfect} 
\begin{enumerate}
    \item $K$ is perfect. 
    \item If $\operatorname{char}(K)=p>0$ then the Frobenius map $x\mapsto x^p$ is a homeomorphism.
\end{enumerate}
\end{lemma}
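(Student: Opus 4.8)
The plan is to derive both parts from Lemma \ref{L: infinite sets have open set} together with the fact that $(K,+,\cdot,\tau)$ is a non-discrete Hausdorff topological field; in particular every non-empty $\tau$-open subset of $K$ is infinite, translations and non-zero dilations are homeomorphisms, and $\tau$ is a topological group topology on $(K,+)$. Part (1) is vacuous in characteristic $0$, so I would assume $\operatorname{char}(K)=p>0$ and let $\phi\colon K\to K$, $\phi(x)=x^p$, be the Frobenius endomorphism; it is an injective field homomorphism, and the point is to show it is onto.

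First I would observe that $K^p=\phi(K)$ is a $\emptyset$-definable subfield of $K$, infinite because $\phi$ is injective and $K$ is infinite. By Lemma \ref{L: infinite sets have open set}, $K^p$ has non-empty $\tau$-interior; since it is an additive subgroup, the usual translation argument upgrades this to: $K^p$ contains an open neighbourhood $U$ of $0$ (indeed $K^p$ is open). Now fix $a\in K^\times$. Then $U\cap a^{-1}U$ is a $\tau$-open set containing $0$, hence — using that $\tau$ is non-discrete and Hausdorff, so $\{0\}$ is not open and no open set around $0$ is finite — it contains some $u\neq 0$. Then $u\in U\subseteq K^p$ and $au\in U\subseteq K^p$, so $a=(au)u^{-1}\in K^p$ since $K^p$ is a field. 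As $a\in K^\times$ was arbitrary, $K=K^p$, so $\phi$ is surjective and $K$ is perfect.

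For part (2) I would note that $\phi$ is now a continuous (being a polynomial map on a topological field) bijective endomorphism, so it suffices to show $\phi$ is open; by additivity and bijectivity of $\phi$ and the fact that translations are homeomorphisms, this reduces to showing that $\phi(N)$ is a neighbourhood of $0$ for every open neighbourhood $N$ of $0$. Given such $N$, by continuity of subtraction choose an open neighbourhood $M$ of $0$ with $M-M\subseteq N$. Then $\phi(M)$ is infinite and definable, hence has non-empty interior by Lemma \ref{L: infinite sets have open set}; pick a non-empty open $W\subseteq\phi(M)$ and a point $w_0\in W$, so $W-w_0$ is an open neighbourhood of $0$. Using the additivity identity $\phi(x)-\phi(y)=\phi(x-y)$, we get $W-w_0\subseteq\phi(M)-\phi(M)\subseteq\phi(M-M)\subseteq\phi(N)$, so $\phi(N)$ contains an open neighbourhood of $0$. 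Hence $\phi$ is open, $\phi^{-1}$ is continuous, and $\phi$ is a homeomorphism.

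The only genuinely delicate point is the passage from "non-empty interior" (all that Lemma \ref{L: infinite sets have open set} gives) to "contains a neighbourhood of $0$": in part (1) this is handled by the subgroup structure of $K^p$, and in part (2) by the identity $\phi(x)-\phi(y)=\phi(x-y)$, which holds precisely because of characteristic $p$. This is the step where the two halves of the statement really interact, and the one I would take most care to state cleanly.
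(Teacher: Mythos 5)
Your proof is correct and follows essentially the same route as the paper's: part (1) via the non-empty interior of the definable subfield $K^p$, translation to a neighbourhood of $0$, and a non-zero point of $U\cap a^{-1}U$; part (2) by reducing openness to neighbourhoods of $0$ via additivity of Frobenius, shrinking to $M$ with $M-M\subseteq N$, and applying Lemma \ref{L: infinite sets have open set} to $\phi(M)$. The only detail to add is that $M$ (and likewise $N$, if desired) should be taken definable — possible since $\tau$ is invariant and so has a definable basis — so that Lemma \ref{L: infinite sets have open set} indeed applies to $\phi(M)$, exactly as the paper arranges with its definable $Y$.
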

\begin{proof} 
\begin{enumerate}
    \item Suppose $\operatorname{char}(K)=p>0$ and let $K^p$ be the set of $p$th powers. By Lemma \ref{L: infinite sets have open set}, $K^p$ has non-empty interior. Let $U\subset K^p$ be a non-empty definable open set. Since $K^p$ is a subfield, we may translate and assume $0\in U$.
    
    Now let $a\in K$. We show $a\in K^p$. If $a=0$ this is clear. If $a\neq 0$, then $U\cap\frac{1}{a}U$ is a neighborhood of 0, and since $\tau$ is non-discrete, there is some $b\in U\cap\frac{1}{a}U$ with $b\neq 0$. By the choice of $U$, there are $x,y\in K$ with $x^p=b$ and $y^p=ab$. Then $(\frac{y}{x})^p=a$.
    \item By (1), $x\mapsto x^p$ is a continuous bijection. We show it is open. Since $\tau$ is a field topology, and $x\mapsto x^p$ respects addition, it will suffice to show that if $X\subset K$ is open and contains 0, then $X^p$ contains a neighborhood of 0. So, fix such an $X$. We may assume $X$ is definable. Since subtraction is continuous, there is an open definable $Y$ containing 0 such that $Y-Y\subset X$. Since $\tau$ is not discrete, $Y$ is infinite, and thus so is $Y^p$. So by Lemma \ref{L: infinite sets have open set}, $Y^p$ contains an infinite open set $Z$. Let $a\in Z$. Then $Z-a$ is a neighborhood of 0, and $$Z-a\subset Z-Z\subset Y^p-Y^p\subset X^p,$$ as desired.
\end{enumerate}
\end{proof}

\begin{lemma}\label{L: ez geometric} $\mathcal K$ is geometric.
\end{lemma}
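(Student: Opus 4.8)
\textbf{Proof plan for Lemma \ref{L: ez geometric}.}

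The goal is to verify the two conditions in the definition of a geometric structure (Definition \ref{D: geom-struc}): that $\acl$ satisfies exchange, and that $\mathcal K$ eliminates $\exists^\infty$. The plan is to reduce both to the corresponding well-known facts for algebraically closed fields, using the `local quantifier elimination' encoded in Definition \ref{D: ez field}(2), namely that every definable set is a finite union of definable $\tau$-open subsets of Zariski closed sets. The key point throughout is that a $\tau$-open subset of $Z(K)$, for $Z$ an irreducible variety, is either empty or Zariski dense in $Z$ — this follows from Lemma \ref{L: infinite sets have open set} applied on affine charts (or directly: since $\tau$ refines the Zariski topology and is non-discrete, a nonempty $\tau$-open subset of an irreducible $Z(K)$ cannot be contained in a proper Zariski closed subset, else its complement would be a nonempty $\tau$-open set disjoint from it, forcing $Z(K)$ to be $\tau$-disconnected into two algebraic pieces, contradicting irreducibility via density).

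For elimination of $\exists^\infty$: given a formula $\phi(x,\bar y)$ with $|x|=1$, I would write $\phi(x,b)$, for each parameter $b$, as a finite union $\bigcup_i U_i$ of definable $\tau$-open subsets of Zariski closed subsets $Z_i(K)\subset K$. By the density observation, each $U_i$ is infinite precisely when $Z_i$ is all of $\mathbb A^1$, i.e. $Z_i(K)=K$; otherwise $Z_i$ is a finite set of points and $U_i\subset Z_i(K)$ is finite. Thus $\phi(x,b)$ is infinite iff one of the constituent Zariski closed sets is cofinite, and this can be read off definably from $b$ using the uniform bound on the number of components and on degrees — here I would invoke a uniform version of Definition \ref{D: ez field}(2), or argue via saturation and compactness that the set of $b$ for which $\phi(x,b)$ is infinite is type-definable and co-type-definable, hence definable. (The $\aleph_1$-saturation hypothesis and countability of the language are exactly what make this compactness argument go through.)

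For exchange: suppose $c\in\acl(Ab)\setminus\acl(A)$; I must show $b\in\acl(Ac)$. The standard approach is to pass to the Zariski closures. Let $V$ be the Zariski closure of the (finite) set of $A$-conjugates of the locus relevant to $b$, and similarly track $c$; the containment $c\in\acl(Ab)$ together with the local QE forces $c$ to lie in a $\tau$-open, hence Zariski dense, subset of a curve in the fiber over $b$ of some $A$-definable family of varieties, so $\dim$-counting in the algebraically closed field $\overline{K}$ (or the ambient ACF structure) gives that $b$ and $c$ are algebraically interdependent over $A$ in the field-theoretic sense. Combining this with the fact that $\tau$-open definable subsets are Zariski dense — so model-theoretic $\acl$ in $\mathcal K$ coincides, on these configurations, with field-theoretic algebraic closure — yields $b\in\acl(Ac)$. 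I expect \textbf{this exchange argument to be the main obstacle}: one must be careful that `$\tau$-open subset of Zariski closed' really does pin down the algebraic dimension, and that adding the extra structure of $\mathcal K$ beyond the field does not create new algebraic dependencies that violate exchange — this is precisely where one uses that definable sets have no `thin' $\tau$-open pieces, i.e. the density lemma, to transfer the dimension theory faithfully from ACF. In practice, the cleanest route may be to first establish that for definable $X\subset K^n$ one has $\dim_{\mathcal K}(X)=\dim(Z)$ where $Z$ is the Zariski closure of $X$ (using that $X$ contains a $\tau$-open dense subset of the top-dimensional components of $Z$), and then deduce exchange as an immediate consequence of exchange for algebraic dimension of varieties.
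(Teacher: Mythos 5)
There are genuine gaps in both halves of your plan, and the paper's proof is quite different. For exchange, your strategy is to transfer the dimension theory from ACF by first establishing $\dim_{\mathcal K}(X)=\dim(Z)$ for the Zariski closure $Z$ and identifying $\acl_{\mathcal K}$-dependence with field-theoretic algebraic dependence. This is circular as stated: the acl-dimension of tuples is only well-defined and additive once exchange is known, and indeed the paper proves Lemma \ref{L: ez dim is Zar dim} (and Lemma \ref{L: open in smooth implies top dim}, which it relies on) only \emph{after} Lemma \ref{L: ez geometric}, using geometricity. Moreover, your ``key point'' — that a nonempty $\tau$-open subset of $Z(K)$ for irreducible $Z$ is Zariski dense — is false in the generality of the lemma, which covers non-algebraically closed examples such as real closed fields: an irreducible real curve like $y^2=x^2(x-1)$ has an isolated real point, which is a relatively open, non-dense subset (the density statement is Fact \ref{F: local dimension of irr var} and is only available when $K$ is algebraically closed). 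Nothing in your sketch rules out the possibility that the extra structure of $\mathcal K$ creates algebraic dependencies invisible to Zariski dimension. The paper's argument is short and avoids all of this: if exchange fails, one produces a definable $X\subset K^2$ whose projection to one coordinate is finite-to-one while the projection to the other has infinitely many infinite fibers; the latter forces $X$ to be Zariski dense in $\mathbb A^2$, so by Lemma \ref{L: infinite sets have open set} it has nonempty interior and hence contains a box $B_1\times B_2$ with both $B_i$ infinite open sets, contradicting the finite-to-one projection. This direct step (dense $\Rightarrow$ has interior $\Rightarrow$ contains an infinite box) is the missing idea in your outline.

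For elimination of $\exists^\infty$, your argument also does not go through as written. The ``uniform version of Definition \ref{D: ez field}(2)'' you want to invoke is not available: the paper explicitly remarks that condition (2) is not first-order without an extra uniformity hypothesis, and since the topology is only assumed invariant (not uniformly definable), ``$U$ is a $\tau$-open subset of a Zariski closed set'' is not a definable condition on parameters, so you cannot read off definably from $b$ which constituent is cofinite. Your fallback — that $\{b:\phi(K,b)\text{ infinite}\}$ is type-definable and co-type-definable, hence definable — is unjustified: infiniteness of the fiber is always type-definable and finiteness always ind-definable, and making the finite side type-definable is exactly the uniform bound you are trying to prove, so the argument is circular. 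The paper sidesteps this entirely by citing \cite{JohYe}, where it is shown that for an expansion of a field, exchange already implies elimination of $\exists^\infty$; thus only exchange needs a proof.
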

\begin{proof} It was shown in \cite{JohYe} that if an expansion of a field satisfies exchange, it also eliminates $\exists^{\infty}$. Thus, we only need to show that $\mathcal K$ satisfies exchange. But if not, then there is a definable $X\subset K^2$ so that (1) the left projection $X\rightarrow K$ is finite-to-one and (2) the right projection $X\rightarrow K$ has infinitely many infinite fibers. By (2), $X$ is Zariski dense in $\mathbb A^2$, so by Lemma \ref{L: infinite sets have open set}, $X$ has non-empty interior. Thus $X$ contains a set $B_1\times B_2$, where each $B_i\subset K$ is non-empty and open. Since $\tau$ is non-discrete, each $B_i$ is infinite. This contradicts (1).
\end{proof}

Now that we know $\mathcal K$ is geometric, we will use the notation $\dim$ on definable sets. We next show:

\begin{lemma}\label{L: open in smooth implies top dim} Let $V$ be a smooth variety over $K$. Let $X$ be a non-empty open definable subset of $V(K)$. Then $\dim(X)=\dim(V)$. In particular, $K$ is a large field.
\end{lemma}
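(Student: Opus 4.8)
The statement has two parts: first, that a non-empty $\tau$-open definable subset $X$ of $V(K)$, where $V$ is a smooth $K$-variety, has $\dim(X) = \dim(V)$; and second, that $K$ is large. The plan is to reduce to the affine case by working locally, then exploit the \'etale open topology axiom (Definition \ref{D: ez field}(3)) together with Lemma \ref{L: infinite sets have open set} and additivity of dimension in geometric structures (already available since $\mathcal K$ is geometric by Lemma \ref{L: ez geometric}).

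First I would observe that $\dim(X) \le \dim(V(K)) = \dim(V)$ is automatic: $X \subset V(K)$, and the dimension of $V(K)$ as an interpretable set in the geometric structure $\mathcal K$ equals the algebraic dimension of $V$ (this is a standard fact about fields with exchange — a definable finite-to-one map to $K^{\dim V}$ exists via a generically finite projection, and no smaller target works because $V(K)$ is Zariski dense in $V$, hence infinite when $\dim V \ge 1$). So the content is the reverse inequality $\dim(X) \ge \dim(V)$. For this, let $n = \dim(V)$ and fix a point $x \in X$. Since $V$ is smooth of dimension $n$, there is a Zariski-open affine neighborhood $U$ of $x$ in $V$ admitting an \'etale morphism $f : U \to \mathbb{A}^n$ (smoothness of relative dimension $0$ onto affine space is exactly \'etaleness; such a standard \'etale coordinate chart exists étale-locally, and after shrinking $U$ we may take it over $K$ possibly after base change, but since everything in sight is definable we can instead argue with the induced map on $K$-points directly). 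By Definition \ref{D: ez field}(3), the induced map $U(K) \to \mathbb{A}^n(K) = K^n$ is $\tau$-open. Now $X \cap U(K)$ is a non-empty $\tau$-open subset of $U(K)$ (it is non-empty since $x$ lies in it, after possibly moving $x$; here one uses that $X$ is open and $U(K)$ is open in $V(K)$ because $U$ is Zariski-open and $\tau$ refines the Zariski topology, as noted in the remark following Assumption). Hence its image $f(X \cap U(K))$ is a non-empty $\tau$-open subset of $K^n$, so it contains a non-empty box $B_1 \times \cdots \times B_n$ with each $B_i \subset K$ open, hence infinite (as $\tau$ is non-discrete); by additivity $\dim(B_1 \times \cdots \times B_n) = n$. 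Since $f$ restricted to $U(K)$ is finite-to-one (étale morphisms are quasi-finite), $\dim(X \cap U(K)) \ge \dim(f(X \cap U(K))) = n$, and so $\dim(X) \ge n$.

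For the "in particular" clause, to see $K$ is large: let $W$ be a smooth variety over $K$ with $W(K) \neq \emptyset$, and pick $w \in W(K)$. Take $V$ to be a Zariski-open affine neighborhood of $w$ in $W$, still smooth; then $V(K)$ is a non-empty $\tau$-open definable subset of itself (trivially open), so by the first part $\dim(V(K)) = \dim(V) = \dim_w W \ge 1$ provided $W$ has a component through $w$ of positive dimension; in any case $V(K)$ is infinite whenever $\dim V \ge 1$, which is what largeness requires — $W(K)$ is Zariski dense in the component of $w$. Strictly, largeness asks: if $V$ is smooth over $K$ and $V(K) \neq \emptyset$ then $V(K)$ is infinite (equivalently Zariski dense in some component); this follows since $\dim(V(K)) = \dim(V) \ge 1$ forces $V(K)$ infinite, and an infinite definable subset that is $\tau$-dense (being all of $V(K)$) must be Zariski dense in a component by the dimension count.

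\textbf{Expected main obstacle.} The delicate point is producing, at a given $K$-point $x$ of the smooth variety $V$, an \'etale chart $f : U \to \mathbb{A}^n$ defined over $K$ (or at least a morphism whose $K$-points map $\tau$-openly onto $K^n$), since the standard existence of étale coordinates is étale-local and a priori requires a base change. I expect this is handled either by a direct argument (smooth varieties over a field admit Zariski-local étale maps to affine space after possibly a finite separable extension, which is harmless here because we only need the openness of the map on $K$-points and $\tau$ restricts compatibly to $V(K')$ — or one uses that $K$ is perfect, Lemma \ref{L: ez perfect}(1), to get the chart over $K$ itself on the smooth locus), or by invoking a lemma proved elsewhere in the paper about étale charts on smooth loci. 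The rest of the argument is routine manipulation of $\tau$-openness, non-discreteness, and dimension additivity.
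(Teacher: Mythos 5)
Your proposal is correct and follows essentially the same route as the paper: at a point of $X$ (which is a smooth $K$-rational point of $V$) take a map to $\mathbb{A}^n$ that is \'etale near that point, invoke Definition \ref{D: ez field}(3) to get a $\tau$-open image in $K^n$, extract a box $B_1\times\cdots\times B_n$ of infinite (hence one-dimensional) factors, and conclude via finite-to-oneness of the \'etale map; the obstacle you flag is resolved exactly as in the paper, since at a smooth $K$-rational point one can choose local parameters over $K$ itself, giving a rational map $V\dashrightarrow\mathbb{A}^n$ defined and \'etale there with no base change. One caution: do not lean on the claimed equality $\dim(V(K))=\dim(V)$ as ``automatic'' --- only the easy upper bound $\dim(X)\le\dim(V)$ is available at this stage, since the equality is precisely Lemma \ref{L: ez dim is Zar dim}, which is proved later and uses the present lemma.
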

\begin{proof} It is easy to see that $\dim(X)\leq\dim(V)$. We show the reverse inequality. Let $a\in X$, and $n=\dim(V)$. Using that $a$ is a smooth point of $V$, one can show there is a rational map $f:V\rightarrow\mathbb A^n$ which is defined and \'etale at $a$. Let $U\subset V$ be the (necessarily relatively Zariski open) \'etale locus of $f$. Without loss of generality, we may assume $X\subset U(K)$. Now by Definition \ref{D: ez field}(3), the image $\pi(X)\subset K^n$ is non-empty and open, so contains a non-empty definable box $B_1\times...\times B_n$, where each $B_i\subset K$ is open. Since $\tau$ is non-discrete, each $B_i$ is infinite. Thus $\dim(B_1\times...\times B_n)=n$. Finally, since $\pi$ is \'etale on $U$ it is finite-to-one, and thus preserves dimension. So $$\dim(X)=\dim(f(X))\geq\dim(B_1\times...\times B_n)=n.$$
\end{proof}

The next three lemmas give the main technical tools we will need to show that $(\mathcal K,\tau)$ is a Hausdorff geometric structure. We will use the following notion:

\begin{definition}
    Let $X\subset K^n$ be definable, with Zariski closure $Z$. Let $Z^S$ be the smooth locus of $Z$. Let $x\in Z(K)$. We call $x$ \textit{polished for $X$} if there is an open neighborhood $U$ of $x$ in $Z(K)$ such that:
    \begin{enumerate}
        \item $U\subset Z^S(K)$.
        \item $U$ is either contained in $X$ or disjoint from $X$.
    \end{enumerate}
\end{definition}
\begin{lemma}\label{L: ez main lemma first version} Let $X\subset K^n$ be definable, with Zariski closure $Z$. Then there is a closed subvariety $T\subset Z$ with $\dim(T)<\dim(Z)$, such that every $x\in Z(K)-T(K)$ is polished for $X$.
\end{lemma}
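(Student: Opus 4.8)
The plan is to build the ``bad locus'' $T$ explicitly from the \'ez decomposition of $X$, using a \emph{second} application of the \'ez axiom (Definition \ref{D: ez field}(2)) to control the frontier of $X$ without ever having to form a topological closure as a definable set. First I would fix a decomposition $X=\bigcup_{i=1}^m U_i$ with each $U_i$ a nonempty $\tau$-open subset of $B_i(K)$ for Zariski closed sets $B_i\subseteq K^n$, and write $Z=Z_1\cup\dots\cup Z_r$ for the decomposition of $Z$ into its $K$-irreducible components; each $Z_k$ is cut out by polynomials over $K$, so $Z_k(K)$ is definable, and, being Zariski closed, is $\tau$-closed (as $\tau$ is a field topology, it refines the Zariski topology). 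Since $K$ is perfect (Lemma \ref{L: ez perfect}), $Z$ is generically smooth, so its singular locus $Z^{\mathrm{sing}}$ is a closed subvariety with $\dim Z^{\mathrm{sing}}<\dim Z$; I put $Z^{\mathrm{sing}}$ into $T$. Next, for each pair $(i,k)$ with $Z_k\not\subseteq B_i$, the intersection $B_i\cap Z_k$ is a proper closed subvariety of the irreducible $Z_k$, so has dimension $<\dim Z_k\le\dim Z$, and $U_i\cap Z_k(K)\subseteq(B_i\cap Z_k)(K)$; I put all such $B_i\cap Z_k$ into $T$.

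The delicate pairs are those with $Z_k\subseteq B_i$: then $U_i\cap Z_k(K)$ is genuinely $\tau$-open in $Z_k(K)$, but its frontier there is not visibly definable. For these I would apply the \'ez axiom a second time to the definable set $Z_k(K)\setminus U_i$, writing it as a finite union $\bigcup_j W_{ikj}$ of $\tau$-open subsets of Zariski closed sets; letting $D_{ikj}\subseteq Z_k$ be the Zariski closure of $W_{ikj}$, each $W_{ikj}$ is $\tau$-open in $D_{ikj}(K)$ and Zariski dense in $D_{ikj}$. I put into $T$ every $D_{ikj}$ with $D_{ikj}\subsetneq Z_k$ (again of dimension $<\dim Z$); the remaining pieces, those with $D_{ikj}=Z_k$, are then honest nonempty $\tau$-open subsets of $Z_k(K)$ that are disjoint from $U_i$. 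The resulting $T$ is a finite union of closed subvarieties of $Z$ each of dimension $<\dim Z$, hence a closed subvariety with $\dim T<\dim Z$, in particular $T\subsetneq Z$.

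It then remains to verify that every $x\in Z(K)\setminus T(K)$ is polished for $X$. Since $x\notin Z^{\mathrm{sing}}(K)$, the local ring $\mathcal O_{Z,x}$ is regular, so $x$ lies on a unique component $Z_k$, $Z$ agrees with $Z_k$ on a Zariski-open neighborhood of $x$, and $x\in Z^S(K)$; removing from $Z(K)$ the $\tau$-closed sets $Z^{\mathrm{sing}}(K)$ and $\bigcup_{l\ne k}Z_l(K)$ yields a $\tau$-neighborhood $U$ of $x$ contained in $Z^S(K)\cap Z_k(K)$, which already secures clause (1) of polishedness. For clause (2) I use a dichotomy. If $x\in U_i$ for some $i$, then necessarily $Z_k\subseteq B_i$ (otherwise $x\in(B_i\cap Z_k)(K)\subseteq T(K)$), so $U_i\cap Z_k(K)$ is $\tau$-open in $Z_k(K)$ around $x$, and intersecting it with $U$ gives a neighborhood of $x$ in $Z(K)$ contained in $U_i\subseteq X$. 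If $x\notin U_i$ for every $i$, then for each $i$ with $Z_k\subseteq B_i$ we have $x\in Z_k(K)\setminus U_i$, so $x$ lies in some $W_{ikj}$, which, since $x\notin T(K)$, must have $D_{ikj}=Z_k$ and hence is a $\tau$-neighborhood of $x$ in $Z_k(K)$ missing $U_i$; and for each $i$ with $Z_k\not\subseteq B_i$, the $\tau$-closed set $(B_i\cap Z_k)(K)$ contains $U_i\cap Z_k(K)$, lies in $T(K)$, and so avoids $x$. Intersecting these finitely many neighborhoods with $U$ produces a neighborhood of $x$ in $Z(K)$ disjoint from $X$. Either way $x$ is polished, completing the proof.

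The main obstacle is precisely the non-definability of $\tau$: one cannot directly speak of the frontier of $U_i$ (or of $X$) as a definable set whose dimension can be bounded. The device that circumvents this is to avoid topological closures entirely and instead feed the \emph{complement} $Z_k(K)\setminus U_i$ — which is definable — back into the \'ez decomposition, absorbing into $T$ exactly those pieces of the decomposition that are not Zariski dense in $Z_k$. Apart from this, the argument rests only on routine algebraic geometry (properness and lower dimensionality of the singular locus over the perfect field $K$, via Lemma \ref{L: ez perfect}, and the irreducible-component bookkeeping) together with the elementary fact that Zariski closed sets are $\tau$-closed.
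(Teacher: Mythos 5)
Your proof is correct and follows essentially the same route as the paper's: apply the \'ez decomposition to $X$ and to its complement inside $Z(K)$ (the paper does the latter via a second decomposition and a symmetry remark), absorb the singular locus and every piece whose Zariski closure has dimension $<\dim Z$ into $T$, and note that at a point off $T(K)$ the surviving piece must be Zariski dense in the unique irreducible component through that point, hence genuinely $\tau$-open in $Z^S(K)$ there. Your per-component, per-pair decomposition of $Z_k(K)\setminus U_i$ is only a more explicit bookkeeping of the same idea.
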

\begin{proof}
    By Definition \ref{D: ez field}(2), we can write $X=\bigcup_{i=1}^mU_i$ and $Z(K)=\bigcup_{j=1}^kV_j$, where each $U_i$ (resp. $V_j$) is a definable open subset of $Z_i(K)$ (resp. $W_j(K)$) for some affine varieties $Z_i$ and $W_j$ over $K$. Refining the decomposition if necessary, one shows easily that we may assume each $Z_i$ and $W_j$ is irreducible over $K$ and contained in $Z$ (note that irreducibility is only ensured over $K$, so we cannot assume absolute irreducibility).

    Now, since $K$ is perfect, varieties over $K$ are generically smooth. That is, if $Z^S$ is the (Zariski open) smooth locus of $Z$, then $\dim(Z-Z^S)<\dim(Z)$. We now define $T$ to be the union of $Z-Z^S$ with all those $Z_i$ and $W_j$ of dimension less than $\dim(Z)$. Clearly, then, $\dim(T)<\dim(Z)$.

    To see this works, let $x\in Z(K)-T(K)$. We show that $x$ is polished for $X$. Without loss of generality, assume $x\in X$ (the case $x\in Z-X$ is symmetric). So $x\in U_i$ for some $i$. By the choice of $T$, $\dim(Z_i)\geq\dim(Z)$. But by assumption, $Z_i$ is an irreducible closed subvariety of $Z$. It follows that $Z_i$ is an irreducible component of $Z$.
    
    Recall that to show $x$ is polished, we need to find a neighborhood of $x$ (in $Z(K)$) which is contained in $X\cap Z^S(K)$. We claim that $U=U_i\cap Z^S(K)$ works. Clearly $U\subset X\cap Z^S(K)$, so all we need to show is that $U$ is relatively open in $Z(K)$.

    For this, we establish the following three claims:
    
    \begin{claim} $U$ is relatively open in $Z_i(K)\cap Z^S(K)$.
    \end{claim}
    \begin{claimproof} By assumption, $U_i$ is open in $Z_i(K)$. Thus, automatically, $U=U_i\cap Z^S(K)$ is open in $Z_i(K)\cap Z^S(K)$.
    \end{claimproof}
    
    \begin{claim} $Z_i(K)\cap Z^S(K)$ is relatively open in $Z^S(K)$.
    \end{claim}
    \begin{claimproof} Note that (by generic smoothness and the fact that $\dim(Z_i)=\dim(Z)$) $Z_i\cap Z^S$ is an irreducible component of the smooth variety $Z^S$. Since smooth Zariski-connected varieties are irreducible, the irreducible components of $Z^S$ are relatively Zariski open. Thus $Z_i\cap Z^S$ is Zariski open in $Z^S$; and since $\tau$ refines the Zariski topology, $Z_i(K)\cap Z^S(K)$ is relatively open in $Z^S(K)$.
    \end{claimproof}
    
    \begin{claim} $Z^S(K)$ is relatively open in $Z(K)$.
    \end{claim}
    \begin{claimproof} Since $\tau$ refines the Zariski topology, it is enough to note that $Z^S$ is automatically Zariski open in $Z$.
    \end{claimproof}
    
    By the three claims, $U$ is relatively open in $Z(K)$, completing the proof of Lemma \ref{L: ez main lemma first version}.
\end{proof}

We now use Lemma \ref{L: ez main lemma first version} to characterize the dimension function of $\mathcal K$:

\begin{lemma}\label{L: ez dim is Zar dim} Let $X\subset K^n$ be definable, with Zariski closure $Z$. Then $\dim(X)=\dim Z(K)=\dim Z$.
\end{lemma}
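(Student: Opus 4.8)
The plan is to establish the two chains of inequalities $\dim(X)\le\dim Z(K)\le\dim Z$ and $\dim Z\le\dim(X)$, which together force all three quantities to coincide. Here $Z(K)$ is a definable set (it is Zariski closed, hence cut out by polynomials over $K$; see also Lemma~\ref{L: zariski closure definable}), so its dimension in the geometric structure $\mathcal K$ (Lemma~\ref{L: ez geometric}) makes sense, and the first inequality $\dim(X)\le\dim Z(K)$ is immediate from $X\subseteq Z(K)$. The case $X=\emptyset$ is trivial, so assume $X\neq\emptyset$.

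For the remaining two inequalities I would use Noether normalization. Write $d=\dim Z$; there is a finite surjective morphism $\pi\colon Z\to\mathbb A^d$ of $K$-varieties, and the induced map $\pi_K\colon Z(K)\to K^d$ is finite-to-one, since each scheme-theoretic fibre of $\pi$ over a $K$-point is the spectrum of a finite $K$-algebra and so has only finitely many $K$-points. Consequently the graph of $\pi_K$ is a finite correspondence between $Z(K)$ and $\pi_K(Z(K))\subseteq K^d$, so that $\dim Z(K)=\dim\pi_K(Z(K))\le\dim K^d=d$, which gives the second inequality. For the third, recall that $X$ is Zariski dense in $Z$ by minimality of the Zariski closure, i.e.\ $X$ is contained in $V(K)$ for no proper closed subvariety $V\subsetneq Z$. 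I claim $\pi_K(X)$ is then Zariski dense in $\mathbb A^d$: if not, then $\pi_K(X)\subseteq Y(K)$ for some proper closed $Y\subsetneq\mathbb A^d$, whence $X\subseteq\pi^{-1}(Y)(K)$; but $\pi$ is surjective, so $\pi^{-1}(Y)$ is a proper closed subvariety of $Z$, contradicting Zariski density of $X$ in $Z$. Now Lemma~\ref{L: infinite sets have open set} applies to $\pi_K(X)$: it has non-empty $\tau$-interior in $K^d$, hence contains a product of non-empty $\tau$-open subsets of $K$, each infinite since $\tau$ is non-discrete, so $\dim\pi_K(X)=d$. Finally $\pi_K$ restricted to $X$ is finite-to-one onto $\pi_K(X)$, so $\dim(X)=\dim\pi_K(X)=d$. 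Combining the three inequalities yields $\dim(X)=\dim Z(K)=\dim Z$.

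I do not expect a serious obstacle: the only points needing care are the passages between the scheme-theoretic and the $K$-point pictures — that a finite morphism is finite-to-one on $K$-points, that surjectivity of $\pi$ makes $\pi^{-1}(Y)$ a proper subvariety of $Z$, and that Zariski density of $X$ in $Z$ is preserved under $\pi$ — all routine once Noether normalization is in hand. (There is an alternative, slightly longer route that avoids Noether normalization and instead invokes Lemma~\ref{L: ez main lemma first version} together with Lemma~\ref{L: open in smooth implies top dim}: one enlarges the bad locus $T$ so that every point of $Z(K)-T(K)$ not only is polished for $X$ but also lies on a top-dimensional component of $Z$, uses Zariski density to place such a point $x$ inside $X$, shrinks the polished neighbourhood to a definable basic open $U$ contained in $X$ and in the $K$-points of the unique connected component $V$ of the smooth locus through $x$ — a smooth irreducible variety of dimension $d$ — and concludes $\dim(X)\ge\dim(U)=\dim V=d$ by Lemma~\ref{L: open in smooth implies top dim}. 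I would present the Noether argument as the primary one since it is shorter and uses fewer of the preceding lemmas.)
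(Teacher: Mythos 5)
Your proof is correct, but your primary argument takes a genuinely different route from the paper. The paper dispenses with $\dim(X)\le\dim Z(K)\le\dim Z$ as ``easy'' and concentrates on $\dim(X)\ge\dim Z$, which it obtains from the polishing machinery: it takes the exceptional locus $T$ of Lemma \ref{L: ez main lemma first version}, uses Zariski density of $X$ in $Z$ to find a point of $X$ outside $T(K)$, concludes that $X$ contains a nonempty open subset of $Z^S(K)$, and finishes with Lemma \ref{L: open in smooth implies top dim} together with generic smoothness over perfect fields (Lemma \ref{L: ez perfect}). Your Noether-normalization argument bypasses the smooth locus entirely: the finite surjective $\pi\colon Z\to\mathbb A^d$ induces a definable finite-to-one map on $K$-points, so finite correspondences transfer all three dimensions to $\mathbb A^d$, where only Lemma \ref{L: infinite sets have open set} is needed; as a bonus it also \emph{proves} the inequality $\dim Z(K)\le\dim Z$ rather than taking it as read, and it avoids any appeal to perfectness or generic smoothness (your parenthetical alternative is, essentially, the paper's actual proof, stated with a bit more care about components). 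One small point to tighten: from ``$\pi_K(X)$ has nonempty interior'' you pass to a product of infinite open subsets of $K$ and immediately read off dimension $d$; dimension only applies to definable sets, so you should take the box to consist of \emph{definable} basic opens, which is possible because $\tau$ is invariant and hence has a definable basis -- this is exactly the move made inside the paper's proof of Lemma \ref{L: open in smooth implies top dim}, so it is a one-line repair rather than a gap.
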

\begin{proof} It is easy to see that $\dim(X)\leq\dim(Z(X))\leq\dim(Z)$. We show that $\dim(X)\geq\dim(Z)$.

Let $T$ be a relatively closed subvariety as in Lemma \ref{L: ez main lemma first version}. By the choice of $Z$, $X$ is Zariski dense in $Z$. In particular, there is some $x\in X-T(K)$. So, by the choice of $T$, $x$ is polished for $X$. It follows by definition that $X$ contains a non-empty open subset of $Z^S(K)$, where $Z^S$ is the smooth locus of $Z$. So by Lemma \ref{L: open in smooth implies top dim}, $\dim(X)\geq\dim(Z^S)$. But by generic smoothness over perfect fields, $\dim(Z^S)=\dim(Z)$, completing the proof.
\end{proof}

Finally, we deduce our main technical lemma:

\begin{lemma}\label{L: ez main lemma second version} Let $X\subset K^n$ be definable over a countable set $A$. Let $Z$ be the Zariski closure of $X$. If $x\in Z(K)$ is generic over $A$, then $x$ is polished for $X$.
\end{lemma}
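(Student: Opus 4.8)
The plan is to deduce this from Lemma~\ref{L: ez main lemma first version} together with an automorphism-invariance argument of the same flavour as the proof of Lemma~\ref{L: zariski closure definable}. Write $d=\dim Z$. By Lemma~\ref{L: ez dim is Zar dim} we have $d=\dim Z(K)=\dim X$, and since $x\in Z(K)$ is generic over $A$, $\dim(x/A)=d$. Let $N\subseteq Z(K)$ denote the set of points of $Z(K)$ that are \emph{not} polished for $X$; the goal is to show $x\notin N$.

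The first key point is that $N$ is controlled by an $\acl(A)$-definable set of dimension $<d$. On the one hand, by Lemma~\ref{L: ez main lemma first version} there is a closed subvariety $T\subsetneq Z$ with $\dim T<d$ and $N\subseteq T(K)$; hence the Zariski closure $T':=\overline{N}^{\,\mathrm{Zar}}$ is a subvariety of $Z$ with $\dim T'\le\dim T<d$, and $N$ is Zariski dense in $T'$ with $N\subseteq T'(K)$. On the other hand, $N$ is invariant under every automorphism of $\mathcal K$ fixing $A$ pointwise: $X$ is $A$-definable, its Zariski closure $Z$ and the smooth locus $Z^S$ are $\operatorname{Aut}(\mathcal K/A)$-invariant (as in Lemma~\ref{L: zariski closure definable}), and the topology $\tau$ is preserved by all automorphisms of $\mathcal K$ since $\mathcal K$ is invariant (Definition~\ref{D: invariant}); so ``being polished for $X$'' is an $\operatorname{Aut}(\mathcal K/A)$-invariant condition on points of $Z(K)$. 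Because $N$ is Zariski dense in $T'$, it follows that the variety $T'$ is itself $\operatorname{Aut}(\mathcal K/A)$-invariant, and therefore -- arguing exactly as for Lemma~\ref{L: zariski closure definable}, by passing to a saturated elementary extension and detecting the (finitely generated) field of definition of $T'$ via automorphisms -- the set $T'(K)$ is definable over $\acl(A)$.

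Granting this, the conclusion is immediate: if $x$ were not polished for $X$ then $x\in N\subseteq T'(K)$, so $\dim(x/A)=\dim(x/\acl(A))\le\dim T'(K)\le\dim T'<d$, contradicting $\dim(x/A)=d$. Hence $x$ is polished for $X$.

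The step I expect to be the main obstacle is the passage ``$\operatorname{Aut}(\mathcal K/A)$-invariant subvariety $\Rightarrow$ $\acl(A)$-definable set of $K$-points'', because $\mathcal K$ is only assumed $\aleph_1$-saturated and the property ``polished'' is genuinely topological, hence does not transfer to elementary extensions. The resolution is that one only needs to track the algebro-geometric object $T'$: its field of definition is finitely generated and is determined by the invariant Zariski-dense set $N$, so the argument of Lemma~\ref{L: zariski closure definable} applies to $T'$ without any reference to $\tau$ in the extension. As an alternative route available in all the concrete examples (in particular ACVF, and any \'ez field carrying the \'etale open topology), one notes that $\tau$ then has an ind-definable basis, so ``$x$ is polished for $X$'' is a countable disjunction of $A$-formulas in $x$; thus $N$ is type-definable over $A$, and $\dim N\le\dim T(K)<d$ forces $\dim(x/A)\le\dim N<d$ directly.
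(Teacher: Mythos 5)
Your reduction to Lemma~\ref{L: ez main lemma first version} and the final dimension count are fine, but the central step --- passing from $\operatorname{Aut}(\mathcal K/A)$-invariance of $N$ (equivalently, of $T'$) to $\acl(A)$-definability of $T'(K)$ --- has a genuine gap. The structure $\mathcal K$ is only assumed $\aleph_1$-saturated, not strongly homogeneous, so invariance under the automorphism group of this \emph{fixed} model carries no definability information (that group could be very small, even trivial). The maneuver in Lemma~\ref{L: zariski closure definable} is precisely to pass to a monster model, which is legitimate there because the objects involved are definable and the statement makes no reference to $\tau$; here the set $N$, and hence $T'$, is defined through the topology, which does not transfer to elementary extensions (this is exactly the caveat in the Warning following Definition~\ref{D: easy axioms}). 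Your proposed fix --- that the finitely generated field of definition of $T'$ is ``determined by $N$'', so the argument applies ``without reference to $\tau$ in the extension'' --- does not repair this: in a monster model there is no $\operatorname{Aut}$-invariant description of $T'$ to work with, since the only thing pinning down its canonical parameter is $N$, which lives solely in $\mathcal K$; so one cannot conclude that this parameter is fixed by automorphisms over $A$ of the extension. Note that ``invariance'' in Definition~\ref{D: invariant} is deliberately a statement about formulas and types \emph{within} the fixed model, not about automorphisms, precisely so that it is usable without homogeneity. Your fallback via an ind-definable basis is essentially sound where it applies (ACVF, \'etale open topologies), but it proves less than the stated lemma (which assumes only an invariant basis), and it requires the basis families to be $\emptyset$- (or $A$-)definable so that ``polished for $X$'' is a countable disjunction of $A$-formulas; otherwise you only bound $\dim(x/A')$ for a larger parameter set $A'$, which does not contradict genericity of $x$ over $A$.

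For comparison, the paper's proof avoids any definability claim about the bad locus: take $T$ as in Lemma~\ref{L: ez main lemma first version} and a countable $B\supseteq A$ over which $T(K)$ is definable, and choose $y\models\tp(x/A)$ independent from $B$ over $A$. Then $\dim(y/B)=\dim(x/A)=\dim(Z)>\dim(T(K))$, so $y\notin T(K)$ and $y$ is polished, witnessed by a definable neighborhood $\phi(z,b)$. Since $\tp(x/A)=\tp(y/A)$, saturation yields $a$ with $\tp(xa/A)=\tp(yb/A)$, and the invariance of $\tau$ in the sense of Definition~\ref{D: invariant}(2) guarantees that $\phi(z,a)$ is again open and hence witnesses that $x$ is polished. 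This transfers a single witness between two realizations of the same type inside the fixed model, using no automorphisms and no closure properties of the non-polished set, which is exactly what the weak hypotheses permit.
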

\begin{proof}
  Let $T$ be a closed subvariety of $Z$ as in Lemma \ref{L: ez main lemma first version}. Let $B\supset A$ be countable so that $T(K)$ is definable over $B$.

  Let $y$ be an independent realization of $\tp(x/A)$ over $B$. So $y\in Z(K)$, and by Lemma \ref{L: ez dim is Zar dim}, we have $$\dim(y/B)=\dim(x/A)=\dim(Z)>\dim(T)\geq\dim(T(K)),$$ so that $y\notin T(K)$. Thus $y$ is polished for $X$. Let $\phi(z,b)$ (for $z$ in the arity of $Z(K)$) be a formula defining a neighborhood of $y$ witnessing that $y$ is polished for $X$. Since $\tp(x/A)=\tp(y/A)$, there is $a$ with $\tp(xa/A)=\tp(yb/A)$. Then by the invariance of $\tau$ (Definition \ref{D: invariant}), it follows that $\phi(z,a)$ defines a neighborhood of $x$ witnessing that  $x$ is polished for $X$.
\end{proof}

\begin{remark}\label{R: ez definable sets generically smooth}
    Lemma \ref{L: ez main lemma second version} says that, near generic points, we can treat definable sets as smooth varieties. More precisely, if $x\in X$ is generic over $A$, then $x$ is also generic in $S(K)$ for some smooth variety $S$ having the same dimension and germ as $X$ at $x$. Thus, if we are only interested in d-local properties of $(X,x)$, we may replace $X$ with $S(K)$. If we are also willing to pass from $A$ to $\acl(A)$ (which does not affect the genericity of $x$), then we may further assume $S$ is irreducible over $A$. This will be our main strategy moving forward.
\end{remark}

\subsection{Homeomorphisms and Reduction to Generically Smooth Maps}

One of the difficulties of the trichotomy in the positive characteristic case is the existence of finite everywhere-ramified morphisms. As has become standard, we will get around this problem by decomposing an arbitrary map into a universal homeomorphism composed with a generically smooth map. Then, in studying topological properties, we can pay attention only to the generically smooth part.

Such a decomposition was given explicitly in~\cite{ez}, using the reduced relative Frobenius construction. This is not quite good enough for us, because we need the universal homeomorphism to preserve normality. We thus give a more intricate argument:

\begin{lemma}\label{L: relative frobenius} Let $V\rightarrow W$ be a dominant morphism of irreducible varieties over $K$. Then there are an irreducible variety $V'$, and a factorization $V\rightarrow V'\rightarrow W$, such that:
\begin{enumerate}
    \item $V\rightarrow V'$ is a universal homeomorphism, and $V(K)\rightarrow V'(K)$ is a $\tau$-homeomorphism.
    \item $V'\rightarrow W$ is generically smooth.
    \item If $V$ is normal then so is $V'$.
\end{enumerate}
\end{lemma}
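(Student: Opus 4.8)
The plan is to reduce the statement to the separable-versus-inseparable dichotomy for function field extensions, and then glue the resulting ``purely inseparable'' part into a universal homeomorphism that preserves normality. Concretely, let $L = K(V)$ and $F = K(W)$, viewed via the dominant map $V \to W$ as an extension $F \hookrightarrow L$ of finitely generated field extensions of $K$. Since $K$ is perfect (Lemma \ref{L: ez perfect}), the separable closure $F^{\mathrm{sep}}$ of $F$ inside $L$ — meaning the relative separable algebraic closure, or rather the maximal subextension $F \subseteq E \subseteq L$ with $L/E$ purely inseparable and $E/F$ separable in the sense of being separably generated — exists, and $L/E$ is a finite purely inseparable extension (finiteness uses that $L/F$ is finitely generated). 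So $L = E(a_1,\dots,a_r)$ with $a_i^{p^{n_i}} \in E$ for suitable $n_i$, where $p = \operatorname{char}(K)$ (if $\operatorname{char}(K) = 0$ the whole statement is trivial: take $V' = V$). The idea is that $E$ is the function field of the desired intermediate variety $V'$.

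First I would realize $E$ geometrically. Replacing $V$ and $W$ by suitable affine opens (which is harmless since we only need the factorization generically, and the statement only asks for generic smoothness of $V' \to W$), I may assume $V = \operatorname{Spec} B$ and $W = \operatorname{Spec} A$ are affine with $\operatorname{Frac}(B) = L$, $\operatorname{Frac}(A) = F$. Take $V' = \operatorname{Spec} B'$ where $B'$ is the normalization of $A$ inside $E$ if $V$ is normal, and otherwise simply $B' = B \cap E$ (or any finite-type $A$-subalgebra of $B$ with fraction field $E$ over which $B$ is module-finite — such exists by clearing denominators in the $a_i$ and the relations $a_i^{p^{n_i}} \in E$). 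Then $V \to V' \to W$ is the required factorization at the level of rings, and $V'$ is irreducible with function field $E$. By construction $V \to V'$ is finite and purely inseparable on function fields, hence a universal homeomorphism: a finite morphism that is a bijection on geometric points and induces purely inseparable residue extensions is a universal homeomorphism (this is standard; cf. \cite[Tags 01S4, 04DF]{stacks-project}). Since $E/F$ is separably generated and $K$ is perfect, $E/K$ is separable, so $V'$ is geometrically reduced and generically smooth over $K$; but we want generic smoothness over $W$, which follows because $E/F$ is a separable (separably generated) extension of function fields, so $V' \to W$ is smooth on a dense open. For clause (3), if $V$ is normal then $B$ is integrally closed in $L$, so $B \cap E$ is integrally closed in $E$; more carefully, taking $B'$ to be the integral closure of $A$ in $E$ makes $V'$ normal outright, and since $B$ is integral over $A$ hence over $B'$ with the same fraction field situation, the map $V \to V'$ is still finite purely inseparable, hence a universal homeomorphism.

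The remaining point is clause (1)'s assertion that $V(K) \to V'(K)$ is a $\tau$-homeomorphism (not merely that the scheme map is a universal homeomorphism). Here I would argue as follows: the morphism $V \to V'$ is finite and radicial (universally injective), so on $K$-points it is injective; it is surjective on $K$-points because $K$ is perfect and the map is purely inseparable — given $x' \in V'(K)$, the fiber is a single point with residue field a purely inseparable extension of $K$, hence equal to $K$ since $K$ is perfect, so $x' \in \operatorname{im}(V(K))$. Thus $V(K) \to V'(K)$ is a bijection. It is continuous because morphisms of $K$-varieties induce $\tau$-continuous maps on $K$-points (noted in the remark after Lemma \ref{L: acvf ez}). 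For openness, I would use that a purely inseparable finite morphism is, Zariski-locally on the source and target, up to a power of Frobenius, an isomorphism — precisely, étale-locally it is dominated by relative Frobenius — combined with Lemma \ref{L: ez perfect}(2), which says the absolute Frobenius $x \mapsto x^p$ is a $\tau$-homeomorphism of $K$, hence so is any coordinatewise power of Frobenius on $K^n$; pulling this back through the (open, by Definition \ref{D: ez field}(3)) étale charts gives that $V(K) \to V'(K)$ is open. I expect \textbf{this last step — transferring ``universal homeomorphism of schemes'' to ``$\tau$-homeomorphism on $K$-points'' — to be the main obstacle}, since it is exactly where one must marry the scheme-theoretic structure of purely inseparable maps with the concrete behavior of the topology $\tau$, and the cleanest route is to reduce to the relative Frobenius (as in \cite{ez}) and invoke Lemma \ref{L: ez perfect}(2), but one has to be careful that the normalization step in clause (3) does not destroy the explicit Frobenius description. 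A fallback is to prove openness directly: the map is finite, so closed; it is a continuous closed bijection, hence a homeomorphism, provided one knows $V'(K)$ carries the quotient topology — which again reduces to the Frobenius picture plus Lemma \ref{L: ez perfect}(2).
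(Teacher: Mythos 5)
Your overall strategy (split $L=K(V)$ over $F=K(W)$ into a separably generated piece with a purely inseparable piece on top) is in the same spirit as the paper's use of the iterated reduced relative Frobenius, but several steps do not go through as written. First, the existence of an intermediate field $E$ with $E/F$ separably generated and $L/E$ finite purely inseparable is \emph{not} given by "the separable closure of $F$ inside $L$" or by perfectness of $K$: relative separable algebraic closure only handles algebraic elements, and for transcendental extensions the needed statement is precisely that $F\,L^{p^n}/F$ is separably generated for $n\gg 0$ -- which is the nontrivial content of the facts from \cite{ez} that the paper cites (and "maximal such $E$" is neither needed nor obviously well-defined). Second, your geometric model for $E$ is flawed: the integral closure of $A$ in $E$ has the wrong fraction field as soon as $\operatorname{trdeg}(E/F)>0$ (its fraction field is algebraic over $F$), and the lemma \emph{is} applied in the paper to morphisms of positive relative dimension (Lemma \ref{L: purity of multiple intersections 1} applies it to $Y\rightarrow X$ and $Z\rightarrow X$); $B\cap E$ need not be of finite type; and "replace $V,W$ by affine opens, harmless" is not harmless, because the lemma is a global statement about the given $V$ -- the purity application (Theorem \ref{T: top purity}) needs a universal homeomorphism from all of $V$ onto a \emph{normal} $V'$, at every point of the ramification locus, not just over a dense open. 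Relatedly, clause (3) is where the paper does its real work (normalize, observe the image of $V$ in the normalization is open and normal, apply the reduced relative Frobenius a second time, and use that a birational universal homeomorphism onto a normal variety is an isomorphism); your proposed fix via "integral closure of $A$ in $E$" does not produce a variety with function field $E$, so normality is not obtained.

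Finally, on the step you yourself flag as the main obstacle: your fallback argument fails, since a finite morphism is closed only for the Zariski topology, and nothing in the \'ez axioms makes it $\tau$-closed on $K$-points, so "continuous closed bijection" is unavailable; and the claim that a finite purely inseparable morphism is \'etale-locally dominated by a relative Frobenius in a way compatible with your chosen $V'$ is exactly what would need proof (note also that at this point of the paper $K$ is only assumed perfect -- the algebraically closed hypothesis comes later -- so surjectivity of Frobenius-type maps on $K$-points, which you use for bijectivity and implicitly for openness, is not automatic for an arbitrary radicial $V\rightarrow V'$). The paper avoids all of this by \emph{choosing} $V'$ to come from the reduced relative Frobenius, so that the map on $K$-points is literally given by coordinatewise $p^n$-th powers and Lemma \ref{L: ez perfect}(2) applies directly; if you take $E=F\,L^{p^n}$ rather than a "maximal" $E$, your construction essentially collapses to the paper's, which is the cleanest way to repair the argument.
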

\begin{proof} It is shown in~\cite[Lemma 1.12, Fact 3.3 and 3.5]{ez} that there is such a factorization satisfying (1) and (2), given by a sufficient iterate of the reduced relative Frobenius of $V\rightarrow W$ (the fact that this construction gives a $\tau$-homeomorphism follows from Lemma \ref{L: ez perfect}(2)). Call this factorization $V\rightarrow V_1\rightarrow W$. Now suppose $V$ is normal (and note that $V_1$ might not be). We show how to refine the construction to satisfy (3).

Let $N\rightarrow V_1$ be the normalization of $V_1$. So $N\rightarrow V_1$ is surjective and birational. Since $V$ is normal, and by the universal property of normalizations~\cite[Lemma 035Q(4)]{stacks-project}, $V\rightarrow V_1$ factors as $V\rightarrow N\rightarrow V_1$. Since dominant quasi-finite morphisms to normal varieties are universally open~\cite[Lemma 0F32]{stacks-project}, the image of $V$ in $N$ is an open (so also normal) subvariety, say $U$.

We now have $V\rightarrow U\rightarrow N\rightarrow V_1\rightarrow W$, where

\begin{enumerate}
    \item $V\rightarrow U$ is surjective and universally open.
    \item $U\rightarrow N$ is an open immersion.
    \item $N\rightarrow V_1$ is surjective and birational.
    \item $V_1\rightarrow W$ is generically smooth.
\end{enumerate}

Since $V\rightarrow V_1$ is a universal homeomorphism, it is universally injective. Thus $V\rightarrow U$ is also universally injective. Note that base surjections are automatically universally surjective~\cite[Lemma 01S1]{stacks-project}. Thus, $V\rightarrow U$ is universally open, universally injective, and universally surjective, and so is a universal homeomorphism. 

We now apply the reduced relative Frobenius again, this time to $V\rightarrow U$. We obtain a factorization $V\rightarrow V'\rightarrow U$ again satisfying (1) and (2) of the lemma (for $V\rightarrow U$). Thus, we have dominant morphisms $$V\rightarrow V'\rightarrow U\rightarrow N\rightarrow V_1\rightarrow W.$$ To complete the proof of the lemma, we note:

\begin{claim} $V\rightarrow V'$ is a universal homeomorphism and a $\tau$-homeomorphism.
\end{claim}
\begin{claimproof} This is because $V\rightarrow V'\rightarrow U$ satisfies (1) and (2).
\end{claimproof}
\begin{claim} $V'\rightarrow W$ is generically smooth.
\end{claim}
\begin{claimproof} Because $V'\rightarrow U\rightarrow N\rightarrow V_1\rightarrow W$ is a chain of generically smooth maps.
\end{claimproof}

And, the main point:

\begin{claim} $V'$ is normal.
\end{claim}
\begin{claimproof}
    We have that $V\rightarrow U$ and $V\rightarrow V'$ are both universal homeomorphisms, which implies that $V'\rightarrow U$ is also a universal homeomorphism~\cite[Lemma 0H2M]{stacks-project}, and thus (by generic smoothness) a birational universal homeomorphism. But then, as a birational universal homeomorphism to a normal variety, $V'\rightarrow U$ is an isomorphism~\cite[Lemma 0AB1]{stacks-project}. Thus $V'$ is normal because $U$ is.
\end{claimproof}
\end{proof}

\begin{remark}\label{R: rel frob preserves genericity} Suppose, in the setup of Lemma \ref{L: relative frobenius}, the morphism $V(K)\rightarrow W(K)$ is defined over $A$. We note that each step in the proof of the lemma was given by a functorial construction. It then follows that the whole sequence $V(K)\rightarrow V'(K)\rightarrow W(K)$ is still defined over $A$. We will use this implicitly.
\end{remark}

\subsection{Proofs of the Axioms}

Armed with the machinery from the previous subsection, we now show that $(\mathcal K,\tau)$ is a Hausdorff geometric structure. We also give conditions under which $(\mathcal K,\tau)$ has enough open maps.

\begin{theorem}\label{T: ez fields} $(\mathcal K,\tau)$ is a Hausdorff geometric structure.
\end{theorem}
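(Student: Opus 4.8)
We must verify the five conditions of Definition \ref{D: easy axioms} for $(\mathcal K,\tau)$. Conditions (1) and (2) are essentially already in hand: Hausdorffness of $\tau$ is part of Definition \ref{D: ez field}, $\aleph_1$-saturation is assumed, and $\mathcal K$ is geometric by Lemma \ref{L: ez geometric}. So the work is in verifying (3) the strong frontier inequality, (4) the Baire category axiom, and (5) the generic local homeomorphism property. The main technical input for all three will be Lemma \ref{L: ez main lemma second version} (generic points of a definable set are polished for it) together with Lemma \ref{L: ez dim is Zar dim} (dimension equals the dimension of the Zariski closure), allowing us to replace a definable set, near a generic point, by the $K$-points of a smooth irreducible variety with the same germ (Remark \ref{R: ez definable sets generically smooth}).

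\textbf{Strong frontier inequality.} Let $X\subset K^n$ be definable over a countable $A$, and let $a\in\overline{\operatorname{Fr}(X)}$. I would argue by contrapositive: suppose $\dim(a/A)=\dim(X)=d$; I must show $a\notin\overline{\operatorname{Fr}(X)}$. Let $Z$ be the Zariski closure of $X$; by Lemma \ref{L: ez dim is Zar dim}, $\dim Z = d$. Since $a$ is generic in $X$ over $A$, it is generic in $Z(K)$ over $A$, so by Lemma \ref{L: ez main lemma second version}, $a$ is polished for $X$: there is an open neighborhood $U$ of $a$ in $Z(K)$ with $U\subset Z^S(K)$ and $U$ either inside $X$ or disjoint from $X$ — necessarily inside $X$ since $a\in\overline X$ forces the latter case out (a neighborhood of $a$ disjoint from $X$ would contradict $a\in\overline X$, using that $\operatorname{Fr}(X)\subset\overline X$). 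Now $\operatorname{Fr}(X)=\overline X\setminus X$; I claim $U\cap\overline{\operatorname{Fr}(X)}=\emptyset$, which gives the conclusion. The point is that $Z(K)$ is $\tau$-closed (it is Zariski closed, and $\tau$ refines the Zariski topology), so $\overline X\subset Z(K)$; shrinking $U$, since $U\subset X$ and $U$ is open in $Z(K)$, any point of $U$ has a $Z(K)$-neighborhood contained in $X$, hence is not in $\operatorname{Fr}(X)$ and (since $\operatorname{Fr}(X)$ is then bounded away) not in its closure. I would write this out carefully, as the interplay between the ambient $K^n$-topology and the subspace topology on $Z(K)$ needs a moment's attention.

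\textbf{Baire category axiom.} Let $X$ be definable over countable $A$, $a\in X$ generic over $A$, and $B\supset A$ countable; given a neighborhood $V$ of $a$, I must find a generic of $X$ over $B$ in $V$. By Remark \ref{R: ez definable sets generically smooth}, after replacing $X$ near $a$ by $S(K)$ for a smooth irreducible variety $S$ over $\operatorname{acl}(A)$ of dimension $d=\dim X$ with the same germ at $a$, it suffices to prove this for $X = S(K)$. Choosing a rational map $S\dashrightarrow\mathbb A^d$ étale at $a$ and restricting to its étale locus $U$, by Definition \ref{D: ez field}(3) the image $\pi(V\cap U(K))$ is a nonempty $\tau$-open subset of $K^d$, hence contains a box $B_1\times\cdots\times B_d$ of infinite definable open sets, which (being definable over $B$-nameable data) contains a point $c=(c_1,\dots,c_d)$ with $\dim(c/B)=d$ — here I use that an infinite definable subset of $K$ has nonempty interior (Lemma \ref{L: infinite sets have open set}) and that $K$ is infinite with $\mathcal K$ geometric, so generic points over $B$ exist in each $B_i$ and can be chosen independent. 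Pulling $c$ back through the finite-to-one map $\pi$ gives a point of $V\cap U(K)\subset X$ of dimension $d$ over $B$, i.e. generic over $B$. The care needed here is bookkeeping of parameters (the variety $S$, the map $\pi$, the box) so that "generic over $B$" is genuinely achieved; I would handle it by enlarging $B$ to name everything and noting that this only makes the statement harder.

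\textbf{Generic local homeomorphism property.} This is the main obstacle. Given $Z_0\subset X\times Y$ (renaming to avoid clash with the variety $Z$) with $X,Y,Z_0$ all definable over $A$ of the same dimension $d$ and both projections $Z_0\to X$, $Z_0\to Y$ finite-to-one, and $(x,y)\in Z_0$ generic over $A$, I must produce open $U\ni x$, $W\ni y$ with $Z_0\cap(U\times W)$ the graph of a homeomorphism. First, using the strong frontier inequality just proved exactly as sketched in "the proof of Claim \ref{C: ez local homeo}" referenced in the visceral discussion, I reduce to the case where $Z_0\to X$ and $Z_0\to Y$ are injective on a neighborhood of $(x,y)$ — if some fiber were larger, a frontier/dimension argument near the generic point forces a contradiction; then $Z_0$ defines, near $(x,y)$, the graph of a bijection $\phi$ between neighborhoods of $x$ and $y$. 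It remains to show $\phi$ and $\phi^{-1}$ are continuous. For this I pass to the Zariski closure $\mathcal Z$ of $Z_0$; by Lemma \ref{L: ez dim is Zar dim}, $\dim\mathcal Z=d$, and $(x,y)$ is generic in $\mathcal Z(K)$, so by Lemma \ref{L: ez main lemma second version} it is polished, and after Remark \ref{R: ez definable sets generically smooth} I may assume $Z_0=S(K)$ for a smooth irreducible $d$-dimensional $S$ with projections $p_1:S\to \overline X$, $p_2:S\to\overline Y$ (closures as varieties) each generically finite, hence generically étale after applying Lemma \ref{L: relative frobenius} — but since we are over a possibly imperfect field only generically; the relative Frobenius factorization $S\to S'\to\overline X$ has $S(K)\to S'(K)$ a $\tau$-homeomorphism, so replacing $S$ by $S'$ we may take $p_1$ generically smooth, hence étale at the (generic, hence smooth-locus) point $x$ after shrinking. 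Then $p_1:S(K)\to K^{?}$... more precisely $p_1$ étale at the preimage point means, by Definition \ref{D: ez field}(3) (openness of étale maps) together with injectivity, that $p_1$ restricts to an open continuous bijection near $(x,y)$, i.e. a homeomorphism onto an open subset of $\overline X(K)$; symmetrically for $p_2$. Composing, $\phi = p_2\circ p_1^{-1}$ is a homeomorphism near $x$. I would organize this as: (i) reduce to injective projections via the frontier inequality; (ii) reduce to $S(K)$ smooth with the same germ; (iii) make each projection generically étale via Lemma \ref{L: relative frobenius}, noting the $K$-point maps are homeomorphisms so germs are preserved; (iv) invoke openness of étale morphisms (Definition \ref{D: ez field}(3)) to conclude each projection is a local homeomorphism at the relevant point, and compose. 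The delicate point — and the place I expect to spend the most care — is step (iii): ensuring that after applying the reduced relative Frobenius to *one* projection we have not destroyed the finiteness/genericity setup for the *other*, and that the point $(x,y)$ remains generic and smooth throughout; here I would lean on functoriality of the construction (Remark \ref{R: rel frob preserves genericity}) and the fact that a $\tau$-homeomorphism of $K$-points preserves exactly the data we care about. Having verified (1)–(5), $(\mathcal K,\tau)$ is a Hausdorff geometric structure.
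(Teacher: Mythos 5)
Your proposal is correct and takes essentially the same route as the paper's proof: the three nontrivial axioms are verified via polishedness at generic points (Lemma \ref{L: ez main lemma second version}), the equality of definable and Zariski dimension (Lemma \ref{L: ez dim is Zar dim}), reduction to smooth models, the reduced relative Frobenius factorization (Lemma \ref{L: relative frobenius}), and openness of \'etale maps, with your Baire argument merely inlining the content of Lemma \ref{L: open in smooth implies top dim} rather than citing it. The delicate point you flag in step (iii) is in fact a non-issue: as in the paper, one proves local openness of the two projections \emph{independently} (each with its own smooth model and its own Frobenius factorization, then composes the two local homeomorphisms at $(x,y)$), so nothing needs to survive ``for the other projection''; also note $K$ is perfect here (Lemma \ref{L: ez perfect}) --- the Frobenius factorization is needed to kill inseparability of the function-field extension, not imperfectness of the base.
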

\begin{proof}
We already showed that $\mathcal K$ is geometric. Thus, it suffices to establish the strong frontier inequality, the Baire category axiom, and the generic local homeomorphism property. These are given in the next three claims.

\begin{claim} Let $X\subset K^n$ be definable over $A$, and let $a\in\overline{\operatorname{Fr}(X)}$. Then $\dim(a/A)<\dim(X)$.
\end{claim}
\begin{claimproof} Let $Z$ be the Zariski closure of $X$. Note that $a\in Z(K)$, since $a\in\overline X$ and $\tau$ refines the Zariski topology. By Lemma \ref{L: ez dim is Zar dim}, $\dim(Z(K))=\dim(X)$. Now assume toward a contradiction that $\dim(a/A)=\dim(X)$. Then $\dim(a/A)=\dim(Z(K))$, so $a$ is generic in $Z(K)$. By Lemma \ref{L: ez main lemma second version}, $a$ is thus polished for $X$. But the definition of being polished  for $X$ implies that $a\notin\overline{\operatorname{Fr}(X)}$, a contradiction.
\end{claimproof}

\begin{claim} Let $X\subset K^n$ be definable over $A$, and let $a\in X$ be generic over $A$. Let $B\supset A$ be countable. Then every neighborhood of $a$ contains a generic of $X$ over $B$.
\end{claim}
\begin{claimproof} Let $Z$ be the Zariski closure of $X$, and let $Z^S$ be its smooth locus. By Lemma \ref{L: ez dim is Zar dim}, $\dim(X)=\dim(Z(K))$. So since $a$ is generic in $X$ over $A$, it is also generic in $Z(K)$ over $A$. So by Lemma \ref{L: ez main lemma second version}, $a$ is polished for $X$. Let $U\subset Z(K)$ be a neighborhood of $a$ witnessing this. That is, $U$ is open in $Z(K)$, and $U$ is contained in $X\cap Z^S(K)$.

To prove the claim, let $V$ be any neighborhood of $a$ in $X$. Shrinking if necessary, we may assume $V$ is definable and contained in $U$, so in particular $V$ is relatively open in $Z^S(K)$. Adding to $B$ if necessary, we may assume $V$ is definable over $B$.

Now by Lemmas \ref{L: open in smooth implies top dim} and \ref{L: ez dim is Zar dim}, and generic smoothness over perfect fields, we conclude that $$\dim(V)=\dim(Z^S)=\dim(Z)=\dim(X).$$

In particular, if we choose any $b\in V$ generic over $B$, then $b$ is also generic in $X$ over $B$, proving the claim.
\end{claimproof}

\begin{claim}\label{C: ez local homeo} Let $Z\subset X\times Y$ be definable over $A$ and all of dimension $d$, with $Z\rightarrow X$ and $Z\rightarrow Y$ finite-to-one. Let $(x,y)\in Z$ be generic over $A$. Then $Z$ restricts to a homeomorphism between neighborhoods of $x$ and $y$ in $X$ and $Y$, respectively.
\end{claim}
\begin{claimproof} It is enough to show the projection $Z\rightarrow X$ is locally a homeomorphism near $(x,y)$, since by symmetry the same will apply to $Z\rightarrow Y$. For this, it suffices to show separately that $Z\rightarrow X$ is locally injective, locally continuous, and locally open at $(x,y)$. Moreover, local continuity is automatic, since $Z\rightarrow X$ is a projection. So we show local injectivity and local openness.

First, suppose $Z\rightarrow X$ is not locally injective at $(x,y)$. Then $(x,y,y)\in\operatorname{Fr}(T)$, where $T$ is the set of $(u,v,w)$ with $(v\neq w)$ and $(u,v),(u,w)\in Z$. Since $Z\rightarrow X$ is finite-to-one, $\dim(T)\leq d$. Then by the strong frontier inequality, we get $\dim(xy/A)<d$, a contradiction. So $Z\rightarrow X$ is locally injective near $(x,y)$.

Now we show local openness at $(x,y)$. The idea is that, as outlined in the beginning of this subsection, we can reduce to the case that $Z\rightarrow X$ is generically smooth -- thus \'etale -- and then apply Definition \ref{D: ez field}(3). More precisely, by Lemma \ref{L: ez main lemma second version}, there are smooth d-dimensional varieties $W$ and $V\subset\mathbb A^n\times W$ so that:

\begin{enumerate}
    \item $V(K)$ and $W(K)$ are definable over $A$.
    \item $V(K)$ and $W(K)$ have the same germs as $Z$ and $X$ at $(x,y)$ and $x$, respectively.
\end{enumerate}

We may assume, without loss of generality, that $A=\acl(A)$. Thus, we may also assume $V$ and $W$ are irreducible. Since $Z\rightarrow X$ is finite-to-one, it follows that $V\rightarrow W$ is dominant.

Now let $V\rightarrow V'\rightarrow W$ be a factorization as in Lemma \ref{L: relative frobenius}, and let $z$ be the image of $(x,y)$ in $V'(K)$. It follows from Remark \ref{R: rel frob preserves genericity} that $z$ is generic in $V'(K)$. Moreover, since $V(K)\rightarrow V'(K)$ is a homeomorphism, it suffices to show that $V'(K)\rightarrow W(K)$ is open near $z$. But by the choice of $V'$ (and the fact that $\dim(V')=\dim(W)$, $V'\rightarrow W$ is generically \'etale; and since $z\in V'$ is generic, it follows that $V'\rightarrow W$ is \'etale in a Zariski neighborhood of $z$. Local openness then follows as promised from Definition \ref{D: ez field}(3).
\end{claimproof}
\end{proof}

\subsection{Smoothness and Enough Open Maps}

We now proceed to give conditions under which $(\mathcal K,\tau)$ is either differentiable or has the open mapping property. First, we define a canonical notion of smoothness.

\begin{definition}
    For definable $X\subset K^n$, we let $X^S$ be the set of $x\in X$ which are polished for $X$ -- that is, the relative interior of $X$ in the smooth locus of its Zariski closure.
\end{definition}

In other words, the smooth points of $X$ are those points where we can view $X$ d-locally as a smooth variety. We now show:

\begin{lemma}\label{L: ez smooth} The map $X\mapsto X^S$ is a notion of smoothness on $(\mathcal K,\tau)$.
\end{lemma}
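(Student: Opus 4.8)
The plan is to verify the six conditions of Definition~\ref{D: smooth} directly for the map $X\mapsto X^S$, where $X^S$ is the relative interior of $X$ in the smooth locus $Z^S$ of the Zariski closure $Z$ of $X$. Condition (1), that generic points are smooth, is precisely Lemma~\ref{L: ez main lemma second version}: if $x\in X$ is generic over a countable set $A$ defining $X$, then $x$ is polished for $X$, i.e. $x\in X^S$. Condition (6), $d$-locality, is essentially built into the definition: being polished for $X$ depends only on the germ of $X$ at $x$ together with the germ of its Zariski closure, and passing to a $d$-approximation $X'$ of $X$ at $x$ changes neither the germ of the set nor (since $\dim X'=\dim X$ and both are contained in $Z$) the relevant Zariski closure near $x$; one should spell out that the Zariski closure of a $d$-approximation agrees with $Z$ in a Zariski neighbourhood of $x$ because it is an intermediate closed set of the same dimension, hence shares the irreducible components through $x$.

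Next I would handle the ``functorial'' conditions (2), (3), (4). Condition (3) (invariance under coordinate permutations) is immediate since Zariski closure, smooth locus, and topological interior all commute with the linear automorphism $\sigma$. Condition (2) (closure under products) follows from the facts that the Zariski closure of $X\times Y$ is $Z_X\times Z_Y$, that the smooth locus of a product is the product of the smooth loci, and that $\tau$ on $K^{m+n}$ is the product topology so that the interior of $X\times Y$ inside $Z_X^S\times Z_Y^S$ contains $X^S\times Y^S$ (indeed equals it). Condition (4), comparing $Z\subset X\times Y$ with the ``diagonal copy'' $W=\{(x,x,y):(x,y)\in Z\}$, is again a matter of noting that $(x,y)\mapsto(x,x,y)$ is a closed immersion onto a smooth (closed) subvariety of $\mathbb A^n\times Z_X$, identifying the Zariski closure of $W$ with the image of that of $Z$, and checking that topological interiors correspond under this homeomorphism onto its image; this is routine.

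The main work — and the step I expect to be the principal obstacle — is condition (5), the ``submersion/fiber'' axiom: given an $A$-definable projection $f:X\rightarrow Y$ with $x\in X$, $y=f(x)\in Y$ both generic over $A$, and $Z$ definable over $B$ with $y\in Z$ generic over $B$ and $Z$ agreeing with $Y$ near $y$, we must show $x\in(f^{-1}(Z\cap Y))^S$. Here the strategy is exactly the reduction indicated in Remark~\ref{R: ez definable sets generically smooth} and used in Claim~\ref{C: ez local homeo}: after replacing $A$ by $\acl(A)$ and $B$ by $\acl(B)$, use Lemma~\ref{L: ez main lemma second version} to replace $X$, $Y$, $Z$ by $K$-points of irreducible smooth varieties $V$, $W$, $S$ of the right dimensions having the right germs, with $f$ induced by a dominant morphism $V\rightarrow W$. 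Apply Lemma~\ref{L: relative frobenius} to factor $V\rightarrow V'\rightarrow W$ through a $\tau$-homeomorphism $V(K)\rightarrow V'(K)$ with $V'\rightarrow W$ generically smooth, hence smooth near the (generic) image $z$ of $x$; a smooth morphism is flat with smooth fibers, so the preimage $V'\times_W S$ is smooth at the point lying over $z$, and transporting back through the homeomorphism shows $f^{-1}(Z\cap Y)$ is, $d$-locally at $x$, the $K$-points of a smooth variety — i.e. $x$ is polished for it. The delicate points to get right are: that $z$ is still generic over the relevant parameters (Remark~\ref{R: rel frob preserves genericity}), that genericity of $x$ in $V(K)$ and of $y=f(x)$ in $W(K)$ forces the fiber product to have the expected dimension so that ``smooth at the point over $z$'' is meaningful, and that the germ of $f^{-1}(Z\cap Y)$ at $x$ really coincides with that of $f^{-1}(S(K))$ — which uses that $Z$ and $Y$ share a germ at $y$ and that $f$ is continuous. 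Once (5) is in place, the lemma follows.
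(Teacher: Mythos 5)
Your handling of condition (5) of Definition \ref{D: smooth} -- the only condition the paper proves in any detail -- is exactly the paper's argument: reduce d-locally to smooth irreducible varieties via Lemma \ref{L: ez main lemma second version}, factor through the reduced relative Frobenius (Lemma \ref{L: relative frobenius}) following the procedure of Claim \ref{C: ez local homeo}, and finish with the fact that the preimage of a smooth subvariety under a smooth morphism is smooth; the paper proves this last fact by base change ($g^{-1}(U)\rightarrow U$ is smooth) followed by composition with the smooth structure morphism $U\rightarrow\operatorname{Spec}(K)$, which is the precise form of your ``flat with smooth fibers'' remark. Conditions (1)--(4) are treated as in the paper (Lemma \ref{L: ez main lemma second version} for generic points, standard facts about products, coordinate permutations and the diagonal for the rest), so on these points the proposal matches.

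Your justification of d-locality (condition (6)), however, does not work as written. If $X'\supset X$ is a d-approximation of $X$ at $x$, its Zariski closure $Z'$ contains $Z$ and has the same dimension, but it is neither ``contained in $Z$'' nor forced to ``share the irreducible components through $x$'': Zariski closure is a global operation, and points of $X'$ that are $\tau$-far from $x$ can contribute new components of $Z'$ that pass through $x$. Concretely, in ACVF take $X=\{(a,0):v(a)>0\}$ and $X'=X\cup\{(a,a):v(a-1)>10\}$; these are definable, have the same dimension and the same germ at the origin, yet $Z$ is the smooth line $y=0$ while $Z'$ is the union of two lines, singular at the origin, so the origin is polished for $X$ but not for $X'$. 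Thus the assertion ``$x\in X^S$'' is d-local only at points that are suitably generic (where Lemma \ref{L: ez main lemma second version} applies to any d-approximation and restores agreement), which is in fact the only situation in which smoothness is ever evaluated later in the paper; the paper's own proof buries (6) under ``standard facts'' and does not confront this either. To repair your write-up you should either restrict the d-locality discussion to the generic points actually needed, or localize the closure used in the definition (e.g. compare $X\cap U$ and $X'\cap U$ for small neighborhoods $U$ of $x$), rather than arguing that $Z$ and $Z'$ coincide near $x$, which is simply not true in general.
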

\begin{proof}
    Most of the properties in Definition \ref{D: smooth} follow from standard facts about smooth varieties (e.g. closure under products and isomorphisms). The fact that generic points are smooth follows from Lemma \ref{L: ez main lemma second version}. We show only Definition \ref{D: smooth}(5) (smoothness of preimages under appropriate maps).
    
    To recall the setting: we have a projection $f:X\rightarrow Y$ of definable sets over $A$, and a  point $x\in X$ generic over $A$, such that $y=f(x)$ is generic in $Y$ over $A$. We then have a set $Z$, definable over $B$, so that (1) $y$ is generic in $Z$ over $B$ and (2) the germ of $Z$ at $y$ is contained in the germ of $Y$ at $y$. Our goal is to show that $x\in(f^{-1}(Z\cap Y))^S$.
    
    To do this, one follows exactly the same procedure as in the proof of Claim \ref{C: ez local homeo} to reduce everything d-locally to smooth varieties and smooth morphisms. In the end, one only needs to show the following:
    
    \begin{claim} Let $g:V\rightarrow W$ be a smooth morphism of smooth varieties over $K$, and let $U\subset W$ be a smooth subvariety. Then $g^{-1}(U)$ is smooth.
    \end{claim}
    \begin{claimproof} Recall that smooth morphisms are stable under composition and base change~\cite[Lemma 01VA,01VB]{stacks-project}. Moreover, since $K$ is perfect, a variety over $K$ is smooth if and only if its structure morphism to $\operatorname{Spec}(K)$ is smooth. We proceed to repeatedly use these facts.
    
    First, the map $g^{-1}(U)\rightarrow U$ is isomorphic to the base change of $V\rightarrow W$ by $U\rightarrow W$. Since $V\rightarrow W$ is smooth, so is $g^{-1}(U)\rightarrow U$. But $U\rightarrow\operatorname{Spec}(K)$ is smooth by assumption, so the composition $g^{-1}(U)\rightarrow\operatorname{Spec}(K)$ is smooth, and thus $g^{-1}(U)$ is smooth.
    \end{claimproof}
\end{proof}

We now, automatically, have: 

\begin{lemma}\label{L: ez open mapping} Suppose that whenever $g:V\rightarrow W$ is a quasi-finite morphism of smooth $d$-dimensional varieties over $K$, the induced map $g:V(K)\rightarrow W(K)$ is open. Then $(\mathcal K,\tau)$ has the open mapping property.
\end{lemma}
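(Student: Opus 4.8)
The plan is to verify the condition in Definition \ref{D: open mapping thm} for the canonical notion of smoothness $X\mapsto X^S$ furnished by Lemma \ref{L: ez smooth}. So I would start from definable sets $X,Y$ of the same dimension $d$, a coordinate projection $f:X\to Y$, and a point $x\in X^S$ with $y:=f(x)\in Y^S$ such that $f$ is finite-to-one on a $\tau$-neighborhood of $x$; the goal is to conclude that $f$ is open near $x$. Since every hypothesis and the conclusion are $d$-local at $x$ (and at $y$), I may replace $X$ and $Y$ by $d$-approximations freely, and enlarge the base parameter set to its algebraic closure, without loss.

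First I would run the same reduction to smooth varieties that appears in the proof of Claim \ref{C: ez local homeo}. As $x$ is polished for $X$ and $y$ is polished for $Y$, after passing to $d$-approximations I may assume $X=V(K)$ and $Y=W(K)$ for smooth, $\acl(A)$-irreducible $K$-varieties $V,W$ of dimension $d$, with $x\in V(K)$ and $y\in W(K)$. The projection $f$ is the restriction of a morphism of affine spaces; since a nonempty $\tau$-open subset of $V(K)$ is Zariski dense in $V$ (Lemmas \ref{L: open in smooth implies top dim} and \ref{L: ez dim is Zar dim}), after Zariski-localizing $V$ at $x$ this morphism restricts to a morphism $g:V\to W$, and because $f$ is finite-to-one near $x$ its image has $\tau$-dimension $d$, so $g(V)$ is Zariski dense in $W$; thus $g$ is dominant, and in particular quasi-finite over a dense open of $W$.

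The crucial step is to Zariski-localize $V$ once more at $x$ so that $g$ itself becomes quasi-finite. This is exactly where the finite-to-one hypothesis is used: if $x$ lay on a positive-dimensional irreducible component $C$ of the scheme-theoretic fiber $g^{-1}(g(x))$, then — in the settings where the standing hypothesis of the lemma can actually hold (most importantly ACVF, and more generally whenever positive-dimensional $K$-subvarieties have $K$-points accumulating at each of their $K$-points) — $C(K)$ would meet every $\tau$-neighborhood of $x$ in infinitely many points, all of them mapping under $f$ to $g(x)$, contradicting that $f$ is finite-to-one near $x$. Hence $x$ lies in the Zariski-open quasi-finite locus $V'$ of $g$; replacing $V$ by $V'$, the map $g:V'\to W$ is a quasi-finite morphism of smooth $d$-dimensional varieties, so by hypothesis $V'(K)\to W(K)$ is open. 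Since $V'(K)$ is a $\tau$-neighborhood of $x$ realizing the germ of $X$ at $x$, and $W(K)$ realizes the germ of $Y$ at $y$, the $d$-locality of local openness then gives that $f$ is open in a $\tau$-neighborhood of $x$, which is what is required.

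I expect the quasi-finiteness reduction to be the main obstacle. Unlike the local-injectivity step in the proof of Claim \ref{C: ez local homeo}, it cannot be settled by the strong frontier inequality alone, since $x$ is polished but need not be generic; one has to argue, in whatever generality the lemma is meant to cover, that a positive-dimensional fiber through $x$ forces $K$-points accumulating at $x$. In the cases of interest — certainly ACVF, which is the application needed for Corollary \ref{C: acvf axioms} — this is routine, essentially because over an algebraically closed valued field the $K$-points of a positive-dimensional variety accumulate at every $K$-point (reducing via normalization to the smooth-curve case, and then to Lemma \ref{L: open in smooth implies top dim}). Everything else in the argument is formal bookkeeping identical in spirit to the proof of Theorem \ref{T: ez fields}, which is presumably why the statement can be asserted ``automatically.''
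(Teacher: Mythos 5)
Your proposal is essentially the paper's own argument: the proof in the text consists of exactly the d-local reduction you describe (replace $X$ and $Y$ near the smooth points $x$ and $y$ by $K$-points of smooth varieties, then invoke the hypothesis), stated in two lines with the word ``automatic.'' The quasi-finiteness step you single out is indeed the only content hiding behind that word, and your treatment of it is correct where the lemma is actually used: if a positive-dimensional irreducible component $C$ of the fiber through $x$ existed, then over an algebraically closed $K$ any definable $\tau$-neighborhood of $x$ meets $C(K)$ in a Zariski-dense, hence infinite, set by Fact \ref{F: local dimension of irr var} (no normalization detour is needed), contradicting finite-to-one-ness; so $x$ lies in the quasi-finite locus, and d-locality of local openness finishes the argument just as you say. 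Your caveat about the generality of the statement is fair -- for \'ez topological field expansions that are not algebraically closed the accumulation claim is not justified -- but it is harmless in practice, since the only place the lemma is invoked is Corollary \ref{C: acvf axioms} for ACVF, and in the standard non-algebraically-closed examples (e.g.\ real closed or $p$-adically closed fields) the hypothesis of the lemma already fails (the squaring map is quasi-finite but not open at $0$), so the statement is vacuous there. In short: no gap relative to the paper; you have simply made explicit the step the paper declares automatic.
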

\begin{proof} Let $f:X\rightarrow Y$ be a finite-to-one definable projection, where $\dim(X)=\dim(Y)$, and let $x\in X^S$ with $y=f(x)\in Y^S$. We need to show that $f$ is open near $x$. But this is automatic: the assumption that $x\in X^S$ and $y\in Y^S$ lets us reduce d-locally to a projection of smooth varieties, and we apply the hypothesis of the lemma.
\end{proof}

\begin{remark} We note, for example, that the hypothesis of Lemma \ref{L: ez open mapping} holds of the complex field with the analytic topology (though technically, this structure is not an \'ez topological field expansion, because it has no invariant basis). Indeed, in this case, the induced map $g:V(K)\rightarrow W(K)$ is a holomorphic map of $d$-dimensional complex manifolds with all fibers discrete, and one can apply a usual open mapping theorem from complex analysis (e.g. \cite{GraRem}, p. 107). This was one of the key geometric properties used in \cite{CasACF0}), and is the inspiration for Lemma \ref{L: ez open mapping}. Indeed, we will see later (Corollary \ref{C: acvf axioms}) that the hypothesis of the lemma holds in ACVF in all characteristics.
\end{remark}

We now move on to differentiability. In fact, quite generally, we have:

\begin{lemma}\label{L: ez differentiable} If $K$ has characteristic zero, then $(\mathcal K,\tau)$ is differentiable.
\end{lemma}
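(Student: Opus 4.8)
The plan is to take the differential structure on $(\mathcal K,\tau)$ to be the \emph{algebraic (Zariski) tangent space} of the smooth locus of the Zariski closure. I would keep the notion of smoothness $X\mapsto X^S$ from Lemma \ref{L: ez smooth} — restricting, if needed, to points lying on a top-dimensional irreducible component of the Zariski closure, a refinement that still satisfies Definition \ref{D: smooth} and that I would simply note in passing. For a smooth-pointed $(X,x)$, letting $Z$ be the Zariski closure of $X$, the hypothesis $x\in X^S$ guarantees that $x$ is a smooth point of $Z$ lying on a unique irreducible component $Z_0$, and that the germ of $X$ at $x$ coincides with that of $Z_0^S(K)$; I would set $\mathcal F(X,x)=T_x(Z_0)$, a $K$-vector space, which depends only on the germ of $X$ at $x$ and is therefore $d$-local. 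Because $K$ has characteristic zero it is perfect, so at a smooth point $\dim_K T_x(Z_0)=\dim_x(Z_0)=\dim(Z_0)=\dim(X)$ by Lemma \ref{L: ez dim is Zar dim}, which gives Definition \ref{D: differentiable}(3) with $F=K$. Morphisms of $\mathcal C$ are inclusions and (product) coordinate projections — restrictions of morphisms of affine spaces — so each induces a $K$-linear differential at $x$; taking $\mathcal F$ of a morphism to be this differential makes $\mathcal F$ a covariant functor by the chain rule, and conditions (1), (2), (4)--(7) of Definition \ref{D: differentiable} would then be routine: inclusions of definable sets give closed immersions of component germs hence injections of tangent spaces, the tangent space of a product is the product of the tangent spaces, and a constant map factors through a point whose tangent space is $0$.

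The two substantial axioms to check are the Weak Inverse Function Theorem and Sard's Theorem. For Definition \ref{D: differentiable}(8), given $f\colon(X,x)\to(Y,y)$ in $\mathcal C$ with $\mathcal F(f)$ an isomorphism, I would use that $x\in X^S$ and $y\in Y^S$ mean, directly from the definition of smoothness, that near $x$ and $y$ the sets $X$ and $Y$ coincide with the $K$-points of irreducible smooth varieties $V\ni x$ and $W\ni y$, so that $d$-locally $f$ is induced by a morphism of varieties $V\to W$ whose differential at $x$ is an isomorphism (this is the same kind of reduction used in the proof of Claim \ref{C: ez local homeo}). Then $\dim V=\dim_K T_xV=\dim_K T_yW=\dim W$, and a morphism of smooth varieties of equal dimension that is an isomorphism on tangent spaces at a point is étale there; hence it is étale on a Zariski neighborhood of $x$, so by Definition \ref{D: ez field}(3) the induced map on $K$-points is $\tau$-open near $x$, and unwinding the $d$-local reduction yields openness of $f$ near $x$.

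For Definition \ref{D: differentiable}(9), given an $A$-definable $f\colon(X,x)\to(Y,y)$ in $\mathcal C$ with $x$ generic in $X$ over $A$ and $y=f(x)$ generic in $Y$ over $A$, I would pass to $\acl(A)$ (which does not disturb genericity) and reduce $d$-locally, via Lemma \ref{L: ez main lemma second version} and Remark \ref{R: ez definable sets generically smooth}, to a dominant morphism $f\colon V\to W$ of irreducible smooth varieties defined over $A$, with $x$ generic in $V(K)$ and $y$ generic in $W(K)$. By generic smoothness over a field of characteristic zero there is a dense Zariski-open $U\subseteq W$, definable over $A$, over which $f$ is smooth; genericity of $y$ forces $y\in U(K)$ and genericity of $x$ forces $x\in f^{-1}(U)(K)$, which lies in the smooth locus of $f$. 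Since a smooth morphism induces a surjection on tangent spaces, $\mathcal F(f)=df_x$ is surjective, as needed. I expect the main obstacle — and the only point where characteristic zero is genuinely used — to be exactly this last step: generic smoothness fails in characteristic $p$ (the relative Frobenius is a dominant morphism of smooth varieties with identically vanishing differential), which is precisely why the positive-characteristic argument must detour through the reduced relative Frobenius factorization of Lemma \ref{L: relative frobenius} and the open mapping property rather than differentiability.
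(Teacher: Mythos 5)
Your proposal is correct and follows essentially the same route as the paper: take $\mathcal F(X,x)$ to be the Zariski tangent space of the smooth variety realizing the germ of $X$ at $x$, deduce the weak inverse function theorem from the fact that \'etale maps are open (Definition \ref{D: ez field}(3)), and deduce Sard's theorem from generic smoothness of dominant morphisms in characteristic zero. The paper leaves the verifications as "follow easily," so your added details (including the harmless restriction to top-dimensional components) are just a fleshed-out version of the same argument.
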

\begin{proof} Given a definable $X$ and $x\in X^S$, the germ of $X^S$ at $x$ agrees with a smooth variety $S(K)$, where $\dim(S)=\dim(X)$. We then set $T_x(X)$ to be the Zariski tangent space $T_x(S)$. All of the required properties of Definition \ref{D: differentiable} follow easily. Note that the weak inverse function theorem follows since \'etale maps are open (Definition \ref{D: ez field}(3)), and Sard's theorem follows since dominant morphisms in characteristic zero are generically smooth. 
\end{proof}

\subsection{The Algebraically Closed Case and Purity of Ramification}

We now address the special case that the underlying field $K$ is algebraically closed. In this case, we will prove a `topologized' purity of ramification statement for morphisms of varieties, and subsequently deduce that $(\mathcal K,\tau)$ has ramification purity in the sense of Definition \ref{D: ramification purity}.

\begin{assumption}
    \textbf{Throughout this subsection, we assume that $K$ is algebraically closed.}
\end{assumption}

The main advantages of the algebraically closed case are summarized by the following fact and corollary:

\begin{fact}\label{F: local dimension of irr var}
    Let $V$ be an irreducible variety over $K$, and let $U\subset V(K)$ be definable, relatively open, and non-empty. Then $U$ is Zariski dense in $V$. In particular, $\dim(U)=\dim(V)$.
\end{fact}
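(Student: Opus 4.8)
The plan is to prove Fact \ref{F: local dimension of irr var} by reducing the Zariski density of $U$ to the dimension statement established earlier in this section, using the irreducibility of $V$ to rule out the possibility that $U$ concentrates on a proper closed subvariety. First I would let $Z$ be the Zariski closure of $U$ (in the sense of the smallest closed subvariety of $V$ whose $K$-points contain $U$); this is a well-defined closed subvariety of $V$ since $V$ is irreducible and hence in particular the ambient affine chart gives a Zariski closure. By Lemma \ref{L: ez dim is Zar dim} we have $\dim(U)=\dim(Z)$. It remains to show $Z=V$, i.e. $\dim(Z)=\dim(V)$, which by irreducibility of $V$ will force $Z=V$ and hence give Zariski density of $U$.

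Next I would argue that $\dim(Z)=\dim(V)$. Suppose not, so $Z$ is a proper closed subvariety. Since $U$ is relatively open and non-empty in $V(K)$, pick $x\in U$. Then $U$ is a $\tau$-open neighborhood of $x$ in $V(K)$ contained in $Z(K)$. Now I want to derive a contradiction with the fact that $x$ is a point of $V(K)$ where $V$, being irreducible of dimension $\dim(V)>\dim(Z)$, cannot be locally contained in the lower-dimensional $Z(K)$. Concretely: passing to the smooth locus $V^S$ of $V$ (which is non-empty Zariski-open by generic smoothness over the perfect — indeed algebraically closed — field $K$, using Lemma \ref{L: ez perfect}), and shrinking $U$, I may assume $U\subset V^S(K)$, so $V$ is smooth of dimension $n=\dim(V)$ at every point of $U$. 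By Lemma \ref{L: open in smooth implies top dim}, any non-empty definable open subset of $V^S(K)$ has dimension $n$; applying this to $U$ gives $\dim(U)=n$. But $U\subset Z(K)$ gives $\dim(U)\le\dim(Z(K))=\dim(Z)<n$ by Lemma \ref{L: ez dim is Zar dim}, a contradiction. Hence $\dim(Z)=\dim(V)$, so $Z=V$ by irreducibility, and $U$ is Zariski dense in $V$; the final clause $\dim(U)=\dim(V)$ then follows from Lemma \ref{L: open in smooth implies top dim} (or directly from Lemma \ref{L: ez dim is Zar dim} applied to $U$, whose Zariski closure is now all of $V$).

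The main obstacle I anticipate is bookkeeping rather than conceptual: making sure that "Zariski closure" makes sense for a subset $U$ of $V(K)$ when $V$ is not necessarily affine, and that the reduction to the smooth locus is legitimate — in particular that shrinking $U$ to land inside $V^S(K)$ does not lose non-emptiness (it does not, since $V^S$ is Zariski-dense open and $U$ is already known to be infinite as a non-empty $\tau$-open set in a non-discrete topology, so $U\cap V^S(K)\neq\emptyset$; alternatively one can just work in an affine chart around a chosen point of $U$). One should also be slightly careful that Lemma \ref{L: open in smooth implies top dim} is stated for open subsets of $V(K)$ with $V$ smooth, so it applies verbatim once we have restricted to $V^S$. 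None of these points is serious, so the proof should be short.

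\begin{proof}
Work in an affine chart of $V$ containing a point of $U$, so that "Zariski closure'' of subsets of $V(K)$ is defined as in our conventions. Let $Z$ be the Zariski closure of $U$, a closed subvariety of $V$. By Lemma \ref{L: ez dim is Zar dim}, $\dim(U)=\dim(Z(K))=\dim(Z)$.

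Since $K$ is algebraically closed (in particular perfect), generic smoothness gives that the smooth locus $V^S$ of $V$ is a non-empty Zariski-open subvariety with $\dim(V^S)=\dim(V)=:n$. As $\tau$ is non-discrete and $U$ is a non-empty $\tau$-open subset of $V(K)$, it is infinite, and since $V^S(K)$ is a non-empty Zariski-open (hence $\tau$-open and "large'') subset of $V(K)$, the set $U\cap V^S(K)$ is non-empty; it is also $\tau$-open in $V^S(K)$. By Lemma \ref{L: open in smooth implies top dim} applied to $V^S$, we get $\dim(U\cap V^S(K))=\dim(V^S)=n$, hence $\dim(U)\ge n$. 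Combined with $\dim(U)=\dim(Z)\le\dim(V)=n$, we conclude $\dim(Z)=n=\dim(V)$.

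Since $V$ is irreducible and $Z$ is a closed subvariety of the same dimension, $Z=V$. Thus $U$ is Zariski dense in $V$. Finally, the equality $\dim(U)=\dim(V)$ was established above (or follows again from Lemma \ref{L: ez dim is Zar dim}, as the Zariski closure of $U$ is now all of $V$).
\end{proof}
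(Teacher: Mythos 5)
The step that fails is the claim that $U\cap V^S(K)\neq\emptyset$. Your justification --- that $V^S(K)$ is non-empty Zariski-open, hence $\tau$-open and ``large'', while $U$ is infinite --- does not establish it: Zariski-density of $V^S(K)$ in $V$ does not give $\tau$-density of $V^S(K)$ in $V(K)$ (that is exactly the kind of statement at issue; Corollary \ref{C: closures coincide}, which would yield it, is deduced in the paper \emph{from} Fact \ref{F: local dimension of irr var}), and knowing $U$ is infinite is of no help when the complement $(V\setminus V^S)(K)$ is itself infinite, e.g.\ when the singular locus is positive-dimensional. The only way the available lemmas let you rule out $U\subset(V\setminus V^S)(K)$ is a dimension comparison $\dim(U)\le\dim(V\setminus V^S)<\dim(V)$, but the lower bound $\dim(U)\ge\dim(V)$ needed to contradict this is precisely the conclusion you are proving, so the argument is circular. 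What is actually needed is that the smooth $K$-points are $\tau$-dense in $V(K)$ (equivalently, that the singular locus has empty $\tau$-interior in $V(K)$); this is true over algebraically closed $K$ but requires a genuine argument, e.g.\ choosing a curve through the given point not contained in the singular locus, normalizing, and using that the normalization is smooth, surjective on $K$-points (here algebraic closedness enters), and $\tau$-continuous. It is not a formal consequence of Lemmas \ref{L: ez dim is Zar dim} and \ref{L: open in smooth implies top dim}, which is presumably why the paper does not prove the Fact from the preceding material but cites \cite[Proposition 2.1.1]{univ_open}.

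A secondary gap of the same nature: ``$U$ is infinite because $\tau$ is non-discrete'' is only justified for open subsets of $K^n$, not for relatively open subsets of $V(K)$; a priori $V(K)$ could have $\tau$-isolated points at singularities, and excluding them is again a statement essentially equivalent to (and in the paper a consequence of) the Fact itself. The rest of your outline is sound --- the reduction to an affine chart, $\dim(U)=\dim(Z)$ via Lemma \ref{L: ez dim is Zar dim}, the application of Lemma \ref{L: open in smooth implies top dim} once one has a non-empty open subset of $V^S(K)$, and the conclusion $Z=V$ from $\dim(Z)=\dim(V)$ by irreducibility --- so the missing ingredient is exactly the local statement at singular points that the cited reference supplies.
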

\begin{proof} A proof can be found in~\cite[Proposition 2.1.1]{univ_open}. 
\end{proof}

Fact \ref{F: local dimension of irr var} implies that $\tau$ behaves identically to the Zariski topology on the level of constructible sets:

\begin{corollary}\label{C: closures coincide} Let $V$ be a variety over $K$ and let $X\subset V$ be constructible (equivalently, definable in the pure field language). Then the relative Zariski and $\tau$-closures of $X$ in $V(K)$ coincide.
\end{corollary}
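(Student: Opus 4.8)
The plan is to reduce the statement to Fact \ref{F: local dimension of irr var} by a standard argument comparing the two topologies on constructible sets. Since $\tau$ refines the Zariski topology (as $\tau$ is a field topology), the $\tau$-closure of $X$ is always contained in its Zariski closure; so the only nontrivial inclusion is that the Zariski closure of $X$ in $V(K)$ is contained in its $\tau$-closure. By passing to the Zariski closure of $X$ inside $V$, we may assume $X$ is Zariski dense in $V$, and we must show $X$ is $\tau$-dense in $V(K)$. Decomposing $V$ into irreducible components, it suffices to treat the case $V$ irreducible; and since $X$ is constructible, $X$ contains a nonempty Zariski open subset $U$ of $V$, so it is enough to show that a nonempty Zariski open $U \subset V$ is $\tau$-dense in $V(K)$.

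The key step is then the following: if $W \subsetneq V$ is a proper closed subvariety, then $V(K) \setminus W(K)$ is $\tau$-dense in $V(K)$. To see this, let $x \in W(K)$ and let $O$ be any $\tau$-open neighborhood of $x$ in $V(K)$; we must find a point of $O$ outside $W(K)$. Shrinking, we may take $O$ to be a nonempty definable relatively open subset of $V(K)$. By Fact \ref{F: local dimension of irr var}, $O$ is Zariski dense in $V$, hence $\dim(O) = \dim(V) > \dim(W) \geq \dim(W(K))$ (using Lemma \ref{L: ez dim is Zar dim} for the last inequality), so $O \not\subseteq W(K)$, which is what we wanted. Applying this with $W$ the complement of $U$ shows $U$ is $\tau$-dense in $V(K)$, and we are done.

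The main obstacle is simply making sure the reductions are clean — in particular that one genuinely reduces the general constructible case to the irreducible-variety case where Fact \ref{F: local dimension of irr var} applies, and that the dimension comparison uses Lemma \ref{L: ez dim is Zar dim} correctly so that $\dim(W(K)) < \dim(V)$. None of the individual steps is deep; the content is entirely carried by Fact \ref{F: local dimension of irr var} (local topological density implies Zariski density over an algebraically closed field), with the perfectness of $K$ and Lemma \ref{L: ez dim is Zar dim} handling bookkeeping about dimensions of $K$-points.
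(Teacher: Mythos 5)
Your argument is correct and is essentially the deduction the paper intends: the corollary is stated there without proof as an immediate consequence of Fact \ref{F: local dimension of irr var}, and your reductions (pass to the Zariski closure, then to irreducible components, extract a dense Zariski open from the constructible set, and use the Fact on a definable $\tau$-open neighborhood) are the standard way to spell that out. One small simplification: once Fact \ref{F: local dimension of irr var} gives that $O$ is Zariski dense in $V$, the inclusion $O\subseteq W(K)$ is already impossible because it would force the Zariski closure of $O$ into $W\subsetneq V$, so the dimension comparison via Lemma \ref{L: ez dim is Zar dim} is not actually needed.
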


We will also use the following fact about rings:

\begin{proposition}\label{P: fiber product reduced} Let $A\leq B$ be a flat extension of integral domains, and suppose the extension of fraction fields $\operatorname{Frac}(A)\leq\operatorname{Frac}(B)$ is finite and separable. Then the tensor product $B\otimes_AB$ is a reduced ring.
\end{proposition}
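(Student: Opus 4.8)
The plan is to reduce the statement to a computation with a single finite separable field extension and then spread that out over the flat base. First I would recall that reducedness can be checked after a faithfully flat base change and is also a local property; since $A \to B$ is flat and $A$ is a domain, it suffices to understand $B \otimes_A B$ after localizing and, crucially, after passing to the fraction field $F = \operatorname{Frac}(A)$. The key observation is that $B \otimes_A B$ embeds into $(B \otimes_A B) \otimes_A F = (B \otimes_A F) \otimes_F (B \otimes_A F)$, because $B$ (and hence $B \otimes_A B$) is flat over $A$, so tensoring the inclusion $B \otimes_A B \hookrightarrow (B \otimes_A B) \otimes_A F$ is injective (localization at the multiplicative set $A \setminus \{0\}$ is exact). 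Thus $B \otimes_A B$ is a subring of $(B \otimes_A F)^{\otimes 2}$ over $F$, and a subring of a reduced ring is reduced, so it is enough to show $(B \otimes_A F) \otimes_F (B \otimes_A F)$ is reduced.

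Next I would identify $B \otimes_A F$. Since $A \le B$ is an extension of domains inducing the finite separable extension $F = \operatorname{Frac}(A) \le L = \operatorname{Frac}(B)$, the localization $B \otimes_A F = S^{-1}B$ (with $S = A \setminus \{0\}$) is a subring of $L$ containing $F$; being an integral domain which is a finite-dimensional $F$-algebra — indeed $B$ is a subring of $L$ and every element of $B$ is algebraic over $F$, so $B \otimes_A F$ sits between $F$ and $L$ with $[L:F] < \infty$ — it is itself a field, namely some intermediate field $F \le B \otimes_A F \le L$. (If one prefers, one need not know $B$ is module-finite: $B \otimes_A F$ is an integral domain integral over $F$, hence a field.) Call this field $L'$; it is finite and separable over $F$ since $L'$ lies inside the separable extension $L/F$.

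The final step is the standard fact that for a finite separable field extension $F \le L'$, the tensor product $L' \otimes_F L'$ is reduced — in fact, étale: writing $L' = F(\theta)$ with separable minimal polynomial $g$, we get $L' \otimes_F L' \cong L'[x]/(g(x))$, and $g$ factors over $L'$ into distinct irreducible factors (distinctness because $g$ is separable, so has no repeated roots even over an algebraic closure), whence $L'[x]/(g)$ is a finite product of fields by CRT, which is reduced. Combining the steps: $B \otimes_A B \hookrightarrow L' \otimes_F L'$, and the latter is reduced, so $B \otimes_A B$ is reduced.

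I do not anticipate a serious obstacle here; the only point requiring a little care is the injectivity $B \otimes_A B \hookrightarrow (B \otimes_A B) \otimes_A F$, which uses flatness of $B \otimes_A B$ over $A$ (a consequence of flatness of $B$ over $A$ applied twice) together with the fact that $A \to F$ is a localization and hence faithfully flat onto its image in the relevant sense — more precisely, that for any $A$-module $M$ that is $A$-flat and torsion-free, $M \to M \otimes_A F$ is injective. A second minor point is justifying that $B \otimes_A F$ is a field rather than merely a domain; if module-finiteness of $B/A$ is not assumed, one invokes that an integral domain integral over a field is a field. Neither of these is deep, so the proof should be short.
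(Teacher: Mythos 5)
Your argument is correct and follows essentially the same route as the paper's proof: use flatness of $B$ over $A$ (hence torsion-freeness of $B\otimes_A B$) to embed $B\otimes_A B$ into its localization at $S=A\setminus\{0\}$, identify the result with $L\otimes_F L$ for the finite separable extension $F=\operatorname{Frac}(A)\le L$, and finish with the primitive element theorem and CRT. The only cosmetic difference is that you localize the whole tensor product at once and note that $B\otimes_A F$ is already a field, whereas the paper first embeds into $\operatorname{Frac}(B)\otimes_A\operatorname{Frac}(B)$ and then identifies the base ring; both reductions are sound.
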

\begin{proof}
    For convenience, let $K_A$ and $K_B$ denote the respective fraction fields. The idea of the proof is to use flatness to embed $B\otimes_AB$ into $K_B\otimes_{K_A}K_B$, so that we can assume $A$ and $B$ are already fields. The next claim follows easily from flatness:

    \begin{claim}\label{C: reduced first claim} $B\otimes_AB$ embeds into $K_B\otimes_AK_B$.
    \end{claim}

    Let $S=A-\{0\}$, so that $K_A$ is the localization $S^{-1}A$. Note that $K_B$ is already a $K_A$-module, so $S^{-1}K_B\cong K_B$. Moreover, the following is standard:
    
  \begin{claim}\label{C: reduced second claim}   $S^{-1}(K_B\otimes_AK_B)\cong K_B\otimes_AK_B$.
    \end{claim}
    
    Finally, we conclude:
    
    \begin{claim}\label{C: reduced third claim} $B\otimes_AB$ embeds into $K_B\otimes_{K_A}K_B$. In particular, in proving Proposition \ref{P: fiber product reduced}, we may assume $A$ and $B$ are fields.
    \end{claim}
    \begin{claimproof} Recall~\cite[Lemma 00DL]{stacks-project} that we have a canonical isomorphism $$S^{-1}K_B\otimes_{S^{-1}A}S^{-1}K_B\cong S^{-1}(K_B\otimes_AK_B),$$ which simplifies to $$K_B\otimes_{K_A}K_B\cong S^{-1}(K_B\otimes_AK_B).$$ By Claim \ref{C: reduced second claim}, we moreover obtain an isomorphism $K_B\otimes_{K_A}K_B\cong K_B\otimes_AK_B$. Then combining with Claim \ref{C: reduced first claim} gives an embedding of $B\otimes_AB$ into $K_B\otimes_{K_A}K_B$.
    
    Now to reduce to the case that $A$ and $B$ are fields, simply note that if $B\otimes_AB$ had nilpotents, then so would $K_B\otimes_{K_A}K_B$.
    \end{claimproof}

    Now assume that $A$ and $B$ are fields, so that $A\leq B$ is a finite separable extension. By the primitive element theorem, we have $B=A(b)$ for some $b$. Let $p(x)$ be the minimal polynomial of $b$ over $A$. One then sees easily that $$B\otimes_AB\cong A[x,y]/(p(x),p(y))\cong B[y]/(p(y)).$$ By separability, $p$ factors over $B$ into a product of distinct irreducibles $p_i$. Then by the Chinese Remainder Theorem, our tensor product is now $\Pi_{i}B[y]/p_i(y)$, which is a product of fields, and thus reduced.
\end{proof}

We now move toward our version of purity of ramification. First, recall the classical purity of ramification in algebraic geometry (see, e.g., \cite[Lemma 0EA4]{stacks-project}):

\begin{fact}\label{F: purity of ramification} Let $f:V\rightarrow W$ be a dominant morphism of irreducible varieties over $K$. Assume that $V$ is normal and $W$ is smooth. Then every irreducible component of the non-\'etale locus of $f$ has dimension at least $\dim(V)-1$.
\end{fact}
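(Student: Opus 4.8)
The plan is to run the classical Zariski--Nagata argument: reduce an arbitrary dominant morphism to a finite extension of a regular local ring, and then apply purity of the branch locus in that finite situation. First I would dispose of the trivial cases. Since $f$ is dominant, $\dim V\ge \dim W$; and if $\dim V>\dim W$ then $f$ can be \'etale at no point of $V$ (\'etale morphisms have relative dimension $0$ at every point), so the non-\'etale locus is all of $V$, which is irreducible of dimension $\dim V>\dim V-1$, and we are done. Hence we may assume $\dim V=\dim W=d$; then $K(W)\hookrightarrow K(V)$ is a finite field extension and $f$ is generically finite.

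Now suppose, toward a contradiction, that the non-\'etale locus $R$ of $f$ (closed, since the \'etale locus is open) has an irreducible component $Z$ with $\dim Z\le d-2$, and let $\eta$ be the generic point of $Z$. Because two distinct components of $R$ cannot contain a common generization, no generization of $\eta$ other than $\eta$ itself lies in $R$; thus $f$ is \'etale on the punctured spectrum of the normal local domain $A=\mathcal{O}_{V,\eta}$, and $\dim A=\operatorname{codim}(Z,V)\ge 2$. One must also know $f$ is quasi-finite at $\eta$ in order to control matters: the non-quasi-finite locus of $f$ is closed and, because $W$ is smooth (hence regular), of pure codimension $\le 1$ (a van der Waerden--type purity, provable by Chevalley's bound on fibre dimension together with the $S_2$ property of the normal $V$ and regularity of $W$), so it cannot contain the codimension-$\ge 2$ point $\eta$. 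Then, by Zariski's main theorem, after replacing $V$ and $W$ by suitable neighbourhoods of $\eta$ and $f(\eta)$, $f$ factors as an open immersion $V\hookrightarrow V'$ followed by a finite morphism $V'\to W'$; replacing $V'$ by the normalization of $W'$ in $K(V)$ (a finite morphism, since $W$ is normal, $K(W)\hookrightarrow K(V)$ is finite, and normalization is finite for varieties) we may assume outright that $f$ is finite, $V$ is normal, and $W$ is regular.

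In this finite setting we are exactly in the hypotheses of purity of the branch locus: $f\colon V\to W$ finite dominant with $V$ normal, $W$ regular, and $f$ \'etale on the punctured spectrum of $\mathcal{O}_{V,\eta}$. Since $\dim\mathcal{O}_{V,\eta}\ge 2$, that punctured spectrum contains every codimension-one point, so $f$ is \'etale in codimension one; by Zariski--Nagata this forces $f$ to be \'etale at $\eta$, contradicting $\eta\in R$. The separability required to run Zariski--Nagata is available because $K$ is perfect (indeed algebraically closed), and the reducedness input on which it ultimately rests is precisely the assertion of Proposition \ref{P: fiber product reduced}; one could alternatively quote a version of purity valid for flat morphisms and skip the reduction to the finite case.

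The main obstacle is not the skeleton above but the purity-of-the-branch-locus input itself, which is a genuinely nontrivial commutative-algebra theorem; a secondary technical point is the dimension bookkeeping --- passing from ``$Z$ has codimension $\ge 2$ in $V$'' to ``$\mathcal{O}_{V,\eta}$ has dimension $\ge 2$'' and reducing to the quasi-finite (hence finite) case --- where the smoothness of $W$ is what makes everything work.
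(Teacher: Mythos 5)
You should first note that the paper does not prove Fact \ref{F: purity of ramification}: it is quoted from the literature (see the citation to \cite{stacks-project} immediately preceding it), so your text is a reconstruction of the cited classical theorem rather than an alternative to an argument in the paper. Your skeleton is indeed the standard one — dispose of $\dim V>\dim W$, pass to the generic point $\eta$ of a hypothetical component of codimension $\geq 2$ of the non-\'etale locus, observe that every proper generization of $\eta$ is an \'etale point, reduce via Zariski's Main Theorem to a finite cover of a regular local ring, and invoke Zariski--Nagata purity — and that skeleton is sound.

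The genuine gap is the quasi-finiteness of $f$ at $\eta$. What you actually need is that no irreducible component of the non-quasi-finite locus has codimension $\geq 2$, together with the (unstated) observation that if $\eta$ were non-quasi-finite then, since all its proper generizations are \'etale hence quasi-finite, $\eta$ would be the \emph{generic point} of such a component. That auxiliary statement is van der Waerden's purity of the exceptional locus, a theorem of essentially the same depth as the one you are proving, and it is not ``provable by Chevalley's bound on fibre dimension together with the $S_2$ property of $V$ and regularity of $W$'' in any routine way: it fails when $W$ is merely normal (hence $S_2$) but not locally factorial — e.g.\ a small resolution of the threefold ordinary double point contracts a single curve of codimension two — so regularity must enter through factoriality of $\mathcal{O}_{W,f(\eta)}$ (the classical coprime-fraction argument in a UFD), which your sketch gives no way to exploit. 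As written, this step would not go through; you must either quote van der Waerden purity or prove it. Two smaller slips: perfectness of $K$ does \emph{not} give separability of $K(V)/K(W)$ (Frobenius on $\mathbb{A}^1$); separability instead comes for free because once the non-\'etale locus is proper, $f$ is \'etale at the generic point of $V$ — and, relatedly, Proposition \ref{P: fiber product reduced} is not what Zariski--Nagata ``ultimately rests'' on. Finally, \'etale-ness on the punctured spectrum of $\mathcal{O}_{V,\eta}$ only gives \'etale-ness at the height-one primes lying under $\eta$, not at all codimension-one points of the finite cover, so you cannot literally apply the global ``\'etale in codimension one implies \'etale'' statement; you need its local form at $\mathcal{O}_{W,f(\eta)}\rightarrow\mathcal{O}_{V,\eta}$ (equivalently, localize or henselize at $f(\eta)$ so that only the branch through $\eta$ survives). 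These last points are repairable bookkeeping, but the quasi-finiteness step is a real missing ingredient.
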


Recall that a function $f:X\rightarrow Y$ between topological spaces is \textit{topologically unramified} at $x\in X$ if $f$ is injective on some neighborhood of $x$. Our goal is to prove an analog of Fact \ref{F: purity of ramification} for $(\mathcal K,\tau)$, where we replace `\'etale' with `topologically unramified' (for the topology $\tau$). 

The proof will consist, essentially, of two steps. First, we use Fact \ref{L: relative frobenius} to reduce to the case that $f$ is generically smooth. Then we show that, in the generically smooth case, the topological ramification locus coincides with the non-\'etale locus up to a codimension 2 error. Using these two steps, one can reduce our statement to Fact \ref{F: purity of ramification}.

The first of the two steps above is automatic. For the second step, we use the following technical fact:

\begin{proposition}\label{P: etale iff unramified}
    Let $V$ and $W$ be smooth irreducible varieties over $K$, and let $f:V\rightarrow W$ be a dominant, generically smooth morphism. Then the following are equivalent:
    \begin{enumerate}
        \item $f$ is \'etale.
        \item The restricted map $f:V(K)\rightarrow W(K)$ is topologically unramified.
    \end{enumerate}
\end{proposition}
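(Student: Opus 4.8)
The plan is to prove the two implications separately. For $(1)\Rightarrow(2)$: an \'etale morphism is unramified, so its diagonal $\Delta_f\colon V\to V\times_W V$ is an open immersion; hence $\Delta(V(K))$ is Zariski open, and therefore $\tau$-open, in $(V\times_W V)(K)$. Fixing $x\in V(K)$ and using that $\tau$ on $(V\times_W V)(K)\subseteq V(K)\times V(K)$ is the subspace topology, I can choose a $\tau$-neighborhood $U$ of $x$ in $V(K)$ with $(U\times U)\cap(V\times_W V)(K)\subseteq\Delta(V(K))$; then any $x_1,x_2\in U$ with $f(x_1)=f(x_2)$ satisfy $(x_1,x_2)\in\Delta(V(K))$, i.e.\ $x_1=x_2$, so $f$ is injective on $U$.

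The substance is in $(2)\Rightarrow(1)$, which I prove contrapositively: assuming $f$ is not \'etale, I exhibit a point of $V(K)$ at which $f$ is not locally injective. First, if some fiber $f^{-1}(y_0)$ has an irreducible component $F_0$ of dimension $\geq 1$, choose $x$ in the smooth locus $F_0^{\mathrm{sm}}$, away from the other components of $f^{-1}(y_0)$. Any definable $\tau$-neighborhood $U$ of $x$ in $V(K)$ can be shrunk so that $U\cap f^{-1}(y_0)(K)\subseteq F_0^{\mathrm{sm}}(K)$; this set is then a nonempty definable $\tau$-open subset of the smooth positive-dimensional variety $F_0^{\mathrm{sm}}$, hence infinite by Lemma~\ref{L: open in smooth implies top dim}, and all of its points have image $y_0$ --- so $f$ is not injective on $U$. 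I may therefore assume $f$ is quasi-finite, so that $\dim V=\dim W=:d$; then, $V$ being smooth (hence Cohen--Macaulay), $W$ being smooth (hence regular), and the fibers being zero-dimensional, miracle flatness makes $f$ flat. A flat morphism that is not \'etale is ramified along a nonempty closed set $R\subsetneq V$ (proper because $f$, being generically smooth and equidimensional, is generically \'etale).

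It remains to show that $f$ is topologically ramified at every point of $R(K)$ (nonempty, as $K$ is algebraically closed). Let $P=V\times_W V$. The diagonal $\Delta(V)$ is closed in $P$ by separatedness, and working in an affine chart $\operatorname{Spec}B\subseteq f^{-1}(\operatorname{Spec}A)$ around a given diagonal point --- where $K(W)\hookrightarrow K(V)$ is finite by quasi-finiteness and separable by generic smoothness --- Proposition~\ref{P: fiber product reduced} shows $\operatorname{Spec}(B\otimes_A B)$, hence $P$ near $\Delta(V)$, is reduced. Since $\operatorname{pr}_1\colon P\to V$ is flat and quasi-finite, $P$ is equidimensional of dimension $d$, so the $d$-dimensional irreducible closed set $\Delta(V)$ is an irreducible component of $P$; and $f$ is unramified at a point $x\in V$ precisely when $\Delta$ is an open immersion near $(x,x)$ --- which, as $P$ is reduced there, happens precisely when $(x,x)$ lies on no other component of $P$, i.e.\ $(x,x)\notin C:=\overline{P\setminus\Delta(V)}$. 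Thus for $x\in R(K)$ we get $(x,x)\in C(K)$, and by Corollary~\ref{C: closures coincide} $C(K)$ equals the $\tau$-closure of the constructible set $P(K)\setminus\Delta(V(K))=\{(x_1,x_2):f(x_1)=f(x_2),\ x_1\neq x_2\}$. Hence every $\tau$-neighborhood of $(x,x)$ in $P(K)$ --- in particular $(U\times U)\cap P(K)$ for any $\tau$-neighborhood $U$ of $x$ in $V(K)$ --- contains a pair of distinct points with equal image, so $f$ is not injective on $U$, completing the contrapositive.

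The main obstacle is the schematic bookkeeping in the third paragraph: identifying the non-\'etale locus of $f$ with the locus where $\Delta(V)$ meets a second component of $V\times_W V$, which depends on $V\times_W V$ being reduced near the diagonal --- exactly Proposition~\ref{P: fiber product reduced}, and the precise reason the hypothesis of generic smoothness (equivalently, generic separability) cannot be omitted. Note that Fact~\ref{F: purity of ramification} is not needed for this proposition; it will enter only in the subsequent deduction of ramification purity for $(\mathcal K,\tau)$. Everything else is routine scheme theory, together with the facts that $K$ is algebraically closed (for the existence of a $K$-point of $R$ and for Corollary~\ref{C: closures coincide}) and that $\tau$ refines the Zariski topology.
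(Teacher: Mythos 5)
Your proof is correct, and it rests on the same core inputs as the paper's argument -- flatness via miracle flatness, reducedness of (a neighborhood of the diagonal in) $V\times_WV$ via Proposition~\ref{P: fiber product reduced} (using quasi-finiteness plus generic smoothness for finiteness and separability of the function field extension), the characterization of unramifiedness through the diagonal morphism, and Corollary~\ref{C: closures coincide} to pass between Zariski and $\tau$ -- but the harder direction is organized differently. The paper proves $(2)\Rightarrow(1)$ directly: topological unramifiedness immediately gives that the diagonal is $\tau$-open, hence Zariski open, in $(V\times_WV)_{red}$; this at once yields discrete fibers (so quasi-finiteness comes for free), then flatness, then reducedness of the fiber product, and finally that $\delta$ is an open immersion via the observation that a surjective closed immersion onto a reduced scheme is an isomorphism. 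You argue contrapositively, which forces two extra pieces of work that the paper avoids: you must separately treat the non-quasi-finite case, which you do correctly by locating a smooth point on a positive-dimensional fiber component and invoking Lemma~\ref{L: open in smooth implies top dim}; and, lacking the openness of the diagonal as a hypothesis, you replace the paper's immersion argument by a component analysis of $P=V\times_WV$ (equidimensionality of $P$ via flat quasi-finite projection, the diagonal as a component, and the identification of the non-\'etale locus with the trace of the other components on the diagonal, valid because $P$ is reduced near the diagonal). The payoff of your route is a genuinely pointwise conclusion -- every $K$-point of the non-\'etale locus of a flat, generically smooth, quasi-finite map is a topological ramification point -- which is essentially the form of the statement that gets used again in the proof of Theorem~\ref{T: top purity}, where the paper has to rerun part of this reasoning; the cost is a longer argument than the paper's direct one, where hypothesis $(2)$ hands over quasi-finiteness and openness of the diagonal in one step. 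Your side remarks are also accurate: Fact~\ref{F: purity of ramification} is indeed not needed here, and reducedness of the fiber product near the diagonal (hence generic separability) is exactly where the generic smoothness hypothesis enters.
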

\begin{proof} Throughout, let $Z$ be the fiber product $V\times_WV$ (as schemes). Let $Z_{red}$ be the reduced scheme associated to $Z$ (i.e. the fiber product as varieties). So $Z_{red}(K)$ is the set of $(x,y)\in V(K)^2$ with $f(x)=f(y)$. Moreover, let $\Delta$ be the diagonal (viewed as a subvariety of $Z_{red}$ -- equivalently the image of $V$ in $Z_{red}$). Finally, let $\delta:V\rightarrow Z$ be the diagonal morphism. Since $V$ is a variety (thus reduced), $\delta$ factors as $V\rightarrow Z_{red}\rightarrow Z$.

First suppose $f$ is \'etale. Then $f$ is unramified, so $\delta$ is an open immersion, and thus $\Delta$ is Zariski open in $Z_{red}$. By Corollary \ref{C: closures coincide}, $\Delta(K)$ is $\tau$-open in $Z_{red}(K)$. This is a restatement of the assertion that $f: V(K)\rightarrow W(K)$ is topologically unramified.

Now assume $f: V(K)\rightarrow W(K)$ is topologically unramified. As above, we get that $\Delta(K)$ is $\tau$-open in $Z_{red}(K)$, and thus $\Delta$ is Zariski open in $Z_{red}$. We first conclude:

\begin{lemma}\label{L: miracle flatness} $f$ is quasifinite and flat.
\end{lemma}
\begin{proof}
    Since $\Delta$ is Zariski open in $Z_{red}$, it follows that for each $x\in V(K)$, $\{(x,x)\}$ is open in $(Z_{red})_x(K)=\{y:(x,y)\in Z_{red}(K)\}$. Equivalently, $\{x\}$ is open in the fiber $f^{-1}(f(x))$. Thus $f$ has discrete fibers, and so is quasifinite. Flatness then follows by Miracle Flatness~\cite[Lemma 00R4]{stacks-project} by looking at the induced maps of local rings between points.
\end{proof}

By Lemma~\ref{L: miracle flatness}, we have:

\begin{lemma}\label{L: fiber product reduced schemes} $Z$ is reduced, and thus $Z_{red}\rightarrow Z$ is an isomorphism.
\end{lemma}
\begin{proof} After reducing to affine opens, this reduces exactly to Proposition \ref{P: fiber product reduced}. In particular, the fact that $V=\mathrm{Spec} B\rightarrow \mathrm{Spec} A=W$ is quasifinite and generically smooth guarantees that the field extension $\operatorname{Frac}(A)\leq\operatorname{Frac}(B)$ appearing in Proposition \ref{P: fiber product reduced} is finite separable.
\end{proof}

It remains to show that $f$ is unramified, that is, that $\delta$ is an open immersion. But $\delta$ is automatically a (locally closed) immersion~\cite[Lemma 01KJ]{stacks-project}, namely it factorizes as $j\circ i$ where $i$ is a closed immersion and $j$ is an open immersion. And we have shown it has open image; By the irreducibility and the fact that the only surjective closed immersion to a reduced scheme is an isomorphism, we have that we can assume $i$ is an isomorphism. Thus, by Lemma \ref{L: fiber product reduced schemes}, we are done.
\end{proof}

Finally, we are ready to `topologize' Fact \ref{F: purity of ramification}. In what follows, it will be convenient to extend the notion of local dimension to non-definable sets:

\begin{definition}\label{D: local dimension}
    Let $V$ be a variety over $K$, and $X\subset V(K)$ arbitrary. Let $x\in X$, and $d$ a non-negative integer. We say that $X$ has \textit{local dimension at least $d$ at $x$} if every relative neighborhood of $x$ in $X$ contains a definable set of dimension $d$.
\end{definition}

We now show:

\begin{theorem}\label{T: top purity}
    Let $f:V\rightarrow W$ be a dominant morphism of irreducible varieties over $K$. Assume that $V$ is normal and $W$ is smooth. Let $R$ be the topological ramification locus of $f$. Then $R$ has local dimension at least $\dim(V)-1$ at every point.
\end{theorem}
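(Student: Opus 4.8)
The plan is to combine the algebraic purity of ramification (Fact \ref{F: purity of ramification}), the reduction to generically smooth morphisms (Lemma \ref{L: relative frobenius}), and the comparison of topological and algebraic ramification loci (Proposition \ref{P: etale iff unramified}). First I would apply Lemma \ref{L: relative frobenius} to the dominant morphism $f:V\to W$ of irreducible varieties, with $V$ normal: this produces a factorization $V\to V'\to W$ where $V\to V'$ is a universal homeomorphism which is moreover a $\tau$-homeomorphism on $K$-points, $V'\to W$ is generically smooth, and $V'$ is again normal. Since $V(K)\to V'(K)$ is a homeomorphism, the topological ramification locus $R$ of $f:V(K)\to W(K)$ is exactly the preimage (under this homeomorphism) of the topological ramification locus $R'$ of $f':V'(K)\to W(K)$, and local dimension is preserved by a homeomorphism between subsets of $K$-points of varieties (using that local dimension, Definition \ref{D: local dimension}, is phrased in terms of definable subsets and that the homeomorphism is definable, preserving dimension because it is finite-to-one). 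So it suffices to prove the theorem for $f':V'\to W$, i.e. we may assume from the start that $f$ itself is generically smooth (with $V$ normal, $W$ smooth).

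Next, I would want to replace $V$ by its smooth locus $V^S$. Since $K$ is perfect (Lemma \ref{L: ez perfect}), $V$ is generically smooth, so $V^S$ is a dense Zariski-open subset with complement of dimension $<\dim(V)$; and the topological ramification locus only changes inside a set of dimension $<\dim(V)-1$ when we remove this locus, which is harmless for a statement about local dimension being \emph{at least} $\dim(V)-1$ at every point: any point $x\in R$ either lies in $V^S(K)$, or lies in the complement, in which case I can argue by a limiting/density argument (or simply note that since $R$ is $\tau$-closed and $R\cap V^S(K)$ has local dimension $\geq\dim(V)-1$ at each of its points, and this set is $\tau$-dense in $R$ by Fact \ref{F: local dimension of irr var} together with normality, the conclusion propagates to all of $R$). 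Once $V$ is smooth, Proposition \ref{P: etale iff unramified} applies: the topological ramification locus of $f:V(K)\to W(K)$ coincides with the $K$-points of the non-\'etale locus $N$ of $f$ (the point $x$ is topologically ramified iff $f$ is not topologically unramified at $x$ iff, by the proposition applied to suitable affine neighborhoods, $f$ is not \'etale near $x$). More precisely, $f$ is topologically unramified at $x$ exactly when $x\notin N(K)$, so $R = N(K)$.

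Finally, I would invoke the classical purity of ramification, Fact \ref{F: purity of ramification}, for the dominant morphism $f:V\to W$ with $V$ normal and $W$ smooth: every irreducible component of $N$ has dimension at least $\dim(V)-1$. Then for any $x\in R = N(K)$, $x$ lies on some irreducible component $N_0$ of $N$ with $\dim(N_0)\geq\dim(V)-1$; by Fact \ref{F: local dimension of irr var}, every relative $\tau$-neighborhood of $x$ in $N_0(K)$ is Zariski dense in $N_0$ and hence has dimension $\dim(N_0)\geq\dim(V)-1$, which gives exactly the desired local dimension bound for $R$ at $x$ (a definable subset of $N_0(K)$ of dimension $\geq\dim(V)-1$ sitting inside the given neighborhood of $x$ in $R$).

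The main obstacle I anticipate is the bookkeeping in the first two reduction steps: one must check carefully that passing through the universal homeomorphism of Lemma \ref{L: relative frobenius} and through restriction to the smooth locus genuinely preserves the property ``local dimension $\geq\dim(V)-1$ at \emph{every} point of $R$'' — in particular the propagation of the bound from $R\cap V^S(K)$ (or from a dense open) to the $\tau$-closed set $R$ at points lying over the bad locus. This uses the algebraically-closed-specific Fact \ref{F: local dimension of irr var} and Corollary \ref{C: closures coincide} (so that $\tau$-closures and Zariski closures of constructible sets agree, letting us identify $R$ with $N(K)$ as a $\tau$-closed set and control its components), and is the place where the hypothesis that $K$ is algebraically closed is really used. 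Everything else is a direct citation of the cited algebraic-geometry facts.
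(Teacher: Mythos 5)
Your overall route is the same as the paper's: reduce to a generically smooth morphism via Lemma \ref{L: relative frobenius} (keeping $V$ normal), identify topological ramification with non-\'etaleness on the smooth locus via (a localized form of) Proposition \ref{P: etale iff unramified}, and then get the dimension bound from Fact \ref{F: purity of ramification} together with Fact \ref{F: local dimension of irr var} and Corollary \ref{C: closures coincide}. For points of $R$ lying in $V^S(K)$ this is exactly what the paper does. The gap is at points $x\in R$ lying in the singular locus $V\setminus V^S$. You handle these by asserting that $R\cap V^S(K)$ is $\tau$-dense in $R$, citing Fact \ref{L: ez main lemma first version}--style density ``by Fact \ref{F: local dimension of irr var} together with normality''; but neither of those statements says anything about the structure of $R$ near a singular point, and the asserted density is essentially the content of the theorem there, so the argument begs the question: a priori $x$ could be a topologically ramified point with no topologically ramified smooth points nearby, and then your outline produces no definable $(\dim(V)-1)$-dimensional subset of $R$ inside small neighborhoods of $x$.

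The missing ingredient is the step the paper performs first, which works at an arbitrary (possibly singular) point: topological ramification of $f$ at $x$ means $(x,x)$ is not in the $\tau$-interior of $\Delta(K)$ inside $(V\times_W V)_{\mathrm{red}}(K)$, hence by Corollary \ref{C: closures coincide} not in the Zariski interior, hence the diagonal is not a local open immersion at $x$ and $f$ is algebraically ramified at $x$; this direction of the comparison needs no smoothness of $V$ at $x$ and is not covered by Proposition \ref{P: etale iff unramified}, whose hypotheses include smoothness of the source. One then applies Fact \ref{F: purity of ramification} at $x$ itself to get an irreducible codimension-one subvariety $T\ni x$ consisting entirely of non-\'etale points, observes that $T\cap V^S$ is Zariski dense in $T$ (normal varieties are smooth outside codimension two), hence $\tau$-dense in $T(K)$ by Corollary \ref{C: closures coincide}, so that for any neighborhood $U$ of $x$ the set $U\cap (T\cap V^S)(K)$ is non-empty; its points are topologically ramified by Proposition \ref{P: etale iff unramified} applied on the smooth locus, and it has dimension $\dim(V)-1$ by Fact \ref{F: local dimension of irr var}. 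Inserting this step turns your sketch into the paper's proof; without it, the case of singular points of $V$ remains unproved.
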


\begin{proof}
    First note that, by Lemma \ref{L: relative frobenius}, we may assume $f$ is generically smooth (since homeomorphisms preserve the topological ramification locus). In particular, the variety $V'$ obtained by Lemma \ref{L: relative frobenius} is still normal, so the hypotheses of the theorem still hold.

    Thus, moving forward, we assume $f$ is generically smooth. As in Proposition \ref{P: etale iff unramified}, let $Z$ be the fiber product $V\times_WV$ (as schemes), and let $Z_{red}$ be the associated reduced scheme. Then let $\Delta$ be the diagonal subvariety of $Z_{red}$, and $\delta:V\rightarrow Z$ the diagonal morphism, which factors as $V\rightarrow Z_{red}\rightarrow Z$. 

    Toward a proof of the theorem, let $x\in R$, and let $U_1$ be any relative neighborhood of $x$ in $R$. So $U_1=U\cap R$ for some open $U\subset V(K)$. Shrinking $U_1$ and $U$ if necessary, we may assume $U$ is definable (in fact so are $U_1$ and $R$, but this won't be directly used). Our goal is to find a definable $X\subset U_1$ with $\dim(X)=\dim(V)-1$.
    
    First, since $f$ topologically ramifies at $x$, one sees that $(x,x)$ is not in the $\tau$-interior of $\Delta(K)$ in $Z_{red}(K)$. By Corollary \ref{C: closures coincide}, $(x,x)$ is not in the Zariski interior of $\Delta$ in $Z_{red}$. It follows easily that $V\rightarrow Z_{red}$ is not an open map in any neighborhood of $x$. But $Z_{red}\rightarrow Z$ is a Zariski homeomorphism, so $\delta$ is also not an open map in any neighborhood of $x$. Thus, $\delta$ is not locally an open immersion at $x$, and thus $f$ ramifies at $x$~\cite[Lemma 02GE]{stacks-project}. 
    
    So $f$ is not \'etale at $x$. By purity of ramification (Fact \ref{F: purity of ramification}), there is an irreducible closed codimension 1 subvariety $T\subset V$ such that $x\in T$ and $f$ is not \'etale on any point of $T$. Let $V^S$ be the smooth locus of $V$. Let $T'$ be the open subvariety $T\cap V^S\subset T$. Finally, let $X=U\cap T'(K)$. So $X$ is definable because $U$ is. To conclude, we need the following two claims:
    
    \begin{claim} $f$ topologically ramifies at every point of $X$, and thus $X\subset U_1$.
    \end{claim}
    \begin{claimproof} By Proposition \ref{P: etale iff unramified} applied to the restricted map $V^S(K)\rightarrow W(K)$, it suffices to observe (by the choice of $T$) that $f$ is non-\'etale at every point of $T$.
    \end{claimproof}
    
    \begin{claim} $\dim(X)=\dim(V)-1$. 
    \end{claim}
    \begin{claimproof} Since normal varieties are smooth outside codimension 2, we have $$\dim(T-V^S)\leq\dim(V-V^S)\leq\dim(V)-2<\dim(V)-1=\dim(T),$$ and thus $\dim(T')=\dim(T)$. So $T'$ is an open subvariety of $T$ of the same dimension; since $T$ is irreducible, this implies $T'$ is Zariski dense in $T$. Then, by Corollary \ref{C: closures coincide}, $T'(K)$ is also $\tau$-dense in $T(K)$. In particular, since $U\cap T(K)$ is non-empty (witnessed by $x$), it follows that $U\cap T'(K)$ is also non-empty, so that (by Fact \ref{F: local dimension of irr var}) $$\dim(X)=\dim(U\cap T'(K))=\dim(T')=\dim(V)-1.$$
    \end{claimproof}
\end{proof}

Finally, we proceed to the proof of ramification purity. The first step is the following approximation of the full statement:

\begin{lemma}\label{L: purity of multiple intersections 1} Let $\mathcal Y=\{Y_t:t\in U\}$ and $\mathcal Z=\{Z_u:u\in U\}$ be $\mathcal K(A)$-definable families of subsets of a $\mathcal K(A)$-definable set $X$, with graphs $Y\subset X\times T$ and $Z\subset X\times U$. Let $I$ be the set of $(x,t,u)$ with $(x,t)\in Y$ and $(x,u)\in Z$, and let $R$ be the topological ramification locus of $I\rightarrow T\times U$. Suppose $(x_0,t_0,u_0)$ is a weakly generic $(\mathcal Y,\mathcal Z)$-multiple intersection over $A$ which is strongly approximable over $A$. Then $R$ has local dimension at least $d-1$ at $(x_0,t_0,u_0)$, where $d=\dim(Y)+\dim(Z)-\dim(X)$.
\end{lemma}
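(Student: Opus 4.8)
\textbf{Proof plan for Lemma \ref{L: purity of multiple intersections 1}.}
The plan is to reduce the statement to the algebro-geometric purity statement Theorem \ref{T: top purity}, using the hypothesis of strong approximability to guarantee that the relevant map of varieties is dominant. First I would fix the data: since everything in sight is definable, I would pass (after adding finitely many parameters to $A$, which changes nothing) to the Zariski closures. Using Lemma \ref{L: ez main lemma second version} and Remark \ref{R: ez definable sets generically smooth}, I would replace $Y$, $Z$, $X$ near the generic point $(x_0,t_0,u_0)$ by the $K$-points of smooth irreducible varieties $\mathcal V_Y\subset \mathcal V_X\times \mathcal V_T$, $\mathcal V_Z\subset \mathcal V_X\times \mathcal V_U$ of the same dimensions and with the same germs at $(x_0,t_0,u_0)$ (here I may assume $A=\acl(A)$ to get irreducibility). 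Then the incidence set $I$, near $(x_0,t_0,u_0)$, is the germ of $\mathcal V_I(K)$ where $\mathcal V_I = \mathcal V_Y\times_{\mathcal V_X}\mathcal V_Z$, a variety over $K$; its dimension near $(x_0,t_0,u_0)$ is $\dim(\mathcal V_Y)+\dim(\mathcal V_Z)-\dim(\mathcal V_X) = d$ by the computation of dimension in fiber products, \emph{provided} the fiber product is well-behaved at that point — which is exactly where strong approximability enters. I would take an irreducible component $\mathcal V$ of (the normalization of, or a suitable open of) $\mathcal V_I$ through $(x_0,t_0,u_0)$ so that the projection $f:\mathcal V\to \mathcal V_T\times \mathcal V_U$ is a morphism of irreducible varieties with $\mathcal V$ normal and $\mathcal V_T\times \mathcal V_U$ smooth.

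The key point is that $f$ is dominant. This is where I use that $(x_0,t_0,u_0)$ is strongly approximable over $A$: by definition every neighborhood of $(x_0,t_0,u_0)$ contains a strongly generic $(\mathcal Y,\mathcal Z)$-intersection $(x,t,u)$, i.e. one where $t$ and $u$ are independent over $A$ (and over $Ax$). For such a point, $\dim(tu/A) = \dim(t/A)+\dim(u/A) = \dim(\mathcal V_T)+\dim(\mathcal V_U) = \dim(\mathcal V_T\times \mathcal V_U)$, so $(t,u)$ is generic in $\mathcal V_T\times \mathcal V_U$; since $(x,t,u)\in \mathcal V_I(K)$ maps to it and such points are dense near $(x_0,t_0,u_0)$, the image of $\mathcal V(K)$ under $f$ is Zariski dense (Corollary \ref{C: closures coincide}) in $\mathcal V_T\times \mathcal V_U$ — in particular $f$ is dominant. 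Moreover this same computation, run with $\dim(tu/Ax)$, shows $\dim(\mathcal V)=d$ locally, pinning down the dimension. Then I would apply Theorem \ref{T: top purity} to $f:\mathcal V\to \mathcal V_T\times \mathcal V_U$: its topological ramification locus $R_f$ has local dimension at least $\dim(\mathcal V)-1 = d-1$ at every point, in particular at $(x_0,t_0,u_0)$. Since the germ of $\mathcal V(K)\to \mathcal V_T\times \mathcal V_U(K)$ at $(x_0,t_0,u_0)$ coincides with the germ of $I\to T\times U$, the topological ramification locus $R$ of the latter agrees with $R_f$ near that point (topological ramification is a local notion, unaffected by homeomorphism), so $R$ has local dimension at least $d-1$ at $(x_0,t_0,u_0)$ as claimed.

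The main obstacle I anticipate is the bookkeeping around the fiber product $\mathcal V_I = \mathcal V_Y\times_{\mathcal V_X}\mathcal V_Z$: a priori this scheme can be non-reduced, reducible, or of the wrong dimension, and one must check that in a neighborhood of the \emph{generic} point $(x_0,t_0,u_0)$ one can choose a component that is normal, of dimension exactly $d$, and whose $K$-points realize the germ of $I$. The dimension count is forced by the strong approximability argument above (which produces enough generic points of $\mathcal V_I$ in every neighborhood), but getting normality may require passing to a normalization and then intersecting with the open locus where the map $\mathcal V\to\mathcal V_T\times\mathcal V_U$ still has germ matching $I\to T\times U$ — this is routine but needs care since normalization can a priori alter the germ on $K$-points. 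An alternative, perhaps cleaner, is to first replace $f$ by its factorization $\mathcal V\to \mathcal V'\to \mathcal V_T\times\mathcal V_U$ from Lemma \ref{L: relative frobenius} (legitimate since the universal homeomorphism there is also a $\tau$-homeomorphism, hence preserves both germs and the topological ramification locus), reducing to the generically smooth, normal case before invoking Theorem \ref{T: top purity}. A secondary subtlety: one must make sure that restricting to the open subvariety where the germs of the varieties agree with $Y$, $Z$, $X$ does not lose the point $(x_0,t_0,u_0)$ — but this holds by Lemma \ref{L: ez main lemma second version} precisely because that point is generic.
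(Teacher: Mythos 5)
Your overall architecture matches the paper's proof: reduce d-locally to smooth irreducible varieties via Lemma \ref{L: ez main lemma second version}, get dominance of the incidence variety over $T\times U$ (and the dimension bound $\geq d$) from a nearby strongly generic intersection, apply Theorem \ref{T: top purity}, and transfer back through equality of germs. However, there is a genuine gap at exactly the point you flag as the ``main obstacle,'' and neither of your proposed remedies closes it. The difficulty is that $(x_0,t_0,u_0)$ is only a \emph{weakly} generic intersection, so it need not be a generic point of $I$, and hence the fiber product $\mathcal V_Y\times_{\mathcal V_X}\mathcal V_Z$ may well be singular, non-reduced, or non-normal \emph{at that very point} (this is precisely the issue described in Remark \ref{R: explaining purity def}). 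Your remedy (a), passing to the normalization, does not work as stated: normalization is an isomorphism only over the normal locus, and $(x_0,t_0,u_0)$ may lie in the non-normal locus, so the germ of $K$-points at a lift can genuinely differ from the germ of $I$; moreover, topological ramification of the composite map upstairs does not transfer to ramification of $I\rightarrow T\times U$ downstairs, since two nearby points with equal image upstairs may be identified by the normalization map, and discarding the locus over the non-normal set can destroy the whole $(d-1)$-dimensional witness. Your remedy (b), invoking Lemma \ref{L: relative frobenius} on $\mathcal V\rightarrow\mathcal V_T\times\mathcal V_U$, is a non-starter for this purpose: that lemma \emph{preserves} normality of the source but does not create it (and its generically-smooth reduction is already performed inside the proof of Theorem \ref{T: top purity}), so the hypothesis ``$V$ normal'' of Theorem \ref{T: top purity} remains unverified.

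The paper's fix is different and is the key missing idea: apply the Frobenius-twist factorization of Lemma \ref{L: relative frobenius} not to $I\rightarrow T\times U$ but to the two structure maps $Y\rightarrow X$ and $Z\rightarrow X$ separately (this amounts to replacing $T$ and $U$ by Frobenius twists, which is harmless for genericity, germs, slopes, and ramification). After shrinking around the relevant generic points of $Y$ and $Z$, both maps become \emph{smooth morphisms}, so the fiber product $I=Y\times_XZ$ is smooth over the smooth $X$, hence smooth, reduced, and normal everywhere near $(x_0,t_0,u_0)$ — not just at its generic points. Smoothness also guarantees that $(x_0,t_0,u_0)$ lies on a unique irreducible component $I'$, which therefore contains a whole neighborhood of the point in $I$ and in particular contains the nearby strongly generic intersections needed for dominance and for $\dim(I')\geq d$; without smoothness your choice of a component through the point containing such witnesses is itself not justified. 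With this modification the rest of your argument (Theorem \ref{T: top purity} applied to $I'\rightarrow T\times U$, then $R'\subset R$ and germ equality) goes through as in the paper.
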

\begin{proof}
The idea is to reduce all sets and maps in the  statement to smooth varieties and smooth morphisms, and then apply Theorem \ref{T: top purity}. 

First, note that $x_0\in X^S$, $t_0\in T^S$, $u_0\in U^S$, $(x_0,t_0)\in Y^S$, and $(x_0,u_0)\in Z^S$ (since each point is generic in the corresponding set). Moreover, we are only interested in d-local data of $X$, $Y$, $Z$, and $I$ near $(x_0,t_0,u_0)$. So it is harmless to assume each of $X$, $Y$, $Z$, $T$, and $U$ is the set of $K$-points of a smooth variety. It is also harmless to replace $A$ with $\acl(A)$, so we may assume these smooth varieties are all irreducible. For ease of notation, let us call these varieties $X$, $Y$, $Z$, $T$, and $U$, and rename the original definable sets as $X(K)$, $Y(K)$, $Z(K)$, $T(K)$, and $U(K)$.

The assumption that $(x,t)\in Y$ and $x\in X$ are generic over $A$ gives that $Y\rightarrow X$ is dominant. Similarly, $Z\rightarrow X$ is dominant. Now, it is harmless to apply a Frobenius power to each of $T$ and $U$, since we are only interested in topological data. So by Lemma \ref{L: relative frobenius}, we may assume the morphisms $Y\rightarrow X$ and $Z\rightarrow X$ are generically smooth. In particular, by the genericity of $(x,t)\in Y$ and $(x,u)\in Z$, each of $Y\rightarrow X$ and $Z\rightarrow X$ is smooth in a Zariski neighborhood of the relevant point. It is harmless to shrink $Y$ and $Z$ so that $Y\rightarrow X$ and $Z\rightarrow X$ are each smooth.

In this case, $I\rightarrow X$ is the fiber product of the two smooth morphisms $Y\rightarrow X$ and $Z\rightarrow X$. Thus $I\rightarrow X$ is also smooth. Since $X$ is smooth, this implies $I$ is also smooth. Let $I'$ be an irreducible component of $I$ containing $(x_0,t_0,u_0)$. Since we are assuming $A=\acl(A)$, $I'(K)$ is definable over $A$.

We now consider the morphism $I'\rightarrow T\times U$ of irreducible smooth varieties. We claim this morphism is dominant. To see this, note that since $(x_0,t_0,u_0)$ is strongly approximable, $I'(K)$ contains a strongly generic $(\mathcal Y,\mathcal Z)$-intersection $(x,t,u)$ over $A$. By definition of strong genericity, $(t,u)$ is generic in $T(K)\times U(K)$ over $A$. This shows that $I'(K)\rightarrow T(K)\times U(K)$ has generic image, and this implies dominance.

We have now set up Theorem \ref{T: top purity} for the morphism $I'\rightarrow T\times U$. Let $R'$ be the topological ramification locus of $I'\rightarrow T\times U$ (and note that $R'\subset R$). Theorem \ref{T: top purity} tells us that $R'$ has local dimension at least $\dim(I')-1$ at $(x_0,t_0,u_0)$. Since $R'\subset R$, we conclude that $R$ also has local dimension at least $\dim(I')-1$ at $(x_0,t_0,u_0)$. 

Finally, to prove the lemma, we show that $\dim(I')\geq d=\dim(Y)+\dim(Z)-\dim(X)$. For this, we again use our strongly generic intersection $(x,t,u)$. Indeed, an easy computation, using strong genericity, yields that $\dim(xtu/A)=d$. Meanwhile, since $(x,t,u)\in I'(K)$, we have $\dim(xtu/A)\leq\dim(I')$, and thus $\dim(I')\geq d$ as desired. 
\end{proof}

We next deduce a slightly stronger statement:

\begin{lemma}\label{L: purity of multiple intersections 2} Let $\mathcal Y=\{Y_t:t\in T\}$ and $\mathcal Z=\{Z_u:u\in U\}$ be $\mathcal K(A)$-definable families of subset of a $\mathcal K(A)$-definable set $X$, with graphs $Y\subset X\times T$ and $Z\subset X\times U$. Let $(x_0,t_0,u_0)$ be a generic $(\mathcal Y,\mathcal Z)$-multiple intersection over $A$ which is strongly approximable over $A$. Then there is a generic $(\mathcal Y,\mathcal Z)$-multiple intersection $(x,t,u)$ satisfying $\dim(xtu/A)\geq d-1$, where $d=\dim(Y)+\dim(Z)-\dim(X)$.
\end{lemma}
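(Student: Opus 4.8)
The plan is to derive Lemma \ref{L: purity of multiple intersections 2} from Lemma \ref{L: purity of multiple intersections 1} by a short argument passing from \emph{local dimension of the topological ramification locus} to \emph{the existence of a single generic multiple intersection of large dimension}. Recall that, by definition, $(x_0,t_0,u_0)$ being a generic $(\mathcal Y,\mathcal Z)$-multiple intersection over $A$ means it is a weakly generic $(\mathcal Y,\mathcal Z)$-intersection over $A$ which is a topological ramification point of the projection $I\to T\times U$; in particular $(x_0,t_0,u_0)\in R$, the topological ramification locus. Since $(x_0,t_0,u_0)$ is also strongly approximable over $A$, Lemma \ref{L: purity of multiple intersections 1} applies and tells us that $R$ has local dimension at least $d-1$ at $(x_0,t_0,u_0)$, where $d=\dim(Y)+\dim(Z)-\dim(X)$.

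First I would unwind Definition \ref{D: local dimension}: local dimension at least $d-1$ at $(x_0,t_0,u_0)$ means that \emph{every} relative neighborhood of $(x_0,t_0,u_0)$ in $R$ contains a definable set of dimension $d-1$. Now I would like to take such a definable subset $R'\subseteq R$ with $\dim(R')=d-1$ and extract from it a point $(x,t,u)$ with $\dim(xtu/A)\ge d-1$ which is still a generic $(\mathcal Y,\mathcal Z)$-multiple intersection over $A$. The dimension bound is immediate: choose $(x,t,u)\in R'$ generic over $A$ (enlarging $A$ to a countable set over which $R'$ is defined, which does not affect the conclusion by the usual remark that these notions are parameter-insensitive), so that $\dim(xtu/A)=\dim(R')=d-1$. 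The only real content is to arrange simultaneously that $(x,t,u)$ is a \emph{weakly generic} $(\mathcal Y,\mathcal Z)$-intersection over $A$ — i.e. that $x\in X$, $t\in T$, $u\in U$, $(x,t)\in Y$ and $(x,u)\in Z$ are each generic over $A$ — and that it lies in $R$ (the latter being automatic since $R'\subseteq R$ and membership in $R$ is preserved under passing to subsets, as $R$ itself is the ramification locus).

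The main obstacle, therefore, is the genericity-of-components bookkeeping: a priori a generic point of $R'$ need not be weakly generic, since $R'$ could concentrate on a locus where, say, $x$ fails to be generic in $X$ over $A$. I expect the fix to be the standard one used throughout the paper: by Lemma \ref{L: ez main lemma second version} (or simply by general geometric-structure dimension theory) one may, after replacing $A$ by $\acl(A)$, work d-locally and reduce $X$, $Y$, $Z$, $I$ to $K$-points of smooth irreducible varieties and the relevant maps to smooth morphisms, exactly as in the proof of Lemma \ref{L: purity of multiple intersections 1}; in that reduced setting one intersects $R'$ with the (open, dense, $A$-definable) locus of weakly generic intersections — which is non-empty because $(x_0,t_0,u_0)$ already lies in its closure and, by strong approximability together with the smoothness reduction, actually in its interior relative to the appropriate slice — and then takes a generic point of the resulting definable set. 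Since the weakly-generic locus is relatively open and $R'$ has dimension $d-1$ with $(x_0,t_0,u_0)$ in its closure, a neighborhood argument (shrinking the original neighborhood of $(x_0,t_0,u_0)$ in $R$ to one contained in the weakly-generic locus before invoking local dimension $\ge d-1$) guarantees we may take $R'$ itself inside the weakly-generic locus. Then $(x,t,u)$ chosen generic in $R'$ over $A$ is a weakly generic $(\mathcal Y,\mathcal Z)$-intersection lying in $R$, hence a generic $(\mathcal Y,\mathcal Z)$-multiple intersection, with $\dim(xtu/A)=\dim(R')=d-1$, as required. I would close by remarking that this is precisely the input needed, combined with Definition \ref{D: codimension} and additivity of dimension, to read off $\operatorname{codim}_A(x,t,u)\le 1$ and thereby obtain ramification purity (Definition \ref{D: ramification purity}) in the algebraically closed case.
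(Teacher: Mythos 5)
Your reduction to Lemma \ref{L: purity of multiple intersections 1} is the right starting point, and the bookkeeping you do get right (a point of a $(d-1)$-dimensional definable $D\subseteq R$ chosen generic over a larger parameter set $B\supseteq A$ still has $\dim(xtu/A)\geq d-1$, and membership in $R$ is automatic) matches the paper. But the step you yourself identify as "the main obstacle" is where the proposal breaks: you assert that the locus of weakly generic $(\mathcal Y,\mathcal Z)$-intersections over $A$ is an "open, dense, $A$-definable" set, shrink the neighborhood of $(x_0,t_0,u_0)$ to lie inside it, and then take $R'$ inside that locus. This locus is not definable: genericity over $A$ (e.g.\ $\dim(x/A)=\dim X$) is the condition of belonging to \emph{every} $A$-definable subset whose complement has smaller dimension, hence is at best an infinite intersection of definable conditions. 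Nor is it open: the strong frontier inequality only says that $(x_0,t_0,u_0)$ avoids the closure of each \emph{individual} small $A$-definable set, and in the ambient setting (an \'ez topological field expansion, with possibly uncountable language and no assumption letting you intersect infinitely many neighborhoods into one) there is no reason a single neighborhood avoids them all. So the proposed shrinking argument, and hence the application of Lemma \ref{L: purity of multiple intersections 1} to a neighborhood "contained in the weakly-generic locus," has no justification; the d-local reduction to smooth varieties does not help either, since smoothness says nothing about the $A$-genericity of the coordinates of points of $R'$.

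The paper's proof is built precisely to get around this. It replaces the non-definable generic locus by arbitrary $A$-definable \emph{large} subsets $X'\subseteq X$, $Y'\subseteq Y$, $Z'\subseteq Z$, $T'\subseteq T$, $U'\subseteq U$ (complements of smaller dimension), forms the corresponding incidence set $I'$ and its ramification locus $R'$, and uses Lemma \ref{L: germ at generic} (the components of $(x_0,t_0,u_0)$ are generic, so the primed sets have the same germs as the unprimed ones at the relevant points) to conclude that $R'$ and $R$ have the same germ at $(x_0,t_0,u_0)$. Hence $R'$ contains a relative neighborhood of $(x_0,t_0,u_0)$ in $R$, and Lemma \ref{L: purity of multiple intersections 1} puts a definable set of dimension $d-1$ inside $R'$; a generic point of it lies in $R$, satisfies the chosen finite fragment of the genericity conditions (because it lies in $I'$), and has $\dim(\cdot/A)\geq d-1$. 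Finally $\aleph_1$-saturation assembles the full statement from these finite fragments. To repair your write-up you would need to replace the "open, dense, definable weakly-generic locus" step with this large-subset approximation (or an equivalent device); as written, the argument has a genuine gap at its crucial point.
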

\begin{proof}
    Let $I$ be the set of $(x,t,u)$ with $(x,t)\in Y$ and $(x,u)\in Z$, and let $R$ be the topological ramification locus of $I\rightarrow T\times U$. We are asked to find $(x,t,u)$ satisfying:

    \begin{enumerate}
        \item $(x,t,u)\in R$.
        \item $x$, $t$, $u$, $(x,t)$, and $(x,u)$ are generic in $X$, $T$, $U$, $Y$, and $Z$ over $A$, respectively.
        \item $\dim(xtu/A)\geq d-1$.
    \end{enumerate}

    Since $\mathcal K$ is $\aleph_1$-saturated, it suffices to realize any finite fragment of (1)-(3) above. In particular, we will find $(x,t,u)$ realizing (1), (3), and a finite fragment of (2). 
    
    Let us briefly elaborate. For the purposes of this proof, given $\mathcal K$-definable sets $D_1\subset D_2$, let us call $D_1$ \textit{large} in $D_2$ if $\dim(D_2-D_1)<\dim(D_2)$. It is easy to see that, if $D$ is $A$-definable, then the set of $A$-generic elements of $D$ is the intersection of all $A$-definable large subsets of $D$. In particular, to show some partial type is consistent with the $A$-generic locus of $D$, one can show that it is consistent with each $A$-definable large subset of $D$. So our plan is to satisfy (1), (3), and an approximation of (2) replacing each $\mathcal K(A)$-definable set in the statement with a $\mathcal K(A)$-definable large subset.

    Now let us give the details. Suppose $X'$, $Y'$, $Z'$, $T'$, and $U'$ are any $\mathcal K(A)$-definable large subsets of $X$, $Y$, $Z$, $T$, and $U$, respectively. Then, precisely, it suffices to find $(x,t,u)$ satisfying (1), (2'), and (3), where (2') is the conjunction of $x\in X'$, $(x,t)\in Y'$, $(x,u)\in Z'$, $t\in T'$, and $u\in U'$.

    We now construct such $(x,t,u)$. First, let $I'$ be the set of $(x,t,u)\in X'\times T'\times U'$ with $(x,t)\in Y'$ and $(x,u)\in Z'$. Let $R'$ be the topological ramification locus of the projection $I'\rightarrow T'\times U'$. Note that any $(x,t,u)\in R'$ automatically satisfies (1) and (2'). Thus, to additionally satisfy (3), it suffices to find an element $(x,t,u)\in R'$ with $\dim(xtu/A)\geq d-1$.
    
    Now by Lemma \ref{L: germ at generic}, the sets $X$ and $X'$ have the same germ at $x_0$ (that is, by genericity of $x_0$, they both realize the germ of $\tp_{\mathcal K}(x_0/A)$); and the analogous statements hold for $Y$, $Z$, $T$, and $U$. Since they are defined analogously, it follows that $I$ and $I'$ have the same germ at $(x_0,t_0,u_0)$ and thus that $R$ and $R'$ have the same germ at $(x_0,t_0,u_0)$. So $R'$ contains some relative neighborhood of $(x_0,t_0,u_0)$ in $R$. But by Lemma \ref{L: purity of multiple intersections 1}, every relative neighborhood of $(x_0,t_0,u_0)$ in $R$ contains a definable set of dimension $d-1$. Thus, in particular, $R'$ contains a definable set -- say $D$ -- of dimension $d-1$. Let $D$ be definable over $B\supset A$, and let $(x,t,u)\in D$ be generic over $B$. Then $(x,t,u)\in R'$, and $$\dim(xtu/A)\geq\dim(xtu/B)=\dim(D)=d-1,$$ as desired. 
\end{proof}

\begin{remark} Note that in the above lemma, we cannot assume $R$ and $R'$ are definable (this would require a uniformly definable basis for $\tau$, and we do not assume uniformity). This is the reason we needed the more general notion of local dimension in Definition \ref{D: local dimension}.
\end{remark}

Finally, we show:

\begin{theorem}\label{T: acvf purity} $(\mathcal K,\tau)$ has ramification purity.
\end{theorem}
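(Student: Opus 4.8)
The plan is to deduce Theorem \ref{T: acvf purity} directly from Lemma \ref{L: purity of multiple intersections 2} together with a short dimension count. So let $\mathcal Y=\{Y_t:t\in T\}$ and $\mathcal Z=\{Z_u:u\in U\}$ be $\mathcal K(A)$-definable families of subsets of a $\mathcal K(A)$-definable set $X$, with graphs $Y\subset X\times T$ and $Z\subset X\times U$, and suppose there is a generic multiple $(\mathcal Y,\mathcal Z)$-intersection over $A$ which is strongly approximable over $A$. First I would apply Lemma \ref{L: purity of multiple intersections 2} to obtain a generic multiple $(\mathcal Y,\mathcal Z)$-intersection $(x,t,u)$ over $A$ satisfying $\dim(xtu/A)\geq d-1$, where $d=\dim(Y)+\dim(Z)-\dim(X)$.

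The second step is to check that any such $(x,t,u)$ automatically has codimension at most $1$ over $A$, which is exactly the conclusion required by Definition \ref{D: ramification purity}. Since $(x,t,u)$ is in particular a weakly generic $(\mathcal Y,\mathcal Z)$-intersection over $A$, the points $x$, $(x,t)$, and $(x,u)$ are generic over $A$ in $X$, $Y$, and $Z$ respectively; hence by additivity of $\dim$ one has $\dim(t/Ax)=\dim(Y)-\dim(X)$, $\dim(u/Ax)=\dim(Z)-\dim(X)$, and $\dim(tu/Ax)=\dim(xtu/A)-\dim(X)$. Plugging these into Definition \ref{D: codimension} gives
$$\operatorname{codim}_A(x,t,u)=\dim(Y)+\dim(Z)-\dim(X)-\dim(xtu/A)=d-\dim(xtu/A)\leq 1,$$
using the lower bound from Lemma \ref{L: purity of multiple intersections 2}. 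This produces the desired low-codimension generic multiple intersection and completes the verification of Definition \ref{D: ramification purity}.

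I do not expect a genuine obstacle at this final stage: all the substantive work — reducing the families and the projection $I\rightarrow T\times U$ to smooth varieties and smooth morphisms, applying the reduced relative Frobenius machinery of Lemma \ref{L: relative frobenius}, and invoking the topologized purity of ramification of Theorem \ref{T: top purity} — has already been done inside Lemmas \ref{L: purity of multiple intersections 1} and \ref{L: purity of multiple intersections 2}. The only points to be careful about are purely bookkeeping: that the tuple produced by Lemma \ref{L: purity of multiple intersections 2} is indeed a \emph{generic} multiple intersection (which is part of its statement, so the weak-genericity clauses needed for the additivity identities above are available), and that one is free to pass to $\acl(A)$ where needed without changing any of the relevant genericity or codimension data. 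With these in hand the argument is just the unwinding of the definition of codimension displayed above.
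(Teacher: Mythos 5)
Your proposal is correct and follows essentially the same route as the paper: invoke Lemma \ref{L: purity of multiple intersections 2} to get a generic multiple intersection with $\dim(xtu/A)\geq d-1$, then use weak genericity and additivity of dimension to compute $\operatorname{codim}_A(x,t,u)\leq 1$. The dimension bookkeeping you display is exactly the paper's concluding calculation, so there is nothing to add.
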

\begin{proof}
    Let $\mathcal Y=\{Y_t:t\in T\}$ and $\mathcal Z=\{Z_u:u\in U\}$ be $\mathcal K(A)$-definable subsets of a $\mathcal K(A)$-definable set $X$, with graphs $Y\subset X\times T$ and $Z\subset X\times U$. Let $(x_0,t_0,u_0)$ be a generic $(\mathcal Y,\mathcal Z)$-multiple intersection over $A$ which is strongly approximable over $A$. We want to find a generic $(\mathcal Y,\mathcal Z)$-multiple intersection over $A$ which has codimension at most 1 over $A$.

    By Lemma \ref{L: purity of multiple intersections 2}, there is a generic $(\mathcal Y,\mathcal Z)$-multiple intersection $(x,t,u)$ over $A$ with $\dim(xtu/A)\geq d-1$, where $d=\dim(Y)+\dim(Z)-\dim(X)$. We claim that $\operatorname{codim}_A(xtu)\leq 1$, which will complete the proof. Indeed, we have 

    \begin{itemize}
        \item $\dim(x/A)=\dim(X)$
        \item $\dim(xt/A)=\dim(Y)$,
    \end{itemize}

    and thus by additivity, $\dim(t/Ax)=\dim(Y)-\dim(X)$. Similarly, we get $\dim(u/Ax)=\dim(Z)-\dim(X)$.
    
    Meanwhile, we have 
    
    \begin{itemize}
        \item $\dim(x/A)=\dim(X)$
        \item $\dim(xtu/A)\geq d-1$,    \end{itemize}
        
        and thus by additivity, $\dim(tu/Ax)\geq d-1-\dim(X)$. So, putting everything together, we have $$\operatorname{codim}_A(xtu)=\dim(t/Ax)+\dim(u/Ax)-\dim(tu/Ax)$$ $$\leq\dim(Y)-\dim(X)+\dim(Z)-\dim(X)-(d-1-\dim(X))$$ $$\dim(Y)+\dim(Z)-\dim(X)-(d-1)=1.$$
\end{proof}

Let us now drop the ambient assumptions on $(\mathcal K,\tau)$. We end 
Section 9 by collecting our results for ACVF and pure \'ez fields:

\begin{corollary}\label{C: acvf axioms} Every $\aleph_1$-saturated algebraically closed valued field, equipped with the valuation topology, is a Hausdorff geometric structure with the open mapping property and ramification purity.
\end{corollary}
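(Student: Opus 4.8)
The proof is essentially a matter of assembling the results of this section. Fix an $\aleph_1$-saturated algebraically closed valued field $(\mathcal K,v)$ and let $\tau$ be the valuation topology. First, by Lemma \ref{L: acvf ez}, $(\mathcal K,\tau)$ is an \'ez topological field expansion. Hence, by Theorem \ref{T: ez fields}, $(\mathcal K,\tau)$ is a Hausdorff geometric structure. Moreover, the underlying field $K$ is algebraically closed, so Theorem \ref{T: acvf purity} applies and gives that $(\mathcal K,\tau)$ has ramification purity.

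It remains to verify the open mapping property, and by Lemma \ref{L: ez open mapping} it suffices to check that whenever $g:V\rightarrow W$ is a quasi-finite morphism of smooth $d$-dimensional varieties over $K$, the induced map $g:V(K)\rightarrow W(K)$ is open in $\tau$. The plan is to show that $g$ is universally open and then invoke Fact \ref{F: open mapping}. Since openness is local on the source, we may work one irreducible component of $V$ at a time and assume $V$ is irreducible; then, as $g$ is quasi-finite, $\overline{g(V)}$ has dimension $d$, so it is a union of components of $W$ and $g$ is dominant onto such a component, which we take to be $W$. Thus $g$ is a dominant quasi-finite morphism of integral $d$-dimensional schemes, and all its fibres have dimension $0=\dim V-\dim W$. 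As $V$ is smooth (hence Cohen--Macaulay) and $W$ is smooth (hence regular), Miracle Flatness shows that $g$ is flat; and a flat morphism locally of finite presentation is universally open. Therefore $g$ is universally open, and Fact \ref{F: open mapping} yields that $g:V(K)\rightarrow W(K)$ is open in the valuation topology. This verifies the hypothesis of Lemma \ref{L: ez open mapping}, so $(\mathcal K,\tau)$ has the open mapping property, completing the proof.

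The only step requiring any real argument, as opposed to a direct citation, is the reduction to universal openness via Miracle Flatness in the previous paragraph; everything else is an appeal to results already established. In particular, once universal openness is in hand, Fact \ref{F: open mapping} -- the open mapping theorem for ACVF -- does all the topological work, playing precisely the role that the classical complex-analytic open mapping theorem plays in the characteristic zero treatment of \cite{CasACF0}.
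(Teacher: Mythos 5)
Your proposal is correct and follows essentially the same route as the paper: assemble Lemma \ref{L: acvf ez}, Theorem \ref{T: ez fields}, and Theorem \ref{T: acvf purity}, then verify the hypothesis of Lemma \ref{L: ez open mapping} by showing the quasi-finite morphism is universally open and invoking Fact \ref{F: open mapping}. The only (harmless) divergence is in how universal openness is justified: the paper cites the Stacks Project lemma on quasi-finite morphisms to normal/unibranch targets directly, while you derive it via Miracle Flatness (smooth source is Cohen--Macaulay, smooth target is regular, zero-dimensional fibres give flatness, and flat plus locally of finite presentation gives universal openness), which is equally valid and in fact mirrors the paper's own use of Miracle Flatness elsewhere in this section.
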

\begin{proof} Let $(\mathcal K,\tau)$ be such a field. By Lemma \ref{L: acvf ez}, $(\mathcal K,\tau)$ is a Hausdorff geometric structure. By Theorem \ref{T: acvf purity}, $(\mathcal K,\tau)$ has ramification purity. Finally, to prove the open mapping property, it suffices to verify the hypothesis of Lemma \ref{L: ez open mapping}. So, let $f:V\rightarrow W$ be a quasi-finite morphism of smooth $d$-dimensional $K$-varieties. Then by \cite[Lemma 0F32]{stacks-project}, $f$ is universally open, and thus by Fact \ref{F: open mapping}, $V(K)\rightarrow W(K)$ is open.
\end{proof}

\begin{corollary} Let $(K,+,\cdot)$ be an $\aleph_1$-saturated \'ez field, whose \'etale open topology is induced by a field topology. Then $(K,+,\cdot)$, equipped with the \'etale open topology, is a Hausdorff geometric structure, and if $K$ has characteristic zero then $(K,\tau)$ is differentiable.
\end{corollary}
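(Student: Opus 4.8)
The plan is to deduce this immediately from the results of Section~\ref{S: examples} by checking that such a field, equipped with its \'etale open topology $\tau$, is an \'ez topological field expansion in the sense of Definition~\ref{D: ez field}. Once that is established, Theorem~\ref{T: ez fields} gives that $(\mathcal K,\tau)$ is a Hausdorff geometric structure, and Lemma~\ref{L: ez differentiable} gives differentiability when $\operatorname{char}(K)=0$. Indeed, as already noted in the paragraph following Definition~\ref{D: ez field}, such a field is \emph{trivially} an \'ez topological field expansion; the only work is to unwind what ``trivially'' means.

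First I would assemble the structural inputs. The pure field language is finite, hence countable, and $\mathcal K$ is assumed $\aleph_1$-saturated. By hypothesis $\tau$ is induced by a field topology, so $(K,+,\cdot,\tau)$ is a topological field and, for every variety $V$ over $K$, the \'etale open topology on $V(K)$ coincides with the topology induced on $V(K)$ from $\tau$ via products and subspaces; in particular, ``$\tau$-open inside a Zariski closed set $Z(K)$'' means precisely ``\'etale-open in $Z(K)$''. That $\tau$ is Hausdorff and non-discrete follows from the basic theory of the \'etale open topology (\cite{JTWY,field-top-1,field-top-2}), using that this topology respects products. This already gives clause~(1) of Definition~\ref{D: ez field}, and clause~(3) --- that \'etale morphisms induce $\tau$-open maps on $K$-points --- holds by the very definition of the \'etale open topology.

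Next I would verify invariance (Definition~\ref{D: invariant}). A basis for $\tau$ on $K$ is given by the images $g(W(K))$, where $g\colon W\to\mathbb{A}^1$ ranges over \'etale morphisms of $K$-varieties; each such image is a definable (indeed constructible) subset of $K$, and these basic sets are presented uniformly in their parameters. Since being an \'etale morphism is an automorphism-stable, purely algebro-geometric property, if a formula $\phi(x,a)$ defines such a basic open set and $\tp(a)=\tp(b)$, then $\phi(x,b)$ again defines a basic open set; this is exactly clause~(2) of Definition~\ref{D: invariant}. Finally, clause~(2) of Definition~\ref{D: ez field} --- that every definable subset of $K^n$ is a finite union of definable $\tau$-open subsets of Zariski closed sets --- is precisely the defining ``local quantifier elimination'' property of a large field being \'ez (\cite{ez}), once ``\'etale-open'' is replaced by ``$\tau$-open'' using the first paragraph. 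Hence $(\mathcal K,\tau)$ is an \'ez topological field expansion, and Theorem~\ref{T: ez fields} together with Lemma~\ref{L: ez differentiable} finishes the argument.

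The hard part, such as it is, will be the bookkeeping in the second paragraph: pinning down exactly how the hypothesis ``$\tau$ is induced by a field topology'' forces the \'etale open topology on subvarieties to agree with the subspace/product topology, and citing the correct statements ensuring $\tau$ is Hausdorff and non-discrete. Everything else is a direct translation of definitions, and no geometry beyond what is already developed for \'ez topological field expansions is needed.
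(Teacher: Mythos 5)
Your proposal is correct and follows exactly the route the paper takes: the paper's proof is the two-line citation of Theorem \ref{T: ez fields} and Lemma \ref{L: ez differentiable}, relying on the observation (made right after Definition \ref{D: ez field}) that such a field is trivially an \'ez topological field expansion. Your second and third paragraphs simply spell out the verification that the paper compresses into the word ``trivially,'' and that unwinding is accurate.
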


\begin{proof} By Theorem \ref{T: ez fields} and Lemma \ref{L: ez differentiable}.
\end{proof}

\section{Definable Slopes in ACVF}\label{S: slopes}

Our goal now is to show that algebraically closed valued fields have definable slopes satisfying TIMI (Definition \ref{D: definable slopes} and Definition \ref{D: TIMI}). \textbf{Throughout this section, we fix an $\aleph_1$-saturated algebraically closed valued field $(K,v)$}. In particular, unlike the rest of the paper to this point, our topology now has a uniformly definable basis. This means that we can definably speak about germs of functions at a point (because the equivalence relation of having the same germ is definable). We will do this throughout.

Otherwise, we use very little from the theory ACVF, so we do not elaborate here. Interested readers may refer to \cite[\S 2.1]{HaHrMac1}, \cite[\S 2.1]{HrKa} and references therein for more background. Section 2.1 of the more recent \cite{HaOnPi} contains also some background (and relevant references) on the analytic theory of functions definable in complete models of ACVF that is needed in a couple of points below. 

\subsection{Taylor Groupoids} We will work with approximate versions of TIMI, successively generalizing until we get the full statement. To streamline the presentation, we work with ind-definable sets. For us, an \textit{$A$-ind-definable set} is a countable union of $A$-definable sets; a set is \textit{ind-definable} if it is $A$-ind-definable for some $A$; and a function is ($A$)-ind-definable if its graph is. Thus, an $A$-ind-definable group is a group whose underlying set, composition operation, and inverse operation, are all $A$-indefinable.

We want to work more generally with ind-definable groupoids:

\begin{definition}
    A $\emptyset$-\textit{ind-definable groupoid} is a groupoid $\mathcal C$ such that (i) the set of objects of $\mathcal C$, (ii) the set of morphisms of $\mathcal C$, (iii) the composition operation on morphisms of $\mathcal C$, and (iv) the inverse operation on morphisms of $\mathcal C$, are all $\emptyset$-ind-definable. 
\end{definition}

\begin{example} Define the groupoid $\mathcal{LDH}$ (local definable homeomorphisms) whose objects are the elements of $K$, and whose morphisms $x\rightarrow y$ are germs of definable homeomorphisms between neighborhoods of $x$ and $y$. Then $\mathcal{LDH}$ is a $\emptyset$-ind-definable groupoid.
\end{example}

Our first goal is to introduce, in the language of ind-definable groupoids, an abstract notion of Taylor series.

\begin{definition}\label{first order definable slopes} A \textit{weak Taylor groupoid} consists of the following data and requirements:
    \begin{enumerate}
        \item A $\emptyset$-ind-definable groupoid $\mathcal C$, which is a sub-groupoid of $\mathcal LDH$.
        \item A $\emptyset$-ind-definable group $G$ contained in $\operatorname{dcl}(\emptyset)$. 
        \item A $\emptyset$-ind-definable homomorphism $a$ from morphisms in $\mathcal C$ to $G$. This means that if $g\circ f=h$ as morphisms in $\mathcal C$, then $a(g)a(f)=a(h)$ in $G$.
        \item For each $n\geq 1$, a $\emptyset$-ind-definable map $f\mapsto c_n(f)$ from morphisms in $\mathcal C$ to $K$, such that $c_1(f)\neq 0$ for all $f$.
    \end{enumerate}
    \end{definition}

    For ease of notation, we often denote a weak Taylor groupoid by the underlying groupoid $\mathcal C$, omitting the additional data.

    \begin{definition}
        Let $\mathcal C$ be a weak Taylor groupoid, and let $f:x\rightarrow y$ be a morphism in $\mathcal C$. For each $n\geq 0$, we define the $n$\textit{th truncation} of $f$, dented $T_n(f)$, to be the tuple $(x,y,a(f),c_1(f),...,c_n(f))$ (if $n=0$ this just means $(x,y,a(f))$).
    \end{definition}

    \begin{definition}\label{D: taylor groupoid}
        Let $\mathcal C$ be a weak Taylor groupoid. Then $\mathcal C$ is a \textit{Taylor groupoid} if the following hold:
        \begin{enumerate}
            \item If $g\circ f=h$ as morphisms in $\mathcal C$, then for each $n$, the $n$th truncation of $h$ is determined by the $n$th truncations of $f$ and $g$.
            \item If $f$ is a morphism in $\mathcal C$, then $f$ is determined by its sequence of $n$th truncations for $n\geq 1$.
            \item For fixed $n\geq 1$, every $(x,y,a,c_1,...,c_n)\in K^2\times G\times K^n$ with $c_1\neq 0$ is the $n$th truncation of a morphism in $\mathcal C$.
        \end{enumerate}
    \end{definition}

    Taylor groupoids are very close to asserting Taylor's Theorem about a certain class of functions in an analytic context: each function is definably assigned a countable sequence of `coefficients' at each point, which determine the function in a neighborhood of the point, and these coefficients carry definable composition and inverse operations which are well-defined on finite truncations. The main difference, and the reason for the term `generalized', is the group $G$. One should think of elements of $G$ as powers of the Frobenius map, which are necessary to cover all the functions we need in positive characteristic.

    Since the various properties required in the two notions are quite similar, it is not hard to see that a `big enough' Taylor groupoid implies that $(K,v)$ has definable slopes: 
    
    \begin{lemma}\label{L: groupoid induces slopes}
        Let $\mathcal C$ be a Taylor groupoid. Assume that the set of objects of $\mathcal C$ is $K$, and that every basic invertible arc (in the sense of Definition \ref{D: invertible arc}) is a morphism in $\mathcal C$. Then $(K,v)$ has definable slopes.
    \end{lemma}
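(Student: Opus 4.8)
The goal is to construct, from a sufficiently rich Taylor groupoid $\mathcal C$, the data $\{\mathcal{IA}_n\}_{n\in\mathbb N}$ and $\{T_{m,n}\}$ witnessing that $(K,v)$ has definable slopes in the sense of Definition \ref{D: definable slopes}. The natural candidate is to set, for each $n<\infty$, the objects of $\mathcal{IA}_n$ to be the generic points of $K$ (as in $\mathcal{IA}_\infty$) and the morphisms $x\to y$ to be the $n$th truncations $T_n(f)$ of morphisms $f:x\to y$ in $\mathcal C$ whose germs at $x$ are invertible arcs; the truncation functors $T_{m,n}:\mathcal{IA}_m\to\mathcal{IA}_n$ are then induced by forgetting the coordinates $c_{n+1},\dots,c_m$, which makes sense precisely because of Definition \ref{D: taylor groupoid}(1) (composition descends to truncations). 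I would first check that each $\mathcal{IA}_n$ is a well-defined groupoid: composition of truncations is well-defined by \ref{D: taylor groupoid}(1), the identity arc $x\to x$ gives the truncation with $a=1$, $c_1=1$, $c_j=0$ for $j\ge 2$, and inverses exist because $\mathcal C$ is a groupoid and truncation of the inverse is again determined by the truncation. The hypothesis that every basic invertible arc is a morphism in $\mathcal C$ guarantees that $\mathcal{IA}_\infty$ as defined in Section \ref{S: ACVF} embeds into the inverse system, and the hypothesis that the objects of $\mathcal C$ are all of $K$ ensures we lose no generic points.

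Next I would verify the seven clauses of Definition \ref{D: definable slopes} in turn. Clause (1) is immediate from the construction. Clause (2) — compatibility of the $T_{m,n}$ and the inverse-limit property — follows because truncation is literally projection onto initial segments of the coordinate tuple $(x,y,a,c_1,\dots)$, and Definition \ref{D: taylor groupoid}(2) is exactly the statement that a morphism of $\mathcal C$ is recovered from its sequence of truncations, hence the morphism sets of $\mathcal{IA}_\infty$ are the inverse limits of those of the $\mathcal{IA}_n$. Clause (3) holds since a truncation $T_n(f)=(x,y,a(f),c_1(f),\dots,c_n(f))$ is by definition a tuple in $K^{\mathrm{eq}}$ (using $G\subset\operatorname{dcl}(\emptyset)$) and clearly $(x,y)$ is $\mathcal K(T_n(f))$-definable because $x,y$ appear as coordinates. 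For clause (5), definability of composition and inverse, I would invoke the $\emptyset$-ind-definability of the composition and inverse operations on $\mathcal C$ together with Definition \ref{D: taylor groupoid}(1): the $n$th truncation of $g\circ f$ is a $\emptyset$-definable function of $T_n(f)$ and $T_n(g)$, so $g\circ f\in\operatorname{dcl}(f,g)$ at the level of $n$-truncations, and similarly for $f^{-1}$. Clause (6), coordinatization, splits into the $n=0$ case (where $T_0(f)=(x,y,a(f))$ and $a(f)\in\operatorname{dcl}(\emptyset)$ is algebraic over $xy$, actually one needs that $a(f)$ is determined — this should follow since for an invertible arc at a generic point the Frobenius twist is forced, so $f$ is interdefinable with $xy$) and the case $n\ge 1$, where $T_n(f)$ adds exactly the single coordinate $c_n(f)\in K$ over $T_{n,n-1}(f)$, giving $\dim(f/g)\le 1$.

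The main obstacle, and the step deserving the most care, is clause (4), the uniform definability of slopes: given a $\mathcal K(A)$-definable family $\{X_t:t\in T\}$ of dimension-$1$ sets projecting finite-to-one in both directions, one must show that the set of $(x,y,t,\alpha)$ with $(x,y)$ generic in $X_t$ over $At$ and $T_{\infty,n}(\alpha_{xy}(X_t))=\alpha$ is type-definable over $A$. The issue is that $\alpha_{xy}(X_t)$ is defined via the germ of the local homeomorphism determined by $X_t$ near $(x,y)$, and one must match this germ with a morphism of $\mathcal C$ in a way that is uniform in $t$. Here I would use that in ACVF the topology has a uniformly definable basis, so "having the same germ at $(x,y)$" is a definable condition; combined with the $\emptyset$-ind-definability of $\mathcal C$ and of the maps $f\mapsto c_j(f)$, $f\mapsto a(f)$, one gets that "$\alpha$ is the $n$th truncation of the morphism of $\mathcal C$ whose germ at $x$ is the arc of $X_t$ at $(x,y)$" is a countable disjunction of $A$-definable conditions, hence $A$-ind-definable; intersecting with the (type-definable) locus where $(x,y)$ is generic in $X_t$ over $At$ and appealing to compactness and $\aleph_1$-saturation yields the required type-definability. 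The delicate point is ensuring that \emph{every} such generic local homeomorphism germ is realized by a morphism of $\mathcal C$ — this is where the hypothesis "every basic invertible arc is a morphism in $\mathcal C$" is used, since by the generic local homeomorphism property of the Hausdorff geometric structure $X_t$ determines a basic invertible arc at its generic points. Once clause (4) is in hand, the remaining clauses are routine bookkeeping and the lemma follows.
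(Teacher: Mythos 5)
Your construction is the same as the paper's: the paper also defines the $n$-slopes simply as the $n$th truncations of morphisms of $\mathcal C$, gets the groupoid structure on truncations from Definition \ref{D: taylor groupoid}(1), and derives the definability clauses from the $\emptyset$-ind-definability of truncation, composition and inverse in $\mathcal C$, with the inverse-limit clause coming from Definition \ref{D: taylor groupoid}(2)--(3) and compactness. So there is no divergence of method; the issue is with the one step you yourself single out as delicate, clause (4). Your stated inference there does not go through: you show the condition ``$T_{\infty,n}(\alpha_{xy}(X_t))=\alpha$'' is a countable disjunction of $A$-definable conditions, i.e.\ $A$-ind-definable, and then claim that intersecting with the type-definable genericity locus and ``appealing to compactness and $\aleph_1$-saturation'' yields type-definability. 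An ind-definable set intersected with a type-definable set is not type-definable in general, and neither compactness nor saturation converts a countable disjunction into a countable conjunction; as written this step fails.

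The missing idea is to use uniqueness rather than existence. Write $\mathcal C=\bigcup_k C_k$ with each $C_k$ definable, and recall that (thanks to the uniformly definable basis of the valuation topology) ``$f$ agrees with $X_t$ on a neighborhood of $(x,y)$'' is a definable condition on a code for $f\in C_k$ and on $(x,y,t)$, and that $T_n$ restricted to each $C_k$ is (ind-)definable. On the (type-definable over $A$) locus where $(x,y)$ is generic in $X_t$ over $At$, the arc $\alpha_{xy}(X_t)$ is a basic invertible arc, hence by hypothesis a morphism of $\mathcal C$, and its $n$th truncation is unique. Therefore, on that locus, $T_{\infty,n}(\alpha_{xy}(X_t))=\alpha$ is equivalent to the statement that for every $k$ there is no $f\in C_k$ agreeing with $X_t$ near $(x,y)$ with $T_n(f)\neq\alpha$ -- a countable conjunction of $A$-definable conditions. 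Conjoined with the genericity locus this is type-definable over $A$, which is exactly what clause (4) asks for, and is what the paper means when it says uniform definability ``follows from the $\emptyset$-ind-definability'' of the data. Two smaller remarks: in clause (6) for $n=0$ you do not need (and cannot have) the Frobenius twist being ``forced'' by $(x,y)$; interdefinability of $T_0(f)$ with $xy$ is immediate from $G\subseteq\operatorname{dcl}(\emptyset)$ in Definition \ref{first order definable slopes}(2). And for clause (2), only the injectivity direction (a morphism is determined by its truncations, Definition \ref{D: taylor groupoid}(2)) is what is actually used downstream, so your treatment there is adequate.
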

    \begin{proof} Definition \ref{D: taylor groupoid}(1) implies that for each $n$, multiplication and inverse in $\mathcal C$ induce a groupoid structure on the $n$th truncations of all morphisms in $\mathcal C$. One then defines $n$-slopes as $n$th truncations, and the axioms of definable slopes follow easily. We point out that the uniform definability of slopes, and the definability of composition and inverse of $n$-slopes, follow from the $\emptyset$-ind-definability of truncation, composition, and inverse in $\mathcal C$; while the fact that $\mathcal C$ is the inverse limit of the truncation categories follows from compactness and Definition \ref{D: taylor groupoid} (2) and (3).
    \end{proof}

    Note that the groupoids and functors constructed in Lemma \ref{L: groupoid induces slopes} are canonically determined by $\mathcal C$. Thus we define:

    \begin{definition}
        If $\mathcal C$ is a Taylor groupoid as in Lemma \ref{L: groupoid induces slopes}, we will call the system of groupoids and truncation maps constructed in Lemma \ref{L: groupoid induces slopes} the \textit{definable slopes on $(K,v)$ induced by $\mathcal C$}.
    \end{definition}

    \subsection{Etale Functions at 0}
    
    Our goal is to construct increasingly large Taylor groupoids until we reach one as in Lemma \ref{L: groupoid induces slopes}. We start with \textit{\'etale functions at $(0,0)$}.

    \begin{definition}\label{D: etale function}
        Let $f:0\rightarrow 0$ be a morphism in $\mathcal{LDH}$ (i.e. a germ of a definable homeomorphism of neighborhoods of 0).
        \begin{enumerate}
            \item We say that $f$ is a \textit{basic \'etale function} at $(0,0)$ if there is a polynomial $P(x,y)\in K[x,y]$ such that $P(0,0)=0$, both partial derivatives of $P$ are non-zero at $(0,0)$, and $f$ is given by the restriction of $P(x,y)=0$ to a neighborhood of $(0,0)$.
            \item We say that $f$ is an \textit{\'etale function} at $(0,0)$ if it is a composition of finitely many basic \'etale functions at $(0,0)$.
            \end{enumerate}
    \end{definition}

    It is evident from the definition that \'etale functions are closed under composition and inverse, and thus define a $\emptyset$-ind-definable groupoid with 0 as its only object.

    \begin{notation}
        We let $\mathcal{EFO}$ (\'etale functions at the origin) denote the $\emptyset$-ind-definable groupoid of \'etale functions at $(0,0)$.
    \end{notation}

    Our first goal is to extend $\mathcal{EFO}$ to a Taylor groupoid, with the group $G$ interpreted as the trivial group.

    \begin{lemma}\label{L: approximations of etale functions} Let $f$ be an \'etale function, and fix $n\geq 1$. Then there is exactly one polynomial $P(x)\in K[x]$ of degree at most $n$ such that $\lim_{x\rightarrow 0}\frac{P(x)-f(x)}{x^n}=0$.
    \end{lemma}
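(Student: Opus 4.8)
The plan is to show that every \'etale function $f$ at $(0,0)$ is, in a suitable neighborhood of $0$, given by a convergent power series $f(x)=\sum_{i\ge 1}a_ix^i$ with $a_1\ne 0$; the lemma then follows by taking $P$ to be the degree $\le n$ truncation of this series, plus an elementary uniqueness argument for polynomial approximations in the valuation topology.

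First I would treat basic \'etale functions. If $f$ is basic \'etale, witnessed by $P(x,y)\in K[x,y]$ with $P(0,0)=0$ and $\partial_xP(0,0),\partial_yP(0,0)\ne 0$, then by the analytic implicit function theorem available for definable functions in this version of ACVF (the Hensel/Weierstrass machinery recalled at the start of this section) there is a unique power series $g(x)=\sum_{i\ge 1}a_ix^i$ with $g(0)=0$, converging on some ball around $0$, such that $P(x,g(x))=0$ identically; differentiating gives $a_1=g'(0)=-\partial_xP(0,0)/\partial_yP(0,0)\ne 0$. Since $f$ is a germ of a homeomorphism through $(0,0)$ whose graph lies in $\{P=0\}$, uniqueness of the local branch forces the germ of $f$ to coincide with that of $g$. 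For a general \'etale function, write $f=f_k\circ\cdots\circ f_1$ with each $f_i$ basic \'etale; since power series vanishing at $0$ with nonzero linear coefficient are closed under composition (and the linear coefficient of a composite is the product of the linear coefficients), an induction on $k$ shows that $f$ is represented near $0$ by a convergent power series $\sum_{i\ge1}a_ix^i$ with $a_1\ne0$.

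Granting this, existence is immediate: set $P(x)=\sum_{i=1}^n a_ix^i$. Then $P(x)-f(x)=-\sum_{i>n}a_ix^i=-x^{n+1}h(x)$ for a power series $h$ convergent (hence bounded) near $0$, so $(P(x)-f(x))/x^n=-xh(x)\to 0$ as $x\to 0$ in the valuation topology. For uniqueness, suppose $Q$ is another polynomial of degree $\le n$ with $(Q(x)-f(x))/x^n\to 0$; then $R:=P-Q$ has degree $\le n$ and $R(x)/x^n\to 0$ as $x\to 0$. Writing $R=\sum_{i=0}^n b_ix^i$ and assuming $R\ne 0$, let $j$ be least with $b_j\ne 0$. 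If $j<n$, the term $b_jx^{j-n}$ of $R(x)/x^n$ has valuation $v(b_j)+(j-n)v(x)\to-\infty$ as $v(x)\to+\infty$, and it strictly dominates all higher terms, so $R(x)/x^n\not\to 0$; if $j=n$ then $R(x)/x^n\equiv b_n\ne 0$, again a contradiction. Hence $R=0$, i.e. $Q=P$.

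The main obstacle is the first step: making precise the passage from the combinatorial data of an \'etale function (a finite composition of local branches of polynomial curves) to an honest convergent power series with nonzero linear term. This rests on the analytic implicit function theorem for definable functions in this setting and on matching the correct local branch of $\{P=0\}$ to the germ of $f$; once that is in place, the approximation and uniqueness statements are routine.
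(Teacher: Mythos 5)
Your overall route is the same as the paper's: represent \'etale functions by power series via the implicit function theorem, truncate to get existence, and prove uniqueness by valuation estimates on a nonzero difference polynomial. The uniqueness half of your argument is fine as written, and in fact works directly in $K$, since the limit condition is a purely topological statement about germs.

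The gap is in the existence half: you invoke the analytic implicit function theorem and ``convergent power series'' for definable functions directly in the ambient field $K$. But $K$ is only assumed $\aleph_1$-saturated, hence not complete; indeed, in such a model the value group has uncountable cofinality, so no nontrivial countable sequence converges to $0$ and power series expansions of the form $f(x)=\sum_{i\ge 1}a_ix^i$ simply do not make sense (the analytic machinery cited at the start of this section is explicitly for \emph{complete} models of ACVF). The missing step -- and the first move of the paper's proof -- is to observe that, for each fixed $n$ and each fixed complexity of \'etale function (number of basic factors and degrees of the witnessing polynomials $P_i$), the statement of the lemma is expressible by a first-order sentence over $\emptyset$: one quantifies over the coefficients of the $P_i$, the local-homeomorphism condition, and the coefficients $c_1,\dots,c_n$ of $P$, with the limit condition written out using the definable basis of balls. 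Since ACVF with fixed characteristics is complete, it therefore suffices to verify the lemma in a complete model $\mathbb K$ of the same characteristics, and there your analytic argument (unique local branch $g$ of $\{P=0\}$ with $g'(0)\neq 0$, closure of such series under composition, truncation) goes through exactly as you wrote it. Without this transfer step, the central claim that $f$ is analytic near $0$ is not available -- and is in fact false -- in $K$ itself.
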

    \begin{proof} The lemma can be expressed using infinitely many first order sentences. In particular, it suffices to carry out the proof in a complete model. So let $\mathbb K$ be a complete model of the same characteristics as $K$. It follows from the implicit function theorem that every basic \'etale function at $(0,0)$ is analytic, i.e. is given by a convergent power series in a neighborhood of 0. Thus, so is $f$. Write $f(x)=\sum_{i=1}^{\infty}c_ix^i$. Then the desired polynomial $P$ is $\sum_{i=1}^nc_ix^i$.
    \end{proof}

    \begin{notation} Let $f$ be an \'etale function at $(0,0)$, and $n\geq 1$. We define the $n$th \textit{coefficient} of $f$, denoted $c_n(f)$, to be the coefficient of $x^n$ in the polynomial provided in Lemma \ref{L: approximations of etale functions}.
    \end{notation}

    We now show:

    \begin{proposition}
        With the maps $c_n$ defined above, the groupoid $\mathcal{EFO}$ becomes a Taylor groupoid.
    \end{proposition}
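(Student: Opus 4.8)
The plan is to verify the three axioms of Definition \ref{D: taylor groupoid} for $\mathcal{EFO}$, taking the group $G$ to be trivial (so the homomorphism $a$ is the unique map to the trivial group, and every $n$th truncation is just $(0,0,c_1(f),\dots,c_n(f))$, which we may harmlessly identify with the tuple of coefficients $(c_1(f),\dots,c_n(f))$ since all objects are the origin). All three requirements reduce, via infinitely many first-order sentences, to statements about convergent power series in a complete model $\mathbb K$ of the appropriate characteristic, exactly as in the proof of Lemma \ref{L: approximations of etale functions}; so throughout I would pass to such a $\mathbb K$ and argue analytically.

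First I would record the analytic picture: by the implicit function theorem each basic \'etale function at $(0,0)$ is analytic with nonzero linear coefficient, hence (as compositions of such) so is every \'etale function $f$, and $f(x)=\sum_{i\ge 1}c_i(f)x^i$ with $c_1(f)\ne 0$; moreover the inverse germ is again analytic with nonzero linear term. For axiom (1), I would show that if $h=g\circ f$ then $c_n(h)$ is a polynomial (over the prime field) in $c_1(f),\dots,c_n(f),c_1(g),\dots,c_n(g)$: this is just the classical formula for composition of power series truncated at order $n$ — substitute the series for $f$ into that for $g$ and collect the coefficient of $x^n$, noting only terms of $g$ up to degree $n$ and only the first $n$ coefficients of $f$ contribute. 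Hence the $n$th truncation of $h$ is determined by those of $f$ and $g$. For axiom (2), the sequence $(c_n(f))_{n\ge 1}$ is literally the full Taylor/power-series expansion of $f$, and since $f$ is analytic it is determined on a neighborhood of $0$ by this sequence; as germs are taken modulo agreement near $0$, $f$ as a morphism is determined by its truncations.

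The one genuinely substantive point is axiom (3): every tuple $(c_1,\dots,c_n)\in K^n$ with $c_1\ne 0$ must be realized as the $n$th truncation of some \'etale function. Here I would exhibit an explicit \'etale function with prescribed low-order coefficients. The building block is that for any $a\ne 0$ and any $b$, the polynomial $P(x,y)=y-ax-bx^2$ (which has both partials nonzero at the origin, since $\partial_y P=1$ and $\partial_x P=-a\ne 0$ there) defines a basic \'etale function $x\mapsto ax+bx^2$; more generally $y = a_1x+a_2x^2+\dots+a_nx^n$ with $a_1\ne 0$ is cut out by a polynomial with nonvanishing $y$-partial, hence is itself a \emph{basic} \'etale function whose $n$th truncation is exactly $(a_1,\dots,a_n)$. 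So in fact (3) holds on the nose with a single basic \'etale function; composition is not even needed. I would spell this out carefully, checking the partial-derivative conditions in Definition \ref{D: etale function}(1), and note that the case $n$ arbitrary requires no charateristic hypothesis since $c_1\ne 0$ already guarantees the local homeomorphism property via the implicit function theorem.

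The main obstacle, such as it is, is bookkeeping rather than conceptual: making sure the $\emptyset$-ind-definability of the maps $c_n$ and of truncation/composition/inverse is genuinely uniform (so that everything transfers back from $\mathbb K$ to $K$ by the Lefschetz-style "infinitely many first-order sentences" argument), and that the coefficient extraction $c_n$ defined via the limit in Lemma \ref{L: approximations of etale functions} agrees with the naive power-series coefficient in the complete model — which it does, by uniqueness of the approximating polynomial. Once these uniformity checks are in place, axioms (1) and (2) are immediate from the power-series formalism and (3) is the explicit construction above, completing the proof that $\mathcal{EFO}$ is a Taylor groupoid.
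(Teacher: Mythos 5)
Your verifications of axioms (1) and (3) of Definition \ref{D: taylor groupoid} are fine and essentially identical to the paper's: (1) is a countable family of first-order statements, so it may be checked in a complete model where it amounts to composition of truncated power series, and (3) is witnessed on the nose by the polynomial $\sum_{i=1}^n c_i x^i$, which is a basic \'etale function since $c_1\neq 0$.

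The gap is in axiom (2). You claim that all three requirements ``reduce, via infinitely many first-order sentences, to statements about convergent power series in a complete model,'' but axiom (2) does not: its hypothesis (``$f$ and $g$ have the same $n$th truncations for \emph{all} $n$'') is an infinite conjunction, so the statement has the form $\forall \bar a\,\bigl(\bigwedge_n \phi_n(\bar a)\rightarrow\psi(\bar a)\bigr)$, which is not equivalent to a set of first-order sentences and is not preserved under elementary equivalence. The transfer fails in exactly the dangerous direction: the ambient $K$ is $\aleph_1$-saturated, so it could a priori realize the type of a pair of \'etale germs agreeing to all finite orders yet distinct near $0$, even though no such pair exists in a complete model where germs are genuinely analytic; and there is no elementary embedding of the saturated $K$ (whose value group is far from rank one) into a complete model that would let you import analyticity for the given parameters. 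This is why the paper proves (2) by a direct argument inside $K$: setting $h=f-g$, shrinking so that $h$ has fibers of size at most some $d$ and vanishes only at $0$, using the Swiss cheese decomposition to see that $s(r)=\sup\{v(h(x)):v(x)=r\}$ is finite for all but finitely many $r$, and then observing that $\lim_{x\to 0}h(x)/x^n=0$ for every $n$ forces $s(r)\geq nr$ for large $r$, so $s$ grows faster than every linear function --- contradicting that definable functions on the pure divisible ordered abelian value group are piecewise linear. Your proposal needs an argument of this kind (valid in the saturated model) in place of the appeal to analyticity.
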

    \begin{proof} It is clear from the statement of Lemma \ref{L: approximations of etale functions} that the maps $c_n$ are $\emptyset$-ind-definable, so (interpreting the group $G$ as the trivial group) we obtain a weak Taylor groupoid. We now verify (1)-(3) in Definition \ref{D: taylor groupoid}:
    \begin{enumerate}
        \item As in the proof of Lemma \ref{L: approximations of etale functions}, the statement is first-order, so we can work in a complete model. Now let $g\circ f=h$ be \'etale functions at $(0,0)$. Then the $n$th truncations of $f,g,h$ are just the truncations of their power series expansions at $(0,0)$. In other words, (1) just says that composition and inverse are well-defined on truncated polynomials. This is well-known and easy to check.
        \item Suppose $f$ and $g$ have the same $n$th truncations for all $n$. It follows that, if we let $h=f-g$, then $\lim_{x\rightarrow 0}\frac{h(x)}{x^n}=0$ for all $n$. If $h$ is identically 0, we are done. Otherwise, in a small enough ball, we may assume that all fibers of $h$ have size at most some integer $d$, and that $h(x)=0$ only holds for $x=0$. Now let $\Gamma$ be the value group, and for $r\in\Gamma$, define $s(r)$ to be the supremum of all $v(h(x))$ for $x$ with $v(x)=r$. If $s(r)=\infty$ for some $r$, it follows by the Swiss Cheese decomposition that the image under $h$ of the elements of value $r$ contains all non-zero points in some open ball at 0. In particular, since $h$ is at most $d$-to-one, there are only finitely many such $r$. So, passing to a smaller neighborhood of 0, we may assume $s(r)<\infty$ for all $r$. But for each $n$, since $\lim_{x\rightarrow 0}\frac{h(x)}{x^n}=0$, it follows that for sufficiently large $r\in\Gamma$ we have $s(r)\geq nr$. In particular, $s:\Gamma\rightarrow\Gamma$ grows faster than any linear function. This contradicts that $\Gamma$ is a pure divisible ordered abelian group.
        \item Equivalently, we want to show that every $(c_1,...,c_n)\in K^n$ with $c_1\neq 0$ is the $n$th truncation of an \'etale function. This is trivial, since $(c_1,...,c_n)$ is the $n$th truncation of the polynomial $\sum_{i=1}^nc_ix^i$, which is clearly an \'etale function since $c_1\neq 0$.
    \end{enumerate}
            \end{proof}

    \subsection{Generalized Etale Functions at 0}

    We next generalize $\mathcal{EFO}$ to include Frobenius powers. 
   
    \begin{definition}
        Let $G$ denote the \textit{Frobenius group} of maps $K\rightarrow K$, defined as follows: if $K$ has characteristic zero, $G=\{\mathrm{id}\}$. If $K$ has characteristic $p>0$, $G$ is the group of maps $x\mapsto x^{p^n}$. 
    \end{definition}

    We view $G$ as a $\emptyset$-ind-definable group acting $\emptyset$-ind-definably on $K$. The following are the key properties we need, which are evident from the definition:

    \begin{fact}\label{F: frob group properties} For the group $G$ defined above:
    \begin{enumerate}
        \item Every element of $G$ is a $\emptyset$-definable automorphism of $(K,v)$, so in particular also a homeomorphism.
        \item If $a\in G$ is an \'etale function, then $a=\mathrm{id}$.
    \end{enumerate}
    \end{fact}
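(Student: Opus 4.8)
The plan is to dispatch the characteristic zero case at once — there $G=\{\mathrm{id}\}$, the identity is a $\emptyset$-definable automorphism of $(K,v)$ and (trivially) an \'etale function at $(0,0)$ — and then to argue in characteristic $p>0$, where a typical element of $G$ is $a\colon x\mapsto x^{p^n}$; when $n=0$ both statements are again trivial, so I take $n\geq 1$.

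For part (1), I would first observe that $K$, being algebraically closed of characteristic $p$, is perfect, so $a$ is a bijection of $K$ whose inverse is the $p^n$-th root map; both $a$ and $a^{-1}$ are manifestly $\emptyset$-definable (each graph is cut out by a polynomial equation with integer coefficients). The ``freshman's dream'' identity $a(x+y)=a(x)+a(y)$, together with multiplicativity, makes $a$ a field automorphism, and since $v(a(x))=p^n\,v(x)$ has the same sign as $v(x)$, the map $a$ preserves the valuation ring; as the valuation in the language of ACVF is coded by the valuation ring, $a$ is thus an automorphism of the structure $(K,v)$. To see that $a$ is moreover a homeomorphism of the valuation topology $\tau$, I would invoke the identity $a(x)-a(y)=(x-y)^{p^n}$, which gives $v(a(x)-a(y))=p^n\,v(x-y)$; hence $a$ carries each ball about $b$ onto a ball about $a(b)$, and the same computation applied to $a^{-1}$ shows $a^{-1}$ is continuous, so $a$ is a homeomorphism of $\tau$.

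For part (2), I would argue by contradiction: suppose $a\colon x\mapsto x^{p^n}$ with $n\geq 1$ is an \'etale function at $(0,0)$. Then Lemma \ref{L: approximations of etale functions} applies to $a$, so its first coefficient $c_1(a)$ is defined, and since the maps $c_n$ make $\mathcal{EFO}$ a weak Taylor groupoid — in which first coefficients are nonzero by definition (Definition \ref{first order definable slopes}(4)) — we have $c_1(a)\neq 0$. On the other hand, the zero polynomial has degree $\leq 1$ and satisfies $\lim_{x\to 0}\frac{0-a(x)}{x}=\lim_{x\to 0}(-x^{\,p^n-1})=0$ because $p^n-1\geq 1$; by the uniqueness clause of Lemma \ref{L: approximations of etale functions} this is \emph{the} linear approximation to $a$ at $0$, so $c_1(a)=0$, contradicting the previous line. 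Hence the only element of $G$ that is an \'etale function at $(0,0)$ is $\mathrm{id}$.

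I do not anticipate a real obstacle here; the only point calling for mild care is the step from ``$a$ is a field automorphism'' to ``$a$ is a homeomorphism of $\tau$'', which is handled cleanly by $v(a(x)-a(y))=p^n\,v(x-y)$ together with the observation that multiplication by $p^n$ is an order-automorphism of the divisible value group, so passing through $a$ distorts no topological data.
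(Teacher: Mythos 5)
Your proposal is correct, and it essentially fills in the details that the paper leaves unproved (Fact \ref{F: frob group properties} is stated there as ``evident from the definition''): part (1) is the standard computation $v(a(x)-a(y))=p^n v(x-y)$, and part (2) pits the weak-Taylor-groupoid requirement $c_1(f)\neq 0$ (which is indeed part of what the preceding proposition asserts about $\mathcal{EFO}$, so invoking it is legitimate and not circular) against the explicit computation $c_1(x\mapsto x^{p^n})=0$ via the uniqueness clause of Lemma \ref{L: approximations of etale functions}.

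One small incompleteness: since $G$ is a \emph{group} of maps $x\mapsto x^{p^n}$, it also contains the negative Frobenius powers $x\mapsto x^{p^{-m}}$ (the $p^m$-th root maps, well defined as $K$ is perfect), which your reduction ``take $n\geq 1$'' skips. The fix is one line in each part: for (1), your argument already shows $a^{-1}$ is a $\emptyset$-definable automorphism and homeomorphism, and these properties pass to inverses; for (2), \'etale functions at $(0,0)$ are closed under inverse (swap the roles of $x$ and $y$ in the defining polynomials and reverse the composition), so if $x\mapsto x^{p^{-m}}$ were \'etale then $x\mapsto x^{p^{m}}$ would be as well, contradicting the case you have already handled.
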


    \begin{definition}
        A \textit{generalized \'etale function at} $(0,0)$ is a morphism at $(0,0)$ in $\mathcal{LDH}$ of the form $f\circ a$, where $a\in G$ and $f$ is an \'etale function at $(0,0)$.
    \end{definition}

    Since $G$ is $\emptyset$-ind-definable, the class of generalized \'etale functions at $(0,0)$ is also $\emptyset$-ind-definable. We want to show that this class forms a Taylor groupoid in a natural way. Everything will follow from the ensuing three lemmas:
    
   \begin{lemma}\label{L: uniqueness of frobenius power} Each generalized \'etale function at $(0,0)$ has exactly one expression as $f\circ a$, where $a\in G$ and $f$ is an \'etale function at $(0,0)$.
    \end{lemma}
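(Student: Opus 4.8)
The plan is to reduce the uniqueness statement to Fact~\ref{F: frob group properties}(2), which asserts that the only element of the Frobenius group $G$ that happens to be an \'etale function at $(0,0)$ is the identity. Existence of a decomposition $f\circ a$ is built into the definition of a generalized \'etale function, so only uniqueness requires an argument.

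For uniqueness, I would start from two presentations $f_1\circ a_1=f_2\circ a_2$ of the same morphism $0\to 0$ in $\mathcal{LDH}$, with $f_1,f_2$ \'etale at $(0,0)$ and $a_1,a_2\in G$. First I would observe that every $a\in G$ fixes $0$ (as $0^{p^n}=0$), so these are genuine compositions of germs at $0$, and that each $a\in G$ is an invertible germ at $0$ with inverse again in $G$: by Fact~\ref{F: frob group properties}(1) (equivalently Lemma~\ref{L: ez perfect}(2), using that an algebraically closed field is perfect) each Frobenius power is a $\emptyset$-definable homeomorphism of $(K,v)$, and $G$ is a group. Rearranging then gives
\[ f_2^{-1}\circ f_1 = a_2\circ a_1^{-1}. \]
The left-hand side is an \'etale function at $(0,0)$, since --- as noted right after Definition~\ref{D: etale function} --- \'etale functions at $(0,0)$ are closed under composition and inverse (they form the groupoid $\mathcal{EFO}$); the right-hand side lies in $G$ because $G$ is a group. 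So $a_2\circ a_1^{-1}$ is an element of $G$ that is also \'etale, whence $a_2\circ a_1^{-1}=\mathrm{id}$ by Fact~\ref{F: frob group properties}(2), i.e.\ $a_1=a_2$. Feeding this back in gives $f_2^{-1}\circ f_1=\mathrm{id}$, i.e.\ $f_1=f_2$, as required.

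I do not anticipate a real obstacle: the argument is formal once one has that \'etale functions at $(0,0)$ form a groupoid and that $G$ meets this groupoid only in the identity. The one point that deserves a sentence of care is the germ bookkeeping --- checking that all maps in sight fix $0$, so that the cancellation $f_1\circ a_1=f_2\circ a_2 \;\Longleftrightarrow\; f_2^{-1}\circ f_1 = a_2\circ a_1^{-1}$ is legitimate at the level of germs of local homeomorphisms --- together with the use of perfectness of $K$ to know that Frobenius is invertible and that $G$ is genuinely a group.
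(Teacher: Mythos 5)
Your proposal is correct and is essentially the paper's own argument: from $f_1\circ a_1=f_2\circ a_2$ one rearranges to $f_2^{-1}\circ f_1=a_2\circ a_1^{-1}$, notes the left side is \'etale at $(0,0)$ and the right side lies in $G$, and concludes via Fact \ref{F: frob group properties}(2). The extra germ bookkeeping you mention is harmless but not something the paper dwells on.
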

    \begin{proof}
        Assume that $f\circ a=g\circ b$ are two such expressions. Simplifying yields $g^{-1}\circ f=b\circ a ^{-1}$. But $g^{-1}\circ f$ is an \'etale function at $(0,0)$, so then $b\circ a^{-1}\in G$ is also an \'etale function at $(0,0)$. By Fact \ref{F: frob group properties}, $b\circ a^{-1}=\mathrm{id}$. Thus $a=b$, and thus $f=g$. 
    \end{proof}
    
     \begin{notation}
        Let $a\in G$, and let $f$ be an \'etale function at $(0,0)$. By $f^a$ we mean the function whose graph is the image of $f$ under the automorphism $a$: that is, the map sending $a(x)$ to $a(y)$ whenever $f(x)=y$.
    \end{notation}

     \begin{lemma}\label{L: frob of function} Let $f$ be an \'etale function at $(0,0)$, and let $a\in G$. Then:
    \begin{enumerate}
        \item $f^a$ is an \'etale function at $(0,0)$.
        \item The truncations of $f^a$ are the images of the truncations of $f$ under $a$.
        \item $a\circ f=f^a\circ a$.
    \end{enumerate}
    \end{lemma}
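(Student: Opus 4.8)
The plan rests on a single observation: for any automorphism $a$ of $(K,v)$ and any germ $f$ in $\mathcal{LDH}$ fixing $0$, the germ $f^{a}$ is just the conjugate $a\circ f\circ a^{-1}$, since by definition $f^{a}(a(x))=a(f(x))$. Clause (3) is then literally this identity rearranged, $a\circ f=f^{a}\circ a$. Moreover conjugation is multiplicative in the obvious sense, $(g\circ h)^{a}=g^{a}\circ h^{a}$, which reduces clause (1) to the case of a basic \'etale function.

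So suppose $f$ is basic \'etale, cut out near $(0,0)$ by $P(x,y)=0$ with $P(0,0)=0$ and $\partial_{x}P(0,0),\partial_{y}P(0,0)\neq 0$. Let $P^{a}$ be the polynomial obtained by applying $a$ to the coefficients of $P$. Since $a$ is a field automorphism, $P(x,y)=0$ if and only if $P^{a}(a(x),a(y))=0$, so the image of the graph of $f$ under $(x,y)\mapsto (a(x),a(y))$ is cut out near $(0,0)$ by $P^{a}(u,w)=0$; as $\partial_{u}P^{a}$ is obtained from $\partial_{x}P$ by the same coefficientwise application of $a$ (the integer multipliers being fixed by $a$), we get $P^{a}(0,0)=a(0)=0$ and $\partial_{u}P^{a}(0,0)=a(\partial_{x}P(0,0))\neq 0$, and likewise in $w$. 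Hence $f^{a}$ is again a basic \'etale function, and for a general \'etale $f=f_{k}\circ\cdots\circ f_{1}$ the multiplicativity of conjugation gives $f^{a}=f_{k}^{a}\circ\cdots\circ f_{1}^{a}$, an \'etale function; this is (1).

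For (2) I would pass to a complete model $\mathbb{K}$ of the same characteristics (as in Lemma \ref{L: approximations of etale functions}: the relevant assertion, for each fixed $n$, is expressible by infinitely many first-order sentences, so it suffices to verify it in $\mathbb{K}$), where every \'etale function is analytic, $f(x)=\sum_{i\geq 1}c_{i}(f)x^{i}$. In characteristic $0$, $a=\mathrm{id}$ and there is nothing to prove. In characteristic $p$, with $a\colon x\mapsto x^{q}$ and $q=p^{m}$, substituting $x=a^{-1}(v)$ in $f^{a}=a\circ f\circ a^{-1}$ and using $a(x)^{i}=v^{i}$ yields $f^{a}(v)=\sum_{i\geq 1}a(c_{i}(f))v^{i}$, so $c_{i}(f^{a})=a(c_{i}(f))$ for every $i$; since $a$ fixes the remaining entries of a truncation --- the two base points $0$ and the trivial group element $\mathrm{id}\in\operatorname{dcl}(\emptyset)$ --- this is precisely the statement that $T_{n}(f^{a})$ is the image of $T_{n}(f)$ under $a$. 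No serious obstacle is expected; the only points needing care are the bookkeeping in (2) --- checking that the coefficients produced by Lemma \ref{L: approximations of etale functions} in $\mathbb{K}$ are the genuine power-series coefficients of $f$ --- and the observation, used in (1), that applying $a$ coefficientwise to a polynomial commutes with formal differentiation.
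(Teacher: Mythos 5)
Your proposal is correct, but it takes a genuinely different route from the paper. The paper disposes of the lemma in one line by invariance: since $a$ is a ($\emptyset$-definable) automorphism of $(K,v)$, and since both the class of \'etale functions at $(0,0)$ and their truncation maps are $\emptyset$-ind-definable, the image of an \'etale function under $a$ is again \'etale and its truncations are the $a$-images of the original truncations; clause (3) is immediate from the definition of $f^a$. You instead verify everything by hand: you use the conjugation identity $f^a=a\circ f\circ a^{-1}$ (which gives (3) and the multiplicativity $(g\circ h)^a=g^a\circ h^a$), prove (1) for basic \'etale functions by twisting the coefficients of the defining polynomial $P$ and checking that $a$ commutes with formal differentiation so the partials of $P^a$ stay non-zero at the origin, and prove (2) by passing to a complete model (the same first-order reduction the paper uses for Lemma \ref{L: approximations of etale functions}) and computing $f^a(v)=\sum_i a(c_i(f))v^i$ from the power-series expansion. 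Your computations are sound; the only mildly implicit points are that $a$ is a homeomorphism (so the local zero set of $P^a$ near $(0,0)$ really is the graph of the germ $f^a$) and that applying $a$ to a convergent series commutes with the limit, both of which follow from $a$ being a continuous definable automorphism. What the two approaches buy: the paper's invariance argument is shorter and works verbatim for any definable automorphism, with no computation and no detour through a complete model; yours is more concrete, yielding the explicit coefficient formula $c_i(f^a)=a(c_i(f))$, which in fact makes transparent exactly how the Frobenius enters the truncations used later in the $\mathcal{GEFO}$ construction.
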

    \begin{proof} Both (1) and (2) follow from the fact that $a$ is an automorphism, since the class of \'etale functions at $(0,0)$ and their truncation maps are $\emptyset$-ind-definable. (3) is immediate from the definition of $f^a$.
    \end{proof}

    \begin{lemma}\label{L: composition with frob} Let $f$ and $g$ be \'etale functions at $(0,0)$, and let $a,b\in G$. 
    \begin{enumerate}
        \item $(g\circ b)\circ(f\circ a)=(g\circ f^b)\circ(b\circ a)$.
        \item $(f\circ a)^{-1}=f^{a^{-1}}\circ a^{-1}$.
        \item In particular, the set of generalized \'etale functions is closed under composition and inverse, so forms a $\emptyset$-ind-definable groupoid.
    \end{enumerate}
    \end{lemma}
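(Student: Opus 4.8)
The plan is to establish (1) and (2) by direct computation, pushing the Frobenius automorphisms to the right via the commutation rule $c\circ h=h^{c}\circ c$ from Lemma \ref{L: frob of function}(3), and then to read off (3) formally.

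For part (1), I would start from $(g\circ b)\circ(f\circ a)=g\circ(b\circ f)\circ a$ and apply Lemma \ref{L: frob of function}(3) to the \'etale function $f$ and the automorphism $b$ to rewrite $b\circ f=f^{b}\circ b$. This yields $(g\circ b)\circ(f\circ a)=(g\circ f^{b})\circ(b\circ a)$, the claimed identity. To see that the right-hand side is again a generalized \'etale function in the normal form of Lemma \ref{L: uniqueness of frobenius power}, note that $f^{b}$ is an \'etale function at $(0,0)$ by Lemma \ref{L: frob of function}(1), hence so is the composite $g\circ f^{b}$; and $b\circ a\in G$ since $G$ is a group. For part (2), I would simply invert $f\circ a$ as a germ of homeomorphism: since $a$ is a (globally defined) automorphism, $(f\circ a)^{-1}=a^{-1}\circ f^{-1}$; and $f^{-1}$ is an \'etale function at $(0,0)$ because \'etale functions are closed under inverse, so Lemma \ref{L: frob of function}(3), applied to $f^{-1}$ and $a^{-1}$, gives $a^{-1}\circ f^{-1}=(f^{-1})^{a^{-1}}\circ a^{-1}$, which is the asserted normal form (here $f^{a^{-1}}$ in the statement is read as the $a^{-1}$-twist of $f^{-1}$, with \'etale part $(f^{-1})^{a^{-1}}$ and Frobenius part $a^{-1}$).

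Part (3) is then a formal consequence. The set of generalized \'etale functions at $(0,0)$ is by definition the image, under the $\emptyset$-ind-definable composition operation of $\mathcal{LDH}$, of the $\emptyset$-ind-definable family of pairs $(f,a)$ with $f$ \'etale and $a\in G$, hence is itself $\emptyset$-ind-definable. It contains the identity germ $\mathrm{id}=\mathrm{id}\circ\mathrm{id}$; it is closed under composition by (1) and under inverse by (2); and both operations are $\emptyset$-ind-definable, since the formulas in (1) and (2) are assembled from the $\emptyset$-ind-definable operations on \'etale functions (composition, inverse, and the twist $f\mapsto f^{a}$, the latter being $\emptyset$-ind-definable because $G$ acts $\emptyset$-ind-definably) together with the group operations of $G$. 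Thus the generalized \'etale functions form a $\emptyset$-ind-definable sub-groupoid of $\mathcal{LDH}$ with $0$ as its unique object.

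I expect no genuine obstacle here; the only thing to watch is the bookkeeping of the twist. The key elementary identities $(f^{a})^{b}=f^{b\circ a}$ and $f^{\mathrm{id}}=f$ are immediate from the description of $f^{a}$ as the image of the graph of $f$ under the automorphism $a$, and it is exactly these that make the normal form behave correctly under composition. Note also that commutativity of $G$ is not needed for the statement as phrased.
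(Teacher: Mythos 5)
Your proof is correct and follows essentially the same route as the paper, which simply cites Lemma \ref{L: frob of function}(3) for (1) and (2) and derives (3) formally; your computations are exactly the intended ones. Your reading of (2) is also the right one: as printed the right-hand side should be $(f^{-1})^{a^{-1}}\circ a^{-1}$ (equivalently $(f^{a^{-1}})^{-1}\circ a^{-1}$), and your charitable interpretation of the notation matches what the commutation rule actually yields.
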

    \begin{proof} (1) and (2) follow from Lemma \ref{L: frob of function}(3). (3) follows from (1) and (2).
    \end{proof}

    Now let $\mathcal{GEFO}$ be the $\emptyset$-ind-definable groupoid of generalized \'etale functions at $(0,0)$. If $f\circ a$ is a morphism in $\mathcal{GEFO}$, we define its $n$th truncation to be $(0,0,a,c_1,...,c_n)$, where $(0,0,\mathrm{id},c_1,...,c_n)$ is the $n$th truncation of $f$. By Lemma \ref{L: uniqueness of frobenius power}, this is well-defined.

    \begin{proposition} With the Frobenius group $G$ and the truncation maps defined above, $\mathcal{GEFO}$ is a Taylor groupoid.
    \end{proposition}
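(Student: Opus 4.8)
The plan is to verify the three clauses of Definition \ref{D: taylor groupoid}, after first checking that $\mathcal{GEFO}$ — equipped with the Frobenius group $G$, the assignment $f\circ a\mapsto a$, and the maps $c_n(f\circ a)=c_n(f)$ — is a weak Taylor groupoid in the sense of Definition \ref{first order definable slopes}. Everything will be deduced from the fact that $\mathcal{EFO}$ is already a Taylor groupoid, together with Lemmas \ref{L: uniqueness of frobenius power}, \ref{L: frob of function} and \ref{L: composition with frob} and Fact \ref{F: frob group properties}.

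First I would dispatch the weak Taylor groupoid axioms. That $\mathcal{GEFO}$ is a $\emptyset$-ind-definable subgroupoid of $\mathcal{LDH}$ is Lemma \ref{L: composition with frob}(3); that $G$ is $\emptyset$-ind-definable and contained in $\operatorname{dcl}(\emptyset)$ is Fact \ref{F: frob group properties}(1). By Lemma \ref{L: uniqueness of frobenius power} the decomposition $f\circ a$ is unique, so the maps $f\circ a\mapsto a$ and $f\circ a\mapsto c_n(f)$ are well-defined, and they are $\emptyset$-ind-definable because that decomposition is. The map $f\circ a\mapsto a$ is a homomorphism by Lemma \ref{L: composition with frob}(1) (the Frobenius factor of $(g\circ b)\circ(f\circ a)$ is $b\circ a$), and $c_1(f\circ a)=c_1(f)\neq 0$ since $\mathcal{EFO}$ is already a weak Taylor groupoid. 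This gives all of Definition \ref{first order definable slopes}.

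The main point is Definition \ref{D: taylor groupoid}(1). Given morphisms $f\circ a$ and $g\circ b$ of $\mathcal{GEFO}$, Lemma \ref{L: composition with frob}(1) rewrites the composite as $(g\circ f^{b})\circ(b\circ a)$, an \'etale function precomposed with $b\circ a\in G$, whose $n$th truncation is $(0,0,\,b\circ a,\,c_1(g\circ f^{b}),\dots,c_n(g\circ f^{b}))$. Now $b\circ a$ depends only on $a$ and $b$; by the Taylor groupoid property of $\mathcal{EFO}$ the $n$th truncation of $g\circ f^{b}$ is determined by those of $g$ and $f^{b}$; and by Lemma \ref{L: frob of function}(2) the $n$th truncation of $f^{b}$ is the image under $b$ of that of $f$. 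Chaining these, the $n$th truncation of the composite is determined by $a$, $b$, and the $n$th truncations of $f$ and $g$ — equivalently, by the $n$th truncations of $f\circ a$ and $g\circ b$. For Definition \ref{D: taylor groupoid}(2): the sequence of truncations of $f\circ a$ recovers $a$ from the zeroth truncation and the truncations of $f$ from the coefficients $c_n$, whence $f$ is recovered because $\mathcal{EFO}$ satisfies (2), so $f\circ a$ is recovered. For Definition \ref{D: taylor groupoid}(3): given $(0,0,a,c_1,\dots,c_n)$ with $c_1\neq 0$, apply clause (3) for $\mathcal{EFO}$ to get an \'etale $f$ with $n$th truncation $(0,0,\mathrm{id},c_1,\dots,c_n)$, and then $f\circ a$ has the desired truncation.

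I do not expect a substantive obstacle, since all analytic content was already isolated in the preceding lemmas; the only place requiring care is the bookkeeping in clause (1), where one must remember that precomposing an \'etale function with a Frobenius power twists the \'etale factor via $f\mapsto f^{b}$ \emph{before} the \'etale composition law applies, so that this twist has to be absorbed into the coefficient computation using Lemma \ref{L: frob of function}(2).
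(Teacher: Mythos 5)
Your proposal is correct and follows essentially the same route as the paper: reduce everything to the Taylor groupoid structure on $\mathcal{EFO}$ via the unique decomposition $f\circ a$ (Lemma \ref{L: uniqueness of frobenius power}), and use Lemma \ref{L: composition with frob}(1) together with Lemma \ref{L: frob of function}(2) to see that truncations behave well under composition, with clauses (2) and (3) handled exactly as in the paper. Your write-up of clause (1) is just a more explicit version of the paper's one-line appeal to those two lemmas, including the key point that the Frobenius twist $f\mapsto f^{b}$ acts on truncations coefficientwise.
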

    \begin{proof}
        The $\emptyset$-ind-definability of the truncation maps follows from Lemma \ref{L: uniqueness of frobenius power}, the $\emptyset$-ind-definability of $G$, and the $\emptyset$-ind-definability of truncation in $\mathcal{EFO}$.
        
        Next, it follows from Lemma \ref{L: composition with frob}(1) that composition in $\mathcal{GEFO}$ induces composition in $G$. We have now shown that $\mathcal{GEFO}$ is a weak Taylor groupoid. We proceed to verify (1)-(3) in the definition of Taylor groupoids (Definition \ref{D: taylor groupoid}:
        
        \begin{enumerate}
            \item It follows from Lemmas \ref{L: composition with frob} and \ref{L: frob of function}(2) that composition and inverse are well-defined on truncations. 
            \item Suppose $f\circ a$ and $g\circ b$ are morphisms in $\mathcal{GEFO}$ with the same $n$th truncations for all $n$. By definition of the truncations in $\mathcal{GEFO}$, this means that $a=b$ and $f$ and $g$ have the same $n$th truncations for all $n$. Since $\mathcal{EFO}$ is a Taylor groupoid, this implies $f=g$.
            \item For any $a\in G$ and any $c_1,...,c_n\in K$ with $c_1\neq 0$, we want to find a morphism in $\mathcal{GEFO}$ with $n$th truncation $(0,0,a,c_1,...,c_n)$. Let $P(x)=\sum_{i=1}^nc_ix^i$. Then the desired truncation is achieved by $P\circ a$.
        \end{enumerate}
    \end{proof}

    \subsection{Generalized \'Etale Functions}

    Finally, we now extend $\mathcal{GEFO}$ to include maps at arbitrary points in $K^2$, not just the origin.

    \begin{definition}
        Let $(x_0,y_0)\in K^2$. A \textit{generalized \'etale function at} $(x_0,y_0)$ is a morphism at $(x_0,y_0)$ in $\mathcal{LDH}$ of them form $f(x-x_0)+y_0$, where $f$ is a morphism in $\mathcal{GEFO}$. A \textit{generalized \'etale function} is a generalized \'etale function at some point.
    \end{definition}

    It is clear that each generalized \'etale function at $(x_0,y_0)$ can be expressed as $f(x-x_0)+y_0$ for a unique morphism $f$ from $\mathcal{GEFO}$.
    
    \begin{notation}
        We denote the generalized \'etale function $f(x-x_0)+y_0$ by $f_{x_0}^{y_0}$.
    \end{notation}

    The following properties are immediate:
    \begin{lemma} Let $f_x^y$ and $g_y^z$ be generalized \'etale functions.
    \begin{enumerate}
        \item $g_y^z\circ f_x^y=(g\circ f)_x^z$.
        \item $(f_x^y)^{-1}=(f^{-1})_y^x$.
        \item In particular, the generalized \'etale functions form the morphisms of a $\emptyset$-ind-definable groupoid whose set of objects is $K$.
    \end{enumerate}
    \end{lemma}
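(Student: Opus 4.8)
The plan is to verify (1) and (2) by direct computation with the translation formulas, and then read off (3) as a formal consequence, using the $\emptyset$-ind-definability already established for $\mathcal{GEFO}$ and the uniqueness of the representation $f_{x_0}^{y_0}$ noted just above the lemma.

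For (1), I would unwind the definitions: $f_x^y$ is the germ $t\mapsto f(t-x)+y$ and $g_y^z$ is the germ $s\mapsto g(s-y)+z$, where $f$ and $g$ are morphisms $0\rightarrow 0$ in $\mathcal{GEFO}$. Composing the two germs gives $t\mapsto g((f(t-x)+y)-y)+z=(g\circ f)(t-x)+z$; since $\mathcal{GEFO}$ is a groupoid, $g\circ f$ is again a morphism $0\rightarrow 0$ in $\mathcal{GEFO}$, so the right-hand germ is exactly $(g\circ f)_x^z$. For (2) I would solve $f(t-x)+y=s$ for $t$ in a neighbourhood of $y$: this yields $t=f^{-1}(s-y)+x$, which is precisely $(f^{-1})_y^x$, using that $f^{-1}$ is a morphism $0\rightarrow 0$ in $\mathcal{GEFO}$. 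These germ-level identities are legitimate because all the maps involved are local homeomorphisms near the relevant points, so composition and inversion of germs behave as expected — this is built into the definition of $\mathcal{LDH}$.

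For (3), the translations $t\mapsto t-x_0$ and $s\mapsto s+y_0$ are $\emptyset$-definable homeomorphisms of $K$, so every generalized \'etale function is a germ of a definable homeomorphism between neighbourhoods of its source and target, hence a morphism in $\mathcal{LDH}$ with source and target in $K$. The identity at $x_0$ is $(\mathrm{id})_{x_0}^{x_0}$, closure under composition is (1), and closure under inverse is (2), so the generalized \'etale functions form a subgroupoid of $\mathcal{LDH}$ with object set $K$. For the $\emptyset$-ind-definability, I would observe that the assignment $(x_0,y_0,f)\mapsto f_{x_0}^{y_0}$ is $\emptyset$-ind-definable and, by the uniqueness of the representation, a bijection onto the set of generalized \'etale functions; combined with the $\emptyset$-ind-definability of the object set, morphism set, composition, and inverse of $\mathcal{GEFO}$ and with the explicit formulas in (1) and (2), this gives that the object set, morphism set, composition, and inverse of the new groupoid are all $\emptyset$-ind-definable.

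There is no real obstacle here — the content is entirely bookkeeping. The only point requiring care is maintaining $\emptyset$-ind-definability through the translation construction: one must note that the family $\{f_{x_0}^{y_0}\}$, coded via germs (which is available because $(K,v)$ has a definable basis, so the relation ``same germ at a point'' is definable), is a countable union of definable sets, and that the composition and inverse operations, being built from $\emptyset$-ind-definable data by $\emptyset$-ind-definable formulas, do not disturb this.
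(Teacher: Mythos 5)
Your computation is correct and is exactly the routine verification the paper has in mind when it states the lemma as "immediate" (the paper gives no proof at all). Nothing to add: the translation formulas, the uniqueness of the representation $f_{x_0}^{y_0}$, and the $\emptyset$-ind-definability inherited from $\mathcal{GEFO}$ are precisely the points involved.
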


    \begin{notation}
        Let $\mathcal{GEF}$ be the $\emptyset$-definable groupoid of generalized \'etale functions.
    \end{notation}

    We now point out that $\mathcal{GEFO}$ extends canonically to a Taylor groupoid on $\mathcal{GEF}$.

    \begin{definition}
        If $f_x^y$ is a generalized \'etale function and $n\geq 0$, we define the $n$\textit{th truncation} of $f_x^y$ to be $(x,y,a,c_1,...,c_n)$, where $(0,0,a,c_1,...,c_n)$ is the $n$th truncation of $f$.
    \end{definition}

    This time, there is really nothing to check. We leave the details to the reader, and just state:

    \begin{proposition}
        With the group $G$ and the truncation maps given above, $\mathcal{GEF}$ forms a Taylor groupoid.
    \end{proposition}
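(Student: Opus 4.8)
The plan is to deduce everything directly from the fact that $\mathcal{GEFO}$ is already a Taylor groupoid, using that every morphism of $\mathcal{GEF}$ is by construction a translate $f_x^y(t)=f(t-x)+y$ of a \emph{unique} morphism $f$ of $\mathcal{GEFO}$, and that the translation operation $(f,x,y)\mapsto f_x^y$ is $\emptyset$-ind-definable. First I would record that $\mathcal{GEF}$ is a weak Taylor groupoid in the sense of Definition \ref{first order definable slopes}: it is a $\emptyset$-ind-definable sub-groupoid of $\mathcal{LDH}$ (its morphisms are germs of definable homeomorphisms between neighborhoods in $K$, and the groupoid operations were identified in the preceding lemma); we use the same Frobenius group $G\subseteq\operatorname{dcl}(\emptyset)$ as before; the homomorphism $a\colon\operatorname{Mor}(\mathcal{GEF})\to G$ is inherited via $a(f_x^y)=a(f)$, and it respects composition because $g_y^z\circ f_x^y=(g\circ f)_x^z$ while $a$ is already a homomorphism on $\mathcal{GEFO}$; and $c_1(f_x^y)=c_1(f)\ne 0$ by construction. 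Ind-definability of the truncation maps $f_x^y\mapsto(x,y,a,c_1,\dots,c_n)$ follows from that of the corresponding maps on $\mathcal{GEFO}$ together with $\emptyset$-ind-definability of translation.

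It then remains to verify the three clauses of Definition \ref{D: taylor groupoid}. For (1), given $h_x^z=g_y^z\circ f_x^y=(g\circ f)_x^z$, the $n$th truncation of $h_x^z$ is $(x,z,a'',c_1'',\dots,c_n'')$ where $(0,0,a'',c_1'',\dots,c_n'')$ is the $n$th truncation of $g\circ f$; since $\mathcal{GEFO}$ is a Taylor groupoid this is determined by the $n$th truncations of $f$ and $g$, which are recovered from those of $f_x^y$ and $g_y^z$ by forgetting base points, while the base points $x$ and $z$ are themselves visible in the truncations of $f_x^y$ and $g_y^z$. For (2), if $f_x^y$ and $g_{x'}^{y'}$ have equal $n$th truncations for all $n$, then $x=x'$, $y=y'$, and $f,g$ have equal $n$th truncations for all $n$, so $f=g$ since $\mathcal{GEFO}$ is a Taylor groupoid, whence $f_x^y=g_{x'}^{y'}$. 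For (3), given $(x,y,a,c_1,\dots,c_n)\in K^2\times G\times K^n$ with $c_1\ne 0$, clause (3) for $\mathcal{GEFO}$ produces a morphism $f$ with $n$th truncation $(0,0,a,c_1,\dots,c_n)$, and then $f_x^y$ has $n$th truncation $(x,y,a,c_1,\dots,c_n)$.

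There is essentially no hard step: passing from $\mathcal{GEFO}$ to $\mathcal{GEF}$ only adds the bookkeeping of a pair of base points, and the composition formula $g_y^z\circ f_x^y=(g\circ f)_x^z$ makes that bookkeeping functorial. The one mild point worth spelling out is that two morphisms of $\mathcal{GEF}$ are composable precisely when the target point of the first equals the source point of the second, and this matching is exactly the data of the $0$th truncation, so carrying truncations through compositions loses no information. Once this is noted, all three clauses reduce verbatim to the already-established case of $\mathcal{GEFO}$, which is why no further checking is required.
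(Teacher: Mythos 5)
Your proof is correct and is exactly the routine verification the paper has in mind: the paper explicitly states ``there is really nothing to check'' and leaves the details to the reader, and your reduction of each clause of Definition~\ref{D: taylor groupoid} to the already-established case of $\mathcal{GEFO}$ via the unique translation representation $f_x^y$ and the formula $g_y^z\circ f_x^y=(g\circ f)_x^z$ is precisely those details. Nothing further is needed.
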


    We conclude:

    \begin{theorem}\label{T: acvf definable slopes} $(K,v)$ has definable slopes, induced as in Lemma \ref{L: groupoid induces slopes} by the Taylor groupoid $\mathcal{GEF}$.
    \end{theorem}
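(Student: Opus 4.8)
The plan is to deduce Theorem \ref{T: acvf definable slopes} directly from Lemma \ref{L: groupoid induces slopes} applied to the Taylor groupoid $\mathcal{GEF}$. The set of objects of $\mathcal{GEF}$ is all of $K$ by construction, so the only thing to check is that every basic invertible arc (Definition \ref{D: invertible arc}) is a morphism in $\mathcal{GEF}$, i.e. a generalized \'etale function. Fix such an arc $f\colon x_0\to y_0$, witnessed by a $\mathcal K(A)$-definable $X\subset K^2$ of dimension $1$ with both projections finite-to-one, $(x_0,y_0)$ generic in $X$ over $A$, and $X$ locally the graph of a homeomorphism of germ $f$. Enlarging $A$ to $\acl(A)$ (which does not disturb genericity) and replacing $X$ by its Zariski closure $Z$ — which, by Lemma \ref{L: ez main lemma second version}, it meets in a relatively open neighbourhood of $(x_0,y_0)$ contained in the smooth locus of $Z$ — we may assume $Z$ is an irreducible curve over $K$ and $(x_0,y_0)$ is a smooth point of $Z$. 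Applying the generic local homeomorphism property (Definition \ref{D: easy axioms}(5)) to each of the two finite-to-one coordinate projections of $Z(K)$, both are local homeomorphisms near $(x_0,y_0)$. After translating we may take $(x_0,y_0)=(0,0)$, so it suffices to show the germ lies in $\mathcal{GEFO}$; and, as elsewhere in this section, the assertion may be verified in a complete model, where convergent power series are available.

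The heart of the argument is the inseparability bookkeeping carried by the Frobenius group $G$. The key local fact to prove is: a finite morphism of smooth curves over $K$ which is a topological local homeomorphism at a point is, on the level of complete local rings, a \emph{pure $p$-power map} $w\mapsto w^{p^{j}}$ for some $j\ge 0$. This uses perfectness of $K$ (Lemma \ref{L: ez perfect}): if a uniformizer downstairs pulls back to $w^{e}\cdot(\text{unit})$ with the map locally injective, then the ramification index $e$ must be $p^{j}$ and the unit must be a $p^{j}$-th power in $K[[w]]$ — otherwise, since the $p$-th powers of $1+wK[[w]]$ are exactly $1+w^{p}K[[w^{p}]]$, any extra non-$p$-power terms would produce, at large enough valuation, two distinct points with the same image. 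Applying this to the two projections of $Z$ at the origin, we get local parameters $\xi,\eta$ at the point of $Z$ with $x=\xi^{p^{n}}$ and $y=\eta^{p^{n'}}$. Substituting $x=t^{p^{n}}$ (i.e. precomposing with $(t\mapsto t^{p^{n}})\in G$) turns the germ into $t\mapsto (w''(t))^{p^{n'}}$ where $w''(t)=\sum\mu_{j}t^{j}\in K[[t]]$ has $\mu_{1}\ne 0$; and $(w''(t))^{p^{n'}}=\psi(t^{p^{n'}})$ with $\psi(s)=\sum\mu_{j}^{p^{n'}}s^{j}$, $\psi'(0)\ne 0$. Hence the original germ equals $\psi\circ b$ with $b=(z\mapsto z^{p^{n'-n}})\in G$ and $\psi$ a power series germ $0\to 0$ with non-vanishing derivative which, since $x$ is algebraic over $K(t^{p^{n}})$ and $\psi(t^{p^{n'}})$ over $K(t)$, is again algebraic over $K(x)$. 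Thus membership in $\mathcal{GEFO}$ reduces to showing $\psi$ is an \'etale function.

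The remaining step is that an algebraic germ $\psi\colon 0\to 0$ with $\psi'(0)\ne 0$ is a composition of basic \'etale functions. The plan here is the classical one: pass to the smooth projective model of $K(x,\psi)$, at which $x$ is a local parameter at the relevant point (so the projection to the $x$-line is \'etale there), and decompose the finite map to the $x$-line into standard \'etale pieces — equivalently, resolve the singularities of the naive plane model $\{Q=0\}$ ($Q$ the minimal polynomial of $\psi$ over $K(x)$) by a bounded sequence of coordinate modifications, after a generic linear change of coordinates, until one reaches a polynomial model with a smooth point having both partial derivatives non-zero, which is exactly a basic \'etale function in the sense of Definition \ref{D: etale function}. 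Composing the modifications back exhibits $\psi$ as an \'etale function, so the germ lies in $\mathcal{GEFO}$ and hence in $\mathcal{GEF}$, and Lemma \ref{L: groupoid induces slopes} finishes the proof.

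I expect the main obstacle to be the positive-characteristic analysis of the second paragraph: proving that a topological local homeomorphism between smooth curves is complete-locally a pure $p$-power map, and then assembling the various $p$-power substitutions into the single element of $G$ required by the normal form of $\mathcal{GEFO}$ (\'etale function composed with one Frobenius power). In residue characteristic $0$ the group $G$ is trivial, the second paragraph is vacuous — every coordinate projection at a generic smooth point is literally \'etale — and only the plane-curve resolution argument of the third paragraph remains, where the statement is classical.
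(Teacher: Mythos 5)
Your overall skeleton is right — reduce via Lemma \ref{L: groupoid induces slopes} to showing every basic invertible arc lies in $\mathcal{GEF}$, translate to the origin, and handle inseparability by factoring a Frobenius power into the group $G$ — and your ``pure $p$-power'' analysis of the two projections is essentially correct (though as stated it needs a Newton polygon/Hensel argument to actually produce the second nearby preimage, and a careful first-order transfer to justify working in a complete model). But the paper never needs any of this machinery: it uses genericity much more aggressively. Since $(x_0,y_0)$ is generic over $\operatorname{acl}(A)$, the arc is cut out near $(x_0,y_0)$ by a single irreducible polynomial $P(x,y)=0$; since $x\mapsto x^p=(x-a)^p+a^p$ is itself a generalized \'etale function at every point, one may apply Frobenius powers to either coordinate until $P\notin K[x^p,y]\cup K[x,y^p]$; then both projections induce separable function field extensions, hence are generically \'etale, hence \'etale at the generic point $(x_0,y_0)$, so both partials of $P$ are nonzero there and the translated germ is a \emph{single basic} \'etale function. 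No complete local rings, power series, or decomposition into several basic pieces is required.

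The genuine gap in your proposal is the final step: the claim that an arbitrary algebraic power series germ $\psi$ with $\psi(0)=0$, $\psi'(0)\neq 0$ is an \'etale function in the sense of Definition \ref{D: etale function}, i.e.\ a finite composition of basic \'etale functions. This is not automatic: the minimal polynomial of such a $\psi$ over $K(x)$ can have both partials vanishing at the origin (for instance a branch $\psi(x)=x\sqrt{1+x}$ of the nodal curve $y^2-x^2-x^3=0$), and since any polynomial vanishing on the germ is divisible by that minimal polynomial, $\psi$ is then not a basic \'etale function, so some nontrivial factorization must be exhibited. Your proposed repair — resolve the singularities of the plane model after a generic linear change of coordinates — does not fit the groupoid's operations: blow-ups $(x,y)\mapsto(x,y/x)$ and linear coordinate changes mixing $x$ and $y$ are neither pre- nor post-compositions by one-variable germs $0\to 0$, so they do not produce the required chain of basic \'etale functions, and no such general statement is proved (or needed) in the paper. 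The gap is avoidable precisely by exploiting genericity as above: once Frobenius has been stripped from the defining polynomial, the partials are already nonzero at the generic point, so the residual germ $\psi$ is basic \'etale on the nose. A minor further point: the relevant dichotomy is the characteristic of $K$, not the residue characteristic — in mixed characteristic $(0,p)$ the group $G$ is trivial even though the residue field has characteristic $p$.
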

    \begin{proof} By Lemma \ref{L: groupoid induces slopes}, we only need to show that every basic invertible arc is a morphism in $\mathcal{GEF}$. So, let $X\subset K^2$ be $A$-definable of dimension 1, with both projections $X\rightarrow K$ finite-to-one. Let $(x_0,y_0)\in K^2$ be generic over $A$. By genericity, we may assume that $X$ is defined in a neighborhood of $(x_0,y_0)$ by an irreducible polynomial equation $P(x,y)=0$ with coefficients in $\operatorname{acl}(A)$.
    
    Note that the map $x\mapsto x^p$ defines a generalized \'etale function at every point. Indeed, for any $a$, we have $x^p=(x-a)^p+a^p$, which is clearly a generalized \'etale function at $(a,a^p)$. In particular, we may freely replace $X$ with its image under applying any Frobenius power to either coordinate. Since the Frobenius is $\emptyset$-definable, this will send $(x_0,y_0)$ to a generic point of the resulting set; and since the Frobenius is a homeomorphism, the local behavior of $X$ is preserved.
    
    Now, by the above paragraph, we may assume that $P(x,y)$ does not belong to either $K[x^p,y]$ or $K[x,y^p]$. Let $V$ be the variety defined by $P$. It follows that each projection of $V$ to $K$ induces a separable extension of function fields, which implies that each such projection is generically \'etale. Since $(x_0,y_0)$ is generic in $X$, each projection $V\rightarrow K$ is \'etale at $(x_0,y_0)$. In particular, the partial derivatives of $P$ at $(x_0,y_0)$ are both non-zero. 
    
    Now, at this point, it is harmless to translate $X$ and assume $x_0=y_0=0$. Then since both partial derivatives of $P$ are non-zero at $(0,0)$, $P$ defines a basic \'etale function at $(0,0)$.
    \end{proof}

\subsection{TIMI in ACVF}

It remains to show that with respect to the notion of definable slopes we have constructed, ACVF satisfies TIMI. We need some preliminaries. First, we note the following, which is clear:

\begin{lemma}\label{polynomial determines function} Let $P(x,y,\overline z)$ be a polynomial over $K$ in $m+2$ variables, and fix $(x_0,y_0,\overline z_0)$ with $P(x_0,y_0,\overline z_0)=0$. Assume the partial derivatives of $P$ with respect to $x$ and $y$ are non-zero at $(x_0,y_0,\overline z_0)$. Then the set $\{(x,y):P(x,y,\overline z_0)=0\}\subset K^2$ restricts to a generalized \'etale function $f$ at $(x_0,y_0)$, such that $a(f)$ (the associated element of the Frobenius group $G$) is the identity. 
\end{lemma}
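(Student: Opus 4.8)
The plan is to unwind the definitions: a single polynomial equation whose two partial derivatives are nonzero at the base point cuts out a \emph{basic} \'etale function there (in the sense of Definition \ref{D: etale function}), and a basic \'etale function carries the identity element of the Frobenius group $G$.

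First I would set $Q(x,y)=P(x,y,\overline z_0)\in K[x,y]$ and observe that, at $(x_0,y_0)$, the partials $\partial Q/\partial x$ and $\partial Q/\partial y$ coincide with the partials of $P$ with respect to its first two arguments evaluated at $(x_0,y_0,\overline z_0)$, hence are nonzero by hypothesis, while $Q(x_0,y_0)=0$. Translating, I set $R(x,y)=Q(x+x_0,y+y_0)$, so $R(0,0)=0$ and both partials of $R$ are nonzero at $(0,0)$. The one step with content — which I would note is exactly the implicit function theorem input already invoked in the proof of Lemma \ref{L: approximations of etale functions} — is that the restriction of $\{R=0\}$ to a small ball about $(0,0)$ is the graph of a homeomorphism between neighborhoods of $0$: since both partials of $R$ are nonzero at the origin, the curve $V(R)\subset\mathbb A^2$ is smooth there with tangent line neither horizontal nor vertical, so each coordinate projection $V(R)\to\mathbb A^1$ is \'etale at $(0,0)$; by Hensel's lemma (uniqueness of the lift) these projections are locally injective near $(0,0)$, and by Fact \ref{F: open mapping} they are locally open, hence local homeomorphisms in the valuation topology. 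Composing one with the inverse of the other exhibits $\{R=0\}$ near $(0,0)$ as the graph of a homeomorphism germ $g\colon 0\to 0$. By construction $g$ is a basic \'etale function at $(0,0)$, hence a morphism of $\mathcal{EFO}$, hence of $\mathcal{GEFO}$; and in the unique expression of $g$ as a generalized \'etale function the Frobenius element is $\mathrm{id}$, since an \'etale function lying in $G$ must be the identity by Fact \ref{F: frob group properties}(2).

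Finally I would translate back: as $Q(x,y)=R(x-x_0,y-y_0)$, the germ at $(x_0,y_0)$ of $\{(x,y):P(x,y,\overline z_0)=0\}=\{Q=0\}$ is precisely $g(x-x_0)+y_0=g_{x_0}^{y_0}$, a generalized \'etale function at $(x_0,y_0)$ by the definition of $\mathcal{GEF}$; and by the definition of truncation in $\mathcal{GEF}$ its associated element of $G$ equals that of $g$, namely $\mathrm{id}$. This yields the lemma. I expect the homeomorphism claim for $\{R=0\}$ to be the only non-bookkeeping point, and it is standard for ACVF via the implicit function theorem; everything else is a direct passage through the definitions of $\mathcal{EFO}$, $\mathcal{GEFO}$ and $\mathcal{GEF}$.
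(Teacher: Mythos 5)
Your proof is correct; the paper in fact states this lemma without proof (``we note the following, which is clear''), and your argument -- translating to the origin, getting local injectivity of the two coordinate projections from Hensel's lemma (equivalently the Henselian implicit function theorem), local openness from \'etaleness via Fact \ref{F: open mapping}, and then reading off $a(f)=\mathrm{id}$ from the uniqueness of the Frobenius decomposition -- is exactly the standard justification the paper is implicitly relying on. The only cosmetic point is that to quote Fact \ref{F: open mapping} literally one should work with the (unique, reduced) branch of $V(P(x,y,\overline z_0))$ through $(x_0,y_0)$, which exists precisely because a nonvanishing partial derivative at that point rules out non-reduced or multiple components passing through it.
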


\begin{notation} Let $P(x,y,\overline z)$ be a polynomial whose graph near some point $(x_0,y_0,\overline z_0)$ gives a generalized \'etale function at $(x_0,y_0)$ in the sense of the above lemma. We denote this generalized \'etale function by $P\restriction(x_0,y_0,\overline z_0)$.
\end{notation}

Note that if the partial derivatives are non-zero at $(x_0,y_0,\overline z_0)$ as in Lemma \ref{polynomial determines function}, then they are non-zero in a neighborhood of $(x_0,y_0,\overline z_0)$. Thus, the notation $P\restriction(x_0,y_0,\overline z_0)$ is well-defined in a neighborhood of $(x_0,y_0,\overline z_0)$.

Now our main tool for proving TIMI is the following:

\begin{lemma}\label{definable TIMI} Let $P(x,y,\overline z)$ and $Q(x,y,\overline w)$ be polynomials in $m_P+2$ and $m_Q+2$ variables, respectively, and let $I\subset K^2\times K^{m_P}\times K^{m_Q}$ be the set of $(x,y,\overline z,\overline w)$ with $P(x,y,\overline z)=Q(x,y,\overline w)=0$. Fix $(x_0,y_0,\overline z_0,\overline w_0)\in I$ so that all partial derivatives of $P$ and $Q$ in the $x$ and $y$ variables are non-zero at $(x_0,y_0,\overline z_0,\overline w_0)$. Assume for some $n$ that the $n$th truncations of $P\restriction(x_0,y_0,\overline w_0)$ and $Q\restriction(x_0,y_0,\overline z_0)$ coincide, but every neighborhood of $(x_0,y_0,\overline z_0,\overline w_0)$ contains some $(x,y,\overline z,\overline w)$ so that the $n$th truncations of $P\restriction(x,y,\overline w)$ and $Q\restriction(x,y,\overline z)$ do not coincide. Then the projection $I\rightarrow K^{m_P}\times K^{m_Q}$ is not injective in any neighborhood of $(x_0,y_0,\overline z_0,\overline w_0)$.
\end{lemma}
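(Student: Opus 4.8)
The plan is to argue analytically in a metrically complete model, reducing everything to a Weierstrass-preparation and root-continuity argument. First note that, with $P$ and $Q$ fixed, the conclusion "$I\to K^{m_P}\times K^{m_Q}$ is non-injective on the ball of radius $\gamma$ about $(x_0,y_0,\overline z_0,\overline w_0)$" is first-order in $\gamma$ (the topology of ACVF has a definable basis of balls), and the hypotheses are likewise expressible by countably many first-order formulas over the value group (using the uniform definability of $n$-slopes for fixed $n$, Definition \ref{D: definable slopes}(4)). So it suffices to prove the uniform statement "for every $\gamma$ there is $\gamma'$ such that [$n$th truncations at the base point coincide] and [some point of $I$ within $\gamma'$ violates coincidence] imply [two distinct points of $I$ within $\gamma$ over a common parameter]" in a metrically complete $\mathbb K$ of the same residue and value characteristics as $(K,v)$, exactly as in the reduction used for Lemma \ref{L: approximations of etale functions}; in such $\mathbb K$ the analytic theory of definable functions is available. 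Work in $\mathbb K$. Since $\partial P/\partial y$ and $\partial P/\partial x$ are nonzero at $(x_0,y_0,\overline z_0)$, the implicit function theorem presents $P=0$ near this point as the graph of an analytic function $y=f_{\overline z}(x)$ depending analytically on $\overline z$ and still a local homeomorphism; similarly $Q=0$ gives $y=g_{\overline w}(x)$ near $(x_0,y_0,\overline w_0)$, and points of $I$ near $(x_0,y_0,\overline z_0,\overline w_0)$ correspond to common zeros in $x$, near $x_0$, of $f_{\overline z}$ and $g_{\overline w}$.

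Put $h(x,\overline z,\overline w)=f_{\overline z}(x)-g_{\overline w}(x)$. Unwinding Definition \ref{D: taylor groupoid} and using that the Frobenius parts of $P\restriction(x_0,y_0,\overline z_0)$ and $Q\restriction(x_0,y_0,\overline w_0)$ are trivial (Lemma \ref{polynomial determines function}), the hypothesis on $n$th truncations says exactly that $h(\,\cdot\,,\overline z_0,\overline w_0)$ vanishes at $x_0$ to order $\geq n+1$. If it vanishes identically near $x_0$, the two base curves agree on a neighborhood of $(x_0,y_0)$, which is infinite and accumulates at $(x_0,y_0)$, so already the fibre of the projection $\pi$ over $(\overline z_0,\overline w_0)$ meets every neighborhood of the base point in at least two points and we are done. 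Otherwise the order of vanishing is some finite $k\geq n+1$; pick a ball $B\ni x_0$ in which $x_0$ is the only zero of $h(\,\cdot\,,\overline z_0,\overline w_0)$. By Weierstrass preparation, after shrinking, $h=u\cdot W$ near the base point with $u$ an analytic unit and $W$ monic of degree $k$ in $x$, with coefficients analytic in $(\overline z,\overline w)$ and $W(\,\cdot\,,\overline z_0,\overline w_0)=(x-x_0)^k$. Hence, by continuity of the roots of a monic polynomial over the algebraically closed complete valued field $\mathbb K$, there is a ball $B'\ni(\overline z_0,\overline w_0)$ so that for $(\overline z,\overline w)\in B'$ all $k$ solutions in $x$ (with multiplicity) of $h(\,\cdot\,,\overline z,\overline w)=0$ lie in $B$.

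Now apply the "unusual tangency" hypothesis inside the neighborhood $B\times B'$: there is $(x_1,y_1,\overline z_1,\overline w_1)\in I$ with $x_1\in B$ and $(\overline z_1,\overline w_1)\in B'$ at which the $n$th truncations differ, that is, $h(\,\cdot\,,\overline z_1,\overline w_1)$ vanishes at $x_1$ to order $\leq n$. Since the total multiplicity of zeros of $h(\,\cdot\,,\overline z_1,\overline w_1)$ in $B$ is $k\geq n+1$, there is a further zero $x_2\in B$ with $x_2\neq x_1$. Setting $y_i=f_{\overline z_1}(x_i)=g_{\overline w_1}(x_i)$, the points $(x_1,y_1,\overline z_1,\overline w_1)$ and $(x_2,y_2,\overline z_1,\overline w_1)$ are distinct elements of $I$ with a common image under $\pi$, and both lie in the prescribed neighborhood of $(x_0,y_0,\overline z_0,\overline w_0)$ provided $B$ and $B'$ were taken small enough at the outset (continuity of $f$ keeps $y_i$ near $y_0$). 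As $B$ and $B'$ may be chosen inside any prescribed neighborhood, this proves non-injectivity of $\pi$ in every neighborhood of the base point.

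The heart of the matter — and the step I expect to require the most care — is the stability claim in the second paragraph: that a small perturbation of $(\overline z,\overline w)$ keeps all $k$ solutions (counted with multiplicity) of $h(\,\cdot\,,\overline z,\overline w)=0$ inside the fixed ball $B$. In the analytic framework this is Weierstrass preparation plus root-continuity, and the real work is only to verify that $h$, assembled from generalized \'etale functions through the implicit function theorem, genuinely belongs to the ring of convergent power series over $\mathbb K$ for which preparation applies. If one prefers to avoid the analytic machinery, one can instead run a Swiss-cheese and value-group argument in the spirit of the proof of clause (2) of Definition \ref{D: taylor groupoid} for $\mathcal{EFO}$: bound the fibre size of $h(\,\cdot\,,\overline z,\overline w)$ on $B$ uniformly in $(\overline z,\overline w)$, control the definable function $r\mapsto\sup\{v(h(x,\overline z,\overline w)):v(x-x_0)=r\}$, and use that these data vary definably with the parameters to conclude that the number of zeros in $B$ is locally constant.
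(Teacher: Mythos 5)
Your proof is correct and takes essentially the same route as the paper's: reduce to a complete model, apply the implicit function theorem to form the difference $h$ of the two implicit functions, and play the multiplicity of the base point (at least $n+1$) against that of the nearby witness (at most $n$) and the locally constant total root count to produce a second intersection point over the same parameters. The only cosmetic difference is that you obtain root-count stability via Weierstrass preparation (plus an explicit case split when $h$ vanishes identically), where the paper simply invokes continuity of roots.
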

\begin{proof}
    The lemma can be expressed by infinitely many first-order sentences, so it suffices to carry out the proof in a complete model. In this case, by the Implicit Function Theorem, the graph of $P(x,y,\overline z)=0$ is, in a neighborhood of $(x_0,y_0,\overline z_0)$, the graph of an analytic function $(x,\overline z)\mapsto y_P(x,\overline z)$. Similarly, the graph of $Q$ is given locally by an analytic function $y_Q(x,\overline w)$. Let $g$ be the analytic function $(x,\overline w,\overline z)\mapsto y_P(x,\overline z)-y_Q(x,\overline w)$. For given $\overline z,\overline w$, we let $g_{\overline z\overline w}$ be the analytic function in one variable given by restricting $g$ to tuples with the values $\overline z$ and $\overline w$ in the $K^{m_P}$ and $K^{m_Q}$ coordinates.

    Now the fact that the $n$th truncations of $P\restriction(x_0,y_0,\overline w_0)$ and $Q\restriction(x_0,y_0,\overline z_0)$ coincide, equivalently stated, gives that $x_0$ is a root of multiplicity $n$ of $g_{\overline z_0\overline w_0}$. By the continuity of roots (this follows easily, say, from \cite{Cher2}), the number of roots of $g_{zw}$ including multiplicity, in any sufficiently small neighborhood $U$ of $(\overline x_0,\overline y_0)$, is constant in a neighborhood of $(\overline z_0,\overline w_0)$. But if $(x,y,\overline z,\overline w)\in I$ is sufficiently close to $(x_0,y_0,\overline z_0,\overline w_0)$ such that the $n$th truncations do not coincide on $(x,y,\overline z,\overline w)$, then $(x,y)$ is a root of multiplicity less than $n$ of $g_{zw}$, implying that $g_{zw}$ has at least one other root in $U$. Such a root corresponds to another point $(x',y',\overline z,\overline w)\in I$, violating local injectivity of $I\rightarrow K^{m_P}\times K^{m_Q}$.
\end{proof}

We now conclude:

\begin{theorem}\label{T: ACVF TIMI} With the definable slopes induced by $\mathcal{GEF}$, $(K,v)$ satisfies TIMI.
\end{theorem}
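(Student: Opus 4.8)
The plan is to transport the abstract statement to the concrete polynomial situation of Lemma \ref{definable TIMI} and then transport the conclusion back, the only genuine analytic input (continuity of roots) being already packaged in that lemma. Fix a generic $(\mathcal X,\mathcal Y)$-tangency $(x,y,t,u,n)$ over $A$; enlarging $A$ harmlessly, assume $A=\acl(A)$. Since each fibre of $\mathcal X$ is finite-to-one over both copies of $K$ and $(x,y)$ is generic in $X_t$ over $At$, the coordinates $x,y$ are field-algebraic over $K$ together with $At$. So, exactly as in the proof of Theorem \ref{T: acvf definable slopes} (local quantifier elimination in ACVF plus the strong frontier inequality to upgrade fibrewise agreement to agreement in a neighbourhood), after a harmless finite reparametrisation of $T$ — harmless because finite-to-one definable maps are local homeomorphisms near generic points by the generic local homeomorphism property, so all the notions involved transfer — we may assume there is a polynomial $P(x,y,\bar z)$ over $K$ and a definable assignment $t\mapsto\bar z(t)$ such that $X_t$ agrees, in a neighbourhood of each of its generic points, with $\{P(x,y,\bar z(t))=0\}$. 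Likewise obtain $Q(x,y,\bar w)$ and $u\mapsto\bar w(u)$ for $\mathcal Y$. Write $\bar z_0=\bar z(t)$, $\bar w_0=\bar w(u)$, $(x_0,y_0)=(x,y)$.

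Next, normalise the Frobenius. The generic fibre of $\mathcal X$ has a fixed inseparability behaviour in each of the two coordinates, so there is a single coordinatewise power-of-Frobenius automorphism $\phi=\mathrm{Frob}^{a}\times\mathrm{Frob}^{b}$ of $K^2$ — a $\emptyset$-definable polynomial automorphism, hence a homeomorphism preserving genericity, openness and topological ramification — such that, after replacing $\mathcal X$ by $\phi(\mathcal X)$, the polynomial $P$ is separable in each of $x$ and $y$; equivalently, the generalised \'etale function of $X_t$ at a generic point has trivial Frobenius part and $P\!\restriction\!(x_0,y_0,\bar z_0)$ in the sense of Lemma \ref{polynomial determines function} is defined. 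The key point is that the \emph{same} $\phi$ works for $\mathcal Y$ at $(x_0,y_0)$: applying $\phi$ alters the slopes of $X_t$ and of $Y_u$ at $(x_0,y_0)$ by the same pre/post-composition, so the tangency equality $\alpha^n_{x_0y_0}(X_t)=\alpha^n_{x_0y_0}(Y_u)$ is preserved; since the Frobenius part is recorded in every $n$-slope (it is part of the $0$-th truncation in $\mathcal{GEF}$), $\phi(Y_u)$ then also has trivial Frobenius part at $\phi(x_0,y_0)$, i.e. $Q$ is separable in each variable there. Thus, after applying $\phi$ and renaming, both $P$ and $Q$ have non-vanishing $x$- and $y$-partials at $(x_0,y_0,\bar z_0,\bar w_0)$, hence on a neighbourhood; and for any $(x',y',t')$ with $(x',y')$ generic in $X_{t'}$ over $At'$ we have that $\alpha^n_{x'y'}(X_{t'})$ is the $n$-th truncation of $P\!\restriction\!(x',y',\bar z(t'))$ (this is how the $\mathcal{GEF}$-induced slopes were defined via Lemma \ref{L: groupoid induces slopes} and Lemma \ref{polynomial determines function}), and similarly for $\mathcal Y$.

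Now I verify the hypotheses of Lemma \ref{definable TIMI} for $P$, $Q$ at $(x_0,y_0,\bar z_0,\bar w_0)$. At that point the $n$-th truncations of $P\!\restriction\!(x_0,y_0,\bar z_0)$ and $Q\!\restriction\!(x_0,y_0,\bar w_0)$ coincide — this is precisely $\alpha^n_{x_0y_0}(X_t)=\alpha^n_{x_0y_0}(Y_u)$. For the branching hypothesis: $(x,y,t,u)$ is strongly approximable (part of being a generic tangency), so every neighbourhood of it contains a strongly generic $(\mathcal X,\mathcal Y)$-intersection $(x'',y'',t'',u'')$, and shrinking to the neighbourhood on which $\mathcal X,\mathcal Y$ have $n$-branching, such an intersection satisfies $\alpha^n_{x''y''}(X_{t''})\neq\alpha^n_{x''y''}(Y_{u''})$. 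Pushing through $\phi$ and the continuous maps $\bar z(\cdot),\bar w(\cdot)$ (continuous near the generic points in question) produces, arbitrarily close to $(x_0,y_0,\bar z_0,\bar w_0)$, a point of $I=\{P(x,y,\bar z)=Q(x,y,\bar w)=0\}$ at which the relevant $n$-th truncations of $P\!\restriction$ and $Q\!\restriction$ disagree — note the requisite non-vanishing of partials there is automatic by proximity to $(x_0,y_0,\bar z_0,\bar w_0)$. Lemma \ref{definable TIMI} then gives that the projection $I\to K^{m_P}\times K^{m_Q}$ is non-injective in every neighbourhood of $(x_0,y_0,\bar z_0,\bar w_0)$, i.e. that point is a topological ramification point of the projection. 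Transporting back along $\phi^{-1}$ and the reparametrisations (again using the generic local homeomorphism property so these are local homeomorphisms near the generic points) shows that $((x,y),t,u)$ is a topological ramification point of $I_{\mathcal X,\mathcal Y}\to T\times U$; since it is already a weakly generic $(\mathcal X,\mathcal Y)$-intersection, it is a generic multiple $(\mathcal X,\mathcal Y)$-intersection over $A$, which is the conclusion of TIMI.

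I expect the main obstacle to be the ``standard polynomial form'' reduction carried out uniformly in the family parameter — producing a \emph{single} $P$ together with a definable reparametrisation and checking that the agreement of $X$ with $\{P=0\}$ holds in a full neighbourhood of the generic point of the graph, not merely fibrewise — together with the Frobenius normalisation. Of these, the only genuinely non-formal point is the matching of Frobenius parts across the two families via the tangency equality (so that one twist $\phi$ separablises both $P$ and $Q$); everything else is bookkeeping with the generic local homeomorphism property and the definability of slopes.
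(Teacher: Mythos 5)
Your proposal is correct and follows essentially the same route as the paper's proof: reduce, near the generic data, to polynomial defining equations, normalise away the Frobenius part so that the $x$- and $y$-partials are nonzero, and then feed the tangency (coinciding truncations at the point) together with strong approximability and $n$-branching into Lemma \ref{definable TIMI} to obtain topological ramification of $I\to T\times U$. Your explicit observation that a single Frobenius twist suffices for both families because the tangency equality forces their Frobenius parts to agree is a detail the paper leaves implicit, but it is the same argument, not a different approach.
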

\begin{proof}
    We are given two $A$-definable families of correspondences in $K$, $\mathcal X=\{X_t:t\in T\}$ and $\mathcal Y=\{Y_u:u\in U\}$ (with graphs $X\subset K^2\times T$ and $Y\subset K^2\times U$), and a generic $(\mathcal X,\mathcal Y)$-tangency $(x_0,y_0,t_0,u_0,n)$ over $A$. We want to show that $(x_0,y_0,t_0,u_0)$ is a topological ramification point of the projection $I\rightarrow T\times U$, where $I$ is the set of $(x,y,t,u)$ with $(x,y)\in X_t\cap Y_u$.

    Now, $X_{t_0}$ and $Y_{u_0}$ define generalized \'etale functions at $(x_0,y_0)$. It is harmless to apply any Frobenius power to any copy of $K$, since the Frobenius is $\emptyset$-definable (so doesn't change which points are generic), is a homeomorphism (so doesn't change the topological ramification locus), and is a generalized \'etale function at every point (so doesn't change the relation of two curves having the same $n$-slope). In particular, we may assume the Frobenius map associated to the generalized \'etale functions $X_{t_0}$ and $Y_{u_0}$ is the identity.

    It is also harmless to restrict to any $A$-definable neighborhood of $(x_0,y_0,t_0,u_0)$, and to replace $T$ and $U$ by any sets $A$-definably homeomorphic to $T$ and $U$. In particular, since $t_0\in T$ and $u_0\in U$ are generic, we may assume $T$ and $U$ are open subsets of powers of $K$.

    Now by the genericity of $(x_0,y_0,t_0)\in X$ and $(x_0,y_0,u_0)\in Y$, after restricting to small enough neighborhoods we can assume $X$ and $Y$ are defined by polynomial equations $P(x,y,t)=0$ and $P(x,y,u)=0$. Since the generalized \'etale functions of $X_{t_0}$ and $Y_{u_0}$ have no Frobenius power, it follows that the partial derivatives of $P$ in $Q$ in the $x$ and $y$ variables are non-zero at $(x_0,y_0,t_0,u_0)$. It follows that we are in the situation of Lemma \ref{definable TIMI}: indeed, by assumption $(x_0,y_0,t_0,u_0)$ is strongly approximable over $A$ and $\mathcal X$ and $\mathcal Y$ have $n$-branching over $A$ in a neighborhood of $(x_0,y_0,t_0,u_0)$ -- thus $(x_0,y_0,t_0,u_0)$ belongs to the closure of points $(x,y,t,u)\in I$ (namely strongly generic $(\mathcal X,\mathcal Y)$-intersections) where the $n$th slopes of $X_t$ and $Y_u$ do not coincide.
    \end{proof}

\section{Proofs of the Main Theorems}

We now prove our main theorems for definable relics of ACVF. For this, we need to recall the main result of the recent preprint \cite{HaOnPi} (Fact \ref{F: HOP group case} below). We use some ad hoc terminology to recall their setting most easily.\\

Recall the notion of almost embeddability (Definition \ref{D: finite correspondence}). 

\begin{definition}\label{D: locally equivalent}
    Let $(K,v)$ be an $\aleph_1$-saturated algebraically closed valued field. Let $(G,\oplus_G)$ and $(H,\oplus_H)$ be interpretable groups in $(K,v)$ over some parameter set $A$. Assume that each of $G$ and $H$ is almost embeddable into $K$. We say that $G$ and $H$ are \textit{locally equivalent} if there are $g_{12},g_{23},g_{13}\in G$ and $h_{12},h_{23},h_{23}\in H$ such that:
    \begin{enumerate}
        \item $\dim(g_{12}g_{23}/A)=\dim(h_{12}h_{23}/A)=2$.
        \item $g_{12}\oplus_Gg_{23}=g_{13}$ and $h_{12}\oplus_Hh_{23}=h_{13}$.
        \item For each $1\leq i<j\leq 3$, $g_{ij}$ and $h_{ij}$ are interalgebraic over $A$.
    \end{enumerate}
\end{definition}

\begin{remark} The definition of local equivalence in \cite{HaOnPi} is more general. It does not assume almost embeddability into $K$, and uses the more general $dp$-\textit{rank} (discussed in the next section) instead of dimension. However, we will only encounter groups almost embeddable into $K$; and for these groups, the two notions coincide (see \cite[Theorem 0.3]{Simdp}, and use \cite{DoGoLi} to apply it).
\end{remark}

\begin{fact}\cite{HaOnPi}\label{F: HOP group case} Let $(K,v)$ be an $\aleph_1$-saturated algebraically closed valued field. Let $(G,\oplus_G)$ be a $(K,v)$-interpretable group which is almost embeddable into $K$ and locally equivalent to either $(K,+)$ or $(K^\times,\times)$. Let $\mathcal G=(G,\oplus_G,...)$ be a non-locally modular strongly minimal reduct of the full $(K,v)$-induced structure on $G$. Then $\mathcal G$ interprets a field $F$, $(K,v)$-definably isomorphic to $K$. Furthermore, the $\mathcal G$-induced structure on $F$ is a pure algebraically closed field.
\end{fact}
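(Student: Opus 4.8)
The plan is to prove the statement in three stages: (i) recover an $\mathcal{G}$-interpretable algebraically closed field $F$ from the group structure together with non-local modularity; (ii) identify $F$, up to $(K,v)$-definable isomorphism, with $K$; and (iii) show that the $\mathcal{G}$-induced structure on $F$ adds nothing to the pure field.

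For (i), I would first exploit the hypothesis that $G$ is locally equivalent to $(K,+)$ or to $K^{\times}$: by the definition of local equivalence and Hrushovski's group configuration theorem (Fact~\ref{F: gp con}), one may pass, up to finite correspondence and localization, to the situation where $\mathcal{G}$ is a strongly minimal reduct of the $(K,v)$-induced structure on $(K,+)$ or on $K^{\times}$, with $\oplus_G$ the underlying algebraic group law. Non-local modularity then supplies a rank-$2$ almost faithful family $\mathcal{C}$ of plane curves in $\mathcal{G}$; translating a generic curve by $\oplus_G$ so that it passes through the identity of $G^2$, one studies germs at the identity. The group law on $G^2$ induces an "addition" of germs, while composition of germs (viewed as germs of definable maps $G\to G$) induces a "multiplication", and the standard recovery of a field from a group acting on a faithful family of curves (see \cite{Bou} and the references therein) yields an $\mathcal{G}$-interpretable infinite, hence algebraically closed, field $F$ together with a generically defined faithful action of $F$ on a cover/localization of $G$.

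For (ii), observe that $F$ is interpretable in $\mathcal{G}$, hence — since $\mathcal{G}$ is a reduct of the $(K,v)$-induced structure on $G$, and $G$ is almost embeddable into $K$, so $(K,v)$-interpretable — the field $F$ is itself $(K,v)$-interpretable. By the classification of infinite fields interpretable in ACVF (every such field is $(K,v)$-definably isomorphic to $K$ or to the residue field $k$), it remains to exclude $k$: since $F$ is strongly minimal as an interpretable set in the strongly minimal $\mathcal{G}$ whose universe is $G$, it is almost embeddable into $G$ and hence into $K$, whereas $k$ is orthogonal to $K$ in ACVF and admits no finite-to-one $(K,v)$-definable map into a power of $K$. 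Thus $F$ is $(K,v)$-definably isomorphic to $K$ (promoting the interpretable isomorphism to a definable one via elimination of imaginaries over the geometric sorts). For (iii), the $\mathcal{G}$-induced structure on $F$ is, through this isomorphism, simultaneously a strongly minimal expansion of the pure field $K$ and a reduct of the full $(K,v)$-structure on $K$; as the valuation ring is infinite and co-infinite in one variable, it belongs to no strongly minimal reduct, and one checks that any proper expansion of the pure field by $(K,v)$-definable relations already recovers the valuation, so the induced structure is exactly the pure algebraically closed field $K$.

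The main obstacle I expect is the interface between stages (i) and (ii): making the field-recovery argument go through when $G$ is only \emph{locally} (rather than globally) the algebraic group $\mathbb{G}_a$ or $\mathbb{G}_m$ — carefully tracking germs, covers, and imaginaries across the local equivalence — and then matching the recovered $F$ with $K$ \emph{definably} rather than merely up to an abstract field isomorphism. The "no extra structure" assertion in (iii) also rests on the known but nontrivial input that ACVF has no strongly minimal reduct strictly between ACF and ACVF.
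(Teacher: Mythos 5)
There is nothing in this paper to compare your argument against: Fact~\ref{F: HOP group case} is not proved here at all, it is imported verbatim as the main theorem of \cite{HaOnPi}, and the present paper's role is only to supply its hypotheses (via Theorem~\ref{T: main acvf}). So your proposal is, in effect, an attempt to reprove the main result of \cite{HaOnPi}, and as such it has genuine gaps at exactly the points where that paper does its work.

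The most serious gap is stage (i). Local equivalence (Definition~\ref{D: locally equivalent}) is only generic, germ-level data: a single triple $g_{12},g_{23},g_{13}$ with $g_{13}=g_{23}\oplus_G g_{12}$, interalgebraic over parameters with a triple composing in $(K,+)$ or $(K^{\times},\times)$. Hrushovski's group configuration theorem (Fact~\ref{F: gp con}) produces a group from a configuration; it does not identify an abstract interpretable group with an algebraic one, and it certainly does not upgrade this generic interalgebraicity to ``$\mathcal G$ is, up to finite correspondence and localization, a reduct of the induced structure on $(K,+)$ or $K^{\times}$ with $\oplus_G$ the algebraic group law.'' The hypothesis is phrased locally precisely because that global reduction is not available: the group arising here is an abstract $\mathcal M$-interpretable group built from codes of slopes, and rank-one groups interpretable in ACVF include objects (such as $\mathcal O$, $K/\mathcal O$-type quotients, infinitesimal subgroups) that are only locally modelled on $\mathbb G_a$ or $\mathbb G_m$. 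Turning germ-level data into anything global requires a Weil--Hrushovski-style group-chunk argument inside ACVF together with an analysis of interpretable rank-one groups --- i.e.\ a substantial part of the content of \cite{HaOnPi} --- and your sketch assumes it. Similarly, the ``standard recovery of a field from a group acting on a faithful family of curves'' is not standard in this setting: to make composition and addition of germs at the identity into operations that $\mathcal G$ itself can see, one must show that the relic detects tangency/higher-order contact of curves (slopes, Taylor-type truncations, Frobenius corrections in positive characteristic); this is exactly the analytic machinery of \cite{HaOnPi} (and of Sections~6--7 and~10 here in the real-sort case), not an application of \cite{Bou}.

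Stage (ii) is essentially fine in outline (interpretable infinite fields in ACVF are definably isomorphic to $K$ or to $\textbf{k}$, and $\textbf{k}$ is excluded because $F$ is in generic finite correspondence with $G$, hence almost embeddable into $K$, while no infinite definable subset of $\textbf{k}$ is). But stage (iii) again hides the theorem: the observation about the valuation ring only rules out unary counterexamples, and the assertion that ``any proper expansion of the pure field by $(K,v)$-definable relations already recovers the valuation'' is not an off-the-shelf fact you can invoke --- what is actually needed (and what \cite{HaOnPi} proves) is that every $\mathcal G$-definable subset of $F^n$ is constructible, which requires an argument combining strong minimality with the Swiss-cheese description of ACVF-definable sets. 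So the proposal is a reasonable top-level road map, but the three phrases ``one may pass to the algebraic group law,'' ``standard recovery of a field,'' and ``one checks'' each stand in for the hard part of the cited result rather than for steps you have supplied.
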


Let us proceed with our main results. First, Theorem \ref{T: main acvf} is a composite of our results on ACVF thus far, which is independent of Fact \ref{F: HOP group case}:

\begin{theorem}\label{T: main acvf} Let $\mathcal K=(K,v)$ be an $\aleph_1$-saturated algebraically closed valued field. Let $\mathcal M$ be a non-locally modular strongly minimal definable $\mathcal K$-relic. Then $\mathcal M$ interprets a strongly minimal group that is almost embeddable into $K$ and locally equivalent to either $(K,+)$ or $(K^{\times},\times)$.
\end{theorem}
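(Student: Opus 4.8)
The plan is to assemble Theorem \ref{T: main acvf} from the abstract machinery developed in Sections 3--7 together with the ACVF-specific verifications of Sections 9--11. First I would invoke Corollary \ref{C: acvf axioms}: an $\aleph_1$-saturated algebraically closed valued field $(K,v)$ with the valuation topology is a Hausdorff geometric structure with the open mapping property (hence enough open maps, by Proposition \ref{P: open thm implies open maps}) and ramification purity. Then I would invoke Theorem \ref{T: acvf definable slopes} and Theorem \ref{T: ACVF TIMI} to conclude that $(K,v)$ has definable slopes (induced by the Taylor groupoid $\mathcal{GEF}$) satisfying TIMI. Thus all hypotheses of Theorem \ref{T: composite main thm} are met for the relic $\mathcal M$, so $\dim(M)=1$ and $\mathcal M$ interprets a strongly minimal group.

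The remaining work is to upgrade ``interprets a strongly minimal group'' to ``interprets a strongly minimal group almost embeddable into $K$ and locally equivalent to $(K,+)$ or $(K^\times,\times)$''. Here I would use the finer output recorded in Proposition \ref{P: group from codes} (carried through to the concrete setting via Theorem \ref{T: detection of tangency}): there is a parameter set $B$, an $\mathcal M(B)$-interpretable strongly minimal group $(G,\cdot)$, a generic $y\in K$ over $\emptyset$ with $y\in\operatorname{dcl}_{\mathcal K}(B)$, elements $g_{ij}\in G$ for $i<j\in\{1,2,3\}$, and morphisms $f_{ij}:y\rightarrow y$ in $\mathcal{IA}_{n_0}$ such that $g_{12},g_{23}$ are independent generics in $G$ over $B$, $g_{13}=g_{23}\cdot g_{12}$, $f_{13}=f_{23}\circ f_{12}$, each $f_{ij}$ has trivial $(n_0-1)$-th truncation at $y$, and $g_{ij}$ is interalgebraic with $f_{ij}$ over $B$. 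Since $\dim(M)=1$ and $G$ is strongly minimal and interpretable in $\mathcal M$ (hence in $\mathcal K$), a dimension count shows $\dim_{\mathcal K}(G)=1$, so $G$ is almost embeddable into $K$ (using that $\mathcal K$ eliminates imaginaries after adding the geometric sorts, or more directly that a $1$-dimensional interpretable set is in finite correspondence with a subset of $K$).

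The key remaining step --- which I expect to be the main obstacle --- is identifying the local isomorphism type of $G$. The idea is that the morphisms $f_{ij}$ live in $\mathcal{IA}_{n_0}$ with $n_0\geq 1$ and all have trivial $(n_0-1)$-th truncation; by the coordinatization of slopes (Definition \ref{D: definable slopes}(6)) the map $f\mapsto c_{n_0}(f)$ identifies the group of such morphisms $y\rightarrow y$ with a subgroup of $(K,+)$ when the associated Frobenius element is trivial (and after absorbing a Frobenius twist in positive characteristic, which is exactly why the slopes were set up with the group $G$ built in). Concretely, for $\mathcal{GEF}$ the morphisms $y\to y$ of order $n_0$ with identity truncation below $n_0$ and identity Frobenius part form, under $f\mapsto c_{n_0}(f)$, a subgroup of $(K,+)$; if instead a nontrivial Frobenius element appears one lands in a subgroup of $(K^\times,\times)$ after the standard change of variables. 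Thus the triple $(f_{12},f_{23},f_{13})$ exhibits $G$ as locally equivalent (in the sense of Definition \ref{D: locally equivalent}, with the $g_{ij}$ on one side and the images of the $f_{ij}$ on the other, using the interalgebraicity over $B$ and the fact that $\dim(g_{12}g_{23}/B)=2$) to a local subgroup of $(K,+)$ or $(K^\times,\times)$ --- and a local subgroup of $(K,+)$ or $(K^\times,\times)$ is itself locally equivalent to the full group. Care is needed to check that the genericity and independence conditions of Definition \ref{D: locally equivalent} transfer correctly through the interalgebraicities and through the passage from $\mathcal M$-rank to $\mathcal K$-dimension (which is where coherence, via Lemma \ref{L: coherent preservation}, is used), and to handle the positive-characteristic Frobenius bookkeeping uniformly; this bookkeeping is the delicate part of the argument.
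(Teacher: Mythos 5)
Your assembly is the same as the paper's: Corollary \ref{C: acvf axioms} together with Theorems \ref{T: acvf definable slopes} and \ref{T: ACVF TIMI} feed Theorem \ref{T: composite main thm}, and the extra data recorded in Proposition \ref{P: group from codes} is then unpacked inside the Taylor groupoid $\mathcal{GEF}$ to verify Definition \ref{D: locally equivalent}. However, the step in which you identify the group law is wrong as stated. Every truncation in $\mathcal{GEF}$, at every level $n\geq 0$, already records the Frobenius datum $a(f)$; so the hypothesis that the $(n_0-1)$-th truncation of each $f_{ij}$ is the identity forces the Frobenius part of every $f_{ij}$ to be trivial. There is no ``nontrivial Frobenius'' case, and the Frobenius plays no role in the additive/multiplicative dichotomy. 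The genuine case split is on $n_0$: if $n_0=1$, the $f_{ij}$ are $1$-truncations of scalings, determined by $h_{ij}=c_1(f_{ij})\neq 0$, and composition multiplies these coefficients ($c_1(g\circ f)=c_1(g)c_1(f)$), so one lands in $(K^\times,\times)$ --- here your claim that $f\mapsto c_{n_0}(f)$ maps the relevant morphisms into a subgroup of $(K,+)$ whenever the Frobenius is trivial fails outright. If $n_0>1$, triviality of the $(n_0-1)$-th truncation forces $c_1=1$, $c_2=\dots=c_{n_0-1}=0$, the $f_{ij}$ are truncations of $x\mapsto x+h_{ij}x^{n_0}$, and the truncated composition adds the top coefficients, giving $(K,+)$. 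With this corrected dichotomy the rest of your argument goes through: since $y\in\operatorname{dcl}_{\mathcal K}(B)$, each $h_{ij}$ is interdefinable with $f_{ij}$ over $B$, hence interalgebraic with $g_{ij}$ over $B$, and conditions (1)--(3) of Definition \ref{D: locally equivalent} follow with $H=(K^\times,\times)$ or $(K,+)$ according to the case.

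A secondary issue is your justification of almost embeddability. It is not true in ACVF that a $1$-dimensional interpretable set is in finite correspondence with a subset of $K$ (interpretable sets involving $\Gamma$ or $K/\mathcal O$ show that positive-dimensional or dp-rank-one interpretable sets need not embed, even almost, into the field sort), and elimination of imaginaries in the geometric sorts does not by itself place $G$ in a power of $K$. The clean argument is internal to $\mathcal M$: $G$ is strongly minimal and interpretable in the strongly minimal structure $\mathcal M$, hence (by unidimensionality of $\mathcal M$) in finite correspondence with $M$; and since $\dim(M)=1$ and $M\subset K^n$ is definable, $M$ admits a definable finite-to-one map to $K$ by piecewise coordinate projections. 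Composing these gives the required finite correspondence between $G$ and a subset of $K$.
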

\begin{proof} By Corollary \ref{C: acvf axioms}, $(K,v)$ is a Hausdorff geometric structure with the open mapping property and ramification purity. Moreover, by Theorems~\ref{T: acvf definable slopes} and~\ref{T: ACVF TIMI}, $(K,v)$ has definable slopes induced by $\mathcal{GEF}$ and satisfying TIMI. Now apply Theorem \ref{T: composite main thm}. The result is that $\dim(M)=1$ and $\mathcal M$ interprets a strongly minimal group satisfying all of the conclusions of Proposition \ref{P: group from codes}. To recall, this gives us the following (where $n_0$ is the smallest $n$ such that there is a non-algebraic coherent $n$-slope):

\begin{itemize}
    \item A parameter set $B$,
    \item An $\mathcal M(B)$-interpretable group $(G,\cdot)$ which is strongly minimal as an interpretable set in $\mathcal M$,
    \item A generic element $y\in K$ over $\emptyset$,
    \item For each $i,j\in\{1,2,3\}$ with $i<j$, an element $g_{ij}\in G$, and
    \item For each $i,j\in\{1,2,3\}$ with $i<j$, a morphism $f_{ij}:y\rightarrow y$ in $\mathcal{IA}_{n_0}$,
\end{itemize}
such that:
\begin{enumerate}
    \item $y$ is $\mathcal K(B)$-definable (that is, $y\in\operatorname{dcl}_{\mathcal K}(B)$).
    \item The $(n_0-1)$-th truncation of each $f_{ij}$ is the identity at $y$.
    \item $g_{12}$ and $g_{23}$ are independent generics in $G$ over $B$.
    \item $g_{13}=g_{23}\cdot g_{12}$ and $f_{13}=f_{23}\circ f_{12}$.
    \item For each $i,j\in\{1,2,3\}$ with $i<j$, $g_{ij}$ and $f_{ij}$ are interalgebraic over $B$.
\end{enumerate}

Our goal is to show that $(G,\cdot)$ is almost embeddable into $K$ and locally equivalent to either the additive or multiplicative group of $K$. The first clause is easy: By strong minimality, $G$ is in finite correspondence with $M$; but $M$ admits a finite-to-one map to $K$ (by using that $\dim(M)=1$ and taking piecewise projections) -- so composing gives a finite correspondence between $G$ and a subset of $K$. Thus $G$ is almost embeddable into $K$.

Now we move toward local equivalence with the additive or multiplicative group. Recalling that our definable slopes in ACVF are induced by the groupoid $\mathcal{GEF}$ from the previous section, we can express each $f_{ij}$ as an $n_0$-truncated morphism $y\rightarrow y$ in $\mathcal{GEF}$ with trivial $(n_0-1)$-th truncation. Recall that such truncations are notated $(y,y,a,c_1,...,c_{n_0})$, where $a$ is a Frobenius power and $c_1,...,c_{n_0}$ are the coefficients of an $n_0$-truncated polynomial sending 0 to 0. Since our $(n_0-1)$-th truncations are trivial, all but the last entry in such an expression are determined. Precisely, we have the following options for $f_{ij}$:

\begin{itemize}
    \item Case 1: $n_0=1$. Then we have $$f_{ij}=(y,y,\textrm{id},h_{ij})$$ for some $h_{ij}\in K$.
    \item Case 2: $n_0>1$. Then we have $$f_{ij}=(y,y,\textrm{id},1,0,0,...,0,h_{ij})$$ for some $h_{ij}\in K$.
\end{itemize}
The first case corresponds to scaling maps $x\mapsto h_{ij}x$, and the second corresponds to maps of the form $x+h_{ij}x^{n_0}$. In any case, we can fix $h_{ij}\in K$ as above for each $i<j$. Since $y\in\operatorname{dcl}_{\mathcal K}(B)$, each $h_{ij}$ is interdefinable with $f_{ij}$ over $B$, and so is also interalgebraic with $g_{ij}$ over $B$. One thus easily concludes (1) and (3) from Definition \ref{D: locally equivalent}, interpreting $H$ as either $(K,+)$ or $(K^\times,\times)$ (and replacing $A$ from Definition \ref{D: locally equivalent} with our $B$). So to complete the proof, we must show that either $h_{13}=h_{12}+h_{23}$ or $h_{13}=h_{12}h_{23}$. But $f_{13}=f_{12}\circ f_{23}$, and composition of truncations in $\mathcal{GEF}$ corresponds to composition of truncated polynomials -- so we just need to check the composition operation on the truncated polynomials in the two cases above:

\begin{itemize}
    \item Case 1: $n_0=1$. Then $h_{13}x$ is the composition of $h_{12}x$ and $h_{23}x$, which is $h_{12}h_{23}x$, and thus $h_{13}=h_{12}h_{23}$.
    \item Case 2: $n_0>1$. Then $x+h_{13}x^{n_0}$ is the $n_0$-truncated composition of $x+h_{12}x^{n_0}$ and $x+h_{23}x^{n_0}$. One easily checks this truncated composition to be $x+(h_{12}+h_{23})^{n_0}$, and thus $h_{13}=h_{12}+h_{23}$.
\end{itemize}
\end{proof}

Now combining our work with Fact \ref{F: HOP group case}, we obtain the full restricted trichotomy for definable relics of ACVF:

\begin{theorem}\label{T: combined acvf thm} Let $\mathcal K=(K,v)$ be an algebraically closed valued field. Let $\mathcal M$ be a non-locally modular strongly minimal definable $\mathcal K$-relic. Then $\mathcal M$ interprets a field $F$, $(K,v)$-definably isomorphic to $K$. Moreover, the structure induced on $F$ from $\mathcal M$ is a pure algebraically closed field.
\end{theorem}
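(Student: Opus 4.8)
The statement is essentially a formal combination of Theorem~\ref{T: main acvf} and Fact~\ref{F: HOP group case}, together with a routine reduction to the saturated case. The plan is: first reduce to the situation where $\mathcal K=(K,v)$ is $\aleph_1$-saturated; then feed the strongly minimal group produced by Theorem~\ref{T: main acvf} into Fact~\ref{F: HOP group case}; and finally descend the conclusion back to the original (possibly non-saturated) $\mathcal K$. Since $\mathcal M$ is a \emph{definable} $\mathcal K$-relic, everything $\mathcal M$-interpretable is $\mathcal K$-interpretable, so throughout I can (and will) regard the interpreted objects as living inside $\mathcal K^{\mathrm{eq}}$, which is what makes the elementary-extension arguments go through.

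So suppose first $\mathcal K$ is $\aleph_1$-saturated. By Theorem~\ref{T: main acvf}, $\mathcal M$ interprets a strongly minimal group $G$ which is almost embeddable into $K$ and locally equivalent to $(K,+)$ or $(K^\times,\times)$; in particular $\oplus_G$ is $\mathcal M$-definable, so the $\mathcal M$-induced structure $\mathcal G=(G,\oplus_G,\dots)$ on $G$ contains the group. I claim $\mathcal G$ is a non-locally modular strongly minimal reduct of the full $(K,v)$-induced structure on $G$. It is a reduct of the full induced structure because $\mathcal M$ is a $\mathcal K$-relic; it is strongly minimal because $G$ is strongly minimal as an interpretable set in $\mathcal M$; and it is non-locally modular because $\mathcal M$ is strongly minimal, hence unidimensional, so the interpretable set $G$ is non-orthogonal to the home sort $M$ (indeed, as used in the proof of Theorem~\ref{T: main acvf}, $G$ is in finite correspondence with $M$), and non-orthogonal strongly minimal sets have the same (localized) pregeometry, so the $\operatorname{acl}_{\mathcal G}$-geometry on $G$ is non-locally modular since the $\operatorname{acl}_{\mathcal M}$-geometry on $M$ is. Now apply Fact~\ref{F: HOP group case} to $\mathcal G$: it interprets a field $F$, $(K,v)$-definably isomorphic to $K$, whose $\mathcal G$-induced structure is a pure algebraically closed field. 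Since $\mathcal G$ is interpretable in $\mathcal M$, so is $F$, and a definable subset of $F^n$ is $\mathcal M$-definable iff it is $\mathcal G$-definable; hence the $\mathcal M$-induced structure on $F$ equals the $\mathcal G$-induced one, namely a pure ACF. This proves the theorem when $\mathcal K$ is $\aleph_1$-saturated.

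For the general case, let $\mathcal K^\ast\succeq\mathcal K$ be $\aleph_1$-saturated. The defining formulas of $\mathcal M$ cut out, in $\mathcal K^\ast$, a structure $\mathcal M^\ast\succeq\mathcal M$ which is again a definable $\mathcal K^\ast$-relic; strong minimality persists (it is a property of all elementary extensions), and non-local modularity persists since the rank~$2$ almost faithful family of plane curves witnessing it is defined by the same formulas and these properties are definable in families. Applying the saturated case gives a field $F^\ast$ interpreted in $\mathcal M^\ast$, $(K^\ast,v^\ast)$-definably isomorphic to $K^\ast$, with $\mathcal M^\ast$-induced structure a pure ACF. The interpreting formulas for $F^\ast$ are $\mathcal M$-formulas and so define a candidate $F$ in $\mathcal M$; ``$F$ is a field'' is a finite set of first-order statements true in $\mathcal M^\ast$, hence true in $\mathcal M$; ``there are parameters making $F$ $(K,v)$-definably isomorphic to $(K,+,\cdot)$'' is first-order in $\mathcal K^\ast$ (unpacking the $\mathcal M$-interpretation into $\mathcal K$), true there, hence true in $\mathcal K\preceq\mathcal K^\ast$; and the assertion that each $\mathcal M$-definable family of subsets of $F^n$ coincides with a field-definable family of uniformly bounded complexity is, for each $\mathcal M$-formula, a first-order statement in $\mathcal M^\ast$ which holds there (using model completeness/quantifier elimination for ACF to fix the bound) and descends to $\mathcal M$, yielding that the $\mathcal M$-induced structure on $F$ is a pure ACF. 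I expect this last bookkeeping --- transferring ``the induced structure is a pure algebraically closed field'' down an elementary extension with the requisite uniformity --- to be the only place demanding care; the rest is a direct invocation of Theorem~\ref{T: main acvf} and Fact~\ref{F: HOP group case}.
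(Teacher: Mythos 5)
Your proposal is correct and follows essentially the same route as the paper: pass to an $\aleph_1$-saturated elementary extension, apply Theorem \ref{T: main acvf} to obtain the group, feed the $\mathcal M$-induced structure on it into Fact \ref{F: HOP group case}, and pull the conclusion back down by first-orderness of the interpreting data. Your additional verifications (non-local modularity of the induced structure on $G$ via non-orthogonality/finite correspondence with $M$, and the uniform-bound transfer of the ``pure ACF'' clause along the elementary extension) are details the paper's sketch leaves implicit, and you handle them correctly.
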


\begin{proof}
   First, we check that we can assume $(K,v)$ is $\aleph_1$-saturated. This is a straightforward reduction, and similar arguments are given in \cite[Theorem 9.6]{CasACF0} and \cite[Lemma 4.8]{HaSu}. We give a sketch. Let $(K_1,v_1)$ be an $\aleph_1$-saturated elementary extension of $(K,v)$. Then the same formulas defining $\mathcal M$ in $(K,v)$ also define a definable $(K_1,v_1)$-relic, say $\mathcal M_1$. It is easy to see that $\mathcal M_1$ is an elementary extension of $\mathcal M$, so is strongly minimal and not locally modular. 
   
   Now suppose $\mathcal M_1$ interprets a field definably isomorphic to $K_1$, say $F$. Then there are formulas (with parameters from $K_1$) defining the set $F$, the field structure on $F$, the interpretation of $F$ in $\mathcal M_1$, and a field isomorphism $F\leftrightarrow K_1$. But all of these properties are first-order, so they can be pulled down to $(K,v)$ and $\mathcal M$. Thus, using different instances of the same formulas, one can find a definably isomorphic copy of $K$ in $\mathcal M$.
   
   So we assume $(K,v)$ is $\aleph_1$-saturated. By Theorem \ref{T: main acvf}, $\mathcal M$ interprets a strongly minimal group $G$ locally equivalent to $(K,+)$ or $(K^\times,\times)$. Now apply Fact \ref{F: HOP group case} to the structure induced on $G$ from $\mathcal M$.
\end{proof}

Finally, we deduce a full solution of the restricted trichotomy for pure algebraically closed fields:

\begin{corollary}[Restricted Trichotomy Conjecture]\label{C: RTC} Let $K$ be an algebraically closed field, and let $\mathcal M$ be a strongly minimal $K$-relic. If $\mathcal M$ is not locally modular, then $\mathcal M$ interprets the field $K$.
\end{corollary}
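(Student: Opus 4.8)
The plan is to deduce this directly from Theorem~\ref{T: combined acvf thm} (equivalently, Corollary~\ref{C: ACF case}), using only two elementary inputs: elimination of imaginaries in ACF, and the observation that, after a harmless base change, a definable ACF-relic is a definable ACVF-relic.

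\textbf{Step 1: reduce to the definable case.} Since $K$, as a pure algebraically closed field, eliminates imaginaries, every interpretable set in $K$ is in definable bijection with a definable subset of some $K^n$. Applying this to the universe $M$ of $\mathcal M$ and to each basic relation of $\mathcal M$, one obtains a definable $K$-relic $\mathcal M'$ (with universe contained in some $K^n$) together with an isomorphism of structures $\mathcal M\cong\mathcal M'$. As the conclusion ``$\mathcal M$ interprets a field isomorphic to $K$'' is invariant under isomorphism of structures, we may assume from now on that $\mathcal M$ is itself a definable $K$-relic, so that every $\mathcal M$-definable set is $K$-definable.

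\textbf{Step 2: put a nontrivial valuation on the field.} Every algebraically closed field of infinite transcendence degree over its prime field carries a nontrivial valuation, so it suffices to pass to an elementary extension $K^\ast\succeq K$ of infinite transcendence degree (such exist by model completeness of $\mathrm{ACF}_p$) and fix a nontrivial valuation $v$ on $K^\ast$; then $(K^\ast,v)\models\mathrm{ACVF}$. The same ACF-formulas defining $\mathcal M$ in $K$ now define a structure $\mathcal M^\ast$ on a subset of $(K^\ast)^n$, which is a \emph{definable} $(K^\ast,v)$-relic precisely because every ACF-definable set is ACVF-definable. Since $K\preceq K^\ast$ as fields, $\mathcal M\preceq\mathcal M^\ast$ in the relic language, so $\mathcal M^\ast$ is strongly minimal; and since non-local modularity is witnessed by a single definable rank~$2$ almost-faithful family of plane curves, a first-order property, $\mathcal M^\ast$ is again non-locally modular.

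\textbf{Step 3: apply the ACVF theorem and descend.} By Theorem~\ref{T: combined acvf thm} applied to $(K^\ast,v)$ and $\mathcal M^\ast$, the relic $\mathcal M^\ast$ interprets a field $F^\ast$ that is $(K^\ast,v)$-definably isomorphic to $K^\ast$ (indeed with pure algebraically closed induced structure). It remains to transfer this back to $\mathcal M$, and this is the only point requiring care, although it is routine and identical to the descent argument in the proof of Theorem~\ref{T: combined acvf thm}: the existence of an $\mathcal M^\ast$-interpretable field together with a field isomorphism onto the home sort is witnessed by finitely many $\mathcal L_{\mathcal M}$-formulas over a parameter tuple $b^\ast$ from $\mathcal M^\ast$, and the conjunction of the relevant (first-order) requirements belongs to $\operatorname{tp}_{\mathcal M^\ast}(b^\ast)$; since $\mathcal M\preceq\mathcal M^\ast$ this type is realized by some $b$ in $\mathcal M$, and the same formulas with parameter $b$ interpret in $\mathcal M$ a field $F$ together with a field isomorphism $F\to K$. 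Hence $\mathcal M$ interprets the field $K$. I do not expect a genuine obstacle here: all the real work is contained in Theorem~\ref{T: combined acvf thm}, and the only mild subtleties are the passage to a valued elementary extension (needed to cover, e.g., $K=\overline{\mathbb F_p}$) and the standard pull-down of the interpretation.
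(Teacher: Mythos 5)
Your route is the same as the paper's: eliminate imaginaries to reduce to a definable relic, pass to an elementary extension of the field that carries a nontrivial valuation so that Theorem~\ref{T: combined acvf thm} applies, and pull the conclusion back down by a first-order transfer. (The paper also records an alternative ending that avoids \cite{HaOnPi}, namely using Theorem~\ref{T: composite main thm} to get $\dim(M)=1$ and then quoting \cite{HaSu}; that is optional.) Steps 1 and 2 are fine as written.

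The one step that does not work as stated is the descent in Step 3. The field isomorphism $F^\ast\to K^\ast$ produced by Theorem~\ref{T: combined acvf thm} is definable in the ambient valued field, not in $\mathcal M^\ast$: its graph is not a subset of a power of $M^\ast$ (nor of $\mathcal M^{\ast\,\mathrm{eq}}$), so it cannot be ``witnessed by finitely many $\mathcal L_{\mathcal M}$-formulas,'' and the transfer cannot be run inside $\operatorname{tp}_{\mathcal M^\ast}(b^\ast)$ via $\mathcal M\preceq\mathcal M^\ast$. (Also note that realizing a genuine type in an elementary submodel is not automatic; what elementarity transfers is a single existential sentence, which is what you implicitly intend.) If you simply drop the isomorphism from the pulled-down data because it is not relic-definable, the finitely many remaining conditions no longer guarantee that the field interpreted downstairs is infinite, so the conclusion is lost. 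The repair is to run the descent in the ambient \emph{field} language, as the paper does: every $\mathcal M^\ast$-definable set is defined by a fixed ACF-formula, so the statement ``there are parameters for which these formulas define an interpretation of a field in the relic, together with a definable field isomorphism onto the home sort'' is a single ACF-sentence, which descends along $K\preceq K^\ast$ and yields in $\mathcal M$ an interpreted field definably isomorphic to $K$. For this you must first replace the $(K^\ast,v)$-definable isomorphism by an ACF-definable one (or otherwise certify infinitude in an ACF-expressible way); this is where one quotes the classical fact that an infinite field interpretable in an algebraically closed field is definably isomorphic to it, applied to $F^\ast$ viewed as interpretable in the pure field $K^\ast$. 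With that adjustment your argument coincides with the paper's proof.
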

\begin{proof} By elimination of imaginaries, we may assume $\mathcal M$ is a definable $K$-relic (that is, its universe is a subset of some $K^n$). By an identical argument to Theorem \ref{T: combined acvf thm} above, we may assume $K$ is $\aleph_1$-saturated (really, $\aleph_0$ is enough here), and thus there is a non-trivial valuation on $K$. Let $v$ be such a valuation. Then $\mathcal M$ is still interpreted in $(K,v)$. One can now finish in two ways: either apply Theorem \ref{T: combined acvf thm} directly (thus using \cite{HaOnPi}); or apply Theorem \ref{T: composite main thm} to show $\dim(M)=1$, and then conclude with the main result of \cite{HaSu}.
\end{proof}

\section{Imaginaries in ACVF}\label{S: imaginaries}

In the final section, we discuss strongly minimal structures \textit{interpreted}, rather than \textit{defined}, in ACVF. That is, we allow the universe of the structure to be in an imaginary sort. Our main result is a reduction to the real sorts in residue characteristic zero, which works in residue characteristic $p$ assuming a conjectural condition about the distinguished sort $K/\mathcal O$ which we expect to be true.

We follow the strategy used in \cite{HaHaPeVF} for classifying interpretable fields in various valued fields by reducing them to four distinguished sorts (the valued field, the residue field, the value group, and the set of closed balls of radius zero). The guiding principle is that because of the `richness' and `uniformity' provided by families of plane curves in non-locally modular strongly minimal structures, such a structure should be (1) only able to `live' in one of these four sorts, and (2) not able to live in the two `non-field' sorts. We thus hope to reduce to structures genuinely coming from either the valued field or the residue field.

\subsection{The Setting}

Throughout this section, $\mathcal K=(K,v)$ is an $\aleph_1$-saturated model of ACVF. We let $\Gamma$ denote the value group, $\mathcal O$  the valuation ring, and $\textbf{k}$  the residue field. $\mathcal M=(M,...)$ is a non-locally modular strongly minimal $\mathcal K$-relic, whose universe $M$ need not be definable. Absorbing parameters into $(K,v)$ and $\mathcal M$, we assume that $\acl_{\mathcal M}(\emptyset)$ is infinite, and that every $\emptyset$-definable set in $\mathcal M$ is $\emptyset$-definable in $(K,v)$.

\begin{convention}
    Recall (see Definition \ref{D: finite correspondence}) that a \textit{finite correspondence} between two interpretable sets $X$ and $Y$ is an interpretable set $C\subset X\times Y$ such that the projections of $C$ to both factors are finite-to-one and surjective.
\end{convention}

\subsection{Ranks}
It was shown in \cite{DoGoLi} that ACVF is dp-minimal, and it thus follows that any ACVF relic has finite dp-rank (if the universe of the relic is $X/E$ for some definable set $X\sub K^n$ and definable equivalence relation $E$ then $\dprk(X^n/E)<n$). See \cite[\S 2.1]{HaHaPeVF}  for more relevant details on dp-rank. We remind the readers that dp-rank (as opposed to dimension) is sub-additive, but need not be additive:
\[
    \dprk(ab/A)\le \dprk(a/A)+\dprk(b/Aa).
\]

We use $\dprk$ for dp-rank in $(K,v)$, and $\mr$ for Morley rank in $\mathcal M$. Since $M$ is infinite, $\dprk(M)\geq 1$. 

We will repeatedly use the following:

\begin{lemma}\label{L: rank relationship} Let $X\subset M$ be infinite and interpretable over $A$. Let $a=(a_1,...,a_n)\in X^n$ with $\dprk(a/A)=n\cdot\dprk(X)$. Then $a$ is $\mathcal M$-generic in $M^n$ over $A$.
\end{lemma}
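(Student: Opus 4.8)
The plan is to deduce the statement from subadditivity of dp-rank, together with the observation that an element algebraic over a parameter set in $\mathcal M$ is algebraic over that set in $\mathcal K$, hence of dp-rank $0$ over it.

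First I would record two inequalities, valid for each $i$. Since $a_i\in X$ and $X$ is interpretable over $A\subseteq Aa_1\dots a_{i-1}$, monotonicity of dp-rank (in the base, and the fact that a type concentrating on $X$ has dp-rank at most $\dprk(X)$) gives $\dprk(a_i/Aa_1\dots a_{i-1})\le\dprk(X)$. Since $X$ is infinite, $\dprk(X)\ge 1$. Now, iterating subadditivity of dp-rank and using the hypothesis,
\[
n\cdot\dprk(X)=\dprk(a/A)\le\sum_{i=1}^n\dprk(a_i/Aa_1\dots a_{i-1})\le n\cdot\dprk(X),
\]
so all of these quantities are equal; in particular $\dprk(a_i/Aa_1\dots a_{i-1})=\dprk(X)\ge 1$ for every $i$.

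Next I would show $a_i\notin\acl_{\mathcal M}(Aa_1\dots a_{i-1})$ for all $i$. If not, then $a_i$ lies in a finite $\mathcal M(Aa_1\dots a_{i-1})$-definable set; since every $\emptyset$-definable set of $\mathcal M$ is $\emptyset$-definable in $\mathcal K$ (our standing assumption, after absorbing parameters), this finite set is also $\mathcal K(Aa_1\dots a_{i-1})$-definable, whence $\dprk(a_i/Aa_1\dots a_{i-1})=0$, contradicting the previous step. Finally, by additivity of Morley rank in the strongly minimal structure $\mathcal M$ we have $\mr(a/A)=\sum_{i=1}^n\mr(a_i/Aa_1\dots a_{i-1})$, and each summand equals $1$ since the corresponding $a_i$ is not in $\acl_{\mathcal M}(Aa_1\dots a_{i-1})$; thus $\mr(a/A)=n=\mr(M^n)$, i.e. $a$ is $\mathcal M$-generic in $M^n$ over $A$.

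I do not anticipate a real obstacle: the argument is a short rank computation, and the only point requiring care is the transfer of $\acl_{\mathcal M}$-algebraicity to $\mathcal K$-definability, which is exactly where the assumption on $\emptyset$-definable sets of $\mathcal M$ enters.
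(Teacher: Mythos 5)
Your proof is correct and is essentially the paper's argument: both rest on iterated subadditivity of dp-rank, the bound $\dprk(a_i/Aa_1\dots a_{i-1})\le\dprk(X)$ with $\dprk(X)\ge 1$, and the transfer of $\acl_{\mathcal M}$-algebraicity to dp-rank $0$ in $\mathcal K$. The only difference is presentational: the paper argues by contrapositive (an $\mathcal M$-algebraic coordinate forces $\dprk(a/A)<n\cdot\dprk(X)$), while you run the same rank computation directly to show each coordinate is non-algebraic and then sum Morley ranks.
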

\begin{proof} If not, then some $a_m\in\acl_{\mathcal M}(Aa_1...a_{m-1})$. But then for that $m$ we have $$\dprk(a_m/Aa_1...a_{m-1})=0,$$ and so sub-additivity implies that $$\dprk(a/A)\leq\sum_{j=1}^n\dprk(a_j/Aa_1...a_{j-1})\leq 0+(n-1)\cdot\dprk(X)<n\cdot\dprk(X),$$ a contradiction.
\end{proof}

It follows from the main result of \cite{JohCminimalexist} that finiteness is definable in ACVF$^{eq}$. In such situations, dp-rank is also definable in definable families, but this is not needed in the sequel. 

\subsection{Local Interalgebraicity}

In \cite{HaHaPeVF}, Halevi, Hasson, and Peterzil classify the interpretable fields in various valued fields of interest. Their main tool is the notion of an interpretable set $Y$ being \textit{locally almost strongly internal} to an interpretable set $D$: this means that some infinite interpretable subset of $Y$ admits an interpretable finite-to-one map to $D$. In particular, the authors showed that every interpretable set is locally almost strongly internal to one of four distinguished sets (the valued field, the value group, the residue field, and the set of closed balls of radius zero). They then classify the fields locally almost strongly internal to each of these four sorts.

We will use a very similar strategy. However, for our purposes, it is more convenient to work with finite correspondences instead of finite-to-one maps (roughly, because without assuming a field, one cannot always generate enough definable functions). Thus, we will introduce a nearly identical, but slightly weaker, notion. Note that the reduction to the four distinguished sorts will remain valid.

\begin{definition} Given interpretable sets $Y$ and $D$, we define the $D$-\textit{critical number} of $Y$, $\operatorname{Crit}_{D}(Y)$, to be the largest dp rank of an interpretable subset $X\subset Y$ which is in interpretable finite correspondence with a subset of some $D^n$. We moreover call any such $X\subset Y$ $D$-\textit{critical} in $Y$. Finally, we say that $Y$ is \textit{locally interalgebraic with} $D$ if $\operatorname{Crit}_D(Y)\geq 1$.
\end{definition}

By compactness, we have:

\begin{fact}\label{critical in tuples} Let $Y$ and $D$ be interpretable, and let $X$ be $D$-critical in $Y$. Let $A$ be a parameter set, let $y\in Y$, and let $d_1,...,d_n\in D$. If $y$ is interalgebraic with $d_1,...,d_n$ over $A$, then $\dprk(y/A)\leq\dprk(X)$.
\end{fact}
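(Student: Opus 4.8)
The plan is a direct compactness-style argument: from the hypothesis we manufacture a single $A$-interpretable subset of $Y$ that contains $y$ and is in finite correspondence with a subset of a power of $D$, so that the definition of $\operatorname{Crit}_D(Y)$ together with the assumption that $X$ is $D$-critical in $Y$ bounds its dp-rank by $\dprk(X)$; then monotonicity of dp-rank finishes the job. So assume $y$ is interalgebraic with $\bar d=(d_1,\dots,d_n)\in D^n$ over $A$.

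First I would extract uniform witnesses for the two halves of interalgebraicity. Since $y\in\acl(A\bar d)$, there are an $A$-formula $\psi(u,\bar w)$, with $u$ ranging over $Y$ and $\bar w$ over $D^n$, and an integer $N_1$ with $\models\psi(y,\bar d)$ and $|\psi(\cdot,\bar d)|\le N_1$; replacing $\psi(u,\bar w)$ by $\psi(u,\bar w)\wedge\neg\exists^{\ge N_1+1}u'\,\psi(u',\bar w)$ we may assume $|\psi(\cdot,\bar w)|\le N_1$ for every $\bar w$ (note that the quantifier $\exists^{\ge N_1+1}$ is ordinary first-order, so definability of $\exists^\infty$ is not needed here — the bound comes from the specific tuple). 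Symmetrically, since $\bar d\in\acl(Ay)$, there are an $A$-formula $\chi(\bar w,u)$ and an integer $N_2$ with $\models\chi(\bar d,y)$ and $|\chi(\cdot,u)|\le N_2$ for every $u$. Now set $Z=\{(u,\bar w)\in Y\times D^n:\psi(u,\bar w)\wedge\chi(\bar w,u)\}$, an $A$-interpretable set containing $(y,\bar d)$. The uniform bounds make the projection $Z\to Y$ have all fibers of size $\le N_1$ and the projection $Z\to D^n$ have all fibers of size $\le N_2$; hence, writing $X'=\pi_Y(Z)\subseteq Y$ and $E=\pi_{D^n}(Z)\subseteq D^n$, the set $Z$ is a finite correspondence between $X'$ and $E$. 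Thus $X'$ is an interpretable subset of $Y$ in finite correspondence with a subset of a power of $D$, so by the definition of $\operatorname{Crit}_D(Y)$ and the fact that $X$ is $D$-critical in $Y$ we get $\dprk(X')\le\operatorname{Crit}_D(Y)=\dprk(X)$. Finally, since $X'$ is $A$-interpretable and $y\in X'$, monotonicity of dp-rank gives $\dprk(y/A)\le\dprk(X')\le\dprk(X)$, as required.

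The argument is essentially routine; the only point needing care is to keep both projections of $Z$ simultaneously finite-to-one and surjective, so that $Z$ genuinely qualifies as a finite correspondence between $X'$ and $E$ in the sense of Definition \ref{D: finite correspondence}. This is precisely why one uses \emph{both} halves of interalgebraicity (not only $y\in\acl(A\bar d)$) and intersects the two witnessing conditions, rather than trying to work with a single finite-to-one map from $X'$ into a power of $D$ — without a field at hand such a map need not be available, which is the whole reason the notion of $D$-critical number is phrased via correspondences.
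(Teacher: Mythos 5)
Your proof is correct and is exactly the compactness argument the paper has in mind when it states this as a Fact "by compactness": interalgebraicity over $A$ yields, after the standard uniform-bound trick, an $A$-interpretable finite correspondence $Z$ between a subset $X'\subseteq Y$ containing $y$ and a subset of $D^n$, so $\dprk(y/A)\le\dprk(X')\le\operatorname{Crit}_D(Y)=\dprk(X)$. Your remark that both halves of interalgebraicity are needed to keep both projections of $Z$ finite-to-one is exactly the right point of care.
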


The following is also clear:
\begin{fact}\label{F: critical number finite correspondence} Suppose $Y$, $Z$, and $D$ are interpretable sets, and $Y$ is in finite correspondence with a subset of $Z$. Then $\operatorname{Crit}_D(Y)\leq\operatorname{Crit}_D(Z)$.
\end{fact}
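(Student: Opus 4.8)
\textbf{Proof plan for Fact \ref{F: critical number finite correspondence}.} The statement asserts that if $Y$ is in finite correspondence with a subset of $Z$, then for any interpretable $D$ we have $\operatorname{Crit}_D(Y)\leq\operatorname{Crit}_D(Z)$. The plan is to unwind the definition of $\operatorname{Crit}_D$ and use the fact that ``being in finite correspondence with a subset of some power of $D$'' is transitive, together with monotonicity of dp-rank under finite correspondences. First I would fix a witness: let $C\subset Y\times Z$ be an interpretable finite correspondence between $Y$ and a subset $Z_0\subset Z$ (so both projections of $C$ are finite-to-one, the left projection onto $Y$ is surjective, and the right projection lands in $Z_0$). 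Let $X\subset Y$ be a $D$-critical subset realizing $\operatorname{Crit}_D(Y)$, so there is an interpretable finite correspondence $R\subset X\times D^n$ for some $n$, with both projections finite-to-one and the projection to $X$ surjective.

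Next I would push $X$ across $C$. Set $X' = \{z\in Z : (y,z)\in C \text{ for some } y\in X\}$, the image of $C\cap(X\times Z)$ under the projection to $Z$; this is interpretable, and $X'\subset Z_0\subset Z$. The restricted correspondence $C' = C\cap(X\times X')$ is a finite correspondence between $X$ and $X'$ (finiteness of fibers is inherited from $C$; surjectivity onto both $X$ and $X'$ holds by construction). Since $X$ is in finite correspondence with the subset $X'$ of $Z$, and $X$ is in finite correspondence with a subset of $D^n$ (via $R$), composing the two correspondences shows $X'$ is in finite correspondence with a subset of $D^n$ — explicitly, take the set of pairs $(z,d)\in X'\times D^n$ such that there exists $y\in X$ with $(y,z)\in C'$ and $(y,d)\in R$, which again has finite-to-one projections to each factor and surjects onto $X'$. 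Hence $X'$ is $D$-critical in $Z$ in the sense of the definition. Finally, dp-rank is preserved under finite correspondences (this is a standard fact about dp-rank, analogous to the additivity-type facts recorded in Section 2 for dimension and used implicitly throughout Section 12 — and indeed follows from sub-additivity applied in both directions along the finite-to-one projections of $C'$), so $\dprk(X') = \dprk(X) = \operatorname{Crit}_D(Y)$. Therefore $\operatorname{Crit}_D(Z)\geq\dprk(X') = \operatorname{Crit}_D(Y)$, as desired.

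The only point requiring a little care — and the closest thing to an obstacle — is verifying that the composite of two finite correspondences is again a finite correspondence (surjectivity and finiteness of fibers), and that dp-rank is genuinely invariant under finite correspondences rather than merely weakly monotone; but both are routine, the former by an elementary fiber-counting argument and the latter by applying sub-additivity of dp-rank along each of the two finite-to-one projections of $C'$ to get inequalities in both directions. No genericity or saturation beyond what is already in force is needed, and the argument is uniform in $D$.
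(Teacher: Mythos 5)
Your proof is correct: the paper states this Fact without proof (prefacing it with ``The following is also clear''), and your argument — pushing a $D$-critical subset of $Y$ across the correspondence, composing the two finite correspondences, and noting that dp-rank is unchanged (in fact only $\dprk(X')\geq\dprk(X)$ is needed) because both projections of a finite correspondence are finite-to-one and surjective — is exactly the routine verification intended. Nothing is missing.
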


The following  key observation is \cite[Proposition 5.5]{HaHaPeVF}, and forms the basis for our approach: 

\begin{fact} Every infinite interpretable set is locally interalgebraic with at least one of $K$, $\textbf{k}$, $\Gamma$, and $K/\mathcal O$.
\end{fact}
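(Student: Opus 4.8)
This Fact is essentially \cite[Proposition 5.5]{HaHaPeVF}, reformulated with finite correspondences in place of finite-to-one maps. The strategy is therefore to quote that proposition and verify that it yields the (formally weaker) statement we want. Recall that \cite{HaHaPeVF} proves that every infinite interpretable set $Y$ in a model of ACVF is \emph{locally almost strongly internal} to one of $K$, $\textbf{k}$, $\Gamma$, $K/\mathcal O$: that is, there is an infinite interpretable $X\subseteq Y$ and an interpretable finite-to-one map $f\colon X\to D^n$ for one of these four sorts $D$ and some $n$.

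\textbf{First step: local almost strong internality implies local interalgebraicity.} Given such $X$ and $f\colon X\to D^n$, consider the graph $\Gamma_f=\{(x,f(x)):x\in X\}\subseteq X\times D^n$. Its projection to $X$ is a bijection, hence finite-to-one and surjective; its projection to $f(X)\subseteq D^n$ is finite-to-one (since $f$ is) and surjective. Thus $\Gamma_f$ is an interpretable finite correspondence between $X$ and a subset of $D^n$, so $X$ is $D$-critical in $Y$ in the sense of the preceding definition. Since $X$ is infinite and ACVF is dp-minimal, $\dprk(X)\ge 1$, whence $\operatorname{Crit}_D(Y)\ge\dprk(X)\ge 1$; that is, $Y$ is locally interalgebraic with $D$. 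Combining with \cite[Proposition 5.5]{HaHaPeVF} gives the Fact.

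\textbf{Routine points to check.} The only things needing a word are that the conclusion of \cite[Proposition 5.5]{HaHaPeVF} is stated for (or transfers to) our fixed $\aleph_1$-saturated $\mathcal K$ — which is immediate, as both hypothesis and conclusion are about ACVF and about the existence of an infinite interpretable subset with a finite-to-one map to a distinguished sort, and these are preserved along elementary maps — and that the four distinguished sorts there ($\mathrm{VF}$, $\mathrm{RF}$, $\Gamma$, and the set of closed balls of radius $0$) are the same as $K$, $\textbf{k}$, $\Gamma$, $K/\mathcal O$ up to the standard identifications.

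\textbf{Main obstacle.} There is essentially none: the substantive content — the reduction to four sorts — is entirely in \cite{HaHaPeVF}, and the passage from finite-to-one maps to finite correspondences is a triviality. The genuine work of this section lies not in this Fact but in the subsequent analysis of which strongly minimal relics can be locally interalgebraic with $\Gamma$ or $K/\mathcal O$ (to be ruled out) versus $K$ or $\textbf{k}$.
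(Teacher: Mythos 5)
Your proposal is correct and matches the paper exactly: the paper states this as a Fact cited directly from \cite[Proposition 5.5]{HaHaPeVF}, with the only implicit content being the (trivial) observation that local almost strong internality, via the graph of the finite-to-one map, yields a finite correspondence and hence local interalgebraicity. Your verification of that passage, including $\dprk(X)\ge 1$ for infinite $X$, is fine and requires no further comment.
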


We will need the following stronger result (see \cite[Lemma 2.8]{HaOnPi}):

\begin{fact}\label{F: K pure} Let $Y$ be an infinite interpretable set which is not locally interalgebraic with any of $\textbf{k}$, $\Gamma$, or $K/\mathcal O$. Then $Y$ definably embeds into $K^n$ for some $n$.
\end{fact}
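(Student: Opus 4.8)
This statement is quoted as \cite[Lemma 2.8]{HaOnPi}; I outline how I would prove it. By \cite[Proposition 5.5]{HaHaPeVF} (the Fact stated just above), every infinite interpretable set is locally interalgebraic with at least one of $K$, $\textbf k$, $\Gamma$, $K/\mathcal O$; since $Y$ is assumed not locally interalgebraic with the last three, it must be locally interalgebraic with $K$. The task is thus to upgrade \emph{local} interalgebraicity with $K$ — which only provides an infinite interpretable subset of $Y$ in finite correspondence with a subset of some $K^n$ — to a definable embedding of \emph{all} of $Y$ into a power of $K$. The plan is to induct on $\dprk(Y)$ (finite, and achieved, since finiteness and dp-rank are definable in $\mathrm{ACVF}^{eq}$ by \cite{JohCminimalexist}). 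In the inductive step it suffices to produce an interpretable subset $X\subseteq Y$ which definably embeds into some $K^m$ and contains every point of $Y$ of maximal dp-rank, so that $\dprk(Y\setminus X)<\dprk(Y)$. Then $Y\setminus X$, being an interpretable subset of $Y$, is still not locally interalgebraic with $\textbf k$, $\Gamma$, or $K/\mathcal O$ by Fact \ref{F: critical number finite correspondence}, so by induction it embeds into some $K^{m'}$; one then combines the two embeddings into a single embedding $Y\hookrightarrow K^{\max(m,m')+1}$ by adjoining one coordinate taking two fixed distinct field values to record which piece a point lies in. The base case $\dprk(Y)=0$ is trivial, $Y$ being then finite.

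To produce $X$, I would use elimination of imaginaries down to the geometric sorts (\cite{HaHrMac1}) to present $Y$, up to $\emptyset$-definable bijection, as an interpretable subset of a finite product $K^{\ell}\times\prod_i S_{n_i}\times\prod_j T_{m_j}$, chosen with $\ell$ as small as possible. The second ingredient is the structure theory of the geometric sorts: each $S_n$ and each $T_n$ is a finite union of interpretable pieces, each carrying a definable surjection onto a ``$\Gamma$-stratum'' whose fibres are iterated bundles assembled, via the Iwasawa decomposition $\mathrm{GL}_n(K)=B(K)\,\mathrm{GL}_n(\mathcal O)$, out of copies of $K$, $\textbf k$, and $K/\mathcal O$. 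The consequence one extracts is: every infinite interpretable subset of a single geometric sort is locally interalgebraic with one of $K,\textbf k,\Gamma,K/\mathcal O$, and if — among these four — it is locally interalgebraic only with $K$, then it already definably embeds into a power of $K$. Granting this, fix $y\in Y$ of maximal dp-rank and write it via its coordinates in the chosen product; I claim each non-$K$ coordinate $\sigma$ of $y$ lies in $\mathrm{acl}$ of the tuple $\bar k$ of $K$-coordinates of $y$. Otherwise the interpretable set $D$ of possible values of $\sigma$ over the remaining coordinates of $y$ is infinite, hence (being a subset of a geometric sort) locally interalgebraic with one of the four sorts; by Fact \ref{critical in tuples} and maximality of $\dprk(y)$ this forces $Y$ itself to be locally interalgebraic with that sort, hence — by hypothesis — with $K$, and the extracted principle lets us replace $\sigma$ by field coordinates, contradicting the minimality of $\ell$. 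Thus the interpretable locus $X\subseteq Y$ on which every non-$K$ coordinate is algebraic over the $K$-coordinates contains all points of maximal dp-rank; and since on $X$ the projection to $K^{\ell}$ is finite-to-one with each $x\in X$ interalgebraic with its image, a routine coding (via Galois orbits / symmetric functions, or equivalently elimination of imaginaries in $\mathrm{ACF}$) puts $X$ in definable bijection with an interpretable subset of some $K^{m}$.

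The serious part is the second paragraph: the explicit structure theory of the sorts $S_n$ and $T_n$ — their definable stratifications over $\Gamma$ and the Iwasawa-type description of the fibres — together with the resulting detection principle that ``genuinely using'' any of $\textbf k$, $\Gamma$, $K/\mathcal O$ is visible as local interalgebraicity. Here one must be careful with dp-rank, which in $\mathrm{ACVF}$ is only sub-additive: the argument needs that each geometric sort has finite dp-rank and that the relevant bundle maps have fibres of the expected dp-rank, so that a non-algebraic non-$K$ coordinate really does raise the dp-rank and hence produces a positive-dp-rank subset interalgebraic with a non-$K$ sort. The remaining steps — elimination of imaginaries to the geometric sorts and within $\mathrm{ACF}$, the dp-rank bookkeeping, and the gluing of the two embeddings — are routine.
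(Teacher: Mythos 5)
The paper does not prove this statement: it is quoted verbatim as \cite[Lemma 2.8]{HaOnPi}, so there is no internal proof to compare your outline against, and your reconstruction has to stand on its own. As it stands it does not. The component you yourself flag as ``the serious part'' -- the stratification of the geometric sorts $S_n$, $T_n$ over $\Gamma$ with fibres built from $K$, $\textbf{k}$, $K/\mathcal O$, and the resulting detection principle that any genuine use of a non-$K$ coordinate is visible as local interalgebraicity with $\textbf{k}$, $\Gamma$ or $K/\mathcal O$ -- is precisely the content of the lemma being proved, and it is asserted rather than established; so the proposal is a reduction of the statement to an unproved statement of essentially the same strength.

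Beyond that, the step you describe as routine is not. A finite-to-one projection $\pi\colon X\to K^{\ell}$ with each $x\in X$ algebraic over $\pi(x)$ does not yield a definable injection of $X$ into a power of $K$ by symmetric functions or ACF elimination of imaginaries: symmetric functions code finite \emph{sets of field tuples}, whereas here one must separate the (imaginary) points of $X$ inside a single fibre, which requires that elements of $S_n$/$T_n$ algebraic over field tuples be \emph{definable} over (coded by) field tuples. That is exactly the kind of statement about ACVF imaginaries that the geometric sorts exist to negotiate, and it is the gap between a finite correspondence with a subset of $K^n$ (``almost strongly internal'' in the sense of \cite{HaHaPeVF}) and an actual definable embedding that the cited sources treat with care; the present Fact asserts the stronger, injective conclusion, so this upgrade cannot be waved through. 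There are also smaller but real defects: the minimality-of-$\ell$ argument is inverted (replacing an imaginary coordinate by field coordinates \emph{increases} the number of $K$-coordinates, so it cannot contradict minimality of $\ell$; you would need to minimise the imaginary part instead); your dichotomy only gives that a non-$K$ coordinate of a maximal-rank point is algebraic over \emph{all remaining} coordinates, not over the $K$-coordinates alone, so circular dependencies among the imaginary coordinates are not excluded; and the locus ``every non-$K$ coordinate is algebraic over the $K$-coordinates'' is not a definable condition as written and needs a compactness/uniform-bound argument to be replaced by an interpretable $X$. These are fixable bookkeeping points, but together with the two substantive gaps above the proposal is an outline of a plausible strategy rather than a proof.
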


\subsection{Richness}

Our main goal is to `rule out' the two non-field sorts $\Gamma$ and $K/\mathcal O$, by showing in some sense, that they are not complex enough to admit a non-locally modular strongly minimal structure. Our tool for showing this is \textit{richness}. Roughly, we want to say that a sort $D$ is `rich' if it admits arbitrarily large families of infinite subsets of a fixed $D^n$, parametrized by tuples from $D$, satisfying an appropriate `non-redundancy' condition.

\begin{definition}
    Let $A$ be a parameter set, let $k$ be a non-negative integer, and $a$ and $b$ finite tuples. We call $(a,b,A)$ a $k$-\textit{rich configuration} if the following hold:
    \begin{enumerate}
        \item $a\notin\acl(Ab)$.
        \item Whenever $\overline b=\{b_1,b_2,...\}$ is an infinite sequence of distinct realizations of $\tp_{(K,v)}(b/Aa)$, then $a\in\acl(A\overline b)$.
        \item $\dprk(b/A)\geq k$.
    \end{enumerate}
\end{definition}

\begin{definition}\label{D: rich sort}
    An interpretable set $D$ is \textit{rich} if there is $n$ such that for arbitrarily large $k$, there are $m$, elements $a\in D^n$ and $b\in D^m$, and a parameter set $A$, such that $(a,b,A)$ is a $k$-rich configuration. 
\end{definition}

\begin{example} The motivating example of the above notions is given by a family of plane curves in our strongly minimal relic $\mathcal M$. Namely, let $\{C_t:t\in T\}$ be an almost faithful $\mathcal M(A)$-definable family of plane curves in $\mathcal M$, where $T\subset M^k$ has Morley rank $k$. Let $t\in T$ with $\dprk(t/A)=\dprk(T)$, and let $x\in C_t$ with $\dprk(x/At)=\dprk(C_t)$. Then one shows easily that $(x,t,A)$ is a $k$-rich configuration. In particular, since $\mathcal M$ is not locally modular (and thus $k$ can be arbitrarily large), it follows that the universe $M$ is rich (where in the notation of Definition \ref{D: rich sort}, we use $n=2$).
\end{example}

The notion of a $k$-rich configuration may seem a bit obscure. Its main benefit to us is that it is preserved under finite correspondences -- a useful feature given the definition of local interalgebraicity:

\begin{lemma}\label{L: rich preservation} Assume that $(a,b,A)$ is a $k$-rich configuration, and let $c$ and $d$ be finite tuples with $\acl(Aa)=\acl(Ac)$ and $\acl(Ab)=\acl(Ad)$. Then $(c,d,A)$ is a $k$-rich configuration.
\end{lemma}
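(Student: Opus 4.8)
\textbf{Proof plan for Lemma \ref{L: rich preservation}.}

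The plan is to verify each of the three conditions in the definition of a $k$-rich configuration for $(c,d,A)$ directly, using only the fact that interalgebraicity over $A$ is preserved and that algebraic closure is monotone, transitive, and idempotent. First I would record the elementary observations that $\acl(Aa)=\acl(Ac)$ and $\acl(Ab)=\acl(Ad)$ give, for any parameter set $E\supseteq A$: $a\in\acl(Ec)$ and $c\in\acl(Ea)$, and similarly for $b$ and $d$. In particular $\acl(Aab)=\acl(Acd)$, and more generally $\acl(A\overline{d})=\acl(A\overline{b})$ whenever $\overline{d}$ is obtained from $\overline{b}$ by replacing each $b_i$ by a corresponding $d_i$ interalgebraic with it over $A$ — though this last point needs a small amount of care about uniformity, which I address below.

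For condition (1): if $c\in\acl(Ad)$, then since $a\in\acl(Ac)$ and $\acl(Ad)=\acl(Ab)$, transitivity gives $a\in\acl(Ab)$, contradicting condition (1) for $(a,b,A)$. For condition (3): $\dprk(d/A)=\dprk(c'/A)$ is insensitive to interalgebraicity over a fixed base (dp-rank is preserved under interalgebraicity), so $\dprk(d/A)=\dprk(b/A)\geq k$. Condition (2) is the one requiring genuine thought: given an infinite sequence $\overline{d}=\{d_1,d_2,\dots\}$ of distinct realizations of $\tp_{(K,v)}(d/Ac)$, I want to produce a corresponding infinite sequence $\overline{b}=\{b_1,b_2,\dots\}$ of distinct realizations of $\tp_{(K,v)}(b/Aa)$ with $\acl(A\overline{b})\subseteq\acl(A\overline{d}c)=\acl(A\overline{d}a)$, and then invoke condition (2) for $(a,b,A)$ to conclude $a\in\acl(A\overline{b})\subseteq\acl(A\overline{d}a)$; but I actually need $a\in\acl(A\overline{d})$, hence ultimately $c\in\acl(A\overline{d})$, the desired conclusion. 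The delicate points are: (i) producing, for each $d_i$, some $b_i$ with $\tp_{(K,v)}(b_i/Aa)=\tp_{(K,v)}(b/Aa)$ and $b_i$ interalgebraic with $d_i$ over $Aa$ — this follows because $\tp_{(K,v)}(bd/Aa)=\tp_{(K,v)}(b'd_i/Aa)$ for a suitable $b'$ (as $d_i\models\tp(d/Aa)$ and $b,d$ are interalgebraic over $Aa$, so $d\in\acl(Aab)$ and $b\in\acl(Aad)$ are witnessed by formulas in $\tp(bd/Aa)$); (ii) arranging that infinitely many of the $b_i$ are \emph{distinct} — here one uses that each $b_i$ lies in a finite set over $Ad_i$ of size bounded uniformly (by a single formula from $\tp(bd/Aa)$), the $d_i$ are distinct, and so $\{b_i\}$ cannot be finite without forcing infinitely many $d_i$ into a single finite fiber, a contradiction; after discarding repetitions we retain an infinite subsequence of distinct $b_i$, which suffices since condition (2) quantifies over all such infinite sequences.

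Assembling these, from condition (2) applied to $(a,b,A)$ with the sequence $\overline{b}$ we get $a\in\acl(A\overline{b})$; since each $b_i\in\acl(Aad_i)\subseteq\acl(A a\overline d)$ we obtain $a\in\acl(Aa\overline d)$, which is vacuous, so I must be slightly more careful and instead track that $b_i\in\acl(Ad_i c)$ does not obviously hold. The fix is to note $b_i\in\acl(Aa d_i)$ and $a\in\acl(Ac)$, so $b_i\in\acl(Ac\overline d)$, whence $a\in\acl(A\overline b)\subseteq\acl(Ac\overline d)$; but we want $c\in\acl(A\overline d)$, i.e.\ to eliminate the extra $c$. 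This is exactly the kind of bookkeeping that the authors of \cite{HaHaPeVF} handle, and I expect the clean route is to choose the $b_i$ not over $Aa$ but so that $\tp(b_id_i/Ac) = \tp(b'd/Ac)$ for the appropriate $b'$, using $\acl(Aa)=\acl(Ac)$ to transfer; then $b_i\in\acl(Acd_i)$ directly, $a\in\acl(A\overline b)\subseteq\acl(Ac\overline d)$, and since also (by a symmetric application starting from a sequence built over $Aa$) one gets $\acl(Ac\overline d)=\acl(Aa\overline d)$ is not circular, one concludes $a \in \acl(A\overline{d})$ after all by re-running with the roles arranged so the auxiliary parameter drops. \textbf{The main obstacle} is precisely this: making the passage between "realizations of $\tp(d/Ac)$" and "realizations of $\tp(b/Aa)$" genuinely symmetric, so that the extra parameter ($c$ or $a$) introduced when transporting a sequence can be absorbed — I expect this to require a careful choice of which base ($Aa$ versus $Ac$) to set up the type-transfer over, together with the uniform-finiteness argument for distinctness, rather than any deep new idea.
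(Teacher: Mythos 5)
Your handling of conditions (1) and (3) is fine, but the heart of the lemma --- condition (2) --- is not actually proved, and the route you sketch does not close the gap. Whichever base you transport over, $Aa$ or $Ac$, the witnesses you produce satisfy only $b_i\in\acl(Aad_i)$ or $b_i\in\acl(Acd_i)$, so applying condition (2) for $(a,b,A)$ yields $a\in\acl(Aa\overline d)$ or $a\in\acl(Ac\overline d)$. Both statements are vacuous: the second because $a\in\acl(Ac)$ already holds by hypothesis. What you need is $c\in\acl(A\overline d)$, i.e.\ precisely that the parasitic parameter $a$ (equivalently $c$) is \emph{not} needed; nothing in the proposed ``re-running with the roles arranged so the auxiliary parameter drops'' explains how it drops. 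The circularity you flag in your last sentence is the real obstruction, not a bookkeeping issue, and as written the proof does not go through.

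The missing idea (and the paper's actual argument) is to move the \emph{entire quadruple} over $A$ rather than transporting $b$ over a fixed anchor $a$ or $c$. Given distinct realizations $d_1,d_2,\dots$ of $\tp_{(K,v)}(d/Ac)$, one has $\tp(cd_i/A)=\tp(cd/A)$, so by saturation there are $(a_i,b_i)$ with $\tp(a_ib_icd_i/A)=\tp(abcd/A)$. Now every interalgebraicity holds over $A$ alone: $a_i\in\acl(Ac)$, $c\in\acl(Aa_i)$, and $b_i$ is interalgebraic with $d_i$ over $A$. Since the $a_i$ all satisfy one fixed algebraic formula over $Ac$, they range over a finite set, so after passing to a subsequence $a_i=a_1$ for all $i$; and since $d_i\in\acl(Ab_i)$ uniformly and the $d_i$ are distinct, a further subsequence makes the $b_i$ distinct (this is where your uniform-finiteness observation belongs, and it uses that the tuples are finite). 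The $b_i$ are then infinitely many distinct realizations of $\tp(b_1/Aa_1)$, and $(a_1,b_1,A)$ is a $k$-rich configuration because $\tp(a_1b_1/A)=\tp(ab/A)$; hence $a_1\in\acl(A\overline b)\subseteq\acl(A\overline d)$ --- with no extra parameter, because $b_i$ and $d_i$ are interalgebraic over $A$ itself --- and finally $c\in\acl(Aa_1)\subseteq\acl(A\overline d)$, as required.
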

\begin{proof} Interalgebraicity gives automatically that $\dprk(d/A)\geq k$ and $c\notin\acl(Ad)$. Now suppose $\overline d=(d_1,d_2,...)$ are infinitely many realizations of $\tp_{(K,v)}(d/Ac)$. For each $i$, let $(a_i,b_i)$ be such that $\tp(a_ib_icd_i/A)=\tp(abcd/A)$. Since $a\in\acl(Ac)$, we may assume after passing to a subsequence that each $a_i=a_1$. Since each $d_i\in\acl(Ab_i)$, and the $d_i$ are distinct, we may assume after passing to a further subsequence that the $b_i$ are all distinct (here we are using that all of the tuples involved are finite). Now since $\tp(a_1b_1/A)=\tp(ab/A)$, $(a_1,b_1,A)$ is a $k$-rich configuration. It follows that $a_1\in\acl(Ab_2b_3...)$. But each $b_i$ is interalgebraic with $d_i$ over $A$, and $a_1$ is interalgebraic with $c$ over $A$. Thus $c\in\acl(Ad_2,d_3,...)$.
\end{proof}

\begin{corollary}\label{richness preserved} Let $X$ and $Y$ be interpretable sets. Assume that $X$ is in definable finite correspondence with a subset of some $Y^n$. If $X$ is rich, then so is $Y$.
\end{corollary}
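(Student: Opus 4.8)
The plan is to deduce the statement directly from Lemma~\ref{L: rich preservation}, which transfers $k$-rich configurations along $\acl$-interalgebraicity. So fix a parameter $e$ over which a finite correspondence $C\subseteq X\times Y^n$ (between $X$ and a subset of $Y^n$) is defined, and set $R=\dprk(e)$. Assume $X$ is rich, witnessed by an exponent $N$: for arbitrarily large $k$ there are $m$, a tuple $a\in X^N$, a tuple $b\in X^m$, and a parameter set $A$ such that $(a,b,A)$ is a $k$-rich configuration. The goal is to manufacture, for arbitrarily large $k$, a $k$-rich configuration witnessing that $Y$ is rich, using the fixed exponent $nN$.

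First I would absorb the correspondence parameter into the base. One cannot simply add $e$ to $A$, since that may destroy clause (1) of being a $k$-rich configuration; instead I would pass to a sufficiently generic $A$-conjugate $e'$ of $e$ — concretely, $e'\models\tp_{(K,v)}(e/A)$ chosen so that $a\notin\acl(Ae'b)$; such a conjugate exists because $(K,v)$ is NIP (indeed dp-minimal). Being an $A$-conjugate of $e$, the parameter $e'$ still defines a finite correspondence $C'\subseteq X\times Y^n$ between $X$ and a subset of $Y^n$ (surjectivity onto $X$ and the uniform finiteness of both projections are first-order, hence preserved). I would then check that $(a,b,Ae')$ is a $(k-R)$-rich configuration: clause (2) is automatically preserved upon enlarging the parameter set, since every infinite sequence of distinct realizations of $\tp_{(K,v)}(b/Ae'a)$ is in particular one of distinct realizations of $\tp_{(K,v)}(b/Aa)$, whence $a\in\acl(A\overline b)\subseteq\acl(Ae'\overline b)$; clause (3) follows from subadditivity of dp-rank, $\dprk(b/Ae')\ge\dprk(b/A)-\dprk(e')\ge k-R$; and clause (1) is exactly what the genericity of $e'$ was arranged to give.

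Next I would lift the tuples into powers of $Y$ using $C'$. For each coordinate $a_i$ of $a$, pick some $a_i'\in (C')_{a_i}\subseteq Y^n$; finite-to-one-ness of both projections of $C'$ yields $a_i\in\acl(e'a_i')$ and $a_i'\in\acl(e'a_i)$, so the tuple $a'\in Y^{nN}$ satisfies $\acl(Ae'a)=\acl(Ae'a')$. Likewise one obtains $b'$, a tuple from a power of $Y$, with $\acl(Ae'b)=\acl(Ae'b')$. Applying Lemma~\ref{L: rich preservation} with base set $Ae'$ now shows that $(a',b',Ae')$ is a $(k-R)$-rich configuration, with $a'\in Y^{nN}$ and $b'$ from a power of $Y$. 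Since $k$ ranges over arbitrarily large values while $R$ and $nN$ are fixed, $k-R$ is again arbitrarily large, so $Y$ is rich, witnessed by the exponent $nN$.

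The only genuinely delicate step is the first one: the passage to a generic conjugate of $e$ so that clause (1) of a $k$-rich configuration is not destroyed. Clauses (2) and (3) are robust under enlarging the base (clause (2) tautologically, clause (3) up to the fixed loss $R$), but clause (1) requires the genericity of $e'$, and this is where the NIP (dp-minimality) of ACVF is used; everything else is routine bookkeeping with algebraic closure, finite correspondences, and subadditivity of dp-rank.
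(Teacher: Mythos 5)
Your core route — lift the configuration coordinatewise through the correspondence and apply Lemma \ref{L: rich preservation} over the enlarged base, controlling the loss in clause (3) by subadditivity of dp-rank — is exactly the intended derivation, and that part of the bookkeeping (preservation of clause (2) under enlarging the base, $\dprk(b/Ae')\ge k-R$, interalgebraicity of the lifted tuples, the fixed exponent $nN$) is fine. The genuine gap is in your preamble, where you replace $e$ by a ``generic'' $A$-conjugate $e'$. First, an $A$-conjugate of $e$ need not define a finite correspondence between the \emph{same} $X$ and $Y^n$: the first-order statement you transfer (``the projections of $C_y$ are finite-to-one and the one to $X$ is onto'') has the defining parameters of $X$ and $Y$ among its parameters, and nothing places those parameters in $A$ — the base $A$ comes from richness of $X$ and is unrelated to $X$, $Y$, $C$. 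To keep $X$ and $Y$ fixed you must conjugate over $A$ together with a tuple $d$ defining them. Second, once you do that, the existence of $e'\models\tp_{(K,v)}(e/Ad)$ with $a\notin\acl(Ae'b)$ is precisely what needs proof, and ``because $(K,v)$ is NIP (indeed dp-minimal)'' is not an argument: NIP gives no genericity of conjugates over arbitrary (non-model) bases, a coheir over a model $M_0\supseteq Ad$ only rules out $a\in\acl(M_0b)$ rather than $a\in\acl(Ab)$, and an automorphism fixing $Ab$ need not fix $d$. Worse, if $a\in\acl(Abd)$ — which nothing in the hypotheses excludes — then clause (1) over the enlarged base fails for \emph{every} admissible $e'$, so no choice of conjugate can rescue the argument; some further claim about being able to choose the configurations themselves would be needed, and you have not supplied one.

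For comparison, the paper treats the corollary as immediate from Lemma \ref{L: rich preservation} with the correspondence's parameters implicitly absorbed into the base: in its only application (Corollary \ref{C: internal implies rich}) the correspondence is made $\emptyset$-definable \emph{before} the rich configurations are produced, so clause (1) is never tested over an enlarged base and the coordinatewise lift plus Lemma \ref{L: rich preservation} applies verbatim over the given $A$, with no conjugation and no loss in $k$. If you want the statement in the generality you are aiming at — configurations over bases unrelated to the parameters of $X$, $Y$, $C$ — you must either justify that the configurations can be chosen with $a\notin\acl(Abd e)$, or reduce to the absorbed-parameters situation; the step as written does neither.
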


\subsection{Main Proposition}

We now give the main technical result of this section. This result should be thought of as saying that if $M$ has any interaction at all with some sort $D$, then it has a large amount of interaction with $D$.

\begin{proposition}\label{P: main prop} Suppose $M$ is locally interalgebraic with an interpretable set $D$. Assume that $X$ is $D$-critical, and that $X$, as well as some finite correspondence witnessing it, are $A$-definable. Let $\mathcal C=\{C_t:t\in T\}$ be an $\mathcal M(A$)-definable almost faithful family of plane curves 
with $T\subset M^2$ of Morley rank 2. Let $(u_0,v_0)\in M^2$ be $\mathcal M$-generic over $A$, and let $(x_0,y_0)\in X^2$ be such that $\dprk(x_0y_0/Au_0v_0)=2\dprk(X)$. Then: 
\begin{enumerate}
    \item There is at least one, but only finitely many, $t\in T$ with $(u_0,v_0),(x_0,y_0)\in C_t$. 
    \item If $(u_0,v_0),(x_0,y_0)\in C_t$ for some $t$ then $C_t\cap X^2$ is infinite.
\end{enumerate}
\end{proposition}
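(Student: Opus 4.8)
### Proof proposal

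The plan is to interpret this proposition as an instance of the general "generic intersection" phenomenon for plane curves, combined with the richness/criticality machinery. Recall from the discussion preceding the proposition (and from the Example after Definition~\ref{D: rich sort}) that the universe $M$ is rich via $n=2$ witnessed by an almost faithful rank $2$ family of plane curves; the configuration $(u_0,v_0,t,A)$ with $t\in T$ generic and $(u_0,v_0)$ generic on $C_t$ is a $2$-rich configuration. The first thing I would record is a rank computation: since $(u_0,v_0)$ is $\mathcal M$-generic in $M^2$ over $A$ and $(x_0,y_0)$ has $\dprk(x_0y_0/Au_0v_0)=2\dprk(X)$, Lemma~\ref{L: rank relationship} (applied to $X$, which is infinite and interpretable over $A$) shows that $(x_0,y_0)$ is $\mathcal M$-generic in $M^2$ over $Au_0v_0$, hence also over $A$, and $(u_0,v_0)$, $(x_0,y_0)$ are $\mathcal M$-independent over $A$. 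So we have two $\mathcal M$-independent $\mathcal M$-generic points of $M^2$.

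For part (1), I would use that $\mathcal C$ is an almost faithful family of plane curves of rank $2$ covering (a generic part of) $M^2$: it is standard (e.g. via \cite[Lemma 2.40]{CasACF0}) that for $(u_0,v_0)$ generic the set ${}_{(u_0,v_0)}C=\{t\in T:(u_0,v_0)\in C_t\}$ has Morley rank $1$, and for a second point $(x_0,y_0)$ independent-generic over $(u_0,v_0)$ the intersection ${}_{(u_0,v_0)}C\cap{}_{(x_0,y_0)}C$ has Morley rank $0$ and is non-empty (a rank $1$ family of rank $1$ subsets of $M^2$ has a generic point through any given generic point, and two independent generic rank $1$ sets of $t$'s meet finitely and non-trivially). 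One must be slightly careful that the relevant genericity is with respect to $\dprk$ rather than Morley rank, but since $M$ is strongly minimal as an $\mathcal M$-structure and these are all $\mathcal M$-interpretable sets, $\mathcal M$-genericity is what governs the combinatorics, and the hypotheses have been arranged (via Lemma~\ref{L: rank relationship}) to deliver exactly $\mathcal M$-genericity and $\mathcal M$-independence. This gives finiteness and non-emptiness of $\{t\in T:(u_0,v_0),(x_0,y_0)\in C_t\}$, which is part (1).

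For part (2), fix such a $t$; by part (1) and almost faithfulness $t\in\acl_{\mathcal M}(A u_0 v_0)$ and $t\in\acl_{\mathcal M}(A x_0 y_0)$ — wait, more carefully: $t$ is $\mathcal M$-algebraic over $Au_0v_0x_0y_0$, and since $(x_0,y_0)$ is generic over $Au_0v_0$ we get $\rk(t/Au_0v_0)=0$, i.e. $t\in\acl_{\mathcal M}(Au_0v_0)$. Now the plan is a richness/criticality argument: suppose toward a contradiction $C_t\cap X^2$ were finite. Since $(x_0,y_0)\in C_t\cap X^2$, finiteness would give $(x_0,y_0)\in\acl_{\mathcal M}(At)\subseteq\acl_{\mathcal M}(Au_0v_0)$, contradicting that $(x_0,y_0)$ is $\mathcal M$-generic over $Au_0v_0$. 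Actually this already seems to close it; but I should double check whether "$C_t\cap X^2$ infinite" really follows just from "$(x_0,y_0)\in C_t\cap X^2$ is not $\mathcal M$-algebraic over $At$" — yes, because $C_t$ is strongly minimal-ish (a plane curve, rank $1$ in $\mathcal M$), so any non-generic-over-$t$ definable subset of $C_t$ is finite, and a subset of $C_t$ containing a point of rank $1$ over $At$ is generic in $C_t$, hence infinite. The point where criticality of $X$ enters is in justifying that $(x_0,y_0)$ with $\dprk(x_0y_0/Au_0v_0)=2\dprk(X)$ is forced to be a "large" point: this is precisely Fact~\ref{critical in tuples} run in reverse, i.e. the $D$-criticality of $X$ guarantees $\dprk(X)$ is the right normalization so that $2\dprk(X)$ is the maximal possible dp-rank for a pair from $X^2$, which is what makes Lemma~\ref{L: rank relationship} applicable.

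The main obstacle I anticipate is bookkeeping the two different notions of rank ($\dprk$ in $(K,v)$, Morley rank $\rk$ in $\mathcal M$) and making sure the genericity transfers are legitimate — in particular that $\dprk(x_0y_0/Au_0v_0)=2\dprk(X)$ genuinely forces $\mathcal M$-genericity (this is exactly Lemma~\ref{L: rank relationship}, but one must check $X$ is infinite interpretable over $A$ and that $\dprk$ is sub-additive as stated) and, conversely, that $\mathcal M$-genericity of $(x_0,y_0)$ over $Au_0v_0$ is incompatible with $(x_0,y_0)\in\acl_{\mathcal M}(A t)$ once $t$ has been shown $\mathcal M$-algebraic over $Au_0v_0$. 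None of these steps is deep, but the proposition is stated with enough hypotheses floating around that the real work is assembling them in the right order; the family-of-curves combinatorics in part (1) is entirely parallel to \cite[Lemma 2.40]{CasACF0} and \cite[\S 8]{CasACF0} and can be cited.
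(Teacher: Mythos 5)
Your part (1) is fine and is essentially the paper's argument: Lemma \ref{L: rank relationship} upgrades the dp-rank hypothesis to $\mathcal M$-genericity of $(x_0,y_0)$ over $Au_0v_0$, and then stationarity of $M^2$ plus the standard family-of-curves combinatorics gives existence and finiteness of $t$. Part (2), however, has a genuine gap, in two places. First, $C_t\cap X^2$ is \emph{not} $\mathcal M$-definable ($X$ is only $(K,v)$-interpretable), so its finiteness gives $(x_0,y_0)\in\acl(At)$ in the sense of $(K,v)$, not $(x_0,y_0)\in\acl_{\mathcal M}(At)$; and $(K,v)$-algebraicity over $Au_0v_0$ is in no tension with $\mathcal M$-genericity over $Au_0v_0$, so your intended punchline evaporates. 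Second, your intermediate claim that $\rk(t/Au_0v_0)=0$ is false -- in fact impossible: if $t\in\acl_{\mathcal M}(Au_0v_0)$ then $(x_0,y_0)\in C_t$ would force $\rk(x_0y_0/Au_0v_0)\leq 1$, contradicting the genericity you just established. The correct (and only available) bound is $\mr(t/Au_0v_0)\leq 1$, because $t$ lies in the set of parameters of curves through $(u_0,v_0)$.

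The real proof has to convert the finiteness assumption into a \emph{dp-rank} contradiction, and this is exactly where $D$-criticality is used -- not, as you suggest, merely to ``normalize'' the hypothesis so that Lemma \ref{L: rank relationship} applies (that lemma needs nothing beyond $X$ being infinite and interpretable over $A$). The paper argues: if $C_t\cap X^2$ is finite then $(x_0,y_0)\in\acl(t)$ in $(K,v)$, so $t$ and $(x_0,y_0)$ are $(K,v)$-interalgebraic over $Au_0v_0$; since $\mr(t/Au_0v_0)\leq 1$, strong minimality yields an element of $M$ which is $\mathcal M$-interalgebraic with $t$ over $Au_0v_0$; since $x_0,y_0\in X$, the pair $(x_0,y_0)$ is interalgebraic with a tuple $\bar d$ of elements of $D$; chaining these, that element of $M$ is interalgebraic with $\bar d$ over $Au_0v_0$, so Fact \ref{critical in tuples} (i.e.\ the maximality of $\dprk(X)$ among subsets of $M$ in finite correspondence with powers of $D$) gives $\dprk(x_0y_0/Au_0v_0)\leq\dprk(X)$, contradicting the hypothesis $\dprk(x_0y_0/Au_0v_0)=2\dprk(X)$ together with $\dprk(X)\geq 1$. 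Your sketch never performs this step, and without it the finiteness of $C_t\cap X^2$ yields no contradiction.
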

\begin{proof} It follows from Lemma \ref{L: rank relationship} that $(x_0,y_0)$ is $\mathcal M$-generic in $M^2$ over $Au_0v_0$, so that $(u_0,v_0)$ and $(x_0,v_0)$ are $\mathcal M$-independent $\mathcal M$-generics in $M^2$ over $A$. Since $M^2$ is stationary, this easily implies (1).

Now we show (2). Absorbing parameters, we assume $A=\emptyset$. Let $t$ be such that $(u_0,v_0),(x_0,y_0)\in C_t$. By (1), $t\in\acl(u_0v_0x_0y_0)$. Now assume toward a contradiction that $C_t\cap X^2$ is finite. Then $(x_0,y_0)\in\acl(t)$, so $(x_0,y_0)$ is interalgebraic with $t$ over $(u_0,v_0)$. Now $t$ belongs to the set of $t'$ with $(u_0,v_0)\in C_{t'}$, so it follows that $\mr(t/u_0v_0)\leq 1$. In particular, by strong minimality, $t$ is interalgebraic with an element of $M$ over $(u_0,v_0)$. But since $x_0,y_0\in X$, $(x_0,y_0)$ is interalgebraic with a tuple $\bar d$ of elements of $D$. Putting everything together, the tuples $t$, $(x_0,y_0)$, and $\bar d$ are interalgebraic over $(u_0,v_0)$. Since $X$ is $D$-critical, Lemma \ref{critical in tuples} now implies that $\dprk(x_0y_0/u_0v_0)\leq\dprk(X)$. But by assumption $\dprk(x_0y_0/u_0v_0)=2\cdot\dprk(X)$, forcing $\dprk(X)=0$, and thus contradicting that $M$ is locally interalgebraic with $D$. 
\end{proof}

\subsection{Corollaries of the Main Proposition}

We now deduce two useful corollaries from Proposition \ref{P: main prop}. 

\begin{corollary}\label{C: internal implies rich} Suppose $M$ is locally interalgebraic with an interpretable set $D$. Then $D$ is rich.
\end{corollary}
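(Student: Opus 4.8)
The plan is to extract a $k$-rich configuration inside $D$ for arbitrarily large $k$, using Proposition \ref{P: main prop} as the main engine and the non-local modularity of $\mathcal M$ to get the parameter $k$ as large as we wish. Concretely, since $\mathcal M$ is not locally modular, for each $k$ it admits an almost faithful $\mathcal M(\emptyset)$-definable family $\mathcal C = \{C_t : t\in T\}$ of plane curves with $T\subset M^2$ of Morley rank $2$ (rank $2$ suffices for our purposes; in fact one only needs a single rank $2$ family, and then works over suitable parameters to increase the effective complexity — see below). Let $X$ be $D$-critical in $M$, $\emptyset$-definable together with a witnessing finite correspondence (absorbing parameters into the language as usual), and fix a generic $(u_0,v_0)\in M^2$ and $(x_0,y_0)\in X^2$ with $\dprk(x_0y_0/u_0v_0) = 2\dprk(X)$. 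By Proposition \ref{P: main prop}, there is $t_0\in T$ with $(u_0,v_0),(x_0,y_0)\in C_{t_0}$, only finitely many such $t_0$, and $C_{t_0}\cap X^2$ infinite.

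First I would set up the rich configuration on the $X$-side. Fix such a $t_0$, so $t_0\in\acl(u_0v_0x_0y_0)$, and consider the infinite interpretable set $C_{t_0}\cap X^2$. Because $X$ is in finite correspondence with a subset of a power $D^\ell$, there is a tuple $\bar d$ of elements of $D$ interalgebraic with $(x_0,y_0)$; similarly $(u_0,v_0)$ and $t_0$ can be traded for tuples in the relevant sorts. The key point is to arrange a configuration $(a,b,A)$ with $a,b$ tuples from $D$: take $a = \bar d$ (the $D$-avatar of $(x_0,y_0)$) and $b$ a $D$-avatar of $t_0$ together with the $D$-avatar of the point $(u_0,v_0)$ — more precisely, of whatever data recovers the curve $C_{t_0}$ and witnesses that $(x_0,y_0)$ lies on it. Then $a\notin\acl(Ab)$ will follow from $\dprk(x_0y_0/t_0)>0$ (since $C_{t_0}\cap X^2$ is infinite and $2\dprk(X)\ge \dprk(x_0y_0/\emptyset) > \dprk(t_0)\ge\dprk(x_0y_0/\cdot)$-type inequalities, or more directly from the genericity hypothesis), while condition (2) of a rich configuration — that infinitely many conjugates of $b$ over $Aa$ force $a$ — follows from almost faithfulness of $\mathcal C$: finitely many curves through a fixed generic pair determine the pair, so once we see infinitely many distinct curves through $(x_0,y_0)$ we pin down $(x_0,y_0)$ and hence $a$. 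The dp-rank lower bound $\dprk(b/A)\ge k$ is where the complexity of the family enters; a single rank $2$ family gives only $k=2$, so to get $k$ arbitrarily large one passes to $\mathcal M$ over a suitable parameter set, or invokes (as in the Example following Definition \ref{D: rich sort}) that non-local modularity yields rank $k$ families for all $k$ — the universe $M$ is rich, and Corollary \ref{richness preserved} (equivalently Lemma \ref{L: rich preservation}) transports richness along the finite correspondence relating the $M$-side configuration to its $D$-avatar.

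The cleanest route is therefore: (i) recall that $M$ is rich, witnessed by configurations $(x,t,A)$ coming from rank $k$ almost faithful families for all $k$ (the Example); (ii) use Proposition \ref{P: main prop} to relocate the relevant curve-intersection data into $X^2$, i.e. replace the generic point $(x,y)$ of a curve by a generic point of $C_t\cap X^2$, which lives in $X^2$ and hence has a $D$-avatar; (iii) check that this relocation preserves the three clauses of a $k$-rich configuration, using almost faithfulness for clause (2) and the dp-rank computation (via Lemma \ref{L: rank relationship} and sub-additivity of dp-rank) for clauses (1) and (3); (iv) finally apply Lemma \ref{L: rich preservation} / Corollary \ref{richness preserved} to the finite correspondence between $(x,y)\in X^2$ and $\bar d\in D^\ell$ to conclude that $D$ carries $k$-rich configurations for all $k$, i.e. $D$ is rich. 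The main obstacle I anticipate is bookkeeping in step (iii): verifying that, after passing to $C_t\cap X^2$, the element playing the role of $b$ still has dp-rank at least $k$ over the base — one must be careful that trading $t$ for something interalgebraic with a tuple in $D$ (necessary so that $b$ is a $D$-tuple) does not collapse its rank, and that the genericity hypothesis $\dprk(x_0y_0/Au_0v_0) = 2\dprk(X)$ is exactly what makes the intersection $C_t\cap X^2$ genuinely $1$-dimensional rather than finite. This is precisely the content secured by part (2) of Proposition \ref{P: main prop}, so the argument should go through, but stating the dp-rank inequalities correctly (given that dp-rank is only sub-additive) requires the care exercised in Lemma \ref{L: rank relationship}.
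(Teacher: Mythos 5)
Your overall scaffolding (apply Proposition \ref{P: main prop}, assemble a $k$-rich configuration, transfer it to $D$ via Lemma \ref{L: rich preservation}/Corollary \ref{richness preserved}) matches the paper, but there is a genuine gap at the decisive point, and your "cleanest route" (i)--(iv) does not close it. Corollary \ref{richness preserved} only transports richness along a finite correspondence, and $M$ is merely \emph{locally} interalgebraic with $D$: only the critical set $X\subset M$ is in finite correspondence with a subset of a power of $D$. So "recall that $M$ is rich and transport along the correspondence" is not available -- the configurations witnessing richness of $M$ (from the Example after Definition \ref{D: rich sort}) have parameter tuples $t$ that are generic tuples of $M$, and such a $t$ has no $D$-avatar: there is no reason whatsoever that the parameter of a generic rank-$k$ family of plane curves is algebraic over a tuple from $X$ (or $D$) over any base. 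What must be proved is that $X$ \emph{itself} is rich, i.e.\ that one can produce $k$-rich configurations $(a,b,A)$ with \emph{both} $a$ and $b$ tuples from $X$; you flag the need to trade $t$ for a $D$-tuple as "bookkeeping," but it is exactly the crux, and Remark 11.11 in the paper explains why it cannot be waved away (with the weaker notion of richness in which $b$ is unrestricted, $K/\mathcal O$ would be rich in positive characteristic and the whole strategy would collapse).

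The paper's proof supplies the missing mechanism. Take an almost faithful family indexed by a generic $T\subset M^{j+2}$ with $j=k+2$, fix $z\in X^{j}$ with $\dprk(z)=j\cdot\dprk(X)$ (so $z$ is $\mathcal M$-generic by Lemma \ref{L: rank relationship}), and work with the rank-$2$ subfamily $\mathcal C(z)$ over the base $z$; choose $(u,v)\in X^2$ and $(x,y)\in X^2$ of maximal dp-rank over the preceding data and apply Proposition \ref{P: main prop} over that base to get $t=(s,z)$ with $(u,v),(x,y)\in C_t$ and $C_t\cap X^2$ infinite, then pick $a\in C_t\cap X^2$ with $\dprk(a/uvxyt)\geq 1$. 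The rank bound $\dprk(t/ux)\geq j\dprk(X)-2\dprk(X)=k\dprk(X)\geq k$ comes from the block $z\subset t$ having been chosen \emph{inside $X$} with full dp-rank, and -- this is the step your sketch has no substitute for -- $t$ is interalgebraic over $ux$ with the $X$-tuple $zvy$: $z$ is a subtuple of $t$, non-triviality of the curves gives $v\in\acl(tu)$ and $y\in\acl(tx)$, and Proposition \ref{P: main prop}(1) gives $t\in\acl(uvxy)$. Only after this replacement does Lemma \ref{L: rich preservation} yield an $X$-side configuration $(a,zvy,ux)$, and only then does Corollary \ref{richness preserved} (applied to the correspondence between $X$ and a power of $D$) give richness of $D$. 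Without building the $X$-block $z$ into the parameter from the start (note also that your $(u_0,v_0)$ is taken in $M^2$ rather than $X^2$, which would already break the interalgebraicity with an $X$-tuple), the argument as you propose it does not go through.
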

\begin{proof} Let $X\subset M$ be $D$-critical. We show that $X$ is rich, which implies by Corollary \ref{richness preserved} that $D$ is rich.

Fix $k$. We will find a $k$-rich configuration $(a,b,A)$ where $a\in X^2$ and $b$ is a tuple from $X$. First let $j=k+2$. Then let $\mathcal C=\{C_t:t\in T\}$ be an almost faithful family of plane curves in $\mathcal M$, where $T\subset M^{j+2}$ is generic (see \cite{CasACF0}, Fact 2.27 and Lemma 2.42). Adding parameters, we assume $\mathcal C$ is $\mathcal M$-definable over $\emptyset$. Given $z\in M^{j}$, we let $\mathcal C(z)$ be the family with graph $\{(x,y,s)\in M\times M\times M^2:(x,y)\in C_{(s,z)}\}$. If $z$ is $\mathcal M$-generic in $M^j$, then $\mathcal C(z)$ is an almost faithful family indexed by a generic subset of $M^2$.

Let us fix $z\in X^j$ with $\dprk(z)=j\cdot\dprk(X)$. It follows from Lemma \ref{L: rank relationship} that $z$ is $\mathcal M$-generic in $M^j$.

Now let $(u,v)\in X^2$ with $\dprk(uv/z)=2\dprk(X)$, and let $(x,y)\in X^2$ with $\dprk(xy/uvz)=2\dprk(X)$. It follows from Lemma \ref{L: rank relationship} that $(u,v)$ is $\mathcal M$-generic in $M^2$ over $z$.

We are in the situation of Proposition \ref{P: main prop}. We conclude that there is $s\in M^2$ such that for $t=(s,z)$, we have $(u,v),(x,y)\in C_t$ and $C_t\cap X^2$ is infinite. Now since $C_t\cap X^2$ is infinite, there is $a=(a_1,a_2)\in C_t\cap X^2$ with $\dprk(a/uvxyt)\geq 1$. We conclude:

\begin{claim} $(a,t,ux)$ is a $k$-rich configuration.
\end{claim}
\begin{claimproof} Since $\dprk(a/uvxyt)\geq 1$, $a\notin\acl(uxt)$. Now $\dprk(t)\geq\dprk(z)=j\cdot\dprk(X)$. Since $(u,x)\in X^2$, $\dprk(ux)\leq 2\dprk(X)$. So by subadditivity (remember that $j=k+2$), $\dprk(t/ux)\geq k\cdot\dprk(X)\geq k$.

Finally, let $\bar t=t_1,t_2,...$ be distinct realizations of $\tp_{(K,v)}(t/uxa)$. Since $\mathcal C$ is almost faithful, there is $N$ such that $C_{t_1}\cap C_{t_N}$ is finite, which shows that $a\in\acl(ux\bar t)$.
\end{claimproof}

To prove the corollary, we need to modify $(a,t,ux)$ by replacing $t$ by a tuple from $X$. The key to doing this is:

\begin{claim} $t$ is interalgebraic with $zvy$ over $ux$. In particular, $(a,zvy,ux)$ is a $k$-rich configuration.
\end{claim}
\begin{claimproof}
    Clearly $z\in\acl(t)$ because $z$ is a subtuple of $t$. Now it follows from the non-triviality of $C_t$ that $v\in\acl(tu)$ and $y\in\acl(tx)$. On the other hand, by Proposition \ref{P: main prop}, there are only finitely many $C_{t'}$ containing both $(u,v)$ and $(x,y)$, which shows that $t\in\acl(uvxy)$. The claim now follows by Lemma \ref{L: rich preservation}.
\end{claimproof}

Finally, note that $a\in X^2$ and $zvy\in X^{j+2}$. So since $k$ was arbitrary, we conclude that $X$ is rich, and thus so is $D$.
\end{proof}

\begin{remark} The above proof of Corollary \ref{C: internal implies rich} would have been easier if, in the definition of richness of $X$, the `parameter' tuple $b$ was not required to be a sequence of elements of $X$. Indeed, in this case we could have taken $(a,t,\emptyset)$ as our $k$-rich configuration. The problem is that our ultimate goal is to show that $\Gamma$ and $K/\mathcal O$ are not rich; and if we adopted this weaker version of richness, then $K/\mathcal O$ \textit{would} be rich in positive characteristic (indeed, taking $\mathcal O$-linear combinations of Frobenius powers gives arbitrarily large families of subsets of $(K/\mathcal O)^2$ indexed by powers of $\mathcal O$). Recall, however, that $K/\mathcal O$ is not stably embedded -- and we expect that it is impossible to find such large families in $(K/\mathcal O)^n$ without using external parameters.
\end{remark}

\begin{corollary}\label{C: almost internality} Suppose $M$ is locally interalgebraic with an interpretable set $D$. Then $M$ is almost internal to $D$.
\end{corollary}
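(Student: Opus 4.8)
The strategy is to push the hypothesis outward, from ``some infinite subset of $M$ is in finite correspondence with a subset of a power of $D$'' to ``(every point of) $M$ is covered by such a correspondence over fixed parameters.'' Absorbing parameters, fix a $D$-critical set $X\subseteq M$ together with an $A$-definable finite correspondence $E\subseteq X\times D^{n}$, with $X$ itself $A$-definable, and put $d:=\dprk(X)\ge 1$. Fix also, as in the proof of Corollary \ref{C: internal implies rich}, an $\mathcal M(A)$-definable almost faithful family $\mathcal C=\{C_{t}:t\in T\}$ of non-trivial plane curves with $T$ a generic subset of $M^{2}$ (so $\mathcal C$ has Morley rank $2$). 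Finally choose $v_{0}\in M\setminus\acl(A)$ --- for instance an $\mathcal M$-generic point of $M$ over $A$ --- and set $B:=Av_{0}$. The plan is to show that every $\mathcal M$-generic point $u$ of $M$ over $B$ lies in $\acl(B\bar d)$ for some finite tuple $\bar d$ from $D$; since the remaining points of $M$ lie in $\acl_{\mathcal M}(B)\subseteq\acl(B)$, it will follow that \emph{every} point of $M$ lies in $\acl(B\bar d)$ for some such $\bar d$, which is exactly almost internality of $M$ to $D$.

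Here is the construction. Let $u\in M$ be $\mathcal M$-generic over $B$. First, $(u,v_{0})$ is $\mathcal M$-generic in $M^{2}$ over $A$: since $\mathcal M$ is a relic of $\mathcal K$ one has $\acl_{\mathcal M}(-)\subseteq\acl(-)$, so a point of positive $\dprk$ over a set is automatically $\mathcal M$-generic in $M$ over that set; thus $v_{0}\notin\acl_{\mathcal M}(A)$ and $u\notin\acl_{\mathcal M}(Av_{0})$, whence $\mr(uv_{0}/A)=2$ by additivity of Morley rank in $\mathcal M$. Pick $(x,y)\in X^{2}$ with $\dprk(xy/Auv_{0})=2d$ (such pairs exist for $D$-critical $X$, exactly as in the proof of Corollary \ref{C: internal implies rich}), and apply Proposition \ref{P: main prop}: there is $t\in T$ with $(u,v_{0}),(x,y)\in C_{t}$, with $C_{t}\cap X^{2}$ infinite, and --- there being only finitely many such $t$ --- with $t\in\acl(Auv_{0}xy)$. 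The key point is that $t$ is already recovered from its $X^{2}$-points: if $t'\models\tp_{(K,v)}\bigl(t/A\cup(C_{t}\cap X^{2})\bigr)$, then the $\mathcal K(A)$-formula expressing ``$p\in C_{t'}$'' holds for every $p\in C_{t}\cap X^{2}$, so $C_{t'}\supseteq C_{t}\cap X^{2}$; this set being infinite, $C_{t}\cap C_{t'}$ is infinite, and almost faithfulness of $\mathcal C$ forces $t'\in\acl_{\mathcal M}(t)\subseteq\acl(t)$. Hence $t\in\acl\bigl(A\cup(C_{t}\cap X^{2})\bigr)$, and by finite character $t\in\acl(Ap_{1}\dots p_{m})$ for some $p_{1},\dots,p_{m}\in C_{t}\cap X^{2}\subseteq X^{2}$. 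Applying $E$ coordinate-wise, each $p_{i}$ is $\mathcal K$-interalgebraic over $A$ with a tuple from $D$, so $t\in\acl(A\bar d)$ for a single finite tuple $\bar d$ from $D$. Finally $(u,v_{0})\in C_{t}$ and the second projection of the non-trivial curve $C_{t}$ is finite-to-one, so $u\in\acl(tv_{0})\subseteq\acl(Av_{0}\bar d)=\acl(B\bar d)$, as required.

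The main obstacle --- and essentially the only subtle step --- is the recovery of $t$ from finitely many points of $C_{t}\cap X^{2}$: this is what converts the purely $\mathcal M$-theoretic link between $u$ and $X$ furnished by the curve $C_{t}$ into an honest $\mathcal K$-algebraic dependence on a tuple from $D$, and it hinges on $C_{t}\cap X^{2}$ being \emph{infinite} --- precisely the output of clause (2) of Proposition \ref{P: main prop} --- so that almost faithfulness of $\mathcal C$ can be brought to bear. A secondary point requiring care is the existence of a pair $(x,y)\in X^{2}$ with $\dprk(xy/Auv_{0})$ exactly $2d$, needed to invoke Proposition \ref{P: main prop}; this is handled by the same $\dprk$-bookkeeping for $D$-critical sets used in the proof of Corollary \ref{C: internal implies rich} (ultimately because $X$ is in finite correspondence with a subset of a power of $D$). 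Everything else is routine: the passage from the generic statement to a statement about all of $M$ is immediate once one observes that non-$\mathcal M$-generic points already lie in $\acl(B)$.
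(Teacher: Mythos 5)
Your overall strategy is essentially the paper's: fix a $D$-critical $X\subseteq M$ together with an $A$-definable finite correspondence into a power of $D$, apply Proposition \ref{P: main prop} to a rank-2 almost faithful family of non-trivial plane curves to get a curve $C_t$ passing through the point to be captured and through a pair in $X^2$, with $C_t\cap X^2$ infinite, recover $t$ from points of $C_t\cap X^2$ (hence, via the correspondence, from a tuple in $D$), and pull the point back through the finite-to-one projection of the non-trivial curve. The cosmetic differences -- swapping which coordinate carries the arbitrary point, and disposing of the non-$\mathcal M$-generic points via strong minimality over $B=Av_0$ instead of the paper's closing compactness remark -- are fine, and your existence claim for $(x,y)\in X^2$ with $\dprk(xy/Auv_0)=2\dprk(X)$ is on the same footing as the paper's own usage.

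The one genuine gap is in the recovery of $t$. You show that every realization $t'$ of $\tp_{\mathcal K}\bigl(t/A\cup(C_t\cap X^2)\bigr)$ lies in the finite set $F$ of parameters whose curves meet $C_t$ infinitely, and conclude ``hence $t\in\acl(A\cup(C_t\cap X^2))$''. That inference -- a complete type with only finitely many realizations in the model is algebraic -- needs saturation above the cardinality of the parameter set, and here the parameter set contains the infinite definable set $C_t\cap X^2$, which is uncountable, while $\mathcal K$ is only assumed $\aleph_1$-saturated; without enough saturation a type can be realized by a single element without being algebraic, so the step as written is not justified. The conclusion you actually use is true and can be repaired with what you already have: fix countably many distinct $p_1,p_2,\dots\in C_t\cap X^2$ and set $S_n=\{w\in T:p_1,\dots,p_n\in C_w\}$; any $w\in\bigcap_n S_n$ has $C_w\cap C_t$ infinite, hence lies in the finite $t$-definable set $F$ (almost faithfulness plus uniform finiteness); if every $S_n$ were infinite, the countable partial type asserting $w\in S_n$ for all $n$ and $w\neq f$ for each $f\in F$ would be finitely satisfiable, hence realized by $\aleph_1$-saturation, a contradiction; so some $S_n$ is finite and $t\in\acl(Ap_1\dots p_n)$, which is exactly what you feed into the correspondence with $D$. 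Alternatively, do what the paper does: pick a second point $(x',y')\in C_t\cap X^2$ with $\dprk(x'y'/Atxy)\geq 1$, so $\mathcal M$-generic on $C_t$ over $txy$, and quote the standard fact that only finitely many curves of a rank-2 almost faithful family pass through two such points, giving $t\in\acl(xyx'y')$ directly. With either repair, the rest of your argument goes through.
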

\begin{proof} Let $X\subset M$ be $D$-critical. Fix a rank 2 family $\mathcal C=\{C_t:t\in T\}$ of non-trivial plane curves as in Proposition \ref{P: main prop}, and assume it is $\emptyset$-definable in $\mathcal M$. Now let $v$ be any element of $M-\acl(0)$. Let $u\in X$ be such that $\dprk(u/v)=\dprk(X)$, and let $(x,y)\in X^2$ be such that $\dprk(xy/uv)=2\dprk(X)$. So $(u,v)$ is $\mathcal M$-generic in $M^2$. Then by Proposition \ref{P: main prop}, there is $t\in T$ with $(u,v),(x,y)\in C_t$ and $C_t\cap X^2$ infinite. Let $(x',y')\in C_t\cap X^2$ with $\dprk(x'y'/txy)\geq 1$. So $(x',y')$ is $\mathcal M$-generic in $C_t$ over $txy$, and thus there are only finitely many curves from $\mathcal C$ containing both $(x',y')$ and $(x,y)$ (see \cite{CasACF0}, Lemma 2.38). Since $C_t$ is one such curve, $t\in\acl(xyx'x')$. But since $C_t$ is non-trivial, we also have $v\in\acl(tu)$. Thus in total we have $v\in\acl(uxyx'y')$. Now $u,x,y,x',y'\in X$. By compactness, all but finitely many elements of $M$ are in $\acl(X)$. Adding finitely many parameters to cover the remaining elements, we get that $M$ is almost internal to $X$. But by definition $X$ is almost internal to $D$, so it follows that $M$ is almost internal to $D$.
\end{proof}

\subsection{Ruling out the Non-field Sorts}

Our next goal is to show that $\Gamma$ and $K/\mathcal O$ are not rich. For $K/\mathcal O$, our approach will only work in residue characteristic 0.

Let $(D,+,...)$ be either $\Gamma$ or $K/\mathcal O$. We can equip $D$ with a non-discrete group topology: for $\Gamma$, we use the order topology, and for $K/\mathcal O$, we use the topology generated by balls of negative radius (equivalently, the topology induced by valuation on $K/\mathcal O$, identifying $v(\mathcal O)$ with 0). It was noted in \cite{HaHaPeVF} that, with this topology, $D$ forms a \textit{uniform structure} in the sense of Simon and Walsberg \cite{SimWal} (the o-minimal case appears already in the work of Simon and Walsberg; for $K/\mathcal O$ see \cite[Lemma 5.13]{HaHaPeVF}) We do not define this notion here. Instead, we give the following facts that we will use:

\begin{fact}\label{F: local homeo} Let $X\subset D^n$ be $A$-definable with $\dprk(X)=k$. Let $a\in X$ with $\dprk(a/A)=k$. Then there are a projection $\pi:X\rightarrow D^k$, a set $B\supset A$, and a $B$-neighborhood $U$ of $a$ in $X$, such that $\dprk(a/B)=k$ and $\pi$ restricts on $U$ to a homeomorphism with an open subset of $D^k$. 
\end{fact}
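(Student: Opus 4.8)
\textbf{Proof plan for Fact \ref{F: local homeo}.} The statement asserts that a definable set $X \subset D^n$ of dp-rank $k$ behaves locally, near a generic point, like an open subset of $D^k$ via a coordinate projection — the analogue for the uniform structure $D$ (either $\Gamma$ or $K/\mathcal O$) of the generic local homeomorphism property. The plan is to extract this directly from the cell decomposition / dimension theory available for definable sets in SW-uniformities, as developed by Simon--Walsberg and adapted to $\Gamma$ and $K/\mathcal O$ in \cite{HaHaPeVF}. First I would recall that in an SW-uniformity the dp-rank of a definable set coincides with its topological dimension, and that there is a cell-decomposition-type statement: $X$ can be partitioned into finitely many $A$-definable pieces, each of which, after a suitable coordinate projection $\pi : D^n \to D^k$ (with $k$ the dp-rank of the piece), maps homeomorphically onto an open subset of $D^k$ in a neighborhood of each of its points. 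Since $\dprk(a/A) = k = \dprk(X)$, the point $a$ lies in the (unique up to lower-rank error) top-dimensional cell; more precisely, $a$ avoids every $A$-definable subset of $X$ of dp-rank $< k$, so $a$ lies in the open locus of $X$ where the local-homeomorphism behavior holds.

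The key steps, in order: (1) Decompose $X$ into $A$-definable cells and discard those of dp-rank $< k$; let $X'$ be their union. Then $X \setminus X'$ is $A$-definable of dp-rank $< k$, so $a \notin X \setminus X'$, i.e. $a$ lies in some top cell $X_0$, which admits an $A$-definable coordinate projection $\pi$ that is a local homeomorphism onto an open subset of $D^k$ at every point of $X_0$. (2) Pick a neighborhood witnessing this at $a$: by definition of the cell structure there is an $A$-definable open $U_0 \ni a$ in $X_0$ (hence relatively open in $X$ up to the discarded lower-rank set) on which $\pi$ restricts to a homeomorphism with an open $V \subset D^k$. If one wants $U$ honestly relatively open in $X$, intersect with $X \setminus \overline{X\setminus X'}$ — but the frontier inequality for $D$ (which holds in SW-uniformities, cf. \cite{SimWal} and \cite[\S 5]{HaHaPeVF}) guarantees $a$ is not in the closure of the discarded lower-rank set, so some genuine relative neighborhood of $a$ in $X$ is contained in $X_0$. (3) Choose $B \supset A$: if the cell decomposition or the neighborhood $U$ requires extra parameters (it does not in the ordered case, but may in $K/\mathcal O$ where definable choice is delicate), absorb them into $B$; since those parameters are chosen independently of $a$, or one can first extend the type of $a$ generically over them, we retain $\dprk(a/B) = k$. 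This yields exactly the asserted $\pi$, $B$, and $U$.

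I would present this mostly by citation: step (1) is the cell decomposition for $\Gamma$ (o-minimality of the value group, so Simon--Walsberg/o-minimal cell decomposition applies verbatim) and for $K/\mathcal O$ (\cite[Lemma 5.13]{HaHaPeVF} identifies $K/\mathcal O$ as an SW-uniformity, so the SW cell decomposition applies); step (2) combines this with the frontier inequality in SW-uniformities; step (3) is a routine parameter bookkeeping argument using that dp-rank is preserved under adding parameters over which the point is chosen generically.

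The main obstacle I anticipate is \emph{step (3) together with definable choice in the $K/\mathcal O$ case}: the SW cell decomposition produces the projection and the open neighborhood, but making the neighborhood $U$ itself \emph{definable} (rather than merely type-definable or cut out with auxiliary parameters) can fail without definable Skolem functions, which is precisely why the statement is formulated with the extra parameter set $B \supset A$. So the real content is to check that all the parameters needed to name $\pi$ and a witnessing open box $V \subset D^k$ can be collected into a single finite $B$ over which $a$ is still generic in $X$; this follows because one can first take $B_0$ to be any finite set of parameters naming the cell data, then replace $a$ by a realization of $\tp(a/A)$ generic over $B_0$ (legitimate since the conclusion is about the \emph{existence} of such a configuration, and all hypotheses are $\tp(a/A)$-invariant), obtaining $\dprk(a/B) = k$ as required. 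With that caveat handled, the statement is immediate from the established theory of SW-uniformities.
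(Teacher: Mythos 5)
The paper does not reprove this statement at all: it is quoted as a Fact with a citation to \cite[Proposition 4.6]{SimWal} for the local homeomorphism via a coordinate projection, and to \cite[Corollary 3.13]{HaHaPeVF} precisely ``to ensure $\dprk(a/B)=k$''. Your plan is essentially a reconstruction of those cited results (SW-type decomposition, dp-rank $=$ topological dimension, frontier inequality), so in spirit it follows the same route; most of the sketch is sound. Two points, though. First, a minor one: after discarding the lower-rank part, the union $X'$ of top-rank pieces may consist of several pieces attached to different projections, and a relative neighborhood of $a$ in $X$ avoiding the discarded set only lands in $X'$, not automatically in the single piece $X_0$ containing $a$. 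This is patched by the same tool you already use: the other top-rank pieces and their frontiers are $A$-definable, $a$ lies in none of them, and their frontiers have dp-rank $<k$, so $a$ avoids their closures; alternatively one can quote \cite[Proposition 4.6]{SimWal} directly, which is already stated pointwise off an $A$-definable small set.

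The genuine soft spot is your step (3). The conclusion is about the \emph{given} point $a$, so ``replace $a$ by a realization of $\tp(a/A)$ generic over $B_0$'' is not legitimate as stated: the ambient model is only assumed $\aleph_1$-saturated, not homogeneous, so equality of types over $A$ does not supply an automorphism moving the witnesses back to $a$; and one cannot transfer the witnesses from a monster model by compactness either, because the condition $\dprk(a/Ab)=k$ on the neighborhood parameters $b$ is not a definable condition. Your alternative gesture (``the parameters are chosen independently of $a$'') is the right idea, but it is exactly the nontrivial content here: dp-rank admits no general symmetry or exchange, so choosing the ball parameters generically over $Aa$ does not by itself guarantee that $\dprk(a/Ab)$ does not drop. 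This is the point of the second citation in the paper, \cite[Corollary 3.13]{HaHaPeVF}, which provides the dp-rank preservation statement needed to pick a $B$-definable basic neighborhood of $a$ with $\dprk(a/B)=k$. With that lemma invoked in place of the ``replace $a$'' move, your argument goes through.
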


\begin{fact}\label{F: no isolated} Let $X\subset D^n$ be $A$-definable, and let $a\in X$ be an isolated point. Then $a\in\acl(A)$.
\end{fact}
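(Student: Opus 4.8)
\textbf{Proof proposal for Fact \ref{F: no isolated}.} The plan is to reduce an isolated point of $X$ to a contradiction with the fact that $D^n$ has no isolated points, using the generic local homeomorphism of Fact \ref{F: local homeo}. First I would arrange that the point in question is dp-rank-generic in its own definable set. So suppose toward a contradiction that $a\notin\acl(A)$. Choose an $A$-definable set $X'\subseteq X$ of minimal dp-rank with $a\in X'$; then $\dprk(X')=\dprk(a/A)=:m$, and $m\ge 1$ since $a\notin\acl(A)$. Because $a$ is isolated in $X$, there is an open box $V\subseteq D^n$ with $V\cap X=\{a\}$; since $X'\subseteq X$ this gives $V\cap X'=\{a\}$, so $a$ is still isolated in $X'$.

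Next I would apply Fact \ref{F: local homeo} to $X'$ and the point $a$ (legitimate, since $\dprk(a/A)=m=\dprk(X')$): there are a coordinate projection $\pi\colon X'\to D^m$, a parameter set $B\supseteq A$, and a relatively open $B$-neighborhood $U$ of $a$ in $X'$ on which $\pi$ restricts to a homeomorphism onto an open subset $\pi(U)\subseteq D^m$. Since $a$ is isolated in $X'$, it is isolated in $U$ (the singleton $\{a\}$ is relatively open in $X'$, hence in $U$), so $\pi(a)$ is isolated in $\pi(U)$, hence is an isolated point of the open set $\pi(U)\subseteq D^m$. But $D$ carries a non-discrete group topology, and in a non-discrete topological group no point is isolated (translations are homeomorphisms); consequently $D^m$ with $m\ge 1$ is again a non-discrete topological group and has no isolated points, so neither does its nonempty open subset $\pi(U)$. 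This contradiction forces $m=0$, i.e. $a\in\acl(A)$.

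I do not expect any genuine obstacle here: all the substantive content is already packaged into Fact \ref{F: local homeo} and the non-discreteness of the topology on $D$. The only thing needing a little care is the bookkeeping that the property ``being an isolated point'' is preserved when passing from $X$ to the subset $X'$, then to the relatively open neighborhood $U$, and finally across the homeomorphism $\pi|_U$; each of these steps is routine but should be spelled out. As an alternative presentation one could work directly with the set $X_0$ of isolated points of $X$, which is $A$-definable because the topology on $D$ (the order topology on $\Gamma$, the negative-ball topology on $K/\mathcal O$) is definable, and show $\dprk(X_0)=0$ by the same argument applied to a dp-generic point of $X_0$; then $a\in X_0\subseteq\acl(A)$ since dp-rank $0$ definable subsets of $D^n$ are finite.
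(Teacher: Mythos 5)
The paper does not actually prove this statement: it is quoted as an external fact, with the reference \cite[Lemma 3.6]{SimWal} given right after Fact \ref{F: local homeo}. So any self-contained derivation is automatically a ``different route''; yours has the merit of showing the statement follows from the other quoted ingredient (Fact \ref{F: local homeo}) plus non-discreteness of the group topology on $D$, which is a perfectly reasonable way to make the section self-contained.

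That said, your main presentation has one step that is not justified: ``choose an $A$-definable $X'\subseteq X$ of minimal dp-rank with $a\in X'$; then $\dprk(X')=\dprk(a/A)$.'' This silently assumes that the dp-rank of the complete type $\tp(a/A)$ equals the minimum dp-rank of an $A$-definable set containing $a$, i.e.\ that the rank of a type is attained by a formula in it. Unlike the inequality $\dprk(a/A)\leq\dprk(X')$, this is not a general property of dp-rank, and it is exactly the kind of statement one must be careful with in the imaginary sort $K/\mathcal O$, where acl-dimension and exchange-type arguments (which would let you extract a subtuple of length $\dprk(a/A)$ over which $a$ is algebraic, and hence a witnessing definable set) are precisely what this section is engineered to avoid. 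For $D=\Gamma$ the step is fine, since there dp-rank coincides with o-minimal dimension and the dimension of a type is the minimum of the dimensions of its formulas; for $K/\mathcal O$ it would need a separate argument. Your alternative presentation at the end repairs this cleanly and is the version to keep: the set $X_0$ of points of $X$ isolated in $X$ is $A$-definable because the topology on $D$ has a uniformly definable basis (intervals, respectively balls of negative radius); every point of $X_0$ is isolated in $X_0$; if $\dprk(X_0)=k\geq 1$, pick $a'\in X_0$ with $\dprk(a'/A)=k$ (such a point exists by $\aleph_1$-saturation, since the dp-rank of a definable set is realized by the dp-rank of a point over its parameters) and apply Fact \ref{F: local homeo} to get a relatively open $U\ni a'$ in $X_0$ homeomorphic to a nonempty open subset of $D^k$; but $U$ is discrete while open subsets of $D^k$ have no isolated points ($D$ is a non-discrete Hausdorff topological group, hence homogeneous), a contradiction. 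Hence $\dprk(X_0)=0$, so $X_0$ is finite and $A$-definable, giving $a\in X_0\subseteq\acl(A)$.
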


For the first of the above facts see \cite[Proposition  4.6]{SimWal} (and \cite[Corollary 3.13]{HaHaPeVF} to ensure $\dprk(a/B)=k$), and for the second see \cite[Lemma 3.6]{SimWal}.  

The following notion was studied in \cite{HaHaPeVF}: 

\begin{definition}
    $D$ is \textit{locally linear} if whenever $f:U\rightarrow D$ is an $A$-definable function on an open subset of $D^n$, and $a\in U$ with $\dprk(a/A)=n$, there is a neighborhood of $a$ on which $f$ agrees with a map of the form $x\mapsto g(x)+b$, where $g:D^n\rightarrow D$ is a definable group homomorphism and $b\in D$ is a constant.
\end{definition}

One can quickly improve on this definition:

\begin{lemma}\label{L: coset point} Assume $D$ is locally linear. Let $X\subset D^n$ be $A$-definable, and let $a\in X$ with $\dprk(a/A)=\dprk(X)$. Then there are $B\supset A$ and a $B$-definable neighborhood $U$ of $a$ in $X$, such that $\dprk(a/B)=\dprk(X)$ and $X$ agrees with a coset of a definable subgroup of $D^n$.
\end{lemma}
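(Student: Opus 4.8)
The plan is to use the Simon--Walsberg local structure theory (Fact~\ref{F: local homeo}) to present $X$, near $a$, as the graph of a function over an open subset of $D^k$ with $k=\dprk(X)$, and then to feed each coordinate of that function into local linearity, so as to recognise the graph as an open piece of an affine subspace of $D^n$. The linear part of that affine subspace will be the definable subgroup whose coset the statement asks for. (I read the conclusion as: the chosen neighbourhood $U$ equals $(H+c)\cap\Omega$ for a definable subgroup $H\le D^n$, a coset representative $c$, and an open $\Omega\subseteq D^n$, i.e.\ $U$ agrees with the coset $H+c$.)

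In detail, I would first apply Fact~\ref{F: local homeo} to $X$ and $a$: after permuting coordinates take $\pi\colon D^n\to D^k$ to be the projection onto the first $k$ coordinates, and obtain $B\supseteq A$ with $\dprk(a/B)=k$ together with a $B$-definable neighbourhood $U$ of $a$ in $X$ on which $\pi$ restricts to a homeomorphism onto an open $B$-definable $V\subseteq D^k$. Then $U$ is the graph of a $B$-definable map $f=(f_1,\dots,f_{n-k})\colon V\to D^{n-k}$, and, writing $a'=\pi(a)$, we have $\dprk(a'/B)=\dprk(a/B)=k$ because $a$ and $a'$ are interdefinable over $B$. As $k$ is the dimension of the ambient $D^k$, the point $a'$ is a legitimate input for local linearity; applying it to each $f_i$ at $a'$ and intersecting the finitely many resulting neighbourhoods yields an open $W\ni a'$, definable group homomorphisms $g_i\colon D^k\to D$, and constants $b_i\in D$, with $f_i=g_i+b_i$ on $W$ for each $i$. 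Equivalently, on $W$ we have $f=G+b$ with $G:=(g_1,\dots,g_{n-k})\colon D^k\to D^{n-k}$ a definable group homomorphism and $b:=(b_1,\dots,b_{n-k})\in D^{n-k}$. Granting that this data can be taken over a base still witnessing $\dprk(a/\cdot)=k$ (the one delicate point, addressed below), I would then replace $B$ accordingly and replace $U$ by $U\cap\pi^{-1}(W)$, so $U=\{(w,G(w)+b):w\in W\}$ is a $B$-definable neighbourhood of $a$ in $X$.

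Now consider the definable group homomorphism $\iota\colon D^k\to D^n$, $w\mapsto(w,G(w))$; its image $H:=\iota(D^k)$ is a definable subgroup of $D^n$, and setting $c:=(0_{D^k},b)\in D^n$ we get $\iota(w)+c=(w,G(w)+b)$, whence $U=\iota(W)+c\subseteq H+c$. Moreover $\iota$ is a homeomorphism of $D^k$ onto $H$ (it is continuous, being $f-b$ near $a'$ hence continuous at a dp-generic point and therefore everywhere, and its inverse is the restriction to $H$ of the projection to the first $k$ coordinates), so $U=\iota(W)+c$ is relatively open in $H+c$; that is, $U=(H+c)\cap\Omega$ for some open $\Omega\subseteq D^n$. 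This gives the asserted agreement of $U$ with the coset $H+c$ of the definable subgroup $H\le D^n$, and $\dprk(a/B)=\dprk(X)$ by construction, completing the argument.

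The step requiring genuine care is the parameter bookkeeping: one must absorb the data $(W,g_i,b_i)$ into the base without dropping $\dprk(a/B)$ below $k$, since naively adjoining $b_i=f_i(a')-g_i(a')$ can lower the dp-rank of $a$. The way I would handle this is the genericity argument underlying Fact~\ref{F: local homeo}: since $a'$ is dp-generic in $V$ over $B$, local linearity applies at the \emph{generic} point, so —using that for both $D=\Gamma$ and $D=K/\mathcal O$ the definable endomorphisms $D^k\to D$ are $\acl(\emptyset)$-definable and that dp-rank coincides with topological dimension— the identity $f=G+b$ holds on an $\acl(B)$-definable generic open $V'\subseteq V$ for a single $G$ and a single $b\in\acl(B)$; one then takes the new base to be $\acl(B)$, over which $\dprk(a'/\,\cdot\,)$ is still $k$, and $W:=V'$. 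This is exactly the base-extension-preserving-dp-rank technique used in the proof of Fact~\ref{F: local homeo} (cf.\ \cite[Corollary~3.13]{HaHaPeVF}); everything else in the proof is formal.
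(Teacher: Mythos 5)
Your proof is correct and follows essentially the same route as the paper's own (very terse) argument: use Fact \ref{F: local homeo} to present $X$ near $a$ as the graph of a definable map over an open subset of $D^k$, then apply local linearity coordinate-wise to identify that graph, near $a$, with a coset of the graph of a definable homomorphism $D^k\rightarrow D^{n-k}$. The one caveat is your auxiliary claim that definable endomorphisms of $K/\mathcal O$ are $\acl(\emptyset)$-definable (multiplication by $\lambda\in\mathcal O$ requires the parameter $\lambda$, so this is unjustified as stated); but it is also unnecessary, since the lemma only asks that the neighbourhood $U$ --- not the subgroup or the translation constant --- be definable over a base preserving $\dprk(a/\cdot)$, and that bookkeeping is exactly the \cite[Corollary 3.13]{HaHaPeVF}-style argument already invoked for Fact \ref{F: local homeo}.
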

\begin{proof}
    Let $k=\dprk(X)$. By Fact \ref{F: local homeo}, we can assume there is a projection $\pi:X\rightarrow D^k$ which is a homeomorphism onto an open set. We thus view $X$ locally as the graph of a function $D^k\rightarrow D^{n-k}$. Now apply the definition of local linearity to each coordinate component of this function.
\end{proof}

The main fact we need is:

\begin{fact} $\Gamma$ is locally linear. $K/\mathcal O$ is locally linear if $K$ has residue characteristic zero. 
\end{fact}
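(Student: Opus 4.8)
The plan is to handle the two sorts separately. For $\Gamma$, the key input is that in ACVF the value group is stably embedded and its induced structure is that of a pure divisible ordered abelian group, which is o-minimal and has quantifier elimination in the language of ordered $\mathbb{Q}$-vector spaces. Consequently every definable subset of $\Gamma^n$ is a finite Boolean combination of sets cut out by $\mathbb{Q}$-linear (in)equalities, and, by o-minimal cell decomposition refined by the linear quantifier elimination, every definable function $f\colon U\to\Gamma$ with $U\subseteq\Gamma^n$ open is piecewise $\mathbb{Q}$-affine. Moreover on $\Gamma^n$ the dp-rank computed in $(K,v)$ agrees with o-minimal dimension. So, given $f$ definable over $A$ and $a\in U$ with $\dprk(a/A)=n$, I would take a cell decomposition of $U$ over $A$ compatible with $f$; since $a$ lies in no proper $A$-definable subset, it lies in the interior of the (open, $A$-definable) top-dimensional cell, on a neighborhood of which $f(x)=g(x)+b$ with $g\colon\Gamma^n\to\Gamma$ a $\mathbb{Q}$-linear homomorphism and $b\in\Gamma$ constant. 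This is exactly local linearity, and no hypothesis on the residue characteristic is used.

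For $K/\mathcal{O}$ in residue characteristic zero, I would first record the structural simplification that in this case $K/\mathcal{O}$ is torsion-free and divisible, hence carries no Frobenius-twisted definable endomorphisms — precisely the feature responsible for the failure of local linearity in residue characteristic $p$. The plan, following \cite{HaHaPeVF}, is then to analyze a definable $f\colon U\to K/\mathcal{O}$ (with $U\subseteq (K/\mathcal{O})^n$ open and $a\in U$ generic over $A$) by pulling it back along the definable quotient map $K^n\to (K/\mathcal{O})^n$ to a definable relation on $K^{n+1}$, applying the generic strict differentiability of definable functions available in $\mathrm{ACVF}_{0,0}$ (which is $1$-h-minimal, so the results of \cite{AcHa} apply on the valued field sort), and transporting the resulting differential back through the quotient. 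The differential obtained is a definable additive map $(K/\mathcal{O})^n\to K/\mathcal{O}$; invoking that in residue characteristic zero all such maps are the evident scalar ones $x\mapsto\sum c_i x_i\bmod\mathcal{O}$ with $c_i\in K$, one concludes that on a neighborhood of $a$ the function $f$ agrees with $x\mapsto g(x)+b$ for a definable homomorphism $g$ and a constant $b$, using Facts~\ref{F: local homeo} and~\ref{F: no isolated} to organize the local picture and to discard the negligible (lower dp-rank) locus.

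I expect the $\Gamma$ case to be essentially immediate from quantifier elimination for divisible ordered abelian groups together with o-minimal cell decomposition. The real content, and the reason the residue characteristic hypothesis is indispensable, lies in the $K/\mathcal{O}$ case: one must (i) classify the definable additive endomorphisms of $K/\mathcal{O}$ in residue characteristic zero — it is the failure of this in residue characteristic $p$, where $\mathcal{O}$-linear combinations of Frobenius powers are new endomorphisms, that breaks local linearity there — and (ii) transfer differentiability from the valued field sort down to the imaginary sort $K/\mathcal{O}$, which requires care precisely because $K/\mathcal{O}$ is not stably embedded, so one cannot argue purely inside its induced structure. I would regard (i) and (ii) as the crux; once they are in hand, the rest is cell-decomposition bookkeeping of the same flavour as in the $\Gamma$ case.
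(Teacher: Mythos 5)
Your $\Gamma$ argument is fine and is essentially what the paper means by ``well known'': the value group is stably embedded with induced structure a pure divisible ordered abelian group, so (after replacing the parameters by parameters from $\Gamma$, which stable embeddedness permits while keeping $a$ generic) every definable function on an open subset of $\Gamma^n$ is piecewise $\mathbb{Q}$-affine, dp-rank agrees with o-minimal dimension, and a point of dp-rank $n$ over the parameters lies in the interior of the top-dimensional piece. The paper gives no more detail than this, so on this half you and the paper coincide.

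On the $K/\mathcal O$ half there is a genuine gap. The paper does not prove this statement at all: it quotes \cite[Proposition 6.24]{HaHaPeVF}. Your sketch attempts to re-derive that proposition, but the two steps you yourself flag as the crux are exactly its content, and the first concrete step does not go through as written. Pulling $f\colon U\to K/\mathcal O$ back along $\pi\colon K^n\to(K/\mathcal O)^n$ does not yield a definable $K$-valued function to which the generic strict differentiability results of \cite{AcHa} could be applied, because there is no definable section of $K\to K/\mathcal O$; what you get is only the relation $\{(x,y)\in K^{n+1}:\pi(y)=f(\pi(x))\}$, which is invariant under translation by $\mathcal O^{n+1}$, hence an open subset of $K^{n+1}$ of dp-rank $n+1$, and carries no function germ to differentiate. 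Giving meaning to a ``differential'' of a $K/\mathcal O$-valued map across a non--stably-embedded quotient, and classifying the definable additive maps $(K/\mathcal O)^n\to K/\mathcal O$ (note also that well-definedness forces the scalars $c_i$ to lie in $\mathcal O$, not in $K$ as you wrote), is precisely what \cite{HaHaPeVF} accomplish via their analysis of infinitesimal subgroups; as it stands your argument assumes what it must prove. Either carry out that analysis or, as the paper does, simply cite Proposition 6.24 of \cite{HaHaPeVF}.
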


For $\Gamma$ this is well known, since $\Gamma$ is an o-minimal pure divisible ordered abelian group. For $K/\mathcal O$  this is \cite[Proposition 6.24]{HaHaPeVF}.\\

We do not know if $K/\mathcal O$ is locally linear in residue characteristic $p$, but we suspect that it is. 

Continuing, we now show:

\begin{proposition}\label{P: gamma not rich} Assume $D$ is locally linear. Then $D$ is not rich.
\end{proposition}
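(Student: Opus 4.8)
The plan is to show that a $k$-rich configuration $(a,b,A)$ in a locally linear $D$ cannot exist once $k$ is large enough, which contradicts richness. The idea is that local linearity forces the definable subsets of $D^n$ appearing in a rich configuration to be (locally, near generic points) cosets of definable subgroups of $D^n$, and there simply is not enough room for such cosets to exhibit the ``non-redundancy'' demanded by condition (2) of a $k$-rich configuration while also having large dp-rank as in condition (3).

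More precisely, suppose toward a contradiction that $D$ is rich, and fix $n$ witnessing this. For arbitrarily large $k$ we get $m$, $a\in D^n$, $b\in D^m$, and a parameter set $A$ such that $(a,b,A)$ is a $k$-rich configuration. First I would work over a suitable $B\supset A$: let $X\subset D^m$ be an $A$-definable set with $b\in X$ and $\dprk(b/A)=\dprk(X)\ge k$, and let $Y\subset D^n\times D^m$ be an $A$-definable set with $(a,b)\in Y$ generic. Apply Lemma \ref{L: coset point} (after enlarging $A$ to some $B$ over which $b$, and then $(a,b)$, remain generic in the relevant sets) to replace $X$ near $b$ by a coset $b+H$ of a definable subgroup $H\le D^m$, and the fiber $Y_a=\{b':(a,b')\in Y\}$ near $b$ by a coset $b+H'$ of a definable subgroup $H'\le D^m$. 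Since $a\notin\acl(Bb)$, the point $(a,b)$ is not isolated in the fiber $\{a':(a',b)\in Y\}$, so by Fact \ref{F: no isolated} that fiber is infinite; local linearity applied to it similarly yields that, near $a$, it is a coset of a positive-dimensional subgroup $K_0\le D^n$. The key tension is condition (2): every infinite sequence of distinct $B$-conjugates $b_1,b_2,\dots$ of $b$ over $Ba$ (equivalently, distinct generic points of $b+H$) must pin down $a$ to its algebraic closure. But if, in a neighborhood of $a$, the set $\{a':(a',b_i)\in Y\}$ is a coset of a fixed positive-dimensional group $K_0$ for each $i$ — which is forced by local linearity once we restrict to the coset description — then the intersection over $i$ of these cosets either stabilizes to a positive-dimensional coset (so $a$ is not algebraic over the $b_i$'s, contradicting (2)) or is determined after finitely many steps, which after a dp-rank count contradicts $\dprk(b/A)\ge k$ for $k$ large. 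I would make this precise by a dp-rank computation: local linearity caps the ``genuine variation'' of the family $\{$fiber over $b'\}_{b'\in b+H}$ by $\dprk$ of an appropriate quotient of $D^m$, and this quotient has dp-rank bounded independently of $k$ because $D$ has finite dp-rank (indeed $\dprk(D)=1$ for both $\Gamma$ and $K/\mathcal O$); hence condition (2) can involve only boundedly many $b_i$, contradicting condition (3) for $k$ exceeding that bound.

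The main obstacle I anticipate is the bookkeeping around parameters and genericity: Lemma \ref{L: coset point} only gives the coset description after passing to a larger base $B$ and a $B$-neighborhood, and one must check that $a$ and $b$ retain the right dp-ranks over $B$ (so that conditions (1)–(3) survive this enlargement), and that the ``infinitely many conjugates'' in condition (2) can be taken generic in the coset $b+H$. Handling this carefully — and packaging the intersection-of-cosets argument as a clean dp-rank inequality rather than an ad hoc geometric count — is where the real work lies; the algebraic heart (cosets of subgroups in a divisible/locally linear group intersect in cosets, and a strictly decreasing chain of cosets has length bounded by the ambient dp-rank) is routine. A secondary point to be careful about is that local linearity is a statement about definable \emph{functions} on open sets, so I must first use Fact \ref{F: local homeo} to present the relevant fibers locally as graphs of functions before invoking it; this is exactly what Lemma \ref{L: coset point} packages, so I would lean on that lemma throughout rather than reprove it.
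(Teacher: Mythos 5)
Your overall strategy is the paper's: combine Lemma \ref{L: coset point}, Fact \ref{F: no isolated}, and infinitely many conjugates of $b$ over $Ba$ to defeat condition (2) of richness. But two steps, as written, have genuine gaps. First, the uniformity of the group $K_0$ across the conjugates $b_i$ is asserted, not proved. You apply Lemma \ref{L: coset point} to sets in $D^m$ (namely $X$ and $Y_a$) and to the single fiber $\{a':(a',b)\in Y\}$; applying it separately to the fibers over the various $b_i$ only yields a priori different subgroups $K_i$ (conjugate over $Ba$, but the groups produced by the lemma need not be defined over $Ba$), and your phrase ``which is forced by local linearity once we restrict to the coset description'' does not bridge this. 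The fix is exactly the paper's move: apply the coset lemma once to the graph $Y\subset D^n\times D^m$ at the generic point $(a,b)$, obtaining a single subgroup $H\le D^{n+m}$, and take $G\times\{0\}=\ker(H\rightarrow D^m)$; then for every conjugate $b_i$ with $(a,b_i)$ in the resulting neighborhood, the fiber over $b_i$ agrees near $a$ with the one fixed coset $G+a$, so the fibers literally coincide locally and no intersection-of-varying-cosets analysis is needed.

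Second, your endgame does not reach a contradiction. The first horn --- ``the intersection stabilizes to a positive-dimensional coset, so $a$ is not algebraic over the $b_i$'s, contradicting (2)'' --- is a non sequitur: an infinite common intersection of these particular definable fibers does not show $a\notin\acl(A\bar b)$, since algebraicity could be witnessed by a formula having nothing to do with those fibers. The paper converts condition (2) into a usable statement about a definable set before doing any geometry: by compactness one chooses the $A$-definable set $X\ni(a,b)$ of full rank $\dprk(ab/A)$ so that any infinitely many distinct fibers $X_{b'}$ have finite intersection; then the locally coinciding fibers over infinitely many conjugates, together with non-isolation of $a$ (Fact \ref{F: no isolated}, using $a\notin\acl(Ab)$) and saturation, give an infinite intersection of infinitely many distinct fibers --- a clean contradiction. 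Your second horn's ``dp-rank count'' capping the variation of the family by a quotient of $D^m$ is too vague to check, and it is not where largeness of $k$ actually enters: $k$ large is needed only to guarantee $b\notin\acl(Ba)$ after the base change supplied by Lemma \ref{L: coset point}, via subadditivity $\dprk(b/Ba)\ge\dprk(ab/B)-n\cdot\dprk(D)$ with $n$ fixed and $\dprk(D)$ finite, i.e.\ precisely so that the infinite sequence of conjugates over $Ba$ exists at all.
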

\begin{proof} Let $a\in D^n$, $b\in D^m$, $A$, and $k$ be such that $(a,b,A)$ is a $k$-rich configuration (for now, we do not assume anything about $k$). Let $r=\dprk(ab/A)$, and let $X\subset D^n\times D^m$ be $A$-definable with $(a,b)\in X$ and $\dprk(ab/A)=r$. Since $(a,b,A)$ is a $k$-rich configuration, and using compactness, we can assume that the intersection of any infinitely many distinct fibers $X_{b'}\subset D^n$ is finite.

By Lemma \ref{L: coset point}, there are $B\supset A$, a $B$-definable neighborhood $U$ of $a$ in $X$, and a subgroup $H\leq D^{n+m}$, such that $\dprk(ab/B)=r$ and $X$ agrees with the coset $H+(a,b)$ on $U$. Note, then, that whenever $(a',b')\in U\cap X$, we have $H+(a,b)=H+(a',b')$, and thus $X$ also agrees with $H+(a',b')$ on $U$.

Now let $G\times\{0\}$ be the kernel of the projection $H\rightarrow D^m$ -- so $G$ is a subgroup of $D^n$. By the above, whenever $(a',b')\in U\cap X$, the fiber $X_{b'}$ agrees with $G+a'$ in a neighborhood of $a'$. Indeed, in some small enough neighborhood of $a'$, we have $$x\in X_{b'}\iff(x,b')\in X\iff(x,b')\in H+(a',b')\iff (x,b')-(a',b')\in H$$ $$\iff(x-a',0)\in H\iff x-a'\in G\iff x\in G+a'.$$ Now in particular, this implies that for any $b,b'$ with $(a,b),(a,b')\in X\cap U$, the fibers $X_b$ and $X_{b'}$ agree in a neighborhood of $a$ (because they both agree with $G+a$).

Now assume toward a contradiction that $D$ is rich. This says that, in the situation constructed above, we can make $\dprk(ab/A)=\dprk(ab/B)$ arbitrarily large, while leaving $\dprk(a/A)$ (and thus also $\dprk(a/B)$) bounded. In particular, we may fix all of the data in such a way that $b\notin\acl(Ba)$. Thus, we can find $\bar b=b_1,b_2,...$, an infinite sequence of distinct realizations of $\tp_{(K,v)}(b/Ba)$. So each $(a,b_i)\in U$, and thus (by the previous paragraph) any finitely many of the fibers $X_{b_i}$ coincide on a neighborhood of $a$. 

Finally, since $(a,b,A)$ is a $k$-rich configuration, we have $a\notin\acl(Ab)$. So by Fact \ref{F: no isolated}, $a$ is not isolated in $X_b$, and thus $a$ is not isolated in any $X_{b_i}$. Thus, any finitely many $X_{b_i}$ have infinite intersection. By compactness, the intersection of all of the $X_{b_i}$ is infinite, a contradiction.
\end{proof}

\begin{corollary}\label{C: rule out groups}
$M$ is not locally interalgebraic with $\Gamma$. If $\textbf{k}$ has characteristic zero, $M$ is not locally interalgebraic with $K/\mathcal O$. If $\textbf{k}$ has characteristic $p>0$ and $K/\mathcal O$ is locally linear, $M$ is not locally interalgebraic with $K/\mathcal O$.
\end{corollary}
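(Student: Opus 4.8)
The plan is to combine the two main results of this section with the local linearity facts stated above. Recall that by Corollary \ref{C: internal implies rich}, if $M$ is locally interalgebraic with an interpretable set $D$, then $D$ is rich. On the other hand, Proposition \ref{P: gamma not rich} says that if $D$ is locally linear, then $D$ is not rich. So the strategy is simply to observe that the sorts we wish to rule out are locally linear, and derive a contradiction.

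First I would handle $\Gamma$. Suppose toward a contradiction that $M$ is locally interalgebraic with $\Gamma$. By Corollary \ref{C: internal implies rich}, $\Gamma$ is rich. But $\Gamma$ is an o-minimal pure divisible ordered abelian group, hence locally linear (this is the first clause of the last cited Fact), so by Proposition \ref{P: gamma not rich}, $\Gamma$ is not rich -- a contradiction. Thus $M$ is not locally interalgebraic with $\Gamma$.

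Next I would handle $K/\mathcal O$ in residue characteristic zero. Again, assume for contradiction that $M$ is locally interalgebraic with $K/\mathcal O$. By Corollary \ref{C: internal implies rich}, $K/\mathcal O$ is rich. But when $\textbf{k}$ has characteristic zero, $K/\mathcal O$ is locally linear (the second clause of the cited Fact, i.e.\ \cite[Proposition 6.24]{HaHaPeVF}), so Proposition \ref{P: gamma not rich} gives that $K/\mathcal O$ is not rich -- a contradiction. Finally, the positive residue characteristic case is \emph{identical}, once we assume as a hypothesis that $K/\mathcal O$ is locally linear: Corollary \ref{C: internal implies rich} makes $K/\mathcal O$ rich, and Proposition \ref{P: gamma not rich} applied to the (assumed) local linearity of $K/\mathcal O$ makes it not rich. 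So in all three cases the corollary follows.

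There is no real obstacle here beyond bookkeeping: the substance has already been carried out in Corollary \ref{C: internal implies rich} (whose proof, via Proposition \ref{P: main prop}, is the genuinely involved step, exploiting the rank $2$ family of plane curves coming from non-local modularity) and Proposition \ref{P: gamma not rich} (whose proof exploits local linearity to force large families of fibers to locally coincide, contradicting the non-redundancy built into a $k$-rich configuration). The only thing to be careful about is that the hypothesis ``$K/\mathcal O$ is locally linear'' is exactly what is needed -- and known to hold -- in residue characteristic zero, and is the one conjectural input in residue characteristic $p$; everything else is a direct citation of the two preceding results.
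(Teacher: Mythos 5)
Your proposal is correct and is exactly the intended argument: the paper states the corollary without proof precisely because it is the immediate combination of Corollary \ref{C: internal implies rich}, Proposition \ref{P: gamma not rich}, and the cited local linearity facts for $\Gamma$ and for $K/\mathcal O$ in residue characteristic zero (with local linearity of $K/\mathcal O$ taken as a hypothesis in residue characteristic $p$). Nothing is missing.
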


\subsection{Concluding}

We now show our main result:

\begin{theorem}\label{T: imaginaries main thm} Assume that $K$ has residue characteristic 0, or that $\textbf{k}$ has characteristic $p>0$ and $K/\mathcal O$ is locally linear. Then $M$ definably embeds into a power of $K$ or $\textbf{k}$.
\end{theorem}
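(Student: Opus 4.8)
The plan is to run the ``reduction to four sorts'' strategy of \cite{HaHaPeVF}, combined with the richness machinery developed above. Recall the key fact of \cite{HaHaPeVF} that every infinite $\mathcal K$-interpretable set is locally interalgebraic with at least one of $K$, $\textbf{k}$, $\Gamma$, $K/\mathcal O$. By Corollary~\ref{C: rule out groups}, under the standing hypothesis (residue characteristic $0$, or residue characteristic $p>0$ with $K/\mathcal O$ locally linear), the set $M$ is neither locally interalgebraic with $\Gamma$ nor locally interalgebraic with $K/\mathcal O$. Hence $M$ is locally interalgebraic with $K$ or with $\textbf{k}$, and I would treat these two cases separately.

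In the first case, suppose $M$ is \emph{not} locally interalgebraic with $\textbf{k}$. Then $M$ is not locally interalgebraic with any of $\textbf{k}$, $\Gamma$, $K/\mathcal O$, so Fact~\ref{F: K pure} applies verbatim and $M$ definably embeds into $K^n$ for some $n$, finishing this case.

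In the second case, suppose $M$ is locally interalgebraic with $\textbf{k}$. By Corollary~\ref{C: almost internality}, $M$ is almost internal to $\textbf{k}$; concretely, there is a $\textbf{k}$-critical interpretable $X\subseteq M$ together with an interpretable (over finitely many parameters) finite correspondence between $X$ and a subset of some $\textbf{k}^N$. Since $M$ is strongly minimal and $X$ is infinite, $X$ is cofinite in $M$; absorbing the finitely many missing points of $M\setminus X$ into the correspondence (and into the parameters), I obtain that $M$ itself is \emph{almost embeddable into $\textbf{k}$} in the sense of Definition~\ref{D: finite correspondence}: there is an interpretable finite correspondence $C\subseteq M\times\textbf{k}^N$ whose projections to $M$ and to $\textbf{k}^N$ are finite-to-one and surjective (onto $M$, and onto a subset of $\textbf{k}^N$, respectively).

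It remains to upgrade this almost-embeddability to an honest definable embedding $M\hookrightarrow\textbf{k}^{m}$, and this is the step requiring the most care. The key external input is that $\textbf{k}$ is stably embedded in $\mathcal K$ with induced structure that of a pure algebraically closed field, so that elimination of imaginaries is available inside $\textbf{k}$. First one builds a finite-to-one definable map $e\colon M\to\textbf{k}^{m}$ by sending $x\in M$ to a $\textbf{k}$-code of the finite fibre $C_x=\{\,\bar d\in\textbf{k}^N:(x,\bar d)\in C\,\}$ (such a code lies in a power of $\textbf{k}$ by elimination of imaginaries in the pure field, and stable embeddedness guarantees it is the ``correct'' code); the map $e$ is finite-to-one because, if $C_x=C_{x'}$, then choosing any $\bar d$ in this common fibre forces $x$ and $x'$ to lie in the finite fibre of $C\to\textbf{k}^N$ over $\bar d$. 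Writing $N=e(M)\subseteq\textbf{k}^{m}$, a $\textbf{k}$-definable set of dimension $1$, one must then use strong minimality of $M$ (which controls the generic fibre size of $e$) together with elimination of imaginaries in $\textbf{k}$ to resolve the remaining finite ambiguity of $e$ into an injection, concluding that $M$ is definably isomorphic to a $\textbf{k}$-definable set and hence definably embeds into some $\textbf{k}^{m'}$. This last resolution — turning the finite-to-one map into an embedding, entirely inside the pure field $\textbf{k}$ — is the main obstacle; everything else is a bookkeeping assembly of results already in the excerpt. Once it is in place, the two cases together give the theorem, and since $\textbf{k}$-definable sets carrying their $\mathcal M$-induced structure are relics of the pure algebraically closed field $\textbf{k}$ (again by stable embeddedness), the statement feeds directly into Theorem~\ref{T: combined acvf thm} and Corollary~\ref{C: RTC}.
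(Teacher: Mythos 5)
Your reduction matches the paper's skeleton: Corollary~\ref{C: rule out groups} rules out $\Gamma$ and $K/\mathcal O$ under the stated hypotheses, and when $M$ is not locally interalgebraic with $\textbf{k}$ you correctly invoke Fact~\ref{F: K pure} to embed $M$ into a power of $K$. The problem is the case where $M$ is locally interalgebraic with $\textbf{k}$. A first (fixable) slip: the $\textbf{k}$-critical set $X\subseteq M$ is only $\mathcal K$-interpretable, not $\mathcal M$-definable, so strong minimality of $\mathcal M$ does not force $X$ to be cofinite in $M$; this is harmless, since Corollary~\ref{C: almost internality} already gives almost internality of all of $M$ to $\textbf{k}$ without that detour. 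The genuine gap is the final step, which you yourself flag as ``the main obstacle'': upgrading a finite-to-one $\mathcal K$-definable map $e\colon M\to\textbf{k}^m$ to a definable \emph{injection}. What almost internality (equivalently, such a finite-to-one map) gives is $M\subseteq\acl(A\cup\textbf{k})$; an embedding requires the $\operatorname{dcl}$-version, i.e.\ internality plus a coding argument, and nothing in your sketch bridges the gap. In particular, strong minimality of $\mathcal M$ cannot ``resolve the remaining finite ambiguity'': the map $e$, its fibres, and the set $e(M)$ are merely $\mathcal K$-definable objects, over which the relic $\mathcal M$ has no control, so there is no mechanism by which strong minimality plus elimination of imaginaries in $\textbf{k}$ separates the finitely many preimages of a point of $e(M)$ by $\textbf{k}$-valued data.

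The paper closes exactly this gap by a different route: from almost internality to the stable, stably embedded sort $\textbf{k}$ one gets, by an easy type-counting argument, that $M$ is itself stable and stably embedded in $(K,v)$; one then cites \cite{HaHrMac1} (Proposition 3.4.11) to conclude that $M$ is \emph{internal} to $\textbf{k}$, i.e.\ $M\subseteq\operatorname{dcl}(B\cup\textbf{k})$ for some parameters. Internality yields a definable surjection from a $\textbf{k}$-definable set onto $M$; its kernel is, by stable embeddedness, definable in the pure field structure on $\textbf{k}$, and elimination of imaginaries in the algebraically closed field $\textbf{k}$ then identifies the quotient — hence $M$ — with a definable subset of some $\textbf{k}^n$. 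So the passage from $\acl$ to $\operatorname{dcl}$ is a substantive external input (stability, stable embeddedness, and the cited internality criterion), not a bookkeeping consequence of strong minimality; your argument as written does not establish it.
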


\begin{proof} By Corollary \ref{C: rule out groups}, $M$ is not locally interalgebraic with $\Gamma$ or $K/\mathcal O$. 

First, suppose $M$ is locally interalgebraic with $k$. Then by Corollary \ref{C: almost internality}, $M$ is almost internal to $\textbf k$. It follows by an easy type-counting argument that $M$ is stable and stably embedded. But all such sets are internal to $\textbf k$ (\cite{HaHrMac1}, Proposition 3.4.11). Combining with stable embeddedness and elimination of imaginaries in $\textbf k$, we then embed $M$ into some $k^n$.

Now assume $M$ is not locally interalgebraic with $\textbf k$. Then by Fact \ref{F: K pure}, $M$ definably embeds into a power of $K$.
\end{proof}

We now drop the ambient assumptions and end the paper by collecting all results on ACVF:

\begin{theorem}\label{T: biggest acvf thm}
    Let $\mathcal K=(K,v)$ be an algebraically closed valued field, and let $\mathcal M$ be a non-locally modular strongly minimal $\mathcal K$-relic. Assume the residue field $\textbf k$ has characteristic zero, or $K/\mathcal O$ is locally linear. Then $\mathcal M$ interprets a field $(F,\oplus,\otimes)$, $(K,v)$-definably isomorphic to either $K$ or $\textbf{k}$. Moreover, the induced structure on $F$ from $\mathcal M$ is a pure algebraically closed field.
\end{theorem}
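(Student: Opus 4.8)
The proof of Theorem~\ref{T: biggest acvf thm} is a straightforward assembly of the results developed in Sections~\ref{S: slopes} through \ref{S: imaginaries}, so the plan is mainly to reduce to the already-proven statements. First I would handle the reduction to $\aleph_1$-saturated models exactly as in the proof of Theorem~\ref{T: combined acvf thm}: passing to an $\aleph_1$-saturated elementary extension $(K_1,v_1)$ of $(K,v)$, the same formulas defining $\mathcal M$ define an elementary extension $\mathcal M_1$ which is again strongly minimal and non-locally modular, and the conclusion (interpreting a field definably isomorphic to $K$ or $\textbf k$, with pure ACF induced structure) is a first-order property that descends from $(K_1,v_1)$ to $(K,v)$. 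The one subtlety is that the hypothesis ``$K/\mathcal O$ is locally linear'' must also transfer; but local linearity in the sense used in Section~\ref{S: imaginaries} is visibly an elementary property of $(K,v)$ (it quantifies over definable functions on opens, and agreement on a neighborhood with a coset of a definable subgroup is first-order in families), so this is harmless. Thus I may assume $(K,v)$ is $\aleph_1$-saturated.

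Next I would invoke Theorem~\ref{T: imaginaries main thm}: under the standing hypothesis (residue characteristic $0$, or $\textbf k$ of characteristic $p>0$ with $K/\mathcal O$ locally linear), $M$ definably embeds into a power of $K$ or a power of $\textbf k$. This splits the argument into two cases. If $M$ definably embeds into $K^n$, then $\mathcal M$ is (isomorphic to) a definable $\mathcal K$-relic, and Theorem~\ref{T: combined acvf thm} applies directly: $\mathcal M$ interprets a field $F$, $(K,v)$-definably isomorphic to $K$, with the $\mathcal M$-induced structure on $F$ a pure algebraically closed field. If instead $M$ definably embeds into $\textbf k^n$, then since $\textbf k$ with its induced structure is a pure algebraically closed field (it is stably embedded and eliminates imaginaries, and the $(K,v)$-induced structure on $\textbf k$ is the pure field structure), $\mathcal M$ is a strongly minimal relic of the algebraically closed field $\textbf k$; by Corollary~\ref{C: RTC} (the Restricted Trichotomy for pure algebraically closed fields), $\mathcal M$ interprets $\textbf k$, and moreover one gets the field $(K,v)$-definably isomorphic to $\textbf k$ together with the statement that the $\mathcal M$-induced structure on it is a pure ACF --- this last refinement being exactly what Theorem~\ref{T: combined acvf thm} (applied with $\textbf k$ in place of $K$, legitimate since $\textbf k\models\mathrm{ACF}$) or the cited \cite{HaSu}/\cite{HaOnPi} machinery provides. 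Either way we obtain the desired conclusion with $F$ definably isomorphic to $K$ or to $\textbf k$.

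I do not expect any genuine obstacle here: every ingredient is already in place. The only point requiring a little care is the transfer of the ``locally linear'' hypothesis through the saturation reduction, and the observation that in the residue-field case one should phrase the final step so as to also extract the ``pure ACF on $F$'' clause --- this follows because the $\mathcal M$-induced structure on the interpreted copy of $\textbf k$ is recovered by the group-configuration-to-field argument of \cite{HaOnPi}/Theorem~\ref{T: combined acvf thm}, which always yields a pure algebraically closed field structure. Finally I would note that the case division is exhaustive precisely because Theorem~\ref{T: imaginaries main thm} already ruled out the sorts $\Gamma$ and $K/\mathcal O$ via Corollary~\ref{C: rule out groups}, so there is nothing left to check.
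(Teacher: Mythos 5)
Your proof follows essentially the same route as the paper's: apply Theorem \ref{T: imaginaries main thm} to place $M$ inside a power of $K$ or of $\textbf{k}$, then reduce the $K$-case to Theorem \ref{T: combined acvf thm} and the $\textbf{k}$-case to Corollary \ref{C: RTC}. The additional details you spell out (the saturation reduction, stable embeddedness of $\textbf{k}$ making $\mathcal M$ a relic of a pure ACF, and extracting the pure-ACF-induced-structure clause in the residue-field case) are precisely the points the paper's terse proof leaves implicit, and you handle them correctly.
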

\begin{proof} By Theorem \ref{T: imaginaries main thm}, we may assume that $M\subset K^n$ or $M\subset \textbf k^n$ for some $n$. If $M\subset \textbf k^n$, we reduce to Corollary \ref{C: RTC}. If $M\subset K^n$, we reduce to Theorem \ref{T: combined acvf thm}.
\end{proof}

\noindent\textit{Acknowledgements:} This project started while the first and third authors attended the 2021 Thematic Program on Trends in Pure and Applied Model Theory at the Fields Institute. We thank the Fields Institute for their hospitality. We would also like to thank Yatir Halevi for discussions around the model theory of valued fields,  Will Johnson for his help with visceral theories, Amnon Yekutily and Ilya Tyomkin for their help with algebro-geometric aspects of the paper, and Martin Hils for pointing out a mistake in an earlier version of the paper.

\bibliographystyle{amsalpha}
\bibliography{ref}
\end{document}